\newtheorem{thm}{Theorem}[section]
\newtheorem{cor}[thm]{Corollary}
\newtheorem{lem}[thm]{Lemma}
\newtheorem{prop}[thm]{Proposition}
\newtheorem{empirical}[thm]{Empirical Observation}
\newtheorem*{riemann-mapping-theorem}{Riemann Mapping  Theorem}
\newenvironment{pf}{\proof[\proofname]}{\endproof}
\newenvironment{pf*}[1]{\proof[#1]}{\endproof}
\newcommand{\const}{\operatorname{const}}
\newcommand{\gin}{\Gamma^{\text{in}}}
\newcommand{\gout}{\Gamma^{\text{out}}}
\newcommand{\lra}{\longrightarrow}
\newcommand{\cal}[1]{{\mathcal #1}}
\newcommand{\ixp}{\operatorname{ixp}}
\newcommand{\Dom}{\operatorname{Dom}}
\newcommand{\beq}{\begin{equation}}
\newcommand{\eeq}{\end{equation}}
\newcommand{\Reals}{\RR}
\theoremstyle{definition}
\newtheorem{defn}{Definition}[section]
\theoremstyle{remark}
\newtheorem{rem}{Remark}[section]
\renewcommand{\deg}{\operatorname{deg}}
\newcommand{\diam}{\operatorname{diam}}
\renewcommand{\mod}{\operatorname{mod}}
\newcommand{\tl}{\tilde}
\newcommand{\eps}{\epsilon}
\newcommand{\crit}{\operatorname{Crit}}
\newcommand{\sing}{\operatorname{Sing}}
\newcommand{\asym}{\operatorname{Asym}}
\newcommand{\aaa}[1]{{{\mathbf{#1}}}}
\newcommand{\pr}{{\cal P}}
\newcommand{\cla}{{{\aaa P}_0}}
\newcommand{\claa}{{{\aaa P}}}
\newcommand{\clab}{\widetilde{{\aaa P}}} 
\newcommand{\clac}{\widehat{{\aaa P}}} 
\newcommand{\clad}{{{\aaa P}_1}}
\newcommand{\bm}[1]{{\mathbf #1}}
\newcommand{\bin}[1]{{\underline{\mathbf #1}}_2}
\renewcommand{\Re}{\operatorname{Re}}
\renewcommand{\Im}{\operatorname{Im}}
\numberwithin{equation}{section}
\newcommand{\thmref}[1]{Theorem~\ref{#1}}
\newcommand{\propref}[1]{Proposition~\ref{#1}}
\newcommand{\secref}[1]{\S\ref{#1}}
\newcommand{\lemref}[1]{Lemma~\ref{#1}}
\newcommand{\figref}[1]{Figure~\ref{#1}}
\newcommand{\cW}{{\cal W}}
\newcommand{\cV}{{\cal V}}
\newcommand{\cP}{{\cal P}}
\newcommand{\cC}{{\cal C}}
\newcommand{\cL}{{\cal L}}
\newcommand{\cD}{{\cal D}}
\newcommand{\cE}{{\cal E}}
\newcommand{\cS}{{\cal S}}
\newcommand{\CC}{{\mathbb C}}
\renewcommand{\SS}{{\mathbb S}}
\newcommand{\RR}{{\mathbb R}}
\newcommand{\TT}{{\mathbb T}}
\newcommand{\ZZ}{{\mathbb Z}}
\newcommand{\NN}{{\mathbb N}}
\newcommand{\DD}{{\mathbb D}}
\newcommand{\HH}{{\mathbb H}}
\newcommand{\QQ}{{\mathbb Q}}
\newcommand{\bA}{{\mathbf A}}
\newcommand{\ignore}[1]{{}}
\newcommand{\cir}[1]{\overset{\circ}{#1}}
\newcommand{\finv}[1]{{g_{#1}}}
\newcommand{\finvloc}{{g}}
\newcommand{\Arg}{\operatorname{Arg}}
\newcommand{\tlphi}{\phi}
\begin{document}

\title[Parabolic renormalization]{The fixed point of the parabolic renormalization operator}

\author{Oscar Lanford III, Michael Yampolsky}

\maketitle

\section{Introduction}
In a recent paper \cite{IS}, H. Inou and M. Shishikura demonstrated that the successive parabolic renormalizations $\cP^n(f_0)$
of the quadratic polynomial $f_0(z)=z+z^2$ converge to an analytic map $f_*$ defined in a neighborhood of the origin,
which satisfies the fixed point equation
$$\cP(f_*)=f_*.$$
Conjecturally, the solution of the above functional equation is unique in a suitably restricted class of maps.
 
In this paper we present a class of analytic maps $\cla$ which have a maximal analytic extension to 
a Jordan domain,  satisfying the invariance property
$$\cP:\cla\to\cla.$$
The covering properties of a map $f\in\cla$ admit an explicit topological model.
We prove that the Inou-Shishikura fixed point $f_*$ of $\cP$ is contained in $\cla$, and conjecture that successive
renormalizations of any map $f\in\cla$ converge to $f_*$. 

The boundary of the maximal domain of analyticity of $f_*$ has a 
 a highly degenerate geometry.
It is this bad geometry that makes the study of $f_*$ so challenging. In contrast, consider the parabolic renormalization of 
critical circle maps with a parabolic fixed point on the circle, which both of the authors have studied \cite{Lan3,Ya2,EY}.
The corresponding renormalization fixed point also has a maximal analytic extension, whose covering properties are similar
to those of $f_*$. The geometry of its domain of analyticity, however, is rather tame, which permits both a numerical and
an analytic study.

 We present a numerical method for computing the Taylor's expansion of $f_*$ with a high accuracy. 
Our approach also allows us to compute the domain $\text{Dom}(f_*)$, and the reader will see the first computer-generated images of it.
Finally, we obtain a numerical estimate of the leading eigenvalue of $D\cP|_{f_*}$.

\section{Local dynamics of a parabolic germ}

\subsection{Fatou coordinates}
We briefly review the local dynamics an analytic function $f$ in the
vicinity of a parabolic fixed point at $0$: 
$$f(z)=e^{2\pi i p/q}z+O(z^2).$$
We consider first the case $q=1$, that is, $f'(0)=1$, and we write
\begin{equation}
\label{parabolic-normal-form-1}
f(z)=z+az^{n+1}+O(z^{n+2}),
\end{equation}
for some $n\in\NN$ and $a\neq 0$.
The integer $n+1$ can be recognized as the {\it multiplicity} of $0$
as the solution of $f(z)-z = 0$.

A complex number $\nu$ of modulus one is called an {\it attracting direction}
 if the product $a\nu^n<0$,
and a {\it repelling direction}  if the same product is positive.
The terminology has the following meaning: 

\begin{prop}
\label{attr dir}
Let $\{f^k(z)\}$ be an orbit in $\Dom(f) \setminus \{0\}$ which converges
to the parabolic fixed point $0$. Then the sequence of unit vectors
$f^{k}(z)/|f^{k}(z)|$ converges as $k \to \infty$ to one of the
attracting directions.
\end{prop}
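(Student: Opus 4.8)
The plan is to work in a local coordinate in which the map $f(z)=z+az^{n+1}+O(z^{n+2})$ is brought to an almost-linear form along each attracting direction. First I would perform the standard change of variable $w=c/z^n$ (for a suitable constant $c$ depending on $a$ and $n$) which conjugates $f$ near $0$ to a map of the form $F(w)=w+1+o(1)$ as $w\to\infty$; this is the classical Fatou-coordinate normalization, valid in sectors. The $n$ preimages of $\infty$ under $z\mapsto c/z^n$ that lie in the directions where $\Re w\to +\infty$ correspond precisely to the $n$ attracting directions $\nu$ (those with $a\nu^n<0$), and the sectors bisected by repelling directions map to the region $\Re w\to -\infty$. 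So the proposition reduces to the statement: if $\{F^k(w_0)\}$ is an orbit of $F(w)=w+1+o(1)$ that tends to $\infty$, then in fact $\Re F^k(w_0)\to +\infty$ and $F^k(w_0)/|F^k(w_0)|\to 1$, i.e.\ the orbit escapes through the "$+\infty$ end."

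The key steps, in order, are: (1) write down the substitution $w=\psi(z)=c/z^n$ and verify that in a punctured neighborhood of $0$ it is a biholomorphism from each of $n$ sectors (bisected by an attracting direction and of opening slightly less than $2\pi/n$) onto a right half-plane–like region, and that $F=\psi\circ f\circ\psi^{-1}$ has the asymptotic form $F(w)=w+1+o(1)$; the $o(1)$ term can even be improved to $O(w^{-1/n}\log w)$ but the crude estimate suffices. (2) Show that an orbit of $f$ converging to $0$ must, from some point on, stay inside one fixed attracting sector: an orbit tending to $0$ has $z_{k+1}-z_k=az_k^{n+1}+O(z_k^{n+2})$, so $\arg(z_{k+1}-z_k)\to\arg a + n\arg z_{\text{limit direction}}$ if a limit direction exists; the only way the displacement can be asymptotically tangent to the orbit (which it must be, since consecutive points are close and the orbit converges) is if $nz\to$ an attracting direction — this is exactly the $w$-plane statement that $\Re(F^{k+1}-F^k)=1+o(1)>0$ forces the real part to increase without bound, pinning down the end. (3) Transport back: $w_k=\psi(z_k)\to\infty$ with $\Re w_k\to+\infty$, hence $z_k=(c/w_k)^{1/n}$ (appropriate branch) has $z_k/|z_k|\to$ the $n$-th root of a negative multiple of $\bar a^{-1}$... i.e.\ an attracting direction, and the orbit cannot oscillate between two of them because once it enters one attracting sector and $\Re w_k$ is increasing it can never leave.

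The main obstacle, and the step deserving the most care, is (2): ruling out that the orbit converges to $0$ "spiralling" without a well-defined limit direction, or along a repelling/neutral direction. The clean way to handle this is to avoid assuming a limit direction exists a priori and instead argue directly in the $w$-coordinate. One shows that the set $S$ of attracting sectors together with a small disc around $0$ is forward-invariant under $f$ once we are close enough to $0$ (an "attracting petal" / flower argument), and that the complement of $S$ near $0$ — a neighborhood of the repelling directions — is such that any orbit entering it eventually escapes a fixed neighborhood of $0$. Hence an orbit that converges to $0$ must eventually be trapped in $S$, in fact in a single petal $P_\nu$ since the petals are disjoint near $0$. Inside $P_\nu$, the Fatou coordinate $\Phi_\nu=\psi$ satisfies $\Phi_\nu(f^k z)=\Phi_\nu(z)+k+o(k)$, forcing $\Re\Phi_\nu(f^k z)\to+\infty$, which is precisely the condition that $f^k(z)/|f^k(z)|\to\nu$. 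I would present the petal/flower lemma as the technical core (citing Milnor's book or the standard references if the paper permits), and then the convergence of the argument to $\nu$ is a one-line consequence of the asymptotics of $\Phi_\nu^{-1}$.

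One remark on organization: if the paper intends to develop Fatou coordinates carefully in the next subsection anyway, the most economical proof simply defers: \emph{assuming} the attracting petal $P_\nu$ and the Fatou coordinate $\Phi_\nu:P_\nu\to\CC$ with $\Phi_\nu\circ f=\Phi_\nu+1$ and $\Phi_\nu(z)\sim c/z^n$, the proposition is immediate. I would write the proof in that style, with the petal-flower input flagged explicitly, so as not to duplicate the machinery.
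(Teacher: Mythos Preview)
The paper does not actually prove this proposition; it is stated as a standard fact from the local theory of parabolic fixed points, and the paper immediately moves on to defining petals and developing the Leau--Fatou flower and Fatou coordinates. Your proposal is a correct outline of the standard proof, and the change of variable you describe is precisely the $\kappa(z)=-1/(nz^n)$ that the paper introduces a few paragraphs later for other purposes. Your closing remark --- that if the petal/Fatou-coordinate machinery is about to be developed anyway, one should simply flag the proposition as a consequence and defer --- is exactly the organizational choice the paper makes, so you have correctly anticipated the structure even though there is no proof to compare against.
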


We say in this case that the orbit converges to $p$ {\em from the
direction of $\nu$.}

If $f$ has a parabolic fixed point at $0$, it admits a {\em local
  inverse} there, by which we mean a function $g$, defined and
analytic on a neighborhood of $0$, so that $g(f(z)) = z = f(g(z))$ for
$z$ near enough to $0$. The germ at $0$ of a local inverse is unique,
but its domain of definition typically has to be chosen. A local
inverse also has a parabolic fixed point at $0$; attracting directions for
$f$ are repelling for the inverse and vice versa.

\begin{defn}
\label{defn-petal}
Let $\nu$ be an attracting direction for $f$.  An {\em attracting
  petal for $f$ (from the direction $\nu$)} is a Jordan domain $P$
with closure in $\Dom(f)$ such that:
\begin{enumerate}
\item $0 \in \partial P$
\item $f$ is injective on $P$;
\item  $f(\overline{P} \setminus \{ 0 \})\subset P$;
\item for any $z \in P$, the orbit $f^k(z)$ converges to $0$ from the
  direction $\nu$, and the convergence of $f^k$ to $0$ is uniform on $P$.
\item conversely, any orbit $f^k(z)$ which converges to $z$ from the
  direction $\nu$ is eventually in $P$.
\end{enumerate}
\end{defn}

\noindent
Similarly, $U$ is a {\it repelling petal} for $f$ if it is an
attracting petal for some local inverse $g$ of $f$.

Judiciously chosen petals can be organized into a {\it Leau-Fatou
  Flower} \index{Leau-Fatou Flower} at $0$:

\begin{thm}
\label{thm-flower}
There exists a collection of $n$ attracting petals $P^a_i$, and $n$ repelling petals $P^r_j$
such that the following holds. Any two repelling petals do not intersect, and every repelling petal
intersects exactly two attracting petals. Similar properties hold for attracting petals.
The union
$$(\cup P^a_i)\cup (\cup P^r_j)\cup\{0\}$$
forms an open simply-connected neighborhood of $0$.
\end{thm}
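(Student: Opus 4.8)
The plan is to conjugate $f$, near each parabolic direction, to the model translation $\zeta\mapsto\zeta+1$ at $\infty$, to build each petal as the preimage of a suitable sectorial region, and to assemble the $2n$ petals into the flower by a combinatorial argument about overlapping sectors. After a linear rescaling we may assume $a=-1/n$, so that $f(z)=z-\tfrac1n z^{n+1}+O(z^{n+2})$; the attracting directions are then the $n$-th roots of $1$ and the repelling directions the $n$-th roots of $-1$, so that there are $2n$ directions $\nu_m=e^{i\pi m/n}$ ($m=0,\dots,2n-1$), equally spaced by $\pi/n$ and alternating between attracting ($m$ even) and repelling ($m$ odd). Fix one direction $\nu$ and a small sector $S_\nu$ of opening slightly less than $2\pi/n$ bisected by $\RR_{>0}\nu$. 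On $S_\nu$ the map $\zeta=\Phi(z)=z^{-n}$ is a biholomorphism onto a sectorial neighbourhood of $\infty$, and substituting the expansion of $f$ into $\Phi\circ f\circ\Phi^{-1}$ gives $F(\zeta)=\zeta+1+O(\zeta^{-1/n})$ as $\zeta\to\infty$. For a repelling direction one runs the same computation for a local inverse $g$ of $f$ and gets $G(\zeta)=\zeta-1+O(\zeta^{-1/n})$.

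\emph{Construction of one petal.} For $R$ large and $\delta>0$ small let $\Omega=\{\zeta:\Re\zeta>R-\delta|\Im\zeta|\}$; this is a sector of half-angle $\pi/2+\arctan\delta<\pi$ with vertex far to the right, it is contained in $\{|\arg\zeta|<\pi/2+\arctan\delta\}$, and it is invariant under $\zeta\mapsto\zeta+1$. Using $F=\mathrm{id}+1+O(\zeta^{-1/n})$ one checks, for $R$ large, that $F$ is injective on $\Omega$, that $F(\overline\Omega\setminus\{\infty\})\subset\Omega$, and that $F^k(\zeta)/k\to1$ with $|F^k(\zeta)|\to\infty$ uniformly on $\Omega$. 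Pulling back by $\Phi$, the set $P^a_\nu:=\Phi^{-1}(\Omega)$ is a Jordan domain with $0\in\partial P^a_\nu$ on which $f$ is injective and $f(\overline{P^a_\nu}\setminus\{0\})\subset P^a_\nu$, and the asymptotics of $F^k$ translate into $|f^k(z)|=|F^k(\Phi(z))|^{-1/n}\to0$ uniformly on $P^a_\nu$ with $\arg f^k(z)\to\arg\nu$; this is Definition~\ref{defn-petal}(1)--(4). For the converse (5): if $f^k(z)\to0$ from the direction $\nu$ then, by Proposition~\ref{attr dir}, its limiting direction is exactly $\nu$, so the orbit eventually lies in $S_\nu$, its $\Phi$-image then has real part tending to $+\infty$, and hence it is eventually in $\Omega$, i.e.\ in $P^a_\nu$. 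Repelling petals $P^r_\nu$ are built in the same way from the left-sectorial region $\{\Re\zeta<-R+\delta|\Im\zeta|\}$ and $G$, using the definition of a repelling petal as an attracting petal for $g$.

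\emph{Assembling the flower.} Near $0$ the petal $\Phi^{-1}(\Omega)$ fills a curved sector about its direction of angular half-width $h=(\pi/2+\arctan\delta)/n$, which satisfies $\pi/(2n)<h<\pi/n$ for every $\delta>0$, and likewise for the repelling petals. Two petals with directions $\nu_m,\nu_{m'}$ therefore meet near $0$ exactly when $|m-m'|\,\pi/n<2h$, that is, exactly when $m'=m\pm1$ (taken mod $2n$); choosing $R$ large enough makes the petals so small, and their shapes so close to honest sectors, that no further intersections occur. This yields the asserted incidence pattern: each repelling petal meets precisely the two flanking attracting petals and conversely, while any two petals of the same type are disjoint. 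Since consecutive half-sectors (half-width $h>\pi/(2n)$) overlap, the $2n$ petals cover a full punctured neighbourhood of $0$, so $W:=\{0\}\cup\bigcup_i P^a_i\cup\bigcup_j P^r_j$ contains a round disk about $0$ and is open. To see that $W$ is simply connected, note that the union $W_0$ of the $2n$ petals has nerve a $2n$-cycle of simply connected open sets with connected and simply connected pairwise intersections and empty triple intersections, so $W_0$ is homotopy equivalent to a circle and $\pi_1(W_0)\cong\ZZ$ is generated by a small loop around $0$; that loop bounds a disk inside the round neighbourhood of $0$ contained in $W$, hence is null-homotopic in $W$, and since every loop in $W$ can be pushed off $0$ into $W_0$ we conclude $\pi_1(W)=0$.

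The delicate point is the assembly step: one must fix the sectors $S_\nu$ together with the cut-offs $R,\delta$ so that the transported regions $\Phi^{-1}(\Omega)$ realize exactly the prescribed incidence — in particular so that each adjacent attracting--repelling pair overlaps in a single \emph{connected} lune while all non-adjacent pairs and all same-type pairs remain disjoint — and this requires a quantitative estimate of how much $\Phi^{-1}(\Omega)$ deviates from an ideal sector near $0$ in order to exclude spurious intersections away from $0$. By contrast, everything in the first two paragraphs — the asymptotic expansion of $F$, the injectivity and forward-invariance of $F$ on $\Omega$, and the orbit asymptotics — reduces to elementary estimates on the perturbed translation $F=\mathrm{id}+1+O(\zeta^{-1/n})$.
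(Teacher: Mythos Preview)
Your sketch is correct and follows essentially the same route as the paper: pass to the coordinate $\zeta=z^{-n}$ (the paper writes $w=-1/(nz^{n})$), read off that the conjugated map is a perturbed unit translation near $\infty$, and take petals to be preimages of wide sectorial regions of half-angle strictly between $\pi/2$ and $\pi$ (your $\Omega=\{\Re\zeta>R-\delta|\Im\zeta|\}$ is exactly the paper's $\Delta(\alpha,R)$ with $\alpha=\pi/2+\arctan\delta$).

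Two minor differences are worth noting. First, the paper performs the preliminary conformal change of Proposition~\ref{formal conjugacy} to the refined normal form $z+z^{n+1}+\alpha z^{2n+1}+O(z^{3n+1})$ before passing to $\infty$, which yields the sharper expansion $F(w)=w+1+A/w+O(w^{-2})$; you skip this and work directly with the cruder $F(\zeta)=\zeta+1+O(\zeta^{-1/n})$. Your estimate is entirely sufficient for the petal construction and the flower assembly, so this is a genuine simplification---the finer expansion only matters later in the paper, for the asymptotics of Fatou coordinates. Second, the paper does not actually write out the assembly step (it only remarks that ample petals are needed ``for the proof of Theorem~\ref{thm-flower}'' and leaves the rest implicit), whereas you supply it in full, including the nerve argument for $\pi_1$. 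That argument is clean and correct; the caveat you flag at the end---that one must control the deviation of $\Phi^{-1}(\Omega)$ from an exact sector well enough to guarantee the adjacent overlaps are connected and the non-adjacent pairs stay disjoint---is exactly the point that needs a quantitative check, but it follows from the same $O(\zeta^{-1/n})$ control once $R$ is taken large.
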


\noindent
The proof of this statement relies on some changes of
coordinates. First of all:  Every germ of the form
(\ref{parabolic-normal-form-1}) can be brought into the form 
\begin{equation}
\label{parabolic-normal-form-2}
f(z)=z+z^{n+1}+\alpha z^{2n+1}+O(z^{3n+1})
\end{equation}
in a suitable conformal local coordinate change at $0$.  In fact, a
straightforward induction shows the following:

\begin{prop}[cf. \cite{Mil} Problem 10-d,\cite{BE}]
\label{formal conjugacy}
For every germ of the form (\ref{parabolic-normal-form-1}) there
exists a unique $\alpha\in\CC$ such that for every $N\in\NN$ greater
that $2n+1$ there is a locally conformal change of coordinates $\psi$,
with $\psi(0)=0$, such that
$$\psi\circ f\circ \psi^{-1}(z)=z+z^{n+1}+\alpha z^{2n+1}+O(z^N).$$

\noindent
Further, there exists a formal power series $\Psi(z)=\sum_{k=1}^\infty p_kz^k$
which formally conjugates 
$$\Psi\circ f\circ \Psi^{-1}(z)=z+z^{n+1}+\alpha z^{2n+1}.$$
Thus, the number $\alpha\in\CC$ is a formal conjugacy invariant of $f$, and specifies its formal conjugacy class uniquely.

\end{prop}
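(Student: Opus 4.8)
The proof is a formal-power-series computation: I would work entirely in the ring of formal power series (without worrying about convergence for the $\Psi$-part), and build the conjugating coordinate changes one degree at a time. Write $f(z) = z + az^{n+1} + \sum_{k\ge n+2} c_k z^k$ with $a \neq 0$. I will first normalize $a$ to $1$ by a linear scaling $z \mapsto \lambda z$ (which multiplies the coefficient of $z^{n+1}$ by $\lambda^n$, so choose $\lambda^n = 1/a$; the scaling is only determined up to an $n$-th root of unity, but any choice works), so assume from now on $f(z) = z + z^{n+1} + \sum_{k \ge n+2} c_k z^k$.

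\smallskip

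The heart of the argument is the inductive step. Suppose we have already found a polynomial coordinate change bringing $f$ to the form $z + z^{n+1} + \alpha z^{2n+1} + \sum_{k \ge m} d_k z^k$ for some $m > 2n+1$ (and for $m \le 2n+1$ there is nothing yet to do below degree $m$). I conjugate by $\psi_m(z) = z + p\,z^{m-n}$, whose inverse is $\psi_m^{-1}(z) = z - p\,z^{m-n} + O(z^{2(m-n)-1})$. A direct expansion shows
\[
\psi_m \circ f \circ \psi_m^{-1}(z) = z + z^{n+1} + \alpha z^{2n+1} + \bigl(d_m - (m - 2n -1)\,p\bigr) z^{m} + O(z^{m+1}).
\]
The key point is that the coefficient of $p$ in the degree-$m$ term is $-(m - 2n - 1)$, which is a \emph{nonzero} integer precisely because $m > 2n+1$ — this is exactly why the $z^{2n+1}$ coefficient is the unique resonant term that cannot be removed, and why every higher coefficient \emph{can} be killed by the unique choice $p = d_m/(m-2n-1)$. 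Iterating this for $m = 2n+2, 2n+3, \ldots, N-1$ and composing the finitely many $\psi_m$ gives the polynomial $\psi$ of the first assertion; tracking that the degree-$(2n+1)$ coefficient is never altered by any $\psi_m$ with $m > 2n+1$ shows $\alpha$ is well defined and independent of $N$. Running the same recursion for all $m$ simultaneously produces the formal series $\Psi(z) = \sum p_k z^k$ conjugating $f$ to $z + z^{n+1} + \alpha z^{2n+1}$ exactly; here one must also check, at the degrees $\le 2n+1$, that the lower-order terms are removed — the coefficient of $z^{n+1}$ is pinned at $1$ by our scaling and is unaffected, and the intermediate coefficients $c_{n+2}, \ldots, c_{2n}$ are each removable because for those the relevant integer $m - 2n - 1$ is negative, hence still nonzero.

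\smallskip

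For the invariance claim — that $\alpha$ is a complete formal conjugacy invariant — I argue in two directions. If two germs $f_1, f_2$ have the same $\alpha$, then each is formally conjugate to the common normal form $z + z^{n+1} + \alpha z^{2n+1}$ by the above, hence formally conjugate to each other. Conversely, if $\Phi \circ f_1 \circ \Phi^{-1} = f_2$ formally, I must show their invariants coincide; for this I first observe that any formal conjugacy between two germs of the form $z + z^{n+1} + O(z^{n+2})$ must have linear part $\Phi'(0) = \mu$ with $\mu^n = 1$ (comparing the $z^{n+1}$ coefficients: the multiplier of the leading term transforms by $\mu^n$ and must be preserved since both are $1$), and then that the only formal power series commuting with $z + z^{n+1} + \alpha z^{2n+1}$ up to such a root-of-unity rescaling are themselves essentially of that normalized shape — so conjugating the normal form can only reproduce the same normal form, forcing equality of the $\alpha$'s. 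Concretely it is cleanest to reduce both $f_1, f_2$ to their normal forms $N_1, N_2$, note $N_2 = \Theta \circ N_1 \circ \Theta^{-1}$ for some formal $\Theta$, and then show by a degree-by-degree comparison (again using that $m - 2n - 1 = 0$ only at $m = 2n+1$) that $N_1 = N_2$.

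\smallskip

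\textbf{The main obstacle.} The genuinely delicate part is not the inductive elimination of higher-order terms — that is the routine ``solve a linear equation with nonzero integer coefficient'' step — but rather pinning down the \emph{uniqueness} statements: that $\alpha$ does not depend on the choice of $\psi$ or on $N$, and that $\alpha$ is a \emph{complete} invariant. Both rest on understanding the group of formal symmetries of the normal form $z + z^{n+1} + \alpha z^{2n+1}$, i.e. which formal changes of coordinate fix this map or permute the family of such normal forms; organizing that computation cleanly (rather than as an unstructured coefficient chase) is where care is needed. One convenient device is to pass to the ``time-$1$ map of a formal vector field'' picture: $z + z^{n+1} + \alpha z^{2n+1} + \cdots$ is the time-one map of a unique formal vector field $v(z)\,\partial_z$ with $v(z) = z^{n+1}/(1 + \beta z^n) \cdot (1 + O(z))$, and the residue-type quantity $\beta$ (a simple affine function of $\alpha$) is manifestly conjugacy-invariant because it is a residue of $dz/v(z)$; this reframes the whole proposition as the statement that $v$ is formally linearizable down to its ``resonant part'' $z^{n+1}/(1+\beta z^n)$, which is the classical Poincaré–Dulac normal form in dimension one. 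I would present the elementary induction as the main proof and remark on the vector-field interpretation to make the invariance transparent.
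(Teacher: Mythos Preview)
Your approach is correct and is exactly the ``straightforward induction'' the paper alludes to (the paper gives no further proof beyond that phrase). One minor slip: in your key displayed formula the coefficient of $p$ should be $+(m-2n-1)$ rather than $-(m-2n-1)$, so the required choice is $p = -d_m/(m-2n-1)$; this sign has no effect on the argument since all that matters is $m-2n-1 \neq 0$ for $m \neq 2n+1$.
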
 

For the next few paragraphs, we will take $f$ to have the special form
(\ref{parabolic-normal-form-2}). The attracting directions are then
the $n$th roots of $-1$. We will describe some ways of constructiong
attracting petals for the attracting direction $\nu$; the adjustments
necessary to deal with repelling petals are routine. The reader is
reminded that (\ref{parabolic-normal-form-2}) is {\em not} the general
form of a mapping with a parabolic fixed point of order $n$; it has
been cleaned up by making a preliminary analytic change of coordinates
to eliminate some powers of $z$ in its Taylor series. 
%%Undoing this
%%change of coordinates introduces a flood of terms involving fractional
%%powers of $z$. Although these fractional powers are present in our
%%case too, they are pushed out into higher order terms and so can be
%%hidden from sight in expressions like $O(z^k)$.

The behavior of orbits of such an $f$ near $0$ is greatly clarified by
making the coordinate change
\[
w = \kappa(z) := -\frac{1}{n z^n}.
\]
We are considering a particular attracting direction, and we take
$\kappa$ to be defined on the sector between the two adjacent
repelling directions;  it opens up this sector to
the complex plane cut along the positive real axis. With its domain of
definition restricted in this way, $\kappa$ is bijective, and its
inverse is given by
\[
\kappa^{-1}(w) = \left( -\frac{1}{n w} \right)^{1/n}
\]
where the branch of the $n$-th root is the one cut along the positive
real axis and taking the value $\nu$ at $-1$.

% To see how
% \thmref{thm-flower} is proven, consider a further multivalued change
% of coordinates
% $$w=\kappa(z)=.$$
% The map $\kappa$ conformally transforms the infinite sector between two repelling directions of the map
% (\ref{parabolic-normal-form-2}) into
% the plane with the negative real axis removed. 
% Formally (we will not worry about the choices of inverse branches)
% applying the change of coordinates $\kappa$ to a map $f$  
% of the form (\ref{parabolic-normal-form-2}), we transform it into:

Then
\begin{eqnarray*}
F(w)&:=& \kappa \circ f \circ \kappa^{-1}(w)\\
&=&-\frac{1}{n}\left( f\left(
    \left(-\frac{1}{nw}\right)^{\frac{1}{n}} \right)\right)^{-n}=\\ 
&=&-\frac{1}{n}\left(\left(-\frac{1}{nw}\right)^{\frac{1}{n}}+\left(-\frac{1}{nw}\right)^{\frac{n+1}{n}}+\alpha\left(-\frac{1}{nw}\right)^{\frac{2n+1}{n}}
  + O\left(\left(\frac{-1}{nw}\right)^{\frac{3n+1}{n}}\right)\right)^{-n}=\\
&=&w\cdot\left(1+\left(-\frac{1}{nw}\right)+\alpha\left(\frac{1}{nw}\right)^2+
  O\left(\frac{1}{w^3}\right)\right)^{-n}=\\
&=&w\cdot\left(1+\frac{1}{w}-\frac{\alpha}{n}\cdot\frac{1}{w^2}+\frac{n+1}{2n}\cdot\frac{1}{w^2}+O(\frac{1}{w^3})\right).
\end{eqnarray*}
We thus obtain:
\[
F(w)=w+1+\frac{A}{w}+O(1/w^2)\quad\text{ as $w\to\infty$}
\]
where
\[
A=\frac{1}{n}\left(\frac{n+1}{2}-\alpha\right).
\]
Selecting a right half-plane $H_r=\{\Re z>r\}$ for a sufficiently
large $r>0$, we have
$$\Re F(w)>\Re w+1/2,\text{ and hence }F(\overline{H_r})\subset H_r.$$
The domain $\kappa^{-1}(H_r)$ is then an attracting petal for the
attracting direction $\nu$. In the case of a simple parabolic point,
what we have just shown simplifies to the assertion that any disk of
sufficiently small radius tangent to the imaginary axis from the left
at $0$ is an attracting petal.

The petals just discussed -- pullbacks of half-planes under $\kappa$
-- have boundaries tangent at the origin to the directions $e^{\pm i
  \pi/(2n)} \nu$.  For many purposes -- such as the proof of Theorem
\ref{thm-flower} -- we will need petals with strictly larger opening
angle. There are many ways to construct such petals; here is one which
is convenient for our purposes. Let $\pi/2 < \alpha < \pi$, $R > 0$,
and let
\begin{equation}\label{eq:Delta-alpha-R}
\Delta(\alpha, R) := \{ w : - \alpha < \Arg(w - R ) < \alpha \}
\end{equation}
(i.e., $\Delta(\alpha, R)$ is the sector $\{ -\alpha < \Arg(w) <
\alpha \}$ translated right by $R$.)
From
\[
F(w) = w + 1 + O(1/w),
\]
there exists a $R_0 = R_0(\alpha)$ so that
\begin{equation}\label{eq:move-right}
\vert \Arg(F(w) - w) \vert < \pi - \alpha\quad\text{and}\quad
\Re(F(w)) > \Re(w) + 1/2
\end{equation}
for $\vert w \vert \geq R_0$.
If $R$ is large enough, the domain $ \Delta(\alpha, R)$
does not intersect the disk $\{ \vert w \vert \leq R_0 \}$, so
(\ref{eq:move-right}) holds for $w \in \Delta(\alpha, R)$. For
such $R$'s, by elementary geometric considerations,
\[
F \left(\overline{\Delta(\alpha, R)}\right) \subset \Delta(\alpha, R),
\]
and $F^n(w) \underset{n \to \infty}{\longrightarrow} \infty$ for all
$w \in \Delta(\alpha, R)$. Further any $\Delta(\alpha, R)$ contains a
right half-plane and hence eventually contains any $F$-orbit
converging to $\infty$. Finally, it can be verified that the sequence
of iterates $F^n$ converges uniformly to $\infty$ on
$\Delta(\alpha,R)$. We omit this verification; it uses simplified
versions of the ideas used in the proof of
\lemref{asym-expansion-3}. Thus, sets of the form $\kappa^{-1}\left(
\Delta(\alpha, R)\right)$ are attracting petals, symmetric about the
attracting direction under consideration, with tengents at the origin
in directions $e^{\pm i \alpha/n}\nu$. It will be useful to have a
general term for behavior for this: We will say that a petal $P$ with
attracting or repelling direction $\nu$ is {\em ample} if
\[
P \supset \{ z : \vert \Arg(z/\nu) \vert < \alpha/n, \vert z \vert < r \}
\]
for some $\alpha > \pi/2$ and sufficiently small $r$.  

\medskip\noindent
The dynamics inside a petal is described by the following:

\begin{prop}
\label{fatou cyl}
Let $P$ be an attracting petal for $f$. Then there exists
a conformal change of coordinates $\tlphi$ defined on $P$, conjugating
$f(z)$ to the unit translation $T:z\mapsto z+1$. 
%%If $P$ is ample, the image
%%$\tlphi(P)$ covers a right half-plane. A similar assertion holds for
%%repelling petals, but with the image covering a left half-plane.
\end{prop}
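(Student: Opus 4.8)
The plan is to first build the linearizing coordinate in the variable $w=\kappa(z)$ on a small standard petal, where $f$ is a perturbation of a unit translation, and then transport it to the given petal $P$ by means of the functional equation it satisfies. As a preliminary I would apply \propref{formal conjugacy} to conjugate $f$, by a local conformal change of coordinates at $0$, to the normal form (\ref{parabolic-normal-form-2}); this both justifies the expansion of $F:=\kappa\circ f\circ\kappa^{-1}$ computed above and is harmless, since the change of coordinates is used only to produce a small standard petal while the eventual extension to the (possibly large) petal $P$ takes place within the dynamics of $f$ itself. Fixing the attracting direction $\nu$ of $P$ and passing to $w=\kappa(z)=-1/(nz^n)$, we have $F(w)=w+1+A/w+O(|w|^{-1-1/n})$; for $r$ large the half-plane $H_r=\{\Re w>r\}$ satisfies $\Re F(w)>\Re w+1/2$, $F(\overline{H_r})\subset H_r$, $F$ is injective on $H_r$, and $\kappa^{-1}(H_r)$ is an attracting petal for $\nu$.

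On $H_r$ put $w_k=F^k(w)$, so that $\Re w_k\ge r+k/2$ and $|w_k|\gtrsim k$. A bootstrap from the expansion of $F$ yields $w_k=w+k+A\log k+O(1)$ uniformly, and, more precisely, shows that the telescoped series $\sum_k\bigl(w_{k+1}-w_k-1-A/(w+k)\bigr)$ converges locally uniformly on $H_r$; since $\sum_{k=0}^{n-1}1/(w+k)-\log n$ also converges locally uniformly, the limit
\[
\Phi(w):=\lim_{n\to\infty}\bigl(F^n(w)-n-A\log n\bigr)
\]
exists locally uniformly and hence defines an analytic function on $H_r$. Replacing $n$ by $n+1$ and using $F^{n+1}=F^n\circ F$ gives the Abel equation $\Phi\circ F=\Phi+1$. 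For univalence I would differentiate: $(F^n)'(w)=\prod_{k=0}^{n-1}F'(w_k)$ with $F'(w)=1+O(|w|^{-2})$, so $\sum_k|F'(w_k)-1|<\infty$, the product converges, and $\Phi'(w)=\lim(F^n)'(w)\to1$ as $\Re w\to+\infty$; hence on some half-plane $H_{r'}\subset H_r$ one has $|\Phi'-1|<1/2$, and the mean value inequality then forces $\Phi$ to be injective on $H_{r'}$. Thus $\phi_0:=\Phi\circ\kappa$ is a conformal conjugacy between $f$ and $T$ on the petal $P_0:=\kappa^{-1}(H_{r'})$.

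It remains to extend $\phi_0$ to the arbitrary petal $P$. Given $z\in P$, property (4) of Definition~\ref{defn-petal} together with \propref{attr dir} says that the orbit of $z$ converges to $0$ from the direction $\nu$, so by property (5) applied to the petal $P_0$ this orbit eventually lies in $P_0$; let $N(z)$ be the least such index. Since $P_0$ is open and forward invariant (property (3)), $N$ is locally bounded on $P$, so every compact $K\subset P$ satisfies $f^N(K)\subset P_0$ for $N$ large. Define $\phi(z):=\phi_0\bigl(f^N(z)\bigr)-N$ for any $N\ge N(z)$; the relation $\phi_0\circ f=\phi_0+1$ on $P_0$ makes this independent of $N$, hence well defined, analytic (locally a composition of analytic maps), and still a conjugacy $\phi\circ f=\phi+1$ on $P$. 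Injectivity on $P$ follows because $\phi(z_1)=\phi(z_2)$ forces $f^N(z_1)=f^N(z_2)$ — take $N$ admissible for both points and use injectivity of $\phi_0$ on $P_0$ — while $f^N$ is injective on $P$ since $f$ is (property (2)) and $f(P)\subset P$ (property (3)).

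The only genuinely nontrivial ingredient is the analytic estimate behind the second paragraph: establishing, uniformly on half-planes, the asymptotics $w_k=w+k+A\log k+O(1)$ with enough precision for the defining limit of $\Phi$ to converge and for $\Phi'\to1$. These estimates are of the same nature as — but considerably simpler than — those appearing in the proof of \lemref{asym-expansion-3}; once they are granted, the Abel equation, the univalence on $H_{r'}$, and the propagation to a general petal are formal consequences of the petal axioms of Definition~\ref{defn-petal}.
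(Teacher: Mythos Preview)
Your proof is correct and follows the classical route (the one the paper itself points to with ``For a traditional proof, see e.g.\ [Mil] \S 10''): construct $\Phi$ on a half-plane as the explicit limit $\Phi(w)=\lim_{n\to\infty}(F^n(w)-n-A\log n)$, check the Abel equation and univalence by direct estimates on the orbit $w_k$, and then propagate to an arbitrary petal via the functional equation. The paper instead gives a quasiconformal-surgery proof: it takes a fundamental strip $S=\overline{H_r\setminus F(H_r)}$, straightens it to $\{0\le\Re z\le1\}$ by a diffeomorphism $h$ that conjugates $F$ to $T$ on the boundary, propagates the pulled-back complex structure by $T$, and appeals to the Measurable Riemann Mapping Theorem plus Uniformization to produce a conformal isomorphism $\Phi$ of the resulting surface with $\CC$; the relation $\Phi\circ F=\Phi+1$ then drops out from the classification of fixed-point-free conformal automorphisms of $\CC$. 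The extension from a small petal $P_0$ to a general petal $P$ is identical in both arguments.

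What the two approaches buy: your route is entirely elementary (no MRMT, no uniformization) and dovetails naturally with the asymptotic expansion $\Phi_A(w)\sim w-A\log w+C_A+\sum b_jw^{-j}$ developed just afterwards in the paper --- indeed your convergence estimate is a crude version of \lemref{asym-expansion-3}. The surgery route, by contrast, is soft: it sidesteps all the orbit estimates and delivers univalence for free, at the cost of invoking heavier machinery. Two cosmetic remarks on your write-up: after the normal-form reduction you invoke, the error in $F$ is $O(|w|^{-2})$ (even $O(|w|^{-3})$), not $O(|w|^{-1-1/n})$; and your ``mean value inequality'' for injectivity should be spelled out as $|\Phi(w_1)-\Phi(w_2)-(w_1-w_2)|\le\sup|\Phi'-1|\cdot|w_1-w_2|<|w_1-w_2|$ on the convex set $H_{r'}$.
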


\begin{proof}
For a traditional proof, see e.g. \cite{Mil} \S 10. We cannot resist
giving a proof based on quasiconformal surgery, which
  probably originated in the work of Voronin \cite{Vor}. For
  definiteness, we discuss the case of an attracting petal with
  attracting direction $\nu$, and let 
\[
F(w)=w+1+O(1/w)
\]
be as above. Also as above, we select a right half-plane $H_r$. The main
step will be to prove the existence of $\phi$ for the special petal
$\kappa^{-1}H_r$, which we provisionally denote by $P_0$. The case of
a general petal will then follow by an easy extension argument.

As we know, $F(\overline{H_r})\subset H_r,$ so let us denote $S$ the
closed strip
$$S=\overline{H_r\setminus F(H_r)}.$$
Setting $\SS=\{\Re z\in [0,1]\}$, let $h$ be any diffeomorphism
$$h:S\to \SS,$$
which on the boundary of the strip conjugates $F$ to $T$:
$$T\circ h(w)=h\circ F(w),\text{ for all }w\text{ with }\Re w=r.$$
%%where $l_r$ denotes the vertical line $\{\Re z=r\}$ which bounds $H_r$
%%on the left. 
We will further require that the first partial
derivatives of $h$ and $h^{-1}$ be uniformly bounded in $S$.
Verifying the existence a diffeomorphism with these properties is an
elementary exercise which we leave to the reader.

The diffeomorphism $h$ defines a new complex structure
$\mu=h_*\sigma_0$ on $\SS$, which we extend to the left half-plane
$\{\Re z<0\}$ by
$$\mu|_w=(T^n)^*\mu\text{ for }T^n(w)\in\SS.$$
Gluing together $H_r$ with the standard complex structure and the
half-plane $\{ \Re z<1\}$ with structure $\mu$ via the homeomorphism
$h$ (which is now analytic), and using the Measurable Riemann Mapping
Theorem, we obtain a new Riemann surface $X$.  By the Uniformization
Theorem, $X$ is conformally isomorphic either to $\CC$ or to the
disk. By construction, $X$ is quasiconformally isomorphic to
$\CC$ and therefore cannot be conformally isomorphic to the disk. We
can specify a conformal isomorphism $\Phi:X\to \CC$ uniquely by
imposing normalization conditions $\Phi(0)=0$ and $\Phi(-1)=-1$.

The pair of maps $T|_{\{ \Re z<0\}}$ and $F|_{H_r}$ induces a conformal
automorphism of $X$, which we denote by $\tilde{F}$. Then $\Phi \circ
\tilde{F} \circ \Phi^{-1}$ is a conformal automorphism of $\CC$
with no fixed point. It is a standard fact that the only such
automorphisms are translations, and our choice of normalization for
$\Phi$ implies that
\begin{equation}
\label{phi-at-infty}
\Phi\circ \tilde{F}\circ \Phi^{-1}\equiv T.
\end{equation}
But $\tilde{F} = F$ on $H_r \subset X$, so we get
\[
\Phi \circ F = T \circ \Phi\quad\text{on $H_r$.}
\]
Moreover, the restriction of $\Phi$ to $H_r$ is analytic in the standard
sense.  Thus, we set $\phi := \Phi \circ \kappa$ on $P_0 :=
\kappa^{-1} H_r$ and obtain
\[
\phi \circ f = \phi + 1\quad\text{on $P_0$,}
\]
as desired.  Since $\Phi$ is a conformal  isomorphism from
$X$ to $\CC$, the map $\phi$ is univalent on $P_0$.

This proves the existence of $\phi$ on the particular petal
$P_0$. We provisionally denote the above $\phi$, which is
defined on $P_0$, by $\phi_0$. We define
\[
B^f_\nu := \{ z : f^n(z) \to 0 \quad\text{from the direction $\nu$}\}.
\]
%%this might be called the basin of attraction of the parabolic point
%%$0$ from the direction $\nu$. 
If $z_0 \in B^f_\nu$, then $f^n(z_0)
\in P_0$ for sufficiently large $n$. If $f^{n_0}(z_0)
\in P_0$, then $(f^{n_0})^{-1}P_0$ is an open set containing $z_0$ and
contained in $B^f_\nu$, so $B^f_\nu$ is open. Since $P_0$ is mapped
into itself by $f$, and since
\[
\phi_0(f(z)) = \phi_0(z) + 1\quad\text{on $P_0$,}
\]
$\phi_0(f^n(z_0)) - n$ takes the same value for all $n$ for which
$f^n(z_0) \in P_0$. We denote this common value by $\phi(z_0)$, thus
obtaining a function $\phi$ defined on all of $B^f_\nu$ and extending
$\phi_0$ defined on $P_0 \subset B^f_\nu$. Tautologically, 
$$\phi(f(z))= \phi(z) + 1.$$
 If $f^{n_0}(z_0) \in P_0$,
then $f^{n_0}(z) \in P_0$ on a neighborhood of $z_0$, so $\phi(z) =
\phi_0(f^{n_0}(z))-n_0$ on this neighborhood, which shows that the
extended $\phi$ is analytic, but not necessarily univalent, on all of $B^f_\nu$. 
%%It need not, however, be univalent.

Now let $P$ be a general attracting petal with the same attracting
direction $\nu$. By definition of petal, $P \subset B^f_\nu$, so we
can restrict $\phi$ to $P$, thus obtaining an analytic function
satisfying
\[
\phi(f(z)) = \phi(z) +1 \quad\text{on $P$.}
\]
It remains to show that the restriction of $\phi$ to $P$ is
univalent. To see this, let $z_1$, $z_2$ be points of $P$ with
$\phi(z_1) = \phi(z_2)$. For sufficiently large $n$, $f^n(z_1)$
and $f^n(z_2)$ are both in $P_0$, so
\[
\phi_0(f^n(z_1) = \phi(z_1)+n = \phi(z_2)+n = \phi_0(f^n(z_2)).
\]
But, by construction, $\phi_0$ is univalent, so $f^n(z_1) =
f^n(z_2)$. The argument so far works for any pair $z_1$, $z_2$ in
$B^f_\nu$ with $\phi(z_1) = \phi(z_2)$. Now, however, we use that
facts that $z_1$ and $z_2$ are both in the petal $P$, which is mapped
into itself by $f$ and on which $f$ is univalent.
%% (both of these
%%conditions being part of the definition of petal.) 
Hence, from $f^n(z_1) = f^n(z_2)$ it follows that $z_1 = z_2$, proving
univalence of $\phi$ on $P$.

%%We defer for the moment the proof that the image under $\phi$ of an
%%ample petal covers a right half-plane; this will be a consequence of
%%the more precise result \propref{fatou coords cover}. 

\end{proof}

We note for future reference a simple result which was proved in the
course of the preceding argument:

\begin{prop}\label{th:phi-extended}
Let $\nu$ be an attracting direction for $f$, $P$ be an attracting
petal from the direction $\nu$, $\phi$ a univalent analytic function
defined on $P$ and satisfying the function equation
\[
\phi(f(z)) = \phi(z) +1.
\]
Then $\phi$ has a unique extension to $B^f_\nu$ satisfying this equation.
\end{prop}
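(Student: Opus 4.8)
The plan is to observe that the required extension was, in effect, already constructed verbatim in the course of the proof of Proposition~\ref{fatou cyl}: there the construction was carried out for the special petal $P_0=\kappa^{-1}H_r$, but the only properties of $P_0$ that actually entered into building the extension were that $P_0$ is an attracting petal from the direction $\nu$, that $f(P_0)\subset P_0$, and that $\phi_0$ satisfies the functional equation on $P_0$. Univalence of $\phi_0$ was used only afterwards, to discuss univalence of the extended function, which is not part of the present statement. So I would simply rerun that argument with the given petal $P$ in the role of $P_0$.

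Concretely, I would first record the three facts about $P$ that make the construction go through, all of them immediate from Definition~\ref{defn-petal}: by item~(4), $P\subset B^f_\nu$; by item~(3), $f(\overline P\setminus\{0\})\subset P$, so once an orbit enters $P$ it never leaves it; and by item~(5), for every $z_0\in B^f_\nu$ the orbit $f^n(z_0)$ lies in $P$ for all sufficiently large $n$. Then I would define $\widehat\phi(z_0):=\phi(f^n(z_0))-n$ for any $n$ large enough that $f^n(z_0)\in P$, and check that this does not depend on the choice of such $n$: if $f^n(z_0)\in P$ then $f^{n+1}(z_0)=f(f^n(z_0))\in P$ as well, and $\phi(f^{n+1}(z_0))=\phi(f^n(z_0))+1$ by the functional equation on $P$, so the two expressions agree, and induction handles all larger indices. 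Taking $n=0$ for $z_0\in P$ shows that $\widehat\phi$ restricts to $\phi$ on $P$.

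Next I would verify analyticity and the functional equation. The set $B^f_\nu$ is open: if $f^{n_0}(z_0)\in P$ then, $P$ being open and $f^{n_0}$ continuous, $(f^{n_0})^{-1}(P)$ is an open neighbourhood of $z_0$ lying in $B^f_\nu$; on that neighbourhood $\widehat\phi=\phi\circ f^{n_0}-n_0$ is a composition of analytic maps, hence $\widehat\phi$ is analytic. Since $f(B^f_\nu)\subset B^f_\nu$ (if $f^n(z)\to 0$ from the direction $\nu$, so does $f^{n+1}(z)$), choosing $n$ with $f^{n}(f(z))=f^{n+1}(z)\in P$ gives $\widehat\phi(f(z))=\phi(f^{n+1}(z))-n=\bigl(\phi(f^{n+1}(z))-(n+1)\bigr)+1=\widehat\phi(z)+1$.

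Finally I would dispatch uniqueness: if $\psi$ is defined on $B^f_\nu$, restricts to $\phi$ on $P$, and satisfies $\psi\circ f=\psi+1$, then for any $z_0\in B^f_\nu$ and any $n$ with $f^n(z_0)\in P$ we get $\psi(z_0)=\psi(f^n(z_0))-n=\phi(f^n(z_0))-n=\widehat\phi(z_0)$, so $\psi=\widehat\phi$. I do not anticipate a genuine obstacle; the one point that deserves care is to make sure the invariance and absorption properties invoked are precisely items~(3)--(5) of Definition~\ref{defn-petal}, so that the argument applies to an arbitrary attracting petal from the direction $\nu$ and not merely to the special petal $P_0$.
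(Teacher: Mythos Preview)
Your proposal is correct and follows the same approach the paper intends: the paper states immediately before Proposition~\ref{th:phi-extended} that this result ``was proved in the course of the preceding argument,'' meaning exactly the extension construction $\widehat\phi(z_0)=\phi(f^n(z_0))-n$ from the proof of Proposition~\ref{fatou cyl}. You have correctly identified that the construction there used only the petal axioms (items (3)--(5) of Definition~\ref{defn-petal}) and not the specific form of $P_0$, and you have spelled out the uniqueness argument that the paper leaves implicit.
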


%%%We will revisit the statement of existence of $\tlphi$ from a rather different viewpoint further in the paper.

We define {\em attracting Fatou coordinate} (for the attracting petal
$P$ with attracting direction $\nu$) to be a function $\phi_A$ defined,
analytic and univalent on $P$ and satisfying
\[
\phi_A(f(z)) = \phi_A(z) + 1\quad\text{on $P$.}
\]
As we have seen, such a function extends uniquely, via the above
functional equation, to all of $B_\nu^f$, and the extension restricts
to an attracting Fatou coordinate on any other petal with attracting
direction $\nu$. It is clear that, if $\phi_A$ is an attracting Fatou
coordinate then so is $\phi_A + c$ for any constant $c$. We will see
shortly that any two attracting Fatou coordinates differ only in this
way.
%% but the proof requires some machinery still to be developed. 

Any
attracting Fatou coordinate can be written in the form $\phi_A =
\Phi_A \circ \kappa$, where $\Phi_A$ satisfies the functional equation
\[
\Phi_A(F(w)) = \Phi_A(w) + 1\quad\text{(with $F = \kappa \circ f \circ
  \kappa^{-1}$ as above,)}
\]
on an appropriate $F$-invariant domain ``near infinity''. We will refer
to such $\Phi_A$'s as {\em Fatou coordinates at infinity.}

A {\it repelling Fatou coordinate} $\phi_R$ for $f$ means an attracting Fatou
coordinate for an analytic local inverse $g$ of $f$. If
$$f(z) = z + z^{n+1} + \ldots\text{, then }g(z) = z - z^{n+1} + \ldots,$$
which can be brought back into the standard form by conjugating with
$z \mapsto -z$ ($n$ odd) or $z \mapsto i \cdot z$ ($n$ even). The above considerations then
apply to define $\phi_R$, repelling petals, etc.  We note that:
\begin{itemize}
\item  a repelling Fatou coordinate $\phi_R$ satisfies {\em the same}
  functional equation
\[
\phi_R(f(z)) = \phi_R(z) + 1
\]
as does an attracting one, but the domains of definition are different, and
\item the image of a repelling petal by a  repelling Fatou coordinate is mapped into itself
  by the unit {\em left} translation $w \mapsto w-1$; 
 the image of an ample repelling petal under a repelling Fatou
  coordinate contains a {\em left} half-plane.
\end{itemize}
Again, it is useful to consider also {\em repelling Fatou coordinates
  at infinity}: If $\phi_R$ is a repelling Fatou coordinate, the
corresponding one at infinity is
\[
\Phi_R(w) = \phi_R(\kappa^{-1}(w))
\]
(but the appropriate branches of $\kappa^{-1}$ are different from the ones
in the attracting case).

Our next step is to prove a crude asymptotic formula for a Fatou
coordinate at infinity. It is advantageous here to deviate from what
we have been doing. We consider a mapping $f$ of the form
\[
f(z) = z + z^{n+1} + f_{n+2} z^{n+2} + \cdots ,
\]
i.e., we do not assume we have made a preliminary change of variable
to eliminate, e.g., the terms $z^j$ for $j$ between $n+1$ and $2
n+1$. We introduce $F = \kappa \circ f \circ \kappa^{-1}$ as before;
this time, the behavior of $F$ near infinity is
\[
F(w) = w + v(w)\quad\text{where $v(w) = 1 + v_1 w^{-1/n} + v_2
  w^{-2/n} + \ldots$};
\]
the series converges for sufficiently large $\vert w \vert$. Let
$\Phi_A$ be an attracting Fatou coordinate at infinity. By what
we have already proved: For any $\alpha < \pi$, there is an $R$ so
that $\Phi_A$ extends analytically to a univalent function on the set $\{ \vert
w \vert > R, - \alpha < \Arg(w) < \alpha \}$.

\begin{prop}
\label{crude asymptotics}
\[
\Phi_A'(w) \to 1\quad\text{and}\quad \Phi_A(w)/w \to 1
\]
uniformly as $w \to \infty$ in any sector $\{ -\alpha < \Arg(w)
< \alpha \}$ with $\alpha < \pi$. The same limits hold for $\Phi_R$,
but with $w \to \infty$ in the opposite sector $\{ -\alpha < \Arg(-w) <
\alpha \}$
\end{prop}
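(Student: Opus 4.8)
\emph{Reductions.} Adding a constant to $\Phi_A$ changes neither $\Phi_A'$ nor the limit of $\Phi_A(w)/w$, so it suffices to prove the statement for one attracting Fatou coordinate at infinity. The repelling case then follows by applying the attracting case to a local inverse $g$ of $f$, put in normal form $z+z^{n+1}+\cdots$ via $z\mapsto -z$ or $z\mapsto iz$; the opposite sector appears because $\Phi_R=\phi_R\circ\kappa^{-1}$ with the branch of $\kappa^{-1}$ adapted to the repelling direction. Fix $\alpha<\pi$ and write $W_\beta=\{\,|w|>R_\beta,\ |\Arg w|<\beta\,\}$. Choose $\alpha<\alpha_1<\pi$; by the discussion preceding the proposition $\Phi_A$ extends to a univalent function on $W_{\alpha_1}$. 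Pick $c\in(0,1)$ so small that the disc $\{\,|w-a|<c|a|\,\}\subset W_{\alpha_1}$ for every $a\in W_\alpha$ with $|a|$ large (possible since $\operatorname{dist}(a,\partial W_{\alpha_1})\ge c_0|a|$ for such $a$, with $c_0=c_0(\alpha,\alpha_1)>0$). Given an arbitrary sequence $w_m\to\infty$ in $W_\alpha$, set
\[
\Psi_m(\zeta):=\frac{\Phi_A\bigl(w_m+|w_m|\zeta\bigr)-\Phi_A(w_m)}{|w_m|\,\Phi_A'(w_m)},\qquad |\zeta|<c .
\]
Each $\Psi_m$ is univalent with $\Psi_m(0)=0$, $\Psi_m'(0)=1$, so (rescaling $\{|\zeta|<c\}$ to the unit disc) $\{\Psi_m\}$ is a normal family by the Koebe Distortion Theorem; pass to a subsequence with $\Psi_m\to\Psi_*$ uniformly on compacts, $\Psi_*$ univalent, $\Psi_*(0)=0$, $\Psi_*'(0)=1$ (whence $\Psi_*'$ vanishes nowhere).

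\emph{Step 1: $\Phi_A'\to1$.} Since $F(w)=w+v(w)$ with $v(w)=1+O(|w|^{-1/n})$, for $\zeta$ in a fixed compact $K\subset\{|\zeta|<c\}$ we may write $F\bigl(w_m+|w_m|\zeta\bigr)=w_m+|w_m|\bigl(\zeta+\eta_m(\zeta)\bigr)$ with $\eta_m(\zeta)=v(w_m+|w_m|\zeta)/|w_m|$; thus $\sup_K|\eta_m|\to0$ while $|w_m|\,\eta_m(\zeta)=v(w_m+|w_m|\zeta)\to1$ uniformly on $K$. Substituting $w=w_m+|w_m|\zeta$ into $\Phi_A(F(w))=\Phi_A(w)+1$ and using the definition of $\Psi_m$ gives
\[
\Phi_A'(w_m)\cdot\frac{\Psi_m\bigl(\zeta+\eta_m(\zeta)\bigr)-\Psi_m(\zeta)}{\eta_m(\zeta)}\cdot\bigl(|w_m|\,\eta_m(\zeta)\bigr)=1 .
\]
As $m\to\infty$ the middle factor tends to $\Psi_*'(\zeta)$ (since $\Psi_m'\to\Psi_*'$ on compacts and $\eta_m\to0$) and the last factor to $1$; as $\Psi_*'(\zeta)\ne0$, we conclude $\Phi_A'(w_m)\to 1/\Psi_*'(\zeta)$. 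The limit cannot depend on $\zeta$, so $\Psi_*'$ is the constant $\Psi_*'(0)=1$, and therefore $\Phi_A'(w_m)\to1$. As $\{w_m\}$ was arbitrary, $\Phi_A'(w)\to1$ uniformly as $w\to\infty$ in $W_\alpha$.

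\emph{Step 2: $\Phi_A(w)/w\to1$.} Fix $w_0\in W_{\alpha_1}$. For $w\in W_\alpha$ join $w_0$ to $w$ by a path $\gamma\subset W_{\alpha_1}$ of length $\le C|w|$ staying at distance $\gtrsim1$ from $\partial W_{\alpha_1}$, so that $\Phi_A(w)-w=\Phi_A(w_0)-w_0+\int_\gamma(\Phi_A'-1)$. Given $\varepsilon>0$, choose $M$ with $|\Phi_A'-1|<\varepsilon$ on $W_{\alpha_1}\cap\{|\zeta|>M\}$ (Step 1); on $\gamma\cap\{|\zeta|\le M\}$ one has $|\Phi_A'|\le C_0(M)$ by the Koebe Distortion Theorem, over a sub-path of length $O_M(1)$. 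Hence $|\Phi_A(w)-w|\le O_M(1)+\varepsilon C|w|$, so $\limsup_{w\to\infty}|\Phi_A(w)-w|/|w|\le\varepsilon C$; letting $\varepsilon\to0$ gives $\Phi_A(w)=w+o(|w|)$ uniformly in $W_\alpha$.

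\emph{Where the difficulty lies.} The essential point is Step 1: univalence of $\Phi_A$ is exactly what makes the rescaled family $\{\Psi_m\}$ normal, and the rescaling must be set up so that the functional equation persists in the limit and forces the limiting univalent map to be \emph{linear}—which is the content of $\Phi_A'\to$ constant, the constant being pinned to $1$ by $v(w)\to1$. The remaining work is bookkeeping: checking that the discs $\{|w-w_m|<c|w_m|\}$ and their $F$-images (a perturbation of size $\sup_K|\eta_m|\cdot|w_m|=o(|w_m|)$) lie inside $W_{\alpha_1}$, where $\Phi_A$ is univalent and $v$ is defined, and, in Step 2, handling the non-compactness of the sector by splitting the integration path into a bounded part and a far-away part.
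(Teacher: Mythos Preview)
Your proof is correct, but it takes a genuinely different route from the paper's.

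The paper (following Shishikura) proceeds by direct estimates: it applies the Koebe Distortion Theorem to $a\mapsto\bigl(\Phi_A(w+a)-\Phi_A(w)\bigr)/\Phi_A'(w)$ on a disc of radius $2$, combines this with the functional equation $\Phi_A(w+v(w))=\Phi_A(w)+1$ to obtain a uniform bound $|\Phi_A'(w)|<C$, then uses the Cauchy estimate to bound $|\Phi_A''|\le C/\rho(w)$ (where $\rho(w)$ is the distance to the boundary of the larger sector), and finally inserts this into Taylor's formula with integral remainder to get the quantitative estimate
\[
|1-v(w)\Phi_A'(w)|\le C/\rho(w)=O(|w|^{-1}).
\]
This immediately gives $\Phi_A'(w)=v(w)^{-1}+O(|w|^{-1})$, an explicit rate of convergence. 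The assertion about $\Phi_A(w)/w$ is then handled by a one-line integration.

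Your argument is instead a soft compactness/rigidity argument: you rescale around an arbitrary sequence $w_m\to\infty$, use Koebe to get a normal family, pass to a univalent limit $\Psi_*$, and show from the functional equation that $\Psi_*'$ must be constant (hence $=1$), forcing $\Phi_A'(w_m)\to1$. Two small points worth making explicit: first, you only obtain convergence along a subsequence, and should invoke the standard subsubsequence principle to upgrade this to the full sequence; second, in Step~2 you apply Step~1 on $W_{\alpha_1}$, so you should note that Step~1 holds for any sector of opening $<\pi$, or simply keep the integration path inside $W_\alpha$.

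What each approach buys: the paper's argument is harder but gives a rate, and this rate is the starting point for the sharper asymptotic expansion proved later (Theorem~\ref{asym-expansion-2}). Your argument is cleaner conceptually and isolates the mechanism (``univalence $+$ functional equation $\Rightarrow$ limit is linear'') but yields only $o(1)$.
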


This proposition is a less-precise version of Lemma A.2.4 of
\cite{Sh}, and the argument we give is the first part of Shishikura's
proof of that lemma. Shishikura carries the analysis further and is
able to identify, in favorable cases, the first correction to the
indicated asymptotic behaviors. We do not give his full argument here, as
we will prove  \thmref{asym-expansion-2}, which gives more precise
information about the asymptotic behavior of Fatou coordinates.

\begin{proof}
Fix $\alpha$ with $\pi/2 < \alpha < \pi$, and let $\alpha < \alpha_1 <
\pi$. Take $R_1$ so that $\Phi_A$ is defined and univalent in
\[
S_1 := \{ \vert w \vert > R_1, - \alpha_1 < \Arg(w) < \alpha_1
\}
\]
and also so that $\vert v(w) - 1 \vert < 1/4$ on $S_1$. For $w_0
\in S_1$, denote by $\rho = \rho(w_0)$ the distance from $w_0$ to the
boundary of $S_1$. We will investigate limits as $\vert w_0 \vert \to
\infty$ in the strictly smaller sector $-\alpha < \Arg(w_0) < \alpha$;
then there is a constant $k > 0$ so that, asymptotically, $\rho(w_0)
\geq k \cdot \vert w_0 \vert$. In the following, we will frequently
assume silently that $\rho(w_0)$ is ``large enough''. We will also use
$C$ to denote a generic ``universal'' constant; different instances of
$C$ need not denote the same constant.

For the first step, we use the Koebe Distortion Theorem: If $\vert w -
w_0 \vert < \rho - 2$ -- so the disk of radius 2 about $w$ is in $S_!$
-- the mapping
\[
a \mapsto \frac{\Phi_A(w + a) - \Phi_A(w)}{\Phi_A'(w)}
\]
is analytic and univalent on $\{ \vert a \vert < 2 \}$ and has unit
derivative at the origin. A simple rescaling of the Koebe Theorem to
adapt it to the disk of radius 2 gives a universal constant $C > 0$ so that
\[
\left \vert \frac{\Phi_A(w + a) - \Phi_A(w)}{\Phi_A'(w)} \right\vert >
C^{-1}
\quad\text{for $ 3/4 < \vert a \vert < 5/4$}
\]
We insert $a = v(w)$ into this estimate, use $\vert 1- v(w) \vert < 1/4$
to ensure that $3/4 < \vert v(w) \vert < 5/4$, and use also the
functional equation
\[
\Phi_A(w+v(w)) = \Phi_A(F(w)) = \Phi_A(w)+1
\]
to get
\[
\vert \Phi'_A(w) \vert < C \quad\text{for $\vert w - w_0 \vert <
  (\rho - 2)$.}
\]
Applying the Cauchy estimates gives a bound
\[
\vert \Phi''_A(w) \vert \leq \frac{C}{\rho}\quad\text{for $\vert w -
  w_0 \vert < \rho/2$}
\]
(with a different $C$).

Next we apply Taylor's Formula with Integral Remainder to write
\[
\Phi_A(w_0 + a) = \Phi_A(w_0) + a \cdot \Phi'(w_0) + a^2 \cdot
\int_{s=0}^1 (1 -s) \Phi''(w_0 + s \cdot a) ds.
\]
Again, we set $a = v(w_0)$ and use $\Phi_A(w_0 + v(w_0)) = \Phi_A(w_0)
+ 1$ to get
\[
1 - v(w_0) \Phi_A'(w_0) = (v(w_0))^2 \int_{s=0}^1 (1 -s) \Phi''(w_0 +
s \cdot a) ds.
\]
Since the estimate $\vert \Phi_A''(w)\vert \leq C/\rho$ holds for all
$w$ appearing in the integral on the right, we get
\[
\vert 1 - v(w_0) \cdot \Phi_A'(w_0) \vert \leq C/\rho(w_0).
\]
We have already remarked that $\rho(w_0)\geq k \vert w_0 \vert$ as $w_0
\to \infty$ in the sector $\{ - \alpha < \Arg(w_0) < \alpha \}$
so
\[
\vert \Phi_A'(w_0) - v(w_0)^{-1} \vert = O(\vert w_0
\vert^{-1})\quad\text{in that sector.}
\]
This establishes the asserted convergence of $\Phi_A'$; the assertion
about $\Phi(w)/w$ follows by integration.

\end{proof}

% The function $\tlphi$ is called a {\it Fatou coordinate} with the
% prefix {\it attracting} or {\it repelling} depending on the type of
% the petal $P$.  A function $\Phi$ which conjugates a map of the form
% $F(w)=w+1+O(1/w)$ to the unit translation (\ref{phi-at-infty}) in a
% right (respectively left) half-plane is called an {\em attracting
%   (respectively repelling) Fatou coordinate at infinity.}  We will
% write $\tlphi_A$, $\Phi_A$ for attracting Fatou coordinates, and
% $\tlphi_R$, $\Phi_R$ for the repelling ones.

Equipped with this information about the asymptotic behavior of Fatou
coordinates, we can now show that the image of a Fatou coordinate is
large enough. As usual, it suffices -- up to insertion of some
minus signs -- to consider the attracting case. Let
$\Phi_A$ be an attracting Fatou coordinate, and let $0 < \alpha <
\pi$. Then, for sufficiently large $R$, $\Phi_A$ extends analytically
to a univalent function on
\[
S := \{ w : \vert w \vert > R, -\alpha < \Arg(w) < \alpha \}
\]

\begin{prop}
\label{fatou coords cover}
Let $0 < \alpha_0 < \alpha$. Then, for sufficiently large $R_0$,
\[
\Phi_A(S) \supset S_0 := \{ \vert w \vert < R_0, -\alpha_0 < \Arg(w) <
\alpha_0 \}.
\]
\end{prop}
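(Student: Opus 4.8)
We establish the inclusion $\Phi_A(S)\supset S_0$ with $S_0$ understood as the sector $\{\,|w|>R_0,\ -\alpha_0<\Arg w<\alpha_0\,\}$ of points of \emph{large} modulus; this reading is forced, since $\Phi_A(w)/w\to1$ on $S$ (\propref{crude asymptotics}) makes $|\Phi_A(w)|\to\infty$ as $w\to\infty$ in $S$, so $\Phi_A(S)$ omits a neighbourhood of the origin. The plan is to show that every prescribed $\zeta\in S_0$ lies in $\Phi_A(S)$, by treating $\Phi_A$ as a small perturbation of the identity on a little disc centred at $\zeta$ and applying Rouch\'e's theorem.

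First I would choose the auxiliary constants in order. Fix an angle $\alpha_1$ with $\alpha_0<\alpha_1<\alpha$; then fix $\varepsilon\in(0,1)$ so small that $\arcsin\varepsilon<\alpha_1-\alpha_0$. With this choice, whenever $|\Arg\zeta|\le\alpha_0$ the closed disc $\overline D(\zeta,\varepsilon|\zeta|)$ subtends half-angle $\arcsin\varepsilon$ at the origin, hence lies in $\{|\Arg w|<\alpha_1\}$. By \propref{crude asymptotics} there is an $R_2$ with $|\Phi_A(w)-w|<\tfrac{\varepsilon}{2}|w|$ on $\{\,|w|>R_2,\ |\Arg w|<\alpha_1\,\}$. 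Finally take $R_0>\max(R,R_2)/(1-\varepsilon)$, where $S=\{\,|w|>R,\ |\Arg w|<\alpha\,\}$. Then for $\zeta\in S_0$ every $w\in\overline D(\zeta,\varepsilon|\zeta|)$ satisfies $|w|\ge(1-\varepsilon)|\zeta|>\max(R,R_2)$ and $|\Arg w|\le\alpha_0+\arcsin\varepsilon<\alpha_1<\alpha$, so this closed disc lies in $S\cap\{\,|w|>R_2,\ |\Arg w|<\alpha_1\,\}$ — in particular inside the region where $\Phi_A$ is analytic (and univalent) and where the above estimate holds.

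Now fix $\zeta\in S_0$ and set $\gamma=\partial D(\zeta,\varepsilon|\zeta|)$. On $\gamma$ the function $w\mapsto w-\zeta$ has constant modulus $\varepsilon|\zeta|$, while
\[
\bigl|(\Phi_A(w)-\zeta)-(w-\zeta)\bigr|=|\Phi_A(w)-w|<\tfrac{\varepsilon}{2}|w|\le\tfrac{\varepsilon}{2}(1+\varepsilon)|\zeta|<\varepsilon|\zeta|,
\]
the final inequality because $\varepsilon<1$. By Rouch\'e's theorem $w\mapsto\Phi_A(w)-\zeta$ and $w\mapsto w-\zeta$ have the same number of zeros, counted with multiplicity, in $D(\zeta,\varepsilon|\zeta|)$ — namely one — so $\zeta\in\Phi_A\bigl(D(\zeta,\varepsilon|\zeta|)\bigr)\subset\Phi_A(S)$. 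Since $\zeta\in S_0$ was arbitrary, $\Phi_A(S)\supset S_0$. The statement for the repelling Fatou coordinate $\Phi_R$, with $w\to\infty$ in the opposite sector, follows by running the identical argument after the substitution $w\mapsto-w$, which interchanges the two sectors and leaves the asymptotics intact.

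The single delicate point — the one I would nail down before anything else — is the \emph{uniformity} in \propref{crude asymptotics}: the estimate $\Phi_A(w)-w=o(|w|)$ must hold uniformly over a truncated sector whose half-opening $\alpha_1$ can be pushed arbitrarily close to $\pi$ (so as to accommodate $\alpha_0$ near $\pi$), not merely along individual rays nor only on some fixed narrower sector. That is precisely what \propref{crude asymptotics} delivers, its hypothesis permitting any half-opening strictly below $\pi$. Granting it, the rest is bookkeeping: the nesting $\overline D(\zeta,\varepsilon|\zeta|)\subset S$ via the disc estimate, the one-line Rouch\'e comparison, and the routine reduction of the repelling case to the attracting one.
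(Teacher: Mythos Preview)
Your proof is correct and follows the same overall strategy as the paper: for each target value $\zeta\in S_0$, show that the equation $\Phi_A(w)=\zeta$ has a solution in a small disk $D(\zeta,\varepsilon|\zeta|)\subset S$, using that $\Phi_A$ is a uniform $o(|w|)$-perturbation of the identity on sectors (\propref{crude asymptotics}). The only difference is the tool used to produce the solution: the paper recasts the equation as a fixed-point problem $w=\Psi_0(w):=\zeta+w-\Phi_A(w)$ and applies the Contraction Mapping Principle, using both conclusions of \propref{crude asymptotics} (the bound on $|\Psi_0'|$ comes from $\Phi_A'\to1$, the bound on $|\Psi_0(\zeta)-\zeta|$ from $\Phi_A(w)/w\to1$), whereas you apply Rouch\'e's theorem directly and need only the second conclusion. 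Your version is marginally more economical; otherwise the two arguments are interchangeable. Your identification of the typo in the definition of $S_0$ (it should read $|w|>R_0$) is also correct.
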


\begin{proof}
Let $w_0 \in S_0$; we want to investigate solutions to the equation
$\Phi_A(w) = w_0$ which we rewrite as
\[
 w = w_0 + w -
\Phi_A(w) =: \Psi_0(w)
\]
The idea is to apply the Contraction Mapping Principle to $\Psi_0$,
using the fact that $\Psi_0'(w) = 1 - \Phi_A'(w)$ which is small for
$w$ large. To do this, we need to find a domain mapped into itself by
$\Psi_0$ and on which $\Psi_0$ is contractive. Suppose we can find a
$\delta > 0$ so that
\begin{itemize}
\item $\vert \Psi_0'(w) \vert < 1/2$ for $\vert w - w_0 \vert < \delta$
\item $\vert \Psi_0(w_0) - w_0 \vert < \delta/2$
\end{itemize}
Then, for $\vert w - w_0 \vert < \delta$,
\[
\begin{split}
\vert \Psi_0(w) - w_0 \vert &\leq \vert \Psi_0(w) - \Psi_0(w_0)\vert +
\vert \Psi_0(w_0) - w_0 \vert\\
&< (1/2) \cdot \vert w - w_0 \vert + \delta/2 \leq \delta/2 + \delta/2 = \delta,
\end{split}\]
so the disk of radius $\delta$ about $w_0$ will be mapped to itself, and
$\Psi_0$ will have a unique fixed point in this disk.

We implement this strategy as follows: First of all we arrange, by
making $R$ larger if necessary, that $\vert \Phi_A'(w) - 1 \vert <
1/2$ on $S$. We write
\[
\epsilon := \sin(\alpha - \alpha_0),
\]
and we note that, by elementary geometry, $\vert w - w_0 \vert <
\epsilon \cdot w_0$ implies $\vert \Arg(w/w_0)\vert < \alpha -
\alpha_0$. If we further take
$R_0 \geq (1-\epsilon)^{-1} R$, then the disk of radius $\delta := \epsilon
\cdot \vert w_0 \vert$ about $w_0$ is contained in $S$, for any $w_0
\in S_0$. Recall that we have already arranged that $\vert
\Psi_0'\vert < 1/2$ on $S$. Finally, we apply \propref{crude
  asymptotics} to see that, by taking
$R_0$ large enough we can arrange that
\[
\vert \Phi_A(w_0) - w_0 \vert < (1/2)\cdot \epsilon\cdot \vert w_0
\vert\quad\text{for all $w_0 \in S_0$.}
\]
All the element for the above contraction argument are now in place,
and we can conclude that, for every $w_0 \in S_0$, there is a unique $w$
with $\Phi_A(w) = w_0$ in $\{ \vert w - w_0 \vert < \epsilon \vert w_0
\vert \} \subset S$.  This proves the assertion $\Phi_A(S) \supset S_0$.

\end{proof} 

It follows from this proposition that:

\begin{prop}
\label{covershalfplane}
The image under $\phi_A$ of any
ample petal of $f$ contains  a right half-plane.
\end{prop}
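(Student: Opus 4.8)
The plan is to pull the ample petal back through the coordinate change $\kappa(z) = -1/(nz^n)$ to a truncated sector near $\infty$, apply the covering statement \propref{fatou coords cover} there, and then observe that a truncated sector of half-opening larger than $\pi/2$ contains a right half-plane.

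First I would unwind the definition of ``ample''. Let $P$ be an ample attracting petal with attracting direction $\nu$, so $P \supset P_0 := \{ z : |\Arg(z/\nu)| < \alpha/n,\ |z| < r \}$ for some $\alpha > \pi/2$ and some $r > 0$; since this inclusion only weakens when $\alpha$ or $r$ is decreased, I may assume in addition that $\alpha < \pi$ and that $r$ is as small as I please. Writing $z = \nu|z|e^{i\psi}$ with $\psi = \Arg(z/\nu)$ and using $\nu^n = -1$, one computes $\kappa(z) = (n|z|^n)^{-1}e^{-in\psi}$; hence $\kappa$ — taken, as in the construction preceding \propref{fatou cyl}, on the sector between the two repelling directions adjacent to $\nu$, which contains $P_0$ because $\alpha < \pi$ — maps $P_0$ bijectively onto the truncated sector
\[
S := \{\, w : |w| > R,\ -\alpha < \Arg(w) < \alpha \,\}, \qquad R = 1/(nr^n).
\]
By shrinking $r$ I may further assume $R$ is large enough that, by the extension-and-univalence statement recorded just before \propref{fatou coords cover}, $\Phi_A$ is defined, analytic and univalent on $S$, with $\phi_A = \Phi_A \circ \kappa$ on $P_0$.

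Next I would apply \propref{fatou coords cover}: fixing $\alpha_0$ with $\pi/2 < \alpha_0 < \alpha$, it furnishes an $R_0 > 0$ such that $\Phi_A(S)$ contains the truncated sector $\{\, w : |w| > R_0,\ |\Arg(w)| < \alpha_0 \,\}$. Since $\alpha_0 > \pi/2$, this sector contains the half-plane $\{ \Re w > R_0 \}$: if $\Re w > R_0 \ge 0$ then $|w| \ge \Re w > R_0$, and $\Re w > 0$ forces $|\Arg w| < \pi/2 < \alpha_0$. Chaining the inclusions gives $\phi_A(P) \supset \phi_A(P_0) = \Phi_A(\kappa(P_0)) = \Phi_A(S) \supset \{ \Re w > R_0 \}$, the desired right half-plane; the repelling case is identical up to the routine sign reversals. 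Everything here is elementary; the one point that needs care is the bookkeeping of opening angles under $\kappa$ — ``ample'' is phrased with the angle $\alpha/n$ precisely so that the ``$>\pi/2$'' opening, multiplied by $n$ under $\kappa$, becomes a ``$>\pi/2$'' opening in the $w$-plane, which is exactly the threshold at which a truncated sector swallows a half-plane. One also has to keep the branch of $\kappa$ consistent with the identity $\phi_A = \Phi_A\circ\kappa$, but that is handled by the remarks made when $\kappa$ was introduced.
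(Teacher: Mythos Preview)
Your proof is correct and is precisely the argument the paper has in mind: the paper states this proposition as an immediate consequence of \propref{fatou coords cover} without writing out a proof, and what you have done is simply spell out the intended deduction---pull the ample petal through $\kappa$ to a truncated sector of half-opening $\alpha>\pi/2$, invoke \propref{fatou coords cover} with some $\alpha_0\in(\pi/2,\alpha)$, and observe that the resulting image sector contains $\{\Re w>R_0\}$. Your bookkeeping of the angle factor $n$ in $\kappa$ and the need to shrink $r$ so that $R$ is large enough for the univalence hypothesis are exactly the points one has to check, and you have them right.
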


\noindent
A similar assumption holds for ample repelling petals, but with the image under $\phi_R$ covering a left
half-plane.

Let $P$ be an attracting petal. We define a relation on $P$ by $z_1
\sim z_2$ if $z_1$ and $z_2$ are on the same orbit, that is, if either $z_2  =
f^j(z_1)$ or $z_1 = f^j(z_2)$ (with $j \geq 0$.) It is easy to check
that this is an equivalence relation. 

Consider the quotient $P/_\sim$.  The canonical projection $\pi: P \to
P/_\sim$ is locally injective, and it is straightforward to verify
that there is a unique way to give $P/_\sim$ a Riemann surface
structure in such a way as to make $\pi$ analytic and therefore a
local conformal isomorphism.

Let $P_1$ be another attracting petal contained in $P$. Since the orbit of every point $z\in P$ eventually lands in $P_1$,
the inclusion $P_1\hookrightarrow P$ induces a conformal homeomorphism 
$$P_1/\sim\underset{\simeq}{\longrightarrow} P/_\sim.$$
Now if $P_2$ is any attracting petal with the same attracting direction as
$P$, the intersection $P\cap P_2$
is also a petal. Hence
$$P_2/_\sim\simeq P/_\sim\simeq (P\cap P_2)/_\sim.$$
Thus, the quotient $P_\sim$ does not depend on the choice of the petal $P$ but only on the
choice of the attracting direction $\nu$ corresponding to $P$. We will write 
$$P/_\sim\equiv \cC_A^\nu.$$
We will omit $\nu$ from the notation when the choice of the attracting direction is clear from the context
(for instance, when there is only one attracting direction).

Let $\phi_A$ be an attracting Fatou coordinate defined on some petal
$P$. It is easy to verify, using the injectivity of $\phi_A$ on $P$,
that two points $z_1$ and $z_2$ are equivalent if and only if
$\phi_A(z_1) - \phi_A(z_2) \in \ZZ$. Hence, $\phi_A$ defines by
passage to quotients an injective mapping $\tl\phi_A$ from $\cC_A^\nu$
to $\CC/\ZZ$. If we take $P$ to be an ample petal, then it follows
from \propref{covershalfplane} that $\tl\phi_A$ takes on all values in
$\CC/\ZZ$. Thus:
\begin{prop}
\label{conformal isomorphism of cylinders}
The map $\tl\phi_A$ is a conformal isomorphism from the Riemann surface
$\cC_A^\nu$ to the $\CC/\ZZ$.
\end{prop}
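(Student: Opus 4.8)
The plan is to assemble this proposition from three facts already established: that $\tl\phi_A$ is well-defined and injective from $\cC_A^\nu$ to $\CC/\ZZ$, that it is analytic, and that it is surjective onto $\CC/\ZZ$ when $P$ is chosen ample. The first two are stated in the paragraph immediately preceding the proposition: the functional equation $\phi_A(f(z))=\phi_A(z)+1$ together with the injectivity of $\phi_A$ on $P$ shows $z_1\sim z_2$ iff $\phi_A(z_1)-\phi_A(z_2)\in\ZZ$, so $\phi_A$ descends to a well-defined injective map $\tl\phi_A$ on the quotient. Analyticity is inherited from $\phi_A$ via the local conformal isomorphism $\pi\colon P\to\cC_A^\nu$, since locally $\tl\phi_A\circ\pi=q\circ\phi_A$ where $q\colon\CC\to\CC/\ZZ$ is the (locally invertible) quotient projection.

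Next I would invoke the independence of $\cC_A^\nu$ from the choice of petal, which was established above: $P/_\sim$ depends only on the attracting direction $\nu$, not on the particular petal $P$. This lets me replace $P$ by an ample attracting petal $P'$ without changing the target Riemann surface. On $P'$, \propref{covershalfplane} gives that $\phi_A(P')$ contains a right half-plane $\{\Re w>M\}$; since this half-plane surjects onto $\CC/\ZZ$ under $q$, the composite $\tl\phi_A\colon\cC_A^\nu\to\CC/\ZZ$ is onto. At this point $\tl\phi_A$ is an analytic bijection between Riemann surfaces, hence a conformal isomorphism (an injective analytic map is automatically a local homeomorphism with analytic inverse by the open mapping theorem, and a bijective one is a global homeomorphism).

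I expect the only genuine subtlety — and it is minor — to be making sure the passage to the quotient is carried out with the ample petal while the statement is phrased for an arbitrary petal $P$; this is handled entirely by the earlier remark that all these quotients are canonically identified via the inclusions of petals. A secondary point worth a sentence is that $\CC/\ZZ$ is indeed a Riemann surface and $q$ is a local conformal isomorphism, so that "analytic bijection" really does upgrade to "conformal isomorphism." No estimates or constructions beyond what is already in the excerpt are needed; the proof is essentially a bookkeeping argument combining \propref{covershalfplane}, the petal-independence of $\cC_A^\nu$, and the characterization of $\sim$ in terms of $\phi_A$.

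Finally, the repelling case is identical after the substitution $w\mapsto -w$ (equivalently, replacing $f$ by a local inverse $g$), using the parenthetical remarks that $\phi_R$ satisfies the same functional equation and that the image of an ample repelling petal under $\phi_R$ contains a left half-plane; I would simply note this rather than repeat the argument.
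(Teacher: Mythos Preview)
Your proposal is correct and follows essentially the same approach as the paper: the paper's argument, given in the paragraph immediately preceding the proposition, uses the characterization of $\sim$ via $\phi_A$ to get injectivity, then chooses $P$ ample and invokes \propref{covershalfplane} for surjectivity. Your write-up is in fact more careful than the paper's, spelling out the analyticity step and the petal-independence argument explicitly.
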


In light of the preceding proposition, we will call $\cC_A^\nu$ the
{\it attracting cylinder} corresponding to the direction $\nu$.

%%%%%%%%%%%%%%%%%%%%%%%%%%%%%%%%%%%%%%%%%%%%%%
\ignore{
%%%%%%%%%%%%%%%%%%%%%%%%%%%%%%%%%%%%%%%%%%%%%%
The quotient $P/\sim$ is called
the {\em attracting cylinder} (for reasons which will become apparent
shortly); we will denote it by $\cC_A$.

Although the construction of $\cC_A$ starts from the choice of a petal
$P$, the dependence on $P$ is inconsequential: Consider first two
petals $P_1$, $P_2$ with $P_2 \subset P_1$. Then any orbit in $P_1$ is
eventually in $P_2$, so any $z_1$ in $P_1$ is equivalent (in the sense
of $P_1$) to an element of $P_2$. On the other hand, two elements of
$P_2$ are equivalent in the sense of $P_2$ if and only if they are
equivalent in the sense of $P_1$. In short: The inclusion mapping $P_2
\hookrightarrow P_1$ induces a bijection $P_2/\sim_2 \to
P_1/\sim_1$. It is also easy to see that this bijection is an
isomorphism of Riemann surface structures. In short: The attracting
cylinder built on $P_1$ is canonically isomorphic to the one built on
$P_2$. For $P_1$, $P_2$ general attracting petals, $P_1 \cap P_2$ is
again an attracting petal, so
\[
(P_1/\sim_1) \approx (P_1 \cap P_2/ \sim_{1\cap 2}) \approx (P_2/\sim_2).
\]
Loosely: The attracting cylinder is independent of the choice of
petal. In fact, in the same spirit: The attracting cylinder depends
only on the {\em germ} of $f$ at $0$. (On the other hand: It does, I
believe, depend on the choice of an {\em attracting direction.})

Let $\phi_A$ be an attracting Fatou coordinate, which we take to be
defined on $P$. If $z_1 \sim z_2$, say $z_2 = f^j(z_1)$, then
$\phi_A(z_2) = \phi_A(z_1) + j$. Conversely, if $\phi_A(z_2) =
\phi_A(z_1) + j$, then $\phi_A(z_2) = \phi_A(f^j(z_1)$, so, by
univalence of $\phi_A$ on petals, $z_2 = f^j(z_1)$. In short: $z_1
\sim z_2$ if and only if $\phi_A(z_1) - \phi_A(z_2) \in \ZZ$. Thus,
$\phi_A$ defines by passage to quotients an injective mapping
$\tl\phi_A: \cC_A \to \CC/\ZZ$. If we form $\cC_A$ as a quotient of an
ample petal, \propref{fatou cyl} says that $\phi_A$ takes on all
complex values mod $\ZZ$, i.e., that $\tl\phi_A: \cC_A \to \CC/\ZZ$ is
surjective and hence bijective. It is easy to check that $\tl\phi_A$
is also analytic, so

\begin{prop}\label{conformal isomorphism of cylinders}
  The map $\tlphi_A$ induces a conformal isomorphism $\tl\phi_A$ from
  the space of orbits $\cC_A$ defined above to the cylinder
  $\CC/\ZZ$.
\end{prop}

Alternatively, we can skip the direct verification that there is a
Riemann surface structure on $\cC_A$ making the canonical projection
analytic and simply transport the Riemann surface structure on
$\CC/\ZZ$ to the quotient set $\cC_A$ by the bijection $\tl\phi_A$.

%%%%%%%%%%%%%%%%%%%%%%%%%%%%%%%%%%%%%%%%%%%%%%%%%
}
%%%%%%%%%%%%%%%%%%%%%%%%%%%%%%%%%%%%%%%%%%%%%%%%
The {\em repelling cylinder} $\cC^\nu_R$ for $f$ is the attracting
cylinder for a local inverse of $f$, fixing the origin.

If $P$ is an attracting petal, we will call the half-open domain 
$$C_A=P\setminus fP$$
a {\em fundamental attracting crescent}, the name reflecting its
shape. A fundamental repelling crescent means a fundamental attracting
crescent for a local inverse of $f$.

\begin{prop}\label{th:fundamental-crescent}
For any attracting petal $P$ in the direction $\nu$, the fundamental crescent $C_A$ projects bijectively onto the
attracting cylinder $\cC^\nu_A$. More concretely:
Every point of $P$ lies on a forward orbit starting in $C_A$, and
distinct points of $C_A$ have disjoint forward orbits.
\end{prop}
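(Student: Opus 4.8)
The plan is to use the attracting Fatou coordinate $\phi_A$ on $P$ as a linearizing device, exactly as in the discussion preceding Proposition~\ref{conformal isomorphism of cylinders}, and transport the statement to the model crescent $\{0 \le \Re w < 1\}$ in $\CC$. First I would recall that $\phi_A$ is univalent on $P$ and satisfies $\phi_A(f(z)) = \phi_A(z)+1$; hence $\phi_A$ conjugates the dynamics of $f$ on $P$ to the unit translation $T$ on the domain $\Omega := \phi_A(P) \subset \CC$. Since $P$ is a petal, $f(\overline P \setminus\{0\}) \subset P$, so $\Omega + 1 \subset \Omega$, i.e.\ $\Omega$ is invariant under $T$ and $\phi_A(fP) = \Omega + 1$. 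Thus $\phi_A(C_A) = \phi_A(P \setminus fP) = \Omega \setminus (\Omega + 1)$, a ``fundamental domain'' for the action of $T$ restricted to $\Omega$.

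The two assertions then become purely combinatorial facts about $T$ acting on the forward-invariant set $\Omega$. For the first (every point of $P$ lies on a forward orbit starting in $C_A$): given $z \in P$, set $w = \phi_A(z) \in \Omega$; I claim there is a smallest integer $k \ge 0$ with $w - k \in \Omega \setminus (\Omega+1)$. Indeed $w \in \Omega$, and since all $T$-orbits in $P$ converge to $0$ from the direction $\nu$ — equivalently, by Proposition~\ref{crude asymptotics} / Proposition~\ref{covershalfplane}, $\phi_A$ maps an ample subpetal onto a right half-plane, so $\Re$ of points of $\Omega$ going backward along an orbit eventually drops below any bound — the set of $k \ge 0$ with $w - k \in \Omega$ is a finite initial segment $\{0,1,\dots,m\}$ (it is an initial segment because $\Omega - 1 \supset$ nothing forces $w-k \in \Omega \Rightarrow w-(k-1) = (w-k)+1 \in \Omega+1 \subset \Omega$; wait — rather: if $w - k \in \Omega$ and $k \ge 1$ then $w-(k-1) \in \Omega+1 \subset \Omega$, so indeed it is an initial segment, bounded since $\Re(w-k) \to -\infty$). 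Taking $k = m$, we get $w - m \in \Omega$ but $w - m \notin \Omega + 1$ (else $w-(m+1) \in \Omega$), i.e.\ $w - m \in \phi_A(C_A)$; pulling back, $z = f^m(z_0)$ with $z_0 \in C_A$, since $\phi_A$ is a bijection from $P$ onto $\Omega$ intertwining $f$ with $T$ and $z_0 := \phi_A^{-1}(w-m) \in C_A$. For the second assertion (distinct points of $C_A$ have disjoint forward orbits): if $z_1, z_2 \in C_A$ with $f^{j_1}(z_1) = f^{j_2}(z_2)$, apply $\phi_A$ to get $\phi_A(z_1) + j_1 = \phi_A(z_2) + j_2$; since $\phi_A(z_1), \phi_A(z_2)$ both lie in the strip $\Omega \setminus (\Omega+1) \subset \{0 \le \Re w < 1\}$ (after normalizing $\phi_A$ so that this strip is genuinely $\{0 \le \Re < 1\}$ — or more robustly, since they lie in a set on which $T^k$ has no fixed identifications for $k \ne 0$), we get $j_1 = j_2$ and hence $\phi_A(z_1) = \phi_A(z_2)$, so $z_1 = z_2$ by univalence.

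The one technical point that needs care — and which I expect to be the main obstacle — is justifying that $\phi_A(C_A) = \Omega \setminus (\Omega+1)$ really does consist of \emph{exactly one} representative of each $T$-orbit in $\Omega$, i.e.\ that the backward-orbit exit time $m$ above is well-defined (finite) for every $z \in P$. This is where one genuinely uses that $P$ is a petal and not merely an $f$-invariant set: property (4) of Definition~\ref{defn-petal} gives uniform convergence $f^k \to 0$ on $P$, but I actually need the reverse — that going backward along the orbit one leaves every petal, equivalently that $\Re \phi_A \to -\infty$ along backward orbits. This follows because the image $\Omega = \phi_A(P)$ is contained in (a translate of) the image of the ample petal, which by Proposition~\ref{covershalfplane} contains a right half-plane $\{\Re w > c\}$; combined with $T$-invariance, $\Omega$ is ``bounded on the left along orbits'' in the sense that for fixed $w \in \Omega$ the reals $\Re(w) - k$ for $w - k \in \Omega$ cannot stay $> c$, forcing finiteness. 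I would phrase this cleanly by first reducing, via the canonical isomorphisms $P/{\sim} \cong (P \cap P_{\mathrm{ample}})/{\sim}$ established just above Proposition~\ref{conformal isomorphism of cylinders}, to the case where $P$ itself is ample, so that $\Omega$ literally contains a right half-plane and the combinatorics is transparent. The remainder is then bookkeeping with the functional equation, which I would not spell out in detail.
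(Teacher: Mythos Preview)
Your approach via the Fatou coordinate is viable in principle but more elaborate than necessary, and the step you flag as ``the main obstacle'' is indeed not resolved by your argument. The containment $\{\Re w > c\} \subset \Omega$ coming from Proposition~\ref{covershalfplane} is a \emph{lower} bound on $\Omega$, not an upper bound: nothing prevents $\Omega$ from extending far to the left, so the trivial fact that $\Re(w)-k$ eventually drops below $c$ does not force $w-k$ to leave $\Omega$. The proposed reduction to an ample petal via the canonical isomorphism of cylinders does not help either, since the statement concerns the specific crescent $C_A = P \setminus fP$ of the given petal $P$, not merely the quotient cylinder.

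The paper's proof bypasses the Fatou coordinate entirely and uses condition~(4) of Definition~\ref{defn-petal} directly: uniform convergence $f^n \to 0$ on $P$ forces every backward orbit in $P$ to terminate. Indeed, if $z_{-k} \in P$ for all $k \ge 0$ with $f(z_{-(k+1)}) = z_{-k}$, then $z_0 = f^k(z_{-k})$ for every $k$, and uniformity gives $|z_0| < \varepsilon$ for all $\varepsilon$, so $z_0 = 0 \notin P$. The first point of the orbit inside $P$ is then in $C_A$. For the second assertion the paper uses only injectivity of $f$ on $P$: if the forward orbits of $z_1, z_2 \in C_A$ meet, then after relabelling $z_2 = f^n(z_1)$ for some $n \ge 0$; if $n>0$ this puts $z_2 \in fP$, contradicting $z_2 \in C_A$. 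Your ``more robust'' parenthetical for the second assertion is exactly this argument transported through $\phi_A$; but for the first assertion you should invoke (4) directly rather than the half-plane containment, which points the wrong way.
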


\begin{proof}
From the requirement that $f^n$ converges uniformly to 0 on $P$ --
condition (4) in of our definition of petal --
it follows that no point of $P$ admits an arbitrarily long backward orbit in
$P$. Thus, every point of $P$ lies on the forward orbit starting 
outside $P$; the first point on this orbit inside $P$ is in $C_A$. Let
$z_1$, $z_2$ be points of $C_A$ whose forward orbits intersect. From
the injectivity of $f$ on $P$, we have -- possibly after interchanging
$z_1$ and $z_2$ -- $f^n(z_1) = z_2$ for same $n \geq 0$. If $n > 0$,
then $z_2 = f^n(z_1) \in fP$, contradicting $z_2 \in C_A$, so the only
possibility left is $n=0$, i.e., $z_1 = z_2$.
\end{proof}

We note the following standard fact:
\begin{prop}
\label{liouville}
Let $h:\CC^*\to\CC^*$ be an injective holomorphic map. Then either $h(z)=cz$ or $h(z)=c/z$ for a non-zero constant $c$.
\end{prop}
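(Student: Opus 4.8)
The plan is to show that $h$ extends to a Möbius transformation of $\hat\CC$ which interchanges or fixes the two removed points $0$ and $\infty$; the displayed forms $cz$ and $c/z$ are then exactly the two possibilities.

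First I would rule out essential singularities of $h$ at $0$ and at $\infty$. Suppose, say, $0$ were an essential singularity. By the Casorati--Weierstrass theorem, $h$ maps every punctured disc $\{0<|z|<r\}$ onto a dense subset of $\CC$. On the other hand, fix a point $a$ with $|a|=2r$ and a small disc $D$ about $a$ with $D\cap\{|z|<r\}=\emptyset$; since $h$ is injective it is non-constant, so by the open mapping theorem $h(D)$ is a non-empty open set. Density then forces some value taken by $h$ on $D$ to be taken again on $\{0<|z|<r\}$, contradicting injectivity. Applying the same argument to $z\mapsto h(1/z)$ excludes an essential singularity at $\infty$. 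Hence both singularities are removable or poles, and $h$ extends to a holomorphic self-map $\hat h\colon\hat\CC\to\hat\CC$, i.e. a rational function, which is non-constant because $h$ is injective.

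Next I would bound the degree. A non-constant rational map $\hat h$ of degree $d$ has only finitely many critical values, so for all but finitely many $w\in\hat\CC$ the fibre $\hat h^{-1}(w)$ consists of $d$ distinct points. Choosing such a $w$ outside the finite set $\hat h(\{0,\infty\})$, this fibre lies entirely in $\CC^*$, on which $h$ is injective; hence $d=1$ and $\hat h$ is a Möbius transformation, in particular a bijection of $\hat\CC$. Finally, since $h=\hat h|_{\CC^*}$ never takes the values $0$ or $\infty$, any preimage under $\hat h$ of $0$ or of $\infty$ must lie in $\hat\CC\setminus\CC^*=\{0,\infty\}$; as $\hat h$ is a bijection this means $\hat h$ maps $\{0,\infty\}$ bijectively onto itself. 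If $\hat h$ fixes both points it has the form $z\mapsto cz$; if it interchanges them it has the form $z\mapsto c/z$; in either case $c\neq 0$ since $\hat h$ is invertible, which is the assertion.

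The only step that requires genuine care is the exclusion of essential singularities at $0$ and $\infty$ (where injectivity is used together with Casorati--Weierstrass and the open mapping theorem); once $h$ is known to extend to a rational map, the degree count and the Möbius bookkeeping are routine.
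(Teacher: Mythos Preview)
Your proof is correct and follows essentially the same route as the paper: rule out essential singularities via injectivity (the paper is terser here, simply asserting ``by injectivity, neither singularity can be essential''), extend to a rational map, use injectivity on $\CC^*$ to force degree one, and then read off the two Möbius forms from how $\{0,\infty\}$ is permuted. The only cosmetic difference is that in the final step the paper invokes Liouville's theorem on $h(z)/z$ rather than the classification of Möbius maps fixing or swapping $0$ and $\infty$; both are immediate.
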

\begin{proof}
  Such an $h$ is in particular an analytic function with isolated
  singularities at $0$ and $\infty$ (and nowhere else.) By
  injectivity, neither singularity can be essential, so $h$ extends
  to a meromorphic mapping of the Riemann sphere to itself, i.e. to a rational
  function. Injectivity on the sphere with two points deleted implies
  that this rational function has degree one, i.e., is a M\"obius
  transformation. In particular, the extended function maps the sphere
  bijectively to itself, so either $h(0)=0$, in which case
  $h(\infty) = \infty$, or $h(0) = \infty$. In the
  first case, $h(z)/z$ is bounded at $\infty$ and has a removable
  singularity at $0$, so, by Liouville's Theorem, $h(z)/z = c$. In the case
  $h(0)=\infty$, applying the above to $z \mapsto h(1/z)$ gives $h(z) = c/z$.
  
\end{proof}

A corollary of the above result is a uniqueness statement for Fatou coordinates:

\begin{prop}
\label{uniqueness-fatou}
Let $P$ be an attracting petal of $f$, and let $\phi_1$ and $\phi_2$
be attracting Fatou coordinates on $P$, i.e., univalent analytic
functions satisfying $\phi_i(f(z)) = \phi_i(z) + 1$. Then $\phi_2(z) -
\phi_1(z)$ is constant on $P$.
\end{prop}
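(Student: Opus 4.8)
The plan is to descend to the attracting cylinder and invoke the classification of its conformal automorphisms, which is exactly \propref{liouville} transported through the isomorphism $\CC/\ZZ\cong\CC^*$. Let $\nu$ be the attracting direction of $P$. I would first reduce to the case that $P$ is \emph{ample}. Indeed, by \propref{th:phi-extended} each $\phi_i$ extends uniquely, via its functional equation, to an analytic function $\hat\phi_i$ on $B^f_\nu$, and $\hat\phi_i$ restricts to an attracting Fatou coordinate on any ample petal $P_*$ of direction $\nu$ (such $P_*$ exist, e.g. $\kappa^{-1}(\Delta(\alpha,R))$ for $\alpha>\pi/2$). If the difference $\hat\phi_2-\hat\phi_1$ is constant on $P_*$, then since $P\cap P_*$ is again a petal, hence a nonempty open set, and $\hat\phi_i=\phi_i$ there, the identity theorem on the connected domain $P$ forces $\phi_2-\phi_1$ to be that same constant on all of $P$. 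So from now on assume $P$ is ample.

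Because $\phi_i$ is univalent on $P$, two points of $P$ lie on a common orbit precisely when their $\phi_i$-values differ by an integer; hence $\phi_i$ descends to an injective analytic map $\tl\phi_i\colon\cC_A^\nu\to\CC/\ZZ$, and by \propref{conformal isomorphism of cylinders} (this is where ampleness enters) $\tl\phi_i$ is a conformal isomorphism. Therefore $H:=\tl\phi_2\circ\tl\phi_1^{-1}$ is a conformal automorphism of $\CC/\ZZ$. Conjugating $H$ by $w\mapsto e^{2\pi i w}$ turns it into an injective holomorphic self-map of $\CC^*$, which by \propref{liouville} has the form $z\mapsto cz$ or $z\mapsto c/z$; pulling back, $H(w)=w+b$ or $H(w)=-w+b$ on $\CC/\ZZ$ for some $b$.

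The one genuine point — and the step I expect to be the real content rather than bookkeeping — is to exclude the reflection, and the functional equation does it cleanly. Suppose $H(w)=-w+b$. Lifting $\tl\phi_2=H\circ\tl\phi_1$ back to $P$ gives $\phi_2(z)\equiv-\phi_1(z)+\beta\pmod{\ZZ}$ for every $z\in P$, where $\beta$ is a fixed lift of $b$; thus $z\mapsto\phi_2(z)+\phi_1(z)-\beta$ is a continuous $\ZZ$-valued function on the connected set $P$, hence a constant integer, so $\phi_2=-\phi_1+\beta'$ on $P$ for an honest constant $\beta'$. Evaluating at $f(z)$ — which lies in $P$ by condition (3) in the definition of a petal — and using $\phi_i\circ f=\phi_i+1$ yields $\phi_2(z)+1=-(\phi_1(z)+1)+\beta'=\phi_2(z)-1$, which is absurd. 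Hence $H$ is the translation $w\mapsto w+b$, and the same lifting argument (now $z\mapsto\phi_2(z)-\phi_1(z)-b$ is continuous and $\ZZ$-valued on connected $P$) shows $\phi_2-\phi_1$ is constant on $P$; here the two functional equations are automatically consistent, so no contradiction arises and the proof is complete. Everything except the exclusion of the reflection is routine manipulation of the $\bmod\,\ZZ$ ambiguity together with connectedness of the Jordan domain $P$.
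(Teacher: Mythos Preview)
Your proof is correct and follows essentially the same route as the paper: descend to the attracting cylinder, use \propref{liouville} (via $\ixp$) to classify conformal automorphisms of $\CC/\ZZ$ as $w\mapsto \pm w+b$, then rule out the reflection using the functional equation and resolve the $\ZZ$-ambiguity by connectedness. The only cosmetic difference is that you first explicitly reduce to an ample petal before invoking \propref{conformal isomorphism of cylinders}, whereas the paper applies that proposition directly, relying tacitly on the petal-independence of $\cC_A^\nu$ and the extension of $\phi_i$ to $B^f_\nu$; your version is slightly more verbose but makes the logical dependence clearer.
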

\begin{proof}
By \propref{conformal isomorphism of cylinders}, $\phi_1$ and $\phi_2$
both induce conformal isomorphisms from $\cC_A$ to $\CC/\ZZ$. It will be
more convenient to work with with the punctured plane $\CC^*$ instead of
$\CC/\ZZ$. The function
\[
\ixp(z) := \exp(2 \pi i z)
\]
induces -- again by passage to quotients -- a conformal isomorphism
from $\CC/\ZZ$ to $C^*$, so $\ixp \circ \phi_1$ and $\ixp \circ
\phi_2$ both induce conformal isomorphism $\cC_A \to \CC^*$. Hence,
the prescription
\[
h: \exp(2 \pi i \phi_1(z))) \mapsto  \exp(2 \pi i \phi_2(z))\quad\text{for all
  $z \in P$}
\]
defines a conformal isomorphism $h: \CC^* \to \CC^*$.
By \propref{liouville}, there are two possibilities:
\begin{enumerate}
\item there is a non-zero constant -- which we write as $\exp(2 \pi i
  c)$ -- so that
\[
\exp(2 \pi i c )\cdot \exp(2 \pi i \phi_1(z) = \exp(2 \pi i
\phi_2(z))\quad\text{for all $z \in P$}
\]
or
\item there is a constant $c$ so that
\[
\exp(2 \pi i c )\cdot \exp(- 2 \pi i \phi_1(z)) = \exp(2 \pi i
\phi_2(z))\quad\text{for all $z \in P$}
\]
\end{enumerate}
In the first case,
\[
\phi_2(z) - \phi_1(z) - c \in \ZZ\quad\text{for all $z \in P$.}
\]
But the expression on the left is continuous, and an integer-valued
continuous function on a connected set must be constant, so
\[
\phi_2(z) = \phi_1(z) + c + n_0\quad\text{for all $z \in P$, for some
  $n_0 \in \ZZ$}
\]
which is what we wanted to prove.

In the second case, similarly,
\[
\phi_2(z) = - \phi_1(z) + c + n_0\quad\text{for all $z \in P$,}
\]
and this contradicts
\[
\phi_1(f(z)) - \phi_1(z) = 1 = \phi_2(f(z)) - \phi_2(z)\quad\text{for
  $z \in P$,}
\]
so this case is excluded.
\end{proof}

The critical values of Fatou coordinates are simply related to those
of $f$:

\begin{prop}\label{th:fatou-crit}
Let $\phi_A$ be an attracting Fatou coordinate defined on
$B^f_\nu$. Then critical values of $\phi_A$  all have the form
\[
\phi_A(v) - n\quad\text{ with $v$ a critical value of $f$ in $B^f_\nu$
  and $n
  \geq 1$}
\]
If $f: B^f_\nu \to B^f_\nu$ is surjective, all numbers of this form
are critical values.
\end{prop}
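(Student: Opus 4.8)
The plan is to analyze the critical points of $\phi_A$ directly, using the functional equation $\phi_A\circ f = \phi_A + 1$ and the fact (from Proposition~\ref{th:phi-extended}) that $\phi_A$ is defined and holomorphic on all of $B^f_\nu$, univalent only on petals. First I would establish that $\phi_A$ is a local diffeomorphism near any point of $B^f_\nu$ that is not a preimage-under-iteration of a critical point of $f$. Indeed, differentiating the functional equation gives
\[
\phi_A'(f(z))\cdot f'(z) = \phi_A'(z),
\]
so $\phi_A'(z) = 0$ if and only if either $f'(z)=0$ or $\phi_A'(f(z))=0$. Iterating, $z$ is a critical point of $\phi_A$ precisely when some forward iterate $f^k(z)$ ($k\ge 0$) is a critical point of $f$; and since $\phi_A$ is univalent on any petal, and every orbit in $B^f_\nu$ eventually enters a fixed ample petal $P$ on which $\phi_A$ is injective, we must have $k\ge 1$ (a critical point of $f$ lying in a petal would violate univalence there — so critical points of $f$ in $B^f_\nu$ are never in the petal, and a critical point $z$ of $\phi_A$ has $f^k(z)$ critical for $f$ with $k\ge 1$). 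This gives the description of critical points; the critical values follow by applying $\phi_A$.

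Concretely, for the first assertion: let $c$ be a critical point of $\phi_A$, and let $k\ge 1$ be minimal with $w:=f^k(c)$ a critical point of $f$. Then $v:=f(w)=f^{k+1}(c)$ is a critical value of $f$, and it lies in $B^f_\nu$ since $B^f_\nu$ is forward-invariant (it is even a neighborhood of $0$ intersected with the basin, and $f$ maps it into itself by definition of the basin). Using the functional equation $k+1$ times,
\[
\phi_A(c) = \phi_A(f^{k+1}(c)) - (k+1) = \phi_A(v) - (k+1),
\]
so the critical value $\phi_A(c)$ has the claimed form with $n=k+1\ge 1$. I would remark that one should also check the converse inside this direction is not needed here — only that \emph{every} critical value of $\phi_A$ arises this way, which the above displays.

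For the second assertion, suppose $f:B^f_\nu\to B^f_\nu$ is surjective, and let $v\in B^f_\nu$ be a critical value of $f$, say $v=f(w)$ with $f'(w)=0$, and fix $n\ge 1$. Using surjectivity of $f$ on $B^f_\nu$, I would choose inductively preimages to produce a point $z_0\in B^f_\nu$ with $f^{n-1}(z_0)=w$, hence $f^n(z_0)=v$. Then $z_0$ is a critical point of $\phi_A$ because the chain-rule computation above shows $\phi_A'(z_0)$ picks up the factor $f'(f^{n-1}(z_0)) = f'(w) = 0$, and $\phi_A(z_0) = \phi_A(f^n(z_0)) - n = \phi_A(v)-n$, so $\phi_A(v)-n$ is a critical value of $\phi_A$. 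The one subtlety to be careful about is that when we pull back we must stay inside $B^f_\nu$ and not accidentally land on a point where an \emph{earlier} iterate is already critical for $f$ (which would not hurt — $z_0$ is still a critical point of $\phi_A$ — but one should phrase the argument so it is clearly valid regardless). The main obstacle, such as it is, is bookkeeping: making sure the distinction between critical points of $f$ lying in a petal (impossible, by univalence there) versus in the larger basin $B^f_\nu$ is handled cleanly, so that the index $n$ is genuinely $\ge 1$; everything else is a direct consequence of the chain rule applied to $\phi_A\circ f=\phi_A+1$ together with the extension property of Proposition~\ref{th:phi-extended}.
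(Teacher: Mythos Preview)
Your approach is essentially the same as the paper's: iterate the functional equation, differentiate, apply the chain rule, and use that $\phi_A'$ is nonvanishing on a petal (since $\phi_A$ is univalent there) to terminate the recursion. The paper writes this more compactly as
\[
\phi_A'(z) = \phi_A'(f^n(z)) \prod_{j=0}^{n-1} f'(f^j(z)),
\]
takes $n$ large enough that $f^n(z)$ is in a petal, and reads off that $z$ is critical for $\phi_A$ iff some $f^j(z)$ with $j\ge 0$ is critical for $f$.

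There is one genuine slip in your bookkeeping. You claim that a critical point $z$ of $\phi_A$ must have $f^k(z)$ critical for $f$ with $k\ge 1$, and then in the next paragraph you take ``$k\ge 1$ minimal''. This is wrong: $z$ itself can be a critical point of $f$ (i.e.\ $k=0$), and there is no reason any \emph{later} iterate need be critical for $f$, so your minimal $k\ge 1$ may fail to exist. The parenthetical justification (``critical points of $f$ are never in the petal'') is correct but does not force $k\ge 1$; it only explains why the recursion terminates. The fix is trivial: allow $k\ge 0$, and then $n=k+1\ge 1$ exactly as you wrote. Your treatment of the converse under surjectivity is fine and slightly more explicit than the paper's.
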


\begin{proof}
Iterating the functional equation for $\phi_A$, differentiating, and
applying the chain rule gives
\[
\phi_A'(z) = \frac{d}{dz}\phi_A(f^n(z)) = \phi_A'(f^n(z))
\prod_{j=0}^{n-1} f'(f^j(z)).
\]
For given $z$ and sufficiently large $n$, $f^n(z)$ is in an attracting
petal, which implies $\phi_A'(f^n(z)) \neq
0$. Thus: $z$ is a critical point of $\phi_A$ if and only if there
exists $j\geq 0$ such that $f^j(z)$ is a critical point of $f$. For
such a $j$, $f^{j+1}(z) = v$ is a critical value of $f$. Since
\[
\phi_A(z) = \phi_A(f^{j+1}(z)) - (j+1) = \phi_A(v) - (j+1),
\]
the assertion follows.
\end{proof}

%%For technical convenience, let us introduce another notion of a petal for a parabolic germ. 
%%\begin{defn}
%%\label{ample petal}
%%We say that $P$ is an {\it ample} attracting petal of $f(z)=z+az^{n+1}+O(z^{n+2})$ if there exists an angle
%%$\alpha\in(\pi/2,\pi)$ and $R>0$ such that
%%$$\tlphi_A(P_A)\supset \{\Arg(z-R)\in(-\alpha,\alpha)\}.$$
%%\end{defn}
%%\noindent
%%An ample repelling petal is similarly defined. 
%%We note: 
%%\begin{prop}
%%\label{ample-flower}
%%In \thmref{thm-flower} the petals $P_i^a$, $P_j^r$ can be selected to be ample.
%%\end{prop}

For completeness, let us note how the situation changes if $f'(0)$ is
a $q$-th root of unity $e^{2\pi i p/q}$ with $q\neq 1$.  A fixed petal
for the iterate $f^q$ corresponds to a cycle of $q$ petals for $f$.
It thus follows that $q$ divides the number $n$ of
attracting/repelling directions of $0$ as a fixed point of $f^q$.

\subsection{Asymptotic expansion of a Fatou coordinate at infinity}

% For a simple parabolic point, a Leau-Fatou flower has one attracting petal $P_A$ and one repelling petal $P_R$. Corresponding Fatou coordinates
% will be called $\tlphi_A$ and $\tlphi_R$. The conformal covering
% $$P_A\to \cC_A\simeq \CC/\ZZ$$
% will be denoted $\tl\phi_A$, and similarly for $\tl\phi_R$.
% For technical convenience, we note:
% \begin{prop}
% \label{ample-simple}
% For $f(z)$ of the form (\ref{parabolic-normal-form-3}) a petal $P_A$ is ample (\ref{ample petal}) if and only if there exists $r>0$ and an angle
% $\beta\in(0,\pi/2)$ such that
% $$P_A\supset\{|z|<r\text{ and }|\Arg(z)|>\beta\}.$$
% \end{prop}

We will now specialize to the case $q=1$ and $n=1$. By rescaling
we can then bring the coefficient of $z^2$ to $1$,
and the normal form (\ref{parabolic-normal-form-2}) becomes
\begin{equation}
\label{parabolic-normal-form-3}
f(z)=z+z^2+\alpha z^3+O(z^4).
\end{equation}
We will say in this case that $0$ is a {\em simple} parabolic fixed
point of $f$. 
There is one attracting direction  ($-1$) and one repelling
direction ($+1$). 

If the domain of definition $\Dom(f)\ni 0$ is fixed, we let $B^f\subset\Dom(f)$, as before, to denote the basin of the
parabolic point at the origin. The {\it immediate basin} of $0$, which we denote $B_0^f$, is the connected
component of $B^f$ which contains an attracting petal.

The change of variables $\kappa$ moving the parabolic point to
$\infty$ becomes simply
\[
\kappa(z) = - \frac{1}{z},\quad \kappa^{-1}(w) = - \frac{1}{w},
\]
%%and there are no cuts or fractional powers. 
and we have
\[
F(w) = -\left(f(-\frac{1}{w})\right)^{-1} = w + 1 + \frac{A}{w} + O(w^{-2}),
\quad\text{with $A = 1 - \alpha$,}
\]
and $F(w) - w$ is analytic at $\infty$.

We showed earlier (\propref{crude asymptotics}) that any attracting
Fatou coordinate at infinity $\Phi_A$ for such an $f$ satisfies
\[
\Phi_A(w) = w + o(w)\quad\text{as $w \to \infty$ appropriately}
\]
We will prove shortly a much more precise result -- an asymptotic
expansion giving $\Phi_A$ up to corrections of order $w^{-n}$ for any
$n$. Before we do this, we investigate formal solutions to the
functional equation
\[
\Phi(F(w)) = \Phi(w) + 1
\]
satisfied by both attracting and repelling Fatou coordinates.

\begin{prop}
\label{formal-fatou}
There is a unique sequence $b_1, b_2, \ldots$ of complex coefficients
such that
\begin{equation}
\label{formal-fatou-coord-1}
\Phi_{\text{ps}} (w) = w - A \log(w) + \sum_{j=1}^\infty b_j
w^{-j}
\end{equation}
satisfies
\begin{equation}
\label{formal-fatou-coord-2}
\Phi_{\text{ps}} \circ F(w) = \Phi_{\text{ps}}(w) +1\quad\text{in the sense
  of formal power series.}
\end{equation}
Furthermore, if we set
\begin{equation}
\label{def-phi-n}
\Phi_n(w) := w - A \log w + \sum_{j+1}^n b_j w^{-j}
\end{equation}
then
\begin{equation}
\label{estimate-phi-n}
\Phi_n(F(w)) - \Phi_n(w) - 1 = O(w^{-(n+2)})
\end{equation}
\end{prop}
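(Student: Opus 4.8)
The plan is to solve the functional equation $\Phi_{\text{ps}}\circ F(w)=\Phi_{\text{ps}}(w)+1$ order-by-order in powers of $w^{-1}$, treating the ansatz $\Phi_{\text{ps}}(w)=w-A\log w+\sum_{j\ge 1}b_j w^{-j}$ as a formal object. First I would substitute the expansion $F(w)=w+1+A w^{-1}+c_2 w^{-2}+\cdots$ (valid since $F(w)-w$ is analytic at $\infty$ with $F(w)-w=1+Aw^{-1}+O(w^{-2})$) into each term of $\Phi_{\text{ps}}$ and re-expand in powers of $w^{-1}$. The linear term contributes $F(w)=w+1+Aw^{-1}+\cdots$; the logarithmic term contributes $-A\log F(w)=-A\log w-A\log\!\bigl(1+\tfrac{1}{w}+\tfrac{A}{w^2}+\cdots\bigr)=-A\log w-A w^{-1}+\cdots$; and the tail $\sum b_j F(w)^{-j}=\sum b_j w^{-j}+(\text{lower order corrections})$. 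Collecting everything, the equation $\Phi_{\text{ps}}(F(w))-\Phi_{\text{ps}}(w)=1$ becomes, after the constant term $1$ matches automatically and the $w^{-1}$ terms cancel (this is precisely why the coefficient of $\log w$ must be chosen to be $-A$ — it kills the otherwise unmatchable $A w^{-1}$ coming from the linear term), an identity whose coefficient of $w^{-n}$ for $n\ge 1$ has the form
\[
-n\,b_n\cdot(\text{leading factor})\;+\;(\text{polynomial in }b_1,\ldots,b_{n-1}\text{ and the }c_j)\;=\;0 .
\]

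The key observation is that when one expands $\sum_j b_j F(w)^{-j}-\sum_j b_j w^{-j}$, the coefficient $b_n$ appears for the first time at order $w^{-(n+1)}$, with coefficient $-n$ (from $b_n\bigl(F(w)^{-n}-w^{-n}\bigr)=b_n\bigl((w+1+\cdots)^{-n}-w^{-n}\bigr)=-n\,b_n w^{-(n+1)}+\cdots$), while all earlier $b_j$ with $j<n$ also feed into that order but are already determined. Hence, reading off the coefficient of $w^{-(n+1)}$ in the full equation yields a relation of the shape $-n\,b_n=\Xi_n(b_1,\ldots,b_{n-1})$, which determines $b_n$ uniquely by induction, starting from $b_1$ (whose defining relation comes from the $w^{-2}$ coefficient and involves only $A$ and $c_2$). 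This establishes existence and uniqueness of the sequence $(b_j)$.

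For the second part, once $(b_j)$ has been fixed so that $\Phi_{\text{ps}}\circ F-\Phi_{\text{ps}}-1=0$ as a formal series, the truncation $\Phi_n(w)=w-A\log w+\sum_{j=1}^n b_j w^{-j}$ differs from $\Phi_{\text{ps}}$ by the tail $\sum_{j>n}b_j w^{-j}=O(w^{-(n+1)})$. Substituting, $\Phi_n(F(w))-\Phi_n(w)-1=\bigl(\Phi_{\text{ps}}(F(w))-\Phi_{\text{ps}}(w)-1\bigr)-\bigl[(\text{tail})(F(w))-(\text{tail})(w)\bigr]$; the first bracket vanishes formally, and for the second, since the tail is $O(w^{-(n+1)})$ and $F(w)-w=O(1)$, the difference $(\text{tail})(F(w))-(\text{tail})(w)$ is $O(w^{-(n+2)})$ — one order better, by the same mechanism as above (the difference of a $w^{-(n+1)}$-term evaluated at $F(w)=w+1+\cdots$ versus at $w$ gains a factor $w^{-1}$). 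This gives exactly \eqref{estimate-phi-n}. I would note that here ``$O$'' is interpreted in the formal sense (lowest-order term has degree $\ge n+2$ in $w^{-1}$), so no analytic estimates are needed.

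The main obstacle is purely bookkeeping: carefully verifying that the coefficient of $b_n$ in the order-$w^{-(n+1)}$ equation is a nonzero constant (namely $-n$) and isolating it from the contributions of $b_1,\ldots,b_{n-1}$ and the coefficients of $F$. This is the straightforward induction alluded to before Proposition~\ref{formal conjugacy}, and I would present it compactly by writing $\Phi_{\text{ps}}\circ F-\Phi_{\text{ps}}-1$ as a formal series $\sum_{m\ge 1}E_m w^{-m}$ and showing $E_{n+1}=-n\,b_n+\Xi_n(b_1,\ldots,b_{n-1})$ explicitly, rather than grinding through all cross-terms.
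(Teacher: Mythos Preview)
Your proposal is correct and follows essentially the same approach as the paper: both rewrite the functional equation so that the unknown part is $\sum_j b_j\bigl(F(w)^{-j}-w^{-j}\bigr)$, observe that $F(w)^{-j}-w^{-j}$ vanishes to order $j+1$ at infinity with leading coefficient $-j$, and solve for the $b_j$ inductively; the estimate on $\Phi_n(F(w))-\Phi_n(w)-1$ then follows because the omitted tail $\sum_{j>n} b_j(F(w)^{-j}-w^{-j})$ begins at order $w^{-(n+2)}$. The paper packages this as the single identity $F(w)-w-1-A\log(F(w)/w)=\sum_j b_j(F(w)^{-j}-w^{-j})$, but the content is identical to what you wrote.
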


\noindent 
%%%Some explanations are called for. First of all: 
The logarithm appearing in (\ref{formal-fatou-coord-1}) -- and all other
logarithms in this 
section -- are to be understood as the {\em principal branch,} i.e.,
the branch with a cut along the negative real axis and real values on
the positive axis.  Because of the logarithmic term,
$\Phi_{\text{ps}}$ as written is not exactly a formal power series in
$w^{-1}$. To work around this, we rewrite the equation
$\Phi_{\text{ps}} \circ F = \Phi_{\text{ps}} + 1$ formally as
\begin{equation}
\label{eq:22}
F(w) - w - 1 - A \log(F(w)/w) = \sum_{j=1}^\infty b_j 
  \left( F(w)^{-j} - w^{-j} \right)
\end{equation}
Since $F(w)/w$ is analytic at $\infty$ and takes the value $1$
there, $\log(F(w)/w)$ is analytic at $\infty$ and vanishes
there. Furthermore, the formal identity
\[
\log(F(w)) - \log(w) = \log(F(w)/w)
\]
holds literally on $\{ - \alpha < \Arg(w) < \alpha, \vert w \vert > R \}$
for sufficiently large $R$, for any $\alpha < \pi$. It is equation
(\ref{eq:22}) which we really solve.

The left-hand side of (\ref{eq:22}) is analytic at $\infty$, and vanishes to
second order there:
\[
F(w)-w-1 = -A w^{-1} + O(w^{-2})\quad\text{and}\quad \log(F(w)/w) =
w^{-1} + O(w^{-2})
\]
Further, $F(w)^{-j} - w^{-j}$ is analytic at infinity and
vanishes to order $j+1$ there, so
\[
\sum_{j=1}^\infty b_j   \left( F(w)^{-j} - w^{-j} \right)
\]
is indeed a formal power series in $w^{-1}$ which begins with a term
in $w^{-2}$. Furthermore, the coefficient of $w^{-j-1}$ in the
expression on the right in (\ref{eq:22}) can be
written as 
\[
- j b_j  + \text{a function of $b_1$, \ldots, $b_{j-1}$}
\]
Thus, since the left-hand side of (\ref{eq:22}) is known, the $b_j$'s can be
determined successively, and, by induction on $j$, they are uniquely
determined. The assertion about the order of the error term
$\Phi_n(F(w)) - \Phi_n(w) - 1$ also follows, since
\[
\sum_{j=n+1}^\infty b_j \left( F(w)^{-j} - w^{-j} \right)
\]
begins with a term in $w^{-(n+2)}$

\begin{thm}
\label{asym-expansion-2}
Let $b_1$, $b_2$, \dots be as in \propref{formal-fatou}, and let
$\Phi_A$ be an attracting Fatou coordinate at infinity. Then, for any $n$,
\begin{equation}\label{eq:21}
\Phi_A(w) = w - A \log w + C_A + \sum_{j=1}^n b_j w^{-j} + O(\vert w
\vert^{-(n+1)})
\end{equation}
uniformly as $w \to \infty$ in any sector $-\alpha < \Arg(w) < \alpha$
with $\alpha < \pi$
\end{thm}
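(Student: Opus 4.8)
The plan is to prove Theorem~\ref{asym-expansion-2} by showing that the difference $R_n(w) := \Phi_A(w) - \Phi_n(w)$ between the genuine attracting Fatou coordinate at infinity and the truncated formal solution $\Phi_n$ of \propref{formal-fatou} tends to a constant as $w\to\infty$, with an error of order $w^{-(n+1)}$. First I would record the functional equation for $R_n$: since $\Phi_A(F(w)) = \Phi_A(w)+1$ and, by \eref{estimate-phi-n}, $\Phi_n(F(w)) - \Phi_n(w) - 1 = O(w^{-(n+2)})$, subtracting gives
\[
R_n(F(w)) - R_n(w) = \Phi_n(w) - \Phi_n(F(w)) + 1 = O(w^{-(n+2)}) =: \epsilon_n(w).
\]
So $R_n$ is ``almost'' $F$-invariant, with a defect summable along orbits. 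The strategy is then to show $R_n$ itself has a limit at infinity by telescoping this defect along the forward $F$-orbit.

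The key analytic input is \propref{crude asymptotics}: $\Phi_A'(w)\to 1$, hence $\Phi_A(w) = w + o(w)$; combined with $\Phi_n(w) = w - A\log w + O(1)$ this gives $R_n(w) = o(w)$, and more usefully the derivative estimate $R_n'(w) = \Phi_A'(w) - \Phi_n'(w) = o(1)$ as $w\to\infty$ in any sector $|\Arg w|<\alpha$. Now fix a large $w_0$ in a slightly smaller sector; the forward orbit $w_k := F^k(w_0)$ satisfies $w_k = w_0 + k + O(\log(k+|w_0|))$ and stays (eventually) in the larger sector, with $|w_k| \asymp |w_0| + k$. Telescoping,
\[
R_n(w_0) = R_n(w_N) - \sum_{k=0}^{N-1}\bigl(R_n(w_{k+1}) - R_n(w_k)\bigr) = R_n(w_N) + \sum_{k=0}^{N-1}\epsilon_n(w_k).
\]
The series $\sum_k \epsilon_n(w_k) = \sum_k O(|w_k|^{-(n+2)})$ converges (comparison with $\sum (|w_0|+k)^{-(n+2)}$, and $n+2\geq 3 > 1$), and its tail past index $N$ is $O(|w_0|^{-(n+1)})$ by comparing the sum to the integral $\int_{|w_0|}^\infty t^{-(n+2)}\,dt$. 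It remains to identify $\lim_N R_n(w_N)$; this limit exists and is independent of the starting point because two orbits with the same ``phase'' at infinity are eventually interleaved via the functional equation, and $R_n(w_N) - R_n(w_N')\to 0$ by the derivative bound $R_n' = o(1)$ together with $|w_N - w_N'|$ bounded. Call the common limit $C_A$; then letting $N\to\infty$ in the telescoped identity yields $R_n(w_0) = C_A + \sum_{k\ge 0}\epsilon_n(w_k) = C_A + O(|w_0|^{-(n+1)})$, which is exactly \eref{eq:21}. One must also check that the constant $C_A$ does not depend on $n$: this is immediate since $\Phi_{n+1} - \Phi_n = b_{n+1} w^{-(n+1)}\to 0$, so the limits of $R_n$ and $R_{n+1}$ coincide.

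For the repelling case, one applies the same argument to a local inverse $g$ of $f$ (equivalently, runs the orbit backward under $F$, i.e. forward under $F^{-1}(w) = w - 1 + O(1/w)$), which moves points toward $\infty$ in the opposite sector $|\Arg(-w)|<\alpha$; the formal solution $\Phi_{\text{ps}}$ is the same formal series because the functional equation $\Phi(F(w)) = \Phi(w)+1$ is identical, so the coefficients $b_j$ and $A$ are unchanged, only the constant ($C_R$) and the relevant sector differ. The main obstacle, I expect, is the careful bookkeeping on the orbit: establishing uniformly (over all starting points $w_0$ in the smaller sector) that $w_k$ remains in the larger sector with $|w_k|\gtrsim |w_0| + k$, so that the error series can be dominated uniformly and its tail estimated by an integral; this is the step where one genuinely uses that the sector has opening $<\pi$ and where \propref{fatou coords cover}-type control on the orbit geometry near the boundary of the sector is invoked. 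The uniformity of all the $o$ and $O$ estimates from \propref{crude asymptotics} across the sector is what makes the final convergence uniform, as claimed.
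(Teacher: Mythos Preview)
Your approach is essentially the paper's: telescope the defect $R_n(F(w))-R_n(w)=O(w^{-(n+2)})$ along the forward $F$-orbit and control the resulting sum. The paper packages this slightly differently by first \emph{differentiating} the telescoped identity to obtain $c_n'(w)=\sum_{j\ge 0} u_n'(F^j(w))\,(F^j)'(w)$, bounding this by $|c_n'(w)|=O(|w|^{-(n+2)})$ via \lemref{asym-expansion-3}, and then integrating along horizontal rays; the payoff is that the existence of $C_A$ and its independence of the starting point fall out immediately from the derivative bound, so your separate argument (via $R_n'=o(1)$ and bounded orbit separation) is absorbed into one step.

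One correction worth flagging: your asserted comparison $|w_k|\asymp |w_0|+k$ is not true when $w_0$ lies near the edge of the sector---for instance $w_0=iR$ gives $|w_k|\approx\sqrt{R^2+k^2}$, which stays $\approx R$ for the first $\sim R$ steps. What you actually need, and what you correctly identify as the main obstacle, is the orbit-sum estimate $\sum_{j\ge 0}|F^j(w)|^{-m}=O(|w|^{-(m-1)})$; this is exactly \lemref{asym-expansion-3}(I), whose proof splits according to $\Arg(w_0)$ to handle precisely this phenomenon.
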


\noindent
We collect the main estimates needed for the proof of
\thmref{asym-expansion-2} in the following lemma:

\begin{lem}
\label{asym-expansion-3}
\[
\sum_{j=0}^\infty |F^j(w)|^{-m}= O(\vert w \vert^{-(m-1)})
\tag{I}\]
and
\[
\left\vert (F^j)'(w)\right\vert \quad\text{is bounded uniformly in $j$}\tag{II}
\]
both estimates holding uniformly for $w \to \infty$ in any sector $\{
-\alpha < \Arg(w) < \alpha \}$ with $\alpha < \pi$.
\end{lem}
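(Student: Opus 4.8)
My plan is to reduce both parts to a single lower bound on the escaping orbit. First, (II) follows from (I) with $m=2$: since $F(w)-w$ is analytic at $\infty$ with $F(w)-w=1+A/w+O(w^{-2})$, we have $F'(w)=1+O(|w|^{-2})$, so $(F^{j})'(w)=\prod_{i=0}^{j-1}F'(F^{i}(w))$ with $\sum_{i\ge 0}|F'(F^{i}(w))-1|\le C\sum_{i\ge 0}|F^{i}(w)|^{-2}$; once (I) shows this last sum is $O(|w|^{-1})$ --- hence bounded, indeed $o(1)$ --- the products $(F^{j})'(w)$ are bounded uniformly in $j$ and in $w$ in the sector. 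So the real content is (I), which I will establish for $m\ge 2$ (all that is needed below; for $m=1$ the left-hand side actually diverges).

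To prove (I), I first confine the orbit. Given $\alpha<\pi$ --- enlarging $\alpha$ only strengthens the statement, so take $\pi/2<\alpha<\pi$ --- I choose $\alpha'$ with $\alpha<\alpha'<\pi$ and, following the ample-petal construction above, $R'$ so large that $\Delta(\alpha',R')$ is forward invariant under $F$ and disjoint from a disk $\{|w|\le R_0\}$ on which $\Re F(w)>\Re w+1/2$ and $|F(w)-w-1|\le C_1|w|^{-1}$ hold, and also so that $\{\,|\Arg w|<\alpha,\ |w|>R\,\}\subset\Delta(\alpha',R')$ for a suitable $R$ (possible because, for $|w|>R$ with $R$ large relative to $R'$, the argument of $w-R'$ differs from that of $w$ by less than $\alpha'-\alpha$). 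Then for $w$ in the sector with $|w|>R$, the orbit $w_{j}:=F^{j}(w)$ stays in $\Delta(\alpha',R')$, so $|w_{j}|>R_0$ for every $j$; consequently $\Re w_{j}\ge\Re w+j/2$ for all $j$, and $|\Im w_{j+1}-\Im w_{j}|\le C_1|w_{j}|^{-1}$.

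The crux is the bound $|w_{j}|\ge c_\alpha(|w|+j)$ for all $j\ge 0$, with $c_\alpha>0$ depending only on $\alpha$; granting it, $\sum_{j\ge 0}|w_{j}|^{-m}\le c_\alpha^{-m}\sum_{j\ge 0}(|w|+j)^{-m}=O(|w|^{-(m-1)})$ uniformly in the sector, which is (I). For $j\ge 4|w|$ it is easy: from $\Re w\ge-|w|$ we get $\Re w_{j}\ge-|w|+j/2\ge\max(|w|,j/4)$, hence $|w_{j}|\ge\Re w_{j}\ge(1/8)(|w|+j)$. For $0\le j<4|w|$ it suffices to show $|w_{j}|\ge c_\alpha|w|$. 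Since $w$ lies in the sector, either $\Re w\ge\varepsilon_\alpha|w|$ --- and then $\Re w_{j}\ge\varepsilon_\alpha|w|$ for all $j$, whence $|w_{j}|\ge\varepsilon_\alpha|w|$ --- or $|\Im w|\ge\varepsilon_\alpha|w|$, for some constant $\varepsilon_\alpha>0$; in the latter case I argue by bootstrap. As long as $|\Im w_{i}|\ge(1/2)\varepsilon_\alpha|w|$ for all $i<j$, one has $|w_{i}|\ge(1/2)\varepsilon_\alpha|w|$, hence $|\Im w_{j}-\Im w|\le\sum_{i<j}C_1|w_{i}|^{-1}\le 2C_1j/(\varepsilon_\alpha|w|)$, which is $\le(1/2)\varepsilon_\alpha|w|$ as soon as $j\le\varepsilon_\alpha^{2}|w|^{2}/(4C_1)$; since this quadratic threshold exceeds $4|w|$ once $|w|>R$ is large, the inequality $|\Im w_{j}|\ge(1/2)\varepsilon_\alpha|w|$ propagates over the whole range $0\le j<4|w|$, and there $|w_{j}|\ge(1/2)\varepsilon_\alpha|w|$. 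This gives the bound with $c_\alpha=\min(1/8,\ \varepsilon_\alpha/10)$, evidently uniformly in $w$.

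I expect the bootstrap to be the one genuinely delicate point: one must rule out the orbit drifting inward toward the origin before its real part grows enough to take over, and this is exactly what confinement in the wide forward-invariant sector $\Delta(\alpha',R')$ --- which already bounds the step sizes $|w_{i}|^{-1}$ --- together with the quadratically large safety margin above provide. Everything else is elementary book-keeping with the asymptotics $F(w)=w+1+O(1/w)$.
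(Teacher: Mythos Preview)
Your argument is correct. Both you and the paper reduce (II) to (I) via the product formula and $F'(w)=1+O(|w|^{-2})$, so the substance is (I), and you rightly restrict to $m\ge 2$ (implicit in the paper too).

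For (I) the routes diverge. The paper splits on the \emph{initial point}: if $|\Arg w_0|\le\pi/4$ it bounds $|w_j|\ge \Re w_j\ge \Re w_0+j/2$ directly; otherwise it uses a geometric argument, confining the orbit to a translated cone with vertex $w_0$ and aperture $2(\pi-\alpha_1)$, locating the step $j_0$ at which the orbit crosses the diagonal $\{\Re=\Im\}$, and estimating $j_0\le K|w_0|$, $|w_{j_0}|\ge K^{-1}|w_0|$, and $|w_j|\ge K^{-1}|w_0|$ for $j<j_0$ via the distance from the origin to the lower edge of the cone; the sum then splits at $j_0$. You instead split on the \emph{iterate count}, proving the single uniform bound $|w_j|\ge c_\alpha(|w|+j)$: for $j\ge 4|w|$ the real-part drift alone suffices, while for $j<4|w|$ you run a bootstrap on $\Im w_j$ using the sharper step estimate $|\Im w_{j+1}-\Im w_j|\le C_1|w_j|^{-1}$, which gives a quadratic safety margin $j\lesssim |w|^{2}$ comfortably covering the linear range $j<4|w|$. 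Your approach is a bit more self-contained and yields the clean pointwise lower bound $|w_j|\ge c_\alpha(|w|+j)$ from which (I) follows in one line; the paper's geometric picture is perhaps more vivid but uses only the cruder $|F(w)-w-1|<1/2$ and so has to argue separately about $j_0$ and the two pieces of the sum. Both are valid and of comparable length.
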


\begin{pf}
We fix an $\alpha < \pi$, and we choose
an $\alpha_1$ with $\alpha < \alpha_1 < \pi$; it saves trouble later
if we also require that $\pi - \alpha_1 <
\pi/6$. Next we fix an $R_0$ large enough so that
\begin{equation}\label{eq:20}
\vert F(w) - (w + 1) \vert < \sin(\pi - \alpha_1)
\end{equation}
holds for $\vert w \vert > R_0$;
then an $R_1$ so that the translated sector
\[
\Delta(\alpha_1, R_1) := \{ w : -\alpha_1 < \Arg(w - R_1) < \alpha_1 \}
\]
does not intersect the disk of radius $R_0$ about $0$. Then
(\ref{eq:20}) holds on $\Delta(\alpha_1, R_1)$, so $F$ maps
$\Delta(\alpha_1, R_1)$ to itself; also, since $\pi - \alpha_1 <
\pi/6$, we also have -- again from (\ref{eq:20}) -- 
\begin{equation}
\label{less-than-half}
\vert F(w) - w - 1 \vert < \frac{1}{2}\quad\text{on $\Delta(\alpha_1,
  R_1)$.}
\end{equation}
from which it follows that
\begin{equation}
\label{greater-than-half}
\Re(F(w) \geq \Re(w)+1/2\quad\text{for $w \in \Delta(\alpha_1, R_1)$.}
\end{equation}
and also
\begin{equation}
\label{angle-less-than}
-(\pi - \alpha_1) < \Arg(F(w) - w) < (\pi - \alpha_1)
\end{equation}
We will prove estimates (I) and (II) for $w \to \infty$ in $\Delta(\alpha_1,
R_1) \cap \{ - \alpha < \Arg(w) < \alpha\} $; this does what we want
since every $w$ with $-\alpha < \Arg(w) < \alpha$ and sufficiently
large modulus is in $ \Delta(\alpha_1, R_1)$.

Changing notation slightly: We want then to estimate
\[
\sum_{j = 0}^\infty \vert F^j(w_0)\vert^{-m},
\]
for large $w_0$, given that
\[
-\alpha_1 < \Arg(w_0 - R) < \alpha_1 \quad\text{and}\quad - \alpha <
\Arg(w_0) < \alpha
\]
where -- crucially -- $\alpha_1 > \alpha$.
We write
\[
F^j(w_0) =: w_j =: u_j + i v_j
\]
In the calculation which follows, we adopt the convention that $K$
denotes some constant depending only on $\alpha$,
$\alpha_1$ and $m$. Different instances of $K$ are not necessarily the same
constant. All inequalities involving $w_0$ are only asserted to hold
for $\vert w_0 \vert$ large enough.

We first treat the case $-\pi/4 \leq \Arg(w_0) \leq \pi/4$, i.e.,
$\vert v_0 \vert \leq u_0$. Then, by (\ref{greater-than-half}),
\[
\vert w_j \vert \geq u_j \geq u_0 + j/2,
\]
so
\[
\sum_{j=0}^\infty \vert w_j \vert^{-m} \leq \sum_{j=0}^\infty (u_0 +
j/2)^{-m} \leq K u_0^{-(m-1)} \leq K \vert w_0 \vert^{-(m-1)};
\]
in the last step, we used $\vert v_0 \vert \leq u_0$ to estimate
$u_0 \geq \vert w_0 \vert/\sqrt{2}$.
This proves the desired estimate in this case.

There remain the possibilities $\pi/4 < \Arg(w_0) < \alpha$ and $-\alpha <
\Arg(w_0) < - \pi/4$. The estimates in the two cases are essentially
the same; for definiteness we assume that the first holds, i.e., that
$w_0$ is in the upper half-plane. By (\ref{angle-less-than}) the $w_j$
are all contained in the translated sector $\{ -(\pi - \alpha_1) < w -
w_0 < +(\pi -
\alpha_1) \}$. This sector intersects the diagonal line $\{ \Re(w) =
\Im(w) \}$ in a segment $(\hat{w}_{-}, \hat{w}_{+})$ (labeled so that
$\vert\hat{w}_{-}\vert  <  \vert\hat{w}_{+}\vert$.) It is easy to
show, either by elementary geometry or by writing explicit formulas, that
\[
\vert \hat{w}_{-}\vert \geq K^{-1} \vert w_0 \vert\quad\text{and}\quad
\vert \hat{w}_{+}\vert \leq K \vert w_0 \vert.
\]
Since $u_{j+1} \geq u_j + 1/2$, $w_j$, which start out above the
diagonal line, will get and stay below it after finitely many
steps. Let $j_0$ be the first index for which $w_j$ is below the
diagonal. Using the above upper bound on $\vert \hat{w}_{+} \vert$ we
get a bound
\[
j_0 \leq K \vert w_0 \vert
\]
The line segment from $w_{j_0 - 1}$ to $w_{j_0}$
intersects the diagonal line between $\hat{w}_{-}$ and
$\hat{w}_+$. Since 
\[
\vert w_{j_0} - w_{j_0-1} \vert = \vert F(w_{j_0-1}) - w_{j_0 - 1}
\vert < 3/2
\]
we get
\[
\vert w_{j_0} \vert \geq \vert \hat{w}_{-} \vert - 3/2 \geq K^{-1}
\vert w_0 \vert - 3/2 \geq K^{-1} \vert w_0 \vert
\]
By the first case treated
\begin{equation}\label{last-part}
\sum_{j=j_0}^\infty \vert w_j \vert^{-m} \leq K \vert w_{j_0}
\vert^{-(m-1)} \vert \leq K \vert w_0 \vert^{-(m-1)}
\end{equation}

%%%Finally: 
All the $w_j$ lie above (or on) the line through $w_0$ with
direction $\alpha_1 - \pi$ (and the origin lies below this line.) Using
$\Arg(w_0) < \alpha < \alpha_1$, the distance from this line to the
origin satisfies a lower bound $K^{-1} \vert w_0 \vert$. Hence each
$\vert w_j \vert \geq K^{-1} \vert w_0 \vert$. Since $j_0 \leq K \vert
w_0 \vert$, 
\[
\sum_{j=0}^{j_0 - 1 } \vert w_j \vert^{-m} \leq j_0 \cdot \left(
  K^{-1} \vert w_0 \vert \right)^{-m} \leq (K \vert w_0 \vert) \cdot
(K^m \vert w_0 \vert^{-m}) \leq K \vert w_0 \vert^{-(m-1)}
\]
Combining this estimate on the sum of the first $j_0$ terms with the
estimate (\ref{last-part}) on the sum of the rest gives
\[
\sum_{j=0}^\infty \vert F^j(w_0)\vert^{-m} \leq K \vert w_0
\vert^{-(m-1)}
\]
(for sufficiently large $\vert w_0 \vert$), so (I) is established.
(II) follows easily from (I) together with the estimate
$$F'(w)=1+O(|w|^{-2}),$$
the chain rule, and standard manipulations for reducing estimates on
products to estimates on sums.

\end{pf}

\begin{proof}[Proof of \thmref{asym-expansion-2}]
%%%%We begin by setting up a working domain. 
Let $\pi/2 < \alpha <
\pi$. By an argument already used several times, we can choose $R$
sufficiently large so that
\[
\vert F(w) - w - 1 \vert < \sin(\pi - \alpha)\text{ and }
\Re(F(w) >  \Re{w} + 1/2
\]
for all $w \in \Delta(\alpha, R) := \{ -\alpha < \Arg(w-R) < \alpha \}$
As usual, it follows from the first of these inequalities that $F
(\overline{\Delta(\alpha, R)}) \subset \Delta(\alpha, R)$. We are
going to prove
\[
\Phi_A(w) = \Phi_n(w) + O(\vert w
\vert^{-(n+1)})\quad\text{as $w \to \infty$ in $\Delta(\alpha, R)$}
\]
(where $\Phi_n$ is defined by (\ref{formal-fatou-coord-2})); this
assertion for all $\alpha$ implies the assertion of the theorem for
all $\alpha$.

We set
\[ 
c_n(w) :=\Phi_A(w)-\Phi_n(w)\quad\text{and}\quad
u_n(w):= \Phi_n\circ F(w)-\Phi_n(w)-1.
\]
By \propref{formal-fatou},
$u_n$ is analytic at infinity with
\begin{equation}
\label{estimate-phi-n-1}
u_n(w)=d_{n+2}w^{-(n+2)}+\cdots.
\end{equation}
A simple calculation gives
\[
c_n\circ F(w) = c_n(w)- u_n(w);
\]
iterating gives
\[
c_n\circ F^k(w) = c_n(w) - \displaystyle\sum_{j=0}^{k-1}u_n(F^j(w));
\]
reorganizing and differentiating gives
\begin{equation}
\label{eq-as1}
c_n'(w)= \sum_{j=0}^{k-1}u_n'(F^j(w))\cdot(F^j)'(w)+ c_n'(F^k(w))\cdot
(F^k)'(w).
\end{equation}
By differentiating (\ref{estimate-phi-n-1})
\[
u_n'(w) = O(|w|^{-(n+3)})\text{ as }w\to\infty,
\]
so, by \lemref{asym-expansion-3},
\[
\sum_{j=0}^\infty \left\vert u_n'(F^j(w))\cdot(F^j)'(w) \right\vert <
\infty
\]
By \propref{crude asymptotics}
\[
\Phi_A'(w) \to 1\quad\text{as $\Re(w) \to +\infty$},
\]
and the same is true for $\Phi_n$ by an elementary calculation; hence
\[
c'_n(w) \to 0 \quad\text{as $\Re(w) \to +\infty$}.
\]
Thus, we can let $k \to \infty$ in (\ref{eq-as1}) to get
\[
c_n'(w) = \sum_{j=0}^{\infty}u_n'(F^j(w))\cdot(F^j)'(w)
\]
Applying both parts of \lemref{asym-expansion-3} to this representation,
\begin{equation}
\label{eq-as3}
|c_n'(w)|\leq \const|w|^{-(n+2)}
\end{equation}
for all $w \in \Delta(\alpha, R)$.  It follows from this
estimate that the limit
\[
\lim_{u \to \infty} c_n(u+iv)
\] 
exists and is independent of $v$. We denote this limit
by $C_A$. Then
\[
c_n(u+iv)=C_A -\int_{u}^{\infty}c_n'(\sigma+iv)d\sigma,
\]
so by integrating (\ref{eq-as3}) we get
\[
\vert c_n(w) - C_A \vert \leq \const \vert w \vert^{-(n+1)}
\]
for all $w \in \Delta(\alpha, R)$, which is what we set out to prove.

\end{proof}

\ignore{

It is not difficult to get an asymptotic development
\begin{equation}
\label{asym-fatou-1}
\Phi(w)=w-A\log w +\text{const}+O\left(\frac{1}{|w|}\right)
\end{equation}
%%%
for $\Phi$ in (\ref{phi-at-infty}),
with an appropriate choice of the branch of the logarithm (see \cite{Sh}).
Indeed, denote $$v(w)\equiv F(w)-w=1+\frac{A}{w}+O\left(\frac{1}{|w|^2}\right).$$
Using the Taylor's formula with a remainder, we have
$$1=\Phi(w+v(w))-\Phi(w)=v(w)\Phi'(w)+v(w)^2\int_0^1(1-t)\Phi''(w+v(w)t)dt.$$
Using Koebe Distortion theorem to bound $\Phi'$ and $\Phi''$, we 
get an asymptotic estimate
$$\left|\Phi'(w)-\frac{1}{v(w)}\right|<\frac{\text{const}}{|w|^2},$$
from which (\ref{asym-fatou-1}) follows by a simple integration.

We will now show that the remainder term of (\ref{asym-fatou-1})
admits an asymptotic expansion valid for $w \to \infty$ in any sector of
the form $\{ -\alpha < \Arg(w) < \alpha \}$, $\alpha < \pi$. 
Denote by
$$\Phi_{\mathbf b^n}(w)=w+A\log w+\displaystyle{\sum}_{k=1}^{n-1}b_k w^{-k},\text{ where }\mathbf b^n=(b_0,b_1,\ldots,b_{n-1})\in\CC^n.$$
As a preliminary fact, let us note that:

\begin{lem}
\label{asym-expansion-1}

\begin{itemize}
\item[(I)] The expression $\Phi_{\mathbf b^n}\circ F - \Phi_{\mathbf b^n}(w)-1$ 
extends to be analytic at infinity and to vanish there.
\item[(II)] There is one and only one choice of $\hat {\mathbf b}^n=(\hat b_0,\ldots,\hat b_{n-1})$ for which 
$$\Phi_{\hat {\mathbf b}^n}\circ F - \Phi_{\hat {\mathbf b}^n}(w)-1$$
vanishes to order $n+1$ at infinity.
\end{itemize}
\end{lem}

\begin{proof}
The function $F(w)/w$ is analytic at infinity and takes the value $1$ there, hence $\log(F(w)/w)$ is also analytic at 
infinity and vanishes there. The claim (I) follows immediately.

The claim (II) is verified by a straightforward algebra.
\end{proof}

\noindent
For the choice of  $\hat {\mathbf b}^n=(\hat b_0,\ldots,\hat b_{n-1})$ as in \lemref{asym-expansion-1} (II), we will write
$$\Phi_n\equiv \Phi_{\hat {\mathbf b}^n}.$$
We then have:

% \begin{thm}
% \label{asym-expansion-2}
% We have 
% $$\Phi(w)=\Phi_n(w)+C+O(w^{-n})\text{ as }\Re(w)\to \infty.$$
% \end{thm}

(I) Let us select $r>0$ large enough so that
$$|F(w)-w-1|\leq 1/2\text{ for }\Re w\geq r.$$
The right half-plane $H_r=\{\Re w\geq r\}$ is mapped into itself, and for $w\in H_r$
$$\Re F^j(w)\geq \Re w+j/2\text{, and }|\Im F^j(w)-\Im w|\leq j/2.$$
We now set out to estimate 
$$\displaystyle\sum_{j=0}^\infty |F^j(w)|^{-m}.$$
We write $w=s+it$ and consider separately $s\geq |t|$ and $s<|t|.$ In the first case,
we estimate
$$F^j(w)|\geq |\Re F^j(w)|\geq |s+j/2|,$$
and thus
$$\displaystyle\sum_{j=0}^\infty |F^j(w)|^{-m}\leq \displaystyle\sum_{j=0}^\infty |s+j/2|^{-m}\leq \const s^{-(m-1)}\leq \const |w|^{-(m-1)}.$$
In the case $s<|t|$, we split the sum
$$\displaystyle\sum_{j=0}^\infty |F^j(w)|^{-m}=\displaystyle\sum_{0\leq j<|t|} |F^j(w)|^{-m}+\displaystyle\sum_{j\geq |t|} |F^j(w)|^{-m}.$$
In the first sum, we use 
$$|t-\Im F^j(w)|\leq j/2\leq |t|/2$$
to estimate $|F^j(w)|\geq |t|/2.$ Since the number of terms is $\leq t$, this part of the sum is majorized
by $\const(|t|/2)^{-m}|t|\leq \const|w|^{-(m-1)}.$

In the second sum, we estimate $|F^j(w)|\geq |\Re F^j(w)|\geq |s+j/2|\geq |j/2|.$
Thus, the second sum is majorized by
$$\const\displaystyle\sum_{j\geq |t|}j^{-m}\leq\const |t|^{-(m-1)}\leq\const |y|^{-(m-1)},$$
so the desired estimate is proved.

\begin{proof}[Proof of \thmref{asym-expansion-2}]
Set 
$$h_n=\Phi-\Phi_n\text{ and }u(z)=u(-1/w)=\Phi_n\circ F(w)-\Phi_n(w)-1.$$
Note that $u_n$ is an analytic function at the origin, with
$$u_n(z)=d_{n+1}z^{n+1}+\cdots$$
We calculate
$$h_n(w)=h_n\circ F(w)+u_n(-1/w).$$
Iterating, we get
$$h_n(w)=h_n\circ F^k(w)+\displaystyle\sum_{j=0}^{k-1}u_n(-1/F^j(w)).$$
Differentiation yields
\begin{equation}
\label{eq-as1}
h_n'(w)=h_n'(F^k(w))\cdot (F^k)'(w)+\displaystyle\sum_{j=0}^{k-1}v(F^j(w))\cdot(F^j)'(w),
\end{equation}
where 
$$v(w)=u'(-1/w)\cdot w^{-2}=O(|w|^{-(n+2)})\text{ as }w\to\infty.$$
Trivially,
$$h_n'(w)\to 0,\text{ as }\Re w\to +\infty.$$
By \lemref{asym-expansion-3} (II), we can let $k\to\infty$ in (\ref{eq-as1}) to  get
\begin{equation}
\label{eq-as2}
h_n'(w)= \displaystyle\sum_{j=0}^{\infty}v(F^j(w))\cdot(F^j)'(w).
\end{equation}
By \lemref{asym-expansion-3} (II), there exists $r_2>0$ such that
$$|v'(F^j(w))(F^j)'(w)|\leq \const|F^j(w)|^{-(n+2)}$$
for all $j$ and for all $w$ with $\Re w\geq r_2.$

By \lemref{asym-expansion-3} (I),
\begin{equation}
\label{eq-as3}
|h_n'(w)|\leq \const|w|^{-(n+1)}
\end{equation}
for all $w$ sufficiently far to the right. It follows from this estimate, that the limit
$$\underset{s\to\infty}{\lim}h_n(s+it)$$
exists, and is independent of the value of $t$. Let us denote this limit $H$. Then
$$h_n(s+it)=H-\displaystyle\int_{s}^{\infty}h_n'(\zeta+it)d\zeta,$$
so by integrating (\ref{eq-as3}) we get
$$|h_n(w)-H|\leq \const|w|^{-n}\text{ for }\Re w\geq r_2,$$
which is what we set out to prove.
\end{proof}

}

We insert here a simple remark which we will want to refer to
repeatedly. We say that an analytic function $f:U\to\CC$ is {\it real-symmetric} if 
the Taylor coefficients of $f$ are real 
at some  point $x\in U\cap\RR$. Note, that we do not require that $U$ itself is a real-symmetric domain.
Similarly, if $x$ is a point in $\RR$, we say that an analytic germ $f(z)$ at $x$ is real-symmetric if its
coefficients are real.

%We say that an analytic function is {\em formally real} if
%it commutes with complex conjugation. Modulo some technicalities about
%artificial restrictions on domains of definition, this is more or less
%the same as saying that its Taylor coefficients are real at some point
%of analyticity on the real axis.

\begin{prop}\label{th:formally-real}
Let $f$ be a real-symmetric analytic germ of the form (\ref{parabolic-normal-form-3}), and let $\phi_A$ be
an attracting Fatou coordinate for $f$.
Then there is a pure imaginary constant $c$ so that
\[
\overline{\phi_A(z)} = \phi_A(\overline{z}) + c\quad\text{on $B^f_0$.}
\]
Furthermore,  the coefficients $A$, $b_1$, $b_2$, \dots of
  \thmref{asym-expansion-2} are real.

Similar assertions hold for a repelling Fatou coordinate.
\end{prop}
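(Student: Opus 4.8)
The plan is to use the uniqueness of Fatou coordinates together with the observation that conjugating a Fatou coordinate by complex conjugation on both sides produces another one. First I would record the symmetry of the data. Since $f$ is a real-symmetric germ of the form \eref{parabolic-normal-form-3}, we have $f(\bar z)=\overline{f(z)}$, hence $f^k(\bar z)=\overline{f^k(z)}$, and since the attracting direction $\nu=-1$ is fixed by conjugation, the immediate basin $B^f_0$ is invariant under $z\mapsto\bar z$ (a symmetric petal lies in it, and the conjugate of a component containing it is again such a component). The coordinate $\kappa(z)=-1/z$ satisfies $\overline{\kappa(\bar z)}=\kappa(z)$, so it intertwines conjugation on the $z$-side with conjugation near $\infty$; in particular $F=\kappa\circ f\circ\kappa^{-1}$ is real-symmetric at $\infty$, the petal $P_0=\kappa^{-1}(H_r)$ with $H_r=\{\Re w>r\}$ is conjugation-symmetric, and $A=1-\alpha$ is real because $\alpha\in\RR$.

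Next I would set $\psi(z):=\overline{\phi_A(\bar z)}$. On $P_0$ this is a composition of an antiholomorphic, a holomorphic, and an antiholomorphic map, hence holomorphic, and it is univalent since $\phi_A|_{P_0}$ and conjugation are. It satisfies the Fatou equation: $\psi(f(z))=\overline{\phi_A(\overline{f(z)})}=\overline{\phi_A(f(\bar z))}=\overline{\phi_A(\bar z)+1}=\psi(z)+1$. Thus $\psi$ is an attracting Fatou coordinate on $P_0$, so by \propref{uniqueness-fatou} there is a constant $c$ with $\psi=\phi_A+c$ on $P_0$; since $\psi$ and $\phi_A+c$ both satisfy the same functional equation on all of $B^f_0$ and agree on $P_0$, the uniqueness in \propref{th:phi-extended} gives $\overline{\phi_A(\bar z)}=\phi_A(z)+c$ on $B^f_0$, which is the first assertion. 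Applying this identity to $\bar z$ and conjugating yields $\phi_A(z)=\overline{\phi_A(\bar z)}+\bar c=\phi_A(z)+c+\bar c$, so $c+\bar c=0$ and $c$ is purely imaginary.

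For the reality of the coefficients I would transport the identity to the coordinate at infinity: substituting $z=-1/w$ turns $\overline{\phi_A(\bar z)}=\phi_A(z)+c$ into $\overline{\Phi_A(\bar w)}=\Phi_A(w)+c$, which along the positive real axis $w=u>0$ (the locus corresponding to $z\to0$ along $\nu=-1$) reads $\overline{\Phi_A(u)}=\Phi_A(u)+c$. Inserting the expansion of \thmref{asym-expansion-2} and using that $u$ and the principal branch $\log u$ are real for $u>0$, we get, for each $n$,
\begin{multline*}
u-\bar A\log u+\overline{C_A}+\sum_{j=1}^{n}\bar b_j u^{-j}+O(u^{-(n+1)}) \\
=u-A\log u+(C_A+c)+\sum_{j=1}^{n}b_j u^{-j}+O(u^{-(n+1)}).
\end{multline*}
Since $\log u,1,u^{-1},u^{-2},\dots$ are asymptotically independent as $u\to+\infty$, matching coefficients gives $\bar A=A$, $\bar b_j=b_j$ for every $j$, and $\overline{C_A}=C_A+c$; hence $A$ and all the $b_j$ are real (and $c=-2i\,\Im C_A$ is again seen to be purely imaginary). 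Alternatively one could note that the recursion of \propref{formal-fatou} determines the $b_j$ from the real quantities $A$ and the Laurent coefficients of $F$ at $\infty$, so they are real by induction. The repelling statement follows by the same argument applied to an analytic local inverse $g$ of $f$: $g$ is again real-symmetric, and the reduction of $g$ to the form \eref{parabolic-normal-form-3} (conjugation by $z\mapsto-z$) is itself real-symmetric, so real symmetry is preserved throughout.

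No serious obstacle is anticipated; the points requiring care are all bookkeeping. One must check that $P_0$ is conjugation-symmetric so that \propref{uniqueness-fatou} applies verbatim on a single petal, track the interaction of conjugation with $\kappa$ in the passage to $\infty$, and justify reading off the coefficients of $\log u$, of $1$, and of each $u^{-j}$ from an expansion controlled only up to $O(u^{-(n+1)})$ for each fixed $n$ — that is, invoke the uniqueness of asymptotic expansions in $\{\log u\}\cup\{u^{-j}\}_{j\ge0}$. Each of these is routine.
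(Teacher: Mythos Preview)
Your proposal is correct and follows essentially the same route as the paper: define $\psi(z)=\overline{\phi_A(\bar z)}$ on a conjugation-symmetric petal, invoke \propref{uniqueness-fatou} to get $\psi=\phi_A+c$, extend via the functional equation, and deduce that $c$ is purely imaginary. The paper verifies the last point by evaluating at a real $x$ rather than by your involution argument, and it omits the proof that the $b_j$ are real as ``even simpler,'' whereas you spell it out via the asymptotic expansion (or the recursion of \propref{formal-fatou}); both are fine.
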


\begin{proof} Let $P$ be a small attracting petal invariant under
  complex conjugation (e.g., a small disk tangent to the imaginary
  axis at the origin.) Since $f$ commutes with complex conjugation,
\[
z \mapsto \overline{\phi_A(\overline{z})}
\]
is another univalent analytic function defined on $P$ and satisfying
the usual functional equation $\phi(f(z)) = \phi(z) + 1$. By
uniqueness up to an additive constant of the Fatou coordinate
(\propref{uniqueness-fatou}), there is a constant $c$ so that
\[
\overline{\phi_A(\overline{z})} = \phi_A(z) + c \quad\text{on $P$.}
\]
In the usual way, this identity extends to all of $B^f_0$ by repeated
application of the functional equation. Applying the identity at any
real point $x$ shows that $c$ is pure imaginary. We omit
the proofs of the other assertions, which are even simpler.
\end{proof}

\subsection{A note on resurgent properties of the asymptotic expansion of the Fatou coordinates}
Let us briefly mention a very different approach to the construction of the asymptotic series
(\ref{eq:21}) originating in the works of J.~{\'E}calle \cite{Ec}.

Recall (see e.g. \cite{Ram}) that a formal power series
$\sum_{m=1}^\infty a_mx^{-m}$ is of {\it Gevrey order }$k$ if
$$|a_m|<CA^n(n!)^{\frac{1}{k}}\text{ for some choice of positive constants }C, A.$$

Consider the asymptotic expansion (\ref{formal-fatou-coord-1}) for the Fatou coordinates, and denote
$$\nu_*(w)\equiv \sum_{j=1}^\infty b_jw^{-j}\text{, so that }\Phi_{\text{ps}}(w)=w-A\log(w)+\nu_*(w).$$
As was shown by {\'E}calle for the case $A=0$ \cite{Ec} and A. Dudko and D. Sauzin in the general case \cite{DuSa}:

\begin{thm}
\label{gevrey-order}
The asymptotic series $\nu_*=\sum_{j=1}^\infty b_jw^{-j}$ is of Gevrey order $1$. 
\end{thm}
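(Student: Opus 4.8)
The plan is to pass to the Borel plane, following the resurgent strategy of \cite{Ec}. It is convenient not to work with $\Phi_{\text{ps}}$ itself but with
\[
\psi(w):=\Phi_{\text{ps}}'(w)-1=-\tfrac{A}{w}+\sum_{m\ge 2}\psi_m w^{-m},
\]
which, crucially, is an honest formal power series in $w^{-1}$ with no logarithm. Since $\nu_*(w)=\Phi_{\text{ps}}(w)-w+A\log w$ is obtained from $\psi$ by termwise integration, and integration preserves the Gevrey-$1$ class (dividing the $m$-th coefficient by $m-1$ only improves the bound), it suffices to show that $\psi$ is Gevrey-$1$; normalizing the Borel transform as $\hat\psi(\zeta):=\sum_{m\ge 1}\frac{\psi_m}{(m-1)!}\zeta^{m-1}$, this is equivalent to showing that $\hat\psi$ has positive radius of convergence. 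Differentiating the functional equation $\Phi_{\text{ps}}(F(w))=\Phi_{\text{ps}}(w)+1$ gives
\[
\psi(F(w))\,F'(w)-\psi(w)=1-F'(w),
\]
whose right-hand side is a \emph{convergent} series in $w^{-1}$ vanishing to second order at $\infty$ (because $F(w)-w-1$ is analytic at $\infty$ and $O(w^{-1})$).

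Next I would apply the formal Borel transform to this equation, using the dictionary $w^{-m}\mapsto\zeta^{m-1}/(m-1)!$, multiplication $\mapsto$ convolution $(\hat f\ast\hat g)(\zeta)=\int_0^\zeta\hat f(\zeta-s)\hat g(s)\,ds$, differentiation $\mapsto$ multiplication by $-\zeta$, and the shift $w\mapsto w+1$ contained in $F$ $\mapsto$ multiplication by $e^{-\zeta}$. Writing $F(w)=w+1+\varphi(w)$ with $\varphi$ convergent and $O(w^{-1})$, the composition is expanded as $\psi(F(w))=\sum_{\ell\ge 0}\tfrac{1}{\ell!}\varphi(w)^\ell\,\psi^{(\ell)}(w+1)$, which Borel-transforms to $e^{-\zeta}\hat\psi(\zeta)+\sum_{\ell\ge 1}\tfrac{1}{\ell!}\,\hat\varphi^{\ast\ell}\ast\big((-\zeta)^\ell e^{-\zeta}\hat\psi\big)$, where $\hat\varphi=\mathcal B(\varphi)$ is entire of exponential type. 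Since $1-F'(w)$ Borel-transforms to an entire function $\hat L$ of exponential type with $\hat L(0)=0$, the transformed equation can be written schematically as
\[
(e^{-\zeta}-1)\,\hat\psi(\zeta)=\hat L(\zeta)-\mathcal S(\hat\psi)(\zeta),
\]
where $\mathcal S$ is built out of the convolutions above together with a convolution by $-\zeta\hat\varphi$ (coming from the factor $F'(w)=1+\varphi'(w)$), and $\mathcal S(\hat\psi)$ vanishes to order $\ge 2$ at $\zeta=0$. Both sides vanish at the origin and $(e^{-\zeta}-1)/\zeta$ is holomorphic and non-vanishing near $0$, so dividing through yields a fixed-point equation $\hat\psi=\hat\psi_0+\mathcal K(\hat\psi)$ with $\hat\psi_0$ holomorphic at $0$ and $\mathcal K$ a Volterra-type operator. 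On the Banach space $\mathcal O(\overline{D_\rho})$ with the sup norm, the operator norm of $\mathcal K$ is $O(\rho)$ as $\rho\to 0$ (convolutions are small on small disks), so for $\rho$ small it is a contraction; its unique fixed point is holomorphic on $D_\rho$ and, by comparison of Taylor coefficients, coincides with the formal $\hat\psi$. Hence $\hat\psi$ converges near $0$, which is the asserted Gevrey-$1$ bound.

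The main obstacle is to justify the step where $\psi\circ F$ is turned into a convergent series of iterated convolutions: one must show that $\sum_{\ell\ge 1}\tfrac{1}{\ell!}\,\hat\varphi^{\ast\ell}\ast\big((-\zeta)^\ell e^{-\zeta}\hat\psi\big)$ converges in a \emph{fixed} disk $D_\rho$ with uniformly controlled norm — this rests on the iterated-convolution estimate $\|\hat\varphi^{\ast\ell}\|_{D_\rho}\le M^\ell\rho^{\ell-1}/(\ell-1)!$ for $\hat\varphi$ of exponential type, which makes the $\ell$-sum converge rapidly, and on the fact that each convolution raises the order of vanishing at $0$, so that $\mathcal K$ genuinely shrinks the disk-norms. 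This ``the Borel transform tames right-composition with a near-identity convergent map'' principle is the technical heart, going back to \cite{Ec} for $A=0$ and carried out in the general case by \cite{DuSa}. A Borel-free alternative would read off from \eqref{eq:22} the scalar recursion $j\,b_j=-L_{j+1}+\sum_{k=1}^{j-1}b_k\tilde P_{j,k}$ and establish $|b_j|\le C\Lambda^{j}j!$ by induction via majorant estimates on the structure constants $\tilde P_{j,k}$; there the factorial is forced by the binomial growth in $F(w)^{-j}=(w+1)^{-j}\big(1+\varphi(w)/(w+1)\big)^{-j}$, which is the same mechanism showing that $\nu_*$ is in general not convergent.
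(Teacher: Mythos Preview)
The paper does not actually prove this theorem; it merely attributes the result to \'Ecalle \cite{Ec} for the case $A=0$ and to Dudko--Sauzin \cite{DuSa} in general, and moves on. So there is no ``paper's own proof'' to compare against. Your proposal is a sketch of precisely the Borel--resurgent approach of those references.

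As a sketch it is sound. The reduction from $\nu_*$ to $\psi=\Phi_{\text{ps}}'-1$ is correct and does strip out the logarithm; the differentiated functional equation $\psi(F(w))F'(w)-\psi(w)=1-F'(w)$ is right; your Borel dictionary (shift $\leftrightarrow e^{-\zeta}$, product $\leftrightarrow$ convolution, $d/dw\leftrightarrow -\zeta$) is correct with the normalization $w^{-m}\mapsto\zeta^{m-1}/(m-1)!$; and the fixed-point scheme $(e^{-\zeta}-1)\hat\psi=\hat L-\mathcal S(\hat\psi)$, divided by the unit $(e^{-\zeta}-1)/\zeta$, is the right shape. The convolution terms in $\mathcal S$ all vanish to order $\geq 2$ at $\zeta=0$ (convolution of $O(1)$ with $O(\zeta^k)$ is $O(\zeta^{k+1})$), so the contraction on small disks goes through once the $\ell$-series is controlled.

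You correctly flag the one genuine technical gap: summing $\sum_{\ell\ge 1}\tfrac{1}{\ell!}\hat\varphi^{*\ell}*\big((-\zeta)^\ell e^{-\zeta}\hat\psi\big)$ in a fixed disk with a norm that is $O(\rho)$ requires the iterated-convolution bound $\|\hat\varphi^{*\ell}\|_{D_\rho}\le M^\ell\rho^{\ell-1}/(\ell-1)!$ together with a uniform control of the factors $(-\zeta)^\ell$. This is exactly the lemma one has to prove, and it is the substance of \cite{DuSa}; without it the argument is an outline rather than a proof. Your alternative ``Borel-free'' majorant induction on the recursion $j\,b_j=-L_{j+1}+\sum_{k<j}b_k\tilde P_{j,k}$ is also viable but hides the same difficulty in estimating the structure constants $\tilde P_{j,k}$ coming from the expansion of $F(w)^{-k}$, where the binomial blow-up is what produces the $j!$.
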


%%Thus, the asymptotic series $\nu_*$ is rather slowly divergent. A Stirling formula
%%estimate suggests that the first $n$ terms of the series are useful in estimating the 
%%value of $\Phi$ for $|w|>\text{const}\cdot n$.

\thmref{gevrey-order} is a part of {\'E}calle's theory of {\it resurgence} as applied specifically to Fatou coordinates
(see \cite{Sau} for an account). Recall, that the {\it Borel transform} of a formal power series
$$h_{*}=\sum_{m=1}^\infty a_mx^{-m}$$ consists in applying the termwise inverse Laplace transform:
$$a_m x^{-m}\mapsto \frac{a_m\zeta^{m-1}}{(m-1)!}.$$
In the case when the formal power series is of Gevrey order $1$, this yields a series
$$\sum_{m=1}^\infty \frac{a_m\zeta^{m-1}}{(m-1)!},$$
which converges to an analytic function $\hat h(\zeta)$ in a neighborhood of the origin.

The following theorem describes the phenomenon of {\it resurgence} associated with the asymptotic series $\nu_*$,
discovered by {\'E}calle \cite{Ec}. {\'E}calle presented the proof for the case when $A=0$, and so the logarithmic term is 
absent in (\ref{formal-fatou-coord-1}), and outlined an approach to it in the general case. An independent proof in the general case
was recently given by A.~Dudko and D.~Sauzin \cite{DuSa}:

\begin{thm}
The Borel transform $\hat\nu$ of the formal power series $\nu_*$ analytically extends from the neighborhood of the origin
along every path which avoids the points $2\pi i\ZZ^*$. Furthermore, let $S_{\theta,\eps}$ be any sector
$$S_{\theta,\eps}=\{|Arg(\zeta)-\theta|<\eps\}\text{ such that }\overline{S_{\theta,\eps}}\cap \{2\pi i\ZZ^*\}=\emptyset,$$
and let $\gamma$ be any path as above which eventually lies in $S_{\theta,eps}$ (see \figref{fig-sector}). Denote $\hat\nu^\gamma$ the analytic continuation
along $\gamma$. Then $\hat\nu^\gamma$ is a function of exponential type:
$$|\hat h^\gamma(\zeta)|<C\exp(D|\zeta|),$$
where the constant $C$ and $D$ depend only on $\theta$ and $\eps$.
In particular, $\hat h$ has an analytic continuation $\hat h^+$ to the right half plane $\{\Re\zeta>0\}$
and an analytic continuation $\hat h^-$ to the left half plane $\{\Re\zeta<0\}$.
\end{thm}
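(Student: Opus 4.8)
The plan is to follow the classical resurgence-theoretic route of \'Ecalle \cite{Ec}, in the rigorous form given by Dudko and Sauzin \cite{DuSa}. The input is \thmref{gevrey-order}: since $\nu_*=\sum_{j\ge1}b_jw^{-j}$ is Gevrey of order $1$, its Borel transform
\[
\hat\nu(\zeta)=\sum_{j\ge1}\frac{b_j}{(j-1)!}\,\zeta^{j-1}
\]
is holomorphic in a disc about $0$, and everything reduces to continuing it. To find the equation $\hat\nu$ obeys, split off the explicit part $\Phi_0(w)=w-A\log w$ of the formal Fatou coordinate; subtracting $\Phi_0\circ F-\Phi_0$ from $\Phi_{\text{ps}}\circ F-\Phi_{\text{ps}}=1$ gives
\[
\nu_*\circ F(w)-\nu_*(w)=1-\bigl(F(w)-w\bigr)+A\log\bigl(F(w)/w\bigr)=:g(w),
\]
where, as in (\ref{eq:22}), $g$ is analytic at $\infty$ and vanishes there to order $2$. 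Writing $F(w)=w+1+\rho(w)$ with $\rho$ analytic at $\infty$ and expanding $\nu_*\circ F=\sum_k\frac1{k!}\rho^k\,\nu_*^{(k)}(\cdot+1)$, the Borel transform turns the left side into $(e^{-\zeta}-1)\hat\nu(\zeta)+\cK[\hat\nu](\zeta)$: the shift $w\mapsto w+1$ becomes multiplication by $e^{-\zeta}$, differentiation becomes multiplication by $-\zeta$, products become convolutions, and the convergent series $\rho$ contributes convolution kernels entire of exponential type; here $\cK$ collects the terms $k\ge1$ and is, near $0$, a small (contracting) convolution operator. Thus $\hat\nu$ solves a convolution equation of the schematic shape
\[
(e^{-\zeta}-1)\,\hat\nu=\hat g+\cK[\hat\nu],
\]
with $\hat g$ entire of exponential type.

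Next I would solve this equation in the algebra of resurgent functions. One divides by $e^{-\zeta}-1$, which vanishes simply on $2\pi i\ZZ$ and nowhere else; at $\zeta=0$ the apparent pole is removable because $\hat g$ and $\cK[\hat\nu]$ both vanish at $0$ --- this is exactly where the choice of $\Phi_0$, and in particular the coefficient $A$ of $\log w$, is dictated. The resulting fixed-point equation $\hat\nu=(e^{-\zeta}-1)^{-1}\bigl(\hat g+\cK[\hat\nu]\bigr)$ is then to be solved inside the space $\widehat{\cR}$ of germs at $0$ admitting analytic continuation along every path in $\CC\setminus 2\pi i\ZZ$. I would invoke the structural theorem --- the hard analytic core, due to \'Ecalle and made precise by Dudko--Sauzin --- that $\widehat{\cR}$ is stable under convolution with entire functions of exponential type, under the operators appearing in $\cK$, and under multiplication by $(e^{-\zeta}-1)^{-1}$ outside $2\pi i\ZZ^*$; granting this, a majorant/contraction argument carried out \emph{along a prescribed path} $\gamma$ avoiding $2\pi i\ZZ^*$ produces the continuation $\hat\nu^\gamma$, the sole obstruction to prolongation being a collision of $\gamma$ with a point of $2\pi i\ZZ^*$.

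Finally, for the growth estimate in a sector $S_{\theta,\eps}$ with $\overline{S_{\theta,\eps}}\cap 2\pi i\ZZ^*=\emptyset$: along a path $\gamma$ eventually lying in such a sector the distance from $\gamma(t)$ to $2\pi i\ZZ$ grows at least linearly in $|\gamma(t)|$, so $|e^{-\zeta}-1|^{-1}$ stays bounded there; iterating the fixed-point identity and using that a convolution of two functions of exponential type $D$ over a path of length $O(|\zeta|)$ is again of exponential type $D$, one gets $|\hat\nu^\gamma(\zeta)|\le Ce^{D|\zeta|}$ with $C,D$ depending only on $\theta,\eps$. Since both the positive and the negative real axis are disjoint from $2\pi i\ZZ^*$, covering the right (resp.\ left) half-plane by finitely many admissible sectors yields the continuations $\hat h^{\pm}$; and performing Borel--Laplace summation of $\nu_*$ along those directions recovers genuine attracting and repelling Fatou coordinates, which must then agree with the $\Phi_A,\Phi_R$ of \thmref{asym-expansion-2} --- a consistency check worth recording. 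The main obstacle is the middle step: proving $\hat\nu\in\widehat{\cR}$, i.e.\ that its continuation is genuinely \emph{endless} away from $2\pi i\ZZ^*$. This rests on the nontrivial convolutive stability of $\widehat{\cR}$ and, when $A\neq0$, on controlling the logarithmic ramification of $\hat\nu$ at the points $2\pi i k$ that is forced by the $\log w$ term in $\Phi_0$ --- precisely the feature that required \'Ecalle's $A=0$ argument to be extended by Dudko and Sauzin.
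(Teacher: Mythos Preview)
The paper does not prove this theorem at all: it is stated as a cited result, attributed to \'Ecalle (for the case $A=0$) and to Dudko--Sauzin in general, and no argument is given in the paper beyond that attribution. So there is no ``paper's own proof'' to compare your proposal against.

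That said, your sketch is a faithful outline of the resurgence-theoretic argument actually carried out in those references: pass to the Borel plane, derive a convolution equation with symbol $e^{-\zeta}-1$ whose simple zeros at $2\pi i\ZZ$ are the only obstruction to continuation, and run a fixed-point/majorant argument along paths avoiding $2\pi i\ZZ^*$ to get both endless continuation and exponential bounds in admissible sectors. You are also right that the substantive difficulty is the stability of the resurgent algebra under the relevant convolution operators (and the handling of the logarithmic contribution when $A\neq 0$), which is exactly the content of the Dudko--Sauzin work the paper cites. If you intend to include a proof here rather than a citation, be aware that what you have written is a high-level roadmap; the contraction estimate along paths and the convolution-stability of $\widehat{\cR}$ each require several pages of careful analysis that you would need to either reproduce or explicitly import from \cite{DuSa}.
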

 
\begin{figure}
\centerline{\includegraphics[height=0.3\textheight]{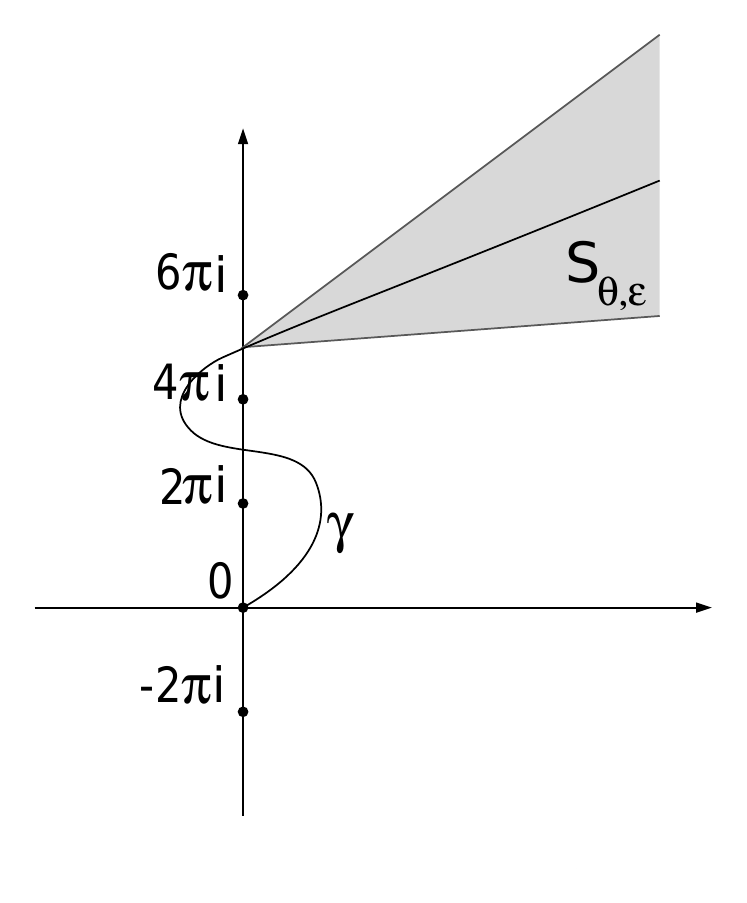}}
\caption{\label{fig-sector}An analytic continuation along a path $\gamma$ has an exponential type in a sector $S_{\theta,\eps}$ whose
closure does not contain any of the points $2\pi i\ZZ^*$.
}
\end{figure}

\noindent
Consider the standard Laplace transforms 
$$\cL^+(w)=\int_0^\infty e^{-w\zeta}\hat h(\zeta)d\zeta\text{ and }\cL^-(w)=\int_{-\infty}^0 e^{-w\zeta}\hat h(\zeta)d\zeta.$$
Note that 
$$\nu^+\equiv \cL^+\hat\nu^+\text{ and }\nu^-\equiv \cL^-\hat\nu^-$$
are defined for $\Re w$ sufficiently large.
The resurgent properties of $\nu_*$ are completed by the following refinement of \thmref{asym-expansion-2}:
\begin{thm}\cite{DuSa}
\label{gevrey-asym}
The analytic functions 
$$\Phi_A(w)=w-A\log w+\nu^+(w)\text{ and }\Phi_R(w)=w-A\log w+\nu^-(w).$$
Furthermore, let $\alpha<\pi/2$. Then there exist positive constants $B=B(\alpha)$, $C=C(\alpha)$ such that
$$\left| \Phi_A(w)-\left( w -A\log w+\sum_{j=1}^n b_j w^{-j}\right)\right|<C^{n+1}(n+1)!|w|^{-(n+1)}$$
uniformly in a sector $\{|\text{Arg}(w)|<\pi/2+\alpha\text{ and }|w|>B\}$,
and similarly for $\Phi_R$.
\end{thm}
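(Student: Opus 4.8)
The plan is to construct $\Phi_A$ and $\Phi_R$ as Borel--Laplace sums of the formal series of \propref{formal-fatou}, extracting the Gevrey-$1$ remainder bound from the analytic information on the Borel transform supplied by the two theorems of {\'E}calle--Dudko--Sauzin quoted above, and then to identify these sums with genuine Fatou coordinates. Fix $\alpha<\pi/2$ and an auxiliary $\alpha'\in(\alpha,\pi/2)$. By \thmref{gevrey-order} the series $\nu_*$ is Gevrey-$1$, so its Borel transform $\hat\nu(\zeta)=\sum_{j\ge1}b_j\zeta^{j-1}/(j-1)!$ has a positive radius of convergence $\rho$ and defines a holomorphic germ at $0$; by the resurgence theorem it continues to holomorphic functions $\hat\nu^{+}$ and $\hat\nu^{-}$ on the right and left half-planes, and on the closed sector $\overline{S_{0,\alpha'}}$, which misses $2\pi i\ZZ^{*}$ because $\alpha'<\pi/2$, one has $|\hat\nu^{+}(\zeta)|\le C_1e^{D_1|\zeta|}$ with $C_1,D_1$ depending only on $\alpha'$; symmetrically for $\hat\nu^{-}$ on $\overline{S_{\pi,\alpha'}}$.

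Next I would enlarge the domain of $\nu^{+}=\cL^{+}\hat\nu^{+}$ from $\{\Re w\gg0\}$ to the sector of the statement by tilting the Laplace contour: for $|\theta|\le\alpha'$ the integral $\int_{e^{i\theta}\RR_{\ge0}}e^{-w\zeta}\hat\nu^{+}(\zeta)\,d\zeta$ converges absolutely on the half-plane $\{\Re(we^{i\theta})>D_1\}$, and by Cauchy's theorem inside $\overline{S_{0,\alpha'}}$, where the exponential control holds, the value is independent of $\theta$ on overlaps. Glueing these determinations, $\nu^{+}$ is holomorphic on $\bigcup_{|\theta|\le\alpha'}\{\Re(we^{i\theta})>D_1\}$, which by an elementary angle computation contains $\{\,|\Arg w|<\pi/2+\alpha,\ |w|>B\,\}$ for a suitable $B=B(\alpha')$; likewise $\nu^{-}$ on the reflected sector.

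The Gevrey-$1$ estimate is then Nevanlinna's quantitative form of Watson's lemma. Given $w$ with $|\Arg w|<\pi/2+\alpha$, choose $\theta$ with $|\theta|\le\alpha'$ and $\Re(we^{i\theta})\ge c\,|w|$, where $c=\sin(\alpha'-\alpha)>0$, and write
\[
\nu^{+}(w)-\sum_{j=1}^{n}b_jw^{-j}
=\int_{e^{i\theta}\RR_{\ge0}}e^{-w\zeta}\Bigl(\hat\nu^{+}(\zeta)-\sum_{j=1}^{n}\frac{b_j\zeta^{j-1}}{(j-1)!}\Bigr)\,d\zeta ,
\]
using that each subtracted monomial integrates exactly to $b_jw^{-j}$ along any such ray. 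Splitting the ray at radius $\rho/2$: on the inner part a Cauchy estimate bounds the tail of $\hat\nu^{+}$ by $\const\,(|\zeta|/\rho)^{n}$, and $\int_{0}^{\infty}e^{-c|w|t}t^{n}\,dt=n!\,(c|w|)^{-(n+1)}$ produces the term $C^{n+1}(n+1)!\,|w|^{-(n+1)}$; on the outer part the exponential bound of the previous step, together with the Gevrey-$1$ bound on the $b_j$, makes the contribution $O(e^{-\delta|w|})$, which for $|w|>B$ is dominated by the same expression. All constants depend only on $\alpha$ (through $\alpha'$, $\rho$ and $c$), which is the source of the $\alpha$-dependence of $B$ and $C$ in the statement; the repelling case is identical with $\nu^{-}$.

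Finally I would identify $\Phi^{+}(w):=w-A\log w+\nu^{+}(w)$ with an attracting Fatou coordinate. From the previous step $\Phi^{+}$ has $w-A\log w+\sum b_jw^{-j}$ as a Gevrey-$1$ asymptotic expansion on $\{|\Arg w|<\pi/2+\alpha\}$, a sector of aperture exceeding $\pi$; in particular $(\Phi^{+})'\to1$, so $\Phi^{+}$ is univalent on a far-enough right half-plane $H_r$ with $F(\overline{H_r})\subset H_r$, i.e., $\Phi^{+}\circ\kappa$ is univalent on the attracting petal $\kappa^{-1}(H_r)$. The difference $E:=\Phi^{+}\circ F-\Phi^{+}-1$ is holomorphic near $\infty$ on a sector of aperture $>\pi$ --- as in the construction of $\Delta(\alpha,R)$, $F$ maps a far-out such sector into itself --- and because $\Phi_{\text{ps}}=w-A\log w+\nu_*$ satisfies the functional equation \emph{formally} (\propref{formal-fatou}) while composition with the analytic germ $F(w)-w$ preserves Gevrey-$1$ expansions, $E$ is Gevrey-$1$ asymptotic to $0$; by the uniqueness half of Watson's lemma --- a function holomorphic on a sector of aperture $>\pi$ and Gevrey-$1$ flat vanishes identically --- $E\equiv0$. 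Hence $\Phi^{+}\circ\kappa$ is a univalent solution of $\phi(f(z))=\phi(z)+1$ on an attracting petal, i.e., an attracting Fatou coordinate; by \propref{uniqueness-fatou} any attracting Fatou coordinate differs from $\Phi^{+}\circ\kappa$ by a constant, so normalising $\Phi_A:=\Phi^{+}$ makes the constant $C_A$ of \thmref{asym-expansion-2} vanish, and the repelling assertion follows by the same argument with $\nu^{-}$ and a local inverse of $f$. The genuinely hard input here is the resurgence theorem we have quoted --- the confinement of the singularities of $\hat\nu$ to $2\pi i\ZZ^{*}$ and the sectorial exponential bounds, due to {\'E}calle and, with the logarithmic term, to Dudko--Sauzin; granting it, the remaining delicate point is the uniformity of the Gevrey-$1$ remainder out to aperture $\pi/2+\alpha$, where one must track how $C_1(\alpha'),D_1(\alpha')$ and the aperture $c$ of the admissible half-planes degenerate as $\alpha'\uparrow\pi/2$. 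One could also bypass the uniqueness argument and verify directly, in the Borel plane, that $\hat\nu^{\pm}$ solve the convolution equation into which the Fatou functional equation transforms --- the division by $e^{-\zeta}-1$ being precisely what creates the $2\pi i\ZZ^{*}$ singularities --- which is the route taken in \cite{DuSa}.
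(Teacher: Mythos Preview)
The paper does not supply its own proof of this theorem: it is stated with the attribution \cite{DuSa} and then supplemented only by the explicit constant estimates quoted from Dudko's thesis \cite{Dudko}. So there is no in-paper argument to compare against. Your sketch is the standard Borel--Laplace/Nevanlinna route --- rotate the Laplace contour inside the singularity-free sector to reach aperture $\pi/2+\alpha$, split the integral to extract the $C^{n+1}(n+1)!\,|w|^{-(n+1)}$ remainder, then invoke Watson's uniqueness on a sector of opening $>\pi$ to identify the sum with a genuine Fatou coordinate --- and this is indeed the scheme of \cite{DuSa}, as you yourself note at the end. You also correctly isolate the one genuinely nontrivial input, namely the resurgence theorem (endless continuation of $\hat\nu$ off $2\pi i\ZZ^*$ with sectorial exponential bounds), as something to be imported rather than reproved.

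One small point worth tightening in your Watson step: to get $E=\Phi^{+}\circ F-\Phi^{+}-1$ holomorphic on a sector of aperture $>\pi$ you need $F$ to carry a far-out sector $\{|\Arg w|<\pi/2+\alpha'',\ |w|>R\}$ into $\{|\Arg w|<\pi/2+\alpha\}$ for some $0<\alpha''<\alpha$; you allude to this via $\Delta(\alpha,R)$, but note that the $\Delta(\alpha,R)$ of the paper are \emph{translated} sectors mapped into themselves, whereas here you want a genuine sector at the origin mapped into a slightly wider one. This is of course immediate from $F(w)=w+1+O(1/w)$, but the two pictures are not literally the same.
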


\noindent
Thus $\nu_*$ is a {\it Gevrey asymptotic series of order $1$.} It follows from \thmref{gevrey-asym} and a Stirling formula
estimate that the first $n$ terms of the asymptotic series $\nu_*$ are useful for numerically estimating the Fatou
coordinates for $|w|>\text{const}\cdot n$. 

The values of the constants in \thmref{gevrey-asym} can be estimated explicitly. Dudko \cite{Dudko} shows the following.
Let us write $$F(w)=w+1+a(w),\text{ where }a(w)=Aw^{-1}+O(w^{-2}),$$
and introduce functions
$$b(w)=a(w-1)=\sum_{k=1}^\infty c_k w^{-k},\text{ and }$$
$$m(w)=-A\log\left(\frac{1+w^{-1}b(w)}{1-w^{-1}}\right)+b(w)=\sum_{k=1}^\infty d_k w^{-k}.$$
Let $C_0, \beta>0$ be such that for all $k\in\NN$
$$|c_k|\leq C_0\beta^{k-1}\text{ and }|d_k|\leq C_0\beta^{k-1}.$$
Finally, let $S$ be such that $|\hat\nu(\zeta)|\leq S$ for $|\zeta|\leq 2$.
Set
$$B=\frac{(\beta_0+\frac{C_0}{\cos\alpha})(\frac{\alpha}{8}+1)+1}{\sin\frac{\alpha}{8}}.$$
Then for all $n\in\NN$
$$\left| \Phi_A(w)-\left( w -A\log w+\sum_{j=1}^n b_j w^{-j}\right)\right|<\left(S+\frac{C_0}{\cos\alpha}\left(\frac{8}{\alpha}\right)^{n+1}\right)(n+1)!|w|^{-(n+1)}$$
uniformly in a sector $\{|\text{Arg}(w)|<\pi/2+\alpha\text{ and }|w|>B\}$,
and similarly for $\Phi_R$.

\subsection{{\'E}calle-Voronin invariants and definition of parabolic
  renormalization.}\

The Riemann surface $\CC/\ZZ$ 
has two punctures at the upper end ($\Im z\to+\infty$), and at the lower end ($\Im z\to-\infty$).
Filling them with points $\oplus$ and $\ominus$ respectively, we obtain the Riemann sphere.
The mapping
$$\ixp(z)\equiv \exp(2\pi i z),$$
conformally transforms $\CC/\ZZ\mapsto \CC^*$, sending $\oplus\to 0$ and $\ominus\to\infty$.

Consider a germ $f$ with a simple parabolic fixed point at $0$, normalized as in (\ref{parabolic-normal-form-3}). 
Let $P_A$ and $P_R$
be a pair of ample petals for $f$, and denote $f^{-1}$ the local branch of the inverse which fixes the origin. Note that $f^{-1}$ extends
univalently to $P_R\cup f(P_A)$.
Fix a choice of the Fatou coordinates $\phi_A$ and $\phi_R$. 

The forward orbits originating in $P_A$ are parametrized by points in the attracting cylinder $\cC_A$. Similarly, $f^{-1}$-orbits
in $P_R$ are parametrized by points in $\cC_R$. By the definition of an ample petal, $P_A\cap P_R\neq \emptyset$. Let $z$ be any point
in the intersection of the petals. It is trivial to see that the correspondence
$$\tl\phi_R(z)\mapsto \tl\phi_A(z)$$
defines a mapping from a subset of $\cC_R$ to $\cC_A$. We denote this mapping by $h$. It is more convenient for us to pass to $\CC^*$
from $\CC/\ZZ$ via the exponential, and consider the mapping $\bm{h}$ formally defined as 
\begin{equation}\label{eq:renorm-heuristic}
\bm{h} = (\ixp \circ \phi_A) \circ (\ixp \circ \phi_R)^{-1}.
\end{equation}
We note:
\begin{lem}\label{th:renorm-analytic}
The mapping $\bm{h}$ is analytic.
\end{lem}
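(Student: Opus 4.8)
The map $\bm h$ is defined by the formula $\bm h = (\ixp\circ\phi_A)\circ(\ixp\circ\phi_R)^{-1}$, so the plan is to check that each of the three maps being composed is analytic on an appropriate domain and that the composition makes sense. The function $\ixp$ is entire, and $\phi_A$ is analytic (indeed univalent) on any ample attracting petal $P_A$, so $\ixp\circ\phi_A$ is analytic on $P_A$. The only real issue is the factor $(\ixp\circ\phi_R)^{-1}$: we must show that $\ixp\circ\phi_R$ has a local analytic inverse near the relevant points, which amounts to checking that it is a local biholomorphism there.

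First I would recall that $\phi_R$ is univalent on the ample repelling petal $P_R$ by Proposition~\ref{fatou cyl} (applied to a local inverse of $f$), and $\ixp$ is locally injective, so $\ixp\circ\phi_R$ is locally injective on $P_R$; by the open mapping theorem its local inverse branches are analytic. Thus on a neighborhood of any point $w_0=\ixp(\phi_R(z_0))$ with $z_0\in P_A\cap P_R$, the expression $(\ixp\circ\phi_R)^{-1}$ denotes a well-defined analytic branch taking values in $P_R$. Since $P_A\cap P_R\neq\emptyset$ by the definition of ample petal, the set of such $w_0$ is a nonempty open subset of $\CC^*$, and on it $\bm h(w_0)=\ixp(\phi_A((\ixp\circ\phi_R)^{-1}(w_0)))$ is a composition of analytic maps, hence analytic. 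One should also observe that $\bm h$ is well-defined independently of the choice of inverse branch: two branches of $(\ixp\circ\phi_R)^{-1}$ differ by precomposition with $z\mapsto z+m$ for some $m\in\ZZ$ (since $\ixp(w)=\ixp(w')$ iff $w-w'\in\ZZ$), and under the functional equation $\phi_R(f(z))=\phi_R(z)+1$ together with $f$ permuting orbits this has no effect on the value of $\ixp\circ\phi_A$ after passing to the cylinders --- this is exactly the content of the preceding discussion that $\tl\phi_R(z)\mapsto\tl\phi_A(z)$ is well-defined on a subset of $\cC_R$.

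The main (minor) obstacle is purely bookkeeping: pinning down the precise domain of $\bm h$ and verifying that the branch of $(\ixp\circ\phi_R)^{-1}$ one chooses actually lands inside $P_A\cap P_R$ (or at least inside $B^f_0$, so that $\phi_A$ is defined there via Proposition~\ref{th:phi-extended}) rather than in some other part of the plane where $\phi_R$ fails to be injective. This is handled by shrinking the petals: one works near a point $z_0$ in the overlap, uses the local injectivity of $\phi_R$ there, and notes that $\phi_A$ is globally defined and analytic on all of $B^f_0\supset P_A\cup P_R$ by Proposition~\ref{fatou cyl}. Once the domain is fixed, analyticity is immediate from the chain rule for holomorphic maps, so no genuine estimates are required.
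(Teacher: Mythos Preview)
Your approach is correct and is essentially the only natural one: $\bm h$ is analytic because it is locally a composition of analytic maps, the key point being that $\ixp\circ\phi_R$ is a local biholomorphism on $P_R$ (since $\phi_R$ is univalent there and $\ixp$ is a covering map), so its local inverses are analytic. The paper in fact gives no proof of this lemma at all, evidently regarding it as immediate from the formula $\bm h = (\ixp\circ\phi_A)\circ(\ixp\circ\phi_R)^{-1}$ and the preceding discussion that the correspondence $\tl\phi_R(z)\mapsto\tl\phi_A(z)$ is well-defined on the cylinders.

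One small correction: your final sentence asserts that $B^f_0\supset P_A\cup P_R$, which is false --- a repelling petal $P_R$ is \emph{not} contained in the parabolic basin (indeed most of it lies outside). What you actually need, and what is true, is that the domain of $\bm h$ at this stage is $\ixp\circ\phi_R(P_A\cap P_R)$, and any local inverse branch of $\ixp\circ\phi_R$ near such a point lands in $P_A\cap P_R\subset P_A$, where $\phi_A$ is defined and analytic. So the fix is simply to replace ``$B^f_0\supset P_A\cup P_R$'' by ``the branch lands in $P_A\cap P_R\subset P_A$''; the rest of your argument is fine.
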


\noindent
Furthermore,
\begin{lem}\label{th:W-nbhd-of-0}
The domain of definition of $\bm{h}$ contains a punctured neighborhood of $0$, and 
a punctured neighborhood at infinity. The singularities of
  $\bm{h}$ at $0$ is removable, the analytic extension taking the
  value $0$ at $0$. Similarly,  $\bm{h}$ has a pole at $\infty$.
\end{lem}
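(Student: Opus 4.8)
The strategy is to translate the behavior of $\bm h$ near the punctures $0$ and $\infty$ of $\CC^*$ into the behavior of the Fatou coordinates near the ends $\oplus$ and $\ominus$ of the cylinders, and then to exploit the asymptotic normalizations already established, namely $\Phi_A(w)=w+o(w)$ and $\Phi_R(w)=w+o(w)$ (\propref{crude asymptotics}), together with the fact that ample petals have images under $\phi_A$ (resp.\ $\phi_R$) containing a right (resp.\ left) half-plane (\propref{covershalfplane}). The key geometric input is that, in the overlap $P_A\cap P_R$ of two ample petals, points whose orbit accumulates at $0$ from the attracting direction correspond under $\tl\phi_A$ to the $\oplus$-end of $\cC_A$, while the same points, viewed through the repelling Fatou coordinate, sit in a region going to the $\ominus$-end of $\cC_R$ — and vice versa. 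Concretely, near $0$ the petal $P_A\cap P_R$ contains an ample sub-petal on which $\Im\phi_A\to+\infty$ while $\phi_R$ stays bounded from below; this is what forces $\bm h(\zeta)\to 0$ as $\zeta\to 0$.

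First I would fix ample petals $P_A, P_R$ and a point $z_0\in P_A\cap P_R$, and describe $\Dom(\bm h)$ precisely: it is $\ixp\circ\phi_R$ applied to the set of $z\in P_A\cap P_R$ (saturated appropriately by the dynamics so that the composition in \eref{eq:renorm-heuristic} makes sense), using \lemref{th:renorm-analytic} for analyticity. Next I would show that $\Dom(\bm h)$ contains a punctured neighborhood of $0$: take $w\in P_R$ deep in an ample sub-sector, so that $\kappa(w)$ (equivalently $-1/w$) lies in a sector where $\Phi_R$ is defined and where by \propref{fatou coords cover} its image covers a large sector; since $P_R$ is ample, $\phi_R(P_R)$ contains a left half-plane, and the $\ominus$-end of $\cC_R$, i.e.\ $\Im\phi_R\to-\infty$, corresponds to $\ixp\circ\phi_R\to\infty$ — wait, I need the correspondence the other way. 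Let me instead run it through $\phi_A$: points $z\in P_A$ with $z\to 0$ from the attracting direction have $\Im\phi_A\to +\infty$ (the $\oplus$-end of $\cC_A$), hence $\ixp\circ\phi_A(z)\to 0$. So I choose a sequence (in fact a whole punctured neighborhood of the $\oplus$-end) of points in $P_A\cap P_R$ converging to $0$; their images under $\ixp\circ\phi_R$ fill a punctured neighborhood of some point, which by the asymptotics $\phi_R\sim w$ and the ampleness of $P_R$ I identify as a punctured neighborhood of $0$ in $\CC^*$; and on this neighborhood $\bm h = \ixp\circ\phi_A$ of those points $\to 0$. Removability of the singularity then follows from Riemann's removable singularity theorem once boundedness near $0$ is known, and the limiting value $0$ is forced by the above. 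The statement at $\infty$ is handled by the symmetric argument with the roles of attracting/repelling (equivalently $\oplus/\ominus$) interchanged, giving $\bm h(\zeta)\to\infty$ as $\zeta\to\infty$ with a simple pole (the pole being simple because near the ends both $\phi_A$ and $\phi_R$ are asymptotically the identity, so $\bm h$ is asymptotically linear there).

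The main obstacle I anticipate is not any single estimate but rather pinning down, carefully and without circularity, that the relevant subset of $P_A\cap P_R$ really does map, under $\ixp\circ\phi_R$, onto a \emph{full} punctured neighborhood of $0$ (resp.\ of $\infty$), rather than merely a cusp-shaped or sector-shaped region. This requires combining: (i) that an ample petal contains a genuine sector $\{|\Arg(z/\nu)|<\alpha, |z|<r\}$ with $\alpha>\pi/2$; (ii) that $\kappa$ opens such a sector to a region containing a sector of half-angle $>\pi$ at $\infty$ in the $w$-plane; (iii) \propref{fatou coords cover}, which says $\Phi_R$ of such a region covers a corresponding large sector $S_0$; and (iv) that exponentiating a sector of half-angle $>\pi/2$ about the appropriate direction (the direction of the $\ominus$-end) yields a full punctured neighborhood of $0$. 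Getting the half-angles to line up — ampleness giving the crucial gain past $\pi/2$ at exactly the right place — is the delicate bookkeeping step; once it is in place, boundedness, removability, and the order of the pole are routine consequences of $\Phi_A(w)=w+o(w)$, $\Phi_R(w)=w+o(w)$.
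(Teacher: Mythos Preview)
Your approach is essentially the same as the paper's: use the overlap of ample petals near the imaginary axis together with the crude asymptotics $\phi_A(z)\sim\phi_R(z)\sim -1/z$ to show that points with $\Im\phi_R$ large and positive also have $\Im\phi_A$ large and positive, so $\bm h\to 0$ near $0$ (and symmetrically at $\infty$).

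Two small remarks. First, your angle-counting argument for getting a \emph{full} punctured neighborhood is more elaborate than necessary. The paper simply observes that the upward sector in $P_A\cap P_R$ maps under $\phi_R$ onto a region containing a vertical half-strip $\{x+iy:0\le x\le 1,\ y>R\}$ of width $1$; since $\ixp$ is $1$-periodic, this half-strip already surjects onto a punctured disk $\{0<|\zeta|<e^{-2\pi R}\}$. You do not need to track half-angles exceeding $\pi/2$ through $\kappa$ and $\Phi_R$. Second, the simplicity of the zero at $0$ and of the pole at $\infty$ is \emph{not} part of this lemma; it is established separately (\propref{thm:simple-zero}) via an explicit computation of the derivative using the $A\log$ term in the asymptotic expansion, not merely from $\Phi\sim w$. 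So drop that claim here.
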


\begin{proof}
By local theory, $P_A \cap P_R$ contains the (upward-facing)
circular sector $\{ w : 0 < \vert w \vert < r, \pi/2 -
\delta < \Arg(w) < \pi/2 + \delta\}$ for sufficiently small $r$, for
any $\delta < \pi/2$. By \propref{fatou coords cover} and the crude
asymptotic estimate $\phi_R(z) \approx -1/z$ as $z \to 0$ staying away
from the positive real axis (\propref{crude
  asymptotics}),
 it follows that the image under $\phi_R$
of this sector contains the upper half-strip
\begin{equation}\label{eq:upper-strip}
\phi_R(P_A\cap P_R ) \supset \{ x + i y : 0 \leq x \leq 1, y > R \}
\end{equation}
for sufficiently large $R$, so the domain of definition of $\bm{h}$ contains a punctured neighborhood
\[
\{ z: 0< \vert z \vert < \exp(- 2 \pi R).
\]
By \propref{crude
  asymptotics}) the infimum
of $\Im(\phi_A(x+iy))$ over the strip on the right of
(\ref{eq:upper-strip}) goes to $+\infty$ as $R \to \infty$, so
$\bm{h}(z) \to 0$ as $z \to 0$, as claimed. The proofs for the
assertions about $\infty$ are similar.
\end{proof}

\noindent
Let us analytically continue $\bm{h}$ to the origin and to infinity. We denote $h^+(z)$ the analytic germ
of $\bm{h}$ at $0$, and $h^-(z)$ the analytic germ at $\infty$. When necessary to emphasize the dependence on the germ $f$
we will write $\bm{h}_f$ and $h^\pm_f$.

It is easy to see that:
\begin{prop}
The germs $h^+$, $h^-$ do not depend on the choice of the petals $P_A$ and $P_R$.
\end{prop}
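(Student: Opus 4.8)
The plan is to reduce the statement to the fact, already established, that a Fatou coordinate does not depend on the choice of a petal. First I would invoke \propref{th:phi-extended} together with the discussion following the definition of attracting and repelling Fatou coordinates: once $\phi_A$ and $\phi_R$ are fixed, they extend uniquely, via the functional equation $\phi(f(z))=\phi(z)+1$, to analytic functions on the attracting basin $B^f$ and on the repelling basin of $f$ respectively, and these global functions restrict to Fatou coordinates on \emph{every} petal of the relevant kind. Consequently $\ixp\circ\phi_A$ and $\ixp\circ\phi_R$ are honest analytic functions on these basins --- no petal enters their definition --- and each is $f$-invariant, since $\ixp(\phi(z)+1)=\ixp(\phi(z))$.

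Next I would fix two pairs of ample petals $(P_A^1,P_R^1)$ and $(P_A^2,P_R^2)$ and observe, as in the proof of \lemref{th:W-nbhd-of-0}, that every ample attracting petal contains a circular sector at $0$ about each of the two imaginary directions, and likewise every ample repelling petal. Hence for all sufficiently small $r>0$ the sectors $\Sigma^{\pm}=\{z:0<|z|<r,\ |\Arg(z)\mp\pi/2|<\delta\}$ (for a fixed $\delta<\pi/2$) lie inside all four petals simultaneously. For $z\in\Sigma^{+}$ the defining formula \eref{eq:renorm-heuristic} yields, for either choice $i=1,2$, the relation
\[
\bm h\bigl(\ixp(\phi_R(z))\bigr)=\ixp(\phi_A(z)),
\]
the choice of $\ixp\circ\phi_R$-preimage in \eref{eq:renorm-heuristic} being immaterial because $\ixp\circ\phi_A$ and $\ixp\circ\phi_R$ are $f$-invariant. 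I would then use \propref{fatou coords cover} together with the estimate $\phi_R(z)\approx-1/z$ of \propref{crude asymptotics} --- exactly as in the proof of \lemref{th:W-nbhd-of-0}, cf.\ \eref{eq:upper-strip} --- to see that $\ixp(\phi_R(\Sigma^{+}))$ contains a punctured disc $\{0<|\zeta|<\rho\}$, on which $\bm h$ is single-valued by \lemref{th:renorm-analytic}. Since the right-hand side of the displayed relation involves only the fixed global Fatou coordinates, the two maps $\bm h$ built from $(P_A^1,P_R^1)$ and from $(P_A^2,P_R^2)$ agree on $\{0<|\zeta|<\rho\}$, so their analytic continuations to $0$ coincide; that is, $h^{+}$ does not depend on the petals. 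Running the same argument with $\Sigma^{-}$ and a punctured neighbourhood of $\infty$ in place of one of $0$ handles $h^{-}$.

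The hard (indeed, the only nonformal) part is this middle step: producing, for an \emph{arbitrary} pair of ample petals, a fixed punctured neighbourhood of $0$ --- and of $\infty$ --- on which $\bm h$ is given by the petal-free formula above. But this is precisely the content of the proof of \lemref{th:W-nbhd-of-0}: one needs only that the relevant circular sectors are contained in every ample petal and that $\phi_R$ carries them onto half-strips covering the required neighbourhoods. Granting that, no new estimates are required and the remainder of the argument is bookkeeping.
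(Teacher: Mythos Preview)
Your argument is correct. The paper itself does not give a proof of this proposition: it is introduced with ``It is easy to see that:'' and left without further comment. Your write-up supplies exactly the kind of justification the authors are taking for granted --- that once the normalizations of $\phi_A$ and $\phi_R$ are fixed, the global extensions of \propref{th:phi-extended} make the relation $\bm h(\ixp\circ\phi_R(z))=\ixp\circ\phi_A(z)$ petal-free, and that any two ample petals overlap in sectors whose $\ixp\circ\phi_R$-images fill out punctured neighbourhoods of $0$ and $\infty$ (as in the proof of \lemref{th:W-nbhd-of-0}). One small wording issue: you say $\ixp\circ\phi_A$ and $\ixp\circ\phi_R$ are ``honest analytic functions on these basins''; be careful that $\phi_R$ lives on the basin of the local inverse, not on $B^f$, so the two global domains are different --- but your actual argument only uses their common restriction to the sectors $\Sigma^\pm$, where this distinction is harmless.
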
 

\noindent
By \propref{uniqueness-fatou}, an attracting (repelling) Fatou coordinate  differs from our choice of $\phi_A$ ($\phi_R$)
by an additive constant. A trivial
verification shows that replacing $\phi_A$ by $\phi_A + c_A$ and
$\phi_R$ by $\phi_R + c_R$ changes $h^\pm$ to
\begin{equation}\label{eq:change-of-normalization}
w \mapsto \lambda_A h^\pm(\lambda_R^{-1} w)
\end{equation}
with
\begin{equation}\label{eq:rescaling-factors}
\lambda_A = \exp(2 \pi i c_A),\quad \lambda_R = \exp(2
\pi i c_R).
\end{equation}
The scale change factors $\lambda_A$ and $\lambda_R$ can be given arbitrary
non-zero values by the appropriate choices of $c_R$ and $c_A$.

We are going to show that the zero of $h^+$ at $0$ and the pole of
$h^{-}$ at $\infty$ are simple. We will see this by deriving a
useful explicit formula for the respective leading coefficients. To
write this formula, we need to introduce some notation. Writing just
the first few terms in the asymptotic approximations to the Fatou
coordinates (\propref{asym-expansion-2}):
\begin{equation}\label{eq:asym-expansion-4}
\begin{split}
\phi_A(z) &= - \frac{1}{z} + A \log(-z) + C_A + O(z)\\
\phi_R(z) &= - \frac{1}{z} + A \log(z) + C_R + O(z)
\end{split}\end{equation}
where
\begin{itemize}
\item $A = 1 - \alpha$, with $\alpha$ the coefficient of $z^3$ in
  the Taylor expansion for $f$ at $0$.
\item the logarithms mean the {\em standard branch,} i.e., the branch
  with a cut along the negative real axis and real values on the
  positive real axis.
\item $C_R$ and $C_A$ are complex constants (specifying the
  normalization of the Fatou coordinates)
\item the term $O(z)$ in the first equation means a quantity which goes
  to zero at least as fast as $\vert z \vert$ as $z \to 0$
  inside any sector of the form $\{ z \neq 0 : - \alpha < \Arg(z) < \alpha
  \}$ with $\alpha < \pi$ (a {\em left-facing} sector), and the $O(z)$
  in the second means as $z \to 0$ in any similarly defined right-facing sector.
\end{itemize} 

\begin{prop}\label{thm:simple-zero}
\[
(h^+)'(0) = \exp\bigl( -2 A \pi^2 + 2 \pi i (C_A - C_R)\bigr)
\]
and
\[
\lim_{z \to \infty} \frac{h^{-}(z)}{z} = \exp\bigl( + 2 A \pi^2 + 2
\pi i (C_A - C_R)\bigr)
\]
In particular: $h^+$ has a simple zero at $0$, $h^-$ a simple pole
at $\infty$, and
\begin{equation}
\label{product derivatives}
(h^+)'(0) \cdot (h^-)'(\infty) = \exp(-4 \pi^2 A)
\end{equation}
%%(where $(h^-)'(\infty)$ means the multiplier for $h^-$ at its fixed point $\infty$.)

\end{prop}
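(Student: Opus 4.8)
The plan is to read off the two leading coefficients directly from the two-term asymptotics \eqref{eq:asym-expansion-4} of the Fatou coordinates, and then to deduce \eqref{product derivatives} by inspection. By \lemref{th:W-nbhd-of-0}, $\bm h$ is holomorphic at $0$ with $\bm h(0)=0$ and has a pole at $\infty$; hence $(h^+)'(0)=\lim_{w\to 0}\bm h(w)/w$, and $\lim_{z\to\infty}h^-(z)/z$ likewise exists, each computable along any convenient path. I will use the path $w=\exp(2\pi i\,\phi_R(z))$ with $z\to 0$. Since $P_A$ and $P_R$ are ample, $P_A\cap P_R$ contains an upward-opening circular sector at the origin --- exactly the geometric input used in the proof of \lemref{th:W-nbhd-of-0} --- and along it $w\to 0$ as $z\to 0$, while by the defining relation for $\bm h$,
\[
\frac{\bm h(w)}{w}=\frac{\exp(2\pi i\,\phi_A(z))}{\exp(2\pi i\,\phi_R(z))}=\exp\bigl(2\pi i(\phi_A(z)-\phi_R(z))\bigr).
\]
Running the same argument along a downward-opening sector, where instead $w\to\infty$, computes $\lim_{z\to\infty}h^-(z)/z$.

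Next, subtract the two expansions in \eqref{eq:asym-expansion-4} (which require \thmref{asym-expansion-2}, not merely the crude \propref{crude asymptotics}, since the constant term is needed). The polar parts $-1/z$ cancel, the $O(z)$ remainders vanish in the limit, and
\[
\phi_A(z)-\phi_R(z)=A\bigl(\log(-z)-\log(z)\bigr)+(C_A-C_R)+O(z).
\]
It is essential here that \eqref{eq:asym-expansion-4} holds for $\phi_A$ throughout a left-facing sector and for $\phi_R$ throughout a right-facing sector, so that the two expansions are simultaneously valid on a punctured neighborhood of the imaginary axis --- once again, this is ampleness. The difference $\log(-z)-\log(z)$ of the standard branch is a constant multiple of $i\pi$, with one sign as $z\to 0$ in the upper half-plane and the opposite sign in the lower half-plane, so $\phi_A(z)-\phi_R(z)$ tends to a limit of the form $(C_A-C_R)\mp i\pi A$. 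Substituting and exponentiating gives
\[
(h^+)'(0)=e^{-2A\pi^2+2\pi i(C_A-C_R)},\qquad \lim_{z\to\infty}\frac{h^-(z)}{z}=e^{+2A\pi^2+2\pi i(C_A-C_R)}.
\]

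Both limits are finite and nonzero, so the zero of $h^+$ at $0$ and the pole of $h^-$ at $\infty$ are simple. Reading $(h^-)'(\infty)$ as the derivative of $h^-$ at its fixed point $\infty$ in the chart $z\mapsto 1/z$, i.e.\ $(h^-)'(\infty)=\bigl(\lim_{z\to\infty}h^-(z)/z\bigr)^{-1}$, the product of the two displayed formulas has its $2\pi i(C_A-C_R)$ contributions cancel while its $A$-contributions add, yielding $(h^+)'(0)\cdot(h^-)'(\infty)=\exp(-4\pi^2 A)$, which is \eqref{product derivatives}.

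The main --- essentially the only --- obstacle is the branch bookkeeping: pinning down the $\pm i\pi$ jump of $\log(-z)-\log(z)$ across the imaginary axis in the two sectors, together with the check that the domains of validity of the expansions in \eqref{eq:asym-expansion-4} genuinely overlap there (which is what forces the use of ample rather than arbitrary petals). Everything else is a mechanical substitution.
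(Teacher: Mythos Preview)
Your proposal is correct and follows essentially the same approach as the paper: the paper also computes $(h^+)'(0)$ as $\lim_{t\to 0^+}\bm h(w(t))/w(t)$ along the ray $z(t)=it$ in the overlap of the ample petals, inserting the two asymptotic expansions \eqref{eq:asym-expansion-4} and using $\log(\pm it)=\log t\pm i\pi/2$ to extract the $\mp i\pi A$ jump, then handles $h^-$ by the symmetric computation in the lower half-plane. The only cosmetic difference is that the paper plugs in the specific path $z=it$ before subtracting, whereas you subtract the expansions symbolically and then evaluate the branch difference; the content is identical.
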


\begin{proof}
To prove the formula for $(h^+)'(0)$, we look at points $z(t)$ of the
form $it$, with $t$ small and positive; such $w$'s are in $B^f_0 \cap
P_R$ for any ample petal $P_R$. Inserting into (\ref{eq:asym-expansion-4})
and using $\log(\pm it) = \log(t) \pm i \pi/2$:
\[
\phi_R(z(t)) = i t^{-1} + A \log t +  i A \pi/2 + C_R +
O(t)
\]
\[
\phi_A(z(t)) = i t^{-1} + A \log t - i A \pi/2 + C_A + O(t).
\]
We put $w(t) := \exp(2 \pi i \phi_R(z(t)))$; then $h^+(w(t)) = \exp( 2
\pi i \phi_A(z(t)))$ so
\[
\frac{h^{+}(w(t))}{w(t)} = \exp(2\pi i ( - i A \pi + C_A - C_R +
O(t))),
\]
so
\[
(h^+)'(0) = \lim_{t \to 0^+} \frac{h^{+}(w(t))}{w(t)} = \exp\bigl( -2 A
\pi^2 + 2 \pi i (C_A - C_R)\bigr),
\]
as asserted. The assertion about $h^-$ is proved by a similar calculation.

\end{proof}

Let us say that
two pairs of germs at zero and infinity $(h^+_1,h^-_1)$ and $(h^+_2, h^-_2)$ are {\em equivalent} if there exist
non-zero constants $\lambda_A$ and $\lambda_R$ so that
\[
h^{\pm}_2(z) = \lambda_A h^{\pm}_1( \lambda_R^{-1} z).
\]
In view of the following, let us call the equivalence class of the germs $\bm{h}_f=h^\pm_f$ the {\it {\'E}calle-Voronin invariant} of $f$.
By (\ref{product derivatives}) the {\'E}calle-Voronin invariant
determines the {\it formal} conjugacy class of the simple parabolic germ $f$. 
A much stronger result is due to Voronin \cite{Vor} (an equivalent
version was formulated by {\'E}calle \cite{Ec}):

\begin{thm}
Two analytic germs $f_1$ and $f_2$ of the form
(\ref{parabolic-normal-form-3}) are conjugate by a local conformal
change of coordinates $\varphi(z)$ with $\varphi(0)=0$ if and only if
their \'Ecalle-Voronin invariants are equal.
\end{thm}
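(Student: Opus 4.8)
The plan is to prove the Voronin--{\'E}calle theorem: analytic conjugacy of two simple parabolic germs is equivalent to equality of their {\'E}calle--Voronin invariants. The necessity direction is the easy one, so I would begin there. Suppose $\varphi$ is a local conformal change of coordinates with $\varphi(0)=0$ conjugating $f_1$ to $f_2$, i.e. $f_2 = \varphi \circ f_1 \circ \varphi^{-1}$. Then $\varphi$ maps an attracting petal of $f_1$ to an attracting petal of $f_2$ and a repelling petal to a repelling petal, and carries the parabolic basin $B^{f_1}_0$ onto $B^{f_2}_0$. If $\phi_A^{(1)}$ is an attracting Fatou coordinate for $f_1$, then $\phi_A^{(1)} \circ \varphi^{-1}$ is univalent, analytic, and satisfies the functional equation $\psi(f_2(z)) = \psi(z) + 1$ on the image petal; by \propref{uniqueness-fatou} it differs from $\phi_A^{(2)}$ by an additive constant, and similarly on the repelling side. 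Tracking through the definition \eref{eq:renorm-heuristic} of $\bm{h}$, one sees that $\bm{h}_{f_1}$ and $\bm{h}_{f_2}$ agree up to the normalization change \eref{eq:change-of-normalization}, so the {\'E}calle--Voronin invariants coincide.

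The substance is the sufficiency direction: given that the invariants are equal, construct a local conformal conjugacy. The strategy is quasiconformal surgery. Choose ample attracting and repelling petals $P_A^{(i)}, P_R^{(i)}$ for $f_i$ and Fatou coordinates normalized so that the representative germs $h^\pm$ of the invariant actually coincide, not merely up to \eref{eq:change-of-normalization}; this is possible since $\lambda_A, \lambda_R$ can be prescribed arbitrarily. Now glue the attracting cylinders $\cC_A^{(1)} \cong \cC_A^{(2)} \cong \CC/\ZZ$ by the identity, and likewise the repelling cylinders, the gluing being compatible precisely because $h^+_{f_1} = h^+_{f_2}$ near the upper end and $h^-_{f_1} = h^-_{f_2}$ near the lower end. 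Concretely: on a neighborhood of the parabolic point one builds a homeomorphism $\Psi_0$ by setting $\Psi_0 = (\phi_A^{(2)})^{-1} \circ \phi_A^{(1)}$ on the attracting side and $\Psi_0 = (\phi_R^{(2)})^{-1} \circ \phi_R^{(1)}$ on the repelling side, these being well-defined up to the identifications just arranged and conjugating $f_1$ to $f_2$ wherever defined. This $\Psi_0$ is conformal away from the overlap region and is quasiconformal across it (one must check it extends continuously and with bounded dilatation to the overlap, where the two definitions are reconciled by the equality of the germs $h^\pm$). Pull back the standard complex structure by $\Psi_0$ near $0$ and by the identity elsewhere to get an $f_1$-invariant Beltrami differential $\mu$ with $\|\mu\|_\infty < 1$; by the Measurable Riemann Mapping Theorem there is a quasiconformal $\chi$ integrating $\mu$, fixing $0$, with $\chi \circ f_1 \circ \chi^{-1}$ holomorphic. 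A short argument using the normalizations identifies $\chi \circ f_1 \circ \chi^{-1}$ with $f_2$, and $\chi$ is conformal where $\mu$ is standard; one then shows $\chi$ is in fact conformal near $0$ by a Weyl-lemma / removability argument on the parabolic point itself, or equivalently by arranging $\mu$ to be trivial near $0$ from the start.

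The main obstacle is making the gluing map $\Psi_0$ genuinely quasiconformal --- in particular, ensuring it is globally well-defined, has bounded dilatation uniformly up to the parabolic point, and that the resulting Beltrami form $\mu$ actually has $\|\mu\|_\infty < 1$ rather than merely degenerating in a controlled way. The delicacy is that near the parabolic fixed point the two petal systems overlap in thin crescents reaching into $0$, and one must verify that the transition between ``glue via $h^+$ at the top'' and ``glue via $h^-$ at the bottom'' does not introduce uncontrolled dilatation there; this is exactly where the hypothesis $\bm{h}_{f_1} \sim \bm{h}_{f_2}$ with the chosen normalization is used in an essential, not merely formal, way. A secondary point requiring care is the final upgrade from quasiconformal to conformal conjugacy at $0$: a priori $\chi$ is only quasiconformal, and one must invoke removability of the point (or the finer structure of the surgery) to conclude that the conjugacy is actually holomorphic there. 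For the details of this surgery I would refer to Voronin's original argument \cite{Vor} and to the exposition in \cite{IS} or \cite{Sh}; the construction parallels closely the quasiconformal proof of \propref{fatou cyl} given above, with the single gluing map replaced by a pair of gluings reconciled by the {\'E}calle--Voronin data.
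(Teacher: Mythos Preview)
Your necessity argument is correct and matches the paper's.

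For sufficiency, you have the right building blocks but take an unnecessary detour. You set up the two conformal maps $\Psi_A=(\phi_A^{(2)})^{-1}\circ\phi_A^{(1)}$ on $P_A$ and $\Psi_R=(\phi_R^{(2)})^{-1}\circ\phi_R^{(1)}$ on $P_R$, each conjugating $f_1$ to $f_2$. The paper's observation---which you miss---is that the hypothesis $h_1^\pm=h_2^\pm$ says precisely that $\Psi_A$ and $\Psi_R$ \emph{agree} on each component of $P_A\cap P_R$, up to post-composition by an iterate of $f_2$; after adding suitable integers to the Fatou coordinates they agree exactly. One therefore obtains a single \emph{conformal} conjugacy on the punctured neighborhood $P_A\cup P_R$, and the asymptotic expansion \eref{eq:asym-expansion-4} shows it extends holomorphically through~$0$. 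That is the whole proof.

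Your quasiconformal surgery layer is a red herring. If the two pieces genuinely agreed only ``up to bounded dilatation'' on the overlap, you would not have a well-defined map at all: $\Psi_0$ would be two-valued there, not merely non-conformal. And even granting a qc conjugacy, your upgrade step is circular: a quasiconformal conjugacy between parabolic germs cannot in general be promoted to a conformal one---that is exactly what the {\'E}calle--Voronin invariant obstructs---so invoking MRMT and then ``a short argument'' to identify $\chi\circ f_1\circ\chi^{-1}$ with $f_2$ would have to re-use the equality of invariants, at which point you are back to the direct argument. The ``main obstacle'' you identify (uniform bounded dilatation near~$0$) is therefore a phantom; the actual content is the exact matching of $\Psi_A$ and $\Psi_R$, and once that is seen the map is already holomorphic.
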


\begin{proof}[Sketch of proof]
The proof of the ``only if'' direction is very easy -- essentially a
diagram-chase. We assume
\[
f_1 = \varphi^{-1} \circ f_2 \circ \varphi\quad\text{on a neighborhood
  of $0$,}
\]
where $\varphi$ is analytic at $0$ with $\varphi(0) = 0$, $\varphi'(0)
>0$. We label various objects attached to $f_1$ and $f_2$ with
indices $1$ and $2$ respectively.  Because the assertion concerns
germs, we can cut down the domains of the various functions appearing
as convenient. We set things up as follows:
\begin{itemize}
\item We fix a domain for $\varphi$ so that it is univalent.
\item We fix a domain for $f_2$ contained in the image of $\varphi$,
  on which $f_2$ is univalent, and so that the image of $f_2$ is
  contained in the image of $\varphi$.
\item We take for the domain of $f_1$ the preimage under $\varphi$ of
  the domain of $f_2$. Then the equation
\[
f_1 = \varphi^{-1} \circ f_2 \circ \varphi
\]
is exact, including domains.
\end{itemize}
It is then obvious that:
\begin{itemize}
\item $\varphi$ maps any ample petal for $f_1$ to an
  ample petal for $f_2$. We fix any ample repelling petal $P^{(1)}_R$ to
  use in the construction of $\bm{h}_1$, and we use $P^{(2)}_R :=
  \varphi P^{(1)}_R$ as repelling petal to construct $\bm{h}_2$.
\item If $\phi^{(2)}_A$ is an attracting Fatou coordinate for $f_2$
  defined on $B^{f_2}$ then $\phi^{(2)}_A \circ \varphi$
  is an attracting Fatou coordinate for $f_1$, and similarly for
  repelling Fatou coordinates. To construct \'Ecalle-Voronin pairs for
  the two mappings, we choose any attracting Fatou coordinate
  $\phi^{(2)}_A$ for $f_2$ and use $\phi^{(1)}_A := \phi^{(2)}_A \circ
  \varphi$ for $f_1$ (and
  similarly for repelling Fatou coordinates.)
\end{itemize}

With things organized this way, an entirely mechanical verification
shows that $\bm{h}_1$ and $\bm{h}_2$ are identical pairs of germs.

Conversely, assume that the \'Ecalle-Voronin invariants of $f_1$ and
$f_2$ are equal. By fixing the constants in Fatou coordinates for
the two mapping appropriately, we can arrange that
\begin{equation}\label{eqmod-1}
h_1^+ = h_2^+\quad\text{and}\quad h_1^{-} = h_2^{-}
\end{equation}
on neighborhoods of $0$ and $\infty$ respectively. Fix also small
attracting and repelling petals, for instance,
\[
P_R = -P_A = \{ z: -1/z \in \Delta(\alpha, R)\},
\]
with $\Delta(\alpha, R)$ as defined in (\ref{eq:Delta-alpha-R}),
$\alpha$ some number in $(\pi/2, \pi)$, and $R$ large enough. With
these choices, $P_R \cap P_A$ has only two components, an upper one
contained in $U^+$ and a lower one contained in $U^-$. Then
$\bigl(\phi^{(2)}_A\bigr)^{-1} \circ \bigl( \phi^{(1)}_A \bigr)$
conjugates $f_1$ to $f_2$ on $P_A$, and, correspondingly,
$\bigl(\phi^{(2)}_R\bigr)^{-1} \circ \bigl( \phi^{(1)}_R \bigr)$
conjugates $f_1$ to $f_2$ on $P_R$. From (\ref{eqmod-1}) it follows --
possibly after adding appropriate integers to the various Fatou
coordinates -- that the two conjugators agree on $P_A \cap
P_R$. Putting them together, we obtain a conjugator on $P_A \cup P_R$,
a punctured neighborhood of $0$. It is easy to see, using the
asymtotic estimates for Fatou coordinates, that this conjugator extends
analytically through $0$ with the value $0$ there.

\end{proof}

\noindent In fact, all equivalence classes of pairs of
germs actually occur as \'Ecalle-Voronin invariants:

\begin{thm}
\label{realization}
Let $h^+$ be an analytic germ at $0$, with a simple zero there, and let
$h^-$ be a meromorphic germ at $\infty$ with a simple pole there. 
Denote $\lambda^+=(h^+)'(0)$, $\lambda^-=(h^-)'(\infty)$, and let 
$$\lambda^+\cdot\lambda^-=e^{-4\pi^2 A}.$$
Then there exists a simple parabolic germ $f(z)$ at the origin of the form 
\[
f(z) = z + z^2 + (1-A)z^3+\cdots
\]
whose \'Ecalle-Voronin invariant is the equivalence class of $(h^+,
h^-)$. 
\end{thm}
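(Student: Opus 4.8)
The plan is to realize the prescribed $\bm h$-data by a quasiconformal surgery that glues the attracting and repelling cylinders according to $h^{\pm}$, reversing the construction used earlier to \emph{read off} the \'Ecalle--Voronin invariant from a germ. First I would build the ``model'' dynamical picture directly in the cylinder coordinates: take a right half-plane $\Hplus{}=\{\Re w> N\}$ with the translation $T:w\mapsto w+1$ as a model attracting map, and a left half-plane $\{\Re w<-N\}$ with the same translation as the model repelling map; here $N$ is chosen large enough that the germs $h^+$ (near $0$) and $h^-$ (near $\infty$) are represented, via $\ixp(z)=\exp(2\pi i z)$, by univalent/meromorphic maps defined on the images of the two upper and lower half-strips $\{0\le \Re w\le 1,\ \pm\Im w>N'\}$. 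Using \propref{fatou coords cover} in reverse (i.e.\ its consequence that the $\ixp$-picture near $0$ and $\infty$ is biholomorphic to a punctured neighborhood), the two germs $h^{\pm}$ translate into two holomorphic gluing maps between the upper (resp.\ lower) ends of the attracting and repelling half-planes, each commuting with $T$. The normalization condition $\lambda^+\lambda^-=e^{-4\pi^2 A}$ is exactly what is needed (by the computation in \propref{thm:simple-zero}) for these two gluings to be compatible with a single value of $A$, i.e.\ for the resulting surface to carry an analytic map of the correct formal type.

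Next I would assemble an abstract Riemann surface (or rather a dynamical object): glue the attracting half-plane $\Hplus{}$ and the repelling half-plane along a fundamental crescent, using the identity near the ``lower'' overlap and using the conjugacy dictated by $h^+$ near the upper end and by $h^-$ near the lower end (after transporting back through $\ixp$ and $\log$). Concretely one glues a neighborhood of the repelling fundamental crescent to a neighborhood of the attracting one, and the pair $(T_{\text{att}},T_{\text{rep}})$ descends to a holomorphic self-map $f_0$ of a neighborhood of a point. Since the gluing maps are only required to be \emph{topologically} compatible (not conformally), one first carries this out smoothly/quasiconformally: one produces a smooth map of a disk to itself with a parabolic fixed point whose \'Ecalle--Voronin data is the prescribed $(h^+,h^-)$, together with an $f_0$-invariant Beltrami coefficient $\mu$ supported on the ``transition zone.'' The invariance of $\mu$ under the dynamics is automatic because $\mu$ is pulled back along orbits from a single fundamental crescent, just as in the proof of \propref{fatou cyl}.

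Then I would invoke the Measurable Riemann Mapping Theorem to integrate $\mu$: let $\Phi$ be a quasiconformal homeomorphism with $\Phi^*\sigma_0=\mu$, normalized to fix $0$ with $\Phi'(0)>0$ (and an appropriate second normalization), and set $f:=\Phi\circ f_0\circ \Phi^{-1}$. Because $\mu$ is $f_0$-invariant, $f$ is holomorphic on a neighborhood of $0$; because $f_0$ had a parabolic fixed point of multiplier $1$ and multiplicity $2$ at $0$, so does $f$, and after the obvious rescaling $f(z)=z+z^2+(1-A)z^3+\cdots$, where the coefficient of $z^3$ is forced to be $1-A$ by \propref{thm:simple-zero} together with the constraint $\lambda^+\lambda^-=e^{-4\pi^2A}$. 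Finally one checks that the \'Ecalle--Voronin invariant of this $f$ is the class of $(h^+,h^-)$: this is the mechanical ``diagram-chase'' half of the realization, essentially the same verification as in the ``only if'' direction of the preceding theorem, because $\Phi$ conjugates the model Fatou coordinates (the identity on the half-planes) to genuine Fatou coordinates of $f$, and the model transition map was by construction $h^{\pm}$.

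The main obstacle I expect is organizing the gluing data so that the overlaps are genuinely consistent --- i.e.\ carefully specifying domains of $h^{\pm}$, the branches of $\log$, and the integer ambiguities in the Fatou coordinates so that the two local gluings (upper end and lower end) patch to a single globally defined $f_0$ on a \emph{full} punctured neighborhood of $0$, and so that the Beltrami coefficient extends across $0$ with bounded dilatation. This is exactly the place where the hypothesis $\lambda^+\lambda^-=e^{-4\pi^2 A}$ must be used; without it the two ends would force incompatible asymptotics (by the formulas of \propref{thm:simple-zero}) and no holomorphic $f$ of the stated form could exist. A secondary technical point is verifying that the surgered surface is conformally a \emph{disk} (so that $\Phi$ exists with the right normalization) rather than all of $\CC$; this follows as in \propref{fatou cyl} from the quasiconformal (not conformal) nature of the gluing, but the bookkeeping near the puncture at $0$ requires care.
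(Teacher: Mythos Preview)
Your core idea---lift $h^{\pm}$ through $\ixp$ to translation-commuting maps $H^{\pm}$ on upper and lower half-planes, glue attracting and repelling half-plane pictures along these, and read off the germ from the induced translation---is exactly the paper's approach (following \cite{BH}). But you have overlaid it with quasiconformal machinery that is not needed and, in one place, leads you to a wrong justification.

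The gluing here is \emph{conformal}: $H^{\pm}$ are holomorphic, so the surface $\cV$ obtained by gluing a left half-plane $V$ (union the strips $V^\pm=\{\pm\Im w>R\}$) to a right half-plane $U$ (union $U^\pm=H^\pm(V^\pm)$) via $H^\pm$ is already a Riemann surface with no Beltrami coefficient to integrate. The unit translation $w\mapsto w+1$ acts on both pieces and commutes with $H^\pm$, hence descends to a holomorphic map $F:\cV'\to\cV$; after uniformizing $\cV\cong\CC\setminus\DD$ by a map $\chi$, one sets $\hat F=\chi\circ F\circ\chi^{-1}$ and $f(z)=-1/\hat F(-1/z)$. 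The asymptotics $H^\pm(w)=w+\tfrac{1}{2\pi i}\log\lambda^\pm+o(1)$, with branches chosen so that $\log(\lambda^+\lambda^-)=-4\pi^2A$, force $\hat F(w)=w+1+A/w+o(1/w)$, giving the $z^3$ coefficient directly; no separate appeal to \propref{thm:simple-zero} is needed at this stage.

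Your ``secondary technical point'' contains a genuine error of reasoning. You say the glued surface is a disk \emph{because} the gluing is quasiconformal rather than conformal; but quasiconformal maps preserve conformal type, and in \propref{fatou cyl} the QC argument was used to show the surface is $\CC$, not a disk. Here the surface is hyperbolic (conformally $\CC\setminus\DD$) for a topological reason: the bounded region between the left and right half-planes and between the two gluing strips is simply absent from $\cV$, leaving a hole. This is the content of the paper's Claim, which it leaves to the reader; your proposal does not identify the actual reason and would need to be corrected here. Once you drop the QC layer and argue the conformal type correctly, your outline coincides with the paper's proof.
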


\begin{proof}[Sketch of proof]
We follow the argument given in \cite{BH}. Let us choose the lifts $H^\pm(w)$ of the germs $h^\pm$ via the exponential map:
$$e^{2\pi iH^\pm(w)}=h^\pm(e^{2\pi i w}).$$
For a sufficiently large value of $R>0$, the maps $H^\pm$ are defined in the half-planes $V^\pm\equiv \{\pm\Im w>R\}$.
Denote $U^\pm\equiv H^\pm(V^\pm)$ -- these domains are invariant under the  translation $w\mapsto w+1$ and contain half-planes
$\{\pm\Im w>R'\}$.
We have
$$H^\pm(w)=w+\frac{1}{2\pi i}\log(\lambda^\pm)+o(1),$$
let us select the branches of the logarithm so that $\log(\lambda^+\lambda^-)=4\pi^2A.$
Fix $w^\pm\equiv H^\pm(R\pm iR).$
We define $V$ as the union of $V^\pm$ and the left half-plane $\{\Re(w)<-R\}$ and $U$ as the union of $U^\pm$ and the half-plane to the right
of the line passing through $w^+$ and $w^-$. Let $\cV$ denote the Riemann surface obtained by guing 
$V$ and $U$ via $H^+$, $H^-$ (see Figure \ref{fig-gluing}). The result follows from the following claim:

\begin{figure}
\centerline{\includegraphics[height=0.3\textheight]{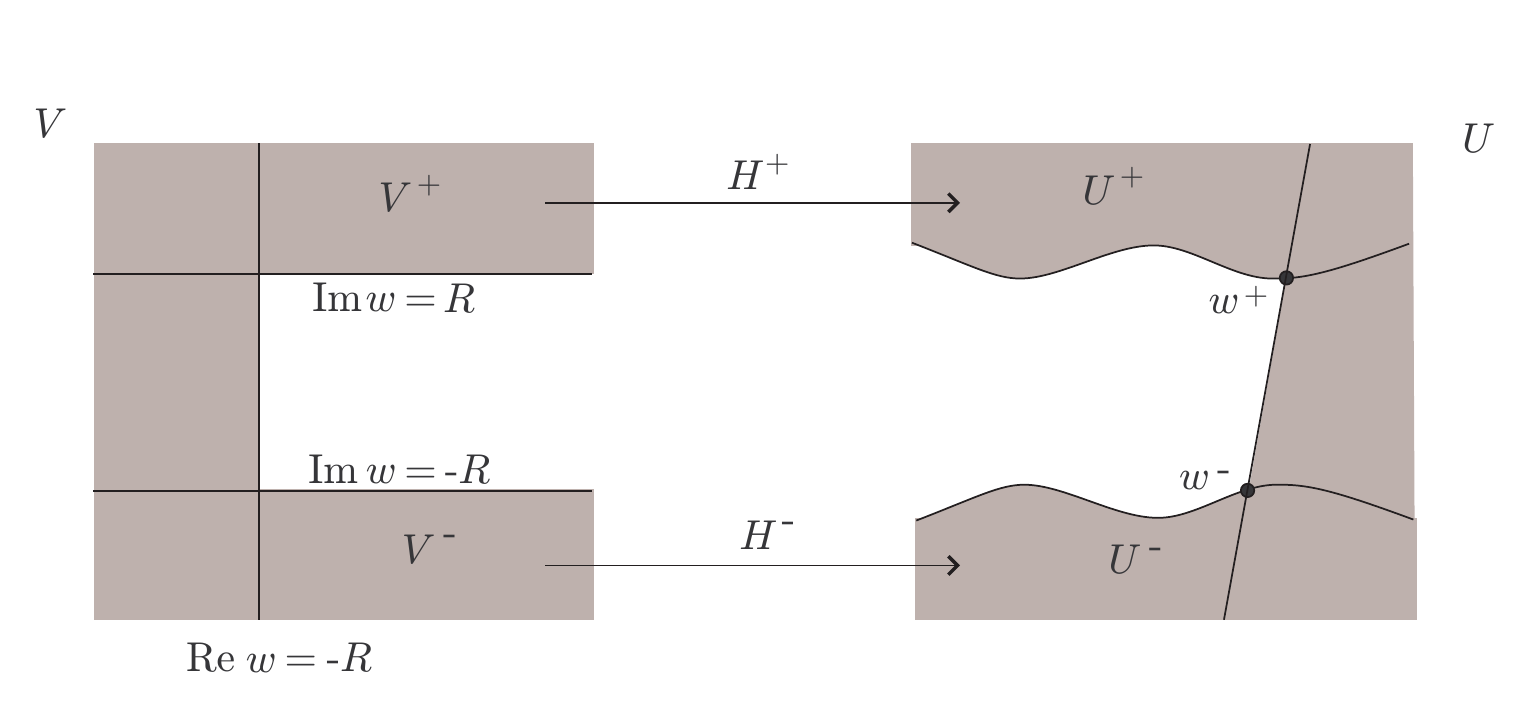}}
\caption{\label{fig-gluing}An illustration of the proof of \thmref{realization}}

\end{figure}

\medskip
\noindent
{\bf Claim.} {\sl For sufficiently large values of $R$ the Riemann surface $\cV$ is conformally isomorphic to $\CC\setminus \DD$.}

\medskip
\noindent
The proof of the claim is fairly straightforward, so we leave it to the reader. Let us show how to complete the argument assuming the claim.
Let us select large enough value of $R$, and let $\chi:\cV\to\CC\setminus\DD$ be the conformal isomorphism with $\chi'(\infty)>0$. 
Denote 
$$V'=V\setminus\{-R-1\leq \Re(w)<-R\},$$
and let $\cV'$ be a subsurface of $\cV$ obtained by replacing $V$ with $V'$. The unit translation $w\mapsto w+1$ maps $V'\to V$ and 
$U\to U$ and commutes with $H^\pm$. Hence it induces an analytic map $F:\cV'\to \cV$. It is easy to see that
$$\hat F(w)\equiv \chi \circ F\circ \chi^{-1}(w)=w+1+\frac{A}{w}+o(\frac{1}{w}).$$
Indeed, the projections to $V$ and $U$ are the Fatou coordinates at infinity for $\hat F$, and $H^\pm$ are 
the changes of coordinates between $V$ and $U$. By construction, 
$$f(z)\equiv -\frac{1}{\hat F(-\frac{1}{z})}$$
is an analytic germ at the origin with the desired properties.

\end{proof}

We are finally ready to define parabolic renormalization:

\begin{defn}
\label{renormalizable_bis}
We say that a parabolic germ $f(z)$ of the form
(\ref{parabolic-normal-form-2}) is {\it renormalizable} if for some --
and hence for all -- choices of normalization of $\phi_A$ and
$\phi_R$, the coefficient of $z^2$ in the Taylor series of $h^+$ at
$0$ does not vanish.
\end{defn}

Then, by the rescaling formulas (\ref{eq:change-of-normalization}) and
(\ref{eq:rescaling-factors}), then exist choices of the normalizations
of the attracting and repelling Fatou coordinates so that the
corresponding $h^+$ has the form
\begin{equation}\label{eq:normalized-h}
h^+(w) = w + w^2 + \cdots
\end{equation}
i.e., so that $h^+$ has a normalized simple parabolic fixed point at
$0$. The rescaling factors $\lambda_A$ and $\lambda_R$ which
accomplish this are uniquely determined; the corresponding additive
constants $c_A$ and $c_R$ are uniquely determined modulo $\ZZ$.

\begin{defn}
\label{defn-par-ren_bis}
For a renormalizable germ $f(z)$ of the form
(\ref{parabolic-normal-form-3}) we will call the analytic germ of the
unique rescaling of $h^+$ with the form (\ref{eq:normalized-h}) the
{\it parabolic renormalization} of $f$. We will use the notation
$\cP f$ for the parabolic renormalization. 

\end{defn}

We thus have
\begin{equation}
\label{eqn-par-ren}
\pr f =\ixp\circ \phi_A\circ (\phi_R)^{-1}\circ\ixp^{-1},
\end{equation}
with $\phi_A$, $\phi_R$ the  appropriately
normalized Fatou coordinates and with suitably selected branches of
the inverses.

\subsection{Analytic continuation of parabolic renormalization}
Parabolic renormalization, as defined above, maps a renormalizable analytic germ $f$ at the origin of the form
(\ref{parabolic-normal-form-3}) to a germ $\cP(f)$ of the same form. 
We will change the point of view now, and will talk about an analytic map  $f$, defined in a domain $\Dom(f)\ni 0$,
whose germ at $0$ is of the form (\ref{parabolic-normal-form-3}). Note that we do not impose any conditions on the
naturality of the domain $\Dom (f)$ at this point. The map $f$ may analytically extend beyond $\Dom(f)$, however,
when considering orbits of points under $f$, we restrict ourselves only to the orbits which do not leave $\Dom(f)$.

As before, we denote $B^f\subset \Dom(f)$  the basin of $0$, and $B^f_0\subset B^f$  the immediate basin of $0$,
that is, the connected component of $B^f$ which contains an attracting petal.
Let us fix an attracting Fatou coordinate $\phi_A$ and extend it to all of $B^f$ via the functional equation.
We also choose a repelling petal $P_R$ and fix a repelling Fatou coordinate $\phi_R$ on it.

We make the following simple observation:
\begin{lem}\label{th:renorm-well-defined}
Let $z_1 \in B^f\cap P_R$ and let $z_2 \in P_R$, and assume
\[
\ixp(\phi_R(z_1)) = \ixp(\phi_R(z_2))
\]
Then $z_2 \in B^f$ and
\[
\ixp(\phi_A(z_1)) = \ixp(\phi_A(z_2))
\]
\end{lem}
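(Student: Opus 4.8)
The plan is to exploit the fact that the relation $\ixp(\phi_R(z_1)) = \ixp(\phi_R(z_2))$ says exactly that $\phi_R(z_1) - \phi_R(z_2) \in \ZZ$, and then to translate this integer shift into a relation between $z_1$ and $z_2$ via iteration of $f$ (or its local inverse). Concretely, write $\phi_R(z_2) = \phi_R(z_1) + m$ for some $m \in \ZZ$. The key structural input is \propref{th:phi-extended} (and the discussion of repelling Fatou coordinates following it): since $\phi_R$ is a repelling Fatou coordinate, it satisfies the functional equation $\phi_R(f(z)) = \phi_R(z) + 1$ wherever both sides make sense, and the repelling petal is mapped into itself by the local inverse $g = f^{-1}$ fixing $0$. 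So adding $1$ to a $\phi_R$-value corresponds to applying $f$, and subtracting $1$ corresponds to applying $g$.

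First I would handle the case $m = 0$. Then $\phi_R(z_1) = \phi_R(z_2)$ with both points in the repelling petal $P_R$ on which $\phi_R$ is univalent, so $z_1 = z_2$; the conclusions are then trivial. For $m \neq 0$, assume without loss of generality $m > 0$ (otherwise swap $z_1$ and $z_2$). I want to connect $z_1$ and $z_2$ by a chain of applications of $f$. The subtle point is staying inside $P_R$ and inside $\Dom(f)$. I would argue: since $z_2 \in P_R$ and $P_R$ is a repelling petal, the backward orbit $g^k(z_2)$, $k = 0, 1, \ldots, m$, stays in $P_R$ (because $g$ maps $P_R$ into itself — this is precisely the defining property of a repelling petal as an attracting petal for $g$), and $\phi_R(g^k(z_2)) = \phi_R(z_2) - k = \phi_R(z_1) + (m-k)$. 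Taking $k = m$ gives a point $w := g^m(z_2) \in P_R$ with $\phi_R(w) = \phi_R(z_1)$; by univalence of $\phi_R$ on $P_R$, $w = z_1$, i.e. $z_1 = g^m(z_2)$, equivalently $z_2 = f^m(z_1)$.

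Now the two conclusions follow. Since $z_1 \in B^f$ and $B^f$ is forward-invariant under $f$ (it is the basin of $0$, and orbits are understood to stay in $\Dom(f)$), applying $f$ a total of $m$ times to $z_1$ keeps us in $B^f$, so $z_2 = f^m(z_1) \in B^f$. For the attracting-coordinate statement, recall that $\phi_A$ has been extended to all of $B^f$ via the functional equation $\phi_A(f(z)) = \phi_A(z) + 1$ (this is exactly the extension furnished by \propref{th:phi-extended} / the discussion of $B^f_\nu$). Hence $\phi_A(z_2) = \phi_A(f^m(z_1)) = \phi_A(z_1) + m$, and applying $\ixp$ kills the integer $m$:
\[
\ixp(\phi_A(z_2)) = \exp\bigl(2\pi i (\phi_A(z_1) + m)\bigr) = \ixp(\phi_A(z_1)).
\]
The main obstacle I anticipate is the bookkeeping needed to justify that the finite backward orbit $g^k(z_2)$ genuinely remains in $P_R$ and that the functional equations apply at each step with the correct branch of the inverse — but this is exactly guaranteed by the petal axioms (conditions (2)–(3) of \defn{defn-petal} applied to $g$) and the remark that $f^{-1}$ extends univalently to $P_R \cup f(P_A)$, so no essentially new idea is required; it is a matter of invoking the already-established local theory carefully.
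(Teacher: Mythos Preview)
Your approach is essentially the paper's: translate $\ixp(\phi_R(z_1)) = \ixp(\phi_R(z_2))$ into $\phi_R(z_2) - \phi_R(z_1) = m \in \ZZ$, use the $g$-invariance of $P_R$ to pull one point back $|m|$ steps inside $P_R$, and then invoke univalence of $\phi_R$ on $P_R$ to identify the endpoints. That is exactly what the paper does.

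There is one genuine slip, however: the reduction ``assume without loss of generality $m > 0$ (otherwise swap $z_1$ and $z_2$)'' is not legitimate, because the hypotheses are \emph{asymmetric} --- you know $z_1 \in B^f$ but not $z_2 \in B^f$. After swapping, your sentence ``Since $z_1 \in B^f$ and $B^f$ is forward-invariant\ldots'' becomes ``Since $z_2 \in B^f$\ldots'', which is precisely what you are trying to prove. The paper treats the two signs separately for exactly this reason: when $m \geq 0$ one gets $z_2 = f^m(z_1)$ and uses \emph{forward} invariance of $B^f$; when $m < 0$ one gets $z_1 = f^{|m|}(z_2)$ and uses the (equally elementary) \emph{backward} invariance of $B^f$ within $\Dom(f)$ --- i.e., if some forward iterate of $z_2$ lies in $B^f$ and the intermediate iterates lie in $\Dom(f)$ (here they lie in $P_R$), then $z_2 \in B^f$. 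Once you split the cases this way, your argument goes through verbatim and coincides with the paper's.
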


\begin{proof}
From the assumption:
\[
\phi_R(z_2) - \phi_R(z_1) =: m \in \ZZ.
\]
We consider separately $m \geq 0$ and $m < 0$. In the first case,
$\phi_R(f^{-m}(z_2)) = \phi_R(z_1)$. Since $f^{-m}(z_2)$ and $z_1$ are
both in $P_R$, and since $\phi_R$ is univalent on $P_R$, $f^{-m}(z_2)
= z_1$, so $z_2 = f^{m}(z_1)$, so $z_2 \in B^f$ and $\phi_A(z_2) =
\phi_A(z_1) + m$ and the asserted equality holds.

Now suppose $m < 0$, and let $p := -m > 0$. Then $\phi_R((f^{-p}(z_1)
= \phi_R(z_2)$, so, arguing as above, $z_1 = f^p(z_2)$, so $z_2 \in
B^f$ so, again, the asserted equality holds.

\end{proof}
The above lemma implies:
\begin{cor}
\label{extension-h}
The pair of analytic germs $\bm{h}$ extends to 
$$\cD(\bm{h}) \equiv \{w=\ixp\circ \phi_R(z)\;|\text{ where }z\in B^f\cap P_R\}\cup\{0,\infty\}.$$
\end{cor}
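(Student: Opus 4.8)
The plan is to observe that the content of the corollary is essentially a repackaging of \lemref{th:renorm-well-defined} together with \lemref{th:W-nbhd-of-0}, so the ``proof'' is a matter of checking that the formula $\bm{h} = (\ixp\circ\phi_A)\circ(\ixp\circ\phi_R)^{-1}$ makes sense as a single-valued analytic function on the set $\cD(\bm{h})$ as defined. First I would recall that $\phi_R$ is univalent on the repelling petal $P_R$, so the map $z\mapsto\ixp(\phi_R(z))$ is locally injective on $P_R$ (its only failure of global injectivity is the $\ZZ$-periodicity in $\phi_R$, which is exactly killed by $\ixp$). Hence for each $w\in\cD(\bm{h})\setminus\{0,\infty\}$ the fibre $\{z\in B^f\cap P_R : \ixp(\phi_R(z))=w\}$ is nonempty by definition, and \lemref{th:renorm-well-defined} says precisely that the value $\ixp(\phi_A(z))$ is the same for every $z$ in this fibre — so the prescription
\[
\bm{h}(w) := \ixp(\phi_A(z)), \qquad z\in B^f\cap P_R,\ \ixp(\phi_R(z))=w,
\]
is well defined on $\cD(\bm{h})\setminus\{0,\infty\}$.

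Next I would check analyticity. Given $w_0\in\cD(\bm{h})\setminus\{0,\infty\}$, pick $z_0\in B^f\cap P_R$ with $\ixp(\phi_R(z_0))=w_0$. Since $\phi_R$ is univalent and $\ixp$ is a local biholomorphism, $w\mapsto z$ has a local analytic inverse branch $\sigma$ near $w_0$ with $\sigma(w_0)=z_0$; composing, $\bm{h}=\ixp\circ\phi_A\circ\sigma$ near $w_0$, and $\phi_A$ is analytic on $B^f$ (it was extended there via the functional equation in the discussion preceding \lemref{th:renorm-well-defined}), so $\bm{h}$ is analytic near $w_0$. This also shows the extension agrees, near any point where both are defined, with the germs $h^+$ and $h^-$ constructed earlier from the original (smaller) petals, because those germs satisfy the same defining relation; thus $\bm{h}$ is an honest analytic continuation of $h^+\cup h^-$. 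Finally, the two points $0$ and $\infty$ are handled by \lemref{th:W-nbhd-of-0}: the singularity of $\bm{h}$ at $0$ is removable with value $0$, and at $\infty$ there is a pole, so adjoining $\{0,\infty\}$ to the domain is legitimate.

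The only point that requires a word of care — and the step I would flag as the main (minor) obstacle — is the consistency between the newly extended $\bm{h}$ and the original germ $h^+$, i.e. that we are genuinely extending and not redefining: one must note that shrinking or enlarging $P_R$, or passing from the small ample petals used in \eref{eq:renorm-heuristic} to the present $P_R$, does not change the values of $\ixp\circ\phi_R$ and $\ixp\circ\phi_A$ on the overlap, which is exactly the content of \propref{th:phi-extended} (the Fatou coordinate extends uniquely via the functional equation) combined with the observation — already made in the text — that $h^{\pm}$ do not depend on the choice of petals. With that remark in place the corollary follows immediately from \lemref{th:renorm-well-defined} and \lemref{th:W-nbhd-of-0}, and no further estimates are needed.
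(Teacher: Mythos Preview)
Your proposal is correct and follows exactly the route the paper intends: the paper gives no separate proof, stating only ``The above lemma implies:'' before the corollary, and you have simply written out the obvious details of how \lemref{th:renorm-well-defined} (well-definedness on fibres) together with the earlier \lemref{th:W-nbhd-of-0} (removability at $0$ and pole at $\infty$) yield the extension. Your added remarks on analyticity via a local inverse branch $\sigma$ and on consistency with the original germs are the natural unpacking of what the paper leaves implicit.
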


\noindent
Furthermore,
\begin{prop}
The domain $\cD(h)$ does not depend on the choice of the repelling petal $P_R$.
\end{prop}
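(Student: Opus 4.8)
The plan is to keep the repelling Fatou coordinate $\phi_R$ fixed — by the repelling analogue of \propref{th:phi-extended} it extends uniquely, via the functional equation, to the basin of $0$ for the local inverse $g$, and hence is defined on every repelling petal, with the values on overlapping petals agreeing — and to show that the set
\[
\{\,\ixp(\phi_R(z))\ :\ z\in B^f\cap P_R\,\}\cup\{0,\infty\},
\]
which I write $\cD(P_R)$, is one and the same for every choice of repelling petal $P_R$. By symmetry it is enough to prove that, for two repelling petals $P_R$ and $\tl P_R$, one has $\ixp(\phi_R(z))\in\cD(\tl P_R)$ whenever $z\in B^f\cap P_R$.

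First I would control the backward orbit. Fix such a $z$ and set $g^k(z)=f^{-k}(z)$. Because $P_R$ is an attracting petal for $g$, the map $g$ sends $\overline{P_R}\setminus\{0\}$ into $P_R$, so $g^k(z)\in P_R$ for all $k\ge 0$ and $g^k(z)\to 0$ along the repelling direction (conditions (3) and (4) of Definition~\ref{defn-petal}, applied to $g$). Next, each $g^k(z)$ lies in $B^f$: the forward $f$-orbit of $g^k(z)$ passes through $g^{k-1}(z),\dots,g(z),z$, all of which lie in $P_R\subset\Dom(f)$, and from $z$ on it coincides with the tail of the $f$-orbit of $z$, which stays in $\Dom(f)$ and converges to $0$; so no point of the orbit of $g^k(z)$ leaves $\Dom(f)$ and the orbit converges to $0$. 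Finally, condition (5) of Definition~\ref{defn-petal}, applied to $g$ and the $g$-attracting petal $\tl P_R$, shows that the orbit $\{g^k(z)\}$, which converges to $0$ along the repelling direction, eventually enters $\tl P_R$: there is $k_0$ with $g^{k_0}(z)\in B^f\cap\tl P_R$.

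To finish, observe that along the chain $g^{k_0}(z),g^{k_0-1}(z),\dots,g(z),z$ — every term of which lies in $P_R$, and each of which is the $f$-image of its predecessor — the functional equation $\phi_R(f(w))=\phi_R(w)+1$ applies at each step and yields $\phi_R(z)=\phi_R(g^{k_0}(z))+k_0$. Hence $\ixp(\phi_R(z))=\ixp(\phi_R(g^{k_0}(z)))\in\cD(\tl P_R)$, and since $0$ and $\infty$ belong to every such set by definition this gives $\cD(P_R)=\cD(\tl P_R)$, proving the proposition. The one step calling for a little care is the inclusion $g^k(z)\in B^f$: backward orbits need not return to the basin in general, but here the relevant arc of the orbit is trapped inside the repelling petal $P_R\subset\Dom(f)$, and this confinement is exactly what makes the argument work. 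I do not expect any genuinely hard obstacle beyond this bookkeeping.
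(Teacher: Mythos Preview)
Your proof is correct and follows essentially the same approach as the paper: pull $z$ back along the local inverse until it lands in the other repelling petal, check that the pulled-back point is still in $B^f$, and use the $\ZZ$-periodicity of $\ixp$ together with the functional equation for $\phi_R$. The paper dispatches the step $g^k(z)\in B^f$ in one line (``since $f^n(f^{-n}(z))=z\in B^f$, $f^{-n}(z)\in B^f$''), whereas you spell out why the intermediate iterates remain in $\Dom(f)$; both are fine.
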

\begin{proof}

Suppose $P^{(1)}_R$ is another repelling petal, and write
$\bm{h}^{(1)}$ for the corresponding function. If $w \in
\cD(\bm{h})$, then $w$ can be written as $\ixp \circ \phi_R(z)$, with
$z \in B^f \cap P_R$. For large enough $n$, $f^{-n}(z) \in
P^{(1)}_R$, and, since $f^n(f^{-n}(z)) = z \in B^f$, $f^{-n}(z) \in
B^f$. Hence $w = \ixp \circ \phi_R(f^{-n}(z)) \in \cD(\bm{h}^{(1)})$
and
\[
\bm{h}^{(1)}(w) = \ixp \circ \phi_A(f^{-n}(z)) = \ixp \circ \phi_A(z)
= \bm{h}(w).
\]
This shows that $\cD(\bm{h})=\cD(\bm{h}^1)$.
\end{proof}

Since the derivative of the local inverse never vanishes, we have: 

\begin{lem}\label{th:critical-values}
The only possibilities for critical values of $\bm{h}$ in $\cD(\bm{h})$ are the images
under $\ixp$ of the critical values of $\phi_A$. More explicitly: $v$
is a critical value of $\bm{h}$ if and only if it can be written
\[
v = \ixp \circ \phi_A(z_c)
\]
where $z_c$ is a critical point of $f$ belonging to $B^f$ and
admitting a backward orbit converging to $0$.
\end{lem}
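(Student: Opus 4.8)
The plan is to reduce the description of the critical points of $\bm{h}$ to the already-available description of the critical points of the extended attracting Fatou coordinate $\phi_A$ (\propref{th:fatou-crit}), exploiting that every other map entering the definition of $\bm{h}$ --- the exponential $\ixp$, the univalent Fatou coordinate $\phi_R$ on the petal $P_R$, and the local inverse $g=f^{-1}$ fixing $0$ --- has nowhere vanishing derivative. Concretely, fix $w_0\in\cD(\bm{h})$ with $w_0\notin\{0,\infty\}$; by \corref{extension-h} we may write $w_0=\ixp(\phi_R(z_0))$ with $z_0\in B^f\cap P_R$, and then $\bm{h}(w_0)=\ixp(\phi_A(z_0))$. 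Since $\ixp$ has nonvanishing derivative everywhere and $\phi_R$ is univalent on $P_R$, the map $\ixp\circ\phi_R$ is a local biholomorphism near $z_0$; let $\zeta$ be the inverse branch with $\zeta(w_0)=z_0$, so that, using \lemref{th:renorm-well-defined} to see that the value is independent of the chosen preimage, $\bm{h}=\ixp\circ\phi_A\circ\zeta$ on a neighbourhood of $w_0$. The chain rule then gives $\bm{h}'(w_0)=2\pi i\,\ixp(\phi_A(z_0))\cdot\phi_A'(z_0)\cdot\zeta'(w_0)$, so $w_0$ is a critical point of $\bm{h}$ if and only if $z_0$ is a critical point of $\phi_A$; the endpoints $0,\infty$ are excluded because $\bm{h}$ has a simple zero and a simple pole there by \propref{thm:simple-zero}.

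For the forward implication I would then invoke the characterization established in the course of the proof of \propref{th:fatou-crit}: $z_0$ is a critical point of $\phi_A$ if and only if $f^{j}(z_0)=:z_c$ is a critical point of $f$ for some $j\ge0$. When this holds, $z_c\in B^f$ by forward invariance of the basin, $\phi_A(z_0)=\phi_A(z_c)-j$, and hence $\bm{h}(w_0)=\ixp(\phi_A(z_0))=\ixp(\phi_A(z_c))$ by the $1$-periodicity of $\ixp$. Moreover $z_0\in P_R$, so conditions~(3)--(4) of Definition~\ref{defn-petal} applied to $g$ make $\{g^k(z_0)\}_{k\ge0}$ a backward $f$-orbit in $P_R$ converging to $0$; concatenating it with $z_0,f(z_0),\dots,z_c$ exhibits a backward orbit of $z_c$ converging to $0$. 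This puts every critical value of $\bm{h}$ in the asserted form.

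For the converse, let $z_c\in B^f$ be a critical point of $f$ carrying a backward orbit $\{\zeta_k\}$, with $f(\zeta_{k+1})=\zeta_k$ and $\zeta_k\to0$. A tail of this orbit is a $g$-orbit (by uniqueness of the local inverse germ at $0$) converging to $0$, hence --- since $n=1$ --- converging from the unique repelling direction $+1$; by condition~(5) of Definition~\ref{defn-petal} for $g$ it eventually lies in $P_R$. Choose $k$ with $\zeta_k\in P_R$; since $f^k(\zeta_k)=z_c\in B^f$ we also have $\zeta_k\in B^f$. By the characterization recalled above, $\zeta_k$ is a critical point of $\phi_A$ (because $f^k(\zeta_k)=z_c$ is a critical point of $f$), so, putting $w_0:=\ixp(\phi_R(\zeta_k))\in\cD(\bm{h})$, the first paragraph shows that $w_0$ is a critical point of $\bm{h}$ with $\bm{h}(w_0)=\ixp(\phi_A(\zeta_k))=\ixp(\phi_A(z_c))$, which is the value claimed.

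The only genuine obstacle is bookkeeping: one must verify that $\bm{h}$ really does have the factored local form $\ixp\circ\phi_A\circ\zeta$ despite $\ixp\circ\phi_R$ being only locally, not globally, injective on $P_R$ --- which is exactly the content of \lemref{th:renorm-well-defined} together with the petal axioms --- and one must check the routine facts that the relevant backward orbits enter $P_R$ while remaining in $B^f$. No analytic estimates are needed beyond what has already been proved.
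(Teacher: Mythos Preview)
Your proof is correct and follows the same approach the paper indicates: the paper's ``proof'' is the single sentence preceding the lemma, ``Since the derivative of the local inverse never vanishes, we have:'', which is exactly your observation that in the local factorization $\bm{h}=\ixp\circ\phi_A\circ(\ixp\circ\phi_R)^{-1}$ only the $\phi_A$ factor can contribute critical points, together with \propref{th:fatou-crit}. You have supplied the details the paper omits---in particular the converse direction (that every $\ixp\circ\phi_A(z_c)$ is actually attained as a critical value), which the paper leaves implicit.
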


%%Let us now define
%%$$\cD^0(\bm{h}) \equiv \{w=\ixp\circ \phi_R(z)\;|\text{ where }z\in B_0^f\cap P_R\}\cup\{0,\infty\}.$$
%%Note that this definition may {\it a priori} depend on the choice of the repelling petal $P_R$. 

We further prove:

\begin{thm}\label{th:W-Jordan-domain}
Suppose  $B^f_0$ is a Jordan domain, such that $\phi_A$ cannot be analytically continued through any point
of $\partial B^f_0$. Then the pair of germs $\bm{h}=(h^+,h^-)$ has a maximal domain of analyticity $\Dom(\bm{h})$,
which is a union of two Jordan domains $W^+\ni 0$ and
$W^-\ni\infty$. Furthermore, let $P_R$ be a repelling petal for $f$. Then 
$$\partial\Dom(\bm{h})\subset\ixp\circ \phi_R(\partial B_0^f\cap P_R).$$
\end{thm}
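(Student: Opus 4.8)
The plan is to realize $\Dom(\bm h)$ as the image of $B^f_0 \cap P_R$ under the map $z \mapsto \ixp \circ \phi_R(z)$, and then transport topological information across. First I would set $G := \ixp \circ \phi_R$, a map defined and analytic on the repelling petal $P_R$; by the crude asymptotics (\propref{crude asymptotics}) and \propref{fatou coords cover}, after passing to the repelling cylinder $G$ descends to a conformal isomorphism $\cC_R \to \CC^*$, and on any fundamental repelling crescent it is univalent. By \corref{extension-h} and the subsequent proposition, $\cD(\bm h) = G(B^f \cap P_R) \cup \{0,\infty\}$ is independent of the choice of $P_R$; I would first argue that $\bm h$ extends analytically exactly to $\cD(\bm h)$ and no further — analyticity on $\cD(\bm h)$ is \corref{extension-h}, and the obstruction to going further is precisely the hypothesis that $\phi_A$ does not continue past $\partial B^f_0$, pulled through the (locally invertible) chain of maps defining $\bm h$. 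This identifies the maximal domain $\Dom(\bm h)$ with (the closure-completion of) $G(B^f_0 \cap P_R)$ near $0$ and $\infty$, giving $W^+ = $ component of $\Dom(\bm h)$ containing $0$ and $W^- = $ component containing $\infty$.

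Next I would establish the boundary containment $\partial \Dom(\bm h) \subset \ixp \circ \phi_R(\partial B^f_0 \cap P_R)$. The key point is that a point $w_0 \in \partial \Dom(\bm h)$ with $w_0 \neq 0,\infty$ must be a limit of points $w_k = G(z_k)$ with $z_k \in B^f_0 \cap P_R$; I would pass to the repelling cylinder, where $\tl\phi_R$ is a conformal isomorphism onto $\CC/\ZZ$, so the $z_k$ (taken in a fixed fundamental crescent) converge to some $z_\infty \in \overline{P_R}$. If $z_\infty$ were an interior point of $B^f_0$, the chain $\ixp \circ \phi_A \circ \phi_R^{-1} \circ \ixp^{-1}$ would extend analytically past $w_0$, contradicting maximality; if $z_\infty$ landed in the interior of $P_R \setminus \overline{B^f_0}$ or hit a point where $\phi_R$ itself is still fine but $z_\infty \notin B^f$, one uses \lemref{th:renorm-well-defined} and the openness of $B^f$ to derive a contradiction as well. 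So $z_\infty \in \partial B^f_0 \cap \overline{P_R}$; a short argument using the Jordan property of $B^f_0$ and that $P_R$ is a petal with $0 \in \partial P_R$ pushes $z_\infty$ into $\partial B^f_0 \cap P_R$ (handling the single boundary point $0$ separately), giving $w_0 \in \ixp \circ \phi_R(\partial B^f_0 \cap P_R)$.

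It remains to see that $W^+$ and $W^-$ are Jordan domains. Here I would use that $B^f_0$ is a Jordan domain together with Carathéodory's theorem: the Riemann map of $B^f_0$ extends to a homeomorphism of the closed disk, so $\partial B^f_0$ is a Jordan curve. The repelling Fatou coordinate $\phi_R$ is univalent on a fundamental crescent and, on the relevant piece of $\partial B^f_0 \cap P_R$, extends continuously (using the asymptotic normal form near $0$ and that $\phi_R$ is a local homeomorphism up to the boundary of the petal away from $0$); composing with $\ixp$, which is a local homeomorphism on $\CC/\ZZ$ and sends the ends to $0$ and $\infty$, one checks the boundary of $W^\pm$ is the continuous injective image of a Jordan arc closed up at $0$ (resp. $\infty$), hence a Jordan curve. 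I expect the main obstacle to be precisely this last step: controlling the boundary behavior of $\phi_R$ near the parabolic point $0$, where the petal geometry degenerates, so that $\ixp \circ \phi_R$ genuinely sends $\partial B^f_0 \cap P_R$ to a Jordan arc terminating cleanly at $0$ and $\infty$ without self-contact — this requires the ample-petal estimates and the precise asymptotics of \thmref{asym-expansion-2} rather than just the crude ones.
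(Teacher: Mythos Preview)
Your outline has the right architecture and matches the paper's approach in spirit: realize $W^+$ as $\ixp\circ\phi_R$ applied to a suitable piece of $B^f_0\cap P_R$, and read off the boundary description. But there is a genuine gap at the step you yourself flag as the ``main obstacle,'' and the difficulty is not the one you identify.

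The point where the argument is incomplete is the claim that $\partial W^+$ is the \emph{injective} image of a Jordan arc in $\partial B^f_0$. You write that ``$\phi_R$ is univalent on a fundamental crescent'' and then ``one checks the boundary \ldots\ is the continuous injective image of a Jordan arc closed up at $0$.'' But univalence of $\phi_R$ on the petal does not by itself give you injectivity of $\ixp\circ\phi_R$ on an arc of $\partial B^f_0$ between a point $z_1$ and $f^{-1}(z_1)$: you need to know that this arc really is a fundamental domain for the action of $f^{-1}$ on the relevant component of $\partial B^f_0\cap P_R$, i.e., that distinct points on it lie on distinct $f^{-1}$-orbits. This in turn requires knowing that $f^{-1}$ acts \emph{orientation-preservingly} on the Jordan curve $\partial B^f_0$ near $0$ --- a topological fact that is not automatic and is the content of the paper's \lemref{sigma-hat-Jordan}. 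The paper proves it by a careful argument with crosscuts: one shows that the lift $f^{-1}\gamma$ of a suitable vertical arc $\gamma$ starts inside the Jordan domain bounded by $\gamma$ and one branch $\widetilde\sigma_+$ of $\partial B^f_0\setminus\{0\}$, forcing $f^{-1}(z_1)\in\widetilde\sigma_+$. None of this uses the refined asymptotics of \thmref{asym-expansion-2}; it is a purely topological orientation argument, and your proposal does not supply a substitute for it.

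A smaller issue: in your sequential boundary argument you pass from $B^f$ to $B^f_0$ without comment (``This identifies the maximal domain \ldots\ with $G(B^f_0\cap P_R)$ near $0$''). A priori $\cD(\bm h)$ is built from all of $B^f$, which may have many components, and one needs to check that the component $W^+$ around $0$ comes exactly from the immediate basin. The paper sidesteps this by constructing an explicit fundamental domain $\Delta\subset P_R$ bounded by $\gamma$, $f^{-1}\gamma$, and an arc $\sigma\subset\partial B^f_0$, then showing directly that $W:=\ixp\circ\phi_R(\Delta\cup\gamma\cup f^{-1}\gamma)$ is a punctured Jordan neighborhood of $0$ whose boundary $\widehat\sigma$ is disjoint from $\cD(\bm h)$. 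This concrete construction both pins down the component and delivers the Jordan curve in one stroke, avoiding the compactness and component-identification issues in your sequence argument.
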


\begin{proof}
Let $P_R$ be a repelling petal which maps under $\phi_R$ to a left
half-plane. After shrinking $P_R$ if necessary, we further assume that
$f(P_R)$ 
 is also
a petal. For purposes of this proof, $f^{-1}$ and $\phi_R^{-1}$ will
mean the (two-sided) inverses of the respective restrictions to
$P_R$. By local theory, $P_R \cup \{0\}$ contains a neighborhood of
$0$ in $\partial B^f_0$. The component of $\partial B^f_0 \cap (P_R
\cup \{0\})$ containing $0$ is therefore a Jordan arc, which we denote
by $\widetilde{\sigma}$. The intersection $\overline{P_R} \cap (\partial B^f_0 \setminus
\widetilde{\sigma})$ is compact and does not contain $0$, so
$\Re(\phi_R(\,.\,))$ is bounded below on it. Hence, for $\beta$
sufficiently negative, the arc $$\widetilde{\gamma} := \phi_R^{-1}\{
\Re{w} = \beta \} $$ is disjoint from $\partial B^f_0 \setminus
\widetilde{\sigma}$. We give the arc $\widetilde{\gamma}$ the
clockwise orientation, which means that $\Im(\phi_R(\,.\,))$ goes to
$+\infty$ at the beginning of $\widetilde{\gamma}$.  The arc
$\widetilde{\gamma}$ must intersect $B^f_0$; otherwise, it (with $0$
appended) would bound a repelling petal contained in $B^f_0$, which is
impossible by local theory. Thus
\[
\emptyset \neq \widetilde{\gamma} \cap \partial B^f_0 \subset
\widetilde{\sigma}. 
\]

Let $\gamma$ denote the initial segment of $\widetilde{\gamma}$ up to
its first intersection with $B^f_0$, and denote the first intersection
point by $z_1$. We have set things up so as to guarantee that $z_1 \in
\widetilde{\sigma}$. Then $f^{-1}\gamma$ is another
arc in $B^f_0 \cap P_R$, running from $0$ to $f^{-1}(z_1)$, and
disjoint from $\gamma$. Let $\sigma$ denote the subarc of $\partial
B^f_0$ running from $z_1$ to $f^{-1}(z_1)$ (with end-points included
this time.) Then
\[
\delta := \gamma + \sigma - f^{-1}\gamma
\]
(meaning: first traverse $\gamma$, then $\sigma$, then
$f^{-1} \gamma$ backwards) is a Jordan curve. Since $\delta\setminus\{0\}$ 
is contained in the Jordan domain $P_R$, the  domain
$\Delta$ bounded by $\delta$ is also contained in $P_R$. Since
$\overline{P_R}$ is contained in the petal $f(P_R)$, the Fatou coordinate $\phi_R$ is
analytic on $\overline{\Delta} \setminus \{ 0 \}$, and
\begin{equation}\label{eq:defn-of-W}
W := \ixp \circ \phi_R(\Delta \cup \gamma \cup f^{-1}\gamma)
\end{equation}
is contained in $\cD(\bm{h})$.
We are going to argue that $W \cup \{0\}$ is a Jordan domain, and that
it is equal to $W^+$. The main steps
are to show
\begin{enumerate}
\item $W$ is a connected open punctured neighborhood of $0$.
\item $\ixp \circ \phi_R$ maps $\sigma$ to a Jordan curve
  $\widehat{\sigma}$.
\item The boundary of $W$ is $\widehat{\sigma} \cup \{ 0 \}$
\end{enumerate}
Since $W$ is the image under $\ixp \circ \phi_R$ of a subset of $P_R
\cap B^f_0$, $W \subset \cD(\bm{h})$. On the other hand,
$\widehat{\sigma}$ is contained in the image of $\partial B^f_0$ and
hence disjoint from $\Dom(f)$. By the Jordan curve theorem, $\CC
\setminus \widehat{\sigma}$ has exactly two connected components. As
$W \cup \{ 0 \}$ is connected, open, and disjoint from
$\widehat{\sigma}$, it is contained in one of these components; we
temporarily denote the containing component $U$. If $W$ were not all
of $U$, then its relative boundary in $U$ would have to be non-empty,
contradicting (3). Thus, $W \cup \{0\}$ is a Jordan domain. Further,
$W \subset \cD(\bm{h})$ and $\widehat{\sigma}$ does not intersect
$\cD(\bm{h})$, so $W$ must be a component of $\cD(\bm{h})$, i.e.,
must be $W^+\setminus \{ 0 \}$.

\medskip\noindent To prove (1): By definition (\ref{eq:defn-of-W}) is the
continuous image of a connected set and so connected. The image of
$\Delta \cup \gamma$ under $\phi_R$ contains an upper half-strip $\{ u
+ i v : \beta - 1 < u \leq \beta, v > R \}$ for sufficiently large
$R$, and this half-strip maps under $\ixp$ to a punctured disk about
$0$, so $W$ is a punctured neighborhood of $0$.

To show that $W$ is open, it suffices to show that the image of
$\gamma$ under $\ixp \circ \phi_R$ -- which is the same as the image
of $f^{-1}\gamma$ -- is in the interior of $W$. This follows from the
way $\ixp \circ \phi_R$ glues together the two edges. Roughly, any
sufficiently small disk about a point of the image of $\gamma$ is the
disjoint union of three parts, respectively the images under $\ixp
\circ \phi_R$ of
\begin{itemize}
\item a differentiably-distorted half-disk in $\Delta$ with diameter along
  $\gamma$
\item another differentiably-distorted half-disk in $\Delta$ with diameter along
  $f^{-1} \gamma$
\item a subarc of $\gamma$
\end{itemize}
Thus, any such disk is contained in the interior of $W$. 

\medskip\noindent To prove (2), we need

\begin{lem}\label{sigma-hat-Jordan}
$ixp \circ \phi_R$ is injective on $\sigma \setminus \{ z_1,
  f^{-1}(z_1) \}$ (that is, on $\sigma$ with its end-points deleted.)
\end{lem}

\noindent We postpone the proof and proceed to deduce (2) from the
lemma. Composing $\ixp \circ \phi_R$ with a continuous
paramet\-rization of $\sigma$ gives a continuous mapping from the
parameter interval $[0,1]$ to $\CC$ which is injective except for
sending $0$ and $1$ to the same point. In an obvious way, this
produces a continuous injective mapping of the circle $\TT$ to $\CC$,
that is, a parametrized Jordan curve in $\CC$.

\medskip\noindent To prove (3): it is nearly obvious that
$\widehat{\sigma}$ and $0$ are contained in the boundary of $W$. To
prove the converse: Let $w$ be a boundary point of $W$; let $(w_n)$ be
a sequence in $W$ converging to $w$; and, for each $n$, let $z_n$ be a
point of $\Delta \cup \gamma$ with $w_n = \ixp \circ \phi_R(z_n)$. By
compactness of $\overline{\Delta}$, we can assume -- by passing to a
subsequence -- that $z_n \to z \in \overline{\Delta}$. If $z = 0$,
then $z_n \to 0$ inside $\Delta \cup \gamma$, which implies
$\Im(\phi_R(z_n)) \to \infty$, which implies $w_n \to 0$, i.e., $w =
0$. Otherwise, $w = \ixp \circ \phi_R(z)$. Since $w \notin W$, $z$
cannot be in $\Delta$, or in $\gamma$, or in $f^{-1}\gamma$, and we
have already dealt with the possibility $z=0$, so we are left only with $z
\in \sigma$, which implies $w = \ixp \circ \phi_R(z) \in
\widehat{\sigma}$.

Modulo the proof of \lemref{sigma-hat-Jordan}, this shows that $W^+$
is a Jordan domain and also gives a useful representation for
$\partial{W}$ as the image under $\ixp \circ \phi_R$ of a fundamental
domain for $f$ in $\partial B^f_0$. From this latter representation,
it is evident that, if $\ixp \circ \phi_A$ cannot be
analytically continued through $\partial B^f_0$, then  $h^+ = \bm{h}|_{W^+}$ cannot
be analytically continued through $\partial W$. Thus, all the
assertions about $h^+$ are proved; the proofs of those about $h^-$ are
similar. 

\end{proof}

\begin{proof}[Proof of \lemref{sigma-hat-Jordan}]
We use the same notation as in the proof of
\propref{th:W-Jordan-domain}. Recall that $\widetilde{\sigma}$ denotes
the component of $B^f_0 \cap P_R$ which contains the parabolic point
$0$. Deleting $0$ splits
$\widetilde{\sigma}$ into two subarcs, which we denote by
$\widetilde{\sigma}_{\pm}$; we will say later which is which.
We parametrize $\widetilde{\sigma}$ as $t \mapsto
\widetilde{\sigma}(t): t_- < t < t_+$, with parameter $t = 0$
corresponding to the parabolic point $0$ and with
$\widetilde{\sigma}_+$ corresponding to $t > 0$.
Since $f^{-1}$ maps $P_R$ into itself and $\widetilde{\sigma}$
into $\partial B^f_0$, it maps $\widetilde{\sigma}$ into
itself. Using the parametrization, we
conjugate $f^{-1}$ on $\widetilde{\sigma}$ to a one-dimensional
mapping:
\[
f^{-1}(\widetilde{\sigma}(t)) = \widetilde{\sigma}(j(t)),
\]
where $j(\,.\,)$ is a continuous injective mapping of the parameter
interval into itself, with $j(0) = 0$.

\medskip\noindent{\em{\bf Claim:} $j(\,.\,)$ is increasing, i.e, (loosely)
  $f$ is orientation-preserving on $B^f_0$.}

\medskip\noindent 
%%This ought to be obvious, but we haven't found a
%%one-sentence proof. It is, in any case, automatic in most applications
%%(where one usually has the additional information that $f$ on $\partial B^f_0$
%%is conjugate to $t \mapsto 2t \bmod 1$.)  
We assume the claim for the
moment. Since $f^{-n}$ converges uniformly to $0$ on $P_R$,
\[
0 < j(t) < t\quad\text{for $0 < t < t_{+}$.}
\]

Recall that $\widetilde{\gamma}$ is the image under $\phi_R^{-1}$ of
an appropriate vertical line and that $\gamma$ is the initial segment
of $\widetilde{\gamma}$, up to $z_1$, its first point of intersection
with $B^f_0$. We choose the labelling of the components of
$\widetilde{\sigma} \setminus \{0\}$ so 
that $z_1 \in \widehat{\sigma}_+$. Then $f^{-1}(z_1)$ is also in
$\widetilde{\sigma}_1$ and -- from the conjugacy to $j(\,.\,)$, the
subarc of $\widetilde{\sigma}$ from $z_1$ to $f^{-1}(z_1)$ with one
end-point included and the other not is a fundamental domain for
the action of $f^{-1}$ on $\widetilde{\sigma}_{+}$. In particular, two
distinct points on this arc have disjoint $f^{-1}$ orbits and hence
distinct images under $\ixp \circ \phi_R$.

It remains to prove the claim. We know that $f^{-1}$ maps
$\widetilde{\sigma}$ into itself and hence maps $\widetilde{\sigma}_+$
either into itself, or into $\widetilde{\sigma}_-$. From injectivity of
$f^{-1}$ on $\widetilde{\sigma}$, we will be done if we show that that
the first alternative holds:
\[
f^{-1} \widetilde{\sigma}_+ \subset \widetilde{\sigma}_+.
\]
By connectivity, this will follow if we show that
$f^{-1}\widetilde{\sigma}_+ \cap \widetilde{\sigma}_+$ is non-empty.

The labelling of the components of $\widetilde{\sigma} \setminus \{
0\}$ is chosen by requiring that $\widetilde{\gamma}$ meets
$\widetilde{\sigma}_+$ before $\widetilde{\sigma}_-$. The closed path
made by following $\widetilde{\gamma}$ from 0 to its first meeting
point $z_1$ with $\widetilde{\sigma}$, then $\widetilde{\sigma}_+$
back to $0$, is a Jordan curve; denote the domain it bounds by
$U$. The first place $\widetilde{\sigma}_-$ meets $\widetilde{\gamma}$
is outside $\overline{U}$; since $\widetilde{\sigma}_{-}$ does not
intersect the boundary of $U$, all of $\widetilde{\sigma}_{-}$ is
outside of $U$. Thus, any continuous path in $P_R$ which starts in $U$
and reaches $\widetilde{\sigma}_{-}$ must intersect $\widetilde{\sigma}_+$
first. It is easy to see, using local theory, that $f^{-1}\gamma$
starts out in $U$. The first place where $f^{-1}\gamma$ intersects
$\widetilde{\sigma}$ is $f^{-1}(z_1)$, so 
\[
f^{-1}(z_1) \in \widetilde{\sigma}_+ \cap f^{-1}\widetilde{\sigma}_+,
\]
completing the proof.

\end{proof}

Let us introduce a model for the dynamics of a map on its immediate
basin. We use the notation $B:\DD\to\DD$ for the quadratic Blaschke product
\begin{equation}
\label{eq:blaschke}
B(z) = \frac{3 z^2 + 1}{3 + z^2}
\end{equation}
We prove:
\begin{thm}\label{th:blaschke-model}
Let $f$ be an analytic function with a normalized simple parabolic
point at the origin. Assume that the immediate basin $B_0^f$ is simply-connected,
and  $f: B_0^f \to B_0^f$ is a degree-2
branched covering map. Then there is a conformal isomorphism $\varphi: B_0^f
\to \DD$ so that
\[
f = \varphi^{-1} \circ B \circ \varphi\quad\text{on $B_0^f$,}
\]
where
$B(z)$ is as in (\ref{eq:blaschke}). In particular, any two analytic maps satisfying the
conditions of the theorem are conformally conjugate on their respective $B_0$'s.
\end{thm}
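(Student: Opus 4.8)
The plan is to transport $f|_{B_0^f}$ to the unit disk by a Riemann map and then classify the resulting Blaschke product. Since $B_0^f$ contains an attracting petal and has $0$ on its boundary, it is a simply connected proper subdomain of $\CC$, so the Riemann Mapping Theorem gives a conformal isomorphism $\psi\colon B_0^f\to\DD$. Put $g:=\psi\circ f\circ\psi^{-1}$. As $f\colon B_0^f\to B_0^f$ is a proper holomorphic map of degree $2$, so is $g\colon\DD\to\DD$, and hence $g$ is a Blaschke product of degree $2$ with exactly one critical point in $\DD$. Every $f$-orbit in $B_0^f$ converges to $0\in\partial B_0^f$, so every $g$-orbit in $\DD$ tends to $\partial\DD$; by the Denjoy--Wolff theorem all orbits converge to one point $p\in\overline{\DD}$, necessarily on $\partial\DD$ (an interior Denjoy--Wolff point would, by the Schwarz--Pick inequality, be an attracting fixed point, and then orbits would converge to it rather than to the boundary). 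The same reasoning applied to the reflected dynamics shows $g$ has no fixed point in $\hat\CC\setminus\overline{\DD}$ either.

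Next I would determine the local nature of $p$. By the local theory of parabolic points, $\partial B_0^f$ agrees near $0$ with a Jordan arc through $0$ --- the fact already used in the proof of \thmref{th:W-Jordan-domain} --- so $\psi$ extends to a homeomorphism from a neighborhood of $0$ in $\overline{B_0^f}$ onto a neighborhood of $p$ in $\overline{\DD}$, with $\psi(0)=p$. Choosing an ample attracting petal $P$ for $f$ with an attracting Fatou coordinate $\phi_A$, the image $\psi(P)$ is an attracting petal for $g$ at $p$, and $\phi_A\circ\psi^{-1}$ is a univalent function on it with $(\phi_A\circ\psi^{-1})\circ g=(\phi_A\circ\psi^{-1})+1$ whose image, by \propref{covershalfplane}, contains a right half-plane. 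This forces $g'(p)=1$: at a hyperbolic boundary fixed point (multiplier in $(0,1)$) Koenigs linearization would confine the imaginary part of any such coordinate on a petal to a bounded set. So $p$ is parabolic. To see its multiplicity is three, observe that $g$ commutes with $z\mapsto1/\bar z$, so $\hat\CC\setminus\overline{\DD}$ is forward invariant and $p$ is a parabolic fixed point for the exterior dynamics too; having no exterior fixed point, these dynamics have $p$ as Denjoy--Wolff point as well, so all exterior orbits converge to $p$. If $p$ had only one attracting direction ($n=1$ in \thmref{thm-flower}), the basin of $p$ would be the single immediate basin $\DD$ together with its $g$-preimages --- i.e.\ just $\DD$, since $g^{-1}(\DD)=\DD$ --- so the exterior, whose orbits converge to $p$, would lie inside $\DD$, which is absurd. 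Since $\deg g=2$ gives $n\le2$, we get $n=2$: $p$ is the unique fixed point of $g$, of multiplicity three.

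It remains to classify degree-$2$ Blaschke products with a (necessarily unique, multiplicity-three) parabolic fixed point. Conjugate $g$ by an automorphism of $\DD$ taking $p$ to $1$; then, using that $\operatorname{Aut}(\DD,1)$ acts transitively on $\DD$, conjugate further by an automorphism fixing $1$ that carries the unique critical point of $g$ in $\DD$ to $0$. The resulting $\tilde g$ is a degree-$2$ Blaschke product with a critical point at $0$, hence decomposes as $\tilde g=m\circ q$ with $q(z)=z^2$ and $m\in\operatorname{Aut}(\DD)$ (the Blaschke product $m_{-\tilde g(0)}\circ\tilde g$ has a double zero at $0$, so equals $e^{i\theta}z^2$). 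The properties inherited from $g$ --- $m(1)=1$, $\tilde g'(1)=2m'(1)=1$, and (multiplicity $\ge3$) the vanishing of the coefficient of $w^2$ in $\tilde g(1+w)=m((1+w)^2)$, which upon expansion is $m''(1)=-\tfrac14$ --- determine $m$ uniquely, and a short computation gives $m(w)=\dfrac{3w+1}{3+w}$, so $\tilde g(z)=\dfrac{3z^2+1}{3+z^2}=B(z)$. Composing $\psi$ with the automorphisms used yields a conformal isomorphism $\varphi\colon B_0^f\to\DD$ with $\varphi\circ f\circ\varphi^{-1}=B$ on $\DD$, i.e.\ $f=\varphi^{-1}\circ B\circ\varphi$ on $B_0^f$; the last assertion follows since any two maps satisfying the hypotheses are each conjugate on their immediate basins to $B|_\DD$.

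The main obstacle is the middle step: showing that the boundary Denjoy--Wolff point $p$ is \emph{parabolic} rather than hyperbolic --- which genuinely uses the parabolicity of $f$ via the half-plane image of the Fatou coordinate on an ample petal (\propref{covershalfplane}) --- and that its multiplicity is exactly three, which is where the reflection symmetry of Blaschke products and the two-sided Denjoy--Wolff argument are needed. Once $p$ is identified as a multiplicity-three parabolic fixed point, matching $g$ with $B$ is an elementary finite-dimensional computation, and the remaining ingredients (the Riemann map, recognizing $g$ as a Blaschke product) are standard.
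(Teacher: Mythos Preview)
Your overall strategy---Riemann-map to $\DD$, identify the result as a degree-$2$ Blaschke product, then pin down that product---is the same as the paper's, but two of your key steps have genuine gaps.

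\textbf{Boundary extension of $\psi$.} You claim that $\partial B_0^f$ is locally a Jordan arc at $0$ and hence that $\psi$ extends to a local homeomorphism there, citing \thmref{th:W-Jordan-domain}. That theorem, however, \emph{assumes} $B_0^f$ is a Jordan domain; it does not establish local regularity of $\partial B_0^f$ near $0$, and nothing in the hypotheses of the present theorem guarantees it. The paper avoids this issue entirely by working with prime ends: it builds the prime end $\widehat{0}$ from explicit circular-arc crosscuts, normalizes $\varphi$ to send $\widehat{0}\mapsto 1$, and proves a lemma (\lemref{thm:convergence-in-prime-ends}) giving a checkable condition for sequences in $B_0^f$ to converge to $\widehat{0}$ in the prime-end topology. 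That lemma is then used for both forward orbits (giving $B(1)=1$, $B'(1)\le 1$) and backward orbits through the repelling petal (giving $B'(1)\ge 1$ and $B''(1)=0$). Your argument that ``$\psi(P)$ is an attracting petal for $g$ at $p$'' and the subsequent Koenigs comparison both rest on the unjustified extension.

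\textbf{Multiplicity three.} Your argument that $n=1$ is impossible asserts that the immediate basin of $p$ for the global map $g$ would then be $\DD$. This is not so: reflection symmetry $g(1/\bar z)=1/\overline{g(z)}$ forces $g''(p)$ to be purely imaginary, so if $g''(p)\neq 0$ the unique attracting direction is \emph{tangent} to $\TT$. The attracting petal then straddles $\TT$, and the immediate basin of $p$ for $g$ contains both $\DD$ and the exterior---so exterior orbits converging to $p$ produce no contradiction. A correct replacement: the conjugacy gives $\DD/g\cong B_0^f/f\cong\CC/\ZZ$ by \propref{conformal isomorphism of cylinders}; if $n=1$, the inclusion $\DD\hookrightarrow B_0^g$ into the immediate basin of the global $g$ induces an injective holomorphic map $\CC/\ZZ\to B_0^g/g\cong\CC/\ZZ$, hence a bijection by \propref{liouville}, so every $g$-orbit in $B_0^g$ would meet $\DD$---contradicting $g$-invariance of the exterior.
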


\begin{proof} 

%%The detailed structure of $B_0$ depends on
%%global dynamics of the mapping $f$ and is hard to
%%control. Nevertheless, simply from the local theory of parabolic
%%points, we know that $0$ sits at the tip of a sharp ``cusp'' of
%%$\partial B^f_0$ pointing roughly to the left. We will
%%make this more explicit shortly. For the moment, we use this fact to
%%motivate the construction of a special prime end of $B^f_0$
%%``associated'' in some sense with $0$.

 For each sufficiently large
$j\in\NN$, let $\sigma_j$ denote that component of the intersection with
$B^f_0$ of the circle of radius $1/j$ about $0$ which contains
$-1/j$. Thus for large $j$, the arc $\sigma_j$ is almost the whole
circle: a small closed arc near the positive real axis has been cut
out. It is immediate that each $\sigma_j$ is a crosscut of $B^f_0$ and
that $\sigma_j$'s for different $j$'s are disjoint (see Figure \ref{fig-crosscut} (a)).
Let $N_j$
be the crosscut neighborhood  of $\sigma_j$
which does not intersect $\sigma_{j-1}$.
 It is evident that 
$\sigma_{j+1}\subset N_j$.
The diameter of $\sigma_j$ is at most $2/j \to 0$, hence  the crosscut neighborhoods $N_j$ form
a fundamental chain. It is clear that the impression of this
fundamental chain contains $0$.
 Recalling that a prime end is an equivalence
class of fundamental chains, we denote by $\widehat{0}$ the prime end
containing the above fundamental chain.

By  Riemann Mapping Theorem, there is a conformal isomorphism
$\varphi$ from $B^f_0$ to $\DD$ mapping the unique critical point of
$f$ in $B^f_0$ to $0$. By Carath\'eodory theory, $\varphi$ extends to
map the set of prime ends of $B^f_0$ homeomorphically to the unit
circle. We can further require that the extension of $\varphi$ sends
$\widehat{0}$ to $1$; with this additional condition, $\varphi$ is
unique. 
%%(To see this: Take any $\varphi$ mapping $B^f_0$ to $\DD$ and
%%the critical point to $0$. Its Carath\'eodory extension sends
%%$\widehat{0}$ to some point of the unit circle, say $e^{i
%%  \theta}$. Then the Carath\'eodory extension of $e^{-i\theta}
%%\varphi$ sends $\widehat{0}$ to $1$ and continues to satisfy the other
%%conditions.)

We use $z_n \underset{\text{p.e.}}{\to}
  \widehat{0}$ 
as an abbreviation for the assertion that $z_n$ is
  eventually in $N_j$ for each $j$. 
From the construction of the topology on the space of prime ends and
of the Carath\'eodory extension, we extract the following:

\medskip\noindent{\em{\bf Continuity at $\bm{0}$:} Let $(z_n)$ be a sequence in
  $B^f_0$. Then $\varphi(z_n) \to 1$ if and only if $z_n\underset{\text{p.e.}}{\to}\widehat{0}$.
}

\medskip\noindent 
In view of the construction of the
  crosscuts $\sigma_j$, it might appear at first glance that the
  requirement that $z_n \underset{\text{p.e.}}{\to}
  \widehat{0}$ 
 is
  more or less the same as the requirement that $z_n \to
  0$. However, proving this requires more control over the
  structure of $\partial B^f_0$ than we have. We next develop a
  convenient condition which suffices to guarantee convergence of
  $\varphi(z_n)$ to $1$.

Let $P_R$ be a repelling petal which maps under $\phi_R$ to a left
half-plane $\{ u + i v : u < \beta \}$. To give some room for
maneuver, 
 we assume that there is a larger petal $P_R'$ mapping to the half-plane
$\{ u < \beta' \}$ for some $\beta' > \beta$. For the next few
paragraphs, $\phi_R^{-1}$ will denote the inverse of the restriction
of $\phi_R$ to $P_R'$.

We next need to argue that $\Im(\phi_R)$ is bounded on $P_R \setminus
B^f_0$. More precisely, let $M$ be a positive number such that, if $z
\in P_R$ with $$\vert \Im(\phi_R(z)) \vert \geq M,$$ then $z \in B^f_0$ (see Figure \ref{fig-crosscuts} (b)).

\begin{figure}
\centerline{\includegraphics[width=\textwidth]{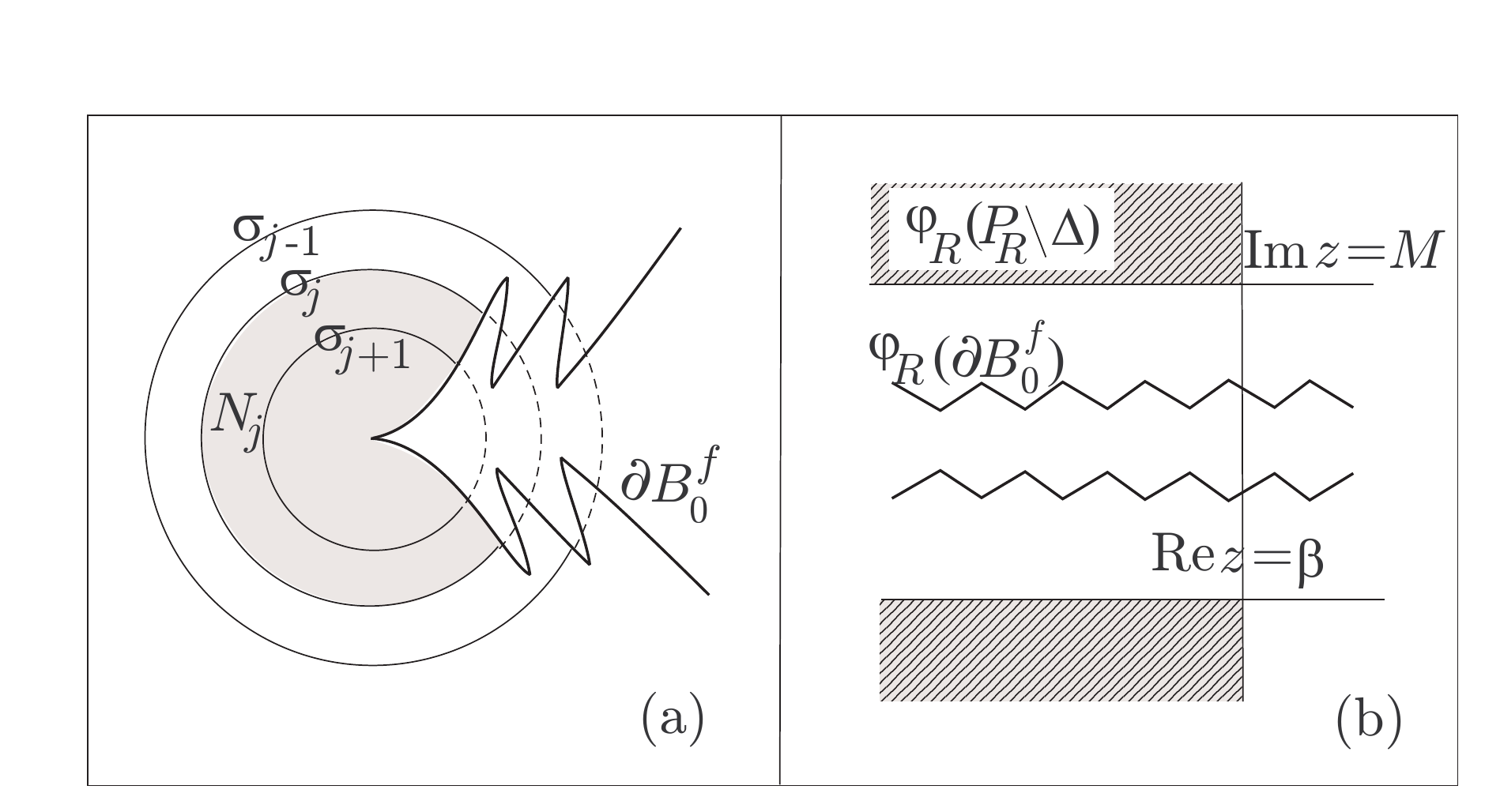}}
\caption{\label{fig-crosscuts}An illustration of the proof of \thmref{th:blaschke-model}}

\end{figure}
%%{\small\tt [Proof of the existence of such an $M$ to be given.]}

\ignore{We look first inside the crescent $P_R \setminus
f^{-1}P_R$, which maps under $\phi_R$ to the strip $\{ \beta-1 \leq
\Im(z) < \beta \}$. By the crude asymptotic estimates on Fatou
coordinates, as $z \to \infty$ inside this strip, $\phi_R^{-1}(z) \to
0$ along the imaginary axis and so -- by local theory -- is eventually
in $B^f_0$. Formulated precisely, what this argument shows is that
there is a real number $M$ so that, if $\beta - 1 \leq \Re(z) <
\beta$  and $\Im(z) \to $ large enough,
then $\phi_R^{-1}(y)$ as
$First: $\Im(\phi_R)$ is continuous on the compact set
\[
\partial B^f_0 \cap \bigl( \overline{P_R} \setminus (f^{-1}P_R \cup \{
0 \}) \bigr),
\]
and hence bounded there. Let $M$ be and upper bound for $\vert
\Im(\phi_R) \vert$ on this set. We claim that
\[
\vert \Im(\phi_R) \vert \leq M \quad\text{on}\quad P_R \cap 
B^f_0.
\]
Suppose not. Then for some $z \in P_R \cap B^f_0$, $\vert
\Im(\phi_R(z)) \vert > M$. But then, for appropriate $n$, $f^n(z) \in
P_R \setminus f^{-1}P_R$, and -- by forward invariance of $B^f_0$ --
$f^n(z)$ is also in $B^f_0$. Since $\Im(\phi_R(f^n(z)) =
\Im(\phi_R(z)$, this contradicts the choice of $M$. 
}%%end ignore

Now let $\Delta := \{ z \in P_R: \vert \Im(\phi_R(z) \vert \leq M
\}$. 

%%{\tt\small [Figure needed!]}By construction $P_R \setminus \Delta
%%\subset B^f_0$. We now claim:

\begin{lem}\label{thm:convergence-in-prime-ends}
If $(z_n)$ is a sequence in $B^f_0 \setminus \Delta$ such that $z_n\to 0$, then $z_n
\underset{\text{p.e}}{\longrightarrow} \widehat{0}$
\end{lem}

\begin{proof}[Proof of lemma \ref{thm:convergence-in-prime-ends}]
Recall the definition of the crosscut neighborhoods $N_j$ above, and
let $D_j$ denote the open disk of radius $1/j$ about $0$. Let
$\gamma_1$ and $\gamma_2$ denote the segments of the upper and lower
edges of $\Delta$ which start at $0$ and extend to the first point
where the edges in question meet the circle bounding of $D_j$. 

Then $\gamma_1$ and $\gamma_2$ are crosscuts of $N_j$; deleting them
splits $N_j$ into 3 subdomains, one of which is bounded by $\gamma_1$,
$\gamma_2$, and a left-facing arc of the circle bounding $D_j$.
%%{\tt\small [Figure {\em really} needed!]}  
All we want to extract from
the above is that this latter domain is contained in $N_j$. On the
other hand, elementary topological considerations, resting on the fact
that the two edges of $\Delta$ are smooth arcs tangent to the positive
imaginary axis at $0$, show that the domain described above is $D_j
\setminus \Delta$ for large enough $j$. In short:
\[
D_j \setminus \Delta \subset N_j\quad\text{for sufficiently large
  $j$.}
\]
Now fix a $j$ large enough so that the above holds. If $z_n$ converges
to $0$ staying outside of $\Delta$, it is eventually in $D_j \setminus
\Delta$; hence, eventually in $N_j$. This holds
for all sufficiently large $j$, so $z_n
\underset{\text{p.e}}{\longrightarrow} \widehat{0}$, as asserted.
\end{proof}

\medskip\noindent We return to the proof of
\thmref{th:blaschke-model}. By assumption, $f: B^f_0 \to B^f_0$ is a
degree-2 branched cover, and $B$ is the conjugate of $f$ on $B^f_0$
under a conformal isomorphism $\varphi: B^f_0 \to \DD$. Thus, $B$ is a
degree-2 branched cover of $\DD$ to itself. It is a standard fact --
following from an easy application of the Schwarz lemma -- that such a
$B$ has the form
\[
B(z) = e^{i \theta} \frac{z^2 - a}{1 - \overline{a}z^2},
\]
where $a \in \DD$ and $\theta \in \Reals$.  By this formula, $B$ extends
analytically to a neighborhood of the closed disk. To complete the
proof, we just need to determine $\theta$ and $a$.

For any $z_0 \in B^f_0$, $f^n(z_0)$ converges to $0$ from the negative
real direction, so $z_n$ is eventually outside $\Delta$, so, by
\lemref{thm:convergence-in-prime-ends} $z_n \underset{\text{p.e}}{\to}
\widehat{0}$, so $\varphi(f^n(z_0)) = B^n(\varphi(z_0)) \to 1$, from
which it follows that $1$ is a fixed point of $B$:
\[
B(1) = 1.
\]
Since $B$ maps the unit circle to itself, preserving orientation,
$B'(1)$ is real and positive. Since the
orbit $B^n(\varphi(z_0)) \to 1$, it is not possible that $B'(1) > 1$.

Now specialize to $z_0 \in B^f_0 \cap P_R$ with $\Im(\phi_R(z_0)) >
M$. Let $f^{-1}$ denote the inverse of the restriction of $f$ to
$P_R$. Then, for any $n \geq 0$, $f^{-n}(z_0) \in P_R$ and
$\Im(\phi_R(f^{-n}(z_0)) = \Im(\phi_R(z_0)) > M$, so $f^{-n}(z_0) \in
B^f_0 \setminus \Delta$. Since $z_0$ is in the repelling petal $P_R$,
$f^{-n}(z_0) \to 0$, so, by \lemref{thm:convergence-in-prime-ends},
$y_{-n} := \varphi(f^{-n}(z_0)) \to 1$. The sequence $(y_{-n})$ is a
backward orbit for $B$: $B(y_{-n-1}) = y_{-n}$. The existence of just
one backward $B$-orbit converging to the fixed point $1$ of $B$
implies
\begin{itemize}
\item $B'(1)$ cannot be $< 1$, so $B'(1)$ must be equal to $1$
\item $B''(1)$ cannot be non-zero; if it were, any backward orbit
  converging to $1$ would have to do so from the positive real
  direction -- in particular, from outside $\DD$ -- whereas our
  sequence $y_{-n}$ is inside $\DD$.
\end{itemize}
We thus have the general algebraic formula for $B(z)$ and the conditions:
\[
B(1) = 1, \quad B'(1) = 1, \quad B''(1) = 0;
\]
routine algebra then shows that $B(z)$ must be as in the statement of
the theorem. 

\end{proof}

\section{Global theory}
\subsection{Basic facts about branched coverings.}
We give a brief summary of relevant facts about analytic branched coverings. A more detailed exposition is
found e.g. in the Appendix E of \cite{Mil}.

For a holomorphic map $f:X\to Y$ between two Riemann surfaces a {\it regular value} is a point $y\in Y$ for which
one can find an open neighborhood $U=U(y)$ such that
$$f^{-1}(U)\overset{f}{\longrightarrow}U$$
is a covering map.
The complement of this set consists of the {\it singular values} of $f$, and will be denoted $\sing(f)$.

By definition, $y\in Y$ is an {\it asymptotic value} of $f$ if there exists a parametrized path 
$$\gamma:(0,1)\to X\text{, such that }\lim_{t\to 1-}f(\gamma(t))=y,$$
and such that $\lim_{t\to 1-}\gamma(t)$ does not exist in $X$. 
We will be concerned with the situation when $X$ is a proper subdomain of the Riemann sphere. In this case, the
non-existence of the limit can be replaced with:
$$\gamma(t)\underset{t\to 1-}{\longrightarrow}{\partial X}.$$
We will denote $\asym(f)\subset Y$ the set of all asymptotic values of $f$. 

Recall that $y_0\in Y$ is called a {\it critical value} (or a {\it ramified point}) of $f$ if 
there exists $x_0\in X$ such that $y_0=f(x_0)$, and the local degree of $f$ at $x_0$ is $n\geq 2$. Thus, in local coordinates,
one has
$$f(x)-y_0=c(x-x_0)^n+\cdots\text{ where }n\geq 2.$$
The point $x_0$ is called a {\it critical point} of $f$ (or a {\it ramification point}).
We denote 
 $\crit(f)\subset X$  the set of the critical points of $f$.
One then has:
\begin{prop}
For a holomorphic map $f:X\to Y$ between Riemann surfaces,
$$\sing(f)=\overline{\asym(f)\cup f(\crit(f))};$$
that is, the set of the singular values of $f$ is the closure of the union of its asymptotic and critical values.
\end{prop}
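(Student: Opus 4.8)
The plan is to prove the two inclusions separately, after first recording that the set of regular values is open, so that $\sing(f)$ is closed. Indeed, if $y$ is a regular value with a witnessing neighbourhood $U$ (so $f^{-1}(U)\to U$ is a covering), then every $y'\in U$ is regular too, since the restriction of that covering over any subneighbourhood of $y'$ is again a covering.

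For the inclusion $\overline{\asym(f)\cup f(\crit(f))}\subseteq\sing(f)$ I would, since $\sing(f)$ is now known to be closed, just check that critical and asymptotic values are singular. If $y_0=f(x_0)$ with local degree $n\ge 2$ at $x_0$, then in a local coordinate $f$ looks like $z\mapsto z^{n}$, so it is not a local homeomorphism at $x_0$; but $x_0\in f^{-1}(U)$ for every neighbourhood $U$ of $y_0$, and a covering $f^{-1}(U)\to U$ would be a local homeomorphism, so $y_0\in\sing(f)$. If $y_0\in\asym(f)$, pick a path $\gamma$ in $X$ with $f(\gamma(t))\to y_0$ and $\gamma(t)$ having no limit in $X$; were $y_0$ regular, choose an evenly covered neighbourhood $V\ni y_0$, so that $f^{-1}(V)$ is a disjoint union of sheets each mapped homeomorphically onto $V$; eventually $\gamma(t)\in f^{-1}(V)$, and by connectedness of the tail of $\gamma$ it lies in a single sheet $W$, whence $\gamma(t)=(f|_{W})^{-1}(f(\gamma(t)))$ converges in $W\subset X$, contradicting the choice of $\gamma$.

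For the reverse inclusion $\sing(f)\subseteq\overline{\asym(f)\cup f(\crit(f))}$ I would show that any $y_0\notin\overline{\asym(f)\cup f(\crit(f))}$ is a regular value. Choose a coordinate disk $U\ni y_0$ disjoint from $\asym(f)\cup f(\crit(f))$; the goal is to prove $f^{-1}(U)\to U$ is a covering. Since $U$ omits all critical values, $f$ is a local homeomorphism on $f^{-1}(U)$. The heart of the argument is a path-lifting claim: for any path $\delta\colon[0,1]\to U$ and any point $x$ over $\delta(0)$ there is a lift of $\delta$ starting at $x$. One lifts $\delta$ as far as possible using local inverses; the set of times $t$ for which a lift over $[0,t]$ starting at $x$ exists is a non-empty interval, relatively open in $[0,1]$, hence of the form $[0,1]$ (in which case we are done) or $[0,T)$ with a genuine breakdown at $T\le 1$, where a lift $\tilde\delta$ is defined on $[0,T)$ but does not extend to $T$. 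In the breakdown case $\tilde\delta(t)$ can have no limit $x^{*}$ in $X$ as $t\to T$, for such an $x^{*}$ would satisfy $f(x^{*})=\delta(T)$, and the local homeomorphism at $x^{*}$ would both force $\tilde\delta(t)\to x^{*}$ and allow the lift to be continued past $T$; hence $\tilde\delta$ exhibits $\delta(T)\in U$ as an asymptotic value, contradicting $U\cap\asym(f)=\emptyset$. Thus every path in $U$ lifts. I would then pass from path-lifting to the covering property by the classical monodromy argument: for each $x\in f^{-1}(y_0)$ define $s_x\colon U\to f^{-1}(U)$ by letting $s_x(y)$ be the endpoint of the lift starting at $x$ of any path in $U$ from $y_0$ to $y$; simple connectedness of $U$ together with uniqueness of lifts (valid because $f$ is a local homeomorphism) make $s_x$ a well-defined continuous section of $f$ with $s_x(y_0)=x$; the images $s_x(U)$ are disjoint open subsets of $f^{-1}(U)$, each mapped homeomorphically onto $U$, and every $z\in f^{-1}(U)$ lies in one of them (lift a path in $U$ from $f(z)$ to $y_0$ starting at $z$), so $U$ is evenly covered.

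I expect the only genuine obstacle to be the path-lifting claim, and specifically the verification that a non-extendable lift really produces an asymptotic value — that is, that $\tilde\delta(t)$ has no limit in $X$ at the breakdown time; this is precisely where both hypotheses on $U$ (no critical values, no asymptotic values) are used. The easy inclusion and the deduction of the covering property from path-lifting are routine point-set topology, and I would keep them brief, referring to Appendix E of \cite{Mil} for details.
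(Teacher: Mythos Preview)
The paper does not prove this proposition; it is stated as part of a ``brief summary of relevant facts about analytic branched coverings,'' with a pointer to Appendix~E of \cite{Mil} for details. Your argument is correct and is exactly the standard one found in such references: closedness of $\sing(f)$, the easy inclusion, and then path-lifting plus monodromy for the hard inclusion.

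One small remark on presentation: in the breakdown case you write that a limit $x^{*}$ ``would both force $\tilde\delta(t)\to x^{*}$ and allow the lift to be continued past $T$.'' The second clause is the content; the first is redundant if you are already assuming $\lim_{t\to T}\tilde\delta(t)=x^{*}$. If instead you meant to rule out mere \emph{subsequential} limits (which is what one really wants, and which does require the local-homeomorphism step to upgrade a subsequential limit to a genuine one), it would be clearer to say so explicitly. Either way the conclusion stands: the failure of the lift to extend produces a path witnessing $\delta(T)\in\asym(f)$, contradicting the choice of $U$.
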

Recall that a non-constant map $g:S_1\to S_2$ between two Hausdorff topological spaces is called {\it proper} if the preimage of every compact set of $S_2$ is compact in $S_1$.
It is easy to see that:
\begin{prop}
\label{proper1}
Let $f:X\to Y$ be a proper holomorphic map of Riemann surfaces. Let $W$ be a non-empty
connected open subset of $Y$, and let $V$ be
a connected component of $f^{-1}(W)$. Then $f:V\to W$ is also proper.
\end{prop}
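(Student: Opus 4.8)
The plan is to verify the defining property of properness directly: given a compact set $K\subset W$, I want to show that $(f|_V)^{-1}(K)=f^{-1}(K)\cap V$ is compact in $V$. Since $V$ is open in $X$ and $f$ is proper as a map $X\to Y$, the set $f^{-1}(K)$ is already compact in $X$; the only issue is that $f^{-1}(K)\cap V$ might fail to be closed in $X$ if part of it accumulates on the relative boundary $\partial V$. So the crux is to rule out such accumulation, i.e.\ to show $f^{-1}(K)\cap V$ is in fact closed in $X$ (equivalently, that it equals $f^{-1}(K)\cap\overline V$, the intersection of two compacts).

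The key point is that $V$ is a \emph{connected component} of the open set $f^{-1}(W)$, hence $V$ is both open and closed in the subspace $f^{-1}(W)$. Therefore any boundary point $x_0\in\partial V\cap f^{-1}(W)$ would force $x_0$ to lie in a component of $f^{-1}(W)$ other than $V$ while also being a limit of points of $V$ — impossible, since components are open. Consequently $\partial V\cap f^{-1}(W)=\emptyset$, that is, $\partial V\subset X\setminus f^{-1}(W)$, so $f(\partial V)\subset Y\setminus W$. Now suppose $x_n\in f^{-1}(K)\cap V$ with $x_n\to x_0$ in $X$; by compactness of $f^{-1}(K)$ we may assume the limit exists and lies in $f^{-1}(K)$, so $f(x_0)\in K\subset W$, hence $x_0\in f^{-1}(W)$, hence $x_0\notin\partial V$, hence $x_0\in V$. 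Thus $f^{-1}(K)\cap V$ is closed in $X$; being a closed subset of the compact set $f^{-1}(K)$, it is compact, and it is contained in $V$, so it is compact in $V$. This establishes properness of $f|_V\colon V\to W$.

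I do not expect any serious obstacle here — the statement is essentially a point-set topology exercise and the argument above is complete modulo trivial verifications. The one subtlety worth stating explicitly (which I have used above) is that in a locally connected space, such as a Riemann surface or any open subset thereof, the connected components of an open set are open; this is what makes $V$ relatively clopen in $f^{-1}(W)$ and drives the boundary computation $f(\partial V)\cap W=\emptyset$. Everything else is the standard ``closed subset of a compact set is compact'' mechanism.
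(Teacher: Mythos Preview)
Your proof is correct. The paper does not actually give a proof of this proposition; it simply prefaces the statement with ``It is easy to see that:'' and moves on, so there is nothing to compare against --- your argument is exactly the standard one and fills in the omitted details cleanly.
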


For a holomorphic map $f:X\to Y$ define 
the degree of $f$ at $y\in Y$ (denoted $\deg_y(f)$) as the possibly infinite
sum of the number of preimages of $y$ in $X$, counted with
multiplicity.
It is not difficult to see that a proper analytic map has a well-defined local degree:
\begin{prop}
\label{proper2}
If $f:X\to Y$ is a proper analytic map between Riemann surfaces, and $Y$ is connected, then
$\deg_y(f)$ is finite and independent of $y$ (and can be denoted as $\deg(f)$).
\end{prop}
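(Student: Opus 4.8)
The plan is to derive both assertions from properness together with the local structure of analytic maps. \emph{Finiteness first.} Since $f$ is non-constant on each component of $X$, every fiber $f^{-1}(y)$ is discrete in $X$; on the other hand $\{y\}$ is compact, so properness forces $f^{-1}(y)$ to be compact, hence finite, say $f^{-1}(y)=\{x_1,\dots,x_k\}$. Each local multiplicity $n_i$ of $f$ at $x_i$ is finite (again because $f$ is non-constant near $x_i$), so $\deg_y(f)=n_1+\cdots+n_k<\infty$.

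For \emph{independence of $y$}, the strategy is to show that the function $y\mapsto\deg_y(f)$ is locally constant on $Y$ and then invoke connectedness. Fix $y_0\in Y$ and write $f^{-1}(y_0)=\{x_1,\dots,x_k\}$ with local degree $n_i$ at $x_i$. Using the normal form for analytic maps (see Appendix E of \cite{Mil}) I would choose pairwise disjoint coordinate neighborhoods $U_i\ni x_i$ and coordinate disks $D_i\ni y_0$ so that $f(U_i)=D_i$ and, in these coordinates, $f|_{U_i}$ is the map $z\mapsto z^{n_i}$; then for each $y\in D_i$ the number of preimages of $y$ inside $U_i$, counted with multiplicity, is exactly $n_i$. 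Consequently, \emph{if} one can produce a connected neighborhood $W\subset\bigcap_i D_i$ of $y_0$ with
\[
f^{-1}(W)\ \subset\ \bigcup_{i=1}^{k}U_i ,
\]
then every $y\in W$ satisfies $f^{-1}(y)=\bigsqcup_{i=1}^{k}(f|_{U_i})^{-1}(y)$, hence $\deg_y(f)=\sum_i n_i=\deg_{y_0}(f)$, which is the desired local constancy.

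The heart of the argument — and the step I expect to be the only real obstacle — is producing this $W$, and this is precisely where properness is used. Choose a compact neighborhood $\overline{W_0}$ of $y_0$ with $W_0\subset\bigcap_i D_i$. By properness $f^{-1}(\overline{W_0})$ is compact, so $C:=f^{-1}(\overline{W_0})\setminus\bigcup_i U_i$ is compact, being closed in a compact set. Since every preimage of $y_0$ lies in $\bigcup_i U_i$, we have $y_0\notin f(C)$; as $f(C)$ is compact, there is an open neighborhood $W\subset W_0$ of $y_0$, which may be taken connected, with $W\cap f(C)=\emptyset$. Then $f^{-1}(W)\subset f^{-1}(\overline{W_0})$ and $f^{-1}(W)\cap C=\emptyset$, so $f^{-1}(W)\subset\bigcup_i U_i$, exactly as needed.

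Finally, $y\mapsto\deg_y(f)$ being locally constant on the connected surface $Y$, it is constant, and we may denote the common value $\deg(f)$. The remaining ingredients — discreteness of fibers, finiteness of the local multiplicities, the $z\mapsto z^{n}$ normal form, and the compactness/separation juggling — are all routine; only the properness input in the third paragraph does the essential work.
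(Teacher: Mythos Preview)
Your argument is correct and is exactly the standard one. The paper itself does not give a proof of this proposition; it is stated as a background fact with the remark ``It is not difficult to see that a proper analytic map has a well-defined local degree,'' with a pointer to Appendix~E of \cite{Mil} for details, and your write-up supplies precisely that routine verification.
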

As a consequence, note:
\begin{prop}
If $f:X\to Y$ is a proper analytic map between Riemann surfaces, and $Y$ is connected, then $f(X)=Y$.
\end{prop}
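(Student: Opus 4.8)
The plan is to show that $f(X)$ is a non-empty subset of $Y$ which is both open and closed; since $Y$ is connected, this forces $f(X)=Y$. Non-emptiness is clear, as $X\neq\emptyset$. Openness follows from the open mapping theorem for holomorphic maps between Riemann surfaces: working in local charts, a non-constant holomorphic function is an open map, and — by the convention adopted in this section — a proper map is in particular non-constant, so $f$ cannot be locally constant on any component of $X$.

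The substantive point is closedness of $f(X)$, and this is exactly where properness is used. Suppose $y\in\overline{f(X)}$ and choose $x_n\in X$ with $f(x_n)\to y$. The set $K:=\{y\}\cup\{f(x_n):n\in\NN\}$ is compact in $Y$, so by properness $f^{-1}(K)$ is a compact subset of $X$ containing every $x_n$. Passing to a subsequence, $x_{n_k}\to x\in X$, and continuity of $f$ gives $f(x)=\lim_k f(x_{n_k})=y$, so $y\in f(X)$. Hence $f(X)$ is closed, and the three properties together give surjectivity.

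Alternatively, and more in keeping with the surrounding development, one can simply invoke \propref{proper2}: the local degree $\deg_y(f)$ is finite and independent of $y\in Y$. Since $f(X)\neq\emptyset$, there is at least one $y_0$ with $\deg_{y_0}(f)\geq 1$, hence $\deg_y(f)\geq 1$ for every $y\in Y$, which is precisely the assertion that every point of $Y$ has a preimage. There is no real obstacle here; the only thing to keep straight is that "proper" as defined above already rules out constant maps, so both the open mapping argument and the degree count are non-vacuous.
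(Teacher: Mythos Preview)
Your proposal is correct. The paper gives no explicit proof: it simply introduces the proposition with ``As a consequence, note:'' immediately after \propref{proper2}, so the intended argument is precisely your second one --- the degree $\deg_y(f)$ is constant on $Y$, and since $f(X)\neq\emptyset$ it is at least $1$ everywhere.

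Your first argument (non-empty, open by the open mapping theorem, closed by properness, hence all of $Y$ by connectedness) is a correct and self-contained alternative that does not rely on \propref{proper2}. It is slightly more elementary in that it uses properness directly rather than via the degree count, though of course the proof of \propref{proper2} itself goes through essentially the same open-and-closed reasoning. Either route is fine; the paper just chose to package the topology into the degree statement first.
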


\begin{prop}
\label{proper3}
Let $g:S_1\to S_2$ be a proper continuous map between Hausdorff topological spaces which is everywhere a local
homeomorphism. Then $f$ is a covering map.
\end{prop}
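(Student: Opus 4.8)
The plan is to verify the defining property of a covering map at an arbitrary point $y\in S_2$: produce an open neighbourhood of $y$ whose preimage is a disjoint union of open sets each carried homeomorphically onto it. Throughout, I will use that a local homeomorphism is in particular an open map, so $g$ is open.

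First I would pin down the fibre over $y$. Since $g$ is a local homeomorphism, $g^{-1}(y)$ is a discrete subset of $S_1$; since $g$ is proper and $\{y\}$ is compact, $g^{-1}(y)$ is compact; a compact discrete set is finite, so $g^{-1}(y)=\{x_1,\dots,x_n\}$. Using that $S_1$ is Hausdorff, choose pairwise disjoint open sets containing the $x_i$, and shrink each so that $g$ restricts to a homeomorphism of it onto an open subset of $S_2$; this produces pairwise disjoint open sets $U_1,\dots,U_n$ with $x_i\in U_i$ and with $g|_{U_i}:U_i\to g(U_i)$ a homeomorphism onto the open set $g(U_i)$. Write $U=\bigcup_i U_i$.

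The heart of the matter is to find an open $W\ni y$ with $g^{-1}(W)\subset U$, and this is where properness is used. Pick a compact neighbourhood $\overline W_0$ of $y$ (legitimate because the spaces of interest — Riemann surfaces — are locally compact; equivalently one should read the hypothesis as ``$S_2$ locally compact Hausdorff''). Then $g^{-1}(\overline W_0)$ is compact, hence so is $C:=g^{-1}(\overline W_0)\setminus U$, hence $g(C)$ is compact and therefore closed in $S_2$. Since $g^{-1}(y)\subset U$, we have $y\notin g(C)$, so $W_1:=\operatorname{int}(\overline W_0)\setminus g(C)$ is an open neighbourhood of $y$ with $g^{-1}(W_1)\subset U$. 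Shrinking once more, set $W:=W_1\cap\bigcap_{i=1}^n g(U_i)$; this is still an open neighbourhood of $y$, and $g^{-1}(W)\subset U$, so $g^{-1}(W)=\bigsqcup_{i=1}^n\bigl(U_i\cap g^{-1}(W)\bigr)$, a disjoint union of open sets; and because $W\subset g(U_i)$, each $g|_{U_i\cap g^{-1}(W)}$ is a homeomorphism onto $W$. Thus $W$ is evenly covered. (If $n=0$, the same construction with $U=\emptyset$ gives a neighbourhood of $y$ disjoint from $g(S_1)$; this does not occur when $S_2$ is connected and $S_1\neq\emptyset$.) Since $y$ was arbitrary, $g$ is a covering map.

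The only step that is not pure bookkeeping is the construction of $W_1$: one must convert ``$g$ proper'' into ``$g$ sends the compact set $C$ to a closed set missing $y$'', and for that one genuinely needs a compact neighbourhood of $y$ with which to cut $g^{-1}(\overline W_0)$ down to something compact. Everything else — finiteness of the fibres, separating them inside $S_1$, and assembling the evenly covered neighbourhood — is routine point-set topology.
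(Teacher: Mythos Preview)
The paper does not supply a proof of this proposition; it is stated as one of several standard facts about branched coverings, with a reference to Appendix~E of Milnor's book for details. So there is nothing to compare your argument against.

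Your argument is the standard one and is correct in the intended setting. You are right to flag that the construction of $W_1$ uses a compact neighbourhood of $y$, i.e.\ local compactness of $S_2$; the proposition as literally stated (arbitrary Hausdorff spaces, ``proper'' meaning only that preimages of compacta are compact) is not quite true without some such hypothesis. Since the paper immediately specializes to Riemann surfaces, this is harmless, and your parenthetical remark handles it appropriately.
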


\noindent
In particular, putting together \propref{proper2} and \propref{proper3}, we have
\begin{prop}
\label{proper4}
Let $f:X\to Y$ be a proper analytic mapping between Riemann surfaces. Let $Y$ be connected, and set $d=\deg(f)$. Then
$$f:X\setminus \crit(f)\to Y\setminus f(\crit(f))$$
is a degree $d$ covering.
\end{prop}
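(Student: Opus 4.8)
The plan is to assemble the statement from \propref{proper2} and \propref{proper3} after a short preliminary step. Throughout I read the domain of the restricted map as $X':=f^{-1}\bigl(Y\setminus f(\crit f)\bigr)=X\setminus f^{-1}(f(\crit f))$, which is contained in $X\setminus\crit(f)$; this is the only sensible reading, since a point of $X\setminus\crit(f)$ may still be sent by $f$ into $f(\crit f)$. \emph{First I would check that $f(\crit f)$ is discrete and closed in $Y$, so that $Y':=Y\setminus f(\crit f)$ is an open connected subsurface.} Since $f$ is non-constant and analytic, $\crit(f)$ is a closed discrete subset of $X$; if $f(\crit f)$ had an accumulation point $y_0\in Y$, then, taking a compact neighbourhood $K$ of $y_0$, properness would make $f^{-1}(K)$ a compact set containing infinitely many critical points, so $\crit(f)$ would accumulate in $X$ --- a contradiction. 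Hence $f(\crit f)$ is closed and discrete; $Y'$ is open; and, being the complement of a closed discrete set in a connected real surface, $Y'$ is connected.

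Next, $f$ restricts to a proper analytic local homeomorphism $X'\to Y'$: on $X'$ the derivative of $f$ never vanishes (no critical points), so $f|_{X'}$ is a local biholomorphism, and for compact $K\subset Y'$ the set $f^{-1}(K)$ is compact in $X$ and lies in $X'$, hence is compact there too. Now \propref{proper3} applies to $f|_{X'}\colon X'\to Y'$ (a proper continuous local homeomorphism) and shows it is a covering map, while \propref{proper2} applies (proper analytic, $Y'$ connected) and shows every fibre over $Y'$ has the same finite cardinality counted with multiplicity --- which over $Y'$ is just the ordinary cardinality, since those fibres miss $\crit(f)$. Thus $f|_{X'}$ is a covering of some degree $d'$. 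To see $d'=d$: for $y\in Y'$ every $f$-preimage of $y$ already lies in $X'$ and is non-critical, so $d'=\#f^{-1}(y)=\deg_y(f)$, and $\deg_y(f)=\deg(f)=d$ by \propref{proper2} applied to $f$ on all of $Y$.

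\emph{The hard part} --- such as it is --- is not hard: the argument is a direct assembly of the quoted propositions. The only places that need a little care are the preliminary step (discreteness of $f(\crit f)$, which is what guarantees that $Y'$ has good topology) and the final bookkeeping identifying the ``degree counted with multiplicity'' of the branched cover $f$ with the number of sheets of the unbranched covering $f|_{X'}$.
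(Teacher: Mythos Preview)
Your proposal is correct and matches the paper's approach exactly: the paper presents \propref{proper4} as an immediate consequence of \propref{proper2} and \propref{proper3} without a separate written proof, and you have simply spelled out the assembly, including the (implicit in the paper) check that $f(\crit f)$ is closed and discrete and the clarification that the domain should be read as $f^{-1}\bigl(Y\setminus f(\crit f)\bigr)$.
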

In view of the above, a proper analytic map is sometimes called a {\it branched covering of a finite degree}.
Generalizing to the case when local degree is infinite gives the following definition:
\begin{defn}
A holomorphic map $f:X\to Y$ between Riemann surfaces is a {\it branched covering} if every point $y\in Y$ has a connected
 neighborhood $U\equiv U(y)$ such that the restriction of $f$ to each connected component of $f^{-1}(U)$ is a proper map.
\end{defn}

\noindent
As a canonical example, consider the case $X=Y=\hat\CC$. Non-constant rational maps $f$ are clearly branched coverings;
and every branched covering is, in fact, a rational map.

Let us formulate another general lemma which we will find useful:
\begin{lem}
\label{proper5}
Let $f:U\to V$ be a proper analytic map between connected subdomains of $\hat\CC$. Assume that $f$ 
has a single critical value $v\in V$, and that $V$ is a topological disk. Then $U$ is also a topological disk,
and $f$ has only one critical point $u$ in $U$, such that $f^{-1}(v)=\{u\}$.

\end{lem}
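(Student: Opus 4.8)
The plan is to strip off the fibre over the unique singular value and use covering-space theory. First I would note that $f$, being a proper analytic map onto the connected surface $V$, has a well-defined finite degree $d=\deg f$ by \propref{proper2}, and is surjective. A proper map has no asymptotic values — a path running out to $\partial U$ cannot have convergent image, since it would eventually be trapped in the compact preimage of a small closed disc around the putative limit — so by hypothesis the only singular value of $f$ is $v$. Put $V':=V\setminus\{v\}$ and $U^*:=f^{-1}(V')=U\setminus f^{-1}(v)$. The fibre $f^{-1}(v)$ is finite (discrete since $f$ is non-constant analytic, compact since $f$ is proper), so $U^*$ is still connected, and since $\crit(f)\subseteq f^{-1}(v)$ the restriction $f\colon U^*\to V'$ is a local homeomorphism; it is proper onto $V'$ by \propref{proper1}, hence a covering map of degree $d$ by \propref{proper3}.

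Next I would read off the topology of $U^*$. Since $V$ is a topological disc, $V'$ is a once-punctured disc with $\pi_1\cong\ZZ$, and a connected $d$-sheeted covering of such a space corresponds to the unique index-$d$ subgroup $d\ZZ\cong\ZZ$; hence $U^*$ is again (homeomorphic to) a punctured disc — a doubly connected domain, so $\hat\CC\setminus U^*$ has exactly two connected components. The \emph{key} step is then to identify those two components. Write $f^{-1}(v)=\{u_1,\dots,u_k\}$; here $k\ge 1$ because $f$ has a critical value and every critical point maps to $v$. Each $u_i$ lies in the open set $U$ and has a neighbourhood meeting $f^{-1}(v)$ only in $u_i$, so $\{u_i\}$ is clopen in $\hat\CC\setminus U^*$, hence one of its components; and $\hat\CC\setminus U$ is clopen in $\hat\CC\setminus U^*$ as well, because $f^{-1}(v)$ is finite. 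Thus $\hat\CC\setminus U^*$ is the disjoint union of $\hat\CC\setminus U$ and the $k$ singletons $\{u_i\}$, giving $k+\#\pi_0(\hat\CC\setminus U)=2$.

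Since $k\ge 1$, there are only two possibilities: $k=2$ with $\hat\CC\setminus U=\emptyset$, or $k=1$ with $\hat\CC\setminus U$ connected and non-empty. In the first case $U=\hat\CC$, so $V=f(U)$ would be a compact connected open subset of $\hat\CC$, forcing $V=\hat\CC$ — contradicting that $V$ is a topological disc. Hence $k=1$: the fibre $f^{-1}(v)=\{u\}$ is a single point, which is therefore the unique critical point of $f$; and $\hat\CC\setminus U$ is connected and non-empty, so $U$ is a simply connected proper subdomain of $\hat\CC$ and hence, by the Riemann Mapping Theorem, conformally — in particular topologically — a disc.

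I expect the difficulty to be bookkeeping rather than ideas: one must check carefully that filling in the fibre $f^{-1}(v)$ caps off exactly the components of $\hat\CC\setminus U^*$ accounted for by its points, and that the degenerate case $U=\hat\CC$ — which is precisely what the hypothesis ``$V$ is a topological disc'' exists to kill — is excluded. The two facts doing the real work are that a proper map has no asymptotic values (so the genuine covering extends over all of $V\setminus\{v\}$) and that $\ZZ$ has a unique subgroup of each finite index.
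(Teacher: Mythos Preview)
Your proof is correct and follows essentially the same route as the paper's: both remove the fibre over $v$, invoke \propref{proper1}/\propref{proper4} to get a genuine covering of the punctured disk $V\setminus\{v\}$, and then use the classification of its finite connected covers (the unique index-$d$ subgroup of $\ZZ$) to conclude. The paper is terser---it simply asserts that $\hat U$ is a punctured disk and that, in suitable uniformizing coordinates, $f$ becomes $z\mapsto\text{const}\cdot z^d$---whereas you spell out the component count in $\hat\CC\setminus U^*$ and explicitly exclude the degenerate case $U=\hat\CC$; both arrive at the same conclusion by the same mechanism.
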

\begin{proof}
Set $\hat V\equiv V\setminus \{v\}$ and $\hat U\equiv f^{-1}(\hat V)$.
By \propref{proper4}, the map $f:\hat U\to\hat V$ is a covering. The domain $\hat V$ is homeomorphic to $\DD\setminus \{0\}$.
By the standard facts about coverings of $\DD\setminus\{0\}$, the domain $\hat U$ is also homeomorphic to the punctured disk.
Moreover, denoting $\eta_{\hat V}$ and $\eta_{\hat U}$ conformal homeomorphisms mapping  $\DD\setminus \{0\}$ 
to $\hat V$ and $\hat U$ respectively, we see that 
$$\eta_{\hat V}(f(z))=\text{const}\cdot(\eta_{\hat U}(z))^d\text{ for }d\geq 2.$$
\end{proof}
\subsection{Parabolic renormalization of the quadratic map.}
A key example for our investigation is the quadratic polynomial 
$$f_0(z)=z\mapsto z+z^2.$$
It is well known that
\begin{itemize}
\item The basin of attraction $B^{f_0}=: B_0$ of the parabolic point is
  connected.
\item $f: B_0 \to B_0$ is a branched cover of degree 2.
\end{itemize}
We will prove more detailed or more general versions of these
assertions later -- see, in particular, the proof of
\thmref{cauliflower} -- so we won't repeat the proofs here.

By a trivial computation, $-1/2$ is a critical point of $f_0$; it is
the only one in the finite plane, and the corresponding critical value
is $-1/4$. Let $\phi_A$ be an attracting Fatou coordinate for $f_0$,
defined on all of $B_0$. Using \propref{th:fatou-crit} and taking into
account the surjectivity of $f_0$ on $B_0$, we get

\begin{prop}
The critical points of $\tlphi_A$ are the preimages -- under iterates
of $f_0$ --  of the critical value $-1/4$:
$$\crit(\tlphi_A)=\cup_{n\geq 1}f_0^{-n}(-1/4).$$
All critical points have local degree $2$, and their images are  integer translates of each other:
$$\tlphi_A(\crit(\tlphi_A))=\{\tlphi_A(-1/4)-n, n \geq 1\}.$$
\end{prop}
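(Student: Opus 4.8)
The plan is to read everything off from \propref{th:fatou-crit} together with the elementary description of the critical orbit of $f_0$. First I would recall that $-1/2$ is the unique critical point of $f_0$ in $\CC$, with critical value $f_0(-1/2)=-1/4$, and that $-1/4\in B_0$: its forward orbit $-1/4\mapsto-3/16\mapsto\cdots$ stays in $(-1/4,0)$ and converges to $0$. Since $f_0\colon B_0\to B_0$ is surjective (indeed a degree-$2$ branched cover, as recalled above), \propref{th:fatou-crit} applies verbatim and gives $\tlphi_A(\crit(\tlphi_A))=\{\tlphi_A(-1/4)-n : n\ge 1\}$, the last displayed assertion of the proposition.

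For the set of critical \emph{points}, I would invoke the characterization established inside the proof of \propref{th:fatou-crit}: $z$ is a critical point of $\tlphi_A$ if and only if $f_0^{\,j}(z)\in\crit(f_0)=\{-1/2\}$ for some $j\ge 0$, i.e. $\crit(\tlphi_A)=\bigcup_{j\ge 0}f_0^{-j}(-1/2)$. Then I would observe that $f_0^{-1}(-1/4)=\{-1/2\}$, since $z+z^2=-1/4$ reads $(z+\tfrac12)^2=0$; hence $f_0^{-j}(-1/2)=f_0^{-(j+1)}(-1/4)$ for every $j\ge 0$, and re-indexing with $n=j+1$ gives $\crit(\tlphi_A)=\bigcup_{n\ge 1}f_0^{-n}(-1/4)$. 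Each of these preimage sets is non-empty and contained in $B_0$: $f_0\colon\CC\to\CC$ is onto, and any point whose $n$-th iterate equals $-1/4$ has its orbit converging to $0$.

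It remains to check that every $z\in\crit(\tlphi_A)$ has local degree exactly $2$. Fix such a $z$ and let $j\ge 0$ be minimal with $f_0^{\,j}(z)=-1/2$, so $f_0^{\,j+1}(z)=-1/4$ and, from the functional equation, $\tlphi_A(\zeta)=\tlphi_A\bigl(f_0^{\,j+1}(\zeta)\bigr)-(j+1)$ near $z$. The point is that the finite orbit $z,f_0(z),\dots,f_0^{\,j}(z)$ meets $-1/2$ \emph{only} at the last step: if $f_0^{\,i}(z)=-1/2$ for some $0\le i<j$ then $f_0^{\,j-i}(-1/2)=-1/2$, making $-1/2$ periodic — impossible, since the orbit of $-1/2$ is $-1/2,-1/4,-3/16,\dots$, which lies eventually in $(-1/4,0)$ and converges to $0$, never returning to $-1/2$. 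Therefore $f_0^{\,j+1}$ has local degree exactly $2$ at $z$, while $\tlphi_A$ is univalent near $f_0^{\,j+1}(z)=-1/4$ (that point is not a critical point of $\tlphi_A$, its forward orbit never hitting $-1/2$). Composing, $\tlphi_A$ has local degree $2$ at $z$.

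I expect the only genuine content to be this last paragraph: ruling out that the orbit of a critical point of $\tlphi_A$ passes through $-1/2$ twice, equivalently that $-1/2$ is aperiodic and $-1/4\notin\crit(\tlphi_A)$. Everything else is a direct citation of \propref{th:fatou-crit} and the argument in its proof, plus the trivial identity $f_0^{-1}(-1/4)=\{-1/2\}$.
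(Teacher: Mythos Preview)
Your proposal is correct and follows essentially the same route as the paper, which simply presents this proposition as an immediate consequence of \propref{th:fatou-crit} together with the surjectivity of $f_0$ on $B_0$. You supply more detail than the paper does --- in particular the re-indexing via $f_0^{-1}(-1/4)=\{-1/2\}$ and the verification that the local degree is exactly $2$ (which the paper leaves implicit) --- but the underlying argument is the same.
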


By standard results of complex dynamics:
\begin{equation}
\label{closure-crit}
\overline{\text{Crit}(\tlphi_A)}=\text{Crit}(\tlphi_A)\cup J(f_0).
\end{equation}
In fact (cf. \cite{Do}), $\phi_A$, mapping $B_0$ to $\CC$, is a branched cover
(but we won't need this result.)

Our next  objective is  the following:
\begin{thm}
\label{quad-extension}
Let $(h^+, h^-)$ be an \'Ecalle-Voronin pair for $f_0$ (i.e., one
constructed with some choice of normalization for $\phi_A$ and
$\phi_R$.) Then the germs $h^+$ (at 0) and $h^-$ (at $\infty$) have
maximal analytic continuations to Jordan domains $\hat
W^+$ and $\hat W^-$ (in the sphere $\hat\CC$).
%% These domains are the two connected components of the set
%% $$\hat W\equiv \ixp\circ \tlphi_R(K(f_0)\cap P_R)\cup\{0,\infty\}.$$
\end{thm}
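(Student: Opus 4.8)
The plan is to obtain this as a special case of \thmref{th:W-Jordan-domain}. Since the parabolic basin $B^{f_0}$ is connected it coincides with the immediate basin, which I keep denoting $B_0$, and $\Dom(f_0)=\CC$; so \thmref{th:W-Jordan-domain} will apply directly once I check its two hypotheses for $f=f_0$: (a) $B_0$ is a Jordan domain, and (b) the attracting Fatou coordinate $\phi_A$, extended to all of $B_0$ via the functional equation, cannot be analytically continued through any point of $\partial B_0$.

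For (b) I would argue as follows. Recall that $\crit(\phi_A)=\bigcup_{n\geq 1}f_0^{-n}(-1/4)\subset B_0$, and that, by \eqref{closure-crit}, $\overline{\crit(\phi_A)}=\crit(\phi_A)\cup J(f_0)$, so every $p\in\partial B_0=J(f_0)$ is the limit of a sequence $z_n\in\crit(\phi_A)$ lying in $B_0$. Suppose $\phi_A$ extended to an analytic function $g$ on some disk $D\ni p$, agreeing with $\phi_A$ on a component of $D\cap B_0$. Using (a), for $D$ small enough $D\cap B_0$ is connected and contains $z_n$ for all large $n$; then $g'$ is analytic on $D$ and vanishes along $z_n\to p$, so $g'\equiv 0$ on $D$. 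Hence $\phi_A$ is constant on $D\cap B_0$ and therefore, by the identity theorem and connectedness of $B_0$, constant on all of $B_0$ — contradicting its univalence on an attracting petal.

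For (a): $\partial B_0=J(f_0)$, and $f_0$ is conformally conjugate via $z\mapsto z+1/2$ to the quadratic $z\mapsto z^2+1/4$, whose Julia set — the ``cauliflower'' — is classically a Jordan curve, the common boundary of the parabolic basin and the basin of $\infty$. I expect this to be the one genuinely non-soft point in the proof: it rests on the local connectivity of the parabolic Julia set together with the absence of pinch points of $B_0$, which requires real complex-dynamical input rather than the elementary arguments used elsewhere in this section. The same fact, in a more general form, is established later in the paper in \thmref{cauliflower}, so one may also simply cite that.

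With (a) and (b) in hand, \thmref{th:W-Jordan-domain} yields that $\bm{h}=\ixp\circ\phi_A\circ(\ixp\circ\phi_R)^{-1}$ has maximal domain of analyticity equal to a union of two Jordan domains $W^+\ni 0$ and $W^-\ni\infty$, which are precisely the maximal continuation domains of the germs $h^+$ and $h^-$. Finally, $W^-$ contains $\infty$ and its boundary is a bounded Jordan curve in $\CC$, off of which $h^-$ is analytic (with only a simple pole at $\infty$), so $W^-$ is a Jordan domain of the sphere $\hat\CC$; setting $\hat W^+:=W^+$ and $\hat W^-:=W^-$ completes the argument.
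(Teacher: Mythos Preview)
Your approach is exactly the paper's: the proof there is literally one sentence invoking \thmref{th:W-Jordan-domain} together with \thmref{cauliflower}, and you have filled in the details---in particular the verification of hypothesis~(b) via \eqref{closure-crit}, which the paper leaves implicit.

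One minor technical point in your argument for (b): the assertion that $D\cap B_0$ is connected for all sufficiently small disks $D$ about $p\in\partial B_0$ does \emph{not} follow from $B_0$ being a Jordan domain alone---a Jordan curve can oscillate so that several of its arcs pass through a small disk and separate $D\cap B_0$ into many pieces. What you actually need is that the critical points of $\phi_A$ accumulate on $p$ from within the specific component $C$ on which $g=\phi_A$; this can be arranged (for instance by using the puzzle-piece structure of \thmref{cauliflower}, or by noting that $C$ contains a full crosscut neighborhood of the prime end at $p$ and that preimages of $-1/2$ are dense in every such neighborhood), but it requires a word beyond ``using (a)''. The paper does not address this either.
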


%% \begin{defn}Let $g:U\to\hat\CC$ be an analytic map of an open (not necessarily connected) domain $U$.
%% Suppose $g$ has a maximal analytic continuation to a domain $W\supset U$. We will write $W=\Dom(g)$.
%% \end{defn}
%% Thus $\text{Dom}(H_{f_0})=\hat W^+\cup\hat W^-$ 
\noindent See \figref{fig-first-renorm}.
\thmref{quad-extension} follows from \thmref{th:W-Jordan-domain} and  a well-known folklore result:
\begin{thm}
\label{cauliflower}
The Julia set of the quadratic polynomial $f_0(z)=z+z^2$ is a Jordan
curve. Orbits starting inside this curve converge to $0$; those
starting outside converge to $\infty$.  The dynamics of $f_0$
restricted to the Julia set is topologically conjugate to the
angle-doubling map of the circle;  specifically, there exists a unique
continuous map $\rho:J(f_0)\to\TT$ such that
$$\rho(f_0(z))=2\rho(z)\mod 1.$$
\end{thm}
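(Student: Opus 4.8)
The plan is to identify the filled Julia set $K:=\{z:f_0^n(z)\not\to\infty\}$ with $\overline{B_0}$, to realize both $\hat\CC\setminus K$ and $B_0$ as Jordan domains with common boundary $J:=J(f_0)$, and then to transport the doubling dynamics from the B\"ottcher coordinate at $\infty$. First the coarse structure. The only critical point of $f_0$ in $\CC$ is $-\tfrac12$, with critical value $-\tfrac14\in B_0$, so the critical orbit is bounded, $K$ is connected, and $\hat\CC\setminus K$ is the completely invariant basin of $\infty$, on which $f_0$ is conjugated by the B\"ottcher coordinate $\phi_\infty$ to $w\mapsto w^2$ on $\hat\CC\setminus\overline\DD$. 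By the Fatou--Sullivan classification there are no wandering domains, and the only periodic Fatou cycle is the parabolic basin of $0$, since every attracting, parabolic, Siegel or Herman cycle must absorb a critical orbit and the only free one converges to $0$. Thus $\operatorname{int}K=B^{f_0}=B_0$, $K=\overline{B_0}$, and $J=\partial K=\partial B_0$; every orbit in $B_0$ tends to $0$ and every orbit in $\hat\CC\setminus K$ tends to $\infty$. Since $B_0$ is simply connected and $f_0\colon B_0\to B_0$ is a degree-$2$ branched cover (recorded at the start of this subsection), \thmref{th:blaschke-model} provides a conformal isomorphism $\varphi\colon B_0\to\DD$ conjugating $f_0$ to the Blaschke product $B$ of \eqref{eq:blaschke}.

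The substantive point is local connectedness of $J$. The quickest route is to cite the theorem that a geometrically finite rational map has locally connected Julia set (see \cite{Mil}); here $f_0$ is geometrically finite because its single critical orbit converges to the parabolic cycle $\{0\}$. Alternatively one builds a parabolic puzzle from the graph formed by an equipotential of $\phi_\infty$, the external rays $R_0$ (which lands at $0$) and $R_{1/2}$ (which lands at the other preimage $-1$ of $0$), and the boundary of a small neighborhood of $0$ assembled from fundamental crescents of the attracting and repelling Fatou coordinates; pulling this graph back by $f_0^{-n}$ yields a fundamental chain of puzzle pieces whose diameters tend to $0$ --- the shrinkage away from the finite set $\{0,-1\}$ coming from uniform expansion in a suitable hyperbolic metric, and the shrinkage near $0$ being controlled by the Fatou coordinates. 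With $J$ locally connected, Carath\'eodory's theorem extends $\phi_\infty^{-1}$ to a continuous surjection of $\overline\DD$ onto $\hat\CC\setminus B_0$, whose restriction is a continuous surjection $\gamma\colon\partial\DD\cong\TT\to J$ with $\gamma(2\theta)=f_0(\gamma(\theta))$; equivalently every external ray lands, and $\gamma(\theta)$ is the landing point of $R_\theta$.

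It remains to show $\gamma$ is injective, i.e.\ that no two external rays co-land. Because $f_0'(z)=0$ only at $-\tfrac12$ and $-\tfrac12\notin J$ (as $f_0(-\tfrac12)=-\tfrac14\in B_0$), the map $f_0$ is a local homeomorphism at every point of $J$ and carries $K$ to $K$ locally; hence it takes the accesses to any $z\in J$ from $\hat\CC\setminus K$ bijectively onto those to $f_0(z)$, so $\#\{\text{rays at }z\}=\#\{\text{rays at }f_0(z)\}$ and angle-doubling is injective on the set of angles co-landing at $z$. Consequently, if $z_0$ is the landing point of $m\ge2$ rays, so is every point of its forward orbit. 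Moreover such a $z_0$ is a cut point of $K$: the Jordan curve $R_{\theta_1}\cup\{z_0\}\cup R_{\theta_2}\cup\{\infty\}$ determined by two of the co-landing rays separates $\hat\CC$, and since $B_0$ is connected and avoids it, $B_0$ lies on one side, so $K\setminus\{z_0\}$ is disconnected (this is also how $W^+$ was located in \thmref{th:W-Jordan-domain}). A standard combinatorial argument in the spirit of Douady--Hubbard and Kiwi then excludes such points: the minimal circular gap determined by the co-landing angles at a cut point strictly expands under doubling unless the orbit is periodic, which rules out an infinite orbit of cut points; and the only periodic point of $f_0$ on $J$ that could carry $m\ge2$ rays is $0$, which carries only $R_0$ because $f_0$ has a single attracting petal (its preimages then carry one ray each). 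Therefore $\gamma$ is a homeomorphism, $J$ is a Jordan curve, orbits inside converge to $0$ and orbits outside to $\infty$ as already noted, and $\rho:=\gamma^{-1}$ gives the required conjugacy with $\rho(f_0(z))=2\rho(z)\bmod 1$; uniqueness among orientation-preserving continuous maps with this property is standard, since any continuous self-semiconjugacy of angle doubling is affine.

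I expect local connectedness to be the genuine obstacle: it is where the degenerate geometry near the parabolic point must be controlled --- via the Fatou-coordinate estimates of the previous section or via the general geometrically-finite theory --- whereas, once it is in hand, the passage to the circle and the exclusion of co-landing rays are routine combinatorics.
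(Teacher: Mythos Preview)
Your overall route---B\"ottcher coordinate at $\infty$, local connectedness from the geometrically-finite theorem, Carath\'eodory extension, then injectivity of the ray-landing map $\gamma$---is legitimate and quite different from the paper's. The paper builds the Jordan curve by hand from a nested family of puzzle pieces $\bA_{i_0\cdots i_{n-1}}$ cut out of the explicit annulus $\bA=\{1/2<|z+1/2|<2\}$, proves their diameters shrink to zero (Denjoy--Wolff on the tails $\bA_{00\cdots}$ and $\bA_{11\cdots}$, a Montel/Rouch\'e argument for the rest), and reads off the conjugacy from the binary coding. The hands-on argument is not merely aesthetic: the identical template is rerun later for \thmref{class-1c}, where the maps in $\clad$ have no polynomial structure and hence no B\"ottcher coordinate or external rays to appeal to.

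There is, however, a genuine gap in your injectivity step. The claim that a co-landing point $z_0$ is a cut point of $K$ is false here: since $\operatorname{int}K=B_0$ is a single connected open set and $K=\overline{B_0}$, the set $K\setminus\{z_0\}$ lies between $B_0$ and its closure and is therefore connected. (The cut-point mechanism works for maps like the basilica, where $K$ has several interior components; it breaks down precisely when the filled Julia set is the closure of a single Fatou component.) Your subsequent gap-expansion reduces the question to a periodic point carrying $m\ge2$ rays, but the assertion that ``the only periodic point of $f_0$ on $J$ that could carry $m\ge2$ rays is $0$'' is not justified: nothing you have written excludes, for instance, a repelling $2$-cycle receiving the rays at angles $1/3$ and $2/3$.

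The repair is short and bypasses the periodic-point combinatorics entirely. Suppose $\gamma(\theta_1)=\gamma(\theta_2)=z_0$ with $\theta_1\ne\theta_2$, and let $A\subset\TT$ be one of the two arcs between $\theta_1$ and $\theta_2$. The Jordan curve $R_{\theta_1}\cup R_{\theta_2}\cup\{z_0,\infty\}$ bounds a region $U$ that meets $\hat\CC\setminus K$ exactly in $\phi_\infty^{-1}\{re^{2\pi i\theta}:r>1,\ \theta\in A\}$; its closure meets $K$ only in $\gamma(A)$. Since $B_0$, hence $K\setminus\{z_0\}$, lies in the complementary region, we get $\gamma(A)=\{z_0\}$. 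Thus the boundary extension of the conformal map $\phi_\infty^{-1}$ is constant on a nondegenerate arc of $\TT$, which is impossible by the F.\ and M.\ Riesz theorem. This gives injectivity of $\gamma$ directly, and the rest of your argument goes through.
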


In the next subsection, we will provide a proof of
\thmref{cauliflower}; while it is fairly standard, the strategy used
will be useful to us later in a more complicated situation.

\subsection{The Julia set of $f_0(z)=z+z^2$ is a Jordan curve.}\label{sec:quadratic-example}
We begin by defining several useful local inverse branches of $f_0$, by choosing the appropriate branches of 
the square root in the formal expression 
\begin{equation}\label{sqrt1}
f_0^{-1}(w)=\sqrt{w+1/4}-1/2.
\end{equation}
We first cut the plane along the positive real ray $R\equiv [-1/4,+\infty)$ and 
denote $$\finv{0}:{\CC\setminus R}\to \HH$$
the inverse branch with non-negative imaginary part.
Similarly, we define 
$$\finv{1}:{\CC\setminus R}\to -\HH$$
to be the inverse branch with negative imaginary part, so that
$\finv{1}\equiv 1 -\finv{0}$.

Finally, we define an inverse branch  $\finvloc$ which fixes the parabolic point $z=0$.
We slit the plane along the ray $(-\infty, -1/4]$, and select the branch of the square root in (\ref{sqrt1})
with non-negative real part. In this way, we get
$$\finvloc:\CC\setminus (-\infty,-1/4]\to \{ \Re z > -1/2 \}.$$
We note that Taylor expansion of $\finvloc$ at $w=0$ begins with $w-w^2+\cdots$, and thus it also has a parabolic fixed point
at the origin.

When needed, we will continuosly extend the three inverse branches
defined above to the edges of the slits. 

\begin{prop}\label{th:2}
The branch $\finvloc$ maps $\CC \setminus (-\infty, 0]$ into itself.
\end{prop}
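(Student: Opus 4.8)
The plan is to show that $\finvloc$ maps $\Omega := \CC \setminus (-\infty,0]$ into itself by combining the explicit formula $\finvloc(w) = \sqrt{w+1/4} - 1/2$ (with the branch of the square root having non-negative real part, cut along $(-\infty,-1/4]$) with a direct argument that the image lies in the slit plane $\Omega$. First I would observe that, by the definition of the branch, $\finvloc$ maps $\Omega$ into the half-plane $H := \{\Re z > -1/2\}$; indeed this is already recorded in the text just above the statement. So it suffices to prove that no point of $\Omega$ is mapped onto the slit $(-\infty, 0]$, i.e.\ that $\finvloc(w) \in (-\infty,0]$ forces $w \in (-\infty,0]$.

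The key step is to invert this condition. Suppose $\finvloc(w) = x$ with $x \le 0$; since $\finvloc$ maps into $H$ we automatically have $x \in (-1/2, 0]$. Applying $f_0$, which is the two-sided inverse of $\finvloc$ on the relevant domains, gives $w = f_0(x) = x + x^2 = x(1+x)$. For $x \in (-1/2,0]$ we have $1 + x \in (1/2, 1] > 0$ and $x \le 0$, so $w = x(1+x) \le 0$; moreover $w$ is real. Hence $w \in (-\infty, 0]$, which is exactly the excluded slit. Contrapositively, for $w \in \Omega$ the value $\finvloc(w)$ cannot lie on $(-\infty,0]$, and combined with $\finvloc(\Omega) \subset H \subset \CC$ this shows $\finvloc(\Omega) \subset \Omega$. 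One should also check the boundary/branch-cut bookkeeping: the cut for $\finvloc$ is $(-\infty,-1/4]$, which is contained in the removed slit $(-\infty,0]$, so $\finvloc$ is genuinely analytic on all of $\Omega$ and the identity $f_0 \circ \finvloc = \mathrm{id}$ holds throughout $\Omega$ without ambiguity.

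I expect the only real subtlety — and the main thing to be careful about rather than a genuine obstacle — is making sure that when $x$ is exactly $0$ or approaches the endpoints, the branch conventions are consistent, so that the reasoning ``$\finvloc(w)$ real and $\le 0$ $\Rightarrow$ $w$ real and $\le 0$'' is airtight; this is handled by the elementary computation $w = x(1+x)$ above together with the fact that the $\Re z > -1/2$ constraint keeps $x$ in the range $(-1/2,0]$ where $1+x>0$. No distortion theorems or deeper machinery are needed; the argument is purely the explicit quadratic algebra combined with the location of the square-root branch cut.
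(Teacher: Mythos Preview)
Your proposal is correct and follows essentially the same approach as the paper: both use that the image of $\finvloc$ lies in the half-plane $\{\Re z > -1/2\}$, and then argue that if $\finvloc(w)$ lands on the remaining piece of the slit (an interval of the form $(-1/2,0]$ or $[-1/2,0)$), applying $f_0$ forces $w$ itself to be a non-positive real, giving the contrapositive. The only differences are cosmetic endpoint bookkeeping.
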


\begin{proof} As already noted, the image of $\finvloc$ is contained
  in the half-plane $\{ \Re z > -1/2 \}$. Furthermore, since
  $\finvloc$ is a branch of the inverse of $f_0$, if $\finvloc(w) \in
  [-1/2, 0)$, then $w \in f_0([-1/2, 0)) = ([-1/4, 0) \subset
  (-\infty, 0)$, so the image under $\finvloc$ of $\CC \setminus
  (-\infty, 0]$ does not intersect $[-1/2,0)$ and so is contained in
  $\CC \setminus (-\infty, 0]$
\end{proof}
Applying the Denjoy-Wolff Theorem, we obtain
\begin{cor}\label{th:3}
The successive iterates ${\finvloc}^n$ converge to the constant $0$ --
the parabolic point -- uniformly
on compact subsets of $\CC \setminus
(-\infty, 0]$.
\end{cor}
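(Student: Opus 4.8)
The plan is to apply the Denjoy--Wolff theorem to the holomorphic self-map $\finvloc$ of the simply connected hyperbolic domain $\Omega:=\CC\setminus(-\infty,0]$; by \propref{th:2} the map $\finvloc\colon\Omega\to\Omega$ is well defined. First I would record the two elementary facts that put us in the relevant case of Denjoy--Wolff. (i)~$\finvloc$ is \emph{not} a conformal automorphism of $\Omega$: by construction $\finvloc(\Omega)\subseteq\{\Re z>-1/2\}$, and $\Omega$ is not contained in that half-plane (for instance $-1+i\in\Omega$), so $\finvloc$ is not surjective onto $\Omega$. (ii)~$\finvloc$ has \emph{no} fixed point in $\Omega$: a fixed point of the inverse branch $\finvloc$ would be a fixed point of $f_0$, and $f_0(z)=z$ forces $z^2=0$, i.e. $z=0\in\partial\Omega$.

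Next, fixing a Riemann map $\psi\colon\Omega\to\DD$ and setting $g:=\psi\circ\finvloc\circ\psi^{-1}$, one has a holomorphic self-map $g$ of $\DD$ which is not an automorphism, so the Denjoy--Wolff theorem gives a unique $\omega\in\overline\DD$ with $g^n\to\omega$ uniformly on compact subsets of $\DD$; by (ii) the point $\omega$ cannot be an interior fixed point of $g$, so $\omega\in\partial\DD$. To transport this conclusion back to $\Omega$ I would use that the boundary of $\Omega$ in $\hat\CC$ is the Jordan arc $(-\infty,0]\cup\{\infty\}$, hence locally connected, so by Carath\'eodory's theorem $\psi^{-1}$ extends to a continuous map $\overline\DD\to\overline\Omega$; consequently $\finvloc^n=\psi^{-1}\circ g^n\circ\psi$ converges locally uniformly on $\Omega$ to the constant $c:=\psi^{-1}(\omega)\in\partial\Omega$. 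One can bypass Carath\'eodory theory here: $\{\finvloc^n\}$ is normal on $\Omega$ by Montel's theorem (it omits the three values $-1,-2,-3$), and any locally uniform subsequential limit $h\colon\Omega\to\hat\CC$ is, on a suitable attracting petal, identically $0$ — see the next paragraph — hence $h\equiv0$ by the identity theorem, and normality then yields $\finvloc^n\to0$ directly.

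It remains to identify $c$. As already observed before \propref{th:2}, $\finvloc(z)=z-z^2+\cdots$ near the origin, so $0$ is a simple parabolic fixed point of $\finvloc$ whose unique attracting direction is $+1$; it therefore possesses an attracting petal $P$ — a sufficiently small disk tangent to the imaginary axis from the right — on which, by the defining property of a petal, $\finvloc^n\to0$ uniformly. Since $P\subset\{\Re z>0\}\subset\Omega$ is nonempty and open and $\finvloc^n\to c$ locally uniformly on $\Omega$, we conclude $c=0$, which is exactly the assertion of \corref{th:3}. I do not anticipate any serious obstacle; the only points requiring a little care are the transport of the Denjoy--Wolff conclusion across the boundary of the slit domain $\Omega$ (handled by local connectivity of $\partial\Omega$, or sidestepped by the Montel argument) and the verification that the limit is the parabolic point $0$ rather than another boundary point of $\Omega$ (handled by the attracting petal). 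An alternative way to pin down $c$ is via the functional equation $f_0\circ\finvloc^{n+1}=\finvloc^n$, which forces $f_0(c)=c$ and hence $c=0$, once one checks using $|\finvloc(w)|\le\sqrt{|w|+1/4}+1/2$ that the orbit cannot escape to $\infty$.
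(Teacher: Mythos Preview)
Your proof is correct and follows exactly the route the paper takes: the paper simply writes ``Applying the Denjoy--Wolff Theorem, we obtain'' and states the corollary, whereas you have carefully filled in all the details (non-automorphism, no interior fixed point, transport of the boundary limit, identification of the Denjoy--Wolff point as $0$). Your alternative Montel-plus-petal argument is also fine and is in fact the method the paper uses later for the analogous statement about $K^{-1}$ in \propref{th:hinverse}.
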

Next, we observe:
\begin{prop}
The intersection
\[
J(f_0) \cap \Reals = \{ -1, 0\}.
\]
\end{prop}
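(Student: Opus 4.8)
The plan is to reduce everything to the elementary one-dimensional dynamics of $f_0$ on the real line. Writing $f_0(x)=x+x^2=x(1+x)$, whose only finite fixed point is the parabolic point $0$, I would partition $\RR$ into the pieces $\{x<-1\}$, $\{-1\}$, $\{-1<x<0\}$, $\{0\}$, $\{x>0\}$ and treat each in turn, showing that all of them except $\{-1\}$ and $\{0\}$ consist of Fatou points.

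First I would dispose of the two points that \emph{do} lie in $J(f_0)$. The point $0$ is a parabolic fixed point, hence belongs to the Julia set by the standard fact that parabolic periodic points are never in the Fatou set (a local uniform limit $g$ of a subsequence of $\{f_0^n\}$ near $0$ would satisfy $g(0)=0$, yet points of $(0,\eps)$ escape to $\infty$ under $f_0$, so no such finite-valued $g$ can exist on $\widehat\CC$). Since $f_0(-1)=0\in J(f_0)$ and the Julia set is completely invariant, $-1\in f_0^{-1}(J(f_0))=J(f_0)$ as well.

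Next I would show that every remaining real point lies in the Fatou set. For $x>0$ one has $f_0(x)=x+x^2>x$, so the orbit $(f_0^n(x))$ is strictly increasing; a monotone real sequence either converges to a fixed point or tends to $+\infty$, and here the only finite fixed point $0$ is unavailable (the orbit stays $>x>0$), so $f_0^n(x)\to+\infty$ and $x$ lies in the open basin of $\infty$. For $x<-1$ we have $f_0(x)=x(1+x)>0$, which reduces this to the previous case. For $-1<x<0$ I would observe that $f_0(x)=x(1+x)<0$ while $f_0(x)-x=x^2>0$, and $f_0(x)=x+x^2\ge -1/4$ (the minimum of $t+t^2$ on $\RR$, attained at $-1/2$); hence $f_0$ maps $(-1,0)$ into itself, the orbit is increasing and bounded above by $0$, and therefore converges to the unique fixed point of $f_0$ in $[-1,0]$, namely $0$, approaching it along the negative real axis, i.e.\ from the attracting direction $-1$. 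Thus $x\in B^{f_0}$, which is contained in the Fatou set. Assembling the cases yields $J(f_0)\cap\RR=\{-1,0\}$.

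The argument is essentially routine, so I do not expect a real obstacle. The only places that need a little care are: the bookkeeping that the monotone orbit in $(-1,0)$ cannot escape the interval (so its limit is forced to be $0$); the check that this convergence occurs from the direction $-1$, so that the point genuinely belongs to the parabolic basin $B^{f_0}$ and not merely to $\partial B^{f_0}$; and making sure that this proposition is proved without appealing to \thmref{cauliflower}, of which it is a part.
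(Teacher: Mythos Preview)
Your proposal is correct and follows essentially the same approach as the paper: partition $\RR$ into $(-\infty,-1)$, $\{-1\}$, $(-1,0)$, $\{0\}$, $(0,\infty)$ and use elementary one-dimensional monotone dynamics on each piece. The paper's version is terser---it does not spell out why $\{-1,0\}\subset J(f_0)$, and for $x>0$ it uses the explicit bound $f_0^n(x)>x+nx^2$ rather than a monotone-sequence argument---but the substance is the same, and your added care about the direction of approach on $(-1,0)$ is appropriate.
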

\begin{proof}
Indeed, if $x>0$ then $$f(x)=x+x^2>x\text{, and, furthermore, }
f^n(x)>x+nx^2\to\infty,$$ and hence $(0,\infty)\subset F(f_0)$.
Since $$f_0((-\infty, -1)) = (0, \infty),$$ the same holds for $(-\infty,-1)$.
Finally, $f_0$ maps the
interval $(-1, 0)$ to itself and $f_0(x) > x$ for $x$ in this
interval, so $f_0^n(x) \to 0$. 
Hence, $(-1,0)\subset F(f)$ as well, as claimed.
\end{proof}

%\subsection{The enclosing annulus}\label{sec:enclosing-annulus}
%Since $f_0(-1/2 + i t) = -1/4 - t^2 \in \Reals$, $f_0( -1/2 + i t) \in
%J(f_0)$ if and only if $t^2 = 3/4$, so
%\[
%J(f_0) \cap i \Reals = \{ \pm \sqrt{3}/2 \}
%\]
%%%%%%%%%%%%%%%%%%%%%%%%%%%%%%%%%%%%%%%%%%%%%%%%%%%%%%%%%%%%%%%%%%%%%%%%%%%%%%%%%%%%%

%Let $\bA$ denote an open annulus bounded by smooth curves with the properties:

Define 
\[
\bA := \{ z : 1/2 < \vert z + 1/2 \vert < 2 \}.\,
\]
i.e., $\bA$ is an annulus, centered at the critical point $-1/2$, with
inner radius $1/2$ and outer radius $2$. 
We collect some elementary facts about this annulus into the following
proposition:
\begin{prop}\label{th:1}
$\-$

\begin{enumerate}
\item If $\vert z + 1/2 \vert \geq   2$ -- i.e., if $z$ is outside the
  annulus -- then $\vert f_0(z) + 1/2 \vert > \vert z + 1/2 \vert$,
  so $f_0(z)$ is again outside the annulus, and $\vert f^n_0(z) +
  1/2 \vert$ goes monotonically to $\infty$.
\item If $\vert z + 1/2 \vert < 1/2$ -- i.e., if $z$ is strictly inside the
  annulus -- then $\vert f_0(z) + 1/4 \vert < 1/4$.
\item If $\vert z + 1/2 \vert < 1/2$, then $f_0^n(z) \to 0$.
\item if $f(z) \in \bA$, then $z \in \bA$, i.e.,
$f_0^{-1}\bA \subset \bA$.
\item $J(f_0)$ is contained in the closure $\overline{\bA}$ of
  $\bA$.
\end{enumerate}
\end{prop}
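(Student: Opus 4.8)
The plan is to derive all five parts from the single algebraic identity produced by the substitution $u=z+1/2$ (which conjugates $f_0$ to $u\mapsto u^2-1/4$):
\[
f_0(z)+\tfrac14=\bigl(z+\tfrac12\bigr)^2,\qquad\text{hence}\qquad f_0(z)+\tfrac12=\bigl(z+\tfrac12\bigr)^2+\tfrac14 .
\]
Parts (1), (2), (4) are then routine modulus estimates, (5) follows formally from (1) and (3), and the only genuinely dynamical input is (3).

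\textbf{Parts (1), (2), (4).} For (1): if $\vert z+1/2\vert\ge 2$ then $\vert f_0(z)+1/2\vert\ge\vert z+1/2\vert^{2}-1/4\ge\vert z+1/2\vert+7/4$, using $\vert z+1/2\vert(\vert z+1/2\vert-1)\ge 2$; this keeps the point outside $\overline{\bA}$ and makes $\vert f_0^{\,n}(z)+1/2\vert$ grow by at least $7/4$ at each step, hence monotonically to $\infty$. For (2): $\vert z+1/2\vert<1/2$ gives $\vert f_0(z)+1/4\vert=\vert z+1/2\vert^{2}<1/4$ immediately. For (4) I would prove the contrapositive: if $z\notin\bA$ then either $\vert z+1/2\vert\le 1/2$, whence $\vert f_0(z)+1/2\vert\le\vert z+1/2\vert^{2}+1/4\le 1/2$, or $\vert z+1/2\vert\ge 2$, whence $\vert f_0(z)+1/2\vert>2$ by the estimate of (1); in either case $f_0(z)\notin\bA$, i.e. $f_0^{-1}\bA\subset\bA$.

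\textbf{Part (3).} Introduce the disk $D:=\{\,\vert w+1/4\vert<1/4\,\}$, tangent to the imaginary axis at $0$ from the left. By (2) it suffices to show $f_0^{\,n}\to 0$ on $D$. Using the identity above, for $w\in\overline D$ one computes $\vert w+1/2\vert^{2}=\vert w+1/4\vert^{2}+\tfrac12\Re(w+1/4)+\tfrac1{16}\le\tfrac14$, with equality only at $w=0$ (since there $\Re w\in[-1/2,0]$), so $f_0(\overline D\setminus\{0\})\subset D$. As $0$ is the unique fixed point of $f_0$ in $\overline D$ and lies on $\partial D$, the Denjoy--Wolff theorem (as already invoked for \corref{th:3}) gives $f_0^{\,n}\to 0$ locally uniformly on $D$. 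Equivalently, one can note that $\{\,\vert z+1/2\vert<1/2\,\}$ and $D$ are exactly the $\kappa$-preimages of the half-planes $\{\Re w>1\}$ and $\{\Re w>2\}$, on which the explicit conjugate $F(w)=\kappa\circ f_0\circ\kappa^{-1}(w)=w+1+(w-1)^{-1}$ satisfies $\Re F^{\,n}(w)\ge\Re w+n$.

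\textbf{Part (5), and the main obstacle.} By (1) every $z$ with $\vert z+1/2\vert\ge 2$ lies in the basin of $\infty$, and by (3) every $z$ with $\vert z+1/2\vert<1/2$ lies in the basin of $0$; both basins are open and contained in the Fatou set, so $J(f_0)$ meets neither region, i.e. $J(f_0)\subset\{\,1/2\le\vert z+1/2\vert<2\,\}\subset\overline{\bA}$. I expect part (3) --- showing that orbits in a \emph{fixed-size} disk tangent at the parabolic point actually converge to it rather than merely staying bounded --- to be the only step needing an idea beyond elementary estimates, and it is handled most cleanly by the forward-invariance computation above combined with Denjoy--Wolff, or by the half-plane picture in the $\kappa$-coordinate.
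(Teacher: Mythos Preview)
Your proof is correct and, for parts (1), (2), (4), (5), essentially identical to the paper's argument (elementary modulus estimates from the identity $f_0(z)+1/4=(z+1/2)^2$, then formal deductions). The only genuine difference is in part (3): the paper observes that the disk $\{\vert z+1/2\vert<1/2\}$ is forward-invariant (by (2) and the inclusion $D\subset\{\vert z+1/2\vert<1/2\}$), invokes Montel to get normality of $\{f_0^n\}$ there, checks by hand that $f_0^n(x)\to 0$ on the real segment $(-1,0)$, and then applies Vitali's theorem to propagate convergence to the whole disk. Your route via Denjoy--Wolff (or, equivalently, the explicit half-plane estimate $\Re F(w)>\Re w+1$ in the $\kappa$-coordinate) avoids the separate real-axis verification and the appeal to Vitali; it is a cleaner packaging of the same invariance, and the half-plane version is the most self-contained of the three.
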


\begin{proof} Items (1) and (2) follow from elementary estimates. For
  (3): Since the open disk of radius $1/4$ about $-1/4$ is contained
  in the open disk of radius $1/2$ about $-1/2$, this latter disk is
  mapped into itself by $f_0$ and hence by all its iterates
  $f_0^n$. In particular, this sequence of iterates is uniformly
  bounded on the disk in question and so -- by Montel's Theorem -- is
  a normal family. Furthermore, elementary considerations show that,
  for $-1 < x < 0$, $f_0^n(x) \to 0$. It follows then from Vitali's
  Theorem that $f_0^n$ converges to the constant $0$ uniformly on
  compact sets of the open disk $\{ \vert z + 1/2 \vert < 1/2 \}$.

From (1) and (2):  If $z \notin \bA$, then $f_0(z) \notin \bA$, which
is tautologically equivalent to (4). Finally, from (1), if $z$ is
outside $\overline{\bA}$, then $f_0^n(z) \to \infty$, so $z \notin
J(f_0)$, and, from (3). If $z$ is inside $\overline{\bA}$, $f_0^n(z)
\to 0$ so -- again -- $z \notin J(f_0)$.
\end{proof}

To prove \thmref{cauliflower}, we are going to show that
$f_0^{-n} \bA$ are a decreasing sequence of topological annuli which
shrink down to a Jordan curve $J$. Points inside $J$ are attracted to
the parabolic point; those outside are attracted to $\infty$. Thus,
the Jordan curve $J$ contains the Julia set. Our argument also shows
that the preimages (of arbitrary order) of the parabolic point $0$
under $f_0$ are dense in $J$, so the Julia set is the whole of $J$.

%%\subsection{Puzzle pieces.}\label{sec:puzzle-pieces}
We define first
\[
\bA_0 := \{ z \in \bA \,:\, \Im z > 0 \}\quad\text{and}\quad
\bA_1 := \{ z \in \bA \,:\, \Im z < 0 \},
\]
and then, for any $n = 2, \ldots$ and any sequence $i_0 i_1 \ldots
i_{n-1}$ of $n$ 0's and 1's,
\[
\bA_{i_0 i_1 \ldots i_{n-1}} := \{ z \in \bA_{i_0} : f_0^j(z) \in
\bA_{i_j}\quad\text{for}\quad 1 \leq j < n \}
\]
We will refer to the sets $\bA_{i_0 \ldots i_{n-1}}$ as {\em puzzle
    pieces (of level $n$.)} It is easy to see that
\begin{itemize}
\item 
$\bA_{i_0 i_1 \ldots i_{n-1}}$ is decreasing in $n$:
$
\bA_{i_0 \ldots i_{n-1}} \supset \bA_{i_0 \ldots i_n};
$
\item
$
f \bA_{i_0 i_1 \ldots i_n} = \bA_{i_1 \ldots i_n};
$

\item
$
\bA_{i_0 i_1 \ldots i_n} = \finv{i_0} \bA_{i_1 \ldots i_n},
$
and hence\\
$
\bA_{i_0 i_1 \ldots i_{n-1}} = \finv{i_0} \circ \finv{i_1} \circ
\cdots \circ \finv{i_{j-1}}\bA_{i_j i_{j+1} \ldots
  i_{n-1}}\quad\text{for $1 \leq j < n-1$.}
$
\end{itemize}

\begin{figure}
%\centerline{\includegraphics[width=1.1\textwidth]{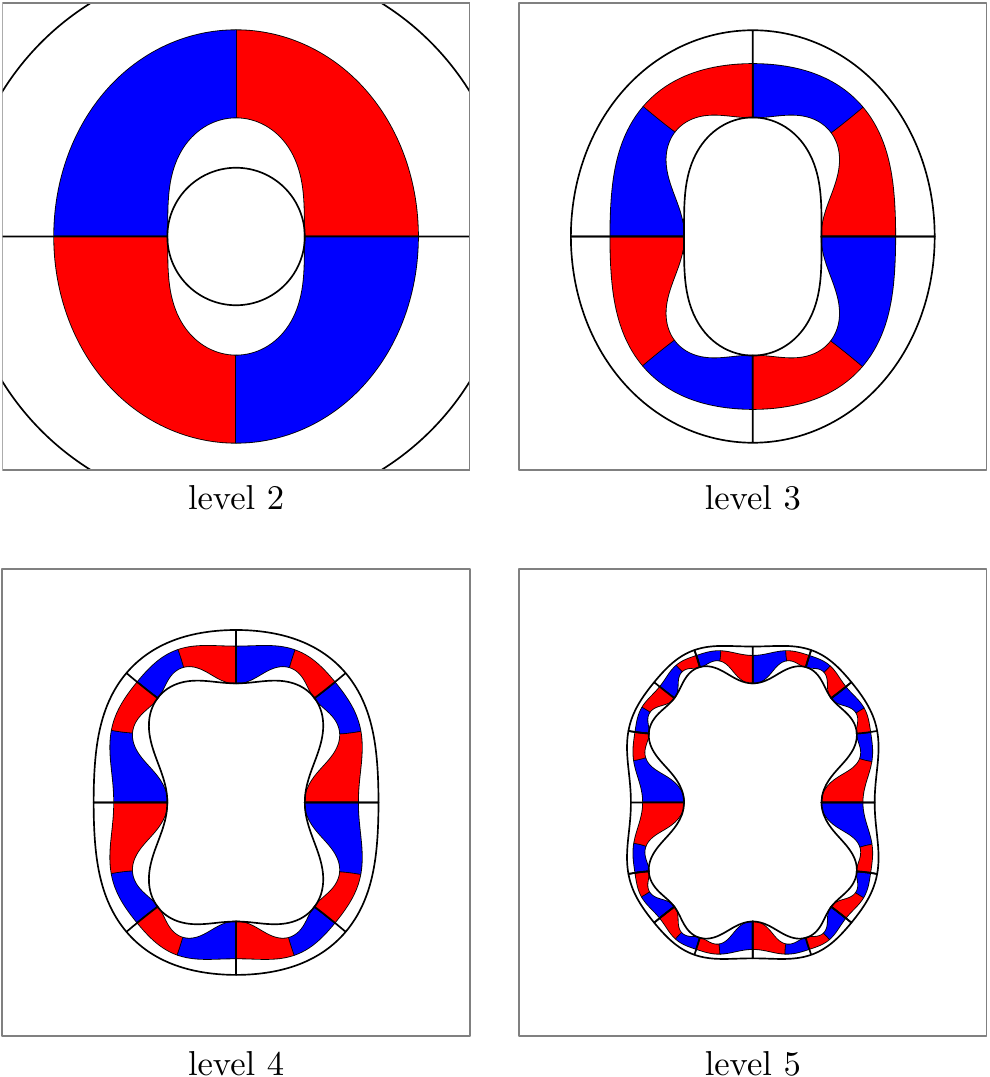}}
\centerline{\includegraphics[scale=1.0]{PuzzlePieces.pdf}}
\caption{\label{fig-puzzle}There are $2^n$ puzzle pieces of level
$n$. In the picture for level $n$, we also show the outlines of the
puzzle pieces of level $n-1$, each of which contains two pieces of
level $n$. The pieces of level $n$ can be labeled
(successively, counterclockwise, starting from the positive real axis)
with $0, 1, \ldots , 2^{n}-1$; the labeling used in the text is the
binary representation of this one.}
\end{figure}

The main step in the argument is to show that the diameters of the
puzzle pieces $\bA_{i_0 i_1 \ldots i_{n-1}}$ go to zero as $n \to
\infty$, uniformly in $i_0 i_1 \ldots i_{n-1}$. The strategy that we
use is standard; it also
appears in, e.g., \cite{lyubich}, Proposition 1.10.

Concretely, we define 
\[
\rho_n := \sup \left\{ \text{diam}(\bA_{i_0 \ldots i_{n-1}}) : i_0
  \ldots i_{n-1}
\in \{0,1\}^{n} \right\}
\]
Since $\bA_{i_0 \ldots i_{n-1}} \supset \bA_{i_0
  \ldots i_{n}}$, the sequence $\rho_n$ is non-increasing in $n$, so 
\begin{equation}
\label{limrho}
\rho_* :=\lim_{n \to \infty} \rho_n
\end{equation}
exists. 

We will prove:
\begin{prop}\label{th:5}
The limit $\rho_*=0.$  
\end{prop}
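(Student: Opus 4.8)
The plan is to show $\rho_* = 0$ by a contradiction argument of the standard ``shrinking puzzle pieces'' type. First I would observe that since there are only finitely many ($2^n$) puzzle pieces at each level and they are nested, if $\rho_* > 0$ then for each $n$ one can choose an infinite sequence $i_0 i_1 i_2 \ldots$ of $0$'s and $1$'s such that the nested intersection $K := \bigcap_{n \geq 1} \overline{\bA_{i_0 \ldots i_{n-1}}}$ is a nonempty compact connected set of diameter $\geq \rho_* > 0$; this uses that a decreasing intersection of nonempty compact connected sets is nonempty compact connected, together with the fact that $\operatorname{diam}$ passes to the limit under such intersections. By \propref{th:1}(5), $K \subset \overline{\bA}$, and in fact $f_0^j(K) \subset \overline{\bA_{i_j}}$ for every $j$, so no point of $K$ is in the interior of $\bA$ unless its whole forward orbit stays in $\overline{\bA}$ — in particular $K$ cannot contain a point whose orbit escapes to $\infty$ or converges to $0$ through the interior of a petal. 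The cleanest route is then to locate where $K$ lives and apply the contraction properties of the inverse branches $\finv{0}$, $\finv{1}$.

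The heart of the argument is a hyperbolic-contraction estimate. The three inverse branches $\finv{0}, \finv{1}$ are defined on $\CC \setminus R$ where $R = [-1/4, +\infty)$; by \propref{th:1}(4) each puzzle piece of level $\geq 1$ is the image of a lower-level one under one of these branches, so (by the third bulleted identity above) $\bA_{i_0 \ldots i_{n-1}} = \finv{i_0} \circ \cdots \circ \finv{i_{n-2}}(\bA_{i_{n-1}})$. If the orbit-closure $K$ stays a definite distance away from the ray $R$ and from the real segment where the branches degenerate, then on a fixed neighborhood $\Omega$ of $K$ that is compactly contained in $\CC \setminus R$ (and in the appropriate domain), the inverse branches are uniform contractions in the hyperbolic metric of $\Omega$ relative to a slightly larger domain $\Omega'$ with $\overline{\Omega} \subset \Omega'$ — this is the Schwarz--Pick contraction, exactly as in \cite{lyubich}, Proposition~1.10. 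Iterating such a contraction forces $\operatorname{diam}(\bA_{i_0 \ldots i_{n-1}}) \to 0$ geometrically, contradicting $\operatorname{diam} \geq \rho_* > 0$. So the main work is to verify that $K$ (equivalently, the worst puzzle pieces) cannot accumulate on the ``bad set'' where contraction fails, namely a neighborhood of $R = [-1/4,\infty)$ inside $\overline{\bA}$; the points of $\overline{\bA}$ with real part $\geq -1/4$ that could cause trouble are near $-1/4$ (the critical value) and near the positive real axis.

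I expect the main obstacle to be precisely handling the points of $\overline{\bA}$ that lie on or near the ray $R$: the critical value $-1/4$, the critical point $-1/2$, and the segment of the positive real axis inside $\overline{\bA}$. Near the positive reals one uses \propref{th:1}(1) (escape to $\infty$) together with the fact that a puzzle piece accumulating there would have an orbit leaving $\overline{\bA}$, which is excluded; near $-1/2$ and $-1/4$ one uses \propref{th:1}(2)--(3) (the disk $\{|z+1/2|<1/2\}$ is mapped into $\{|z+1/4|<1/4\}$ and attracted to $0$), so a puzzle piece meeting that disk would, after one step, be trapped in the parabolic basin and its diameter controlled separately by \corref{th:3}. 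The remaining part of $\overline{\bA}$ — an annular region bounded away from $R$ — is where the clean hyperbolic contraction applies. Assembling these cases: either a puzzle piece eventually enters the parabolic disk (handled by the Denjoy--Wolff convergence in \corref{th:3}, whose uniformity on compacts gives shrinking), or it stays in the region of uniform contraction (handled by Schwarz--Pick), and in both cases its diameter tends to $0$, forcing $\rho_* = 0$.
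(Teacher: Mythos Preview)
Your plan is in the right spirit and even cites the same reference as the paper, but the way you split into cases has a genuine gap. The obstruction to a clean Schwarz--Pick argument is not the critical value $-1/4$, nor the disk $\{|z+1/2|<1/2\}$, nor the positive reals: it is the \emph{parabolic fixed point} $0\in\partial\bA$. The inverse branches $\finv{0},\finv{1}$ (which agree with the local inverse $\finvloc$ on $\bA_{00}$ and $\bA_{11}$ respectively) fix $0$ with derivative $1$, so there is no domain $\Omega'\supset\overline{\bA}$ on which they map compactly into a smaller $\Omega$; the contraction near $0$ is only parabolic. Moreover, your first case is vacuous as stated: puzzle pieces at every level lie in $\bA$, hence are disjoint from $\{|z+1/2|<1/2\}$, so no puzzle piece ever ``enters the parabolic disk.'' The parabolic behavior shows up instead as the puzzle pieces $\bA_{00\cdots 0}$ and $\bA_{11\cdots 1}$ accumulating at $0$ from within $\bA$, where your Schwarz--Pick estimate gives nothing.

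The paper's proof fixes this by splitting on the \emph{itinerary} rather than on geometry: either the sequence $i_0 i_1\ldots$ is eventually constant, or it is not. If eventually constant, then from some point on the puzzle pieces are $\finvloc^{\,n}$-images of $\bA_{00}$ (or $\bA_{11}$), and \corref{th:3} gives uniform convergence to $0$ on that compact set, hence shrinking diameters. If not eventually constant, then the block $01$ recurs infinitely often, and the key observation is that $\overline{\bA_{01}}$ is disjoint from the \emph{entire} closed postcritical set, including $0$ (points of $\bA_0$ near $0$ map back into $\bA_0$, so they are not in $\bA_{01}$). One then extends the inverse branches $f_{-j_k}$ to a simply connected neighborhood $U\supset\overline{\bA_{01}}$, uses Montel to extract a nonconstant limit $h$, and derives a contradiction from the fact that $h(\bA_{01})$ contains a neighborhood of a Julia point $w_0$ whose forward orbit would then be trapped in $\overline{\bA}$. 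The point is that the normal-families argument, unlike Schwarz--Pick, does not need uniform contraction; it only needs a fixed compact landing set $\overline{\bA_{01}}$ bounded away from the postcritical set, and the recurrence of the block $01$ provides exactly that.
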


The first step is to argue that it suffices to consider binary
sequences one at a time:
\begin{lem}
\label{th:5:1} 

There is an infinite sequence $i_0 i_1 \ldots i_n
  \ldots$ so that
\[
\diam(A_{i_0 i_1 \ldots i_n}) \geq \rho_* \quad\text{for all
  $n$,}
\]
i.e., a {\em single} descending chain of puzzle pieces with diameter
converging to $\rho_*$.
\end{lem}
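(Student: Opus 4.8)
The plan is to extract the required single descending chain by a compactness argument on the finite tree of puzzle pieces, a standard ``König's lemma with limits'' device.

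First I would record the following two facts about the structure: each puzzle piece $\bA_{i_0\ldots i_{n-1}}$ of level $n$ splits into exactly the two level-$(n+1)$ pieces $\bA_{i_0\ldots i_{n-1}0}$ and $\bA_{i_0\ldots i_{n-1}1}$ obtained by appending a symbol, and these are nonempty (each is $\finv{i_0}\circ\cdots\circ\finv{i_{n-1}}$ applied to $\bA_0$ or $\bA_1$, hence nonempty). Thus the collection of all puzzle pieces forms an infinite binary tree ordered by inclusion, and $\rho_n$ is the supremum of the diameters over the (finitely many) nodes at level $n$. Since at each level there are only finitely many pieces, the supremum defining $\rho_n$ is attained: for every $n$ there is a word $w^{(n)}\in\{0,1\}^n$ with $\diam(\bA_{w^{(n)}})=\rho_n\ge\rho_*$.

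Next I would build the chain by a diagonal/compactness selection. Consider the sequence of words $w^{(n)}$. Since there are only two choices for the first symbol, one of them, call it $i_0$, occurs as the first symbol of $w^{(n)}$ for infinitely many $n$; restrict to that subsequence. Among those, one choice $i_1$ of second symbol occurs infinitely often; restrict again; and so on. This produces an infinite word $i_0 i_1 i_2\ldots$ with the property that for every $k$ there are infinitely many $n$ with $w^{(n)}$ beginning with $i_0\ldots i_{k-1}$. Now fix $k$. For each such $n\ge k$ we have $\bA_{w^{(n)}}\subset \bA_{i_0\ldots i_{k-1}}$ (appending symbols only shrinks the piece), so $\diam(\bA_{i_0\ldots i_{k-1}})\ge \diam(\bA_{w^{(n)}})=\rho_n$. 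Letting $n\to\infty$ along that subsequence gives $\diam(\bA_{i_0\ldots i_{k-1}})\ge \rho_*$. Since $k$ was arbitrary, the infinite word $i_0 i_1\ldots$ furnishes the required single descending chain, proving the lemma.

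There is essentially no analytic obstacle here; the only point requiring a little care is the bookkeeping in the diagonal extraction — making sure at each stage one passes to a further subsequence along which one more symbol is pinned down, so that the resulting infinite word genuinely has all of its finite prefixes appearing as prefixes of infinitely many $w^{(n)}$. Everything else (nonemptiness and monotonicity of puzzle pieces under appending symbols, attainment of the sup at each finite level, and monotonicity $\diam(\bA_{i_0\ldots i_{k-1}})\ge\diam(\bA_{w^{(n)}})$) is immediate from the itemized properties already established before the statement.
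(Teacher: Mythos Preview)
Your proof is correct and takes essentially the same approach as the paper: both use a diagonal/K\"onig's lemma extraction on the binary tree of puzzle pieces. The paper phrases it slightly more abstractly (choosing $i_0,i_1,\ldots$ so that the supremum over all continuations of each prefix stays $\ge\rho_*$), whereas you work with explicit maximizers $w^{(n)}$ at each level and extract a limiting word; the final inequality $\diam(\bA_{i_0\ldots i_{k-1}})\ge\rho_*$ is then deduced in both cases from monotonicity under appending symbols.
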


\begin{proof}[{\it Proof of \lemref{th:5:1}}]
A simple diagonal argument gives the
existence of a sequence $i_0 i_1 \ldots i_k
\ldots$ such that, for all $k$,
\[
\lim_{n \to \infty} \sup \left\{ \text{diam}(\bA_{i_0, \ldots i_{k-1}
  i'_{k} \ldots i'_{k+n-1}})
 : i'_{k} \ldots i'_{k+n-1} 
\in \{0,1\}^{n} \right\} \geq \rho_*
\]
Since $\text{diam}(\bA_{i_0 i_1 \ldots i_{n-1}})$
is non-increasing in $n$, it follows that
\[
\text{diam}(\bA_{i_0 i_1 \ldots i_k}) \geq \rho_*\quad\text{for all
  $k$,}
\]
proving the lemma.
\end{proof}

%%%%%%%%%%%%%%%%%%%%%%%%%%%%%%%%%%%%%%%%%%%%%%%%%%%%%%%%%%%%%%%%%%%%
\begin{proof}[{\it Proof of \propref{th:5}}]
%%%%%%%%%%%%%%%%%%%%%%%%%%%%%%%%%%%%%%%%%%%%%%%%%%%%%%%%%%%%%%%%%%
We assume the contrary, that  $\rho_*>0$ and we fix a sequence $i_0 i_1
\ldots$ as in the \lemref{th:5:1}, i.e., so that
$\text{diam}(\bA_{i_0 i_1 \ldots i_{k-1}})$ does not go to zero as $k
  \to \infty$. We consider separately the three cases:
\begin{enumerate}
\item $i_j$ is eventually 0;
\item $i_j$ is eventually 1;
\item $i_j$ takes both values $0$ and $1$ infinitely often.
\end{enumerate}
%% In each case, we arrive at a contradiction using a standard strategy,
%% which also appears in, e.g., \cite{lyubich}, Proposition 1.10. 

We start with case (3).  There are then infinitely many $j$'s so that
\[
i_{j} = 0 \quad\text{and}\quad i_{j+1} = 1
\]
Let $j_k$ be a strictly increasing sequence of such $j$'s. Then, for
each $k$,
\[
f_{-j_k} := \finv{i_0} \circ \finv{i_1} \circ \cdots \circ \finv{i_{j_k-1}}
\]
is an analytic branch of the inverse of $f_0^{j_k}$ mapping $\bA_{01}$
bijectively to $\bA_{i_0 \ldots i_{j_k+1}}$. The closure of
$\bA_{01}$ does not intersect the closure of the postcritical set of $f_0$; 
we let $U$ be an bounded simply connected open neighborhood of
$\overline{\bA_{01}}$ disjoint from the postcritical set. Then each
$f_{-j_k}$ has an extension to a branch of the inverse of
$f_0^{j_k}$ defined and analytic on $U$; we denote this extension also by
$f_{-j_k}$.

We next want to argue that the $f_{-j_k}$ are uniformly bounded on $U$
and hence form a normal family there. We can see this as follows: Let
$\hat{U}$ be a neighborhood of $\infty$ which is forward-invariant under
$f_0$ and disjoint from $U$. Then, if $f_{j_k}(z)$
were in $\hat{U}$ (for some $k$ and some $z \in U$), we would have --
since $f_{j_k}$ is an inverse 
branch of $f_0^{j_k}$ and $\hat{U}$ is forward invariant --
\[
z = f_0^{j_k}(f_{-j_k}(z)) \in \hat{U},
\]
which is impossible since by hypothesis $z \in U$ and $U$ is disjoint
from $\hat{U}$. Thus, the sets $f_{-j_k}U$ are all disjoint from
$\hat{U}$, so $(f_{-j_k})$ is uniformly bounded on $U$, as desired.

%%{\em{\bf Sublemma \ref{th:5}.2.} The $f_{-j_k}$ are uniformly bounded on $U$.}

By Montel's Theorem, there is a subsequence of $(j_k)$ along which $f_{-j_k}$
converges uniformly on compact subsets of $U$. By adjusting the
notation, we can assume that the sequence $(f_{-j_k})$ itself
converges; we denote its limit by $h$.
Since
\[
\text{diam}(f_{-j_k} \bA_{01}) = \text{diam}(\bA_{i_0 \ldots i_{j_k+1}})
\]
does not go to zero as $k \to \infty$, and since $f_{-j_k}$ converges
uniformly on the closure of $\bA_{01}$, the limit $h$ is non-constant.

Let $z_0:=\finv{0}(\finv{1}(-1))\in\bA_{01}\cap J(f_0)$. Since the
Julia set if backward-invariant under $f_0$ and closed,
\[
w_0 := h(z_0) = \lim_{k \to \infty} f_{-j_k}(z_0)
\]
is again in the Julia set. Since $h$ is
non-constant, $h\bA_{01}$ is an open neighborhood of $w_0$; let $W$ be
another open neighborhood whose closure is compact and contained in
$h\bA_{01}$. A straightforward application of Rouch\'e's theorem shows
that 
\[
f_{-j_k}\bA_{01} \supset W\quad\text{for sufficiently large $k$.}
\]
Since $f_{-j_k}$ is a branch of the inverse of $f_0^{j_k}$, it follows
that
\begin{equation}\label{eq:1}
f_0^{j_k}W \subset \bA_{01}\quad\text{for all sufficiently
large $k$.}
\end{equation}

We are going to deduce from (\ref{eq:1}) that
\[
\vert f_0^n(w) + 1/2\vert \leq 2\quad\text{for $w \in W$ and $n \geq
0$,}
\]
which by Montel's theorem
contradicts the fact that $w_0 \in W$ is in the Julia set.
Suppose, therefore, that $\vert f_0^{n}(w) + 1/2 \vert > 2$ for some
$w \in W$ and some $n$.
Then, by \propref{th:1}(1),
\[
\vert f_0^{j_k}(w) + 1/2 \vert > 2 \quad\text{for all $k$ with
$j_k \geq n$.}
\]
Since
\[
\vert z + 1/2 \vert \leq 2\quad\text{for all $z \in \bA$,}
\]
this contradicts (\ref{eq:1}), and so completes the proof in case 3.

We turn next to case (1) above, and deal first with the situation $i_j =
0$ for all $j$. We write
temporarily
\[
\bA^{(n)} := \bA_{\underbrace{0\cdots 0}_{\text{$n$ terms}}}
= (\finv{0})^{n-2}\bA_{00}
\]
Now $\finv{0}$ maps $\bA_{00}$ into itself, and $\finv{0} = \finvloc$
on $\bA_{00}$, so we can write
\[
\bA^{(n)} = {\finvloc}^{n-2} \bA_{00}.
\]
By Corollary \ref{th:3} and local properties of parabolic dynamics near $0$, 
\[
{\finvloc}^{n-2} \to 0\quad\text{uniformly on $\bA_{00}$},
\]
so
\[
\text{diam}(\bA^{(n)}) \to 0\quad\text{as $n \to \infty$.}
\]

Next consider sequences of the form $i_0 i_1 \cdots i_k 0 0 \cdots$,
and use the formula
\[
\bA_{i_0 \ldots i_k \underbrace{0 \cdots 0}_{\text{$n$ terms}}} =
  \finv{i_0} \circ \finv{i_1} \circ \cdots \circ \finv{i_k} A^{(n)}.
\]
The mapping $\finv{i_0} \circ \finv{i_1} \circ \cdots \circ
\finv{i_k}$ extends to be continuous on $\overline{\bA_{00}}$,
and $\text{diam}(A^{(n)}) \to 0$ by what we just proved, so
\[
\text{diam}(\bA_{i_0 \ldots i_k \underbrace{0 \cdots 0}_{\text{$n$
      terms}}}) \to 0\quad\text{as $n \to \infty$.}
\]

A similar argument, using $\finv{1} = \finvloc$ on $\bA_{11}$ shows
that
\[
\text{diam}(\bA_{i_0 \ldots i_k \underbrace{1 \cdots 1}_{\text{$n$
      terms}}}) \to 0\quad\text{as $n \to \infty$.}
\]
%
%Thus, in all three cases, $\text{diam}\bA_{i_0 \cdots i_n} \to 0$ as
%$n \to \infty$, contradicting the construction of $i_0 i_1 \cdots$, so
%the proposition is proved.

\end{proof}

%%%%%%%%%%%%%%%%%%%%%%%%%%%%%%%%%%%%%%%%%%%%%%%%%%%%%%%%%%%%%%%%%%%%%%%%%%%%%%%%%%%%
Let $\bm{i}=(i_j)_{j=0}^N$ be a finite or infinite sequence ($N\leq\infty$) of $0$'s and $1$'s.
We interpret it as a binary representation of an element of the circle $\RR/\ZZ$:
$$\bin{i}\equiv \sum_{j=0}^N i_j 2^{j+1}\bmod{\ZZ}.$$
For any such dyadic sequence,
\[
\overline{\bA_{i_0}} \supset \overline{\bA_{i_0 i_1}} \supset \cdots \supset
\overline{\bA_{i_0 \ldots i_n}} \supset \ldots
\]
is a nested sequence of compact sets in $\CC$ with diameter going
to 0, so its intersection contains exactly one point, which we denote by
$\hat{z}(\bm{i})$. Since any $\bA_{i_0 \ldots i_{n-1}}$ contains
$g_{i_0} g_{i_1} \cdots g_{i_{n-1}}(-1) \in J(f_0)$, each
$\hat{z}(\bm{i}) \in J(f_0)$. It is immediate from the construction that $\bm{i}
\mapsto \hat{z}(\bm{i})$ is continuous from $\{0,1\}^\NN$ to
$\CC$. It is not injective, but its non-injectivity is of a simple and
familier nature:
\begin{lem}
\label{ambiguity}
Let $\bm{i}=i_0i_1\ldots i_{n-1}$ and  $\bm{i}'=i'_0i'_1\ldots i'_{n-1}$ be two finite dyadic
sequences of equal length such that
$$\overline{\bA_{\bm{i}}}\cap\overline{\bA_{\bm{i}'}}\neq\emptyset.$$
Then either $\bin{i}=\bin{i'}$ or $\bin{i}=\bin{i'}\pm 2^{-n}.$
\end{lem}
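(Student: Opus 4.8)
I will prove the lemma by induction on the common length $n$, exploiting the self-similar structure $\bA_{0\bm j}=\finv 0\bA_{\bm j}$, $\bA_{1\bm j}=\finv 1\bA_{\bm j}$ (where $\finv 1$ is the second branch of $f_0^{-1}$, so $\finv 1=-1-\finv 0$), together with the elementary identity: if $\bm i=i_0i_1\ldots i_{n-1}$ and $\bm j=i_1\ldots i_{n-1}$, then $\bin i\equiv \tfrac{i_0}{2}+\tfrac12\bin j\pmod{\ZZ}$. The base case $n=1$ is immediate, since then $\bm i,\bm i'\in\{0,1\}$ and $\bin 0-\bin 1=-\tfrac12=-2^{-1}$. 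The geometric input for the inductive step is an analysis of the two ``seams'' along which $\overline{\bA_0}$ meets $\overline{\bA_1}$, namely the real segments $I^{+}:=[0,\tfrac32]$ and $I^{-}:=[-\tfrac52,-1]$ which together make up $\overline\bA\cap\RR=\overline{\bA_0}\cap\overline{\bA_1}$. Since $\finv 0,\finv 1$ are cut along $R=[-\tfrac14,+\infty)$ and $I^{-}\cap R=\emptyset$, each of them continues analytically across $I^{-}$; but $I^{+}\subset R$, so on $\overline\bA$ the boundary values of $\finv 0$ from the upper and from the lower half-plane disagree along $I^{+}$. A direct computation with $\finv 0(w)=\sqrt{w+\tfrac14}-\tfrac12$ shows that these two one-sided extensions of $\finv 0$ along $I^{+}$ are the \emph{disjoint} segments $[0,\tfrac{\sqrt7-1}{2}]\subset I^{+}$ and $[-\tfrac{\sqrt7+1}{2},-1]\subset I^{-}$, that the interior image $\finv 0(\CC\setminus R)$ avoids $\RR$ altogether, and hence that these two segments exhaust $\finv 0(\overline\bA)\cap\RR$; for $\finv 1=-1-\finv 0$ the two one-sided extensions are interchanged, and are again disjoint and exhaust $\finv 1(\overline\bA)\cap\RR$.

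\smallskip
From this I will first obtain, by a simultaneous induction on the length, two \emph{adjacency lemmas}: for any word $\bm i$ of length $n$,
\[
\overline{\bA_{\bm i}}\cap I^{+}\neq\emptyset\iff \bm i\in\{0^{n},\,1^{n}\},\qquad
\overline{\bA_{\bm i}}\cap I^{-}\neq\emptyset\iff \bm i\in\{01^{n-1},\,10^{n-1}\}.
\]
Indeed, writing $\bm i=e\bm j$ one has $\overline{\bA_{e\bm j}}=\finv e(\overline{\bA_{\bm j}})$, and by the seam analysis this set can touch $\RR$ only along $\finv e(I^{+})$, with the relevant one-sided extension of $\finv e$ selected by the first symbol of $\bm j$ (because $\overline{\bA_{\bm j}}$ approaches $I^{+}$ from above if $\bm j$ begins with $0$ and from below if it begins with $1$). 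Thus $\overline{\bA_{0\bm j}}$ meets $I^{+}$ exactly when $\bm j$ begins with $0$ and $\overline{\bA_{\bm j}}$ meets $I^{+}$, i.e.\ (inductively) when $\bm j=0^{n-1}$; it meets $I^{-}$ exactly when $\bm j$ begins with $1$ and $\overline{\bA_{\bm j}}$ meets $I^{+}$, i.e.\ when $\bm j=1^{n-1}$; and symmetrically for $\overline{\bA_{1\bm j}}$ using $\finv 1$.

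\smallskip
The main induction then splits according to whether $i_0=i_0'$. If $i_0\neq i_0'$, say $i_0=0$ and $i_0'=1$, then $\overline{\bA_{\bm i}}\subset\overline{\bA_0}$ and $\overline{\bA_{\bm i'}}\subset\overline{\bA_1}$, so the nonempty set $\overline{\bA_{\bm i}}\cap\overline{\bA_{\bm i'}}$ lies in $I^{+}\cup I^{-}$; if it meets $I^{+}$ the adjacency lemmas force $\bm i=0^{n}$ and $\bm i'=1^{n}$, whence $\bin i-\bin{i'}=-(1-2^{-n})\equiv 2^{-n}\pmod\ZZ$, and if it meets $I^{-}$ they force $\bm i=01^{n-1}$ and $\bm i'=10^{n-1}$, whence $\bin i-\bin{i'}=-2^{-n}$. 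If $i_0=i_0'=a$, write $\bm i=a\bm j$, $\bm i'=a\bm j'$; any point of $\overline{\bA_{\bm i}}\cap\overline{\bA_{\bm i'}}$ has the form $\finv a(\zeta)=\finv a(\zeta')$ for some $\zeta\in\overline{\bA_{\bm j}}$, $\zeta'\in\overline{\bA_{\bm j'}}$ (with the appropriate one-sided extensions of $\finv a$), and since $\finv a$ is univalent off $R$, its interior image misses $\RR$, and its two one-sided extensions along $I^{+}$ are injective with disjoint images, one necessarily has $\zeta=\zeta'$. Hence $\overline{\bA_{\bm j}}\cap\overline{\bA_{\bm j'}}\neq\emptyset$ and the induction hypothesis applies to $\bm j,\bm j'$; from $\bin i-\bin{i'}=\tfrac12(\bin j-\bin{j'})$ the conclusion follows, \emph{unless} the induction hypothesis returns the wrap-around value $\bin j-\bin{j'}\equiv\pm(1-2^{-(n-1)})$, i.e.\ $\{\bm j,\bm j'\}=\{0^{n-1},1^{n-1}\}$ with $n\ge3$. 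In that exceptional case the adjacency lemmas give $\overline{\bA_{0^{n-1}}}\cap\overline{\bA_{1^{n-1}}}\subset I^{+}$ (neither of these words meets $I^{-}$), so the common point $\zeta=\zeta'$ lies on $I^{+}$, approached by $\bA_{0^{n-1}}$ from above and by $\bA_{1^{n-1}}$ from below; this would force $\finv a$ to take the same value on the two sides of $I^{+}$, contradicting the disjointness established above. So this case cannot occur, and the induction is complete.

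\smallskip
\textbf{Main obstacle.} The real work is the seam analysis and its propagation through the adjacency lemmas: one must verify that the inverse branches glue $\bA_0$ to $\bA_1$ analytically across the ``$-1$-end'' seam $I^{-}$ but discontinuously --- into two \emph{disjoint} pieces, one near $0$ and one near $-1$ --- across the slit seam $I^{+}$ near the positive real axis, and then carry this bookkeeping forward under $\finv 0,\finv 1$. This dichotomy is exactly what permits the familiar dyadic identification $\ldots 0111\ldots=\ldots 1000\ldots$ while forbidding any coincidence of non-consecutive puzzle pieces; once it is in hand, the rest is elementary manipulation of binary expansions.
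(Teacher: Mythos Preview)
Your proof is correct. The paper does not actually supply a proof of this lemma, saying only that it is ``a straightforward induction in $n$ which we leave to the reader''; your argument via the two adjacency lemmas --- characterizing exactly which length-$n$ words touch the seam $I^{+}$ and which touch $I^{-}$ --- is the natural way to carry out that induction, and your explicit treatment of the wrap-around case $\{\bm j,\bm j'\}=\{0^{n-1},1^{n-1}\}$ (where halving $\bin j-\bin{j'}$ could otherwise produce $\tfrac12\pm 2^{-n}$) addresses a genuine subtlety that the word ``straightforward'' glosses over. One minor simplification: in Case~2 the conclusion $\zeta=\zeta'$ follows immediately by applying $f_0$ to both sides of $\finv a(\zeta)=\finv a(\zeta')$, since every one-sided extension of $\finv a$ is a right inverse of $f_0$; the disjointness of the two one-sided images of $I^{+}$ is then needed only, as you note, to exclude the wrap-around case.
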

The proof is a straightforward induction in $n$ which we leave to the reader.
As a corollary, we get:
\begin{cor}\label{th:7}
Let $\bm{i}$ and $\bm{i'}$ be two infinite dyadic sequences. Then
\[
\hat{z}(\bm{i}) = \hat{z}(\bm{i'})\quad\text{if and only if}\quad
\bin{i} = \bin{i'}
\]
\end{cor}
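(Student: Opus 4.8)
The plan is to prove the two implications of \corref{th:7} separately, using throughout \lemref{ambiguity}, the self-similarity $\bA_{i_0\ldots i_{n-1}}=\finv{i_0}\circ\cdots\circ\finv{i_{n-2}}\bA_{i_{n-1}}$, the continuous extension of the inverse branches $\finv 0,\finv 1$ to the edges of their slit $R=[-1/4,\infty)$, and the fact (\propref{th:5}) that $\diam\bA_{i_0\ldots i_{n-1}}\le\rho_n\to 0$.

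For the ``only if'' direction, suppose $\hat z(\bm i)=\hat z(\bm i')=:z^*$. For every $n$ the point $z^*$ lies in $\overline{\bA_{i_0\ldots i_{n-1}}}\cap\overline{\bA_{i'_0\ldots i'_{n-1}}}$, so this intersection is non-empty, and \lemref{ambiguity} (applied to the length-$n$ truncations $t_n:=i_0\ldots i_{n-1}$ and $t'_n:=i'_0\ldots i'_{n-1}$) shows that $\bin{t_n}$ and $\bin{t'_n}$ are either equal or differ by $\pm 2^{-n}$; in particular their distance in $\RR/\ZZ$ is at most $2^{-n}$. Since an $n$-digit truncation is within $2^{-n}$ of the point it represents, $\bin{t_n}\to\bin i$ and $\bin{t'_n}\to\bin{i'}$, so the triangle inequality forces $\bin i=\bin{i'}$.

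For the ``if'' direction, assume $\bin i=\bin{i'}$ and $\bm i\neq\bm i'$. Two distinct binary expansions of one point of $\RR/\ZZ$ are either $\{\bm i,\bm i'\}=\{0^\infty,1^\infty\}$, or — after possibly swapping $\bm i$ and $\bm i'$ — $\bm i=u\,0\,1^\infty$, $\bm i'=u\,1\,0^\infty$ for a common finite prefix $u=i_0\ldots i_{\ell-1}$. In the first case $\hat z(0^\infty)=\hat z(1^\infty)=0$, the parabolic point: this is precisely what the all-$0$ and all-$1$ parts of the proof of \propref{th:5} establish, since $\bA_{0^n}=\finvloc^{\,n-2}\bA_{00}$ and $\bA_{1^n}=\finvloc^{\,n-2}\bA_{11}$ shrink to the $\finvloc$-fixed point $0$. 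For the second case the heart of the matter is the claim
\[
-1\in\overline{\bA_{01^m}}\cap\overline{\bA_{10^m}}\qquad(m\ge 0),
\]
$-1$ being the other $f_0$-preimage of the parabolic point. Granting it, set $G_u:=\finv{i_0}\circ\cdots\circ\finv{i_{\ell-1}}$; since $-1\notin R$ and every $f_0$-preimage of $-1$ of positive order is non-real (the equation $x+x^2=-1$ has no real root), the whole backward orbit of $-1$ built by $G_u$ avoids $R$, so $G_u$ is holomorphic near $-1$ and $G_u\bA_{01^m}=\bA_{u01^m}$, $G_u\bA_{10^m}=\bA_{u10^m}$. Thus $q:=G_u(-1)\in\overline{\bA_{u01^m}}\cap\overline{\bA_{u10^m}}$ for every $m$; since $\hat z(\bm i)\in\overline{\bA_{u01^m}}$ with $\diam\overline{\bA_{u01^m}}\le\rho_{\ell+1+m}\to 0$, and likewise for $\bm i'$, we get $\hat z(\bm i)=q=\hat z(\bm i')$.

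It remains to prove the claim, and this is the step I expect to be the main obstacle, since it hinges on using the right edge-values of the continuations of $\finv 0,\finv 1$ across $R$. The piece $\bA_{1^m}\subset\bA_1$ lies in the open lower half-plane and has $0=\hat z(1^\infty)$ in its closure, approached from below the slit $R$; there the continuation of $\finv 0(w)=\sqrt{w+1/4}-1/2$ takes the ``lower-edge'' square-root value $-\sqrt{1/4}-1/2=-1$, so $\bA_{01^m}=\finv 0(\bA_{1^m})$ has $-1$ in its closure. Symmetrically $\bA_{0^m}\subset\bA_0$ sits in the open upper half-plane with $0=\hat z(0^\infty)$ in its closure, approached from above $R$, and there $\finv 1$ takes the value $-1$ (the remaining preimage of $0$, the value $0$ being $\finv 0$'s upper-edge value), whence $-1\in\overline{\bA_{10^m}}$. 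For $m=0$ this is just $-1\in\overline{\bA_0}\cap\overline{\bA_1}$, which holds because $-1\in\partial\bA$ is a limit of points of $\bA$ in each half-plane. Once this slit bookkeeping is carried out, all the pieces fit together.
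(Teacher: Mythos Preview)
Your proof is correct and follows the same overall approach as the paper: the ``only if'' direction via \lemref{ambiguity} applied to truncations, and the ``if'' direction via the standard classification of pairs of binary expansions representing the same point.

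In fact your ``if'' direction is more carefully written than the paper's. The paper simply asserts that for $\bm i=i_0\cdots i_{m-1}0111\ldots$ and $\bm i'=i_0\cdots i_{m-1}1000\ldots$ ``it follows easily that $\overline{\bA_{i_0\cdots i_n}}$ and $\overline{\bA_{i'_0\cdots i'_n}}$ intersect for all $n$,'' without identifying the common point. Your argument makes this explicit: the common point is $G_u(-1)$, and your verification that $-1\in\overline{\bA_{01^m}}\cap\overline{\bA_{10^m}}$ via the boundary values of $\finv0,\finv1$ on the two edges of the slit is exactly the right calculation. Your check that $G_u$ is holomorphic near $-1$ (because all backward images of $-1$ are non-real, hence off the slit $R$) is also correct and necessary. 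You are also more careful than the paper in treating $\{0^\infty,1^\infty\}$ as a separate case: since we work in $\RR/\ZZ$, these two sequences both represent $0$, but they are not of the form $u\,01^\infty$, $u\,10^\infty$, so the paper's stated dichotomy is, strictly speaking, incomplete without this case.
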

\begin{proof}
For any
$\bm{i} = i_0 \cdots i_n \cdots$, and any $n$
\[
\vert \bin{i} - \underline{i_0 \cdots i_{n-1}}_2 \vert \leq 2^{-n}.
\]
Hence, if $\bin{i} \neq \bin{i'}$ and $n$ is large enough so that
\[
\vert\bin{i} - \bin{i'} \vert > 3 \times 2^{-n},
\]
then
\[
\vert \underline{i_0 \cdots i_{n-1}}_2 - \underline{i'_0 \cdots i'_{n-1}}_2 \vert
> 2^{-n},
\]
which by \lemref{ambiguity} implies that
$\overline{\bA_{i_0 \cdots i_{n-1}}}$ and $\overline{\bA_{i'_0 \cdots
    i'_{n-1}}}$ are disjoint, and hence that
    $\hat{z}(\bm{i}) \neq \hat{z}(\bm{i'})$.

Conversely, if $\bin{i} = \bin{i'}$, then either $\bm{i} = \bm{i'}$ or
-- possibly after interchanging the two sequences -- $\bm{i}$ has the
form $i_0 i_1 \ldots i_{m-1}0111\ldots$ and $\bm{i'}$ the form $i_0
i_1 \ldots i_{m-1} 1000 \ldots$ and from this it follows easily that
$\overline{\bA_{i_0 \cdots i_{n}}}$ and $\overline{\bA_{i'_0 \cdots
i'_{n}}}$ intersect for all $n$ and hence that $\hat{z}(\bm{i})
= \hat{z}(\bm{i'})$
\end{proof}

Hence, there is an injective mapping $\theta \mapsto z(\theta)$, from
the circle $\Reals/\ZZ$ to $\CC$, such that
\[
\hat{z}(\bm{i}) = z(\bin{i})\quad\text{for all $\bm{i}$.}
\]

\begin{prop}\label{th:6}
The mapping $z(\,.\,)$ is a homeomorphism from the circle onto the
image $J$ of $\bm{i} \mapsto \hat{z}(\bm{i})$. In particular, $J$ is a
Jordan curve.
\end{prop}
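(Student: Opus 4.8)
The plan is to deduce everything from three facts already in hand: the map $\hat z : \{0,1\}^\NN \to \CC$, $\bm i \mapsto \hat z(\bm i)$, is continuous; the ``binary value'' map $\beta : \{0,1\}^\NN \to \RR/\ZZ$, $\bm i \mapsto \bin i$, is continuous and surjective; and, by \corref{th:7}, $\hat z(\bm i) = \hat z(\bm i')$ holds exactly when $\beta(\bm i) = \beta(\bm i')$. The last fact says precisely that $\hat z$ is constant on the fibers of $\beta$ and factors as $\hat z = z \circ \beta$, where $z : \RR/\ZZ \to \CC$ is the injective map introduced just before the statement, with image $J := z(\RR/\ZZ)$.

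The only point requiring any care is the continuity of $z$. I would argue as follows. Since $\{0,1\}^\NN$ is compact and $\RR/\ZZ$ is Hausdorff, $\beta$ is a closed map, hence a topological quotient map; as $\hat z$ is continuous and constant on the fibers of $\beta$, the induced map $z$ on the quotient is continuous. Concretely, if one prefers to avoid quotient-map language: given $\theta_n \to \theta$ in $\RR/\ZZ$, choose for each $n$ a binary representation $\bm i^{(n)}$ with $\beta(\bm i^{(n)}) = \theta_n$; by compactness of $\{0,1\}^\NN$ pass to a subsequence with $\bm i^{(n_k)} \to \bm i$; then $\beta(\bm i) = \theta$ by continuity of $\beta$, so $z(\theta) = \hat z(\bm i) = \lim_k \hat z(\bm i^{(n_k)}) = \lim_k z(\theta_{n_k})$. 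Since every subsequence of $(z(\theta_n))$ has a further subsequence converging to $z(\theta)$, we get $z(\theta_n) \to z(\theta)$. (One must not attempt to choose the binary representations $\bm i^{(n)}$ continuously in $\theta_n$ — that is impossible — which is exactly why the compactness argument is needed.)

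With continuity of $z$ established, the rest is soft: $z$ is a continuous bijection from the compact space $\RR/\ZZ$ onto $J \subset \CC$, and $\CC$ is Hausdorff, so $z$ is a homeomorphism onto $J$. Consequently $J$ is homeomorphic to a circle, i.e. it is a Jordan curve, which is the assertion of the proposition. There is no substantive obstacle here beyond the continuity step already discussed.
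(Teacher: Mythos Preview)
Your proof is correct and follows essentially the same strategy as the paper: establish that $z$ is injective and continuous, then invoke the standard fact that a continuous bijection from a compact space to a Hausdorff space is a homeomorphism. The only difference is in how continuity is obtained: the paper appeals directly to the uniform shrinking of the puzzle pieces ($\diam(\bA_{i_0\cdots i_{n-1}})\to 0$), while you factor $\hat z$ through the quotient map $\beta$ and use that a continuous surjection from a compact space to a Hausdorff space is a quotient map. Both routes are valid and yield the same conclusion with comparable effort.
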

\begin{proof}
$z(\,.\,)$ is injective by construction, and its continuity follows
easily from the fact that $\diam(\bA_{i_0 \cdots i_{n-1}}) \to 0$. It
is therefore a homeomorphism by the standard topological fact that an
injective continuous mapping from a compact space (to a Hausdorff
space) is a homeomorphism.
\end{proof}

It remains to show that

\begin{prop}
$J = J(f_0) = \bigcap_{n \geq 0} f_0^{-n} \overline{A}$.
\end{prop}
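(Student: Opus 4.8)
The plan is to prove the three equalities via the cycle of inclusions
$$ J \;\subseteq\; J(f_0) \;\subseteq\; \cK \;\subseteq\; J, \qquad\text{where}\quad \cK := \bigcap_{n\geq 0} f_0^{-n}(\overline{\bA}). $$
The first two are immediate. For $J\subseteq J(f_0)$: each $\hat z(\bm i)$ is the limit as $n\to\infty$ of the points $g_{i_0}g_{i_1}\cdots g_{i_{n-1}}(-1)$, which lie in $f_0^{-(n+1)}(0)$ and hence in the closed, backward-invariant set $J(f_0)$, since $0\in J(f_0)$. For $J(f_0)\subseteq\cK$: by \propref{th:1}(5) we have $J(f_0)\subseteq\overline{\bA}$, and since $f_0(J(f_0))=J(f_0)$ this gives $f_0^n(J(f_0))=J(f_0)\subseteq\overline{\bA}$, i.e.\ $J(f_0)\subseteq f_0^{-n}(\overline{\bA})$, for every $n\geq 0$.

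The substance is the inclusion $\cK\subseteq J$, and here the plan is to show instead that $\hat\CC\setminus J\subseteq B_\infty\cup B^{f_0}$, where $B_\infty$ is the basin of $\infty$. This suffices: if $z\in\cK$ then the whole forward orbit of $z$ stays in the compact set $\overline{\bA}$, so $f_0^n(z)\not\to\infty$ and $z\notin B_\infty$; and if $z\in\cK\cap B^{f_0}$ then $f_0^n(z)\to 0$ from the attracting direction $-1$ (\propref{attr dir}), so for large $n$, $f_0^n(z)$ enters the disk $\{\,|w+\tfrac12|<\tfrac12\,\}$ and in particular leaves $\overline{\bA}$, a contradiction; hence $\cK$ is disjoint from $B_\infty\cup B^{f_0}$. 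The first step toward $\hat\CC\setminus J\subseteq B_\infty\cup B^{f_0}$ is to prove the identity $f_0^{-1}(J)=J$. On the one hand $f_0(\hat z(\bm i))=\hat z(\sigma\bm i)$, where $\sigma$ is the one-sided shift: this follows at once from $f_0\bigl(\overline{\bA_{i_0\ldots i_{n-1}}}\bigr)\subseteq\overline{\bA_{i_1\ldots i_{n-1}}}$ together with the description of $\hat z(\bm i)$ as the unique point of $\bigcap_n\overline{\bA_{i_0\ldots i_{n-1}}}$; in particular $f_0(J)=J$. On the other hand, for $w=\hat z(\bm j)\in J$ the two points $\hat z(0\bm j)$ and $\hat z(1\bm j)$ both solve $f_0(z)=w$, and they are distinct (their codes $0\bm j$ and $1\bm j$ agree after the first entry and differ in the first, so they are not identified under \corref{th:7}); since $\deg f_0=2$ and $w$ is not the critical value $-\tfrac14\notin\overline{\bA}$, these are all the preimages of $w$, so $f_0^{-1}(w)\subseteq J$. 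Thus $f_0^{-1}(J)=J$.

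Next, $J$ is a Jordan curve (\propref{th:6}), so $\hat\CC\setminus J=\Omega_0\sqcup\Omega_\infty$ is a disjoint union of two simply connected (hence hyperbolic) domains, with $\infty\in\Omega_\infty$ and $\Omega_0$ bounded. From $f_0^{-1}(J)=J$ one gets $f_0(\hat\CC\setminus J)\subseteq\hat\CC\setminus J$, so $f_0$ maps each connected $\Omega_i$ into a single component; as $f_0$ fixes $\infty$ this forces $f_0(\Omega_\infty)\subseteq\Omega_\infty$, and then $f_0(\Omega_0)\subseteq\Omega_0$ as well, for otherwise $f_0(\hat\CC\setminus J)\subseteq\Omega_\infty$, contradicting surjectivity of $f_0$ together with $f_0(J)=J$. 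Combined with $f_0^{-1}(\Omega_0)\sqcup f_0^{-1}(\Omega_\infty)=\hat\CC\setminus J=\Omega_0\sqcup\Omega_\infty$, this yields $f_0^{-1}(\Omega_i)=\Omega_i$, so $f_0$ restricts to a proper degree-$2$ self-map of each Jordan domain $\Omega_i$; in particular neither restriction is an automorphism. By the Denjoy--Wolff theorem, $f_0^n|_{\Omega_i}$ then converges locally uniformly to a constant $c_i\in\overline{\Omega_i}$, which must be a fixed point of $f_0$, hence $c_i\in\{0,\infty\}$. On $\Omega_\infty$ the interior fixed point $\infty$ is the Denjoy--Wolff point, so $c_{\Omega_\infty}=\infty$ and $\Omega_\infty\subseteq B_\infty$; on $\Omega_0$ the orbits stay bounded (since $\Omega_0$ is bounded and forward invariant), so $c_{\Omega_0}\neq\infty$, whence $c_{\Omega_0}=0$ and $\Omega_0\subseteq B^{f_0}$. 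This gives $\hat\CC\setminus J\subseteq B_\infty\cup B^{f_0}$ and completes the argument.

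The main obstacle is the identity $f_0^{-1}(J)=J$ and the ensuing invariance of the two complementary Jordan domains: these are exactly what convert the static description of $\cK$ (``the orbit never leaves $\overline{\bA}$'') into a dynamical one to which normality and the Denjoy--Wolff theorem apply. One could instead try to prove $\cK\subseteq J$ directly in the spirit of \propref{th:5}, reading the itinerary of a point $z\in\cK$ off the partition $\{\bA_0,\bA_1\}$ and identifying $z$ with $\hat z$ of that itinerary; but then orbits meeting $\RR$ or the circular part of $\partial\bA$ need separate handling --- using \propref{th:1} one checks that such orbits eventually reach $0$ or $-1$, both of which lie in $J$ --- and this boundary bookkeeping is messier than the route above.
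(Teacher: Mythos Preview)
Your proof is correct, but for the crucial inclusion $\cK\subseteq J$ you take a genuinely different route from the paper. The paper argues directly: given $z\in\cK$, it reads off an itinerary $i_n\in\{0,1\}$ according as $f_0^n(z)$ lies in the upper or lower half-plane, and identifies $z$ with $\hat z(\bm i)$; orbits that meet $\RR$ are handled separately by checking that $\overline{\bA}\cap\RR=[-2.5,-1]\cup[0,1.5]$ forces any such orbit to land on $0$, so $z$ is a finite preimage of $0$ and hence in $J$. This is exactly the ``messier boundary bookkeeping'' you anticipated in your final paragraph.

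Your approach instead exploits the Jordan-curve structure of $J$ globally: from $f_0^{-1}(J)=J$ you get full invariance of the two complementary domains, and then Denjoy--Wolff (together with the fact that the only fixed points of $f_0$ are $0$ and $\infty$) identifies them as the basins of $0$ and $\infty$; $\cK$ is then excluded from both. This is cleaner in that it avoids the real-axis case analysis entirely and, as a bonus, yields at once the description of the two complementary components as basins, which in the paper is stated as part of \thmref{cauliflower} but proved via the itinerary route. The cost is a slightly heavier toolkit (Denjoy--Wolff, Carath\'eodory extension to justify that the boundary Denjoy--Wolff point is fixed), though these tools are already in play elsewhere in the paper.
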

\begin{proof}
We show
\begin{enumerate}
\item $J \subset J(f_0)$
\item $J(f_0) \subset \bigcap_{n \geq 0} f_0^{-n} \overline{A}$
\item $\bigcap_{n \geq 0} f_0^{-n} \overline{A} \subset J$
\end{enumerate}

\noindent (1) If $z \in J$, then $\{ z \}
= \bigcap \overline{\bA_{i_0 \cdots i_{n-1}}}$ for some sequence $i_0
i_1 \ldots$. Since $A_{i_0 \cdots i_{n-1}}$ contains
$g_{i_0} \circ \cdots \circ g_{i_{n-1}}(-1) \in J(f_0)$, and since
$\diam(A_{i_0 \cdots i_{n-1}}) \to 0$, $z \in \overline{J(f_0)} =
J(f_0).$.

\medskip\noindent (2) This proof has essentially already been given: If
$z \notin \bigcap_{n \geq 0} f_0^{-n} \overline{A}$, then some
$f_0^n(z) \notin \overline{A}$, so -- by the elementary properties of
the basic annulus $\bA$
(\propref{th:1})  -- $f_0^n(z)$ converges either to $0$ or to
$\infty$, so $z \notin J(f_0)$.

\medskip\noindent (3) Assume $z \in \bigcap_{n \geq 0}
f_0^{-n} \overline{A}$, i.e., $f_0^n(z) \in \overline{A}$ for
all $n$. We consider two cases:
\begin{itemize}
\item $f_0^n(z) \notin \Reals$ for all $n$. Put $i_n = 0$ of $1$
according as $f_0^n(z)$ is in the upper or lower half-plane. Then
$z \in \bA_{i_0 \cdots i_{n-1}}$ for all $n$, so $z
= \hat{z}(\bm{i}) \in J$.
\item $f_0^n(z) \in \Reals$ for some $n$. The intersection of
$\overline{\bA}$ with $\Reals$ is the union of the two intervals $[-2.5,
-1]$ and $[0, 1.5]$, and the first of these maps into the
positive real axis, which is mapped into itself. Hence
$f_0^n(z)$ is eventually in $[0, 1.5]$. But the only orbit staying
forever in $[0, 1.5]$ is the fixed point $0$, so $z$ must be a
preimage of finite order of $0$. It is then straighforward to show
that there is a sequence of the form $i_0 i_1 \cdots i_{n-1} 0 0
0 \cdots$ with $z = \hat{z}(\bm{i})$
\end{itemize}
\end{proof}
This completes the proof of \thmref{cauliflower} We note further that
%% We recall a well-known topological fact:
%% \begin{thm}
%% \label{top1}
%% A continuous bijection from a compact to a Hausdorff
%% topological space is, in fact, a homeomorphism.
%% \end{thm}
%% As a corollary, we have:
%% \begin{cor}
%%  The set $J$ is a Jordan curve.
%% \end{cor}
%% Note that 
%% $$J\supset \cup_{n\geq 0}f^{-n}(0),$$
%% and hence $J\supset J(f_0)$. 
%% it is immediate from the construction that
\[
f_0(\hat{z}(i_0 i_1 i_2 \cdots)) = \hat{z}(i_1
i_2 \cdots) \quad\text{for all binary sequences $i_0 i_1 \cdots$.}
\]
from which it follows that:

\begin{prop}\label{th:9}
The map $\theta \mapsto z(\theta)$ conjugates $\theta \mapsto
2 \cdot \theta$ on $\Reals/\ZZ$ to $f_0$ on its Julia set $J(f_0)$.
\end{prop}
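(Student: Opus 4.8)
The plan is to read the conjugacy off directly from the shift-relation
\[
f_0\bigl(\hat z(i_0 i_1 i_2\cdots)\bigr) = \hat z(i_1 i_2 \cdots)
\]
noted just above, together with the identification $\hat z(\bm i) = z(\bin i)$ and the elementary fact that the left shift on dyadic sequences corresponds, under $\bm i\mapsto\bin i$, to angle doubling on $\RR/\ZZ$. The shift-relation itself is immediate: $\hat z(\bm i)\in\overline{\bA_{i_0\cdots i_{n-1}}}$ for every $n$, so by continuity of $f_0$ and the formula $f_0\bA_{i_0\cdots i_{n-1}}=\bA_{i_1\cdots i_{n-1}}$ we get $f_0(\hat z(\bm i))\in\overline{\bA_{i_1\cdots i_{n-1}}}$ for every $n\ge 1$, and since these sets shrink to the single point $\hat z(i_1 i_2\cdots)$ the equality follows.

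Concretely, I would fix $\theta\in\RR/\ZZ$ and choose an infinite dyadic sequence $\bm i = i_0 i_1\cdots$ with $\bin i=\theta$ (every angle has a binary expansion, so $\bm i\mapsto\bin i$ is onto $\RR/\ZZ$); then $z(\theta)=\hat z(\bm i)$ by the defining property of $z(\,.\,)$. Next I would invoke the purely arithmetic identity $\underline{i_1 i_2\cdots}_2 = 2\bin i \pmod{\ZZ}$, which holds because doubling $\sum_{j\ge 0} i_j 2^{-(j+1)}$ and reducing mod $1$ deletes the leading digit and shifts the remaining ones. Combining this with the shift-relation,
\[
f_0(z(\theta)) = f_0\bigl(\hat z(i_0 i_1\cdots)\bigr) = \hat z(i_1 i_2\cdots) = z\bigl(\underline{i_1 i_2\cdots}_2\bigr) = z(2\theta),
\]
which is exactly the asserted conjugacy. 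By \propref{th:6} and the preceding proposition, $z(\,.\,)$ is a homeomorphism from $\RR/\ZZ$ onto $J = J(f_0)$, so this identity exhibits the angle-doubling map $\theta\mapsto 2\theta$ as topologically conjugate to $f_0$ restricted to its Julia set.

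There is essentially no obstacle; the only point worth a remark is that the computation does not depend on which binary expansion of $\theta$ is used. If $\bin i = \bin{i'}$ then $\hat z(\bm i)=\hat z(\bm{i'})$ by \corref{th:7}, and likewise $\underline{i_1 i_2\cdots}_2 = 2\bin i = 2\bin{i'} = \underline{i'_1 i'_2\cdots}_2$, so both the leftmost and the rightmost terms of the displayed chain are unaffected by the choice, and the conjugacy relation is well posed.
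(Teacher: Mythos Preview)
Your proof is correct and follows exactly the approach the paper takes: the paper simply records the shift relation $f_0(\hat z(i_0 i_1 i_2\cdots))=\hat z(i_1 i_2\cdots)$ and states that the proposition follows from it. In fact you supply more detail than the paper does, both in justifying the shift relation via $f_0\bA_{i_0\cdots i_{n-1}}=\bA_{i_1\cdots i_{n-1}}$ and in checking that the choice of binary expansion is immaterial.
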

%% Hence, periodic orbits of $f_0$ are dense in $J$. Thus 
%% $$J=J(f_0),$$
%% and the proof of \thmref{cauliflower} is completed.

%%%%%%%%%%%%%%%%%%%%%%%%%%%%begin deleted text%%%%%%%%%%%%%%%%%%%%
\ignore{ 
%%%%%%%%%%%%%%%%%%%%%%%%%%%%%%%%%%%%%%%%%%%%%%%%%%%%%%%%%%%%%%%%%%
\begin{proof}[Proof of \thmref{quad-extension}]

Standard considerations of Montel's Theorem imply that the topological disk $B_0\equiv\overset{\circ}{K}(f_0)$ is the basin
of the parabolic fixed point $0$. Thus, an orbit of a point $z_0\in B_0$ eventually enters an attracting petal $P_A$. 
On the other hand, the orbit of a point $z_0\in\hat\CC\setminus \overline{B_0}$ converges to $\infty$, and hence
$$\text{Dom}(H_{f_0})= \ixp\circ \tlphi_R(K(f_0)\cap P_R)\cup\{0,\infty\}=\hat W.$$
Consider the unique real-symmetric analytic conformal map $\psi$ which sends $B_0$ to the slit domain
 $$W\equiv \hat\CC\setminus (\RR_{\geq 0}\cup\{\infty\}),$$
so that $$\psi(-1/2)=-1\text{ and }\underset{x\to 0-\text{ and }x\in\RR}{\lim}\psi(x)=0.$$
 By Carath{\'e}odory Theorem, $\phi$ has a continuous extension to the boundary:
$$J(f_0)\to \RR_{\geq 0}\cup\{\infty\}\subset\hat\CC.$$ By real symmetry, the map
$$h\equiv \psi\circ f_0\circ \psi^{-1}:W\to W$$
extends continuously, and hence, holomorphically, to $\hat\CC$. The quadratic rational
map $h$ has a parabolic fixed point at the origin:
this follows from the fact that on the real axis,
$0$ is attracting for $h$ in the negative direction, and repelling in the positive one. 
It is not difficult to verify that 
\begin{equation}
\label{eqn-h}
h(z)=\frac{z}{(z-1)^2}.
\end{equation}
The conformal conjugacy $\psi$ induces conformal isomorphisms between spaces of orbits:
$$\cC_A^{f_0}\to \cC_A^h,\text{ and }$$
$$\Dom(H_{f_0})=\hat W\to
 \Dom(H_{h})= \ixp\circ \tlphi^h_R(P_R^h\cap W)\cup\{0,\infty\}.$$

In view of real symmetry of $h$, we may assume that the Fatou coordinates $\tlphi^h_R$ and $\tlphi^h_A$ are real-symmetric.
Real symmetry implies that 
$$\ixp\circ \tlphi^h_R(\RR_{>0})=S^1.$$
The Julia set $$J(h)=\RR_{\geq 0}.$$
By Montel's Theorem, every open set $U\cap \RR_{\geq 0}\neq \emptyset$ contains preimages of all points in $\hat\CC$, except at most 
two. Hence, $H_h$ cannot be analytically extended across the ``equator'' $S^1=\ixp\circ \tlphi^h_R(\RR_{>0}),$
and thus
$\Dom(H_h)$ is a union of disks around $0$ and $\infty$. 

This implies that 
the domain $\hat W$ is a union of two topological disks $\hat W^+$ and $\hat W^-$, whose boundaries lie in the projection
$\ixp\circ\tlphi^{f_0}_R(J(f_0))$. Suppose that one of the boundaries is not a Jordan curve; to fix the ideas, assume that
it is $L\equiv \partial \hat W^+$.
 In view of \thmref{cauliflower}, $L$ is locally connected; in view of the above, it is also connected. In view of Carath{\'e}odory theory, there is a point $w\in L$, which is bi-accessible from $\hat W^+$. 
Let $z\in J(f_0)$ be a point with $\ixp\circ \tlphi^{f_0}_R(z)=w$. The map $\ixp\circ \tlphi^{f_0}_R$ is a local homeomorphism at $z$, hence the point $z$ is bi-accessible from the parabolic basin of $f_0$. This contradicts \thmref{cauliflower}, and the proof is thus completed.
\end{proof}

%%%%%%%%%%%%%%%%%%%%%%%%%%%%%%%%end deleted text%%%%%%%%%%%%%%%%%%%%%%%%%%%%%
}
%%%%%%%%%%%%%%%%%%%%%%%%%%%%%%%%%%%%%%%%%%%%%%%%%%%%%%%%%%%%%%%%%%%%%%%%%%%

\subsection{Covering properties of {\'E}calle-Voronin invariant of $f_0$}
For a parabolic germ $f(z)$ of the form
(\ref{parabolic-normal-form-2}) we will denote $\bm{h}_f$ its {\'E}calle-Voronin invariant. We remind the reader, 
that the pair of germs $\bm{h}_f=(h^+,h^-)$ is defined up to pre- and post-composition with a multiplication by
a non-zero constant.

We approach the discussion of covering properties of  $\bm{h}_{f_0}$ indirectly, by introducing a 
different, more convenient, dynamical model.

By  Riemann Mapping Theorem, there is a conformal isomorphism
$\psi$ from $B_0$ to the cut plane $\CC \setminus (-\infty, 0]$
  sending $-1/2$ (the critical point of $f_0$) to $-1$. These
  conditions do not fix $\psi$ uniquely, but it is not difficult to
  see that $\psi$ can be chosen so that $\psi'(-1/2) > 0$ and, under
  this condition, becomes unique. 
%(To see, for example, that
%  $\psi'(-1/2) > 0$ can be achieved: Consider $\psi$'s of the form
%  $\psi_2 \circ \psi_1$, with $\psi_1: B_0 \to \DD$, $-1/2 \mapsto 0$
%  and $\psi_2: \DD \to \CC \setminus (-\infty, ]$, $0 \mapsto
%    -1$. Hold $\psi_2$ fixed, and multiply $\psi_1$ by an appropriate
%    $e^{i \theta}$.)  
Because of uniqueness, and because $B_0$ and the
    cut plane are invariant under complex conjugation, $\psi$ commutes
    with complex conjugation. It must
    therefore map $(-1,0)$ to $(-\infty, 0)$, these being respectively
    the real points of $B_0$ and those of the cut plane. Since
    $\psi'(-1/2) > 0$, $\psi$ is strictly increasing on $(-1, 0)$, and
    $\lim_{x \to 0^-} \psi(x) = 0$.

Define an analytic mapping  $K$ of the cut plane to itself by
\begin{equation}\label{eq:defn-of-h}
K = \psi \circ f_0 \circ \psi^{-1}.
\end{equation}
So defined, $K$ is a real-symmetric analytic degree-2 branched cover: 
$$K:\CC
\setminus [0, \infty) \to \CC \setminus [0, \infty).$$
 By construction,
    $-1$ is a critical point of $h$, and the corresponding critical value is
    $\psi(-1/4)$.
There is a simple explicit formula for $h$:

\begin{prop}\label{th:determine-h} The mapping $K(z)$ is the Koebe function
\begin{equation}\label{eq:koebe-function}
K(z) = \frac{z}{(z-1)^2}
\end{equation}
\end{prop}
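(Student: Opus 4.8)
The plan is to promote $\psi$ to a continuous boundary map, deduce from this that $K=\psi\circ f_0\circ\psi^{-1}$ extends to a rational map of the sphere, and then pin that rational map down by a handful of algebraic constraints. Since $J(f_0)=\partial B_0$ is a Jordan curve (\thmref{cauliflower}), Carath\'eodory's theorem extends $\psi$ to a continuous surjection $\psi\colon\overline{B_0}\to\riem$, injective on $B_0$, carrying $J(f_0)$ onto the slit $[0,\infty]$ two-to-one over the interior of the slit; it is real-symmetric, and the normalization recorded just above the statement gives $\psi(0)=0$, $\psi(-1)=\infty$, with $\psi$ increasing from $(-1,0)$ onto $(-\infty,0)$. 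By real symmetry the fibre of $\psi$ over an interior slit point is a complex-conjugate pair $\{a,\bar a\}\subset J(f_0)$, and $\psi\circ f_0$ takes the same value at $a$ and $\bar a$ (both values lie in $\psi(J(f_0))\subset\overline{\RR}$ and are complex conjugate, hence equal). Thus $\psi\circ f_0$ is constant on the fibres of the quotient map $\psi$, so it factors as $K\circ\psi=\psi\circ f_0$ for a \emph{continuous} $K\colon\riem\to\riem$ agreeing with $\psi\circ f_0\circ\psi^{-1}$ on $\Omega:=\riem\setminus[0,\infty]$. This $K$ is holomorphic off the analytic arc $[0,\infty]$ and continuous across it, hence holomorphic everywhere (standard removability of a $C^1$ arc for continuous functions), i.e. $K$ is rational; and it has degree $2$, because $f_0\colon B_0\to B_0$ is a degree-$2$ branched cover, so $K|_\Omega$ has degree $2$, while $K([0,\infty])=[0,\infty]$ forces both preimages of an $\Omega$-point to lie in $\Omega$.

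Next I would read off the data from $K\circ\psi=\psi\circ f_0$. From $\psi(0)=0$, $\psi(-1)=\infty$, $f_0(0)=f_0(-1)=0$ we get $K(0)=0=K(\infty)$; since $\deg K=2$ this forces $K^{-1}(0)=\{0,\infty\}$ with both preimages simple, so $K(z)=cz/(z^2-pz+q)$ with $c,q\in\RR\setminus\{0\}$ and $p\in\RR$. Because $\psi$ is conformal, the unique critical point of $K$ in $\Omega$ is $\psi(-1/2)=-1$, the image of the unique critical point of $f_0$ in $B_0$; a one-line computation shows the critical points of $cz/(z^2-pz+q)$ are $\pm\sqrt q$, so $q=1$, the two critical points are $\mp1$, and the second, $+1$, lies on $(0,\infty)$. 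Since $K$ maps $\Omega$ into itself, $\infty$ has no preimage in $\Omega$, so the two poles of $K$ (the roots of $z^2-pz+1$) lie on $(0,\infty)$, which forces $p\ge2$. On the other hand $[0,\infty]$ is $K$-invariant, so $K(1)=c/(2-p)\in[0,\infty]$, whereas $K(-1)=-c/(2+p)=\psi(-1/4)$ is a real point of $\Omega$ (as $-1/4$, the critical value of $f_0$, lies in $B_0$), hence negative; these give $c>0$ and $p\le2$, so $p=2$. Finally $0$ is a parabolic fixed point of $K$: $\Omega$ is a full Fatou component of $K$ (on it $K^n\to0$ locally uniformly, transported from $f_0$ via $K^n\circ\psi=\psi\circ f_0^n$), and its complement $[0,\infty]$ has empty interior, hence contains no Fatou component, so $0\in J(K)$ and $|K'(0)|\ge1$; but $K$ maps $(-\infty,0)$ into itself with $K(x)>x$ there (transport of $f_0(y)>y$ on $(-1,0)$ by the increasing $\psi$), which forces $0\le K'(0)\le1$. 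Hence $K'(0)=1$, i.e. $c=1$, and $K(z)=z/(z-1)^2$.

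The step I expect to be most delicate is the passage to a rational map: the well-definedness and continuity of the extension across the two edges of the slit (handled above via real symmetry and the quotient-map property of $\psi$) and the removability of $[0,\infty]$ for a function that is only known to be continuous there. Everything after rationality is routine bookkeeping with a degree-$2$ rational map. As an alternative to the algebraic determination one could instead invoke \thmref{th:blaschke-model} to identify the conformal conjugacy class of $f_0|_{B_0}$ and then match normalizations, but the computation above is the more self-contained route once $K$ is known to be rational.
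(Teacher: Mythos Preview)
Your proof is correct and follows the same broad strategy as the paper: extend $K$ across the slit to a degree-$2$ rational map, then pin it down by a short list of constraints. The execution differs in two places. For the extension, the paper applies Schwarz reflection directly to $1/(K(-1)-K(z))$ (the reciprocal handles the possible pole), whereas you factor $\psi\circ f_0$ through the quotient map $\psi\colon\overline{B_0}\to\riem$ to get a globally continuous $K$ and then invoke removability of the analytic arc; these are equivalent, yours being slightly more conceptual. For the final determination of $c$, the paper rules out $c>1$ using a forward orbit converging to $0$ and rules out $c<1$ by producing a backward orbit converging to $0$, the latter via the conjugacy of $f_0|_{J(f_0)}$ with angle-doubling from \thmref{cauliflower}. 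Your argument here is cleaner: $0\in\partial\Omega\subset J(K)$ already forces $K'(0)=c\geq 1$, while $K(x)>x$ on $(-\infty,0)$ (transported from $f_0(y)>y$ on $(-1,0)$) forces $c\leq 1$, so no explicit backward orbit is needed. The intermediate algebra also differs slightly --- the paper reads off a single double pole from the monotonicity of $K$ on $[0,\infty]$, while you locate the poles on $(0,\infty)$ via $K(\Omega)\subset\Omega$ and then use the signs of $K(\pm1)$ to force $p=2$ --- but both routes are short.
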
 

\begin{proof}
For $x \in [0, \infty)$ and small positive $\epsilon$,
  $K(x+i\epsilon)$ and $K(x-i\epsilon)$ are complex conjugates. As
  $\epsilon \to 0^{+}$, $x+i\epsilon$ converges to the boundary of the
  cut plane, so $\psi^{-1}(x+i\epsilon)$ converges to the boundary of
  $B_0$, so $f_0(\psi^{-1}(x+i \epsilon))$ also converges to the
  boundary of $B_0$, so $K(x+i \epsilon)$ converges to the boundary
  $[0, \infty]$ of the cut plane. 

We apply Schwarz
  reflection to see that $K$ can be extended analytically through
  $[0, \infty)$.
The above argument does not rule out that $K(x+i\epsilon)$ goes to $\infty$. 
To get around this, we recall that
    $K$ is a degree-2 branched cover, and that,  on a small disk about
    the critical point $-1$, it takes on every value near the critical
    value $K(-1)$ twice. Hence, $K(x+i\epsilon)$ stays away from $K(-1)$,
    so Schwarz reflection can be applied without difficulty to the
    function $1/(K(-1) - K(z))$, which is bounded on a neighborhood of
    the positive axis.

Thus, $K$ extends to a function meromorphic on the finite plane. Since
$1/(K(-1) - K(z))$ is bounded near $\infty$, $K$ is also meromorphic at
$\infty$, and hence, it is a rational function. Since it takes each
finite non-real value exactly twice, it has degree 2. It never takes
on a value in $[0,\infty]$ anywhere off $[0, \infty]$ so it must map
$[0,\infty]$ to itself, with each point having two preimages, counted
with multiplicity. As $x$ runs up the positive axis, $K(x)$ starts at
$0$ and runs up, reaching $\infty$ at some finite $a$; it then
runs back down, approaching $0$ as $x \to \infty$. The pole at $a$
must have order 2, so $K(z)$ must have the form
$$K(z)=(bz^2+cz+d)/(z-a)^2.$$ From $K(0) = K(\infty) = 0$, we have $b = d = 0$. Thus,
$$K(z) = c z/(z-a)^2$$ for some constant $c$, which clearly must be
positive. By a simple calculation, a function of this form has only one
(finite) critical
point, at $-a$, so $a = 1$. It remains only to show $c=1$.

We saw
above, from elementary considerations, that $\psi(x) \to 0$ as $x \to
0^{-}$. Hence, for small negative $x$,
\[
K^n(\psi(x)) = \psi(f_0^n(x)) \to 0,
\]
so there are at least some $K$-orbits converging to $0$; this rules out $c >
1$, so we need only show that $c < 1$ is also impossible.

By \thmref{cauliflower}, 
and Carath\'eodory Theorem, $\psi$ has a continuous extension
to the boundary $J(f_0)$ of $B_0$ -- which we continue to denote by
$\psi$. The extension is not a homeomorphism; it identifies complex
conjugate pairs on $J(f_0)$. A useful invertibility property can be
formulated as follows: $J(f_0)\setminus{-1, 0}$ is a disjoint union of
two Jordan arcs, and the extended $\psi$ maps each of these arcs
homeomorphically to $(0, \infty)$. Note that the conjugation equation,
written in the form
\[
K \circ \psi = \psi \circ f,
\]
extends by continuity to $\overline{B_0}$. Because $f_0$ on $J(f_0)$
is conjugate to $s \mapsto 2s$ on $\TT$, there
exist infinite backward orbits for $f_0$ in $J(f_0)$ which converge to
$0$, i.e., sequences $z_0$, $z_{-1}$, $z_{-2}$, \dots with $z_{-n+1} =
f(z_{-n})$ and $z_{-n} \to 0$. Then $\psi(z_{-n}) \to 0$ and
$K(\psi(z_{-n}) = \psi(z_{-n+1})$, that is, $(\psi(z_{-n})$ is a backward
orbit for $K$ converging to $0$. This rules out $c < 1$, so the only
remaining possibility is $c = 1$, so the formula
(\ref{eq:koebe-function}) is proved.
 
\end{proof}

\noindent
We note several other contexts in which 
the Koebe function (\ref{eq:koebe-function}) comes up. It is, of course,  the
essentially unique function univalent on the unit disk which saturates
the Koebe distortion estimates. (See, for example, [Co], Theorem 7.9
and p. 31.)

Furthermore, and perhaps more instructively for our study, one can deduce from the above
discussion that $K(z)$ is the {\it conformal mating} (see \cite{Do1})
of the map $f_0$ with the Chebysheff quadratic polynomial $f_{-2}(z)=z^2-2.$

We next develop an important piece of technique:

\begin{prop}\label{th:conjugate-on-B_zero}
Let $f_1$, $f_2$ each have a normalized simple parabolic point at $0$,
and assume that their restrictions to their respective principal
basins are conformally conjugate, i.e., that there is a conformal isomorphism
$\varphi: B^{f_1}_0 \to B^{f_2}_0$ so that
\begin{equation}\label{eq:conjugate-on-B_zero}
f_1 = \varphi^{-1} \circ f_2 \circ \varphi\quad\text{on
  $B^{f_1}_0$.}
\end{equation}
Assume further that $\varphi$ and $\varphi^{-1}$ extend continuously
to the boundary point $0$ of $B^{f_1}_0$ respectively
$B^{f_2}_0$. Then there is a conformal isomorphism $\psi: W^+_{f_1}
\to W^+_{f_2}$, sending $0$ to $0$, and a non-zero constant $v$ so
that
\[
h^+_{f_1} = v \cdot h^+_{f_2} \circ \psi.
\]
\end{prop}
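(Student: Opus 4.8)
Here is the plan I would follow.

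\medskip\noindent\textbf{Step 1: transport the attracting Fatou coordinate.}
The plan is to transport, via $\varphi$, every ingredient out of which $\bm{h}$ and $h^{+}$ are built, and then to read $\psi$ off from the transported data. First I would fix an attracting Fatou coordinate $\phi^{(2)}_{A}$ for $f_{2}$, extended to $B^{f_{2}}_{0}$, and set $\phi^{(1)}_{A}:=\phi^{(2)}_{A}\circ\varphi$. From $\varphi\circ f_{1}=f_{2}\circ\varphi$ on $B^{f_{1}}_{0}$ one checks at once that $\phi^{(1)}_{A}\circ f_{1}=\phi^{(1)}_{A}+1$; and $\phi^{(1)}_{A}$ is univalent on any attracting petal $P$ of $f_{1}$, because two points of $\varphi(P)$ with equal $\phi^{(2)}_{A}$-value lie on a common $f_{2}$-orbit, hence their $\varphi^{-1}$-images lie on a common $f_{1}$-orbit inside the petal $P$ and so coincide. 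Thus $\phi^{(1)}_{A}$ is an attracting Fatou coordinate for $f_{1}$ (its extension to $B^{f_{1}}_{0}$ being $\phi^{(2)}_{A}\circ\varphi$ by \propref{th:phi-extended}), and passing to the quotient it realizes the induced isomorphism of attracting cylinders $\cC_{A}^{f_{1}}\to\cC_{A}^{f_{2}}$ as the \emph{identity} in the coordinates $\ixp\circ\tl\phi^{(i)}_{A}$ of \propref{conformal isomorphism of cylinders}.

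\medskip\noindent\textbf{Step 2: transport the repelling dynamics and Fatou coordinate near $0$.}
Next I would choose a small ample repelling petal $P_{R}$ for $f_{1}$, small enough that $\varphi$ — which by hypothesis is continuous at $0$ with $\varphi(0)=0$ — carries $P_{R}\cap B^{f_{1}}_{0}$ into the region where the branch $g_{2}=f_{2}^{-1}$ fixing $0$ is defined. Writing $g_{1}=f_{1}^{-1}$ for the corresponding branch, $g_{1}$ maps $P_{R}\cap B^{f_{1}}_{0}$ into itself (the forward $f_{1}$-orbit of $g_{1}(z)$ converges to $0$ and meets $B^{f_{1}}_{0}$), and from $f_{1}=\varphi^{-1}\circ f_{2}\circ\varphi$ one gets $\varphi\circ g_{1}=g_{2}\circ\varphi$ on $P_{R}\cap B^{f_{1}}_{0}$. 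Hence $\varphi$ carries $g_{1}$-orbits to $g_{2}$-orbits, all of which converge to $0$ (from the attracting direction of $g_{2}$, by \propref{attr dir}); in particular $\varphi(P_{R}\cap B^{f_{1}}_{0})$ lies in the basin of $0$ for $g_{2}$, so for a repelling Fatou coordinate $\phi^{(2)}_{R}$ of $f_{2}$ extended via its functional equation, $\phi^{(2)}_{R}\circ\varphi$ is defined on $P_{R}\cap B^{f_{1}}_{0}$, and it satisfies the repelling functional equation for $f_{1}$ there. The continuity of $\varphi$ at $0$ is exactly what will let me extend the induced map across the puncture coming from the parabolic point.

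\medskip\noindent\textbf{Step 3: assemble $\psi$ and conclude.}
Let $\phi^{(1)}_{R}$ be a genuine repelling Fatou coordinate for $f_{1}$ on $P_{R}$, extended via its functional equation. Since $\phi^{(1)}_{R}$ and $\phi^{(2)}_{R}\circ\varphi$ both solve the repelling functional equation for $f_{1}$, their difference is $g_{1}$-periodic, so the prescription $\psi\bigl(\ixp(\phi^{(1)}_{R}(z))\bigr):=\ixp\bigl(\phi^{(2)}_{R}(\varphi(z))\bigr)$, for $z\in P_{R}\cap B^{f_{1}}_{0}$ on the $\oplus$-side near $0$, is well defined, analytic, extends over all of $W^{+}_{f_{1}}\setminus\{0\}$ by the same formula, and extends over $0$ with $\psi(0)=0$ (as $z\to 0$ on the $\oplus$-side, $\varphi(z)\to 0$). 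Running the identical construction with $\varphi^{-1}$ in place of $\varphi$ gives a map $W^{+}_{f_{2}}\to W^{+}_{f_{1}}$ which is the inverse germ of $\psi$ at $0$, hence its global inverse by the identity theorem (each $W^{\pm}$, being a connected open set minus at most the point $0$, is connected); so $\psi:W^{+}_{f_{1}}\to W^{+}_{f_{2}}$ is a conformal isomorphism with $\psi(0)=0$. Finally, for such $z$,
\[
h^{+}_{f_{1}}\bigl(\ixp(\phi^{(1)}_{R}(z))\bigr)=\ixp(\phi^{(1)}_{A}(z))=\ixp(\phi^{(2)}_{A}(\varphi(z)))=h^{+}_{f_{2}}\bigl(\ixp(\phi^{(2)}_{R}(\varphi(z)))\bigr)=h^{+}_{f_{2}}\bigl(\psi(\ixp(\phi^{(1)}_{R}(z)))\bigr),
\]
using \eqref{eqn-par-ren} and \lemref{th:renorm-well-defined}; so $h^{+}_{f_{1}}=h^{+}_{f_{2}}\circ\psi$ near $0$, hence on all of $W^{+}_{f_{1}}$ by the identity theorem. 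For arbitrarily normalized Fatou coordinates the rescaling rule \eqref{eq:change-of-normalization} turns this into $h^{+}_{f_{1}}=v\cdot h^{+}_{f_{2}}\circ\psi$ for a nonzero constant $v$, as asserted. The main obstacle will be the careful passage between the germ description of $h^{+}$ and its maximal domain $W^{+}$: one has to know — via \thmref{th:W-Jordan-domain}, together with the fact proved above that $\bm{h}$ (hence $W^{+}$) is independent of the repelling petal — that $W^{+}_{f_{i}}\setminus\{0\}$ is precisely the $\ixp\circ\phi_{R}$-image of the part of $P_{R}\cap B^{f_{i}}_{0}$ near $0$ on the $\oplus$-side, so that the explicit formula for $\psi$ really covers all of $W^{+}_{f_{1}}$ and lands in $W^{+}_{f_{2}}$, and that $\varphi$ matches the two punctures.
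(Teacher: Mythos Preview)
Your overall architecture matches the paper's proof closely: transport $\phi_A$ by $\varphi$, build $\psi$ by pushing points through $\varphi$ on the repelling side, and verify $h^+_{f_1}=h^+_{f_2}\circ\psi$ by a direct chain of equalities. The place where your plan has a genuine gap is the parenthetical assertion in Step~2 that ``$g_1$ maps $P_R\cap B^{f_1}_0$ into itself (the forward $f_1$-orbit of $g_1(z)$ converges to $0$ and meets $B^{f_1}_0$)''. That argument only yields $g_1(z)\in B^{f_1}$, not $g_1(z)\in B^{f_1}_0$: a preimage of a point of the immediate basin can perfectly well lie in another component of the full basin. Without this invariance you cannot apply $\varphi$ to $g_1(z)$, so the intertwining $\varphi\circ g_1=g_2\circ\varphi$ and the formula $\psi(\ixp\phi^{(1)}_R(z))=\ixp(\phi^{(2)}_R(\varphi(z)))$ are not yet available on all of the set you need.

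The paper confronts exactly this issue by introducing $\widetilde{P_R}=\{z\in P_R:\ g_1^n(z)\in B^{f}_0\text{ for all }n\ge 0\}$ and its image $\widetilde{W}$, and proving (\lemref{thm:tilde-P_R}) that $\widetilde{W}$ is open, that its connected components are components of $\cD(\bm{h})$, and in particular that $W^+\setminus\{0\}$ is one of them. That lemma is what makes your formula for $\psi$ globally well defined on $W^+_{f_1}\setminus\{0\}$. Your appeal to \thmref{th:W-Jordan-domain} at the end does not supply this, since that theorem carries the extra hypotheses that $B^f_0$ is Jordan and that $\phi_A$ does not extend through $\partial B^f_0$, which are not assumed here. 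Finally, you still owe an argument that $\psi$ sends $W^+_{f_1}\setminus\{0\}$ into $W^+_{f_2}\setminus\{0\}$ rather than into $W^-_{f_2}$; the paper does this by tracking an explicit arc tangent to the positive imaginary axis at $0$ and using the asymptotics of Fatou coordinates.
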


\medskip\noindent We begin with a few simple
remarks. From the continuity of $\varphi$ at $0$, it follows
that $\varphi(0) = 0$:
\[
\varphi(0) = \lim_{n \to \infty} \varphi(f_1^n(z)) = f_2^n(\varphi(z)) =
0\quad\text{for any $z \in B^{f_1}_0$,}
\]
It is easy to verify, again making use of continuity of
$\varphi$ at $0$, that $\varphi$ maps attracting petals for $f_1$ to
attracting petals for $f_2$, and that $\varphi^{-1}$ maps attracting
petals for $f_2$ to attracting petals for $f_1$.  For the proposition
to make sense, we have to have chosen normalizations for attracting and
repelling Fatou coordinates for the two mappings, but the choices are
obviously immaterial. We fix -- however we like -- the normalization
of $\phi^{(2)}_A$. Then $\phi_A^{(2)}
\circ \varphi$ is an attracting Fatou coordinate for $f_1$, so, by the
uniqueness of Fatou coordinates (\propref{uniqueness-fatou}),
$\phi_A^{(2)} \circ \varphi - \phi_A^{(1)}$ is constant. For
notational simplicity, we may assume
\[
\phi_A^{(1)} = \phi_A^{(2)} \circ \varphi;
\]
this has the effect of making the constant $v$ in the proposition
equal to one.

The situation with repelling Fatou coordinates is more
complicated. It is not true that $\varphi$ maps repelling petals to
repelling petals, since $\varphi$ is
only available on $B^{f_1}_0$ and repelling petals are not
contained in $B_0$. What we have to work with instead is the weaker
observation that $\varphi$ maps $f_1$-orbits coming out from $0$ in
$B^{f_1}_0$ to the same kind of orbits for $f_2$.
To exploit this observation, we need
to develop some technique for a single function with a simple
parabolic fixed point.

Let $f$ have the form (\ref{asym-expansion-3}); let $P_R$ be a
repelling petal for $f$; and write $f^{-1}$ for the inverse of the
restriction of $f$ to $P_R$. We define 
\[
\widetilde{P_R} := \{ z \in P_R: f^{-n}(z) \in B^f_0\text{ for $n = 0, 1, \ldots$} \}
\]

%%\medskip\noindent{\bf Remark:}
%%In nice cases, $\widetilde{P_R} = P_R \cap B^f_0$, and, so far as
%%  we know, this might always be true. The question is: Is it possible that
%%  there exist points arbitrarily near to $0$ which are not in $B^f_0$
%%  but are mapped into it by a single application of $f$?

\medskip\noindent In the usual way, it is straightforward to show that
the image under $\ixp \circ \phi_R$ of $\widetilde{P_R}$ is
independent of the petal $P_R$. We denote this image by
$\widetilde{W}$; since $\widetilde{P_R} \subset P_R \cap B^f$,
$\widetilde{W}$ is contained in $\cD(\bm{h}_f)$. 

\begin{lem}\label{thm:tilde-P_R}
1. Let $t \mapsto \tau(t), 0 \leq t \leq 1$ be a continuous arc in $P_R
\cap B^f$ with $\tau(0) \in \widetilde{P_R}$. Then $\tau(1) \in
\widetilde{P_R}$.

\smallskip\noindent 2. $\widetilde{P_R}$ is open (so its image
$\widetilde{W}$ under $\ixp \circ \phi_R$ is also open.)

\smallskip\noindent 3. A connected component of $\widetilde{W}$ is also
a connected component of $\cD(\bm{h})$. In other words, a component
of $\cD(\bm{h})$ is either disjoint from $\widetilde{W}$ or contained
in it.

\smallskip\noindent 4. If there
is a continuous path $\tau$ in $P_R \cap B^f_0$ from $z$ to
$f^{-1}(z)$, then $z \in \widetilde{P_R}$.

\smallskip\noindent 5. The components $W^+ \setminus \{ 0 \}$ and $W^-
\setminus \{\infty\}$ of $\cD(\bm{h})$ are contained in $\widetilde{W}$. 
\end{lem}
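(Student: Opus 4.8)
The plan is to derive all five assertions from three observations: a path–transport principle for backward orbits (this gives parts 1 and 4), the resulting description of $\widetilde{P_R}$ as a union of connected components of $P_R\cap B^f$ (this gives part 2), and a separation argument comparing $\widetilde W$ with $\cD(\bm h)$ (part 3, the crux), with part 5 obtained from part 4 together with the construction of $W^{\pm}$ in \thmref{th:W-Jordan-domain}.

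For parts 1 and 4 I would argue as follows. Since $P_R$ is a repelling petal, the branch $f^{-1}$ is analytic on $P_R$ with $f^{-1}(P_R)\subset P_R$, so for each $n\ge 0$ the map $f^{-n}\circ\tau$ is a well-defined continuous arc in $P_R$. As $f^{n}\bigl(f^{-n}(\tau(t))\bigr)=\tau(t)\in B^f$ and $f^{-n}(\tau(t))\in\Dom(f)$, every $f^{-n}(\tau(t))$ lies in $B^f$; thus $f^{-n}\circ\tau$ is a path in $B^f$ whose initial point $f^{-n}(\tau(0))$ lies in the connected component $B^f_0$ (because $\tau(0)\in\widetilde{P_R}$). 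A path in $B^f$ that starts in $B^f_0$ cannot leave it, so $f^{-n}(\tau(1))\in B^f_0$ for all $n$, i.e. $\tau(1)\in\widetilde{P_R}$; this is part 1. For part 4, given a path $\tau$ in $P_R\cap B^f_0$ from $z$ to $f^{-1}(z)$, the concatenations $\tau\ast(f^{-1}\circ\tau)\ast\cdots\ast(f^{-(n-1)}\circ\tau)$ are paths in $P_R\cap B^f$ from $z\in B^f_0$ to $f^{-n}(z)$ (consecutive pieces meet at $f^{-k}(z)$), so the same argument gives $f^{-n}(z)\in B^f_0$ for every $n$, hence $z\in\widetilde{P_R}$. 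Part 2 is then immediate: part 1 shows that each connected component $Q$ of the open set $P_R\cap B^f$ satisfies either $Q\subset\widetilde{P_R}$ or $Q\cap\widetilde{P_R}=\emptyset$, and since $\widetilde{P_R}\subset P_R\cap B^f_0\subset P_R\cap B^f$ it follows that $\widetilde{P_R}$ is a union of components of $P_R\cap B^f$, hence both open and relatively closed in $P_R\cap B^f$. Openness of $\widetilde W$ follows because $\ixp\circ\phi_R$ is a local homeomorphism on $P_R$ ($\phi_R$ being univalent there and $\ixp$ locally invertible), so it maps open sets to open sets.

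For part 3, the point is to show that $\widetilde W$ is relatively closed in $\cD(\bm h)$, since then — being also open — it is a union of components. Suppose $w_k\in\widetilde W$ and $w_k\to w\in\cD(\bm h)$. If $w$ is one of the punctures $0$, $\infty$, the statement reduces to the convention that a component of $\cD(\bm h)$ whose closure meets a puncture is understood to contain it, and this case is controlled by the crude asymptotics of $\phi_R$ near $0$. Otherwise $w=\ixp\circ\phi_R(z)$ for some $z\in B^f\cap P_R$. Inverting $\ixp\circ\phi_R$ locally near $z$ produces $z_k\to z$ with $\ixp\circ\phi_R(z_k)=w_k$; on the other hand $w_k=\ixp\circ\phi_R(\zeta_k)$ with $\zeta_k\in\widetilde{P_R}$. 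By the computation in the proof of \lemref{th:renorm-well-defined}, two points of $P_R$ with the same image under $\ixp\circ\phi_R$, one of which lies in $B^f$, differ by an iterate $f^{\pm k}$ along an orbit staying in $P_R$; since $\widetilde{P_R}$ is manifestly invariant under $f^{-1}$ and — using that $B^f_0$ is forward invariant — also under $f$ as long as the image stays in $P_R$, we conclude $z_k\in\widetilde{P_R}$. As $\widetilde{P_R}$ is relatively closed in $P_R\cap B^f$ and $z\in P_R\cap B^f$, we get $z\in\widetilde{P_R}$, hence $w\in\widetilde W$. I expect this step — converting convergence in $\cD(\bm h)$ into convergence inside $\widetilde{P_R}$ via the orbit-identification of the fibers of $\ixp\circ\phi_R$, together with the puncture bookkeeping — to be the main obstacle.

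Finally, part 5. From the proof of \thmref{th:W-Jordan-domain}, $W^+\setminus\{0\}=\ixp\circ\phi_R(\Delta\cup\gamma\cup f^{-1}\gamma)$, where $\Delta\cup\gamma\cup f^{-1}\gamma$ and its image $f^{-1}(\Delta\cup\gamma\cup f^{-1}\gamma)$ both lie in $P_R\cap B^f_0$. Hence for any $z\in\Delta\cup\gamma\cup f^{-1}\gamma$ the connected set $\Delta\cup f^{-1}\gamma\cup f^{-1}\Delta\subset P_R\cap B^f_0$ contains a path from $z$ to $f^{-1}(z)$, and part 4 yields $z\in\widetilde{P_R}$; therefore $W^+\setminus\{0\}\subset\widetilde W$. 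The statement for $W^-\setminus\{\infty\}$ follows by the same argument applied to a local inverse of $f$. The one place requiring care here is the assertion that the pieces of the $W^+$–construction stay in $B^f_0$ under the branch $f^{-1}$, which is part of the content of \thmref{th:W-Jordan-domain}.
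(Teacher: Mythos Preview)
Your arguments for parts 1, 2, and 4 are correct and match the paper's in substance; the concatenation device in part 4 is a clean repackaging of the paper's induction. For part 3 your sequential argument is also correct, and in fact your explicit use of the orbit identification (that two points of $P_R$ with the same $\ixp\circ\phi_R$-image are related by a power of the local inverse $g$, and that $\widetilde{P_R}$ is invariant under both $g$ and $f$ within $P_R$) spells out a step the paper leaves tacit when it asserts that the small disk $D$ about the lifted boundary point ``contains points of $\widetilde{P_R}$.''

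There is, however, a genuine gap in your part 5. You invoke the explicit representation $W^+\setminus\{0\}=\ixp\circ\phi_R(\Delta\cup\gamma\cup f^{-1}\gamma)$ from the proof of \thmref{th:W-Jordan-domain}, but that theorem --- and the very construction of $\Delta$, $\gamma$, $\sigma$ on which it rests --- requires the hypothesis that $B^f_0$ is a Jordan domain. The present lemma does not assume this, and the hypothesis is not available when the lemma is applied in \propref{th:conjugate-on-B_zero}. The fix is not to describe $W^+$ explicitly but to combine parts 3 and 4: take $P_R$ ample, so that a small upward sector $\{0<|z|<\rho,\ |\Arg z - \pi/2|<\delta\}$ lies in $P_R\cap B^f_0$; part 4 then shows that a slightly smaller such sector lies in $\widetilde{P_R}$, and by the argument of \lemref{th:W-nbhd-of-0} its image under $\ixp\circ\phi_R$ is a punctured neighborhood of $0$. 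Thus $\widetilde{W}$ meets the component $W^+\setminus\{0\}$ of $\cD(\bm h)$, and part 3 forces $W^+\setminus\{0\}\subset\widetilde{W}$. This is exactly the paper's route, and it works without any Jordan-curve assumption on $\partial B^f_0$.
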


\begin{proof} 1. If $\tau(1) \notin \widetilde{P_R}$, there must be an
  $n$ so that $f^{-n}(\tau(1)) \notin B_0^f$. Fix such an $n$, and let
\[
t_0 := \inf \{ t : f^{-n}(\tau(t)) \notin B_0^f \}.
\]
Since $\tau(\,.\,)$ and $f^{-1}$ are continuous and $B^f_0$ is open,
\[
f^{-n}(\tau(t_0)) \in \partial B^f_0 \subset \partial B^f,
\]
so in particular $f^{-n}(\tau(t_0)) \notin B^f$. But $B^f$ is
backward-invariant under $f$ and the path $\tau$ is by assumption in 
$B^f$, so this is a contradiction.

\medskip\noindent 2. If $z_0 \in \widetilde{P_R}$ and $z$ is near
enough to $z_0$, then the straight-line segment $[z_0, z]$ is in $P_R
\cap B^f$ and so -- by 1. -- in $\widetilde{P_R}$.

\medskip\noindent 3. Since $\widetilde{W}$ is contained in
$\cD(\bm{h})$, each component of $\widetilde{W}$ is contained in a
component of $\cD(\bm{h})$. Let $U$ be a component of $\widetilde{W}$
and $V$ the component of $\cD(\bm{h})$ which contains it. If $U$ is
not all of $V$, the relative boundary must be non-empty, i.e., there
must exist a $w_0 \in (\partial U) \cap V$. This $w_0$ is in
$\cD(\bm{h})$, so it can be written as $w_0 = \ixp \circ \phi_R(z_0)$
with $z_0 \in P_R \cap B^f$. Let $D$ be an open disk about $z_0$,
small enough to be contained in $P_R \cap B^f$ and also so that $\ixp
\circ \phi_R$ maps it homeomorphically into $V$. Since $\ixp \circ
\phi_R(D)$ contains points of $\widetilde{W}$, $D$ contains points of
$\widetilde{P_R}$, and it then follows from 1. that $D \subset
\widetilde{P_R}$, so $z_0 \in \widetilde{P_R}$, so $w_0 \in
\widetilde{W}$, contradiction.

\medskip\noindent 4. We argue by induction on $n$ that $f^{-n} \tau
\subset B^f_0$ for all $n$. This is true by hypothesis for
$n=0$. Suppose it is true for $n$ but not for $n+1$. Then
$f^{-(n+1)}(\tau(0)) = f^{-n}(\tau(1)) \in B^f_0$, but $f^{-(n+1)}
(\tau(t_0)) \in \partial B^f_0$ for some $t_0$. Then
$f^{-(n+1)}(\tau(t_0)) \notin B^f$, which contradicts $\tau(t_0) \in
B^f$ by backward invariance of $B^f$. The contradiction proves $f^{-n}
\tau \subset B^f_0$ for all $n$, so, in particular, $f^{-n}(\tau(0)) =
f^{-n}(z) \in B^f_0$ for all $n$, i.e., $z \in \widetilde{P_R}$.

\medskip\noindent 5. Since the resulting domain $\widetilde{W}$
does not depend on the repelling petal used to construct it, we may
assume that $P_R$ is ample. Then a sector $\{ z : 0 < \vert z \vert <
\rho, \pi/2 - \delta < \Arg(z) < \pi/2+\delta \}$, with $\rho$ and
$\delta$ small enough, is contained in $P_R \cap B^f_0$. By 4. there
is a sector of this form but with smaller $\delta$ and $\rho$ which is
contained in $\widetilde{P_R}$, and, by the proof of
\lemref{th:W-nbhd-of-0}, the image under $\ixp \circ \phi_R$ of this
smaller sector is a punctured neighborhood of $0$. Thus,
$\widetilde{W}$ intersects $W^+ \setminus \{ 0 \}$, so, by 3,
$W^+\setminus \{ 0 \}$ is one of the components of $\widetilde{W}$. The
proof for $W^- \setminus \{ \infty \}$ is similar.

\end{proof}

We return to the proof of \propref{th:conjugate-on-B_zero}; we are now
now ready to construct the mapping $\psi$. We fix repelling petals
$P^{(1)}_R$ for $f_1$ and $P_R^{(2)}$ for $f_2$. The situation is the
familiar one: We need these petals to make a construction, but the
objects constructed turn out not to depend on them. We let
$\phi^{(1)}_R$ be a repelling Fatou coordinate for $f_1$ defined at
  least on $P^{(1)}_R$, and similarly $\phi^{(2)}_R$ for $f_2$. Let $w_0
    \in \widetilde{W_1}$, and write $w_0 = \ixp \circ \phi_R^{(1)}(z_0)$
    with $z_0 \in \widetilde{P^{(1)}_R}$. Then $z_{-n} := f^{-n}(z_0)$
    is an outcoming orbit in $B^f_0$. Because $\varphi$ conjugates
    $f_1$ to $f_2$ (on their respective principal basins) and is
    continuous at $0$, $y_{-n} = \varphi(z_{-n})$ is an outcoming
    orbit for $f_2$. For sufficiently large $n$, $y_{-n}$ is in
    $P^{(2)}_R$. Take any $n_0$ so that $y_{-n} \in P^{(2)}_R$ for $n
    \geq n_0$, and set
\[
\psi(w_0) = \ixp \circ \phi^{(2)}_R(y_{-n_0}) \in \widetilde{W_2}
\]
It is easy to check that the result does not depend on the choices
of repelling petals, preimage $z_0$, or number $n_0$ of steps back
before applying $\ixp \circ \phi^{(2)}_R$, so we have constructed a
mapping from $\widetilde{W_1}$ to $\widetilde{W_2}$. Each of the steps
\[
w_0 \mapsto z_0 \mapsto z_{-n_0} = f^{-n_0}(z_0) \mapsto y_{-n_0}
\mapsto \psi(w_0) = \ixp \circ \phi^{(2)}_R(y_{-n_0})
\]
can be done in a neighborhood of each of the points involved by
applying a local conformal isomorphism, so $\psi(\,.\,)$ is a local
conformal isomorphism. Carrying out the same construction with $f_1$
and $f_2$ interchanged and $\varphi^{-1}$ in the role of $\varphi$
produces a mapping from $\widetilde{W_2}$ to $\widetilde{W_1}$ which
inverts $\psi$, so $\psi$ is a global conformal
isomorphism. Furthermore,
\[\begin{split}
\bm{h}_2(\psi(w_0)) &= \ixp \circ \phi^{(2)}_A(y_{-n_0}) = \ixp \circ
\phi^{(2)}_A(\varphi(y_{-n_0}))\\
&= \ixp \circ \phi^{(1)}_A(z_{-n_0}) = \ixp \circ \phi^{(1)}_A(z_0) =
\bm{h}_1(w_0)
\end{split}\]

The mapping $\psi$ just constructed is a more global version of the
one in the proposition. To complete the proof of the proposition, we
need to show
\begin{enumerate}
\item $\psi$ maps $W^+_1 \setminus \{ 0 \}$ to $W^+_2 \setminus \{ 0 \}$.
\item $\psi$ extends analytically to map $0$ to $0$
\end{enumerate}

\medskip\noindent Proof of (1): Since $\psi: \widetilde{W_1} \to
\widetilde{W_2}$ is a homeomorphism, it maps components to
components. We know from 3 of \lemref{thm:tilde-P_R} that $W^+_1
\setminus \{ 0 \}$ is a component of $\widetilde{W_1}$; we have only
to show that the component it maps to is $W_2^+ \setminus \{ 0 \}$. To
do this, choose ample petals $P^{(1)}_A$ and $P^{(1)}_R$ for $f_1$,
and let $t \mapsto \lambda_1(t)$ be an arc in $P^{(1)}_A \cap
P^{(1)}_R$, ending at $0$, which is mapped by $\phi^{(1)}_A$ to an
upward vertical ray.  By the asymptotic estimate for Fatou
coordinates, $\lambda$ projects under $\ixp \circ \phi^{(1)}_R$ to an
arc $\widetilde{\lambda_1}$ in $\widetilde{W_1}$ ending at $0$. Let
$\lambda_2 := \varphi \circ \lambda_1$. It is then immediate that
$\lambda_2$ maps under $\phi^{(2)}_A$  to an
upward vertical ray.

\medskip\noindent{\em {\bf Claim:} $\lambda_2$ is tangent to the positive
imaginary axis at $0$.}

\medskip\noindent Accepting the claim for the moment: We can then
project $\lambda_2$ under $\ixp \circ \phi^{(2)}_R$ to an arc
$\widetilde{\lambda_2}$ in $\widetilde{W_2}$. By the asymptotic
estimates on Fatou coordinates, $\widetilde{\lambda_2}$ ends at
$0$. By the construction of $\psi$,
\[
\widetilde{\lambda_2} = \psi \circ \widetilde{\lambda_1}.
\]
Thus: $\psi(W^+_1 \setminus \{ 0 \})$ intersects $W^+_2$ and hence, by
connectedness, $\psi(W^+_1 \setminus 0) = W^+_2 \setminus \{0\}$.

\medskip\noindent
Proof of claim: It would be immediate if we knew that a terminal
subarc of $\lambda_2$ is contained in an attracting petal. Rather than
prove this directly, we argue as follows. Let $P^{(2)}_A$ be an
attracting petal for $f_2$ which maps under $\phi^{(2)}_A$ to a right
half-plane. Since $\lambda_2$ is contained in $B^f$, there is an $n$
so that $f_1^n(\lambda_s(0)) \in P^{(2)}_A$. Since $\Re(\phi^{(2)}_A)$
is constant on $\lambda_2$, $\lambda_2(t)$ stays in $P^{(2)}_A$, and,
as already noted, approaches $0$ as $t \to 1^{-}$. By the crude
estimates on Fatou coordinates, $f_2^n \circ \lambda_2$ is tangent to
the positive imaginary axis at $0$. Applying an analytic local
inverse of $f_2$ $n$ times proves the claim.

\medskip\noindent Proof of (2): This can be proved in a
straightforward way using the crude local estimates on Fatou
coordinates, but it is quicker to argue that
\[
\psi = (h^+_2)^{-1} \circ h^+_1\quad\text{on a punctured neighborhood
  of $0$},
\]
(where $(h^+_2)^{-1}$ means an analytic local inverse of $h^+_2$,
which exists since $h^+_2$ has a simple zero at $0$)
and to note that the right-hand side is analytic at $0$. To prove the
preceding equation, we invoke the general formula $h^+_1 = h^+_2
\circ \psi$ and note that, by (1), $\psi$ and $(h_2^{+})^{-1} \circ
h_1^{+}$ agree at points of the form $\widetilde{\lambda_1}(t)$ for
$t$ near enough to 1. The assertion then follows by analytic continuation. The proof of 
\propref{th:conjugate-on-B_zero} is thus completed. $\Box$

\begin{defn}\label{defn-big-H}
For $K$ as in (\ref{eq:koebe-function}), let us select a real symmetric Fatou coordinate $\phi^K_R$, 
and a Fatou coordinate $\phi^K_A$ so that
$$\bm{h}_K'(0)=1\text{ and }\cD(h^+_K)=\DD,\;\cD(h^-_K)=\CC\setminus\DD\text{, where }\bm{h}_K=(h^+_K,h^-_K).$$
We will denote thus normalized map $\bm{h}_K$  by $\bm{k}=(k^+,k^-)$ for the remainder of this paper.
\end{defn}
In view of the above discussion, we note:
\begin{prop}
\label{straighten}
There exist $v\neq 0$ and $\theta\in\RR$ such that 
$$\bm{h}_{f_0}=v\cdot \bm{k}\circ \psi(e^{2\pi i\theta}z),\text{ where }\psi:\DD\to\hat W^+$$
is the conformal Riemann mapping, which satisfies $\psi(0)=0,$ $\psi'(0)>0$.
\end{prop}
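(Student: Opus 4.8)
The plan is to deduce the statement from three facts already in hand: \propref{th:determine-h}, which identifies the dynamics of $f_0$ on its immediate basin with that of the Koebe function $K(z)=z/(z-1)^2$ on its immediate basin; \propref{th:conjugate-on-B_zero}, which promotes such a conjugacy of immediate basins to a conformal relation between the \'Ecalle--Voronin germs $\bm{h}$; and the uniqueness of the normalized Riemann map, which forces the conformal isomorphism produced along the way to coincide with $\psi$ up to a rotation of $\DD$.

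First I would record that the conjugacy $\psi_K$ of \propref{th:determine-h} --- which maps $B_0=B^{f_0}_0$ conformally onto the immediate basin of $K$ and intertwines $f_0$ with $K$ --- extends continuously at the parabolic point, together with its inverse: $B^{f_0}_0$ is a Jordan domain by \thmref{cauliflower}, so by Carath\'eodory's theorem $\psi_K$ admits a continuous boundary extension on all of $\partial B_0$, while $0$ is a simple accessible boundary point on the image side, so $\psi_K^{-1}$ is continuous there too (this is already used in the proof of \propref{th:determine-h}). If one prefers to invoke \propref{th:conjugate-on-B_zero} verbatim, one first conjugates $K$ by the linear map normalizing its quadratic coefficient; this changes the \'Ecalle--Voronin invariant only by the rescaling \eref{eq:change-of-normalization} and so can be absorbed at the end. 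With this in place, \propref{th:conjugate-on-B_zero} applies with $f_1=f_0$, $f_2=K$, $\varphi=\psi_K$.

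Next I would run the construction inside the proof of \propref{th:conjugate-on-B_zero}, which in fact transports the whole pair $\bm{h}_{f_0}=(h^+_{f_0},h^-_{f_0})$ to $\bm{h}_K$; restricting to the $+$ component this produces a conformal isomorphism $\Psi$ from $\hat W^+=\cD(h^+_{f_0})$ onto $\cD(h^+_K)$ with $\Psi(0)=0$ and $\bm{h}_{f_0}=\bm{h}_K\circ\Psi$, and likewise on the $-$ component, with one and the same normalization governing both. Using the freedom \eref{eq:change-of-normalization} in the Fatou coordinates of $K$ I would then arrange that these are exactly the coordinates fixed in Definition~\ref{defn-big-H}; the effect is to replace $\bm{h}_K$ by $\bm{k}=(k^+,k^-)$, to replace $\cD(h^+_K)$ by $\DD$ and $\cD(h^-_K)$ by $\CC\setminus\DD$, and to bring out a single nonzero scalar $v$ (coming from the attracting rescaling factor in \eref{eq:change-of-normalization}). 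This yields $\bm{h}_{f_0}=v\cdot\bm{k}\circ\Psi$ with $\Psi\colon\hat W^+\to\DD$ a conformal isomorphism fixing $0$, and the analogous identity on $\hat W^-$.

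Finally, let $\psi\colon\DD\to\hat W^+$ be the normalized Riemann map, $\psi(0)=0$, $\psi'(0)>0$, which is unique by the Riemann Mapping Theorem. Then $\Psi\circ\psi$ is a conformal automorphism of $\DD$ fixing the origin, hence a rotation $z\mapsto e^{2\pi i\theta}z$ for some $\theta\in\RR$; substituting $\psi(z)$ into the identity of the previous paragraph gives $\bm{h}_{f_0}(\psi(z))=v\cdot\bm{k}\bigl(\Psi(\psi(z))\bigr)=v\cdot\bm{k}\bigl(e^{2\pi i\theta}z\bigr)$ for $z\in\DD$, which is the assertion; the $h^-$ identity comes out the same way, with the same $v$ and $\theta$. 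The only real work, and where I expect the friction, is the bookkeeping of the third paragraph: checking that the normalization hypothesis of \propref{th:conjugate-on-B_zero} is harmless for $K$, that its proof genuinely yields the statement for the pair $\bm{h}$ and not merely for $h^+$, and that passing to the Fatou coordinates of Definition~\ref{defn-big-H} introduces nothing beyond the rotation $e^{2\pi i\theta}$ and the scalar $v$, consistently across the two components. The continuity of $\psi_K^{\pm1}$ at $0$, which is what makes \propref{th:conjugate-on-B_zero} applicable, is just the Jordan-domain property of $B^{f_0}_0$.
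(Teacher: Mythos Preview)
Your proposal is correct and follows exactly the route the paper has in mind: the paper offers no separate proof, merely recording the proposition ``in view of the above discussion,'' and that discussion is precisely \propref{th:determine-h} (the conjugacy of $f_0|_{B_0}$ with the Koebe function $K$), \propref{th:conjugate-on-B_zero} (promoting basin conjugacies to conformal relations between the \'Ecalle--Voronin maps), and Definition~\ref{defn-big-H} (normalizing $\bm{h}_K$ to $\bm{k}$ with domain $\DD$). Your handling of the minor bookkeeping---continuity of the conjugacy at $0$ via Carath\'eodory, the quadratic-coefficient normalization of $K$, and the fact that the construction in the proof of \propref{th:conjugate-on-B_zero} actually treats both components of $\widetilde{W}$---is exactly what is needed to make the paper's one-line attribution rigorous.
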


%%%%%%%%%%%%%%%%%%%%%%%%%%%%%%%%%%%%%%%%%%%%%%%%%%%%%%%%%%%%%%%%%%%%%%%%%%%%%%%%%%%%%%%%%%%%%

We show:
\begin{thm}
\label{covering-properties}
$\bm{h}_{f_0}$ has
\begin{itemize}
\item exactly one critical value $v:=\ixp\circ\tlphi_A(-1/4)$
\item exactly two asymptotic values $0$ and $\infty$.
\end{itemize}\end{thm}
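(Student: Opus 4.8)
The statement splits into the two assertions, which I treat separately. Throughout I work with $B_0=B^{f_0}_0$, a fixed attracting Fatou coordinate $\tl\phi_A$ extended to all of $B_0$, and a fixed repelling Fatou coordinate $\phi_R$ on a repelling petal $P_R$ chosen as in the proof of \thmref{th:W-Jordan-domain}.

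\emph{Critical values.} Since $-1/4$ is the only critical value of $f_0$ and $f_0\colon B_0\to B_0$ is a proper (degree $2$) map, hence onto, \propref{th:fatou-crit} gives that the critical values of $\tl\phi_A$ are exactly the numbers $\tl\phi_A(-1/4)-n$, $n\ge1$; all of these have the single image $v=\ixp\circ\tl\phi_A(-1/4)$ under $\ixp$, so \lemref{th:critical-values} shows that the critical values of $\bm h_{f_0}$ form a subset of $\{v\}$. For the reverse inclusion it is enough, by \lemref{th:critical-values} and the identity $\tl\phi_A(-1/2)=\tl\phi_A(-1/4)-1$, to exhibit a backward orbit of the critical point $-1/2$ inside $B_0$ converging to $0$. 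The preimages $(-1\pm i)/2$ of $-1/2$ lie in $B_0$, and $f_0\colon B_0\to B_0$ being a degree-$2$ branched cover every point of $B_0$ has at least one preimage there, so the tree of finite backward orbits of $-1/2$ in $B_0$ is infinite and finitely branching; by König's lemma it has an infinite branch $z_0=-1/2,z_{-1},z_{-2},\dots$. This branch is bounded, and under the identification $B_0\cong\DD$ of \thmref{th:blaschke-model} it becomes a backward orbit of the centre of $\DD$ for the quadratic Blaschke product $B$. Since $B$ is a strict hyperbolic contraction of $\DD$ whose only fixed point is the parabolic one, any such backward orbit leaves every compact subset of $\DD$, is Cauchy (by the expansion of $B$ near $\partial\DD$ off the parabolic point), and therefore converges to that fixed point; hence $z_{-n}\to0$, as required.

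\emph{Asymptotic values: only $0$ and $\infty$ can occur.} By \thmref{th:W-Jordan-domain}, $\partial\hat W^+\subseteq\ixp\circ\phi_R(J(f_0)\cap P_R)$ and symmetrically for $\hat W^-$, so it suffices to consider a path $\gamma(t)\to\partial\hat W^+$ inside $\hat W^+\setminus\{0\}$. Put $z(t)=(\ixp\circ\phi_R)^{-1}(\gamma(t))\in P_R\cap B^{f_0}$, so that $\bm h_{f_0}(\gamma(t))=\ixp\circ\phi_A(z(t))$; from the construction of $\partial\hat W^+$ as the $\ixp\circ\phi_R$-image of a fundamental arc of $f_0$ in $J(f_0)$ it follows that $z(t)$ is bounded and accumulates on $J(f_0)$. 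Suppose $\bm h_{f_0}(\gamma(t))\to a\in\hat\CC$, and let $z^*\in J(f_0)$ be an accumulation point of $z(t)$, say $z(t_j)\to z^*$. By the folklore theorem (\cite{Do}, and the remark after \eref{closure-crit}) that $\tl\phi_A$ extends to a branched covering $B_0\to\CC$, $\tl\phi_A$ has no finite asymptotic value; hence $\phi_A(z(t_j))$ cannot converge to a finite limit, so $e^{2\pi i\phi_A(z(t_j))}\to a$ is possible only when $|\phi_A(z(t_j))|\to\infty$ with $\Im\phi_A(z(t_j))\to+\infty$ (then $a=0$) or $\Im\phi_A(z(t_j))\to-\infty$ (then $a=\infty$). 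Thus $\asym(\bm h_{f_0})\subseteq\{0,\infty\}$.

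\emph{Asymptotic values: $0$ and $\infty$ both occur; the main obstacle.} Set $E_M=\tl\phi_A^{-1}\{\Im\zeta>M\}\subseteq B_0$. The functional equation $\phi_A\circ f_0=\phi_A+1$ makes $E_M$ totally invariant under $f_0$ inside $B_0$; it is open and non-empty (near $0$ along the direction $+i$ one has $\phi_A\sim-1/z\to+i\infty$). Since backward orbits of $f_0$ are dense in $J(f_0)$, a non-empty totally invariant open set has $\overline{E_M}\supseteq J(f_0)$ for every $M$. Pick any point $z^*$ on the boundary arc $\widetilde\sigma_+$ appearing in the proof of \thmref{th:W-Jordan-domain}, with $z^*\neq0$; then $z^*\in P_R$ and $z^*\in\overline{E_M}$ for all $M$, so one may choose a continuous path $z(t)$ in $B_0\cap P_R$, approaching $z^*$ from the side of $\widetilde\sigma_+$ towards $B_0$, with $\Im\phi_A(z(t))\to+\infty$. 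Then $\gamma(t)=\ixp\circ\phi_R(z(t))\to\ixp\circ\phi_R(z^*)\in\partial\hat W^+$, while $\bm h_{f_0}(\gamma(t))=\ixp\circ\phi_A(z(t))\to0$; hence $0\in\asym(\bm h_{f_0})$, and the symmetric construction with $\Im\phi_A(z(t))\to-\infty$ yields $\infty\in\asym(\bm h_{f_0})$. The delicate point is precisely this step: one needs that the totally invariant sets $E_M$ really cluster at $\partial B_0$ away from $0$ and that the required imaginary behaviour of $\phi_A$ can be realized along a \emph{single} continuous path confined to $P_R$ on the prescribed side of $\widetilde\sigma_+$ — this is where the branched-covering property of $\tl\phi_A$ (equivalently, the tameness of the geometry in this quadratic case) is genuinely used, the ``only $0,\infty$'' step using the same input more cheaply. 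Alternatively one may transport the whole discussion to the Koebe-function model $K(z)=z/(z-1)^2$ of \propref{th:determine-h} via \propref{straighten} (or \propref{th:conjugate-on-B_zero}), where $B^K_0=\CC\setminus[0,\infty)$, $J(K)=[0,\infty]$, and all the objects above are completely explicit.
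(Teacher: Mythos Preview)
Your proof has genuine gaps in each of the three nontrivial parts. For the existence of the critical value, the Blaschke argument contains a false claim: it is not true that every backward $B$-orbit of $0$ in $\DD$ converges to the parabolic point $1$. One computes $B'(z)=16z/(3+z^2)^2$, so $|B'(\pm1)|=1$ and $B$ is not uniformly expanding on $\partial\DD$; more to the point, a backward orbit of $0$ can be steered to converge to any repelling periodic point of $B$ on $\partial\DD$ (first take enough preimages of $0$ to land near such a point, then iterate the local inverse branch fixing it). A correct and much shorter argument uses the tools already in the paper: the non-real preimage $(-1+i)/2$ of $-1/2$ lies in $\CC\setminus(-\infty,0]$, and by \corref{th:3} the iterates $\finvloc^n((-1+i)/2)$ converge to $0$, giving the required backward orbit of the critical point.

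For the asymptotic values the ``only $0,\infty$'' direction has a logical gap. Even if one grants a continuous lift $z(t)$ (which already requires the fundamental-crescent bijection you do not set up), the step ``$\phi_A(z(t_j))$ does not converge to a finite limit, hence $\Im\phi_A(z(t_j))\to\pm\infty$'' is a non sequitur: nothing prevents $\Re\phi_A(z(t_j))\to\infty$ with $\Im\phi_A$ bounded, and then $\ixp\circ\phi_A(z(t_j))$ can converge to any prescribed finite nonzero value. The branched-cover property of $\phi_A$ excludes finite asymptotic values of $\phi_A$, not of $\ixp\circ\phi_A$. The paper closes exactly this gap by passing to the Koebe model, lifting the path into the explicit repelling crescent $C_R^+$ (where $\ixp\circ\phi_R$ is a bijection onto $W^+$), and then pushing it forward by a suitable $K^n$ into a single attracting crescent $\{\alpha+m<\Re\phi_A<\alpha+m+1\}$; with $\Re\phi_A$ thus confined, convergence of $\tilde\tau(s)$ to the real axis forces $|\Im\phi_A|\to\infty$. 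Your ``both occur'' direction is, as you acknowledge, left unfinished; here again the paper's Koebe-model construction (explicit arcs $\gamma_1,\gamma_2$ ending at preimages of $0$) gives the required paths directly rather than through a density argument.
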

%% $0$ and $\infty$ are asymptotic values of $\bm{h}_{f_0}$ has exactly one
%% critical value $v:=\ixp\circ\tlphi_A(-1/4)$. 
%% The singular values of $\bm{h}_{f_0}$ are 
%% $$\sing(\bm{h}_{f_0})=\{0,\infty,v\},$$
%% where $. 
%% The points $0$ and $\infty$ are asymptotic values of $\bm{h}_{f_0}$: there exists a path
%% $$\gamma[0,1)\mapsto W^+ \text{ with }\gamma(t)\underset{t\to 1}{\longrightarrow}\partial W^+\text{ and }
%% f_0(\gamma(t))\underset{t\to 1}{\longrightarrow}\partial 0,$$
%% and similarly for $\infty$.
%% \end{thm}
\begin{pf}
To show that the above-defined $v$ is the only possible critical value of
$\bm{h}_{f_0}$,
we write
\[
\bm{h}_{f_0} = \bigl( \ixp \circ \phi_A \bigr)\circ \bigl( \ixp \circ
\phi_R\bigr)^{-1},
\]
for an appropriate branch of the inverse. Whatever inverse branch is
used, its derivative does not vanish. Hence, a critical value of
$\bm{h}_{f_0}$ is a critical value $\bigl( \ixp \circ \phi_A \bigr)$, i.e.,
the image under $\ixp(\,.\,)$ of a critical value of $\phi_A$.  Since
$-1/4$ is the unique critical value of $f_0$, it follows from
\propref{th:fatou-crit} that the critical values of $\phi_A$ have the
form $\phi_A(-1/4) - n$, $n \geq 1$. Since $\ixp(\,.\,)$ is
1-periodic, $v$ is the unique critical value of $\ixp \circ \phi_A$.
%%We postpone showing that this $v$ is indeed a critical value.

We turn to determining the asymptotic values of $\bm{h}_{f_0}$. We show
that the asymptotic values of the restriction $\bm{h}_{f_0}|_{\hat W^+}$
are $\{0,\infty\}$; the argument for $\bm{h}_{f_0}|_{\hat W^-}$ is
identical.  To simplify geometric considerations, we invoke
\propref{straighten}, and present the proof for $\bm{k}$.

We need a number of elementary facts about $K$ which we summarize below for ease of reference:

\begin{prop}\label{th:h-elem}
The Koebe function $K(z) = z/(1-z)^2$ is a degree-2 ramified cover $\hat{\CC} \to
  \hat{\CC}$; its critical values are $-1/4$ and $\infty$; the
  corresponding critical points are $-1$ and $+1$. The interval $[0,
    \infty]$ is fully invariant under $h$; the interval $(-\infty, 0]$
    is forward invariant. 
\end{prop}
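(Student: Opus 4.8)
The plan is to obtain every assertion by an elementary computation, the only mild subtlety being that the pole $z=1$ and the point $\infty$ have to be examined separately from the formula for $K'$.

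First I would record that $K(z)=z/(1-z)^2$ is the quotient of a degree-one polynomial by a degree-two polynomial with no common factor, hence a rational map of degree $2$, and therefore a branched covering $\hat\CC\to\hat\CC$ of degree $2$. To locate the critical points, differentiate on $\CC\setminus\{1\}$:
\[
K'(z)=\frac{(1-z)^2+2z(1-z)}{(1-z)^4}=\frac{1+z}{(1-z)^3},
\]
so the only critical point in $\CC\setminus\{1\}$ is $z=-1$, with critical value $K(-1)=-1/4$. Near the pole $z=1$ one has $K(z)\sim(1-z)^{-2}$, so $K$ has local degree $2$ there; thus $z=1$ is a critical point with critical value $\infty$. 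As $z\to\infty$, $K(z)\sim 1/z\to 0$, so $K$ has local degree $1$ at $\infty$ and $\infty$ is not critical. Since a degree-$2$ branched cover of $\hat\CC$ has exactly two critical points counted with multiplicity (Riemann--Hurwitz), the two simple critical points $-1$ and $+1$ account for all of them, so the critical values are precisely $-1/4$ and $\infty$.

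Next I would check the two invariance statements by sign considerations. For $x\in(0,\infty)$, $x\ne 1$, both $x$ and $(1-x)^2$ are positive, so $K(x)>0$; together with $K(0)=0$, $K(1)=\infty$ and $K(\infty)=0$ this gives $K([0,\infty])\subset[0,\infty]$. For full invariance I would solve $K(z)=t$: for $t\in(0,\infty)$ this reads $tz^2-(2t+1)z+t=0$, whose discriminant is $4t+1>0$, while the product of the roots is $1$ and their sum is $(2t+1)/t$, so both roots lie in $(0,\infty)$; combined with $K^{-1}(0)=\{0,\infty\}$ and $K^{-1}(\infty)=\{1\}$ this yields $K^{-1}([0,\infty])=[0,\infty]$. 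For the forward invariance of $(-\infty,0]$, note that $x<0$ forces $K(x)=x/(1-x)^2<0$, while $K(0)=0$ and $K(\infty)=0$, so $K((-\infty,0])\subset(-\infty,0]$.

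I do not expect a genuine obstacle here; the one point deserving care is handling the points $z=1$ and $z=\infty$ of the sphere, where the local degree of $K$ must be read off from the leading-order behaviour rather than from the derivative formula.
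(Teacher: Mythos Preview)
Your proof is correct. The paper actually states this proposition as a summary of elementary facts without giving a proof in the published text; the suppressed proof in the source takes the same elementary route, differing only in that it argues backward invariance of $[0,\infty]$ via monotonicity of $K$ on $(0,1)$ and $(1,\infty)$ (each point of $(0,\infty)$ picks up two real positive preimages, exhausting the degree) rather than via the quadratic formula as you do. Both arguments are equally direct, and your explicit handling of the local degree at $z=1$ and $z=\infty$ is a nice touch that the paper merely leaves implicit.
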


%%%%%%%%%%%%%%%%%%%%%%%%%%%%%%%%%%%%%%%%%begin removed text%%%%%%%%%%%%%%%%%%%%%%%
\ignore{
%%%%%%%%%%%%%%%%%%%%%%%%%%%%%%%%%%%%%%%%%%%%%%%%%%%%%%%%%%%%%%%%%%%%%%%%%%%%%%%%%%
\begin{proof}
That $h$ is a degree-2 ramified cover is standard, and the
determination of its critical points and values is elementary. It
follows immediately from the definition that $[0,\infty]$ and
$(-\infty, 0]$ are mapped into themselves by $f$. From elementary
  considerations, $h$ maps $(0,1)$ to $(0, \infty)$ (increasing) and
  $(1, \infty)$ to $(0, \infty)$ (decreasing). Thus, every $z \in (0,
  \infty)$ has two primages in $(0, 1) \cup (1, \infty)$. $0$ also has
  two preimages -- itself and $\infty$, both in $[0,
    \infty]$. $\infty$ has only the single preimage $1$, but it has
  multiplicity 2. Combining all these remarks: Every point of
  $[0,\infty]$ has two preimages -- counted with multiplicity -- in
  $[0,\infty]$, so -- since $h$ has degree 2 -- it can't have any
  preimage outside of $[0,\infty]$, i.e., $h^{-1}[0, \infty] \subset [0,
    \infty]$, i.e., $[0, \infty]$ is backward invariant.
\end{proof}

\medskip\noindent{\bf Remark:} We showed above -- as a by-product of
a complicated analysis resting on the fact the the Julia set of $f_0$ is a
Jordan curve -- that every $h$-orbit with initial point off
the positive real axis is attracted to the parabolic point $0$. It may
be worth remarking that this result can be given a simple and direct
proof as follows: Since $[0,\infty]$ is backward invariant, $\CC
\setminus [0,\infty)$ is mapped into itself by $h$, and hence by all
  iterates of $h$. By Montel's theorem, the sequence $h^n$ of iterates
  is a normal family on $\CC \setminus [0,\infty)$. Furthermore, by
    elementary considerations, $h^n(x) \to 0$ for all $x \in (-1, 0)$,
    so it follows from Vitali's theorem that $h^n \to 0$ uniformly on
    compact subsets of $\CC \setminus [0,\infty)$

%%%%%%%%%%%%%%%%%%%%%%%%%end removed text%%%%%%%%%%%%%%%%%%%%%%%%%%%%%%%%
}
%%%%%%%%%%%%%%%%%%%%%%%%%%%%%%%%%%%%%%%%%%%%%%%%%%%%%%%%%%%%%%%%%%%%%

\newcommand{\hinv}{K^{-1}}
\medskip\noindent The interval $[-\infty, -1/4]$ contains both of the
critical values of $h$, so $h$ is a cover in the strict sense above
$\CC \setminus (-\infty, -1/4]$. In other words: $h$ admits two
    analytic right-inverses defined on the cut plane $\CC \setminus
    (-\infty, -1/4]$. One of these inverse branches sends $0$ to $0$;
  the other sends $0$ to its other preimage $\infty$. At the cost of
  introducing a little ambiguity, we will write simply $\hinv$ for the
  inverse branch which is regular at $0$. A straightforward elementary
  calculation shows that the preimage of $(-\infty, 0]$ under $h$ is
the unit circle, so the image of $\hinv$ is the unit disk $\DD$.

\begin{prop}\label{th:hinverse}
The inverse branch
$\hinv$ maps the open upper (respectively lower) half-plane into
  itself. It also maps $\CC \setminus (-\infty, 0]$
%% -- the plane cut
%%along the negative real axis all the way to $0$ -- 
into itself. 
The sequence of iterates $(\hinv)^n$ converges uniformly to $0$ on compact
subsets of $\CC \setminus (-\infty, 0]$. Any repelling Fatou
  coordinate extends to a univalent analytic function on $\CC
  \setminus (-\infty, 0]$.
\end{prop}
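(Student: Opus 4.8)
The plan is to follow closely the treatment of the branch $\finvloc$ of $f_0$ in \propref{th:2} and \corref{th:3}, using two structural features of $\hinv$: it is real-symmetric (it is a branch of the inverse of the real map $K$, regular at the real point $0$, so its Taylor coefficients there are real), and, being a single-valued inverse branch of the degree-two map $K$, it is \emph{injective} on its entire domain $\CC\setminus(-\infty,-1/4]$. I will also use, from \propref{th:h-elem} and the discussion preceding the proposition, that the critical values of $K$ are $-1/4$ and $\infty$, that $[0,\infty]$ is fully $K$-invariant, and that the image of $\hinv$ is $\DD$.

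For the two invariance statements: from $K\circ\hinv=\mathrm{id}$, a point $z$ with $\hinv(z)\in\RR$ must satisfy $z=K(\hinv(z))\in\RR\cup\{\infty\}$; since $\hinv$ never takes the value $\infty$ (its image lies in $\DD$), $\hinv$ maps the connected open upper half-plane into $\CC\setminus\RR$, and since $\hinv(z)=z-2z^2+\cdots$ near $0$ it maps a neighborhood of $0$ in the upper half-plane into the upper half-plane; by connectedness the whole open upper half-plane goes into itself, and similarly for the lower one. For the slit plane: $\hinv$ has image $\DD$, so it suffices to rule out values in $(-1,0]$; but $K$ is increasing on $(-1,0)$ with $K(-1)=-1/4$ and $K(0)=0$, so $K((-1,0])=(-1/4,0]\subset(-\infty,0]$, whence $\hinv(w)\in(-1,0]$ would force $w=K(\hinv(w))\in(-\infty,0]$. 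Therefore $\hinv$ maps $\CC\setminus(-\infty,0]$ into itself, and one may iterate it there.

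For the convergence of the iterates I would argue exactly as in \corref{th:3}, via the Denjoy--Wolff theorem: $\CC\setminus(-\infty,0]$ is conformally a disk, $\hinv$ maps it into itself and is not an automorphism (its image is bounded, being contained in $\DD$), and $\hinv$ has no fixed point inside $\CC\setminus(-\infty,0]$ (the finite fixed points of $K$ are $0$ and $2$, while $\hinv(2)=1/2\ne 2$). Hence $(\hinv)^n$ converges locally uniformly to a boundary fixed point; to identify that point as $0$, restrict to $(0,\infty)$, where $\hinv$ maps $(0,\infty)$ increasingly onto $(0,1)$ with $\hinv(x)<x$, so $(\hinv)^n(x)\searrow 0$. (Alternatively one may skip Denjoy--Wolff: the iterates are uniformly bounded in $\DD$ and converge to $0$ on $(0,\infty)$, hence converge to $0$ locally uniformly by Vitali's theorem.)

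Finally, for the last assertion: a repelling Fatou coordinate $\phi_R$ for $K$ is, by definition, an attracting Fatou coordinate for the local inverse $\hinv$, whose unique attracting direction is $+1$. On a repelling petal $P_R$ it is univalent and satisfies $\phi_R\circ\hinv=\phi_R-1$, so the prescription $\phi_R(z):=\phi_R(\hinv^n(z))+n$, for $n$ large enough that $\hinv^n(z)\in P_R$, extends it; by the invariance and convergence just established (together with \propref{attr dir}, which guarantees the orbit $\hinv^n(z)$ reaches $0$ from the direction $+1$ and so eventually enters $P_R$), this is well defined and analytic on all of $\CC\setminus(-\infty,0]$, being the extension furnished by \propref{th:phi-extended}. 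The one point requiring care -- and the crux of the argument -- is \emph{univalence} of this extension: if $\phi_R(z_1)=\phi_R(z_2)$ with $z_1,z_2\in\CC\setminus(-\infty,0]$, then the functional equation gives $\phi_R(\hinv^n(z_1))=\phi_R(\hinv^n(z_2))$, and for $n$ large both iterates lie in $P_R$, where $\phi_R$ is univalent, so $\hinv^n(z_1)=\hinv^n(z_2)$; since $\hinv$ is injective on $\CC\setminus(-\infty,-1/4]\supset\CC\setminus(-\infty,0]$ and this domain is $\hinv$-invariant, $\hinv^n$ is injective there, forcing $z_1=z_2$. No single step is genuinely hard; the only thing to watch is the bookkeeping of which domain each inverse branch acts on in this last part.
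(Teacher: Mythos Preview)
Your argument is correct and follows essentially the same approach as the paper: elementary invariance of the half-planes and of the slit plane, then convergence of iterates to $0$ (the paper uses Montel and Vitali directly, mentioning Denjoy--Wolff only as an alternative, while you lead with Denjoy--Wolff and give Vitali as the alternative), and finally the extension of $\phi_R$ via the functional equation with univalence deduced from injectivity of $\hinv$. Your treatment is in fact more explicit than the paper's, which simply invokes ``elementary considerations'' for the invariance statements without spelling out the argument you give.
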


\begin{proof} 
The proof may be given as an application of Denjoy-Wolff Theorem, however, 
a more direct argument is possible in our case.

%%Denote the open upper
%%  half-plane by $\HH_{+}$. Elementary considerations show that the
%%  image $\hinv \HH_{+}$ does intersect $\HH_{+}$. Since the image is
%%  connected, if it is not contained in $\HH_{+}$, it must intersect
%%  the boundary $\Reals$. But this is impossible because $\Reals \cup
%%  \{ \infty \}$ is (forward) invariant under $h$. The proof for the
%%  lower half-plane is the same.

Elementary considerations show that the upper  half-plane
is mapped into itself by $\hinv$, and similarly for the lower half-plane.
Furthermore,
$$\hinv:\CC \setminus (-\infty, 0]\longrightarrow\CC \setminus (-\infty, 0].$$
%%
%%To see that $\CC \setminus (-\infty, 0]$ is mapped to itself by
%%  $\hinv$, we start from the fact that $h$ maps $\DD$ bijectively to
%%  $\CC \setminus (-\infty, -1]$. It also maps $\DD \cap (-\infty, 0] =
%%      (-1, 0]$ to $(-1/4, 0]$, and so maps $\DD \setminus (-\infty,
%%          0]$ bijectively to $\CC \setminus (-\infty, 0]$. Hence,
%%              $\hinv$ maps $\CC \setminus (-\infty, 0]$ (bijectively)
%%                to $\DD \setminus (-\infty, 0] \subset \CC \setminus
%%                  (-\infty, 0)$
%%
%%Since $\hinv$ maps $\CC \setminus (-\infty, 0]$ into itself, the same
%%  is true for all iterates $(\hinv)^n$.
 By Montel's theorem, the sequence of iterates $(\hinv)^n$
  is a normal family on $\CC \setminus (-\infty,
  0]$. Elementary considerations show that $(\hinv)^n(x) \to 0$ for
    $x$ real, positive, and small. By Vitali's theorem, $(\hinv)^n$
    converges uniformly to $0$ on compact subsets of $\CC \setminus
    (-\infty, 0]$

The preceding assertion says that
\[
B^{\hinv}_0 = \CC \setminus (-\infty, 0]
\]
Note that any $\hinv$-orbit starting in $(-1/4, 0)$ gets in finitely many
  steps into $(-1, -1/4]$, which is outside $\Dom(\hinv)$. Thus,
    $(-1/4,0)$, although in $\Dom(\hinv)$, is outside of
    $B^{\hinv}$.
Using repeatedly the functional equation
\[
\phi_R(\hinv(z)) = \phi_R(z) - 1,
\]
we see that any repelling Fatou coordinate 
extends (uniquely) to a function defined on
all of $\CC \setminus (-\infty, 0]$ and satisfying the above
  functional equation everywhere. The extended function is analytic;
  from the univalence of $\hinv$ and the fact that repelling Fatou
  coordinates are univalent on small petals, the extended function is
  also univalent. 
%%(Note, however, that the ``natural'' functional
%%  equation
%%\[
%%\phi_R(h(z)) = \phi_R(z) +1,
%%\]
%%doesn't hold -- or even make sense -- on the whole cut plane, only
%%on its intersection with $\DD$.)
\end{proof}

Next we introduce -- in the context of the particular mapping $K$ -- a
useful dynamically-defined family of arcs which come up, with minor
variations, at a number of places in this work. Let $\alpha$ be a real
number so that
\begin{equation}\label{eq:alpha}
\Re(\phi_A(-1)) < \alpha < \Re(\phi_A(-1/4)) \, (= \Re(\phi_A(-1))+1.)
\end{equation}
%%(Recall that $-1$ is a critical point of $h$ and that $-1/4$ the
%%corresponding critical value.)

\begin{prop}\label{th:alpha}
There is an attracting petal $P$ which maps under $\phi_A$ to the
right half-plane $\{ u + i v : u > \alpha \}$.
\end{prop}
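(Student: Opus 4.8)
The plan is to realize $P$ as an iterated pullback, under the branch of $K^{-1}$ regular at the origin, of a sufficiently deep terminal sub-petal of an ample attracting petal. The one arithmetic input is the identity $K(-1)=-1/4$, which gives $\phi_A(-1/4)=\phi_A(K(-1))=\phi_A(-1)+1$; combined with \eqref{eq:alpha} this says that the only critical value of $K$ inside the immediate basin $B^K_0=\CC\setminus[0,\infty)$ is $-1/4$ and that $\Re\phi_A(-1/4)<\alpha+1$. I also record that, since $[0,\infty]$ is fully invariant under $K$ (\propref{th:h-elem}), $B^K_0$ is fully invariant, so $K:B^K_0\to B^K_0$ is a proper degree-$2$ branched cover ramified only over $-1/4$; hence (\propref{proper4}) over any simply connected subset of $B^K_0\setminus\{-1/4\}$ the two branches of $K^{-1}$ are single-valued and univalent, one of them being the branch whose germ at $0$ is the standard local inverse fixing $0$.

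I would begin by fixing any ample attracting petal $Q$ for $K$. By \propref{covershalfplane}, $\phi_A$ maps $Q$ univalently onto a domain containing a right half-plane $\{\Re w>\beta\}$; choose an integer $N$ with $\alpha+N>\beta$ and set $R_0:=(\phi_A|_Q)^{-1}(\{\Re w>\alpha+N\})$, an ample attracting petal with $\phi_A(R_0)=\{\Re w>\alpha+N\}$ and $\phi_A|_{R_0}$ univalent. Now I pull back $N$ times: assume inductively that $R_j\subset B^K_0$ is a Jordan domain with $0\in\partial R_j$ on which $\phi_A$ is univalent with $\phi_A(R_j)=\{\Re w>\alpha+N-j\}$. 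For $0\le j\le N-1$ one has $\Re\phi_A(-1/4)<\alpha+1\le\alpha+N-j$, hence $-1/4\notin R_j$; thus the regular branch of $K^{-1}$ is single-valued and univalent on $R_j$, and I set $R_{j+1}:=K^{-1}(R_j)$. Then $R_{j+1}$ is a Jordan domain in $B^K_0$ with $0\in\partial R_{j+1}$ (as $K^{-1}$ is a local biholomorphism at $0$ fixing $0$), the relation $\phi_A\circ K^{-1}=\phi_A-1$ gives $\phi_A(R_{j+1})=\{\Re w>\alpha+N-j-1\}$, and $\phi_A|_{R_{j+1}}=(\phi_A|_{R_j})\circ K-1$ is univalent because $K$ is injective on $R_{j+1}$. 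After $N$ steps, $P:=R_N$ has $\phi_A|_P$ a conformal isomorphism onto $\{\Re w>\alpha\}$.

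It then remains to check that $P$ is an attracting petal for $K$ from the direction $\nu=-1$, which I expect to be routine. Condition (2) of Definition~\ref{defn-petal} ($K$ injective on $P$) is immediate from $\phi_A\circ K=\phi_A+1$ and injectivity of $\phi_A|_P$. For condition (3): $\phi_A$ carries $\overline P\setminus\{0\}$ onto $\{\Re w\ge\alpha\}$, hence $K(\overline P\setminus\{0\})$ onto $\{\Re w\ge\alpha+1\}\subset\{\Re w>\alpha\}=\phi_A(P)$, and since $\phi_A$ is a bijection of $P$ onto $\{\Re w>\alpha\}$ this forces $K(\overline P\setminus\{0\})\subset P$. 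Conditions (4)–(5): under $K$ the real part of $\phi_A$ grows by $1$, so any forward orbit in $P$ is eventually trapped in a fixed small ample petal around $-1$, where \propref{crude asymptotics} yields uniform convergence to $0$ from the direction $-1$ together with the converse. Finally, $P\cup\{0\}$ is a Jordan domain because $\phi_A|_P$ is a conformal isomorphism onto the half-plane $\{\Re w>\alpha\}$, itself a Jordan domain in $\hat\CC$ with distinguished boundary point $\infty$ corresponding to $0\in\partial P$; the only genuinely delicate point is the continuity of $(\phi_A|_P)^{-1}$ at $\infty$, i.e. that the level set $\{\Re\phi_A=\alpha\}$ meets $B^K_0$ in a single properly embedded arc landing at $0$, which follows from the asymptotic normal form of $K$ at the parabolic point and the crude asymptotics of the Fatou coordinate (\propref{crude asymptotics}, \propref{fatou coords cover}).

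The main obstacle is exactly that last boundary verification near $0$: one must rule out that $\overline P$ re-approaches $\partial B^K_0$ away from the parabolic point. One can avoid the pullback bookkeeping altogether by invoking the result cited after \eqref{closure-crit} (due to Doady) that $\phi_A:B^K_0\to\CC$ is a proper branched covering; since by \propref{th:fatou-crit} and $\phi_A(-1/4)=\phi_A(-1)+1$ every critical value of $\phi_A$ has real part $\le\Re\phi_A(-1)<\alpha$, the restriction $\phi_A:\phi_A^{-1}\{\Re w>\alpha\}\to\{\Re w>\alpha\}$ is then an unramified covering of a simply connected set, so each of its components maps biholomorphically onto $\{\Re w>\alpha\}$ and $P$ is the component meeting $Q$. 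The pullback argument, however, is self-contained.
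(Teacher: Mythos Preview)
The paper does not give a proof of this proposition: immediately after the statement it writes ``Showing that, in the present situation, any $\alpha > \Re(\phi_A(-1))$ is large enough depends on some covering properties of $K$. We omit the proof here.'' Your pullback argument is precisely the sort of thing the paper is gesturing at, and it is correct in substance. Two small points: first, $R_0$ is not \emph{ample} (its boundary is a level curve of $\Re\phi_A$, hence tangent at $0$ to the imaginary axis, so the opening angle is $\pi$, not $>\pi$), but you never actually use ampleness of $R_0$, only of $Q$. Second, the phrase ``the branch whose germ at $0$ is the standard local inverse'' is imprecise since $0\notin R_j$; the clean inductive specification is to take at each step the branch $g_j$ on $R_j$ that agrees with $(K|_{R_j})^{-1}$ on the subpetal $K(R_j)\subset R_j$, which then gives $R_j\subset R_{j+1}$ automatically and makes the branch choices coherent.

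You have correctly identified where the genuine work lies: verifying that the final $P$ is a Jordan domain with $\partial P=\{0\}\cup\gamma$, $\gamma$ a simple arc in the cut plane --- this is exactly how the paper uses $P$ in the very next paragraph (\propref{th:preimages-of-gamma}). Your alternative route via the result (attributed to Douady and cited after \eqref{closure-crit}) that $\phi_A:B^K_0\to\CC$ is a branched covering is the cleaner way to handle this, since then the restriction over $\{\Re w>\alpha\}$ is unramified and each component of the preimage maps biholomorphically; the component containing any small local petal is the desired $P$, and its boundary description follows from properness. The pullback argument can also be made self-contained by tracking the boundary arc of $R_0$ under the successive $g_j$'s, using that each $g_j$ extends holomorphically across $\partial R_j\setminus\{0\}$ (no ramification there) and continuously to $0$ (local inverse of $K$), but this is more bookkeeping.
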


For $\alpha$ a sufficiently large real number, this
proposition is part of the local theory of parabolic fixed
points. Showing that, in the present situation, any $\alpha >
\Re(\phi_A(-1))$ is large enough depends on some covering properties
of $K$. We omit the proof here.

The boundary of $P$ can be written as $\{0\} \cup \gamma$, where
$\gamma$ is an arc in the cut plane $\CC \setminus [0,\infty)$ which
  approaches the parabolic point $0$ at both ends. The arc $\gamma$ is
  a component of the preimage under $\phi_A$ of the vertical line $\{
  \alpha + i v : -\infty < v < \infty \}$; we give it the
  counterclockwise orientation, corresponding to the decreasing
  orientation for the vertical line: $\Im(\phi_A(z))$ runs from
  $+\infty$ to $-\infty$ as $z$ traverses $\gamma$ in the
  counterclockwise sense. This can be seen using the crude asymptotic
  formula $\phi_A(z) \approx -1/z$, valid for small $z$ not too near to the
  positive real axis.

We will speak of an arc $s \mapsto \sigma(s)$, defined and continuous
for $0 < s < 1$ -- i.e., without beginning or end point -- as an {\em
  open} arc. We say that such an arc {\em starts at} $z_0$ if
$\lim_{s \to 0^+} \sigma(s) = z_0$ and that it {\em ends at} $z_1$ if $\lim_{s
  \to 1^{-}}\sigma(s) = z_1$.

\begin{prop}\label{th:preimages-of-gamma}
With $\gamma$ as above:
\begin{enumerate}
\item the preimage of $\gamma$ under $K^n$ is a disjoint union of
  $2^n$ smooth arcs in the cut plane. Each of the component arcs
  starts from and ends at a preimage of the parabolic point $0$. The
  complex conjugate of a component arc of $K^{-n}\gamma$ is a
  component arc.
\item the component arcs of $K^{-n}\gamma$ do not intersect the real
  axis; each of them lies either entirely in the upper half plane or
  entirely in the lower half plane.
\item one of the component arcs of the preimage of $\gamma$ under $K$
  -- equipped with the pullback of the orientation of $\gamma$ under
  $h$ -- lies in the upper half plane, starts at $0$, and ends at
  $\infty$. The other component is the complex conjugate of this one,
  with its orientation reversed. It lies in the lower half plane and
  runs from $\infty$ to $0$.
\end{enumerate}
\end{prop}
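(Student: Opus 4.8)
The plan is to build up the structure of $K^{-n}\gamma$ inductively, using the explicit description of $K$ from Propositions \ref{th:h-elem} and \ref{th:hinverse} together with the fact that $\gamma$ is a $\phi_A$-preimage of a vertical line. First I would establish the $n=1$ case directly, which already contains essentially all the content. Since $\gamma$ lies in the cut plane $\CC\setminus[0,\infty)$ and avoids the critical values $-1/4,\infty$ of $K$ (by the choice of $\alpha$ in (\ref{eq:alpha}), since $\alpha>\Re(\phi_A(-1))>\Re(\phi_A(-1/4))-1$, and in particular $\gamma$ avoids both critical values), the preimage $K^{-1}\gamma$ consists of exactly two disjoint smooth arcs, each mapped diffeomorphically onto $\gamma$ by $K$. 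The endpoints of $\gamma$ accumulate at $0$; since $K^{-1}(0)=\{0,\infty\}$ as a set and the two local branches of $K^{-1}$ at $0$ send $0\mapsto 0$ and $0\mapsto\infty$ respectively, one component arc starts and ends at $0$, and the other starts and ends at $\infty$. Real-symmetry of $K$ (it is the Koebe function, with real Taylor coefficients) means complex conjugation permutes the components of $K^{-1}\gamma$; since $\gamma$ and $\overline{\gamma}$ are distinct (one approaches $0$ through the upper half-plane, the other through the lower), conjugation must swap the two component arcs, so neither is individually conjugation-invariant, and each lies entirely in one open half-plane. This gives (2) for $n=1$: the arcs cannot meet the real axis, because a real point of $K^{-n}\gamma$ would map under $K^n$ into $\gamma\cap\Reals=\emptyset$ (as $K$ preserves $[0,\infty]$ and $(-\infty,0]$). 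To pin down which arc is which in (3): the branch $K^{-1}=\hinv$ regular at $0$ maps $\CC\setminus(-\infty,-1/4]$ into the unit disk and maps the upper half-plane into itself (Proposition \ref{th:hinverse}), so the arc through $0$ obtained by pulling back $\gamma$ via $\hinv$ lies in $\HH\cap\DD$ and starts at $0$; the other branch, regular at $\infty$, sends $0\mapsto\infty$ and, being the conjugate picture, produces the arc from $\infty$ into the upper half-plane as well — so I must check orientations. Pulling back the orientation of $\gamma$ (on which $\Im\phi_A$ decreases), the arc through $0$ is oriented so $\Im\phi_A\circ K$ decreases; tracing with the crude estimate $\phi_A(z)\approx -1/z$ shows it runs from the parabolic point to its preimage at $\infty$. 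The conjugate arc carries the reversed orientation and runs from $\infty$ to $0$ in the lower half-plane.

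For general $n$, I would induct. Assuming the statement for $n$, note $K^{-(n+1)}\gamma=K^{-1}(K^{-n}\gamma)$, and each of the $2^n$ component arcs $\sigma$ of $K^{-n}\gamma$ lies in one open half-plane, starts and ends at preimages of $0$, and avoids $(-\infty,-1/4]$ (since it avoids the real axis). Hence each such $\sigma$ has exactly two disjoint smooth preimage arcs under $K$; that gives $2^{n+1}$ arcs total, and disjointness across different $\sigma$'s follows since $K$ is a covering away from the critical values. The endpoints of each preimage arc sit over endpoints of $\sigma$, which are preimages of $0$, so they are again preimages of $0$ — establishing (1). For (2) at level $n+1$: a preimage arc meeting $\Reals$ would map under $K$ into $\sigma\cap\Reals$, which is empty by the inductive hypothesis; and the closure-under-conjugation statement follows from real-symmetry of $K$ exactly as in the base case. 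This completes (1) and (2), and (3) is purely the $n=1$ assertion already handled.

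The one genuinely delicate point — and the step I expect to be the main obstacle — is the orientation bookkeeping in (3), i.e. correctly identifying which component of $K^{-1}\gamma$ runs from $0$ to $\infty$ versus from $\infty$ to $0$, and verifying that it is the upper-half-plane arc that is oriented outward from the parabolic point. This requires combining three facts carefully: the orientation convention on $\gamma$ (that $\Im\phi_A$ runs $+\infty\to-\infty$ counterclockwise, which itself rests on the crude asymptotics $\phi_A(z)\approx-1/z$ of Proposition \ref{crude asymptotics}); the behavior of the branch $\hinv$ near $0$ and near $\infty$ from Proposition \ref{th:hinverse}; and how pullback interacts with the half-plane the arc lives in. Everything else is a routine induction driven by the covering-space structure of $K$ and its real-symmetry, with the only arithmetic input being that $\gamma$ stays clear of the two critical values $-1/4$ and $\infty$, which is exactly what the constraint (\ref{eq:alpha}) buys us.
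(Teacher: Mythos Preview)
Your argument for (2) rests on the claim that $\gamma\cap\Reals=\emptyset$, but this is false. The petal $P$ is invariant under complex conjugation (since $K$ is real-symmetric and $\Re\phi_A(\overline z)=\Re\phi_A(z)$), so its boundary $\gamma$ is conjugation-invariant and therefore crosses the negative real axis. Consequently your sentence ``a real point of $K^{-n}\gamma$ would map under $K^n$ into $\gamma\cap\Reals=\emptyset$'' fails already at $n=1$, and the related claim that ``$\gamma$ and $\overline\gamma$ are distinct'' is likewise wrong. The paper's proof of (2) does not try to avoid $\Reals$ by symmetry; instead it uses the Fatou coordinate: any $z\in K^{-n}\gamma$ satisfies $\Re\phi_A(z)=\alpha-n$, while one checks (by analyzing $\Re\phi_A$ on $(-\infty,-1)$ and $(-1,0)$ separately) that $\Re\phi_A(x)\geq\Re\phi_A(-1)$ for all $x<0$. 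Since $\alpha-n\leq\alpha-1<\Re\phi_A(-1)$ by the choice of $\alpha$, no preimage point can be negative real; and $[0,\infty]$ is fully invariant, so it is excluded for free.

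There is a second, smaller confusion in your $n=1$ endpoint analysis: you first assert that one component arc ``starts and ends at $0$'' and the other ``starts and ends at $\infty$'', then later (correctly) say the upper arc runs from $0$ to $\infty$. The reason the lifts do not close is precisely that the Jordan curve $\gamma\cup\{0\}$ winds once around the critical value $-1/4$ (this is what the inequality $\alpha<\Re\phi_A(-1/4)$ in (\ref{eq:alpha}) guarantees), so the monodromy is nontrivial and a lift starting at $0$ must end at the other preimage $\infty$. You invoke this fact implicitly later, but it should be the starting point of the argument for (3), not an afterthought.
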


\begin{figure}
\centerline{\includegraphics[width=0.7\textwidth]{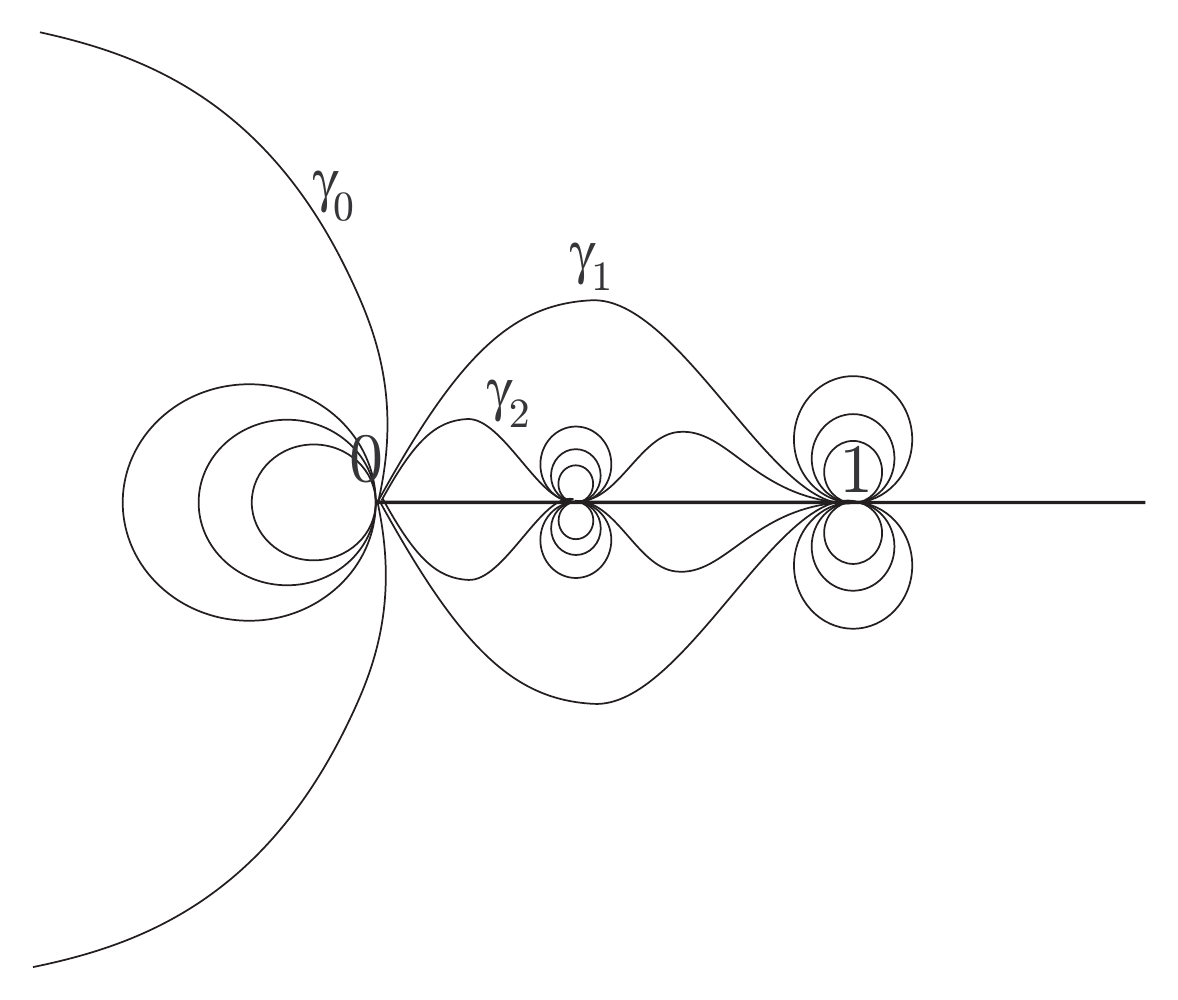}}
\caption{\label{fig-maph}A few  preimages of $\gamma$ under the iteration of  $K$ (\propref{th:preimages-of-gamma}).}
\end{figure}

\begin{proof}
For a postcritical point $z = K^n(-1)$, $n \geq 0$, 
\[
\Re(\phi_A(z)) = \Re(\phi_A(K^n(-1)) = \Re(\phi_A(-1)) + n \neq
\alpha,
\]
by the condition (\ref{eq:alpha}) on $\alpha$. Hence, the postcritical
  set does not intersect $\gamma$, so $K^n$ is a strict cover of
  degree $2^n$ over $\gamma$, i.e., $(K^n)^{-1}\gamma$ has $2^n$ components,
  each of which is a smooth open arc. Since $\gamma$ starts from and
  ends at $0$, each component starts from and ends at a preimage -- of
  order $\leq n$ -- of $0$.

Since $K$ commutes with complex conjugation,
$\Re(\phi_A(\overline{z})) =
\Re(\phi_A(z))$. Hence, the image of $P$ under complex conjugation --
which is again a petal -- is mapped by $\phi_A$ to the same right
half-plane as $P$, so $P$ is invariant under complex conjugation, so
its boundary $\gamma$ is also invariant. Again using the fact that $K$
commutes with complex conjugation, the preimage under $K^n$ of
$\gamma$ is invariant, so the complex conjugate of each component arc
is a component arc.

To show that preimages of $\gamma$ do not intersect $\Reals$: It is
easy to show, using the estimate $\phi_A(z) \approx -1/z$, that
$x \mapsto \Re(\phi_A(x))$ is strictly increasing for $x$ small and
negative. Using $\phi_A(x) = \phi_A(K^n(x)) - n$, and the fact that
all iterates $K^n$ are strictly increasing on $(-1, 0)$, we see that
$\Re(\phi_A(x))$ is strictly increasing on all of $(-1,0)$. On the
other hand, $K$ is strictly decreasing on $(-\infty, -1)$ and maps
this interval onto $(-1/4, 0)$, so -- again applying $\phi_A(x) =
\phi_A(K(x)) - 1$ -- $\Re(\phi_A(x))$ is strictly decreasing on
$(-\infty, -1)$. In particular:
\[
\Re(\phi_A(x)) \geq \Re(\phi_A(-1))\quad\text{for $x \in (-\infty,
  0)$.}
\]
If $K^n(z) \in \gamma$ (with $n > 0$),
\[\begin{split}
\Re(\phi_A(z)) &= \Re(\phi_A(K^n(z))) -n = \alpha -n\\
&\leq \alpha - 1 < \Re(\phi_A(-1)) = \min_{x<0} \Re(\phi_A(x)).
\end{split}\]
Thus, the preimage under $K^n$ of $\gamma$ does not intersect the
negative real axis. Since $\gamma$ does not intersect $[0,\infty]$,
which is fully invariant, the preimage does not intersect it either,
and hence does not intersect $\Reals$.

By condition (\ref{eq:alpha}) on $\alpha$, the critical value $-1/4$
of $K$ is in $P$, so the Jordan curve made by appending $0$ to
$\gamma$ -- the boundary of $P$ -- runs once around the critical
value. Hence, lifts of $\gamma$ under $K$ do not close. One of the two
lifts starts at $0$; it cannot end at $0$ and so must end at
$\infty$, which is the other preimage of $0$. Since this lift starts
in the upper half-plane and, by assertion (2), does not
intersect the real axis, it must lie in the upper half-plane
\end{proof}

%A lot can be said about how all the preimage arcs of $\gamma$ are
%arranged in the cut plane. We need here just a little of this
%information. 
We denote by
\begin{itemize}
\item $\gamma_0$ the component of $K^{-1} \gamma$ starting at $0$,
i.e., the component in the upper half-plane
\item $\gamma_1$ the component of $K^{-1}\gamma_0$ which begins at
  $0$. Then $\gamma_1$ also lies in the upper half-plane and ends at
  $1$, the preimage of $\infty$
\item $\gamma_2$ the component of $K^{-1}\gamma_1$ which starts at
$0$. Again, $\gamma_2$ lies in the upper half-plane; it ends at the
unique preimage $x_2$ of $1$ in $(0,1)$, whose value can easily be
computed to be $(3 - \sqrt{5})/2$.
\end{itemize}

Then $\{0\} \cup \gamma_1 \cup \{1\} \cup \overline{\gamma_1}$ is a
Jordan curve; we denote its interior by $P_R$. 
%%(Here, and for the next
%%few paragraphs, the overbar denotes complex conjugate, not closure.)

\begin{lem}
$P_R$ is an attracting petal for
  $\hinv$.
\end{lem}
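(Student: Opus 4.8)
The plan is to check directly the five requirements in the definition of an attracting petal (Definition~\ref{defn-petal}) for the domain $P_R$ with respect to $\hinv$, whose unique attracting direction at the parabolic fixed point $0$ is $+1$. First I would record the basic inclusions. The arc $\gamma_1$ lies in the open upper half--plane except for its endpoints $0,1$, so $\partial P_R$ meets $\RR$ only in $\{0,1\}$ and, by the symmetry of $\partial P_R$ under complex conjugation, $(0,1)\subset P_R$; hence $\overline{P_R}\cap\RR=[0,1]$. Consequently $\overline{P_R}\subset\CC\setminus(-\infty,-1/4]=\Dom(\hinv)$, on which $\hinv$ is a single--valued (hence injective) branch of $K^{-1}$, and moreover $\overline{P_R}\setminus\{0\}\subset\CC\setminus(-\infty,0]=B^{\hinv}_0$ (Proposition~\ref{th:hinverse}). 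This gives requirements (1) and (2) at once, (1) because $\{0\}\cup\gamma_1\cup\{1\}\cup\overline{\gamma_1}=\partial P_R$.

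For requirement (3), a direct computation gives $\hinv(0)=0$, $\hinv(1)=(3-\sqrt5)/2=x_2$, and $\hinv(\gamma_1)=\gamma_2$ (the component of $K^{-1}\gamma_1$ issuing from $0$), so $\hinv$ carries the Jordan curve $\partial P_R$ homeomorphically onto the Jordan curve $\{0\}\cup\gamma_2\cup\{x_2\}\cup\overline{\gamma_2}$ and hence $\hinv(\overline{P_R})=\overline{P_R'}$, the closure of the Jordan domain $P_R'$ that it bounds. The crucial point is that $\gamma_2\subset P_R$. Iterating $\phi_A\circ K=\phi_A+1$ on the forward--invariant basin $B^K_0$ shows $\Re\phi_A\equiv\alpha-2$ on $\gamma_1$ but $\Re\phi_A\equiv\alpha-3$ on $\gamma_2$ (endpoints excluded), so the open arcs $\gamma_1,\gamma_2$ are disjoint; $\gamma_2$ is also disjoint from $\overline{\gamma_1}$ (opposite half--planes) and from $[0,1]$, so, being connected and missing the Jordan curve $\partial P_R$, it lies wholly inside $P_R$ or wholly outside --- and it accumulates at its endpoint $x_2\in(0,1)\subset P_R$, so $\gamma_2\subset P_R$. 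By symmetry $\overline{\gamma_2}\subset P_R$ and $x_2\in P_R$, i.e.\ $\partial P_R'\setminus\{0\}\subset P_R$. A routine Jordan--curve argument (the unbounded component of $\CC\setminus\overline{P_R}$ misses $\partial P_R'$, hence lies in the unbounded component of $\CC\setminus\partial P_R'$, whence $\overline{P_R'}\subset\overline{P_R}$) now yields $\hinv(\overline{P_R}\setminus\{0\})=\overline{P_R'}\setminus\{0\}\subset P_R$.

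Requirement (5) follows once we observe that $P_R$ contains an ordinary small attracting petal $\Pi$ of $\hinv$. By the Fatou--coordinate asymptotics (Theorem~\ref{asym-expansion-2}), since $\Re\phi_A\equiv\alpha-2$ on $\gamma_1$ one gets $\Re(1/z)\to-\infty$ as $z\to0$ along $\gamma_1$, so $\gamma_1$ and $\overline{\gamma_1}$ reach $0$ inside the open left half--plane, tangent to the imaginary axis; hence $P_R$ contains the region $\{\Re(1/z)>r\}$ (a disk tangent to the imaginary axis at $0$ on the $+1$ side) for every sufficiently large $r$ --- a standard attracting petal of $\hinv$. Any $\hinv$--orbit tending to $0$ from the direction $+1$ is eventually in $\Pi\subset P_R$.

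Requirement (4) is the main obstacle. Here I would pass to the coordinate $w=1/(2z)$, which sends $0$ to $\infty$ and conjugates $\hinv$ to a map $F(w)=w+1+O(1/w)$. Using Theorem~\ref{asym-expansion-2} again, now keeping track of the logarithmic term, one shows that outside a bounded set the image of $P_R$ in the $w$--plane lies in a translated sector $\Delta(\alpha,R)=\{|\Arg(w-R)|<\alpha\}$ with $\pi/2<\alpha<\pi$ --- the boundary arcs $\gamma_1,\overline{\gamma_1}$ go over, near $\infty$, to curves asymptotic to $\Re w=-A\log|\Im w|+O(1)$. On $\Delta(\alpha,R)$ the iterates $F^n$ tend to $\infty$ uniformly (the uniform--convergence statement for such sectors recorded in the local theory, whose verification uses simplified forms of the estimates in Lemma~\ref{asym-expansion-3}), while on the remaining bounded part $\hinv^n\to0$ uniformly by Proposition~\ref{th:hinverse}; translating back, $\hinv^n\to0$ uniformly on $P_R$, and the convergence is from the direction $+1$ since that is the only attracting direction of $\hinv$ (Proposition~\ref{attr dir}). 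This verifies all five requirements, so $P_R$ is an attracting petal for $\hinv$.

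The delicate step throughout is requirement (4): pinning down the shape of $P_R$ near the parabolic point (equivalently, near $\infty$ in the $w$--coordinate) precisely enough to run the uniform--escape estimate, which rests on the asymptotic expansion of the Fatou coordinate of $K$ including its $\log$ term. The other requirements are essentially topological once requirement (3), via the level values $\Re\phi_A\equiv\alpha-2$ on $\gamma_1$ and $\Re\phi_A\equiv\alpha-3$ on $\gamma_2$, is in hand.
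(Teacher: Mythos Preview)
Your proof is correct and, for requirements (1)--(3) and (5), follows essentially the same route as the paper (the paper is terser, simply citing that $\gamma_2 \cup \overline{\gamma_2}$ does not meet $\partial P_R$ rather than recomputing the $\Re\phi_A$--levels, but the substance is the same).

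The one genuine difference is in requirement (4). You pass to the $w$--coordinate, invoke the asymptotic expansion of $\phi_A$ including the logarithmic term to control the shape of $P_R$ near $\infty$, and then appeal to the uniform--escape estimate on translated sectors $\Delta(\alpha,R)$. This works, but it is more elaborate than necessary. The paper avoids the logarithmic term and the sector machinery entirely: since $\gamma_1$ is tangent to the imaginary axis at $0$ (which you already noted in your treatment of (5)), the intersection of $P_R$ with any sufficiently small disk about $0$ lies inside a standard small attracting petal for $\hinv$, on which local theory directly gives uniform convergence of $(\hinv)^n$ to $0$; the remainder $P_R\setminus(\text{small disk})$ is relatively compact in $\CC\setminus(-\infty,0]$, and there uniform convergence is already supplied by the Vitali argument of Proposition~\ref{th:hinverse}. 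So what you flagged as the delicate step --- controlling the $\log$ term in the shape of $P_R$ near the cusp --- is in fact unnecessary: tangency to the imaginary axis is all that is needed, and that follows from the crude estimate $\phi_A(z)\approx -1/z$ alone.
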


\begin{proof}
Since
\begin{itemize}
\item
$
\partial (\hinv P_R) = \hinv( \partial P_R ) = \{0\} \cup \gamma_2 \cup
\{x_2 \} \cup  
\overline{\gamma_2};
$
\item $x_2 \in (0,1) \subset P_R$
\item $\gamma_2 \cup \overline \gamma_2$ does not intersect $\partial
  P$,
\end{itemize}
it follows that the closure of $\hinv P_R$ is contained in $P_R \cup
\{ 0 \}$. By local theory, $\gamma_1$ is tangent to the imaginary axis
at $0$; from this it follows that every $\hinv$-orbit with initial
point not in $(-\infty, 0]$ is eventually in
$P_R$.

It remains only to verify condition (4) of Definition
\ref{defn-petal}.  We showed above -- using Vitali's theorem -- that
$(\hinv)^n$ converges uniformly to $0$ on compact subsets of $\CC
\setminus (-\infty, 0]$. Since $\partial P_R$ is tangent to the
imaginary axis at $0$, it follows from local theory that $(\hinv)^n$
converges uniformly to $0$ on the intersection of $P_R$ with a
sufficiently small disk centered at $0$; uniform convergence on all of
$P_R$ follows.
\end{proof}

Note the mixed character of $P_R$: although it is a repelling petal,
its boundary is defined as a curve of constant $\Re(\phi_A)$, except
at the two points $0$ and $x_2$, where the $\phi_A$ is not defined.

By \propref{th:fundamental-crescent}, $C_R := P_R \setminus K(P_R)$
projects bijectively to the repelling cylinder $\cC_R$ and hence is
mapped bijectively by $\ixp \circ \phi_R$ to the punctured plane $\CC
\setminus \{ 0 \}$. We write $C^+_R$ and $C^-_R$ for the intersections
of $C_R$ with the open upper and lower half-planes respectively; then
\[
C_R = C^+_R \cup (x_2, 1) \cup C^-_R.
\]
$\ixp \circ \phi_A$ is analytic on $C^+_R$ and on $C^-_R$ but cannot be
continued analytically through any point of $(x_2, 1)$. Thus, the
image under $\ixp \circ \phi_R$ of $C^+_R$ (respectively $C^-_R$) is
$W^+$ (respectively $W^{-}$.) By an argument given above for $\phi_A$,
\[
\phi_R(\overline{z}) = \overline{\phi_R(z)} + c\quad\text{with $c$ a
  pure imaginary constant.}
\]
so the imaginary part of $\phi_R$ is constant on $(0,\infty)$, so
$\ixp \circ \phi_R$ maps $[r_2, 1]$ to a circle centered at the
origin. We chose the real part of the additive constant in $\phi_R$ so
that $\phi_R$ is real on $(0, \infty)$; then
\[
W^+ = \DD \setminus \{0\}\quad\text{and}\quad W^- = \CC \setminus
\overline{\DD}.
\]

We now have all the pieces in place to prove that the only asymptotic
values of $\bm{k}$ are $0$ and $\infty$. We assume that $s \mapsto \tau(s),
s \in [0,1)$ is a continuous arc in $W^+$ converging to the unit
  circle as $s \to 1$ (but not necessarily to an particular point of
  the circle) such that $\bm{k}(\tau(s))$ converges as $s \to 1$ to a finite
  nonzero value $z_\infty$, and we show this leads to a
  contradiction. We select an $\alpha$ as in (\ref{eq:alpha}) and also
  so that
\[
\frac{1}{2\pi} \Arg(z_\infty) - \alpha \notin \ZZ.
\]
Since $\bm{k}(\tau(s)) \to z_\infty$, we can -- by deleting an initial
segment of $\tau$ and reparametrizing -- assume that
\begin{equation}\label{eq:avoid-the-boundary}
\frac{1}{2\pi} \Arg(H(\tau(s)) - \alpha \notin \ZZ \quad\text{for $0
  \leq s < 1$.}
\end{equation}
Recall that $\ixp \circ \phi_R$ maps $C^+_R$ bijectively to $W^+$. We
denote by $\hat\tau(s)$ the image of $\tau(s)$ under the inverse of
this bijection. We claim that the lift $\hat{\tau}(\,.\,)$ is
continuous. To see this, we note that
\[
\bm{k}(\tau(s)) = \exp(2 \pi i \phi_A(\hat{\tau}(s))),
\]
so, by (\ref{eq:avoid-the-boundary}).
\begin{equation}\label{eq:avoid-the-boundary-1}
\Re(\phi_A(\hat{\tau}(s)))- \alpha \notin \ZZ\quad\text{for
  $0 \leq s < 1$,}
\end{equation}
that is, the values of $\hat{\tau}(s)$ lie in the interior of $P^+_R$,
not on the ``edges''. Since $\ixp \circ \phi_R$ is a local
homeomorphism everywhere in the interior of $C^+_R$, continuity of
$\hat{\tau}(s)$ follows.

We recall that $P$ denotes here the attracting petal mapped by
$\phi_A$ to the half-plane $\{ \Re(w) > \alpha \}$. Since the orbit of
the initial point $\hat{\tau}(0)$ of the lift is eventually in $P$,
and since $\Re(\phi_A(\hat{\tau}(0)) - \alpha \notin \ZZ$, there exist
positive integers $n$ and $m$ since that
\[
K^n(\hat{\tau}(0)) \in C := \{ z \in P : \alpha +m < \Re(\phi_A(z)) <
\alpha+m+1\}
\]
Because of (\ref{eq:avoid-the-boundary-1}), $\hat{\tau}(s)$ never
intersects either boundary arc of the crescent $C$, so, by continuity,
\[
\tilde{\tau}(s) := K^n(\hat{\tau}(s)) \in C \quad\text{for $0 \leq s < 1$.}
\]
Because the base arc $\tau(s)$ (in $W^+$) approaches the unit circle
as $s \to 1$,
$\hat{\tau}(s)$ approaches $[x_2, 1]$, so, because
  $[0, \infty]$ is mapped to itself by $K$, $\tilde{\tau}(s)$
approaches $[0,\infty]$ as $s \to 1$. We thus have the following
situation:
\begin{enumerate}
\item $s \mapsto \tilde{\tau}(s)$ is a continuous arc in the crescent
  $C$.
\item $s \mapsto \tilde{\tau}(s)$ approaches $[0,\infty]$ as $s \to
  1$.
\item $\ixp(\phi_A(\tilde{\tau}(s)))$ approaches a finite non-zero
  limit $z_0$ as $s
  \to 1$.
\end{enumerate}
It is now easy to show that these three assertions lead to a
contradiction. We first argue that (1) and (2) imply
\begin{equation}\label{eq:im-to-infinity}
\vert \Im(\phi_A(\tilde{\tau}(s)))\vert \to \infty.
\end{equation}
Then $\bm{k}(\tau(s)) = \exp(2 \pi i \phi_A(\tilde{\tau})(s))$ converges to
$0$ (if $\Im(\phi_A(\tilde{\tau})(s)) \to +\infty$) or to $\infty$,
contradicting (3).

To prove (\ref{eq:im-to-infinity}): for sufficiently small positive
$\epsilon$, there is an attracting petal $P_\epsilon$ mapped
homeomorphically by $\phi_A$ to the half-plane $\{ \Im(w) > \alpha -
\epsilon \}$. Let $r$ be a positive real number and let $R_r$ denote
the preimage under the restriction of $\phi_A$ to such a $P_\epsilon$
of the compact rectangle
\[
\{ u + i v : \alpha+m \leq u \leq \alpha + m + 1, -r \leq v \leq r \}
\]
Since $R_r$ is compact and disjoint from $[0,\infty]$, its distance
from $[0,\infty]$ is strictly positive. By (1), if $\vert
\Im(\phi_A(\tilde{\tau}(s)) \vert \leq r$, then $\tilde{\tau}(s) \in
R_r$. Hence, by (2), $\vert \Im(\phi_A(\tilde{\tau}(s)) \vert$ is
eventually $>r$. Since this is true for all $r$,
(\ref{eq:im-to-infinity}) holds.

It is evident that $\{0,\infty\}\subset\asym(H)$. Indeed, for small enough $\eps>0$ the 
projection of $\gamma_1\cap D_\eps(1)$ by $\ixp\circ\phi_R$ is a simple curve in $\DD$ whose
image under $H$ lands at $0$; similarly $\gamma_2$ yields a curve whose image under $H$ lands at $\infty$.

Now let $$\tau:[0,1)\to \DD=\Dom(H)$$ be a curve such that $\tau(t)$ converges to the unit circle $\TT$ as $t\to 1-$.
Let $$\tl\tau:[0,1)\subset\hat\CC\setminus \RR_{\geq 0}$$ be a component of its lift
under $\ixp\circ \phi_R$, parametrized so that 
\begin{equation}
\label{eq-tau}
\tl\tau(t)\to \RR_{\geq 0}\text{ as }t\to 1-.
\end{equation}

Let $C'$ be a connected component of  $K^{n}(C)$ for $n\in\ZZ$. We will say that
the curve $\tl\tau$ {\it crosses} $C'$ if there exist $t_1\neq t_2\in(0,1)$ and $n\in\ZZ$ such that
$$\tl{\tau}:(t_1,t_2)\to C',\;K^n(\tl\tau(t_1))\in\gamma,\text{ and }K^n(\tl\tau(t_2))\in h(\gamma).$$
Note that every time $\tl\tau$ crosses some $C'$, the image $\bm{k}(\tau)$ winds once around the cylinder
$\CC/\ZZ\simeq \hat\CC\setminus\{0,\infty\}$. Hence, we have the following two possibilities:
\begin{enumerate}
\item $\tl\tau$ crosses finitely many connected components of $\cup_{n\in\ZZ}K^n(C)$;
\item $\lim_{t\to 1-}\bm{k}(\tau(t))\in\{0,\infty\}.$ 
\end{enumerate}
In case (2), we are done. 

Standard considerations of dynamics on $J(K)$ imply that for every $x\in\RR_{\geq 0}$ which is not a preimage of $0$,
there exists an infinite sequence of cross-cuts $l_k\equiv K^{-{n_k}}(\gamma_i)$ for some choice of the  inverse branch
and $i=1,2$ such that denoting $N_k$ the cross-cut neighborhood of $l_k$ in $\hat\CC\setminus \RR_{\geq 0}$,
we have $$\overline{N_k}\cap \RR\ni \{x\}.$$
In view of (1) and (2) this implies that $x$ cannot be accumulated by $\tl\tau$. Since points $x$ as above 
form a dense set on $\RR_{\geq 0}$, we see that there exists a limit
$$s=\lim_{t\to 1-}\tl\tau(t)\text{ and }s\in\cup K^{-n}(0).$$
In view of (1), this  implies that
$$\lim_{t\to 1-}\bm{k}(\tau(t))\in\{0,\infty\}.$$
\end{pf}

\subsection{Parabolic renormalization of $f_0$}
We now have:
\begin{prop}
\label{renormalizable1}
The maps $f_0$ and $K$ are both renormalizable.
\end{prop}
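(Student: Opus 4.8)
The proposition asserts that, for $f=f_0$ and for $f=K$, the coefficient of $z^2$ in the horn map $h^+$ does not vanish. Since by \propref{thm:simple-zero} the coefficient of $z^1$ is already known to be nonzero, and since the normalization $h^+(w)=w+w^2+\cdots$ is attainable by a rescaling $w\mapsto\lambda_Ah^+(\lambda_R^{-1}w)$ \emph{precisely} when the $z^2$--coefficient is nonzero, the statement is equivalent to: after normalizing $(h^+)'(0)=1$, the germ $h^+$ is not tangent to the identity to order $\geq 2$ at $0$. I will carry out the argument for $f_0$; the case of $K$ is handled in the same way (all the inputs below — a genuine critical value arising from the critical point of the map, and the asymptotic expansion of the Fatou coordinates — are available for $K$ as well), and one may alternatively deduce one from the other through the conformal relation of \propref{straighten}, noting that the linear rescaling relating the two horn maps does not affect the vanishing of the relevant coefficient once the covering data below is used to control the pre-composition.

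The first ingredient is \thmref{covering-properties}: $\bm{h}_{f_0}$ has exactly one critical value $v=\ixp\circ\phi_A(-1/4)$ and, on each component, exactly the two asymptotic values $0$ and $\infty$. By \lemref{th:critical-values}, the presence of a critical value forces a critical point of the horn map in $\cD(\bm{h}_{f_0})$, of the form $\ixp\circ\phi_R(z_c)$ with $z_c\in B^{f_0}\cap P_R$ a critical point of $\phi_A$ — that is, a point whose forward $f_0$--orbit passes through $-1/2$ — admitting a backward orbit converging to $0$. The plan is to select such a $z_c$ lying in the \emph{upper} part of $P_R\cap B^{f_0}_0$ (a sufficiently high preimage of $-1/2$ pulled into the repelling petal), so that, by the description of $W^+$ in \thmref{th:W-Jordan-domain}, its image under $\ixp\circ\phi_R$ lies in $W^+$. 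Then $h^+=\bm{h}_{f_0}|_{W^+}$ is genuinely ramified; in particular it is not affine, and in particular the germ of $f_0$ at $0$ is not analytically conjugate to the integrable parabolic model $z\mapsto z/(1-z)$ (whose horn maps are linear).

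The remaining and essential step is to upgrade ``$h^+$ is not affine'' to ``the coefficient of $z^2$ in $h^+$ is nonzero.'' Here I would use the refined asymptotics of the Fatou coordinates. Writing $h^+(w)/w=\exp\bigl(2\pi i\,(\phi_A-\phi_R)(z(w))\bigr)$ with $z(w)=\phi_R^{-1}\bigl(\tfrac1{2\pi i}\log w\bigr)$ for the appropriate branch, and using \thmref{asym-expansion-2} together with the fact (\propref{formal-fatou}) that $\phi_A$ and $\phi_R$ share \emph{all} the coefficients $b_j$, one gets $(\phi_A-\phi_R)(z)=c_0+\varepsilon(z)$ with $c_0$ a constant and $\varepsilon(z)$ exponentially small as $z\to0$ in the relevant sector; as a function of $w$ this is $\varepsilon(z(w))=\beta_1 w+O(w^2)$, and the $z^2$--coefficient of $h^+$ is $2\pi i\,\beta_1\,(h^+)'(0)$. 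Thus renormalizability of $f_0$ is exactly the statement $\beta_1\neq0$, i.e.\ the non-vanishing of the leading Stokes / {\'E}calle--Voronin constant (the residue of the Borel transform $\hat\nu$ at $\pm 2\pi i$). This I would obtain either from the explicit constants of \thmref{gevrey-asym} and Dudko's estimates, or — more in the spirit of the present paper — from the covering structure above, by showing that a critical point of $h^+$ accumulating at $0$ along a definite dynamical direction forces the local return dynamics of $h^+$ at $0$ to be exactly quadratic. The main obstacle is precisely this: passing from the qualitative fact that $h^+$ is non-affine (which only yields ``some coefficient beyond $z^1$ is nonzero'') to the sharp statement that the \emph{first} such coefficient, the one at $z^2$, is the nonzero one.
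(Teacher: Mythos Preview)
You have correctly isolated the crux of the matter --- passing from ``$h^+$ is not affine'' to ``the $z^2$--coefficient of $h^+$ is nonzero'' --- and you are honest that neither of your suggested routes (Stokes constants / resurgence residues, or a vaguer appeal to ``covering structure near $0$'') is actually carried out. As written, the proposal has a genuine gap at exactly this point: knowing that $h^+$ has a critical point only tells you that \emph{some} coefficient beyond the linear one is nonzero, and your resurgence sketch does not pin down why it must be the quadratic one. Extracting $\beta_1\neq 0$ from Dudko's explicit bounds or from the residue of $\hat\nu$ at $\pm 2\pi i$ would require substantial additional work not present here.

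The paper's argument bypasses all of this with a simple counting observation that you are close to but do not make. Normalize so that $h^+(z)=z+az^2+\cdots$, and suppose $a=0$. Then $0$ is a parabolic fixed point of $h^+$ with at least \emph{two} distinct attracting directions, hence at least two disjoint attracting petals $P_A^1,P_A^2$. Now apply to \emph{each} petal the argument you already sketched for one: taking successive preimages $P_{-n}$ of the petal, if none of them contains the critical value $v$, then the attracting Fatou coordinate extends univalently to $\cup P_{-n}$ with image all of $\CC$ --- impossible, since $\cup P_{-n}\subset\hat W^+$, a bounded Jordan domain. Hence the critical value $v$ must lie in the basin of \emph{each} attracting direction. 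But $\bm h_{f_0}^n(v)/|\bm h_{f_0}^n(v)|$ can converge to only one direction (\propref{attr dir}), so $v$ cannot serve two disjoint basins. This contradiction forces $a\neq 0$. The same argument works verbatim for $K$. No asymptotics beyond the existence of Fatou coordinates, and no resurgence, are needed.
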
 
\begin{proof}
The arguments are identical, so we only argue for $f_0$. 
Let us replace  $\bm{h}_{f_0}$ by $c\bm{h}_{f_0}$ if needed so that $\bm{h}'_{f_0}(0)=1$. We have to show that the
coefficient $a$ in the Taylor
expansion 
$$\bm{h}_{f_0}(z)=z+az^2+\cdots$$
is not equal to $0$, or, in other words, that $z=0$ is a simple parabolic point of $\bm{h}_{f_0}$. Let $P_0$ be an attracting petal
of $z=0$. For $n\geq 1$ denote $P_{-n}$ the component of the preimage $(\bm{h}_{f_0})^{-1}(P_{-(n-1)})$ which contains $P_{-(n-1)}$.
We claim that there exists $n$ such that $P_{-n}$ contains the unique critical value $v$ of $\bm{h}_{f_0}$.
Assume the contrary. Then for every $n$ the domain $P_{-n}$ is a topological disk. Denote $$B_0=\cup P_{-n}\subset \hat W^+.$$
The Fatou coordinate $\tlphi_A$ of $\bm{h}_{f_0}$ extends to all of $B_0$ via the functional equation
$$\tlphi_A\circ \bm{h}_{f_0}(z)=\tlphi_A(z)+1$$
as an unbranched analytic map.
It is trivial to see that its image is the whole complex plane $\CC$, and hence $U$ must be a parabolic domain. This 
is impossible, however, as $U$ is a subset of a Jordan domain $\hat W^+\subset\CC$.

Now assume that $a=0$. Then the parabolic point $z=0$ has at least two distinct attracting directions $v_1, v_2\in S^1$.
Consider corresponding attracting petals $P_A^1\cap P_A^2=\emptyset$. As we have shown above, the critical value $v$ must be
contained in the intersection of their preimages. This leads to a contradiction, as in this case
$$\lim \bm{h}_{f_0}^n(v)/|\bm{h}_{f_0}^n(v)|=v_1=v_2.$$
\end{proof}

%%Let us note:
%%\begin{prop}
%%\label{straighten}
%%There exist $v\neq 0$ and $\theta\in\RR$ such that 
%%$$F\equiv \cP({f_0})=v\cdot H\circ \psi(e^{2\pi i\theta}z),\text{ where }\psi:\DD\to\hat W^+$$
%%is the conformal Riemann mapping, which satisfies $\psi(0)=0,$ $\psi'(0)>0$.
%%\end{prop}

We now turn to the dynamics of the parabolic renormalization $$F\equiv \cP(f_0):\hat W^+\to\hat\CC.$$ 
\begin{defn}
The points $z\in\hat W^+$ whose orbits do not escape $\hat W^+$ form the {\it filled Julia set}
$K(F)$. Its boundary $J(F)$ is the {\it Julia set}.
\end{defn}

Lavaurs \cite{Lav} has shown that
\begin{thm}[\cite{Lav}]
The interior of the filled Julia set $K(F)$ coincides with the basin of the parabolic point $0$. Repelling periodic orbits
of $F$ are contained in $J(F)$ and are dense in $J(F)$.
\end{thm}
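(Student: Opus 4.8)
The statement is due to Lavaurs; the route I would follow is the Fatou--Julia dictionary for $F=\cP(f_0)\colon\hat W^+\to\hat\CC$, organized around the covering data of \thmref{covering-properties}, which $F$ inherits through its normalization: $F$ has a simple parabolic fixed point at $0$, exactly one critical value $v$, and exactly two asymptotic values, $0$ and $\infty$. Since $\infty\notin\hat W^+$, the asymptotic value $\infty$ plays no role in the internal dynamics, and the asymptotic value $0$, being the parabolic point, contributes only the trivial orbit $\{0\}$. The easy half of the first assertion is immediate: if $z\in B^F$ then its forward orbit stays in $\hat W^+$, converges to $0$, is eventually trapped in an attracting petal $P\subset\hat W^+$, and never escapes, so $B^F\subset K(F)$; as $B^F$ is open, $B^F\subset\operatorname{int}K(F)$.

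For the reverse inclusion, let $U$ be a connected component of $\operatorname{int}K(F)$. An easy induction shows that every $F^n$ is defined on $U$ with $F^n(U)\subset\hat W^+$; since $\hat W^+$ is a bounded Jordan domain in $\CC$, $\{F^n|_U\}$ is normal by Montel, so $U$ is a Fatou component of the system. I would then use --- quoting Lavaurs, or the Inou--Shishikura machinery for the class to which $F$ belongs --- two standard facts: that $F$ has no wandering domains (a Sullivan-type quasiconformal deformation argument, made finite-dimensional by the presence of only finitely many singular values), so every component of $\operatorname{int}K(F)$ is pre-periodic; and the classification of periodic cycles of Fatou components. The singular-value bookkeeping then closes the argument: one first checks that $B^F_0$ is simply connected and that $F\colon B^F_0\to B^F_0$ is a degree-$2$ branched covering --- a consequence of \thmref{covering-properties} and the local theory --- so that \thmref{th:blaschke-model} applies and, in particular, the unique critical value $v$ lies in $B^F_0$; hence the only non-escaping singular values are $v\in B^F_0$ and the parabolic point $0$. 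A cycle of attracting, parabolic, Siegel or Herman components distinct from the parabolic basin of $0$ would have to absorb a singular value not already consumed by that cycle, which is impossible. Therefore the parabolic basin is the only cycle of Fatou components, every component of $\operatorname{int}K(F)$ eventually maps into $B^F$, and combined with the easy inclusion this gives $\operatorname{int}K(F)=B^F$.

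For the second assertion, a repelling periodic point lies in $J(F)=\partial K(F)=\partial B^F$ because it is not a point of equicontinuity of $\{F^n\}$. For density, $J(F)$ is a nonempty perfect set, and near any of its points the iterates fail to be equicontinuous --- arbitrarily close points have orbits escaping $\hat W^+$ while others remain in $B^F$ --- so a standard Montel/fixed-point argument, transported from the rational case through the parabolic-like structure of $F$, produces a periodic point of $F$ in every neighbourhood of every point of $J(F)$; thus periodic points are dense in $J(F)$. Finally, the number of non-repelling cycles of $F$ is finite (a Fatou--Shishikura-type bound, again a consequence of the singular-value count; the only such cycle here being $\{0\}$), so the repelling periodic points remain dense in the perfect set $J(F)$. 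The crux of the whole argument, and the step I expect to be the main obstacle, is the reverse inclusion: justifying the absence of wandering domains and the Fatou-component classification for $F$, which is a map of a Jordan domain rather than a globally defined rational or entire map --- precisely the point for which it is convenient to cite Lavaurs (or the Inou--Shishikura theory) rather than redo everything.
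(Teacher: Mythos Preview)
The paper does not prove this theorem: it is quoted as a result of Lavaurs \cite{Lav} and used without proof, so there is no in-paper argument to compare against. Your proposal is therefore not competing with anything the authors wrote; it is a sketch of how one might recover Lavaurs's result.

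That said, your route is the standard one and is essentially correct in outline. The structural ingredients you invoke --- no wandering domains and the classification of periodic Fatou components for a map with finitely many singular values --- are precisely those supplied by Epstein's theory of analytic maps of finite type, which the paper itself cites a few paragraphs later (the two theorems attributed to \cite{Ep}). You attribute this machinery to Inou--Shishikura, but the relevant reference here is Epstein's thesis; Inou--Shishikura concerns the renormalization operator itself, not the Fatou--Sullivan theory for the renormalized map. With that correction, your singular-value bookkeeping (one critical value $v$, asymptotic values $0$ and $\infty$, with $\infty\notin\hat W^+$ and $0$ the parabolic point) is exactly what \thmref{covering-properties} provides, and the exclusion of other periodic Fatou cycles follows. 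The fact that $v\in B^F_0$ is established in the paper independently (\propref{renormalizable1} and \propref{v in petal}), so there is no circularity in using it.

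The only genuine soft spot is the one you flag yourself: the no-wandering and classification theorems require verifying that $F$ is a map of finite type in Epstein's sense, which is not entirely automatic from the data at hand (one needs the natural boundary of $\hat W^+$ and the finiteness of $\sing(F)$). The paper handles this by citing Epstein's result that \'Ecalle--Voronin maps of finite-type maps are again of finite type; your sketch would need to invoke the same.
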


Let $P^0\equiv P_A$ be an attracting petal of $F$. For $n\in\NN$ inductively define $P^{-n}$ to be the connected component
of the inverse image $F^{-1}(P^{-(n-1)})$ which contains $P^{-(n-1)}$. The same considerations as in the proof of \propref{renormalizable1} imply:
\begin{prop}
\label{v in petal}
There exists $n\in\NN$ such that $P^{-n}$ is a topological disk which contains $ v$ in its closure.
\end{prop}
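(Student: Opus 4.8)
The plan is to rerun, essentially verbatim, the argument used in the proof of \propref{renormalizable1}, this time for the renormalized map $F=\cP(f_0)$ in place of $\bm{h}_{f_0}$. The key structural facts we will need are: $F$ is defined on the Jordan domain $\hat W^+\subset\CC$ (from \thmref{quad-extension}); $F$ has a normalized simple parabolic fixed point at $0$ (by \defnref{defn-par-ren_bis}); and, by \thmref{covering-properties}, the {\'E}calle-Voronin invariant $\bm{h}_{f_0}$ has exactly one critical value $v$, so $F$ — being the appropriate rescaling of $h^+$ — has exactly one critical value as well, which we continue to call $v$. The nesting $P^{-(n-1)}\subset P^{-n}$ is automatic from the definition, since $P^{-(n-1)}$ is contained in $F^{-1}(P^{-(n-1)})$ (as $F(P^{-(n-1)})\subset P^{-(n-1)}$ for an attracting petal) and $P^{-n}$ is chosen to be the component of that preimage containing $P^{-(n-1)}$.

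First I would argue, by contradiction, that some $P^{-n}$ must contain $v$ in its closure. Suppose not, i.e.\ $v\notin\overline{P^{-n}}$ for every $n$. Then each restriction $F:P^{-n}\to P^{-(n-1)}$ is a proper map with no critical value in the target, hence an unbranched covering; since $P^0=P_A$ is simply connected and each $P^{-n}$ is obtained by lifting through such a covering, an easy induction (using that a covering of a disk is trivial) shows every $P^{-n}$ is a topological disk. Set $B_0:=\bigcup_n P^{-n}\subset\hat W^+$. The attracting Fatou coordinate $\tlphi_A$ of $F$, defined on $P_A$, extends to all of $B_0$ via the functional equation $\tlphi_A\circ F=\tlphi_A+1$ as an \emph{unbranched} analytic map (there are no critical points in $B_0$). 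By \propref{covershalfplane} its image already contains a right half-plane, and the functional equation then forces $\tlphi_A(B_0)=\CC$; thus $\tlphi_A:B_0\to\CC$ is a nonconstant analytic map onto $\CC$ with no critical points, so $B_0$ carries a parabolic (i.e.\ $\CC$-covered) structure. But $B_0$ is a subdomain of the Jordan domain $\hat W^+\subset\CC$, hence hyperbolic — a contradiction. This yields the desired $n$ with $v\in\overline{P^{-n}}$.

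It remains to check that for such an $n$ the set $P^{-n}$ is a topological disk. This follows from \lemref{proper5}: by construction $F:P^{-n}\to P^{-(n-1)}$ is proper, $P^{-(n-1)}$ is a topological disk (it is one of the earlier pieces, which — choosing $n$ minimal — are all disks by the covering argument above), and $F$ has at most one critical value $v$ in the target; \lemref{proper5} then gives that $P^{-n}$ is a topological disk with a single critical point mapping to $v$. (If $v$ lies on $\partial P^{-(n-1)}$ rather than inside, one applies the same reasoning to $P^{-(n+1)}$, which contains $v$ in its interior; in either case we land on an index for which the conclusion of the proposition holds.) The main obstacle is purely bookkeeping: making the "minimal $n$" choice precise so that all strictly smaller pieces are genuine disks, and handling the boundary case $v\in\partial P^{-(n-1)}$ versus $v\in P^{-(n-1)}$ — neither is deep, but both must be stated carefully. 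Everything else is a direct transcription of the $\bm{h}_{f_0}$ argument.
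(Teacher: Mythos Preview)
Your proposal is correct and follows precisely the approach the paper intends: the paper's proof is nothing more than the sentence ``The same considerations as in the proof of \propref{renormalizable1} imply,'' and you have faithfully transcribed that argument to $F=\cP(f_0)$, including the hyperbolic-versus-parabolic contradiction and the use of \lemref{proper5} for the topological-disk conclusion. Your additional care with the minimal-$n$ bookkeeping and the boundary case $v\in\partial P^{-(n-1)}$ goes slightly beyond what the paper spells out, but is in the same spirit.
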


%%The Julia set $J(\cP(f))$ is connected.

\subsection{A note on the general theory for analytic maps of finite type}
As the previous example clearly demonstrates, when $f$ has global covering properties, 
we can expect the {\'E}cale-Voronin map $\cE_f$ to also possess a well-understood global 
covering structure. The appropriate setting for a global structure theory for {\'E}calle-Voronin maps is that
of analytic maps {\it of finite type}, developed in A.~Epstein's thesis \cite{Ep}.
While we will not need these results in our investigation, we will briefly mention
some of them below for the sake of completeness of the
exposition.

\begin{defn}[\cite{Ep}]
Let $f:W\to X$ be an analytic map between two Riemann surfaces. Assume further that $X$ is compact, and $W$ lies in some compact
surface $Y$. Suppose that $f$ is nowhere constant, and that every isolated singularity of $f$ is essential.
We say that $f$ is a map of finite type if $\sing(f)$ is a finite set.
\end{defn}

Rational endomorphisms $f:\hat\CC\to\hat\CC$ are obviously maps of  finite type. Epstein demonstrated:
\begin{thm}\cite{Ep}
If an analytic map
$f$ with a parabolic cycle is of finite type, then the corresponding {\'E}calle-Voronin maps $\cE_f:W_f\to \hat\CC$ inherit
the same property.
\end{thm}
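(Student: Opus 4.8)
The plan is to verify that each \'Ecalle--Voronin map $\cE_f\colon W_f\to\hat\CC$ (equivalently, one of the germs $h^\pm$ of $\bm h_f$, extended to its maximal domain) is itself an analytic map of finite type. Fix a parabolic cycle for $f$; after iterating and conjugating we may assume $f$ has a simple parabolic fixed point at $0$ of multiplier a root of unity, and it suffices to treat a single attracting/repelling direction, so that $\cE_f$ is given by the pair $(h^+,h^-)$ with maximal domain $\Dom(\bm h)\subset\hat\CC$. We must check the three defining conditions of Epstein's definition: (i) $\cE_f$ is defined on a subdomain of a compact surface and maps to a compact surface --- here the target is $\hat\CC$ and the source $\Dom(\bm h)$ sits inside $\hat\CC$; (ii) every isolated singularity of $\cE_f$ on $\partial\Dom(\bm h)$ is essential; and (iii) $\sing(\cE_f)=\overline{\asym(\cE_f)\cup \cE_f(\crit(\cE_f))}$ is finite.

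First I would dispose of (i), which is immediate once one recalls (Corollary~\ref{extension-h}, Proposition following it, and the discussion of $h^\pm$) that $\bm h$ has a well-defined maximal analytic continuation whose domain is an open subset of the sphere with $0\in W^+$ and $\infty\in W^-$. For (iii), the key input is Lemma~\ref{th:critical-values}: the critical values of $\bm h$ are exactly the images under $\ixp$ of critical values of $\phi_A$ that lie over critical points of $f$ admitting backward orbits to the parabolic point. By Proposition~\ref{th:fatou-crit}, the critical values of $\phi_A$ are the numbers $\phi_A(v)-n$ with $v$ a critical value of $f$ in $B^f_\nu$ and $n\ge 1$; since $f$ is of finite type it has only finitely many critical values, and since $\ixp$ is $1$-periodic, the set $\ixp(\crit(\phi_A))$ is finite --- indeed it has at most one $\ixp$-class per critical value of $f$. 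Hence $\cE_f(\crit(\cE_f))$ is finite. The asymptotic values $\asym(\cE_f)$ come in two flavours: the ``structural'' ones $0$ and $\infty$ (arising from approach to the parabolic point along the repelling Fatou coordinate, exactly as in Theorem~\ref{covering-properties}), and any asymptotic value inherited from an asymptotic or logarithmic singularity of $\phi_A$, which again are controlled by $\sing(f)$ being finite (asymptotic values of $f$ propagate to at most finitely many $\ixp$-classes of asymptotic values of $\phi_A$, by the same functional-equation argument). Taking the closure of a finite set keeps it finite, so $\sing(\cE_f)$ is finite.

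The main obstacle is condition (ii): showing that $\cE_f$ has no removable or polar isolated singularities on $\partial\Dom(\bm h)$ other than the two structural punctures $0$ and $\infty$ (which, after the natural compactification $\CC/\ZZ\cup\{\oplus,\ominus\}\cong\hat\CC$, are precisely the points where $\cE_f$ takes the values $0$ and $\infty$ and is in fact analytic --- these get absorbed into the surface, not counted as essential singularities). Suppose $w_0\in\partial\Dom(\bm h)$ is an isolated boundary point at which $\bm h$ is bounded. Then $\bm h$ extends holomorphically across $w_0$, contradicting maximality of the domain of analytic continuation --- so every isolated boundary singularity is non-removable. To upgrade ``non-removable'' to ``essential'' one rules out poles: a pole of $\bm h=\ixp\circ\phi_A\circ(\ixp\circ\phi_R)^{-1}$ at an isolated $w_0$ would force $\phi_A$ to tend to $+i\infty$ along the corresponding approach in the dynamical plane, i.e.\ would force an orbit-parametrizing point $z$ with $\ixp\circ\phi_R(z)=w_0$ to converge to the parabolic point; but $(\ixp\circ\phi_R)^{-1}$ extends analytically past such points (this is exactly the mechanism giving the structural puncture), so $w_0$ would not be a boundary point of $\Dom(\bm h)$ at all. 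Hence any genuine isolated boundary singularity is essential. One must be slightly careful that $\partial\Dom(\bm h)$ can be complicated (this is the degenerate geometry emphasized in the introduction), but Epstein's condition only constrains the \emph{isolated} points of the boundary, and the argument above handles each of them; the non-isolated part of the boundary is irrelevant to the definition.

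Finally I would assemble these observations: conditions (i)--(iii) hold, so $\cE_f$ is a map of finite type, with $\sing(\cE_f)\subset\{0,\infty\}\cup\ixp(\crit(\phi_A))\cup(\text{finitely many inherited asymptotic values})$. This is exactly the assertion of the theorem. I expect the write-up to be short modulo the bookkeeping in step (ii); the one genuinely substantive point is the identification of the structural punctures $0,\infty$ as the images of the compactifying points $\oplus,\ominus$ of $\CC/\ZZ$, so that they are legitimately \emph{not} isolated essential singularities but rather points of analyticity of the compactified map --- without this remark one would wrongly fear that $\cE_f$ violates the ``every isolated singularity is essential'' clause at $0$ and $\infty$.
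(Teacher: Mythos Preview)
The paper does not provide a proof of this theorem: it is stated with the citation \cite{Ep} and attributed to Epstein's thesis without any argument in the present paper (the section is explicitly labeled ``A note on the general theory'' and merely summarizes results needed for context). So there is no ``paper's own proof'' to compare against.

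That said, your outline follows the natural strategy and is broadly correct in spirit: reduce to a single parabolic germ, identify the critical values of $\bm h$ via Lemma~\ref{th:critical-values} and Proposition~\ref{th:fatou-crit} as the finite set $\{\ixp\circ\phi_A(v):v\in\sing(f)\cap B^f\}$, and check the structural asymptotic values $0,\infty$ arise from the cylinder compactification. The one place your sketch is genuinely thin is the control of asymptotic values of $\cE_f$ beyond $\{0,\infty\}$: you assert that these are ``inherited'' from $\sing(f)$ via finitely many $\ixp$-classes, but this requires an argument that any path in $\Dom(\bm h)$ escaping to $\partial\Dom(\bm h)$ along which $\bm h$ has a limit must lift (under $(\ixp\circ\phi_R)^{-1}$) to a path in $P_R\cap B^f$ whose forward images accumulate on $\partial B^f$ in a way controlled by $\sing(f)$. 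In the paper this is carried out in full only for the quadratic model (Theorem~\ref{covering-properties}), and the general case needs the tree/tract machinery from Epstein's thesis rather than just the functional equation. Your treatment of condition (ii) is also slightly off: the ``no poles'' argument should instead observe that if $\bm h$ extended meromorphically across an isolated boundary point $w_0$, then after postcomposing with a M\"obius map one gets a removable singularity, contradicting maximality of $\Dom(\bm h)$; your dynamical justification via $\phi_A\to+i\infty$ is not quite the right mechanism.
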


Epstein further showed that the familiar properties, such as density of repelling periodic points and topological minimality, 
hold for Julia sets of analytic dynamical systems of finite type. Most importantly, he proved that the Fatou-Sullivan 
structure theory holds for the Fatou sets of such dynamical systems:

\begin{thm}\cite{Ep}
Every connected component of the Fatou set of a map of finite type is pre-periodic. All periodic Fatou components are either
 basins (attracting or parabolic), or rotation domains (Siegel disks or Herman rings).
\end{thm}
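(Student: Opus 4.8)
The plan is to reproduce the classical two-step Fatou--Sullivan argument, introducing the finite type hypothesis precisely at the two points where the rationality of $f$ was classically used: to rule out wandering components, and to rule out Baker-type periodic components.

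\textbf{Step 1 (no wandering domains).} Suppose $U$ is a wandering Fatou component, so that $U, f(U), f^2(U), \ldots$ are pairwise disjoint. I would run the Sullivan quasiconformal surgery argument, in the form adapted to non-rational maps by Baker, Eremenko--Lyubich and others. Starting from an essentially arbitrary Beltrami differential $\mu_0$ supported in $U$, spread it over the grand orbit of $U$ and extend it by zero to obtain an $f$-invariant Beltrami differential $\mu$ on the ambient compact surface $Y$, with $\|\mu\|_\infty < 1$. Integrating $\mu$ by the Measurable Riemann Mapping Theorem produces a quasiconformal homeomorphism $w_\mu$, and the conjugate $f_\mu := w_\mu \circ f \circ w_\mu^{-1}$ is again an analytic map of finite type between Riemann surfaces, with the \emph{same} combinatorics: the same genera, the same number of singular values, and the same (finitely many) essential isolated singularities with the same local behaviour. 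Letting $\mu_0$ range over an infinite-dimensional family of differentials supported on disjoint small disks in $U$ yields an infinite-dimensional family of non-equivalent maps $f_\mu$ inside the space of finite type maps with these fixed combinatorial data; but that space is finite-dimensional, its dimension being bounded in terms of the genera, $|\sing(f)|$, and the data of the essential singularities. This contradiction shows there are no wandering components, so every Fatou component is pre-periodic.

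\textbf{Step 2 (classification of periodic components).} Given a periodic component $U$ of period $p$, set $g := f^p : U \to U$; this is well defined because an orbit in the Fatou set cannot accumulate on the finite set $\sing(f)$, in particular cannot hit an essential singularity, so $g$ is analytic on $U$ with values in $U$. The classical Fatou analysis of the family $\{g^n\}$ on $U$ then applies verbatim and leaves only the possibilities: $U$ is the immediate basin of an attracting (or superattracting) fixed point of $g$; $U$ is the immediate basin of a parabolic fixed point of $g$ on $\partial U$; $g$ is conformally conjugate on $U$ to an irrational rotation of a disk or of an annulus; or $g^n$ converges locally uniformly on $U$ to a point $\zeta \in \partial U$ at which $g$ is not defined (a Baker-type component). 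The delicate new point is to exclude the last case: since the only singularities of $f$, hence of $g$, are isolated essential ones, $\zeta$ would have to be such a singularity, and I would rule this out by analyzing the dynamics of $g$ near $\zeta$ via the Great Picard Theorem together with the finiteness of $\sing(f)$, following Epstein's treatment. Granting this, the classification in the statement follows; the rotation domains are recognized as Siegel disks or Herman rings according as $U$ is simply or doubly connected.

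\textbf{Main obstacle.} The genuinely hard part is Step 1, and within it the assertion that finite type maps with prescribed combinatorial data form a finite-dimensional family: in the rational case this is the elementary fact that degree-$d$ rational maps constitute a $(2d+1)$-dimensional variety, but in Epstein's setting one must instead bound a Teichm\"uller-type deformation dimension by a careful accounting near the ends of the source surface and near the essential singularities, and one must check that the surgery of Step 1 genuinely stays within the finite type class. This bookkeeping, rather than any isolated estimate, is where the real work lies.
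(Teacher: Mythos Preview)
The paper does not provide a proof of this theorem: it is simply cited from Epstein's thesis \cite{Ep} as background, in the section ``A note on the general theory for analytic maps of finite type,'' with no argument given. So there is no paper proof to compare against.

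Your sketch is a reasonable outline of how Epstein's argument actually goes: Sullivan's quasiconformal deformation argument for non-wandering, with the finite-dimensionality of the relevant deformation space replacing the finite-dimensionality of degree-$d$ rational maps, followed by the classical Fatou classification with Baker domains ruled out. You correctly identify the genuine difficulty as establishing that the deformation space of finite type maps with fixed combinatorics is finite-dimensional; this is indeed where the substance of Epstein's thesis lies, and your sketch honestly flags it as the ``main obstacle'' rather than pretending to resolve it. One small correction: in Step 2 you write that an orbit in the Fatou set ``cannot accumulate on the finite set $\sing(f)$'' --- this is not quite right as stated (orbits in a parabolic basin do accumulate on the parabolic point, which can be a singular value), and what you need is only that the forward orbit stays in $\Dom(f)$, which follows because Fatou components are forward invariant. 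But this is a minor slip in an otherwise sound sketch.
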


\section{A class of analytic mappings invariant under $\cP$}
\subsection{Definition of $\cla$}
We will now specialize to a much narrower class of analytic maps with parabolic orbits. As before, we write 
$$f_0(z)=z+z^2,$$ and we set
 $$F\equiv\pr(f_0)\text{ and }W_F=\Dom(\pr(f_0)).$$

\begin{defn}
\label{defn-class-0}
We will denote $\clab$ the class of  analytic germs $f(z)$
at the origin which have maximal analytic extensions
 $$f:\cD(f)\to\hat\CC$$
such that 
\begin{itemize}
\item[(I)] $\cD(f)$ is a simply-connected domain;
\item[(II)] Denoting $\varphi_f$ the Riemann mapping
$$\varphi_f:\DD\to \cD(f),\text{ with }\varphi_f(0)=0\text{ and }\varphi'_f(0)>0,$$
we have
$$f\circ \varphi_f(z)=v\cdot F\circ \varphi_{F}(e^{2\pi i\theta}z),\text{ for }v\neq 0,\;\theta\in\RR.$$
\end{itemize}

Note that denoting $c$ and $c^F$ the unique critical values of $f$ and $F=\pr(f_0)$ respectively, we get the rescaling factor as $v=c/c^F$.

\end{defn}
If we again let $\bm{k}$ be the normalized $ \bm{h}_K$ for the Koebe function $K(z)$,
 then, by \propref{straighten}, the property (II) in the
above definition is equivalent to
\begin{itemize}
\item[(II')] Denoting $\varphi_f$ the Riemann mapping
$$\varphi_f:\DD\to \cD(f),\text{ with }\varphi_f(0)=0\text{ and }\varphi'_f(0)>0,$$
we have
$$f\circ \varphi_f(z)=v\cdot \bm{k}(e^{2\pi i\theta}z),\text{ for }v\neq 0,\;\theta\in\RR.$$
\end{itemize}

\noindent
We note:
\begin{prop}
For all $f\in\clab$ we have $f''(0)\neq 0$.
\end{prop}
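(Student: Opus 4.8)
The plan is to unpack what Definition~\ref{defn-class-0} (equivalently its form (II$'$)) says about the second Taylor coefficient of $f$ at $0$ and trace it back to known facts about $\bm{k}$. Concretely, $f\circ\varphi_f(z)=v\cdot\bm{k}(e^{2\pi i\theta}z)$ with $v\neq0$, where $\varphi_f:\DD\to\cD(f)$ is the Riemann map fixing $0$ with $\varphi_f'(0)>0$; since $\bm{k}(0)=k^+(0)=0$ and $\bm{k}$ is analytic at $0$ (Lemma~\ref{th:W-nbhd-of-0}), the right-hand side vanishes at $z=0$, consistent with $f(0)=0$. Differentiating twice in $z$ at $z=0$ and using the chain rule gives
\[
(f\circ\varphi_f)''(0)=f''(0)\,\varphi_f'(0)^2+f'(0)\,\varphi_f''(0)=v\,e^{4\pi i\theta}\,(k^+)''(0).
\]
So the assertion $f''(0)\neq0$ will follow once we know $(k^+)''(0)\neq0$, i.e. that the parabolic fixed point $0$ of $k^+$ is a \emph{simple} (order-$2$) parabolic point rather than a degenerate one.

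Thus the real content is: the normalized \'Ecalle–Voronin germ $k^+$ of the Koebe function $K$ has a simple parabolic fixed point at $0$. But this is exactly the statement that $f_0$ (equivalently $K$) is renormalizable with a normalized renormalization: Proposition~\ref{renormalizable1} says $f_0$ and $K$ are renormalizable, which by Definition~\ref{renormalizable_bis} means precisely that the coefficient of $z^2$ in $h^+$ (for any normalization of the Fatou coordinates) does not vanish; and the normalization defining $\bm{k}=(k^+,k^-)$ in Definition~\ref{defn-big-H} is one such choice. Hence $(k^+)''(0)\neq0$, and in fact after the normalization in Definition~\ref{defn-par-ren_bis} one has $F=\cP(f_0)$ with $F(z)=z+z^2+\cdots$, so the coefficient in question is literally $2$ after that rescaling; either way it is nonzero.

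So the proof I would write is short: combine the chain-rule identity above with $v\neq0$, $\varphi_f'(0)>0$ (in particular $\varphi_f'(0)\neq0$), and $(k^+)''(0)\neq0$ (from Proposition~\ref{renormalizable1} together with Definitions~\ref{renormalizable_bis} and~\ref{defn-big-H}) to conclude $f''(0)\neq0$. No estimate is needed; it is purely formal manipulation plus citation of the already-established renormalizability of $f_0$.

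The only point requiring a little care — and the place I'd expect a referee to push — is the bookkeeping of normalizations: one must check that the particular normalization of the Fatou coordinates implicit in Definition~\ref{defn-big-H} (which pins down $\bm{k}$ among all representatives of the \'Ecalle–Voronin invariant of $K$) is covered by the phrase ``for some — and hence for all — choices of normalization'' in Definition~\ref{renormalizable_bis}; this is immediate from the rescaling formulas (\ref{eq:change-of-normalization})–(\ref{eq:rescaling-factors}), since rescaling $h^+(w)\mapsto\lambda_A h^+(\lambda_R^{-1}w)$ multiplies the coefficient of $w^2$ by $\lambda_A\lambda_R^{-2}\neq0$ and so cannot turn a nonzero second coefficient into a zero one. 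Everything else is a one-line differentiation, so this is the main (very mild) obstacle.
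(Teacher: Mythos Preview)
Your chain-rule identity is correct, but it does not give the conclusion you draw from it. From
\[
f''(0)\,\varphi_f'(0)^2+f'(0)\,\varphi_f''(0)=v\,e^{4\pi i\theta}\,(k^+)''(0)
\]
and $(k^+)''(0)\neq 0$, you cannot conclude $f''(0)\neq 0$: the term $f'(0)\,\varphi_f''(0)$ is present and nonzero in general (indeed $f'(0)\neq 0$ follows from the first-derivative identity $f'(0)\varphi_f'(0)=v e^{2\pi i\theta}$, and there is no reason whatsoever for the Riemann map of $\cD(f)$ to satisfy $\varphi_f''(0)=0$). So the displayed equation is perfectly compatible with $f''(0)=0$; nothing you cite constrains $\varphi_f''(0)$. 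The entire remainder of your write-up---verifying $(k^+)''(0)\neq 0$ via Proposition~\ref{renormalizable1} and tracking the normalizations---is fine but beside the point, since the implication you need fails at the very first step.

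The paper's argument is genuinely different and dynamical rather than formal. It assumes $f''(0)=0$, normalizes to $f(z)=z+az^n+\cdots$ with $n>2$, and uses that the Leau--Fatou flower then has $n-1\ge 2$ attracting petals. Since a map in $\clab$ has a \emph{single} critical value, at least one attracting direction has a petal whose basin contains no postcritical point; the attracting Fatou coordinate then extends univalently to an increasing union of simply connected pullbacks whose image is all of $\CC$, contradicting hyperbolicity of a domain sitting inside $\cD(f)$. The key input is not renormalizability of $f_0$ but the covering constraint (a single critical value) built into the definition of $\clab$, combined with the multi-petal structure that $f''(0)=0$ would force. Your approach does not touch this mechanism, and a purely second-order Taylor computation cannot, because pre-composition by a conformal map mixes $f''(0)$ with $\varphi_f''(0)$.
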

\begin{proof}
Suppose $f''(0)=0$. By multiplying by a non-zero constant, if necessary, we may reduce the proof to the
case when $f'(0)=1$, so that $f(z)=z+az^n+\cdots$ for $n>2$. A Leau-Fatou flower of $f$ has $n-1\geq 2$ attracting
petals. Since $f$ has a single critical value $c^f$, there exists an ample petal $P_A$ such that
$$P_A\cap \cup_{k\in\NN} f^k(c^f)=\emptyset.$$
Let $\tlphi_A$ be an attracting Fatou coordinate defined on $P_A$. Set $P_0\equiv P_A$ and inductively define
$P_{-n}$ as the component of the preimage $f^{-1}(P_{-(n-1)})$ which contains $P_{-(n-1)}$ for $n\in\NN$.
The function $\tlphi_A$ analytically extends via the functional equation $$\tlphi_A\circ f(z)=\tlphi_A(z)+1$$
to the union $$U\equiv \cup P_{-n}\subset \Dom(f).$$ A simple induction shows that 
$\tlphi_A$ is univalent on $P_{-n}$, and hence on all of $U$. On the other hand, $\phi_A(U)=\CC$, which is impossible
since $U$ is a hyperbolic domain. 
\end{proof}

\begin{defn}
\label{defn-class}
We define $\clac$ as the set of maps $f\in\clab$ of the form
$$f(z)=z+z^2+\cdots$$
at the origin.

We further define $\claa\subset \clac$ as the collection of maps $f\in\clac$ such that the domain $\cD(f)$ has locally connected
boundary. Finally,  $\cla\subset \claa$ consists of maps whose domain of definition is Jordan.
\end{defn}

\noindent
Since $F=\pr(f_0)\in\cla$, we have
\begin{prop}
The class $\cla$ is non-empty.
\end{prop}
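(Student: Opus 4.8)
The plan is to verify directly that $F \equiv \cP(f_0) \in \cla$; since $\cla \subset \claa \subset \clac \subset \clab$, it suffices to check each defining property for this one map, and in fact all of the work has already been carried out in the preceding sections.

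First I would note that $f_0$ is renormalizable by \propref{renormalizable1}, so $F = \cP(f_0)$ is defined, and that by Definition \ref{defn-par-ren_bis} the germ $F$ at the origin is the (unique) rescaling $w \mapsto \lambda_A\, h^+(\lambda_R^{-1}w)$ of the \'Ecalle--Voronin germ $h^+$ of $f_0$ normalized so that $F(z) = z + z^2 + \cdots$; this normalization is precisely what places $F$ in $\clac$ once membership in $\clab$ is checked. The key input for the latter is \thmref{quad-extension}: the germ $h^+$ has a maximal analytic continuation to a Jordan domain $\hat W^+ \ni 0$ in $\hat\CC$. Because a rescaling by nonzero constants is a M\"obius automorphism of $\hat\CC$ fixing $0$ and carries maximal analytic continuations to maximal analytic continuations, $F$ has a maximal analytic extension $F: \cD(F) \to \hat\CC$ whose domain $\cD(F) = \lambda_R \hat W^+$ is again a Jordan domain containing $0$. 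In particular $\cD(F)$ is simply connected — condition (I) of Definition \ref{defn-class-0} — it has locally connected boundary, so $F \in \claa$, and its boundary is a Jordan curve, so $F \in \cla$.

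It remains only to observe that condition (II) of Definition \ref{defn-class-0} is vacuous for the map $f = F$: with rescaling factor $v = 1$ and $\theta = 0$ it reads $F \circ \varphi_F(z) = F \circ \varphi_F(z)$, which is trivially true — and this is consistent with the remark in the definition that $v = c/c^F$, which equals $1$ when $f = F$. Hence $F \in \cla$ and the class is non-empty. I do not anticipate any real obstacle here: the substance resides entirely in the earlier results, principally \thmref{quad-extension} (which rests in turn on \thmref{cauliflower} and \thmref{th:W-Jordan-domain}) together with \propref{renormalizable1}. The only points that call for a little care are the bookkeeping that passing from $h^+$ to its rescaling $F$ preserves the property ``maximal domain of analyticity is a Jordan domain,'' and the recognition that the self-referential condition (II) is automatically satisfied.
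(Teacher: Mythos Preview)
Your proposal is correct and is essentially the same as the paper's argument: the paper simply asserts ``Since $F=\cP(f_0)\in\cla$'' and deduces the proposition, relying (as you do) on \thmref{quad-extension} and \propref{renormalizable1} together with the observation that condition (II) is tautological for $f=F$. You have just made explicit the routine checks that the paper leaves to the reader.
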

%%We note that the parabolic renormalizations of maps in $\cla$ have the
%%same covering properties:

\noindent
We now set out to prove the following theorem:
\begin{thm}
\label{class-2}
Every $f\in\cla$ is renormalizable, and  the parabolic renormalization $\pr(f)\in\cla$.
\end{thm}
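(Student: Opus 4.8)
The plan is to reduce the theorem to three geometric facts about a map $f\in\cla$, and then to assemble results already established. The three facts are: the immediate basin $B^f_0$ is a Jordan domain; the restriction $f\colon B^f_0\to B^f_0$ is a degree-$2$ branched covering; and the attracting Fatou coordinate $\phi_A$ of $f$ does not continue analytically across any point of $\partial B^f_0$.

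First I would read off the covering structure of $f$. Condition (II) of Definition~\ref{defn-class-0} (equivalently (II$'$), via \propref{straighten}) identifies $f$, after uniformizing $\cD(f)$ by $\varphi_f$ and rescaling the range, with the model $\bm k$, whose covering data were determined in \thmref{covering-properties}: one critical value and exactly two asymptotic values, $0$ and $\infty$; moreover $\bm k$ is conformally modelled (through \propref{straighten}, \propref{th:determine-h}) on the Koebe function $K(z)=z/(z-1)^2$, whose ramification is completely explicit, so the single critical point has local degree $2$. Hence $f$ has a unique critical point, of local degree $2$, a unique critical value $c^f$, and asymptotic values $0,\infty$. Starting from a small attracting petal $P^{0}:=P_A$ and letting $P^{-n}$ be the component of $f^{-1}(P^{-(n-1)})$ containing $P^{-(n-1)}$, the engulfing argument of \propref{renormalizable1} (cf.\ \propref{v in petal}) applies: if no $P^{-n}$ contained $c^f$, then $\bigcup_n P^{-n}$ would be uniformized by $\CC$ through the extension of $\phi_A$, contradicting that it lies inside the hyperbolic domain $\cD(f)$. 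So some $P^{-n}$ captures $c^f$; for large $n$ the map $f\colon P^{-n}\to P^{-(n-1)}$ is then a degree-$2$ branched cover with its only critical value in the target, and $B^f_0=\bigcup_n P^{-n}$ is a simply connected domain carrying a degree-$2$ branched self-cover.

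Next I would show $B^f_0$ is a Jordan domain and $\phi_A$ is non-continuable across its boundary. Here I would transport the explicit covering combinatorics of $K$ through the uniformization of Condition (II) and reproduce, exactly as in the proof of \thmref{cauliflower} (whose puzzle-piece strategy was advertised as reusable), a nested family of puzzle pieces for $f$ inside $\cD(f)$ whose diameters shrink to $0$; the induced parametrization exhibits $\partial B^f_0$ as a Jordan curve. Non-continuability of $\phi_A$ follows from the density in $\partial B^f_0$ of the iterated $f$-preimages of $0$ (visible from the $K$-model, or from Lavaurs' theorem), near which $\phi_A\to\infty$. \emph{This step is the main obstacle}: Condition (II) supplies only a uniformized identification of $f$ with the model, \emph{not} a dynamical conjugacy with $F=\pr(f_0)$, so one must argue that the relevant covering data and basin geometry are nonetheless rigid enough for the shrinking argument to go through for an arbitrary $f\in\cla$.

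With the three facts in hand the rest is routine. By \thmref{th:blaschke-model} applied to $f$, the restriction $f|_{B^f_0}$ is conformally conjugate to the quadratic Blaschke product $B|_{\DD}$, hence to $f_0|_{B^{f_0}_0}$; since $B^f_0$ is Jordan, Carath\'eodory's theorem makes the conjugacy and its inverse continuous at $0$, so \propref{th:conjugate-on-B_zero} yields a conformal isomorphism $\psi\colon W^+_f\to\hat W^+$ with $\psi(0)=0$ and $h^+_f=v'\cdot h^+_{f_0}\circ\psi$ for some $v'\neq 0$. In particular $W^+_f\cong\hat W^+$ is hyperbolic, so the argument of \propref{renormalizable1} applies to $\bm h_f$ and shows that the coefficient of $z^2$ in the normalized $\bm h_f$ is nonzero, i.e.\ $f$ is renormalizable. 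Also, \thmref{th:W-Jordan-domain} (whose hypotheses are precisely the two facts about $B^f_0$) shows that $\cD(\bm h_f)$ is a union of two Jordan domains, so $\cD(\pr(f))$, which is $W^+_f$ up to the affine rescaling built into the normalization defining $\pr(f)$, is a Jordan domain. Finally, pushing $h^+_f=v'\cdot h^+_{f_0}\circ\psi$ through the affine normalizations defining $\pr(f)$ and $F$ gives $\pr(f)=c\cdot F\circ\Theta$ on $\cD(\pr(f))$ with $c\neq 0$ and $\Theta\colon\cD(\pr(f))\to\Dom(F)$ a conformal isomorphism fixing $0$; since both domains are simply connected, $\Theta=\varphi_F\circ(z\mapsto e^{2\pi i\theta'}z)\circ\varphi_{\pr(f)}^{-1}$, which turns the identity into $\pr(f)\circ\varphi_{\pr(f)}(z)=c\cdot F\circ\varphi_F(e^{2\pi i\theta'}z)$. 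This is Condition (II), and together with $\pr(f)(z)=z+z^2+\cdots$ and the Jordan property it gives $\pr(f)\in\cla$.
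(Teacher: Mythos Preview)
Your overall architecture matches the paper's exactly: reduce \thmref{class-2} to the statement that $B^f_0$ is a Jordan domain on which $f$ is a degree-$2$ branched cover (this is \thmref{class-1a}, together with \propref{basin-simply-connected}), then use \thmref{th:blaschke-model} to obtain a conformal conjugacy $\chi$ between $f|_{B^f_0}$ and $f_0|_{B^{f_0}_0}$, and finally push the \'Ecalle--Voronin data of $f_0$ over to $f$ via $\chi$ to verify condition~(II) for $\pr(f)$. Your final assembly is in fact a bit cleaner than the paper's, since you invoke \propref{th:conjugate-on-B_zero} directly to obtain $h^+_f=v'\cdot h^+_{f_0}\circ\psi$, whereas the paper reproves this relation by hand with an explicit fundamental-domain construction in a repelling petal.

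The gap you flag is real and is exactly where the paper's work lies. Your suggestion to ``transport the covering combinatorics of $K$ through the uniformization of Condition~(II)'' and rerun the puzzle argument of \thmref{cauliflower} does not work as stated: the relation in~(II) is \emph{not} a dynamical conjugacy (the uniformizers $\varphi_f$, $\varphi_F$ do not intertwine $f$ and $F$), so the puzzle pieces for $f_0$ or $K$ do not transport to puzzle pieces for $f$. The paper must build the puzzle intrinsically for $f$: it identifies a distinguished boundary point $t^f\in\partial\cD(f)$ (the ``tip of the tail'', \propref{tip1}), constructs primary and secondary cuts landing there (\propref{tip2}, \propref{contain1}), and only then runs a shrinking-pieces argument (\propref{th:5a}) adapted to the geometry of $\cD(f)$. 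This is the content of \thmref{class-1a}/\thmref{class-1c} and occupies the bulk of Section~4. Your sketch is correct as a roadmap, but the step you call the ``main obstacle'' is the theorem, and it is not a routine transport.

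One minor point: your argument for non-continuability of $\phi_A$ across $\partial B^f_0$ via ``$\phi_A\to\infty$ near preimages of $0$'' is not quite right; the paper instead uses that the \emph{critical points} of $\phi_A$ (the iterated $f$-preimages of the critical point) accumulate on all of $\partial B^f_0$, which it reads off from the conjugacy $\chi$ and the corresponding fact~(\ref{closure-crit}) for $f_0$.
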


Let $f\in\clab$ and let $P_0$ be an attracting petal of the parabolic point $z=0$. For $n\geq 1$ let
$P_{-n}$ be the component of $f^{-1}(P_{-(n-1)})$ which contains $P_{-(n-1)}$. We let 
$$B_0^f\equiv \cup P_{-n},$$
and call it the {\it immediate basin} of $0$. We note:
\begin{prop}
\label{critval1}
For $f\in\clab$ the unique critical value $c^f\in B_0^f$. Moreover, there exists an attracting petal $P_A\subset B_0^f$
which is a topological disk containing $c^f$ in its interior.
\end{prop}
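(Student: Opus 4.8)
The plan is to run, for a general $f\in\clab$, the same argument that proves \propref{renormalizable1}. The first thing to pin down is the covering data of $f$. By property~(II) of Definition~\ref{defn-class-0} we can write $f=A\circ F\circ\Xi$, where $F=\cP(f_0)$, $A(w)=vw$ with $v\neq 0$, and $\Xi\colon\cD(f)\to\cD(F)$ is a conformal isomorphism; hence the singular values of $f$ are exactly those of $F$ multiplied by $v$. By \thmref{covering-properties} this forces $f$ to have a single critical value, namely $c^f$, and exactly two asymptotic values: the parabolic point $0$ and one further value $a^f$. Since in the model $c^f=\ixp\circ\tlphi_A(-1/4)$ and $\ixp$ never vanishes, $c^f\neq 0$, so I would choose the starting petal $P_0$ to be a small ample petal with $c^f\notin\overline{P_0}$ (the immediate basin $B^f_0$ does not depend on this choice). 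I would also record the locations of the two asymptotic values: $0\in\partial P_{-n}$ for every $n$, so $0\notin B^f_0$; and $a^f\notin\overline{B^f_0}$ — for $F=\cP(f_0)$ this second value is $\infty$, which lies in $\hat W^-$ and hence outside $\hat W^+=\cD(F)\supset B^F_0$. Thus $\sing(f)$ meets $B^f_0$ in at most the point $c^f$.

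Step~1 is to prove $c^f\in B^f_0$ by contradiction: assume $c^f\notin P_{-n}$ for all $n\geq 0$. Then each disk $P_{-(n-1)}$ is disjoint from $\sing(f)$, so $f$ restricted to any component of $f^{-1}(P_{-(n-1)})$ is an unbranched covering onto the simply connected set $P_{-(n-1)}$, hence a conformal isomorphism; by induction every $P_{-n}$ is a topological disk and $f\colon P_{-n}\to P_{-(n-1)}$ is a conformal isomorphism. Since $P_0$ is ample, the attracting Fatou coordinate $\tlphi_A$ extends via $\tlphi_A\circ f=\tlphi_A+1$ to a holomorphic function on $B^f_0=\bigcup_n P_{-n}$, univalent on each $P_{-n}$, and in fact univalent on all of $B^f_0$: if $\tlphi_A(z_1)=\tlphi_A(z_2)$ then for large $n$ we get $\tlphi_A(f^n(z_1))=\tlphi_A(f^n(z_2))$ with $f^n(z_i)\in P_0$, so $f^n(z_1)=f^n(z_2)$, and then $z_1=z_2$ by pulling back through the isomorphisms. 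Because $\tlphi_A(P_{-n})=\tlphi_A(P_0)-n$ and an ample petal maps under $\tlphi_A$ onto a set containing a right half-plane (\propref{covershalfplane}), the image $\tlphi_A(B^f_0)$ is all of $\CC$. Thus $\tlphi_A$ would be a conformal isomorphism $B^f_0\to\CC$; but $B^f_0$ is a subdomain of $\cD(f)$, which is conformally a disk, and there is no nonconstant holomorphic map $\CC\to\DD$ — contradiction. Hence $c^f\in B^f_0$.

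Step~2 handles the ``moreover''. Let $n_0\geq 1$ be least with $c^f\in P_{-n_0}$; by minimality $c^f\notin P_{-(n_0-1)}$, so, exactly as in Step~1, $P_{-(n_0-1)}$ is disjoint from $\sing(f)$ and $f\colon P_{-n_0}\to P_{-(n_0-1)}$ is a conformal isomorphism. In particular $P_{-n_0}$ is a topological disk, $f$ is injective on it, and the interior of $P_{-n_0}$ contains $c^f$. To finish I would verify that $P_{-n_0}$ is an attracting petal in the sense of Definition~\ref{defn-petal}: conditions (1) and (2) are immediate ($0\in\partial P_{-n_0}$, $f$ injective on $P_{-n_0}$); for (3), a routine induction from axiom~(3) for $P_0$ together with the local normal form at the parabolic point gives $\overline{P_{-n}}\setminus\{0\}\subset P_{-(n+1)}$ for all $n$, whence $f(\overline{P_{-n_0}}\setminus\{0\})\subset\overline{P_{-(n_0-1)}}\setminus\{0\}\subset P_{-n_0}$; and (4)–(5) follow since every orbit starting in $P_{-n_0}$ enters $P_0$ after finitely many steps, where uniform convergence to $0$ from the attracting direction holds. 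Setting $P_A:=P_{-n_0}$ completes the proof.

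The step I expect to cost the most care is the bookkeeping of asymptotic values underlying Step~1 — specifically the claim that the second asymptotic value $a^f$ (i.e.\ $\infty$ in the $\cP(f_0)$-model) lies outside $\overline{B^f_0}$, which is what guarantees that each $P_{-(n-1)}$ really is disjoint from $\sing(f)$ and that the preimages $P_{-n}$ remain topological disks. For $F=\cP(f_0)$ this follows because $\hat W^+$ and $\hat W^-$ are disjoint (\thmref{quad-extension}, \thmref{th:W-Jordan-domain}), and for a general $f\in\clab$ it must be transported through the conformal identification $\Xi$ of property~(II); once this is secured the remainder is the template argument above. A secondary, purely local, point is the verification of petal axioms (3)–(5) for $P_{-n_0}$ in Step~2, which is routine but uses the parabolic normal-form theory rather than just the global covering picture.
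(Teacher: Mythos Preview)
Your argument is correct and follows essentially the same route as the paper: assume $c^f\notin B_0^f$, show by induction that each $P_{-n}$ is a topological disk on which $\tlphi_A$ is univalent, conclude that $\tlphi_A$ is a conformal isomorphism from $B_0^f$ onto all of $\CC$, and derive a contradiction from the hyperbolicity of $B_0^f\subset\cD(f)$. The paper's proof is terse (it leaves the ``moreover'' part to the reader and does not spell out the singular-value bookkeeping), whereas you fill these details in.

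One minor remark: your concern about the second asymptotic value is slightly overcautious. Since $f=v\cdot F\circ\Xi$ with $F=\cP(f_0)$ and the asymptotic values of $F$ are $0$ and $\infty$, the asymptotic values of $f$ are $v\cdot 0=0$ and $v\cdot\infty=\infty$ for \emph{every} $f\in\clab$; there is no need to transport a separate statement through $\Xi$. Consequently $\sing(f)=\{0,\infty,c^f\}$, and since $P_{-n}\subset\cD(f)\subset\CC$ and $0\in\partial P_{-n}$ for all $n$, the only possible singular value in any $P_{-n}$ is $c^f$ --- exactly what you need for the covering argument. The rest of your write-up (including the verification of the petal axioms for $P_{-n_0}$) is a reasonable expansion of what the paper calls ``elementary''.
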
 
\begin{proof}
Assume that $v\notin B_0^f$. The attracting Fatou coordinate $\tlphi_A^f$ extends from $P_0$ to the whole of $B_0^f$
via the functional equation 
$$\tlphi_A^f\circ f(z)=\tlphi_A^f(z)+1.$$
A simple induction shows that $P_{-n}$ is an increasing sequence of topological disks, on each of which 
$\tlphi_A$ is univalent. Hence, $\phi_A$ restricted to $B_0^f$ is a conformal homeomorphism onto the image.
Yet it is clear that the image of $\phi_A$ restricted to $B_0^f$ is the whole of $\CC$, which contradicts the
fact that $B_0^f$ is a hyperbolic domain. The second part of the statement is elementary, and is left to the reader.
\end{proof}

\noindent
Pushing the argument a little further, we have:

\begin{prop}
\label{basin-simply-connected}
For $f\in\clab$ the immediate basin $B_0^f$ is simply connected and contains exactly one critical point of $f$.
The restriction $f:B_0^f\to B_0^f$ is a degree-$2$ branched covering.
\end{prop}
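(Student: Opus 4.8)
The plan is to realize the immediate basin $B^f_0=\bigcup_{n\ge 0}P_{-n}$ as an increasing union of topological disks on which $f$ is a degree‑two branched cover, applying \lemref{proper5} at each stage; the first thing to nail down is the covering data of $f$. By property (II$'$) of Definition~\ref{defn-class} we have $f\circ\varphi_f=v\cdot\bm{k}\circ R_\theta$ on $\DD$, where $R_\theta(z)=e^{2\pi i\theta}z$ and $\varphi_f:\DD\to\cD(f)$ is the conformal uniformization. Since $R_\theta$ and $\varphi_f$ are conformal isomorphisms and $w\mapsto vw$ is a M\"obius map fixing $0$ and $\infty$, the critical points, critical values and asymptotic values of $f$ are the images, under these coordinate changes, of those of $\bm{k}$. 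By \thmref{covering-properties}, $\bm{k}$ has one critical value and exactly the two asymptotic values $0,\infty$; hence $f:\cD(f)\to\hat\CC$ has unique critical value $c^f$ (a finite nonzero number) and asymptotic values $0,\infty$, so $\sing(f)=\{0,\infty,c^f\}$, three distinct points. Moreover $f(\cD(f))=v\cdot\bm{k}(\DD)\subset\CC$, so $f$ omits the value $\infty$, whence $\infty\notin B^f$. Finally, the local degree of $f$ at any critical point equals that of $\bm{k}$ at the corresponding point, and the latter is $2$: the critical points of $\bm{h}=(\ixp\circ\phi_A)\circ(\ixp\circ\phi_R)^{-1}$ have local degree $2$ because $\ixp$ and $(\ixp\circ\phi_R)^{-1}$ are locally injective while the critical points of the attracting Fatou coordinate of $K$ are simple (cf.\ the discussion around \propref{th:fatou-crit}).

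I would then run the induction. By \propref{critval1} we may take $P_0:=P_A$ to be an ample attracting petal which is a topological disk with $c^f$ in its interior and $\overline{P_0}\subset\cD(f)$; note $f(P_0)\subseteq P_0$. With $P_{-n}$ the component of $f^{-1}(P_{-(n-1)})$ containing $P_{-(n-1)}$ we get $P_0\subset P_{-1}\subset\cdots$ and $B^f_0=\bigcup_{n\ge 0}P_{-n}$. The claim, proved by induction on $n$ with hypothesis ``$P_{-(n-1)}$ is a topological disk'' (true for $n=1$), is that $f:P_{-n}\to P_{-(n-1)}$ is a proper holomorphic map of degree $2$, that $P_{-n}$ is a topological disk, and that $f$ has a single critical point $u_n$ in $P_{-n}$ with $f^{-1}(c^f)\cap P_{-n}=\{u_n\}$. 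Granting properness of $f:P_{-n}\to P_{-(n-1)}$ (the step below), the disk $P_{-(n-1)}$ contains the only singular value $c^f$ of $f$ in its interior, so \lemref{proper5} yields that $P_{-n}$ is a topological disk and $f^{-1}(c^f)\cap P_{-n}$ is a single critical point $u_n$; the induced covering $P_{-n}\setminus\{u_n\}\to P_{-(n-1)}\setminus\{c^f\}$ has degree equal to the local degree of $f$ at $u_n$, namely $2$. Since $u_{n-1}\in P_{-(n-1)}\subseteq P_{-n}$ and $f(u_{n-1})=c^f$, necessarily $u_{n-1}=u_n$, so all the $u_n$ coincide with one point $u\in B^f_0$.

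The technical core, which I expect to be the main obstacle, is the properness of $f:P_{-n}\to P_{-(n-1)}$ --- that is, controlling $f$ near the possibly very irregular boundary $\partial\cD(f)$. The key observation is that any compact $\cK\subset P_{-(n-1)}$ lies at a positive distance from the parabolic point $0$ (because $0\in\partial P_0\subset\partial P_{-(n-1)}$) and from $\infty$ (because $\infty\notin B^f\supseteq P_{-(n-1)}$), hence from $\asym(f)=\{0,\infty\}$; its only singular value can be $c^f$. An Iversen‑type argument then shows $(f|_{P_{-n}})^{-1}(\cK)$ cannot accumulate on $\partial\cD(f)$: a sequence $z_m\in P_{-n}$ with $f(z_m)\in\cK$ and $z_m\to\zeta\in\partial\cD(f)$ would, along a subsequence, have $f(z_m)\to w\in\cK$, but over a small disk about $w$ whose closure meets $\sing(f)$ in at most $\{c^f\}$ the map $f$ splits locally into sheets each carrying compacta to compacta (using \lemref{proper5} near $c^f$), trapping the $z_m$ in a compact subset of $\cD(f)$ --- a contradiction. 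Equivalently, one may transport the question through $\varphi_f$ and $v$ to $\bm{k}$, whose covering behaviour over subdomains is already understood from the analysis of the Koebe function $K$. Being closed in $P_{-n}$ and non‑accumulating on $\partial\cD(f)$, $(f|_{P_{-n}})^{-1}(\cK)$ is compact, so $f:P_{-n}\to P_{-(n-1)}$ is proper.

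Assembling the conclusion: $B^f_0=\bigcup_n P_{-n}$ is an increasing union of topological disks, hence connected and simply connected, and contains exactly the one critical point $u$ of $f$ lying in it. Also $f(B^f_0)=\bigcup_n f(P_{-n})=\bigcup_n P_{-(n-1)}=B^f_0$; and for compact $\cK\subset B^f_0$, choosing $n$ with $\cK\subset P_{-(n-1)}$, a short argument using \propref{proper1} and the constancy of degree shows that the only component of $f^{-1}(P_{-(n-1)})$ meeting $B^f_0$ is $P_{-n}$, so $(f|_{B^f_0})^{-1}(\cK)=(f|_{P_{-n}})^{-1}(\cK)$ is compact. Hence $f:B^f_0\to B^f_0$ is proper --- a branched covering of finite degree --- and evaluation on a regular value in $P_0$ shows the degree equals $2$.
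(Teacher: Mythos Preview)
Your approach is essentially the same as the paper's: build $B_0^f$ as the increasing union of the $P_{-n}$ and apply \lemref{proper5} inductively to see that each $P_{-n}$ is a disk and $f:P_{-n}\to P_{-(n-1)}$ is a degree-$2$ branched cover. The paper's proof is a two-line sketch that simply invokes \propref{critval1} and then says ``inductively applying \lemref{proper5}''; you have filled in the main point the paper suppresses, namely the properness of $f:P_{-n}\to P_{-(n-1)}$ needed as input to \lemref{proper5}, together with the degree-count argument showing $f^{-1}(P_{-(n-1)})\cap B_0^f=P_{-n}$, which makes the final properness of $f:B_0^f\to B_0^f$ explicit.
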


\begin{proof}
By \propref{critval1}, there exists  $n$ such that $P_{-n}$ contains the critical value $c^f$. Inductively applying
\lemref{proper5}, we see that for $k\in\NN$ the domain $P_{-(n+k)}$ is a topological disk, and $f:P_{-(n+k)}\to P_{-(n+k-1)}$
is a branched covering of degree $2$ with a single, simple critical point. 
\end{proof}

The proof of \thmref{class-2} will rely on the following key result, which is a direct analogue of \thmref{cauliflower}.
\begin{thm}
\label{class-1a}
Let $f\in\cla$, and denote $B^0_f$ the immediate basin of the parabolic point $0$ of $f$.
Then  $B_f^0$ is a Jordan domain. Denote $\hat f$ the continuous extension of $f$ to $\partial B_0^f$. There exists
a homeomorphism $\rho:\partial B_0^f\to\TT$ such that
$$\rho(\hat f(z))=2\rho(z)\mod 1.$$
\end{thm}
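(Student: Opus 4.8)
The plan is to adapt the cross-cut/puzzle-piece argument used for $\thmref{cauliflower}$ to the map $f\in\cla$, exploiting the model $\bm{k}$ for the covering structure of $f$ on a neighborhood of $\partial\cD(f)$. First, I would fix notation: by $\propref{basin-simply-connected}$, $B^f_0$ is simply connected and $f:B^f_0\to B^f_0$ is a degree-$2$ branched cover with a single simple critical point. By $\thmref{th:blaschke-model}$ (whose hypotheses are exactly what $\propref{basin-simply-connected}$ provides) there is a conformal isomorphism $\varphi:B^f_0\to\DD$ with $f=\varphi^{-1}\circ B\circ\varphi$, where $B(z)=(3z^2+1)/(3+z^2)$ is the quadratic Blaschke product fixing $1$ parabolically. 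So the dynamics of $f$ on $B^f_0$ is conformally conjugate to $B$ on $\DD$; the content of the theorem is that $\varphi$ extends to a homeomorphism $\overline{B^f_0}\to\overline\DD$ (equivalently, that $\partial B^f_0$ is a Jordan curve), and that on the boundary circle $B$ is topologically conjugate to angle-doubling. The latter is standard once the boundary is known to be a Jordan curve: $B|_{\TT}$ is an expanding degree-$2$ circle covering with a parabolic fixed point at $1$, hence (via the standard symbolic coding, exactly as in $\propref{th:9}$) topologically conjugate to $\theta\mapsto 2\theta$.

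So the crux is: $\partial B^f_0$ is a Jordan curve. This is where I would spend almost all the effort, and it is the main obstacle — one cannot simply quote $\thmref{cauliflower}$ because $f$ is not a polynomial and $\partial\cD(f)$ carries the bad geometry; the only structural input is that $\cD(f)$ is a Jordan domain and that $f$ near $\partial\cD(f)$ looks like $\bm{k}$ near $\partial\DD$ (Definition~\ref{defn-class-0}(II$'$)). The strategy is to build a dynamically-defined fundamental chain of cross-cuts of $B^f_0$ accumulating on $\partial B^f_0$ and show the associated prime-end structure is that of a circle. Concretely: using the conjugacy $\varphi:B^f_0\to\DD$ and the Blaschke model, for each large $j$ let $\sigma_j$ be the component containing the attracting direction of the $\varphi$-preimage of the round circle $\{|z|=1-1/j\}\subset\DD$ — or, more intrinsically, take nested preimages under iterates of the repelling inverse branch $f^{-1}$ on a repelling petal $P_R$, as in $\thmref{th:W-Jordan-domain}$ and $\thmref{th:blaschke-model}$. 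The key geometric fact to establish is that the $\sigma_j$ are honest cross-cuts of $B^f_0$ whose diameters shrink to $0$: this follows from the Koebe distortion theorem applied to $\varphi^{-1}$ together with the Blaschke-model fact that $B^{-n}$ of a round circle shrinks geometrically in the Euclidean metric of $\DD$ away from the parabolic fixed point, and shrinks at the parabolic fixed point by the parabolic rate — the same dichotomy handled in cases (1)–(3) of the proof of $\propref{th:5}$.

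Having produced a fundamental chain with diameters $\to 0$, I would then run the coding argument of $\secref{sec:quadratic-example}$ essentially verbatim: the $2^n$ depth-$n$ pieces (preimages under $f^n$ of a fundamental crescent in $B^f_0$, cut out by $K^{-n}$-type arcs as in $\propref{th:preimages-of-gamma}$, transported through the $\bm{k}$-model) have diameters tending to $0$ uniformly, so each infinite binary address pins down a single boundary point, the coding is injective modulo the usual dyadic ambiguity ($\lemref{ambiguity}$, $\corref{th:7}$), and the resulting map $\theta\mapsto z(\theta)$ from $\RR/\ZZ$ to $\partial B^f_0$ is a continuous bijection from a compact space, hence a homeomorphism ($\propref{th:6}$). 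This identifies $\partial B^f_0$ with a Jordan curve and simultaneously yields the semiconjugacy $\rho:\partial B^f_0\to\TT$ with $\rho(\hat f(z))=2\rho(z)\bmod 1$, $\rho$ being the inverse of the coding map; $\hat f$ is the continuous extension of $f$, which exists because the coding is compatible with the dynamics ($f$ shifts addresses). The one genuinely delicate point, which I would isolate as a lemma analogous to $\lemref{thm:convergence-in-prime-ends}$, is the comparison between Euclidean convergence $z_n\to\partial B^f_0$ and prime-end convergence: here I would use that along any orbit the points eventually leave a fixed compact ``vertical-strip'' region $\Delta$ near $0$ (by $\propref{attr dir}$, orbits approach $0$ from the repelling direction's complement), reducing exactly to the situation of $\lemref{thm:convergence-in-prime-ends}$. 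Uniform shrinking of the puzzle pieces, proved by the normal-families/Montel argument of $\propref{th:5}$ transported through the conformal model, is the technical heart and the step I expect to be hardest to make fully rigorous.
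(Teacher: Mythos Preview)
Your proposal has a genuine circularity at its technical heart. You propose to show that the cross-cuts $\sigma_j=\varphi^{-1}\{|z|=1-1/j\}$ have Euclidean diameter tending to $0$, and you claim this ``follows from the Koebe distortion theorem applied to $\varphi^{-1}$.'' But Koebe distortion gives you nothing of the sort: it controls $\varphi^{-1}$ on compact subsets of $\DD$, not near $\partial\DD$. Whether $\varphi^{-1}$ sends circles near $\partial\DD$ to sets of small Euclidean diameter is, by Carath\'eodory theory, \emph{equivalent} to $\partial B^f_0$ being locally connected---which is precisely what you are trying to prove. The same circularity infects your proposed Montel argument ``transported through the conformal model'': any shrinking statement proved for the Blaschke product $B$ on $\DD$ transports back to $B^f_0$ only if you already control the boundary behaviour of $\varphi^{-1}$. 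Working entirely inside $B^f_0$ cannot break this loop.

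The paper avoids this by constructing puzzle pieces in the \emph{ambient plane}, not inside $B^f_0$. The key object you are missing is the ``tip of the tail'' $t^f\in\partial\cD(f)\cap\overline{B^f_0}$ (\propref{tip1}) together with an access $\gout$ to $t^f$ from \emph{outside} $\overline{\cD(f)}$; it is exactly here that the hypothesis $f\in\cla$ (so $\cD(f)$ is Jordan, hence every boundary point is accessible from the complement) is used. From $\gin\cup\gout$ and its pullbacks the paper builds a Jordan domain $U\supset B^f_0$ (\propref{contain1}) and a secondary cut $\Lambda$ lying in $U\setminus B^f_0$, yielding two inverse branches $G_0,G_1:U\setminus\hat\Gamma\to U^\pm$. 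The resulting puzzle pieces $\bA_{i_0\cdots i_n}$ are two-sided neighbourhoods of boundary arcs, and their shrinking (\propref{th:5a}) is proved by a direct Montel argument on inverse branches defined on a simply-connected planar neighbourhood of $\overline{\bA_{01}}$ disjoint from the postcritical set---no appeal to $\varphi^{-1}$ or Koebe is needed. Once the pieces shrink, the coding argument you sketch does go through exactly as in \secref{sec:quadratic-example}.
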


%%%%%%%%%%%%%%%%%%%%%%%%%%%%%%%%%%%%%%%%%%%%%%%%%%%%%%%%%%%%%%%%%%%%%%%%%%%%%%%%%%%%%%%%%%%%%%%%%%%%%%%%%%%
\ignore{
\begin{proof}
To begin the proof  let us construct accesses to the parabolic point $0$ of $f$ both from the inside and from the 
outside of $B^0_f$. 
Let $P_R^f\subset \Dom(f)$ be a repelling petal of $0$.
By Epstein-Fatou-Sullivan classification of the Fatou components of maps of finite type \cite{Ep}, the projection
$\tl B_f^0$  of 
$\overline{B_f^0}\cap P_R^f$ to the repelling cylinder $\cC_R^f$ has two polar connected components which do not intersect.
Consider any simple closed equatorial loop $l$ in $\cC_R^f$ which lies in the complement of $\tl B_f^0$. Its lift to $P_R^f$ is a simple curve  
$\gamma_\text{out}$ which lies outside $\overline{B^0_f}$ and has $0$ as one of the end points.

To construct an access $\gamma_\text{in}\subset B^0_f\cap P_A^f$ we follow a similar procedure. We denote $\tl c\in \cC_A^f$ the projection of the
unique critical value orbit of $f$, and consider a simple equatorial loop $l$ in $\cC_A^f$ which passes through $\tl c$ (we can simply take the circle
$\{\Im(z)=\Im(\tl c)\}\subset \CC/\ZZ\simeq \cC_A^f$ in this case). The lift $\gamma_\text{in}$ of the curve $l$ in this case
is an access to $0$ from the inside of the immediate basin, which in addition has the virtue of containing all of the critical value orbit of $f$
except at most finitely many points. We set 
$$\Gamma=\gamma_\text{out}\cup\gamma_\text{in}\cup\{0\}.$$

Denote $g\subset B^0_f$ the component of the preimage of $\gamma_\text{in}$ which does not intersect with $\gamma_\text{in}$.
By the Carath{\'e}odory Theorem, the conformal map 
$$z\mapsto \phi_f(e^{-2\pi i\theta}\phi_F^{-1}(z))$$
extends to a homeomorphism $\overline{\Dom(F)}\mapsto \overline{\Dom(f)}$. This, and the covering properties of $F$, imply that
the curve $g$ lands at a single boundary point $t\in \partial \Dom(f)$. 
Let $t_{-n}\in f^{-n}(t)\cap\partial B^0_f$ be a point in $P_R^f$. Denote $\gamma'_\text{in}\subset B^0_f$ the preimage of $\gamma_\text{in}$ which
terminates at $t_{-n}$.  
Since $\Dom(f)$ is a Jordan domain, the point $t$ is accessible 
from outside of $\Dom(f)$. Hence, the point $t_{-n}$ is accessible from outside $\overline{B^0_f}$. Let us denote $\chi\subset P_R^f$ such an access.
We set 
$$\Gamma'=\gamma'_{\text in}\cup\chi\cup\{t_{-n}\}.$$

Again using the Epstein-Fatou-Sullivan Theorem, we connect the outer ends of the curves $\Gamma$ and $\Gamma'$ by two arcs 
$\tau_\text{in}\subset B^0_f$ and $\tau_\text{out}\subset \Dom(f)\setminus \overline{B^0_f}.$ We denote $Q_0$ the region bounded
by the union $\Gamma\cup\Gamma'\cup\tau_\text{in}\cup\tau_\text{out}$.
Observe, that the way we constructed $\gamma_\text{in}$ allows us to assume in addition that
$Q_0$ does not intersect with the closure of the critical orbit of $f$.

We note that by construction:

$\bullet$ $Q_0\cap \partial B^0_f$ is connected.

Note that preimages of $\overline{Q_0}$ form a basis of closed neighborhoods for $\partial B^0_f$. 
Consider a sequence of preimages  $Q_{-n}\cap B^0_f\neq \emptyset$ with the property $f:Q_{-n}\mapsto Q_{-n+1}$ for $n\geq 1$.
We claim:

$\bullet$ $\diam Q_{-n}\rightarrow 0$.

Indeed, the univalent inverse branches 
$$\{f_{-j}:Q_0\to Q_{-j}\}$$
form a normal family by Montel's Theorem. If we assume that 
$$\diam Q_{-n}\nrightarrow 0,$$
then
$$f_{-j_k}|_{Q_0}\rightrightarrows h,$$
which is a non-constant analytic function defined on $Q_0$. Denote $G=h(Q_0)$. By construction, $G$ intersects the Julia set
$J(f)$. By Montel's Theorem again, there exists a (possibly empty) set $E\subset \hat\CC$ with $|E|\leq 2$ such that for every compact set 
$K\subset( \hat\CC\setminus E)$ there exists an iterate $f^j(G)$ which covers all of 
$K$. This is clearly impossible.

We can thus conclude:

$\bullet$ $\partial B^0_f$ is locally connected. 

Finally, the Epstein-Fatou-Sullivan Theorem implies that $\partial B^0_f$ is a Jordan curve.

\end{proof}

%%%%%%%%%%%%%%%%%%%%%%%%%%%%%%%%%%%%%%%%%%%%%%%%%%%%%%%%%%%%%%%%%%%%%%%%%%%%%%%%%%%%%%%%%%%%%%%%%%%%%%%%%%%
}

The proof of \thmref{class-1a} is quite involved, as it will require a detailed understanding of the covering properties
of a map in $\cla$. Let us show how \thmref{class-1a} implies \thmref{class-2}:

\begin{proof}
Consider a Riemann map $$v:\DD\to B^f_0$$ which maps $0$ to
the sole critical point of $f$ inside $B^f_0$.
By \thmref{class-1a} and Carath{\'e}odory Theorem, 
the Riemann map has a continuous extension to a homeomorphism $\tl v:\overline{\DD}\to \overline{ B^f_0}$. 

%%Consider an $f$-invariant path $\l\subset B^f_0$ which is given by a positive real ray in the attracting Fatou
%%coordinate. The path $l$ lands at $p=0$. This access to $p=0$
%%specifies a unique prime end $\tl p$, whose impression $I(\tl p)$ contains $0$.
%%Since $f\in\cla$, the point $0$ is also accessible from the outside of the basin (take a path connecting $\zeta$ to $f(\zeta)$ outside of the
%%closure of the basin, and pull it in by the dynamics). 
%%Thus, we can extend the path $l$ to cross the boundary of the basin at $0$.
%%We can use the preimages of $l$ 
%%to construct a repelling petal $P_R$ whose boundary curve intersects the basin boundary at three points only: $0$ and two preimages of $0$.
%%Denoting $f^{-1}$ the branch defined in $P_R$ and fixing the origin, we have 
%%$$f^{-n}(P_R\cap B^0_f)=f^{-n}(P_R)\cap B^0_f\to \{0\}.$$
%%While $f^{-n}(P_R)\cap B^0_f$ is not a cross-cut neighborhood of $0$, it can be completed to a cross-cut neighborhood by adding a small arc 
%%inside the basin. Hence, 
%%$I(\tl p)=\{p\}$. 

Let us further normalize the Riemann map $\DD\to B^f_0$ by requiring that
$\tl v(1)=0.$ This specifies the mapping uniquely, and we denote it $\psi_f$.

By \thmref{th:blaschke-model}, 
$$(\psi_f)^{-1}\circ f\circ \psi_f=B:\DD\to\DD\text{, where }B(z)= \frac{3 z^2 + 1}{3 + z^2}.$$

%%%%%%%%%%%%%%%%%%%%%%%%%%begin ignore%%%%%%%%%%%%%%%%%%%%%%%%%%%%%
\ignore{
The map $B$ is a degree two Blaschke product, with a single simple ramification point at the origin.
It  fixes the boundary point $1$. 
%%To see this, consider, for example, the $B$-invariant path $\psi_f(l)$
%%which lands at $1$.
By the local dynamics of $f$, the point  $1$ attracts some orbits of $B$ in $\bar\DD$, and hence $|B'(1)|\leq 1$.

Assume that $|B'(1)|<1$. Then there exists an arbitrarily small crosscut $\gamma$ forming a crosscut 
neighborhood $N$  of the boundary point $p=0$ in $B^f_0$,
whose  forward orbit  is entirely contained in $N$. This clearly contradicts the local dynamics of the parabolic point $p=0$,
since the boundary points $\partial\gamma\subset\partial B^f_0$ lie in a repelling petal of $f$.

Hence $|B'(1)|=1$, which evidently means $B'(1)=1$. 
Applying Schwarz reflection about the circle we see that the parabolic point
$1$ has at least two attracting directions, and hence $B''(1)=0$.

An inspection shows that the unique Blaschke product of degree two with the properties $B'(0)=0$, $B(1)=1$, $B'(1)=1$, $B''(1)=0$ is
given by the formula
$$B(z)=\frac{3z^2+1}{3+z^2}.$$
}
%%%%%%%%%%%%%%%%end ignore%%%%%%%%%%%%%%%%%%%%%%

 Let us set 
$$\chi=\psi_{f_0}\circ (\psi_f)^{-1}:B_0^f\mapsto B_0^{f_0}.$$
By the discussion above, this mapping is a conjugacy:
$$\chi\circ f|_{B_0}=f_0\circ \chi|_{B_0}.$$

Let us fix 
$$D=\{|w|<|c^f|\},$$ and let $g$ be the branch of $f^{-1}$ fixing $0$ which is defined in $D$.
For the moment, the map $\cP_f$ is only defined locally in a neighborhood of $0$, which is {\it a priori} dependent on the
choice of a repelling petal $P_R^f$. Let us fix a petal $P_R^f\subset D$. 

The map $F=\cP(f_0)$ has a maximal extension to a Jordan domain $W_{F}$. Let us use
 \thmref{cauliflower} to select a simple arc $\gamma$ connecting $0$ with $a\in J_{f_0}$ inside $B_0^{f_0}\cap P_R^{f_0}$ in
some repelling petal of $f_0$, such that:
\begin{itemize}
\item ${\gamma}\cap J_{f_0}=\{ a\}$;
\item $\gamma\cap f_0(\gamma)=\{0\}$;
\item denoting $V_0\subset P_R^{f_0}$ the Jordan domain bounded by $\gamma$, $f_0(\gamma)$, and the 
Jordan subarc of $J_{f_0}$ between $a$ and $f_0(a)$, we have a one-to-one correspondence between points in $W_{f_0}$ and
points in $$\hat V_0\equiv (V_0\cup\gamma)\setminus\{ a\};$$
\item $\hat V\equiv \chi(\hat V_0)\subset P_R^f$.
 \end{itemize}
Denote $\cW\subset \cC_R^f$ the quotient of $\hat V$ by the action of $f$. Let $W\subset \CC$ be the image of $\cW$ by
$\ixp\circ \phi_R^f$ with the puncture at $0$ filled. By construction, $W$ is a Jordan domain, which contains a neighborhood
of $0$. We claim that:
\begin{enumerate}
\item  the germ $\cP(f)$ analytically continues to all of $W$, and, moreover, 
\item $\cP(f)$ cannot be analytically continued
to a neighborhood of any point $z\in\partial W$.
\end{enumerate}

Since $V\subset P_R^f\cap B_0^f$, the first statement is immediate by the definition of $\cP(f)$. 
Furthermore, by (\ref{closure-crit}), the set of critical points 
$$\text{Crit}(\tlphi^f_A)=\chi(\text{Crit}(\tlphi^{f_0}_A))$$
contains $\partial B_0^f$ in its closure. Since $\partial W$ is a projection of a Jordan subarc of $\partial B_0^f$,
this implies (2).
We have thus shown the existence of a Jordan domain $$\Dom(f)=W.$$

Let $$\varphi_f:\Dom(f)\to\DD$$
be the unique Riemann mapping normalized as in Definition (\ref{defn-class-0}). The uniqueness of the Riemann mapping implies
that, up to a pre-composition with a rotation, it is given by
the composition
$$\ixp\circ \phi_R^f\circ \chi^{-1}\circ (\phi_R^{f_0})^{-1}\circ \ixp^{-1}\circ \varphi_{F},$$
with suitably chosen inverse branches.

The map
$\chi$ induces a conformal isomorphism between attracting Fatou cylinders $\cC^f_A\to\cC^{f_0}_A$. 
After uniformizing the cylinders by $\CC^*$, it takes the form
$$\tau\equiv \ixp\circ\phi_A^{f_0}\circ\chi\circ (\phi_A^f)^{-1} \circ \ixp^{-1}:\CC^*\to\CC^*$$
(with suitably chosen inverse branches). By \propref{uniqueness-fatou},
$\tau$ is a multiplication by a non-zero constant.

%Note that the domain $W_f$ is a projection of the intersection $\bar C_R^f\cap B_0^f$ under the repelling Fatou coordinate $\phi_R^f$ of $f$. A diagram chase implies that, up to a pre-composition with a rotation, 
%the conformal map $\varphi_f$ is equal to 

A diagram chase now implies that
$$\cP_f\circ \varphi_f(z)=v\cdot F\circ \varphi_{F}(e^{2\pi i\theta}z)$$
for some $v\neq 0$ and $\theta\in\RR$. Since the unique critical value of the mapping on the left coincides with that of
the mapping on the right, $v=c^{\cP(f)}/c^{F}$.

%%Consider the domain $W_B\ni 0$ of the parabolic renormalization $\pr(B)$. We have 
%%$$\pr(B)= \pr_f\circ (\ixp\circ \phi_R^f) \circ \psi_f\circ(\ixp\circ \phi_R^B)^{-1},$$
%%and the claims readily follow.

\end{proof}

Let $f\in\cla$. By definition of parabolic renormalization, for any repelling petal $P_R^f$ the projection of the intersection
$P_R^f\cap B_0^f$ by $\ixp\circ \phi_R^f$ lies in the domain $\Dom(\cP(f))$. We remark, that it covers all of it:
\begin{rem}
\label{remark-intersection}
For any choice of $P_R^f$,
the projection
$$\ixp\circ\phi_R^f(P_R^f\cap B_0^f)\cup\{0,\infty\}$$
is a union of two disjoint Jordan domains $W^+\ni 0$ and $W^-\ni\infty$ with 
$$W^+=\Dom(\cP(f)).$$
\end{rem}
We begin the proof with a lemma:

\begin{lem} \label{th:22a}
Let $D$ be a Jordan domain containing $0$ such that
\begin{itemize}
\item $c^f\notin D$;
\item there exists a Jordan arc $\tau\ni c^f$ in  $\hat\CC \setminus D$ running to $\infty$ such that
$\tau \cap B^f_0$ is connected.
\end{itemize}
Analytically extend $g$ to  $D$.
 If $w \in B^f_0\cap D$, then $g(w) \in B^f_0$.
\end{lem}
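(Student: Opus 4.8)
The plan is to enlarge the domain of the inverse germ $g$ from $D$ to the slit sphere $\hat\CC\setminus\tau$ and then to read off the conclusion from a single connectedness argument. First I would check that the germ $g$ of $f^{-1}$ fixing $0$ continues to a single-valued analytic map $g:\hat\CC\setminus\tau\to\cD(f)$. For $f\in\cla$ the set of singular values is $\sing(f)=\{c^f,0,\infty\}$: by \propref{straighten} and \thmref{covering-properties}, $f$ has $c^f$ as its only critical value and exactly $0$ and $\infty$ as asymptotic values. Since $\tau$ is a Jordan arc joining $c^f$ to $\infty$, the domain $\hat\CC\setminus(\tau\cup\{0\})$ contains no singular value of $f$, so $f$ is a covering over it and $g$ continues along every path there. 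The one point that must be treated separately is the asymptotic value $0$, which lies in $\hat\CC\setminus\tau$: here one uses that $0$ is a \emph{simple, non-critical} fixed point of $f$, so the local inverse fixing $0$ is unramified at $0$; hence the monodromy of $g$ around a small loop about $0$ is trivial and $g$ extends across $0$ by its own germ. Since $\tau\cap D=\emptyset$, this $g$ agrees on $D$ with the given extension.

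Next I would observe that $B^f_0\setminus\tau$ is connected. Indeed, $\tau\cap B^f_0$ is connected by hypothesis and contains the endpoint $c^f$ of $\tau$, while $\infty\notin\overline{B^f_0}$ (the domain $\cD(f)$ being bounded), so $\tau\cap B^f_0$ is a half-open subarc of $\tau$ with one free end at the interior point $c^f$ and the other running out to $\partial B^f_0$; removing such an arc from the simply connected domain $B^f_0$ (\propref{basin-simply-connected}) leaves it connected. Put $S:=g(B^f_0\setminus\tau)$, which is connected as the image of a connected set under the continuous injection $g$. I would then exhibit a seed point in $S\cap B^f_0$: taking a small attracting petal $P_A\subseteq B^f_0$ of $f$ at $0$ (\propref{critval1}), the petal property makes $f|_{P_A}$ injective with $f(P_A\setminus\{0\})\subseteq P_A$, so $(f|_{P_A})^{-1}$ is an analytic branch of $f^{-1}$ fixing $0$ and therefore equals $g$ near $0$; choosing $w_0\in f(P_A)$ with $w_0\neq0$, close enough to $0$, and off $\tau$ (possible since $0\notin\tau$), we get $w_0\in B^f_0\setminus\tau$ and $g(w_0)=(f|_{P_A})^{-1}(w_0)\in P_A\subseteq B^f_0$. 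Finally, $S$ cannot meet $\partial B^f_0$: if $\zeta=g(w)\in\partial B^f_0$ for some $w\in B^f_0\setminus\tau$, then $\zeta\in\cD(f)$ and $f(\zeta)=w$, but by \thmref{class-1a} the continuous extension of $f$ carries $\partial B^f_0$ into itself, forcing $f(\zeta)\in\partial B^f_0$, which contradicts $w\in B^f_0$. A connected set that meets $B^f_0$ but not $\partial B^f_0$ is contained in $B^f_0$, so $S\subseteq B^f_0$; since $\tau\cap D=\emptyset$ gives $B^f_0\cap D\subseteq B^f_0\setminus\tau$, this yields $g(B^f_0\cap D)\subseteq B^f_0$.

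The step I expect to be the main obstacle is the analytic continuation of $g$ to all of $\hat\CC\setminus\tau$. The cut $\tau$ contains the critical value $c^f$ and the point $\infty$ but \emph{not} the asymptotic value $0$, so covering-space theory alone does not control $g$ near $0$, and one has to use the special nature of the parabolic fixed point to rule out monodromy there. The rest — connectedness of $B^f_0\setminus\tau$, the seed point coming from local parabolic theory, and the invariance of $\partial B^f_0$ under $f$ from \thmref{class-1a} — is routine given the results already in place.
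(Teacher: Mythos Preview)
Your proof is correct and follows the same overall plan as the paper's: both extend the germ $g$ to a single-valued branch $\hat g$ on the slit plane $\CC\setminus\tau$, note that $B^f_0\setminus\tau$ is connected, and produce a seed point near $0$ where $\hat g$ lands in $B^f_0$. The two arguments diverge only in the final step. The paper lifts an arc $\gamma\subset B^f_0\setminus\tau$ through the degree-$2$ branched cover $f:B^f_0\to B^f_0$ supplied by \propref{basin-simply-connected}; the lift $\tilde\gamma$ starting at $g(z_0)$ stays in $B^f_0$, and uniqueness of lifts identifies $\tilde\gamma$ with $\hat g\circ\gamma$. You instead show that the connected image $S=g(B^f_0\setminus\tau)$ cannot meet $\partial B^f_0$, invoking \thmref{class-1a}.

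That forward reference to \thmref{class-1a} is not circular (the lemma is only used in Remark~\ref{remark-intersection}, which plays no role in the proof of \thmref{class-1c}), but it is heavier than necessary. Since $B^f_0$ is a connected component of the open set $B^f$, one has $\partial B^f_0\cap B^f=\emptyset$; as $B^f$ is backward invariant in $\cD(f)$, a point $\zeta\in\partial B^f_0\cap\cD(f)$ with $f(\zeta)\in B^f_0\subset B^f$ would force $\zeta\in B^f$, a contradiction. With this replacement your argument becomes logically equivalent to the paper's and avoids the forward reference. Your explicit treatment of the asymptotic value $0$ in the continuation step (trivial monodromy because $g$ is already analytic on a neighbourhood of $0$) is a genuine improvement over the paper's ``standard considerations''.
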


\begin{proof}
Standard considerations imply 
that $g$ extends to a branch $\hat{g}$ of $f^{-1}$ defined
and analytic on $\CC\setminus \tau$.
%
%{\bf Proof of sublemma.} Let $\tau_1$ denote the straight-line segment
%from $0$ to $w_c$. Then $\tau_1 + \tau$ is a Jordan arc in $\Sphere$,
%and $\Complex \setminus (\tau_1 + \tau)$ is a simply connected domain
%in $\Complex$ over which $f$ is a (strict sense) covering. Hence, if
%we pick any $w_0 \in \Delta \setminus \tau_1$, there is a unique
%branch $\hat{g}$ of $f^{-1}$ defined and analytic on $\Complex
%\setminus (\tau_1 + \tau)$ which agrees with $g$ at $w_0$. This
%$\hat{g}$ doesn't quite do what we want, since it is not defined on
%$\Delta \cap \tau_1$. However, both $\hat{g}$ and $g$ are inverse
%branches of $f$ defined and analytic on the connected set $\Delta
%\setminus \tau_1$, and they agree at $w_0$ and hence, by analytic
%continuation, on all of $\Delta \setminus \tau_1$. We can therefore
%extend $\hat{g}$ to $\Delta \cap \tau_1$ by putting it equal to $g$
%there; the extended function does what we want.
%
Note that $B^f_0 \setminus\tau$ is also connected.
For a point $z_0$ on the
negative real axis and near to $0$, the asymptotic development 
for the attracting Fatou coordinate (\propref{crude asymptotics}) implies that $z_0$ and $g(z_0)\approx z_0$
both lie in $B_0^f$.

Let $z \in B^f_0 \setminus \tau$; then there is a Jordan arc
$\gamma$ in $B^f_0 \setminus \tau$ from $z_0$ to $z$. By the covering
properties of $f$, there is a unique lift $\tilde{\gamma}$ of $\gamma$
starting at $g(z_0)$. Furthermore, this lift is contained in $B^f_0$,
so its end point is in $B^f_0$. On the other hand, for $\hat{g}$ as above,
$$s \mapsto \hat{g}(\gamma(s))$$ is another lift with
the same starting point. By uniqueness of lifts, it coincides with
$\tilde{\gamma}$. In particular, the end point of $\tilde{\gamma}$ is
$\hat{g}(z)$. Since we already know that $\tilde{\gamma}$ lies in
$B^f_0$, it follows that $\hat{g}(z) \in B^f_0$. In particular, if
$z\in D$, then $\hat{g}(z) = g(z)$, so $g(z)
\in B^f_0$, as asserted.
\end{proof}

\begin{proof}[Proof of Remark \ref{remark-intersection}]
Extend the local inverse $g$ to all of $P_R^f$. If necessary, replace $P_R^f$ with $g^n(P_R^f)$ for a sufficienly
large $n\in\NN$ to guarantee that there exists a 
domain $D$ as described in \lemref{th:22a} such that $P_R^f\subset D$.

Denote $$W=\ixp\circ\phi_R^f(P_R^f\cap B_0^f)\text{ and }V=\ixp\circ\phi_R^f(P_R^f\setminus B_0^f).$$
\lemref{th:22a} 
implies that $W$ and $V$ are disjoint. The common boundary $J=\partial W=\partial V$ is the projection
 $$J=\ixp\circ\phi_R^f(P_R^f\cap \partial B_0^f).$$
By \thmref{class-1a}, $J$ is a union of two disjoint Jordan curves. Hence, $W$ is a union of two Jordan domains $W^+\ni 0$ 
and $W^-\ni \infty$,
bounded by the components of $J$. 
On the other hand, $\Dom(\cP(f))$ is also a Jordan domain, which does not intersect $J$. Hence, it is contained
in the component of $W$ which surrounds $0$, and is, in fact, equal to it by the maximality of the analytic continuation of $\cP(f)$.
\end{proof}

%%%%%%%%%%%%%%%%%%%%%%%%%%%%%%%%%%%%%%%%%%%%%%%%%%%%%%%%%%%%%%%%%
\ignore{
Let us denote $E:\DD\to\hat \CC$ the mapping $(c_0)^{-1}\cdot\pr(f_0)\circ\varphi_{f_0}$ as described above. As follows from the above Proposition,
this map is universal in the following sense: for every $f\in\cla$, we have
$$\pr(f)\circ \varphi_f(z)=cE(e^{2\pi i\theta}z).$$
Thus a parabolic renormalization of a map in $\cla$ has the same topological covering structure as the map $E$. 
We note that $E$ allows the following 
description. Consider the rational mapping
$$R(z)=\frac{5z^2-1}{3z^2+1}.$$
This quadratic rational map can be described as the {\it mating} of the quadratic polynomials $z\mapsto z^2-2$ and
$z\mapsto z+z^2$ (see the original paper \cite{Do1}, as well as \cite{Mil2}, \cite{YZ} for a discussion of mating).
  The critical points of $R$ are at $0$ and $\infty$.
The orbit of the finite critical point is
$$0\mapsto -1\mapsto 1,$$
and the Julia set $J(R)=[-1,1]$. The point $1$ is a simple parabolic, $R'(1)=1$. By considerations of real symmetry,
the domain of $\cP(R)$ is a round disk (the projection of the repelling Fatou  petal minus $[-1,1]$). Hence,
$E$ coincides with $\cP(R)$ up to a rescaling of the domain and the range.

\begin{figure}
\label{round_domain_fig}
\centerline{\includegraphics[width=0.8\textwidth]{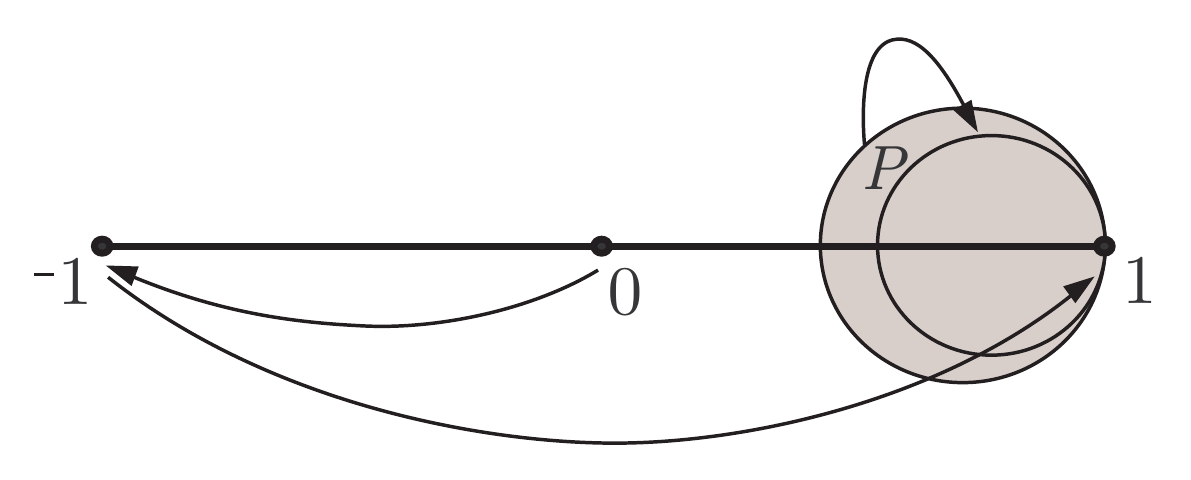}}
\caption{The Julia set of the map $R$. Also indicated is the orbit of the critical point $0$, as well as the 
image of an attracting Fatou petal $P$.}
\end{figure}

As a corollary of \propref{class-1}, we have the following invariance property for the class $\cla$:

\begin{prop}
\label{class-2}
For every $f\in\cla$ its parabolic renormalization $\pr(f)\in\cla$.
\end{prop}

\begin{proof}
For the quadratic map $f_0:z\mapsto z+z^2$ the covering properties of the map $\pr(f_0)$ imply 
that for every sufficiently large disk $D\ni 0$ there exists a simply connected subdomain 
$V\subset W_{\pr(f_0)}$ which is mapped by $\pr(f_0)$ onto $D$ as a branched covering of degree $2$,
except at the origin.

By \propref{class-1},
for an arbitrary $f\in\cla$, the parabolic renormalization $\pr(f)$ 
inherits the same property. The
rest of the proof is straightforward.
\end{proof}

Let us denote $\clas$ the space of analytic maps 
\begin{equation}
\label{decomposition}
f=E\circ \psi:W\to \hat \CC
\end{equation}
 where $E$ is the mapping from \propref{class-1},
and $\psi$ is a conformal mapping of a simply-connected domain $W\ni 0$ to $\DD$,  with the property $\psi(0)=0$, normalized at the origin so that
$f(z)=z+z^2+\cdots$.
We then have
$$\cP:\cla\to \clas\subset \cla.$$

%%%%%%%%%%%%%%%%%%%%%%%%%%%%%%%%%%%%%%%%%%%%%%%%%%%%%%%%%%%%%%%%%%%%%%
}

\subsection{The structure of the immediate parabolic basin of a map in $\claa$}
In this section we will prove several results about lifts of parametrized paths of the form
$$s:[0,1]\to\hat\CC.$$
We will always assume $s(t)$ to be continuous on $(0,1)$. We will say that $s$ {\it lands} at a
point $a\in\hat\CC$ if $$\lim_{t\to b}s(t)=a\text{ for }b\in\{0,1\}.$$
We will generally use the same letter $s$ to denote the function $s(t)$ and the curve
$s([0,1])$ which is its range. We will call 
the image of the open interval $(0,1)$ under $s(t)$ an {\it open} path, and will denote it by $\cir{s}$.
Several times we will encounter the situation when there is a domain $W\subset\hat\CC$ and a curve 
$$s:(0,1)\mapsto{W},$$
which lands at a point $w\in\partial W$ (to fix the ideas, assume that $s(0)=w$). If $w$ has more than one prime end in $W$, then there is
a unique prime end $\hat w$ such that for every prime end neighborhood $N(\hat w)$ we have $s\cap N(\hat w)\neq\emptyset$.
In this case, we will write
$$s(t)\underset{t\to 0+}{\lra}\hat w.$$

Let $f\in\claa$. 
Let $s\mapsto\gamma(s)$ be a continuous path such that:
\begin{itemize}
\item $\gamma(0)=\gamma(1)=0$;
\item $\gamma(s)\neq c^f$ for all $s\in(0,1)$;
\item the winding number $W(\gamma,c^f)=1$;
\item there is an $\eps\in(0,\pi/2)$ such that $\gamma\cap D_\eps(0)$ lies in the sector
$\{\text{Arg}(z)\in(-\pi-\eps,-\pi+\eps)\}$.
\end{itemize}
Standard path-lifting considerations imply that there exists a unique continuous mapping 
$s\mapsto\tl\gamma(s)$, defined for $0\leq s<1$ such that
$$\tl\gamma(0)=0,\text{ and }f(\tl\gamma(s))=\gamma(s)\text{ for }0\leq s<1.$$
We claim:
\begin{prop}
\label{tip1}
We have the following. 
\begin{enumerate}
\item For any loop $\gamma$ as above,
there exists a limit  
$$t\in\partial\Dom(f)=\lim_{s\to 1}\tl\gamma(s).$$ 
\item This point (which we will denote $t^f$) is the same for all loops $\gamma$ satisfying the above properties.
\item For $f=F\equiv \pr(f_0)$, the point $t^F$ is the projection of the inverse orbit 
$$z_{-n}=(\finv{0})^n(-1)$$
by $\ixp\circ \tlphi_R$.
\item Similarly, for $f=H\equiv \pr(h)$, the point $t^H$ is the projection of the inverse orbit 
$$z_{-n}=K^{-n}(1),$$
where the inverse branch is selected to preserve the interval $(0,1)$.

\end{enumerate}
\end{prop}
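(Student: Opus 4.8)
## Proof proposal for Proposition \ref{tip1}

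\emph{The plan is to} establish the existence of the landing point (item 1) by a fundamental-chain / crosscut argument analogous to the one used in the proof of \thmref{th:blaschke-model}, then deduce items 3 and 4 by explicit identification in the model maps $F$ and $K$, and finally derive item 2 as a consequence of the conjugacies furnished by \propref{th:conjugate-on-B_zero} (or, more directly, by \thmref{class-2} and its proof), since every $f \in \claa$ has its immediate basin conformally conjugate to that of $f_0$ (equivalently $K$) in a way that respects the combinatorial structure. \textbf{First} I would observe that $\tl\gamma$ is an arc in $B^f_0$: indeed $\gamma$ is a loop based at $0$ winding once around the unique critical value $c^f$, which by \propref{critval1} lies in $B^f_0$; since $\gamma \cap D_\eps(0)$ lies in the sector around the attracting direction $-1$, the lift $\tl\gamma$ begins at $0$ along that same direction, hence enters an attracting petal, hence stays in $B^f_0$ by the covering description of $f : B^f_0 \to B^f_0$ from \propref{basin-simply-connected}. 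Because $\gamma$ is a loop around $c^f$, the lift cannot close up at $0$ and must terminate at the \emph{other} preimage of $0$ under the relevant partial inverse, which by \thmref{class-1a} and the angle-doubling model corresponds to a well-defined boundary point; the winding hypothesis guarantees that $\tl\gamma$ exits a fundamental crescent exactly once, so the family of crosscuts obtained by pulling $\tl\gamma$ back under the repelling local inverse $g$ forms a fundamental chain whose impression is a single point of $\partial B^f_0 \subset \partial \Dom(f)$ (using that $\partial B^f_0$ is Jordan, from \thmref{class-1a}, the two notions of ``landing'' coincide). This gives item 1 with $t^f$ the landing point.

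\textbf{For item 2}, I would argue that two loops $\gamma_1,\gamma_2$ satisfying the hypotheses are homotopic in $B^f_0 \setminus \{c^f\}$ rel the basepoint $0$ through loops of the same type --- the sector condition near $0$ and the winding number pin down the homotopy class --- so their lifts $\tl\gamma_1,\tl\gamma_2$ are homotopic rel endpoints in $B^f_0$, forcing the same terminal point on $\partial B^f_0$. Alternatively, and perhaps more cleanly, transport the question through the Fatou coordinate: the projection $\ixp \circ \phi_R^f$ sends $\tl\gamma$ (restricted to a repelling petal, after applying $g^n$) to a path in $\Dom(\cP(f))$ landing at a point of $\partial\Dom(\cP(f))$ that is visibly loop-independent, being characterised as the unique end of the fundamental crescent boundary $\ixp\circ\phi_R^f(\partial B^f_0 \cap P_R)$ distinguished by the combinatorics; then pull back by $(\ixp\circ\phi_R^f)^{-1}$ and iterate $f$ forward. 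Either route reduces item 2 to the uniqueness statement already implicit in the fundamental-chain construction.

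\textbf{Items 3 and 4} are then explicit computations in the model maps. For $F = \pr(f_0)$, by \thmref{quad-extension} and \propref{straighten} the basin structure is that of $f_0$, whose Julia set is the cauliflower $J(f_0)$ (\thmref{cauliflower}); the repelling fixed direction corresponds under $\ixp \circ \tlphi_R$ to the equatorial circle, and the distinguished boundary point $t^F$ is the landing point of the lift, which unwinds to the backward orbit $z_{-n} = (\finv 0)^n(-1)$ of the $\alpha$-fixed point $-1 \in J(f_0)$ under the inverse branch $\finv 0$ fixing $0$ --- precisely the $f_0$-inverse orbit that accumulates at $0$ from within a repelling petal and whose $\ixp\circ\tlphi_R$-image is the claimed point. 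For $f = H = \pr(h)$ (I read $h = K$, the Koebe function of \propref{th:determine-h}, so $H = \pr(K)$), the identical argument applies with $J(K) = [0,\infty]$ in place of $J(f_0)$: the relevant backward orbit is $z_{-n} = K^{-n}(1)$, taking the inverse branch preserving $(0,1)$ (this is the branch $\hinv$ of \propref{th:hinverse}, which maps $(0,1)$ into itself and whose iterates converge to $0$), and its projection under $\ixp\circ\phi_R$ is $t^H$; here one uses \propref{th:preimages-of-gamma} to see that the relevant lifted arc $\gamma_1$ lands at $1$, the preimage of $\infty$ sitting on the real axis, so that $t^H$ is exactly the $\ixp\circ\phi_R$-image of the backward orbit of $1$.

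\textbf{The main obstacle} I anticipate is item 1 --- specifically, showing that the lift $\tl\gamma$ genuinely \emph{lands} at a single boundary point rather than merely having its impression contained in $\partial B^f_0$. For a general $f \in \claa$ the boundary $\partial \Dom(f)$ need only be locally connected, not Jordan; but here we are lifting into $B^f_0$, and \thmref{class-1a} asserts $\partial B^f_0$ \emph{is} a Jordan curve, which is exactly what makes the crosscut/fundamental-chain argument yield a point rather than a continuum. So the real work is to set up the fundamental chain correctly --- choosing the crosscuts as successive $g$-pullbacks of a terminal segment of $\tl\gamma$, verifying they are disjoint crosscuts of $B^f_0$ with diameters shrinking to $0$ (using uniform convergence $g^n \to 0$ on the repelling petal, as in \corref{th:3}), and checking the impression is $\{t^f\}$ --- and then invoking Carathéodory theory together with the Jordan-ness of $\partial B^f_0$ to conclude the honest limit exists. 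Once item 1 is in place in this form, items 2–4 follow by the combinatorial/conjugacy bookkeeping sketched above.
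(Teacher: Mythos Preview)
Your approach has a genuine circular dependency: you invoke \thmref{class-1a} (that $\partial B_0^f$ is a Jordan curve) to force landing of $\tl\gamma$ at a single boundary point, but in the paper's logical order \propref{tip1} comes \emph{before} \thmref{class-1a}. The point $t^f$ is precisely what allows the class $\clad$ to be defined and the puzzle partition to be built; only after the puzzle-piece diameters are shown to shrink does one obtain \thmref{class-1c} and hence \thmref{class-1a}. At the moment \propref{tip1} is being proved, nothing whatsoever is known about $\partial B_0^f$, so neither your crosscut argument in $B_0^f$ nor the appeal to the angle-doubling model on $\partial B_0^f$ is available. Your fundamental-chain idea is also unclear as stated: the ``successive $g$-pullbacks of a terminal segment of $\tl\gamma$'', with $g$ the repelling local inverse of $f$ at $0$, are pullbacks towards $0$, whereas $\tl\gamma$ is heading towards $t^f$, a different boundary point.

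The paper's route sidesteps all of this by working one level down, in the dynamical plane of the \emph{pre}-renormalized map. By the defining relation of $\claa$, namely $f\circ\varphi_f(z) = v\cdot F\circ\varphi_F(e^{2\pi i\theta}z)$, a lift of $\gamma$ under $f$ corresponds via the Riemann maps to a lift under $F=\cP(f_0)$; since $\partial\Dom(F)$ is Jordan (\thmref{quad-extension}) and $\partial\Dom(f)$ is locally connected (definition of $\claa$), Carath\'eodory's theorem extends both $\varphi_F$ and $\varphi_f$ to $\overline\DD$, and items (1)--(2) for general $f$ reduce to the single case $f=F$. For that case the paper unwinds the definition of $\cP$: the target of $F$ is the attracting cylinder of $f_0$, so $\gamma$ lifts via $\ixp\circ\phi_A^{f_0}$ to a loop $\nu$ in a fundamental crescent inside $B_0^{f_0}$. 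One then pulls $\nu$ back under $f_0$ step by step; once the pullback crosses the crescent containing the critical value $-1/4$ the loop opens up and its far end lands at $-1\in J(f_0)$, and further pullbacks by the appropriate branch of $f_0^{-1}$ track the orbit $z_{-k}\to 0$ by Denjoy--Wolff. For $k$ large the pullback lies in a repelling petal, and its $\ixp\circ\phi_R^{f_0}$-projection is precisely the desired $\tl\gamma$, landing at the image of the orbit $(z_{-k})$. This proves (1) for $F$ and (3) simultaneously, using only \thmref{cauliflower} about $f_0$, which was established earlier and independently. Your identification of the correct backward orbits in (3) and (4) is right in spirit, but the non-circular way to reach them is through the $f_0$- (respectively $K$-) plane, not through the as-yet-unknown basin structure of $f$.
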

\begin{figure}
\centerline{\includegraphics[width=1.2\textwidth]{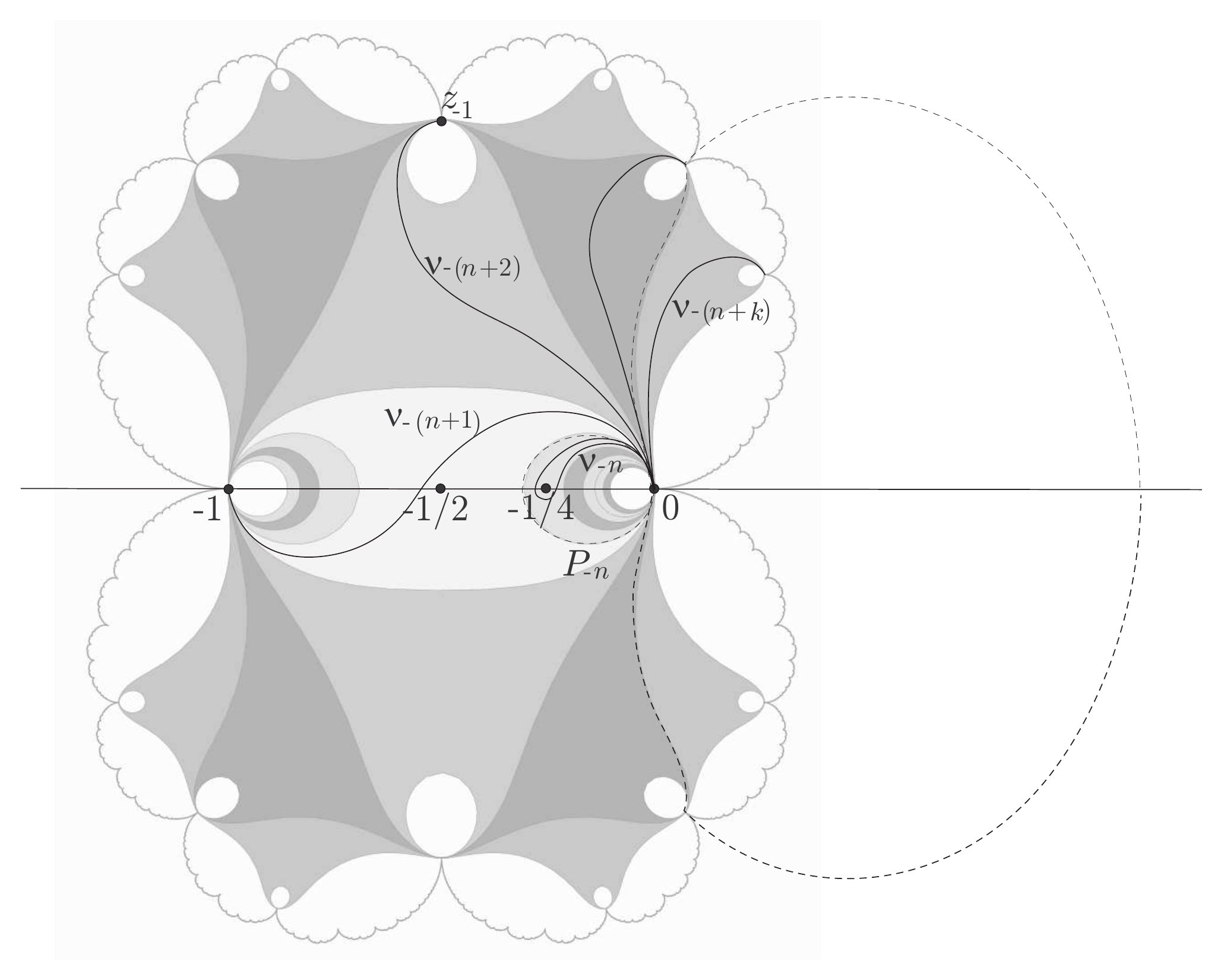}}
\caption{\label{fig-chessboard}
An illustration of the proof of \propref{tip1}}
\end{figure}

\begin{proof}[Proof of \propref{tip1}]
We will prove  (1) - (3) for $f=F\equiv \pr(f_0)$. By the definition of $\claa$ and by Carath{\'e}odory Theorem,
 this will imply (1) and (2) in the
general case. The proof of (4) follows along the same lines as the proof of (3) and will be left to the reader.

Let us begin by selecting a simple curve $\sigma\subset\hat\CC$ which connects $0$ and $\infty$, does not intersect with $\gamma$ except 
at the endpoint $0$, and such that an analytic branch of the logarithm defined on a neighborhood of $\sigma$ has bounded imaginary part.

 Recall that
the attracting Fatou coordinate $\tlphi_A^{f_0}$ holomorphically extends to the whole immediate basin $B_0^{f_0}$ via the
equation
$$\tlphi_A^{f_0}\circ f_0(z)=\tlphi_A^{f_0}(z)+1.$$
This extension is a branched covering $B_0^{f_0}\to\CC$ with simple ramification points at preimages of the critical point $-1/2$.

It is elementary to see what the lift of $\sigma$ to the dynamical plane of $f_0$ looks like.
We summarize its properties below, and
invite the reader to verify them.
Let us denote $$\cir{\sigma}\equiv\sigma\setminus\{0,\infty\}.$$
The preimage of $\cir{\sigma}$ under $\ixp\circ \tlphi_A^{f_0}$ is a countable collection of disjoint simple curves $\cup \cir{s_j}$.
We will denote the closure of $\cir{s_j}$ by $s_j$. It is obtained by adjoining two endpoints to $\cir{s}_j$: two elements of the
grand orbit $\cup_{n\geq 0}(f_0)^{-n}(0)$, which are not necessarily distinct. 

The curves  $s_j$ form a grand orbit under $f_0$ as well. The connected components 
$$S_i\text{ of }B_0^{f_0}\setminus\cup s_j$$
are mapped by the attracting Fatou coordinate $\phi_A^{f_0}$ onto curvilinear strips $\tl S_i\subset \CC$ of  infinite height and of unit
width. The strips $\tl S_i$ are bounded by unit translates of the same curve $\tl\sigma\subset\CC$. Let us enumerate 
these strips in such a way that 
$$T:\tl S_i\to \tl S_{i+1},\text{ where }T(z)=z+1.$$

For a fixed $\tl S_i$ we have
$$\sup_{x,y\in \tl S_i} (\Re(x)-\Re(y))<\infty.$$
This is where we have used the assumption on the branches of $\log$ in $\hat\CC\setminus\sigma$. Thus for every $\ell\in\ZZ$ the union
$\cup_{i\geq \ell}\tl S_i$ contains a right half-plane.

Now let us take {\it any} component $S_i\subset B_0^{f_0}$. The above observation implies that, for any $\ell\in\NN$,
$$\phi_A^{f_0}(\cup_{k\geq \ell} f^k( S_i))\supset \{\Re z>A\}$$ 
for some $A\in\RR$. Note that if $\ell$ is large enough, then $\cup_{k\geq \ell} f^k( S_i)$ does not contain any preimages of $-1/2$, and hence
the restriction of $\phi_A^{f_0}$ to it is unbranched. Denote  the boundary components of $S_{i+\ell}$ by $s$ and $f_0(s)$. Then
$s$ bounds an attracting petail $P$, and
$$P=\cup_{k\geq \ell} f^k( S_i).$$
We now complete the proof as follows. Fix $$S\equiv S_{i+\ell}=P\setminus \overline{f_0(P)}.$$
The curve $\cir{\gamma}= \gamma((0,1))$ has a univalent pull-back $\cir{\nu}\subset S$. 
We continuously extend it to a parametrized loop $$\nu:[0,1]\to S\cup\{ 0\}$$ which starts
and ends at the parabolic point $0$. 

Denote $P_0\equiv P$ and, for $j\in\NN$,  let $P_{-j}$ be the connected component of $(f_0)^{-1}(P_{-(j-1)})$ which contains $P_{-(j-1)}$. 
By \propref{critval1}, there exists $n\in\NN$ such that $P_{-n}$ is an attracting petal such that the critical value $-1/4$
is contained in $P_{-n}\setminus \overline{P_{-(n-1)}}$.
Denote $\nu_0=\nu$ and for $1\leq j\leq n$ let $\nu_{-j}\subset P_{-j}\setminus P_{-(j-1)}$ be the univalent pull-back of $\nu_{-(j-1)}$.
The curve $\nu_{-n}$ is a parametrized loop 
$$\nu_{-n}:[0,1]\to B_0^{f_0}\text{ with }\nu_{-n}(0)=\nu_{-n}(1)=0.$$ By the choice of $n$, the winding number
$$W(\nu_{-n},-1/4)=1.$$
Consider the parameterized curve $$\nu_{-(n+1)}\subset P_{-(n+1)}\setminus P_{-n}$$ such that $\nu_{-(n+1)}(0)=0$ and 
$$f_0(\nu_{-(n+1)})=\nu_{-n}.$$
Evidently,
$$\nu_{-(n+1)}(1)=-1,$$
which is the {\it other} preimage of the parabolic point $z=0$ under $f_0$. To complete the argument, let us now consider the connected component $W$ of
$$B_0^{f_0}\setminus \overline{P_{-(n+1)}}\supset W$$ which contains the ``upper'' preimage of $-1$, the point 
$$z_{-1}\equiv  \frac{-1+\sqrt{3}i}{2}\in\HH,$$
in the boundary. Let $\nu_{-(n+2)}\subset W$ be the next preimage,
$$f_0(\nu_{-(n+2)})=\nu_{-(n+1)},\text{ with }\nu_{-(n+2)}(0)=0\text{ and }\nu_{-(n+2)}(1)=z_{-1}.$$
The inverse branch $\finv{0}$ of the quadratic map $f_0$ univalently extends to a  map 
$$\finv{0}:W\to W.$$ 
For $k\geq 3$ denote $\nu_{-(n+k)}$ the preimage of $\nu_{-(n+k-1)}$ by this branch. 
By Denjoy-Wolff Theorem,
$$\nu_{-(n+k)}\to 0.$$
Furthermore, $\nu_{-(n+k)}$ is disjoint from $P_{-(n+1)}$. For any given ample repelling petal $P_R$
 there exists $k$ such that $\nu_{-(n+k)}\subset P_R$.
Consider the projection 
$$\tl\gamma\equiv\ixp\circ\phi_R^{f_0}(\nu_{-(n+k)}).$$
By construction, it satisfies the properties (1)-(3).
\end{proof}
To help understand the shape of the immediate basin of $F\in\cla$, let us
look at the drawing in \figref{fig-tail}. 
It illustrates the case $F=\cP(f)$ (the reader may think of $f=f_0$, to fix the ideas).
The left figure shows a fragment of the boundary of the immediate basin $B_0^f$
 near the parabolic fixed point $0$. If we look on the right, we see a schematic picture of the 
Jordan domain $\Dom(\cP(f))$. The point $t^{\cP(f)}\in\partial\Dom(\cP(f))$ is the ``tip of the tail'' of the immediate basin $B_0^{\cP(f)}$.
 On the left the reader can see how the tail is formed. The lift of the basin $B_0^f$ fits inside a suitable repelling crescent $C_R^f$,
reaching to the upper tip of the crescent (which under $\ixp\circ \tlphi_R^f$ becomes the parabolic point $z=0$ on the right).
The point $t^{\cP(f)}$ lifts under $\ixp\circ\tlphi_R^f$ to an $f^{-k}$-preimage $w\in C_R^f$ of the parabolic point $0$.
The lift of the immediate basin $B_0^{\cP(f)}$ to $C_R^f$  reaches to $w$; its shape near $w$ is a conformal image of the shape near $0$ (a tail).
The map $\ixp\circ \tlphi_R^f$ is conformal in a neighborhood of $w$, and hence the basin $B_0^{\cP(f)}$ also has a tail ending at $t^{\cP(f)}$.

\begin{figure}
\label{fig-tail}
\centerline{\includegraphics[width=0.7\textwidth]{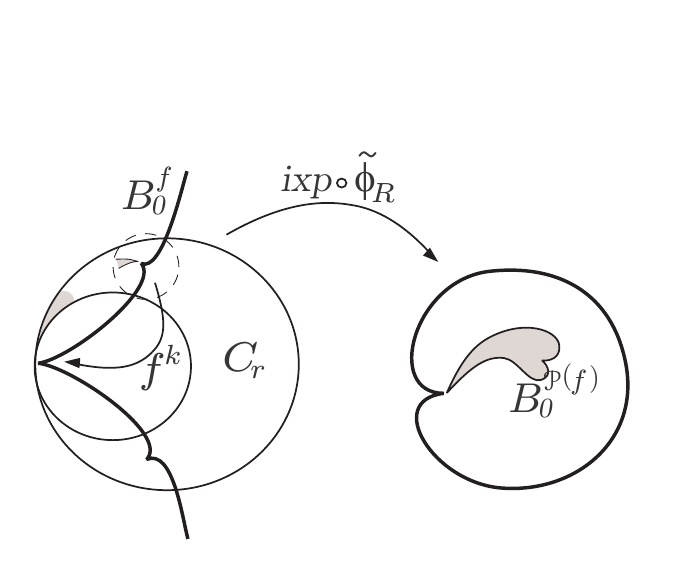}}
\caption{The formation of a ``tail'' in the immediate parabolic basin of $\cP(f)$.}
\end{figure}

\subsection{Definition of $\clad$ and puzzle partitions}
\begin{defn}
We let $\clad$ to be the collection of maps $f\in\claa$ for which the point $t^f$ is accessible from the complement of the domain $\cD(f)$. We thus have
$$\cla\subset \clad\subset \claa.$$
\end{defn}
\thmref{class-1a} will follow from a stronger statement:
\begin{thm}
\label{class-1c}
Let $f\in\clad$, and denote $B^0_f$ the immediate basin of the parabolic point $0$ of $f$.
Then  $B_f^0$ is a Jordan domain. Denote $\hat f$ the continuous extension of $f$ to $\partial B_0^f$. There exists
a homeomorphism $\rho:\partial B_0^f\to\TT$ such that
$$\rho(\hat f(z))=2\rho(z)\mod 1.$$
\end{thm}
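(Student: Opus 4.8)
\textbf{Proof plan for Theorem \ref{class-1c}.}

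The plan is to mimic, for a general $f\in\clad$, the proof of \thmref{cauliflower} that was carried out for $f_0$ in \secref{sec:quadratic-example}: construct an invariant ``basic annulus'' (or rather its replacement adapted to $B_0^f$), cut it into puzzle pieces indexed by binary strings, show the diameters of the puzzle pieces shrink to zero, and read off from this both the Jordan-curve property of $\partial B_0^f$ and the semi-conjugacy $\rho$. By \propref{basin-simply-connected} we already know that $f:B_0^f\to B_0^f$ is a degree-$2$ branched cover of a simply connected domain, ramified over the single critical value $c^f$, with a single simple critical point. Together with \thmref{th:blaschke-model} this means that, after uniformizing $B_0^f$ by the unit disk via the Riemann map sending the critical point to $0$ and the parabolic prime end $\widehat 0$ to $1$, $f$ is conjugate on $B_0^f$ to the Blaschke product $B(z)=(3z^2+1)/(3+z^2)$ acting on $\DD$. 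The whole point is that $B$ is a \emph{global} degree-$2$ branched self-cover of $\DD$ extending holomorphically past $\partial\DD$, and that $\partial\DD$ is obviously a Jordan curve on which $B$ is the angle-doubling map (up to the standard semiconjugacy $\theta\mapsto2\theta$, since $B|_{\TT}$ is a degree-$2$ circle endomorphism fixing $1$ with $B'(1)=1$). So the \emph{only} thing left to prove is that the Carath\'eodory extension of the Riemann map $\varphi:B_0^f\to\DD$ is a \emph{homeomorphism} of $\overline{B_0^f}$ onto $\overline\DD$, i.e.\ that $\partial B_0^f$ is locally connected and that $\varphi$ does not identify any two distinct prime ends; once this is known, $\rho$ is defined as the composition of the boundary homeomorphism $\partial B_0^f\to\TT$ with (the identity on) $\TT$, and the functional equation $\rho(\hat f(z))=2\rho(z)$ follows from $B|_\TT(e^{2\pi i\theta})=e^{2\pi i\cdot 2\theta}$ (checked directly from the explicit formula for $B$).

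To prove local connectivity of $\partial B_0^f$ I would build a decreasing sequence of ``puzzle-piece'' neighbourhoods of $\partial B_0^f$ whose diameters tend to zero. The natural seed is a fundamental crescent for the \emph{repelling} dynamics living in a repelling petal: take a repelling petal $P_R^f$ and, as in the proof of \thmref{th:W-Jordan-domain} and in \propref{tip1}, use the arc $\gamma$ obtained as a $\phi_R$-preimage of a vertical line together with its $f^{-1}$-image and a fundamental arc $\sigma\subset\partial B_0^f$ to bound a region $Q_0\subset P_R^f$ with $Q_0\cap\partial B_0^f$ connected and $\overline{Q_0}$ disjoint from the closure of the forward orbit of $c^f$ (this last property is where the flexibility in choosing $\gamma$, exactly as exploited in \propref{tip1}, is used, and where the hypothesis $f\in\clad$ — i.e.\ that the ``tip'' $t^f$ is accessible from the complement of $\cD(f)$ — enters: it is what lets one close up $Q_0$ on the outside by an arc $\tau_{\mathrm{out}}\subset\cD(f)\setminus\overline{B_0^f}$ and hence makes the pull-backs $Q_{-n}$ honest Jordan neighbourhoods of $\partial B_0^f$). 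The preimages $Q_{-n}$ under the branched cover $f:B_0^f\to B_0^f$, chosen so that $f:Q_{-n}\to Q_{-(n+1)}$, then form a fundamental system of closed neighbourhoods of $\partial B_0^f$. Because $\overline{Q_0}$ avoids the postcritical set, each branch of $f^{-n}$ extends univalently to a fixed simply connected neighbourhood $U\supset\overline{Q_0}$ disjoint from the postcritical set, and these inverse branches form a normal family (uniform boundedness comes, as in case (3) of the proof of \propref{th:5}, from a forward-invariant neighbourhood of $\infty$ or of the ``outside'' that the inverse images must avoid). If $\operatorname{diam}Q_{-n}\not\to0$ a normal-family limit $h$ would be a non-constant holomorphic map whose image meets $J(f)=\partial B_0^f$, and Montel's theorem (the image of $h$ under forward iteration eventually covers all but $\le 2$ points of $\hat\CC$) gives a contradiction exactly as in \secref{sec:quadratic-example}. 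Hence $\operatorname{diam}Q_{-n}\to0$, so $\partial B_0^f$ is locally connected.

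With local connectivity in hand, Carath\'eodory's theorem says $\varphi$ extends continuously to $\overline{B_0^f}\to\overline\DD$, mapping prime ends onto $\TT$; I must rule out that two boundary points get identified. Here I would transport everything to $B$ on $\DD$ and argue combinatorially: the arcs $\varphi(\partial Q_{-n})$ give a nested sequence of crosscut families of $\DD$, and since $f$ on $B_0^f$ is conjugate to the global branched cover $B$ on $\DD$, these are exactly the dyadic ``puzzle'' partition of $\overline\DD$ generated by pulling back one crosscut under $B$. Labelling the level-$n$ pieces by length-$n$ binary strings and intersecting nested sequences produces a continuous surjection $\{0,1\}^{\NN}\to\partial B_0^f$ whose only identifications are the familiar dyadic ones $\dots i_{m-1}0111\ldots=\dots i_{m-1}1000\ldots$ (this is the analogue of \lemref{ambiguity} and \corref{th:7}); but the same identifications occur for $B$ on $\TT$ itself, so the induced map $\TT\to\partial B_0^f$ is a continuous bijection between compact Hausdorff spaces, hence a homeomorphism. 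This is the step I expect to be the main obstacle, because making the ``diameters shrink'' argument fully rigorous requires care about which inverse branch is chosen at each step, about the fact that the $Q_{-n}$ really are crosscut neighbourhoods exhausting $\partial B_0^f$ (this is exactly where accessibility of $t^f$ is needed, and where a naive argument would fail without the $\clad$ hypothesis), and about the boundary behaviour of $f$ near the parabolic point $0$ and near preimages of the critical point — so a good deal of the work is in setting up $Q_0$ correctly and verifying the postcritical-avoidance and exhaustion properties, after which the normal-family contradiction and the combinatorial bijection are routine adaptations of the $f_0$ case. Finally, transporting the angle-doubling semi-conjugacy for $B|_\TT$ back through $\varphi$ gives the homeomorphism $\rho:\partial B_0^f\to\TT$ with $\rho\circ\hat f=2\rho$, completing the proof, and \thmref{class-1a} follows as the special case $f\in\cla\subset\clad$.
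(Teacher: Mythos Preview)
Your overall strategy---build a puzzle partition near $\partial B_0^f$, show the pieces shrink via a normal-family argument, then read off both the Jordan property and the doubling conjugacy---is exactly the paper's. Two points need correction.

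First, a small error: $B|_\TT$ is \emph{not} the squaring map. At $z=e^{i\pi/4}$ one computes $B(z)=(6+8i)/10\neq i$; the fixed-point equation $B(z)=z$ is $(z-1)^3=0$, so $1$ is a triple parabolic fixed point and $B|_\TT$ is only \emph{topologically} conjugate to doubling. That conjugacy would have to be produced separately. The paper sidesteps the issue entirely: it builds the semiconjugacy $\rho$ directly from the puzzle combinatorics (the map $\bin{i}\mapsto\hat z(\bm i)$ of \propref{th:6a} and \propref{th:9a}), never invoking $B|_\TT$.

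Second, and more seriously, your normal-family contradiction does not work as stated. The claim ``forward iterates of $h(Q_0)$ eventually cover all but $\le 2$ points of $\hat\CC$'' is the standard Montel argument \emph{for rational maps}, resting on density of preimages in a global Julia set. Here $f$ is only defined on the Jordan domain $\cD(f)$; there is no global Julia set and nothing a priori prevents an open set from having bounded forward orbit inside $\cD(f)$. The paper's contradiction is different and uses the $\clad$ hypothesis a second, essential time: by \propref{contain1} and \corref{basinbd}, $\partial B_0^f$ contains the tip $t^f\in\partial\cD(f)$ and hence its preimages $u^\pm$, so there is a point $z_0=G_0(u^-)\in\overline{\bA_{01}}\cap\partial B_0^f$ with $F^2(z_0)=t^f\notin\Dom(f)$. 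The non-constant limit $h$ sends $z_0$ to some $w_0\in\partial B_0^f$, and Rouch\'e gives $F^{j_k}(V)\subset\bA_{01}$ for a fixed neighbourhood $V\ni w_0$ and arbitrarily large $j_k$---impossible, since $V$ meets $\partial B_0^f$ and hence contains points whose $F$-orbit exits $\Dom(f)$ in finitely many steps. Thus accessibility of $t^f$ is used not only to \emph{build} the puzzle (your $\tau_{\mathrm{out}}$) but also to \emph{close} the shrinking argument.

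Relatedly, your $Q_0$ in a repelling petal is too vague to carry the binary coding: a single fundamental crescent gives no natural splitting into two halves. The paper works instead inside the Jordan subdomain $U\subset\cD(f)$ of \propref{contain1}, constructs a \emph{secondary cut} $\hat\Gamma=\Gamma\cup\Lambda$ where $\Lambda\subset U\setminus\overline{B_0^f}$ is an arc from a preimage of $t^f$ back to $0$ (produced by iterating a local inverse branch and applying Denjoy--Wolff, \propref{in2}), and lets $U^\pm$ be the components of $U\setminus\hat\Gamma$. The ``annulus'' is $\bA=U\setminus P_{-1}$ with $\bA_i=\bA\cap U^{\pm}$; this is the genuine analogue of the upper/lower half-annuli for $f_0$ and is what makes the inverse branches $G_0,G_1$ and the binary itineraries well-defined.
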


Let us make a new definition. Let $f\in\clad$. Consider an attracting petal $P_A^f$ which contains the unique critical value
$c^f\in B_0^f$. Let $\tau\subset B_0^f$ be a simple curve which connects $c^f$ with the parabolic point $0$ and has
the property $f(\tau)\subset\tau$. To fix the ideas,
we will take this curve to be the horizontal ray 
$$\{\Re(z)\geq \Re(\tlphi_A^f(c^f)),\;\Im(z)=\Im(\tlphi_A^f(c^f))\}$$
in the Fatou coordinate. Using \propref{tip1}, and passing from a curve around the critical value to a slit 
connecting the critical value with a parabolic point we see:
\begin{prop}
\label{tip2}
There exists a unique simple curve $\Gamma^{\text{in}}$ connecting $t^f$ with $0$ such that $$f(\Gamma^{\text{in}})=\tau.$$
\end{prop}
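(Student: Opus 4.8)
The plan is to lift the curve $\tau$ step by step back through $f$, exactly as in the proof of \propref{tip1}, and to identify $\Gamma^{\text{in}}$ as the resulting ``terminal'' lift landing at $t^f$. First I would recall that, by \propref{critval1}, the attracting petal $P_A^f$ can be chosen so that the critical value $c^f$ lies in $P_A^f\setminus f(P_A^f)$, and that $\tau$ is the horizontal ray $\{\Re(w)\ge \Re(\tlphi_A^f(c^f)),\ \Im(w)=\Im(\tlphi_A^f(c^f))\}$ in the attracting Fatou coordinate. Since $f$ is injective on $P_A^f$ with a single critical value $c^f$, the curve $\tau$ with one endpoint at $c^f$ and running to $0$ gives, when we thicken it slightly to a loop $\gamma$ around $c^f$, exactly a loop of the type to which \propref{tip1} applies. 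The key observation is that lifting $\tau$ (rather than the loop) removes the branching: a small loop $\gamma$ around $c^f$ pulls back to a loop through the critical point, but the slit $\tau$ from $c^f$ to $0$ pulls back to an \emph{arc}, because $f$ restricted to a neighborhood of the critical point, with the critical value slit removed, is a genuine covering of degree $2$ over each side of the slit, and the two sheets are glued along $\tau$ itself.

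The main steps, in order, are: (1) set $\tau_0=\tau$ and inductively define $\tau_{-j}$ as the pull-back of $\tau_{-(j-1)}$ lying in $P_{-j}\setminus P_{-(j-1)}$, where $P_{-j}$ is the component of $f^{-1}(P_{-(j-1)})$ containing $P_{-(j-1)}$; by \propref{critval1} and \lemref{proper5} these are all topological disks and the pull-backs are univalent up to the first index $n$ with $c^f\in P_{-n}\setminus P_{-(n-1)}$; (2) at that step, the pull-back $\tau_{-(n+1)}$ is the arc from $0$ to the \emph{other} preimage of $0$ under the relevant branch of $f^{-1}$, precisely because $\tau_{-n}$ runs from $0$ around the critical value and its lift cannot close (same argument as in \propref{th:preimages-of-gamma}(3) and in the proof of \propref{tip1}); (3) continue pulling back along the inverse branch fixing $0$, obtaining a nested sequence of arcs in $B_0^f$ which, by the Denjoy--Wolff-type argument already used for $F=\pr(f_0)$ (and transported to general $f\in\clad$ via the conformal conjugacy $\chi$ and Carath\'eodory, as in the proof of \thmref{class-2} and \propref{tip1}), converges to $0$; (4) conclude that the union of all these arcs, together with a final tail landing at $t^f$, is a simple curve $\Gamma^{\text{in}}$ with $f(\Gamma^{\text{in}})=\tau$ and endpoints $t^f$ and $0$. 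Uniqueness follows from uniqueness of path-lifts: any simple curve $\Gamma$ with $f(\Gamma)=\tau$ and $\Gamma(0)=0$ is determined by its germ at $0$, which in turn is forced by the requirement that $\Gamma$ start into $B_0^f$ (since $\tau\subset B_0^f$ and the inverse branch of $f$ fixing $0$ is uniquely determined near $0$), and then by the landing statement in \propref{tip1}(1)--(2) the other endpoint is necessarily $t^f$.

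The step I expect to be the main obstacle is (3)--(4): controlling the tail of the lifted curve near $t^f$, i.e.\ showing that the nested arcs $\tau_{-(n+k)}$ actually \emph{land} (have a limit) at the boundary point $t^f$ rather than merely accumulating on a subcontinuum of $\partial B_0^f$. For $f=\pr(f_0)$ this is exactly what \propref{tip1} provides, with $t^f$ computed explicitly as the projection of an inverse orbit; for general $f\in\clad$ one uses the conjugacy $\chi\colon B_0^f\to B_0^{f_0}$, which is a conformal isomorphism extending continuously to the boundary (by \thmref{class-1a}, equivalently \thmref{class-1c} once it is proved, but here we only need the part already available, or we restrict to $f\in\clad$ where accessibility of $t^f$ is assumed). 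The assumption $f\in\clad$ — that $t^f$ is accessible from the complement of $\cD(f)$ — is what guarantees the tail is a genuine arc terminating at a single well-defined boundary point. One must be a little careful that the ``thickening'' of $\tau$ to a loop $\gamma$ around $c^f$ is compatible with the hypotheses of \propref{tip1} (the sector condition near $0$ is satisfied because $\tau$ is horizontal in the Fatou coordinate, hence tangent to the repelling/attracting direction $-1$ at $0$), so that parts (1) and (2) of \propref{tip1} can be invoked verbatim; the rest is bookkeeping of inverse branches along the lines already carried out in the preceding proofs.
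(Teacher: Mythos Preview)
Your proposal is correct and follows essentially the same approach as the paper: the paper's entire proof is the single sentence ``Using \propref{tip1}, and passing from a curve around the critical value to a slit connecting the critical value with a parabolic point,'' and your write-up simply spells out what that sentence means. One clarification on your circularity worry in steps (3)--(4): you do \emph{not} need \thmref{class-1a} or \thmref{class-1c} here, since the landing at $t^f$ is already furnished by \propref{tip1}(1)--(2) for any $f\in\claa$, and that proposition was proved using only the local connectivity of $\partial\cD(f)$ (built into the definition of $\claa$) together with Carath\'eodory's theorem, not any property of $\partial B_0^f$.
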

\noindent
By definition of $\clad$, the tip of the tail $t^f\in\partial\cD(f)$ is
accessible from outside ${\cD(f)}$. Let us continue $\gin$ by attaching a simple curve $\gout$ connecting $t^f$ 
to $\infty$ without intersecting $\overline{\cD(f)}$. Let us further require that $\gout$ coincides with a negative real ray
in a neighborhood of $\infty$.
\begin{defn}
We call $\Gamma\equiv\overline{\gin\cup\gout}$ the {\it primary cut} of $f$.
\end{defn}

\noindent
We prove the following:
\begin{prop}
\label{contain1}
Let $f\in\clad$. 
There exists a domain $U\subset \CC$ such that:
\begin{itemize}
\item[(1)] $U$ is a Jordan domain;
\item[(2)] $U\subset\Dom(f)$ and $\partial U\cup\partial\Dom(f)=\{ t^f\}$;
\item[(3)] $\partial U\subset f^{-1}(\overline{\gout}\cup \partial\Dom(f))$ and $U\supset \gin$;
\item[(4)] $U\ni c^f$;
\item[(5)] there is a single critical point $p^f\in \gin$ of $f$ inside $U$.
\end{itemize}
Finally, there is a simple arc $\gin_{-1}\subset f^{-1}(\gin)\cap U$ with endpoints $u^+,u^-\in\partial U$ 
such that $\gin_{-1}\cap\gin=\{p^f\}$ for which the following holds:
\begin{itemize}
\item[(6)] consider the Jordan domain $U'$ which is the connected component of $U\setminus \gin_{-1}$ whose
boundary does not contain the point $t^f$. Then 
$$f:U'\setminus \gin\longrightarrow \Dom(f)\setminus \gin$$
is a univalent map.
\end{itemize}
\end{prop}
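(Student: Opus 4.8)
The plan is to build $U$ as the pullback along a suitable branch of $f^{-1}$ of the slit complement $\Dom(f)\setminus\gout$, exactly as was done for $f_0$ in the analysis of the Koebe model and of $\cP(f_0)$. The crucial structural fact to exploit is property (II$'$) of Definition \ref{defn-class}: modulo a rotation and a conformal change of coordinate on the domain, every $f\in\clad$ has the same covering structure as $\bm{k}$, whose covering properties are completely understood by Theorem \ref{covering-properties} (one critical value, two asymptotic values $0,\infty$). So first I would transport the whole picture via $\varphi_f$ to the model map, where the primary cut $\Gamma$ becomes a concrete arc: its inner part $\gin$ is $f(\tau)$-pulled-back and lands at $t^f$, and its outer part $\gout$ runs from $t^f$ to $\infty$ in the complement of $\cD(f)$. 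Because the unique critical value $c^f$ lies in $B_0^f$ (Proposition \ref{critval1}) and the only asymptotic values are $0$ and $\infty$ (which correspond under $\ixp\circ\phi_R$ to boundary behavior of the basin), cutting $\Dom(f)$ along $\gout$ and along a ray out to $\infty$ removes all singular values of $f$ other than $c^f$ itself — and $c^f$ sits on $\gin$.

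Next I would apply Lemma \ref{proper5} (or rather its noncompact analogue, using properness of $f$ restricted to components of preimages, Proposition \ref{proper1}): let $V^* := \Dom(f)\setminus(\overline{\gout}\cup\{\text{the chosen ray to }\infty\})$, a simply connected domain containing exactly one critical value $c^f$. Take $U$ to be the connected component of $f^{-1}(V^*)$ that contains $\gin$ (such a component exists because $\gin$ maps into $\tau\subset V^*$). Then $f:U\to V^*$ is proper with a single critical value, so by Lemma \ref{proper5} it is a branched cover of degree $2$ with a single critical point $p^f$, and $f^{-1}(c^f)=\{p^f\}$; since $c^f\in\tau=f(\gin)$ and the only $f$-preimage of $c^f$ is the critical point, $p^f\in\gin$. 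This delivers (1), (4), (5), and the boundary description in (3)–(2): $\partial U$ is carried by $f$ into $\partial V^* \subset \overline{\gout}\cup\partial\Dom(f)$, and the only point where $\partial U$ can meet $\partial\Dom(f)$ is where the cut $\gin$ reaches the tip $t^f$, which is exactly where $\gout$ emanates; hence $\partial U\cap\partial\Dom(f)=\{t^f\}$. That $U$ is a Jordan domain follows from $V^*$ being a Jordan domain (a slit disk) together with the degree-$2$ branched-cover structure with a single critical point — standard, and again essentially Lemma \ref{proper5}'s conclusion that $U$ is a topological disk, upgraded to a Jordan domain using local connectivity of $\partial\Dom(f)$ (part of the definition of $\claa$) and of $\overline{\gout}$.

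For the last assertion: $\gin_{-1}$ is the component of $f^{-1}(\gin)$ inside $U$ other than $\gin$ itself. Since $f:U\to V^*$ has degree $2$ with critical point $p^f\in\gin$, the full preimage $f^{-1}(\gin)$ in $U$ is a pair of arcs crossing only at $p^f$; one of them is $\gin$ (it maps to $\tau\supset c^f$-side of $\gin$ — here one must track orientations carefully), the other is $\gin_{-1}$, an arc with both endpoints on $\partial U$, say $u^\pm$, and meeting $\gin$ only at $p^f$. Cutting $U$ along $\gin_{-1}$ splits it into two Jordan subdomains; the one $U'$ whose boundary avoids $t^f$ maps under $f$ to $V^*\setminus\gin = \Dom(f)\setminus(\gin\cup\gout\cup\text{ray})$, and because we have removed one of the two "sheets" the restriction $f:U'\setminus\gin\to\Dom(f)\setminus\gin$ is univalent. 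The main obstacle, and the step I expect to require the most care, is precisely this orientation/combinatorial bookkeeping — verifying that $\gin_{-1}$ (not $\gin$) is the arc to remove, that $U'$ is the correct component, and that the resulting map is genuinely injective rather than merely a homeomorphism onto a slit domain with some identification along the cut; this is where the explicit model $\bm{k}$ and Proposition \ref{th:preimages-of-gamma} (the analogous lift-of-$\gamma$ picture for the Koebe function) should be invoked to pin down the global configuration.
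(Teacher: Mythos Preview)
Your approach differs substantially from the paper's and has a genuine gap at its core. The paper does not attempt to realize $U$ as a component of $f^{-1}(V^*)$ for a cleverly slit target. Instead it first observes that, by the very definition of $\clad$ together with Carath\'eodory's theorem, the entire statement transports under the conformal map $\varphi_f$ to the single model $F=\cP(f_0)$; it then proves a slightly more general version for $F$ (Proposition~\ref{contain2}) by \emph{lifting to the dynamical plane of $f_0$} via $\ixp\circ\phi_R^{f_0}$. There the curves $\gamma_1,\gamma_2,\tau$ are pulled back explicitly through successive fundamental crescents $S_{-(n+k)}$, the branching at $-1/2$ and the prime-end structure at the preimages of $0$ are tracked by hand, and $U$ emerges by construction as the projection of a concrete region bounded by two preimages of $\tau$ and one of $\gamma_2$.

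Your direct route via Lemma~\ref{proper5} breaks down at the choice of $V^*$, and the obstruction is structural rather than cosmetic. First, $\gout$ lies entirely outside $\overline{\cD(f)}$, so ``$\Dom(f)\setminus\overline{\gout}$'' is simply $\Dom(f)$. More seriously, no simply-connected target does what you need. If you take $V=\hat\CC\setminus\overline{\gout}$, then \emph{two} singular values, $0$ and $c^f$, lie in $V$; since $0$ is asymptotic, components of $f^{-1}(V)$ are not proper over $V$, and Lemma~\ref{proper5} does not apply. If instead you cut further along $\gin$ to remove $0$, you also remove $c^f$ --- the paper later records explicitly that $\gin(s_1)=c^f$ --- so $\hat\CC\setminus\Gamma$ contains \emph{no} singular value; each component of $f^{-1}(\hat\CC\setminus\Gamma)$ is then an unbranched sheet mapped conformally onto the slit sphere, and $U$, which must contain the critical point $p^f$, cannot be any single such sheet. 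It is rather the union of two adjacent sheets glued along a preimage of $\gin$, and determining \emph{which} sheets, how they sit relative to $\partial\Dom(f)$, and why the result is a Jordan domain touching $\partial\Dom(f)$ only at $t^f$ is exactly the content of the proposition --- not residual bookkeeping. The paper's lift to the $f_0$-plane is what makes this tractable.
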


\begin{figure}
\centerline{\includegraphics[width=1.1\textwidth]{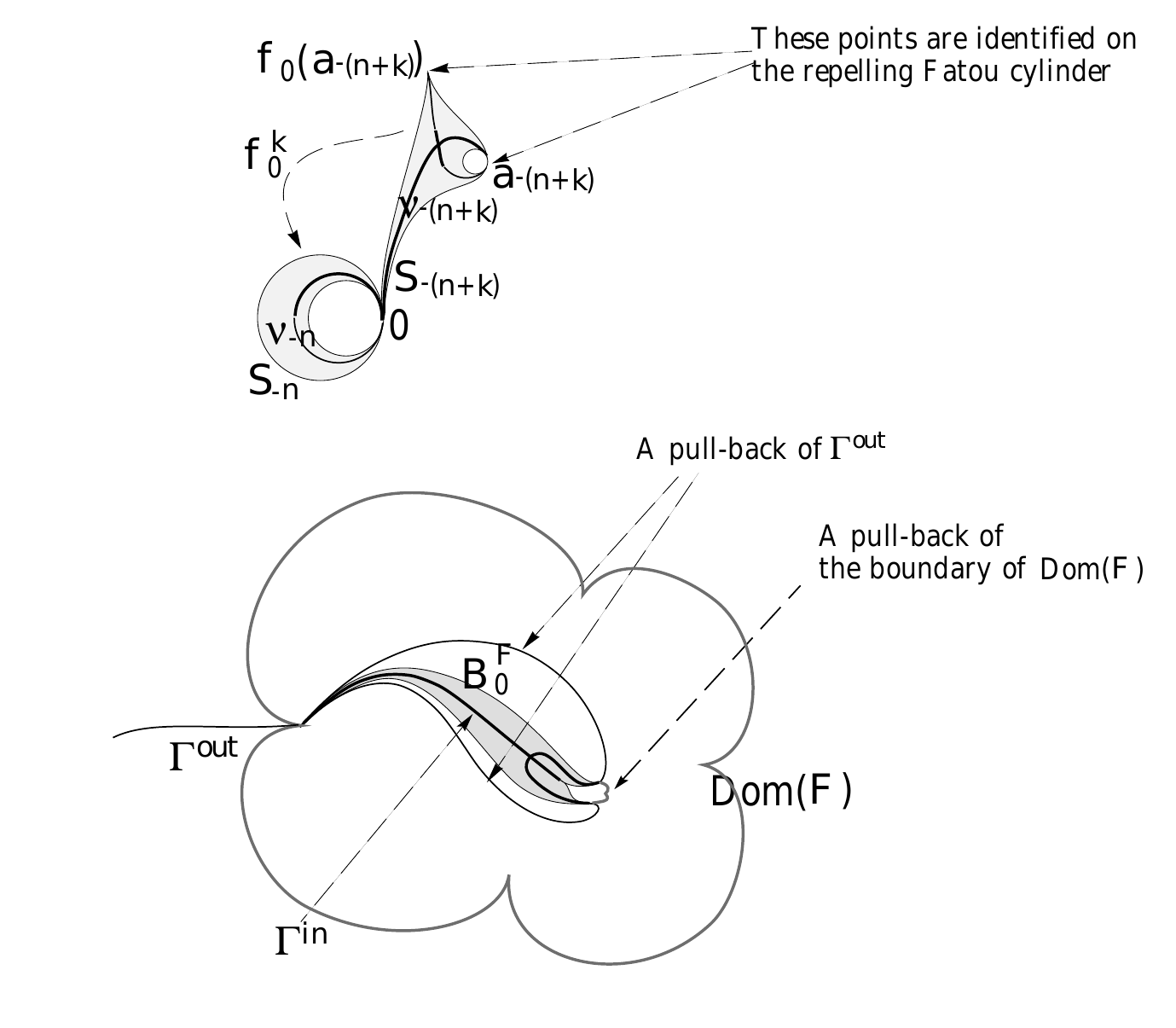}}
\caption{\label{fig-basina}
An illustration to the proof of \propref{contain2}. The pull-back of $\hat\CC\setminus\Gamma$ for $F=\pr(f_0)$. }
\end{figure}

In view of the definition of $\clad$, it suffices to prove a slightly more general statement for $f=F\equiv \pr(f_0)$:
\begin{prop}
\label{contain2}
Let $F=\pr(f_0)$. Consider any simple arc $\gamma=\gamma_1\cup\gamma_2$ and any Jordan curve 
$\tau$ with the following properties:
\begin{itemize}
\item $\gamma_1$ is a simple arc which connects $0$ and $t^F$;
\item $\gamma_2$ is a simple arc which connects $t^F$;
\item the curve $\gamma_1$ approaches $0$ within a sector $\{\Arg(z)\in(\pi/2,3\pi/2)\}$;
\item the curve $\gamma_2$ approaches $\infty$ within a sector $\{\Arg(1/z)\in(\pi/2,3\pi/2)\}$;
\item the curve $\tau$ separates $c^F$ from $\infty$ and $\tau\cap \gamma= \{ t^F\}.$
\end{itemize}
There exists a domain  $U$ of $F^{-1}(V)$ such that:
\begin{enumerate}
\item $U$ is a Jordan domain;
\item $U\subset\Dom(F)$ and $\partial U\cup\partial\Dom(F)=t^F$;
\item $\partial U\subset F^{-1}(\overline{\gamma_2}\cup \tau);$
\item $U$ contains a single critical point $p^F$;
\item there is a simple arc $\delta\subset U$ which is a preimage of $\gamma_1$, has endpoints on the boundary of $U$,
and intersects $\gamma_1$ at the critical point $p^F$ only. If we let  $U'$ be the component of $U\setminus \delta$
not containing $t^F$ on the boundary, then the map
$$F:U'\longrightarrow \Dom(F)\setminus F(\delta)$$
is univalent.
\end{enumerate}
\end{prop}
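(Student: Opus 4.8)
The plan is to prove the proposition directly for $F=\pr(f_0)$; the general case of \propref{contain1}, for arbitrary $f\in\clad$, then follows from the definition of $\clad$ together with the Carath\'eodory extension of the conformal equivalence $\varphi_f^{-1}\circ\varphi_F$ of Definition \ref{defn-class-0}. For $F$ itself the essential point is that, by \propref{straighten} and Definition \ref{defn-big-H}, the covering structure of $F\colon\Dom(F)\to\hat\CC$ is conformally identified with that of the \'Ecalle--Voronin map $\bm k=(k^+,k^-)$ of the Koebe function $K(z)=z/(1-z)^2$, whose geometry is completely explicit (\propref{th:h-elem}, \propref{th:hinverse}, \propref{th:preimages-of-gamma}). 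Concretely, a point $w\in\DD=\Dom(k^+)$ corresponds via $(\ixp\circ\phi_R^K)^{-1}$ to a point $z$ of the upper half $C^+_R$ of the fundamental repelling crescent; this $z$ lies in the immediate basin $B_0^K=\CC\setminus[0,\infty)$, and $k^+(w)=\ixp(\phi_A^K(z))$. So the preimage of the region $V$ (the simply connected region cut out by $\gamma$ and $\tau$ on the side of $c^F$) is obtained by lifting $V$ through $\ixp$ to a domain in the attracting cylinder, pulling that back through the branched covering $\phi_A^K\colon B_0^K\to\CC$, intersecting with $C^+_R$, and pushing the result forward by $\ixp\circ\phi_R^K$. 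It therefore suffices to establish the statement for $k^+$ and transport it to $F$ by the Riemann map $\psi\colon\DD\to\hat W^+$.

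The hypotheses on $\gamma_1,\gamma_2,\tau$ are designed so that $V$ is simply connected, contains the unique critical value $v=c^F$ of $F$ (\thmref{covering-properties}), and contains neither of the two asymptotic values $0$ and $\infty$: indeed $\gamma=\gamma_1\cup\gamma_2$ joins $0$ to $\infty$ through the tip $t^F$, while the Jordan curve $\tau$ separates $c^F$ from $\infty$. I would then take $U$ to be the connected component of $F^{-1}(V)$ whose closure contains $t^F$ --- in the Koebe picture, the component adjacent to the landing point of the ``tail'' of $B_0^F$. By \propref{tip1}(3), $t^F$ is the common $\ixp\circ\phi_R$-projection of the backward orbit $(\finv{0})^n(-1)$, which converges to the parabolic point, so $F$ extends continuously to $t^F$ with $F(t^F)=0$. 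Since $V$ omits the asymptotic values $0,\infty$ of $F$ and $v$ is its only critical value, over the punctured disk $V\setminus\{v\}$ the map $F$ is an honest covering; hence the component $U$ --- which, as the tail analysis will show, has finite valence --- restricts to a proper branched covering $F\colon U\to V$ ramified only over $v$. \lemref{proper5} then yields properties (1) and (4) at once: $U$ is a topological disk containing a single critical point $p^F$ with $F^{-1}(v)\cap U=\{p^F\}$, and this $p^F$ is necessarily the unique critical point of $F$ inside $B_0^F$ provided by \propref{basin-simply-connected}. Property (3), $\partial U\subset F^{-1}(\overline{\gamma_2}\cup\tau)$, is immediate from $\partial V\subset\overline{\gamma_2}\cup\tau$ and the continuity of $F$ at $t^F$.

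For (5)--(6): let $\delta$ be the component of $F^{-1}(\gamma_1)$ passing through $p^F$. Because $F\colon U\to V$ is a degree-two branched cover ramified only over $v$, the arc $\delta$ runs between two boundary points of $U$ and meets $\gamma_1$ only at $p^F$, and cutting $U$ along $\delta$ splits it into two Jordan subdomains on each of which $F$ is a univalent branch of the inverse of $F\colon\Dom(F)\setminus F(\delta)$ --- this is the exact analogue of how the two branches $\finv{0},\finv{1}$ of $f_0$ arise by cutting the cut plane along a suitable slit. Choosing $U'$ to be the half whose closure omits $t^F$ gives the required univalent map $F\colon U'\to\Dom(F)\setminus F(\delta)$.

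The step I expect to be the main obstacle is property (2): that $\overline U$ meets $\partial\Dom(F)$ in the single point $t^F$ and in no other boundary arc (and, relatedly, that $F\colon U\to V$ really is degree two). This is precisely the content of the ``tail'' picture of \figref{fig-tail}. One must trace, stage by stage through the construction of $t^F$ in the proof of \propref{tip1}, which inverse branch of $K$ is followed at each step, and verify that the successive preimages of $\gamma$ and $\tau$ nest around the backward orbit $\{(\finv{0})^n(-1)\}$ without ever pinching off a further boundary arc of the immediate basin --- equivalently, that no part of the Julia-set boundary other than the landing point of the tail is swept into $\overline U$. Each individual step is checkable because $K$ is given by an explicit rational formula, but organizing them into a clean topological argument, matched to \figref{fig-basina}, is the delicate part.
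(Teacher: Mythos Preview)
Your outline is broadly aligned with the paper's approach in that both recognize the crux is the explicit ``tail'' tracing --- following, through the proof of \propref{tip1}, how successive inverse branches carry the curves $\gamma$ and $\tau$ into a single fundamental crescent, and then projecting by $\ixp\circ\phi_R$. Where you diverge is in packaging: you want to define $U$ abstractly as a component of $F^{-1}(V)$, establish property~(2) first by the tail analysis, and then invoke \lemref{proper5} to obtain (1) and (4) cheaply. The paper instead works directly in the dynamical plane of $f_0$ (not $K$): it builds the domain $U$ from scratch by tracking the lifts $\nu_{-j}$ of $\gamma$ and $\kappa_{-j}$ of $\tau$ through the crescents $S_{-n}, S_{-(n+1)},\ldots$, keeping careful account of prime ends of $0$ and of $-1$ in each $S_{-j}$, and then reads all of (1)--(5) off simultaneously from this explicit picture.

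Your proposed shortcut via \lemref{proper5} is largely illusory. To apply that lemma you must already know that $F\colon U\to V$ is proper of finite degree, and the only way to verify this is precisely the stage-by-stage tracing you flag as the ``main obstacle'': you need to see that the component of $F^{-1}(V)$ abutting $t^F$ is bounded inside $\Dom(F)$ by exactly the two $\tau$-preimage arcs and the $\gamma_2$-preimage arc, with no further escape to $\partial\Dom(F)$. Once that explicit description is in hand --- which is the whole content of the paper's computation with $\nu^{1,2}_{-(n+k)}$ and $\kappa_{-(n+k)}$ --- the Jordan property, the single critical point, and the degree-two structure are already visible, so \lemref{proper5} adds nothing. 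In short: your plan correctly locates the work, but the abstract covering-space framework does not let you postpone or bypass it; the paper's direct construction in the $f_0$ plane is essentially what your ``tail analysis'' would have to become.
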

\begin{proof}
Let us again begin by selecting a simple curve $\sigma\subset\hat\CC$ which connects $0$ and $\infty$, 
and does not intersect with $\gamma\cup \tau$ except 
at the point $0$, and such that an analytic branch of the logarithm defined on a neighborhood of $\sigma$ has bounded imaginary part.

Let us parametrize $\gamma$:
$$\gamma(t):[0,1]\to \hat\CC$$
so that $\gamma(0)=0$ and $\gamma(1)=\infty$.

With a slight abuse of notation we use the same notation for the preimages of $\sigma$ and $\gamma$, etc. as in the proof of 
\propref{tip1}. Let $S_{-n}$ be the fundamental crescent $P_{-n}\setminus \overline{f(P_{-n})}$ which contains the critical value $-1/4$.
The corresponding component $\nu_{-n}\subset \overline{S_{-n}}$ of the lift of $\gamma$ is a simple arc which passes through
$-1/4$, and whose two ends land at $0$. 

There are two prime ends of the point $0$ in the crescent $S_{-n}$. We denote the ``upper'' one by $0_{-n}^+$, and the ``lower'' one by
$0_{-n}^-$. 
They correspond to the points $0$ and $\infty$
respectively in the quotient $\cC_A\simeq \hat\CC$. 
We will write 
$$\nu_{-n}(t)\underset{t\to 0+}{\lra}0^+_{-n}\text{ and }\nu_{-n}(t)\underset{t\to 1-}{\lra}0^-_{-n}.$$
Writing $S_{-(n+k)}=P_{-(n+k)}\setminus P_{-(n+k-1)}$, we see that 
$$f_0:S_{-(n+1)}\to S_{-n}$$
is a double covering, branched at $$-1/2\mapsto -1/4\in \nu_{-n}.$$
We thus obtain two parametrized curves $\nu^1_{-(n+1)}$ and $\nu^2_{-(n+1)}$ as the lifts of $\nu_{-n}$ by $f_0$. The intersection
$$\cir{\nu}^1_{-(n+1)}\cap\cir{\nu}^2_{-(n+1)}=-1/2.$$
For $j\leq n+1$ we denote $0_{-j}^+$, $0_{-j}^-$ the ``upper'' and the ``lower'' prime ends of $0$ in $S_{-j}$,
so that
$$\tl f_0: 0^\pm_{-j}\mapsto 0^\pm_{-(j-1)}.$$
 The other preimage of $0$
in $\partial S_{-(n+1)}$ is the point $-1$. We use $-1_{-(n+1)}^+$ and $-1_{-(n+1)}^-$ to denote its two prime ends in $S_{-(n+1)}$
labeled so that the Carath{\'e}odory extension $\tl f_0$ maps
$$-1_{-(n+1)}^+\mapsto 0^+_{-n}\text{ and }-1_{-(n+1)}^-\mapsto 0^-_{-n}.$$
We have
$$\nu^1_{-(n+1)}(t)\underset{t\to 0+}{\lra}0^+_{-(n+1)}\text{ and }\nu_{-(n+1)}(t)\underset{t\to 1-}{\lra}-1^-_{-(n+1)};$$
$$\nu^2_{-(n+1)}(t)\underset{t\to 0+}{\lra}-1^+_{-(n+1)}\text{ and }\nu_{-(n+1)}(t)\underset{t\to 1-}{\lra}0^-_{-(n+1)}.$$
Denote $S_{-(n+k)}\subset W$ the pull-back of $S_{-(n+k-1)}$ by the inverse branch $\finv{0}$; and 
$$\nu^{1,2}_{-(n+k)}\subset S_{-(n+k)}$$
the corresponding preimage of $\nu_{-n}$. 
A trivial induction shows that there are exactly three points $$\{0,a_{-(n+k)},f(a_{-(n+k)})\}\subset\partial S_{-(n+k)},$$
 which map to $0$ under $f_0^k$. Furthermore,  $a_{-(n+k)}$ has two prime ends in $S_{-(n+k)}$;
the points $0$ and  $f_0(a_{-(n+k)})$ each have a single prime end.
We denote $a_{-(n+k)}^\pm$ the prime end mapped to $0_{-n}^\pm$ by the Carath{\'e}odory extension $\tl f_0^k$.

We have
$$\nu^1_{-(n+k)}(t)\underset{t\to 0+}{\lra}0_{-(n+k)}\text{ and }\nu_{-(n+1)}(t)\underset{t\to 1-}{\lra}f_0(a_{-(n-k)});$$
$$\nu^2_{-(n+k)}(t)\underset{t\to 0+}{\lra}a^+_{-(n+1)}\text{ and }\nu_{-(n+k)}(t)\underset{t\to 1-}{\lra}a^-_{-(n+1)}.$$

Each of the sets $\nu_{-j}$ is composed of two parts, $\nu_{-j,1}$ and $\nu_{-j,2}$ which are the preimages of 
$\gamma_1$ and $\gamma_2$ respectively. For $j\leq n$, they intersect at a single point $b_{-j}$; for $j>n$ there are 
two such points $b^1_{-j}$, $b^2_{-j}$. 

Denote $\kappa_{-(n+k)}\subset S_{-(n+k)}$ the preimage of $\tau$. 
Elementary considerations of monodromy, which we spare the reader, imply that
$\kappa_{-(n+k)}$ is a union of two simple arcs: one connecting $0$ with $b^1_{-(n+k)}$, the other with $b^2_{-(n+k)}$; and otherwise
disjoint from $\nu_{-(n+k)}$.

Considerations of Denjoy-Wolff Theorem imply that for every ample repelling petal $P_R$ 
there exists $k$ such that $S_{-(n+k)}\subset P_R$. 
Let us  project the picture in $S_{-(n+k)}$ back to $\hat\CC$ using $\ixp\circ\phi_R^{f_0}$. The points $a_{-(n+k)}$ and
$f_0(a_{-(n+k)})$ are identified in the projection. We obtain a Jordan domain $U$, bounded by the projections
of $\kappa^1_{-(n+k)}$, $\kappa^2_{-(n+k)}$, $\nu_{-(n+k),2}$. It satisfies the properties (1)-(5) by the construction. 

\end{proof}

\begin{cor}
\label{basinbd}
Let $f\in\clad$, and let $U$ be the domain constructed in \propref{contain1}. We have
\begin{itemize}
\item $B_0^f\subset U$;
\item $\partial B_0^f\cap\partial\Dom(f)=\{t^f\}$.
\end{itemize}
\end{cor}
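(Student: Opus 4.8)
The plan is to bootstrap from \propref{contain1}. Recall that the domain $U$ there is a Jordan domain with $U\subset\Dom(f)$, $\overline U\cap\partial\Dom(f)=\{t^f\}$, $U\ni c^f$, $U\supset\gin$, and there is a subarc $\gin_{-1}$ of $f^{-1}(\gin)$ cutting off a piece $U'$ with $f:U'\setminus\gin\to\Dom(f)\setminus\gin$ univalent. First I would observe that since $B_0^f$ is forward-invariant and every point of $B_0^f$ eventually lands in an attracting petal contained in $U$ (shrink $P_A^f$ of \propref{critval1} if necessary so that $P_A^f\subset U$; this is legitimate because $U\supset\gin\ni 0$ and $U$ contains a petal by the way it was built, and the immediate basin is the increasing union of pullbacks of any attracting petal). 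So it suffices to show that $U$ itself is backward-invariant along orbits inside $B_0^f$ in the following sense: if $w\in B_0^f$ and $f(w)\in U$, then $w\in U$. Granting this, a trivial induction on $n$ gives that the chain $P_{-n}$ of pullbacks of a petal $P_{-0}\subset U$ stays inside $U$, whence $B_0^f=\bigcup_n P_{-n}\subset U$.

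The backward-invariance step is where the arc structure of \propref{contain1} does the work. The point is that $f^{-1}(U)$ — or rather the component analysis — must be controlled. I would argue as follows: let $\Gamma=\gin\cup\gout$ be the primary cut; then $\hat\CC\setminus\Gamma$ is simply connected, $f$ restricted to $B_0^f\setminus\Gamma$ avoids the critical value (the only critical value $c^f$ lies on $\gin\subset\Gamma$), so path-lifting applies. Given $w\in B_0^f$ with $f(w)\in U$, pick a path in $B_0^f$ from $f(w)$ to a point $z_0$ deep in the attracting petal $P_A^f\subset U$, avoiding $\Gamma$ (possible since $\Gamma\cap B_0^f=\gin$ is a single arc to $0$ and we can route around it, staying in the open connected set $B_0^f\setminus\gin$). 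Lift this path starting at $w$; the lift stays in $B_0^f$ by backward-invariance of the basin, and its endpoint is a preimage of $z_0$ lying in $B_0^f$, which by the univalence statement (6) of \propref{contain1} — the preimage of the slit domain $\Dom(f)\setminus\gin$ inside $U$ is exactly $U'\setminus\gin$ together with its mirror piece, and both are inside $U$ — must lie in $U$. Then pull the whole lifted path back: it is a path in $B_0^f$ from $w$ to a point of $U$, and it does not cross $\partial U$; since $\partial U\subset f^{-1}(\overline{\gout}\cup\partial\Dom(f))$ and $\overline{\gout}\cup\partial\Dom(f)$ is disjoint from $B_0^f$ except possibly at $t^f$, the path never meets $\partial U$, so $w\in U$.

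For the second bullet, $\partial B_0^f\cap\partial\Dom(f)=\{t^f\}$: the inclusion $\supseteq$ is clear because $t^f$ was defined (\propref{tip1}) as the landing point of a lift $\tl\gamma$ of a loop $\gamma$ around $c^f$, and that lift runs inside $B_0^f$, so $t^f\in\overline{B_0^f}$; also $t^f\in\partial\Dom(f)$ by construction, and $t^f\notin B_0^f$ since $B_0^f$ is open and contained in $\Dom(f)$ while $t^f\in\partial\Dom(f)$. For $\subseteq$: if $z\in\partial B_0^f\cap\partial\Dom(f)$, then $z\in\overline{B_0^f}\subset\overline U$ by the first bullet, and $z\in\partial\Dom(f)$; but $\overline U\cap\partial\Dom(f)=\{t^f\}$ by \propref{contain1}(2), so $z=t^f$.

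\medskip\noindent The main obstacle I expect is making the path-routing in $B_0^f\setminus\Gamma$ rigorous — one needs that $B_0^f\setminus\gin$ is connected (so that any two points can be joined avoiding the slit) and that lifts of such paths, starting from an arbitrary preimage in $B_0^f$, cannot escape $U$; this is exactly what the carefully-arranged boundary description $\partial U\subset f^{-1}(\overline{\gout}\cup\partial\Dom(f))$ and the univalence in item (6) are designed to give, but the monodromy/component bookkeeping (which preimage component one lands in) has to be tracked honestly, much as in the proof of \propref{contain2}. $\Box$
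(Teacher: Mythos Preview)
Your argument for the second bullet is essentially identical to the paper's. For the first bullet, your approach is correct in spirit but substantially overcomplicated, and the paper's proof is a one-liner that you actually discover yourself at the very end of your path-lifting argument.

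The paper observes directly: $B_0^f$ is connected, open, and contained in $\Dom(f)$; it meets $U$ (since $\gin\subset B_0^f\cap U$ by \propref{contain1}(3) and \propref{tip2}); and it is disjoint from $\partial U$, because $\partial U\setminus\{t^f\}\subset f^{-1}(\overline{\gout}\cup\partial\Dom(f))$ while $B_0^f$ is forward-invariant and disjoint from $\overline{\gout}\cup\partial\Dom(f)$, and $t^f\notin\Dom(f)\supset B_0^f$. Connectedness then forces $B_0^f\subset U$. That is the entire proof.

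You set up an inductive backward-invariance scheme and a path-lifting apparatus, but when you come to justify why the lifted path cannot cross $\partial U$, you invoke exactly the observation above: $\partial U\subset f^{-1}(\overline{\gout}\cup\partial\Dom(f))$ and this set is disjoint from $B_0^f$. Once you have that, the path is irrelevant --- apply the disjointness to the connected set $B_0^f$ itself and you are done. All of the routing in $B_0^f\setminus\gin$, the lifting, and the univalence item (6) can be dropped; none of the monodromy bookkeeping you flag as an obstacle is needed.
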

\begin{proof}
Since $B_0^f$ does not intersect with $\overline{\gout}\cup \partial\Dom(f)$ and 
$\partial U\subset f^{-1}(\overline{\gout}\cup \partial\Dom(f))$, we have 
$B_0^f\subset U$. Hence,
$$\partial B_0^f\cap\partial\Dom(f)\subset\{t^f\}.$$
On the other hand, the cut $\gin\subset B_0^f$, hence
$$t^f\in\overline{\gin}\in\partial B_0^f.$$

\end{proof}

\renewcommand{\finv}[1]{{G_{#1}}}
\renewcommand{\finvloc}{{G}}

\subsection{The immediate basin of a map in $\clad$ is a Jordan domain}
We are now in a position to begin the proof of \thmref{class-1c}, which in turn implies \thmref{class-1a}. 
Let us fix $F\in\clad$ and let $U$ be the domain constructed in
\propref{contain1}. 
The preimage $\overline{F^{-1}(\gin)}\cap B_0^F$
consists of the curve $\gin$ and a simple curve $\gin_{-1}$; the two curves cross at the critical point $p^F\in B_0^F$.
To fix the ideas, let us parametrize $$\gin:[0,1]\to \overline B_0^f$$
so that $\gin(0)=0$ and $\gin(1)=t^f$.
Let $s_1<s_2\in(0,1)$ be such that $$\gin(s_1)=c^F, \;\gin(s_2)=p^F.$$
By elementary path-lifting considerations, the curve $\gin_{-1}$ is a cross-cut in $U$.
Let $V$ be the connected component of $U\setminus \gin_{-1}$ which does not contain $t^F$ in its boundary.
By construction (\propref{contain1}), there exists a univalent branch $G$ of $F^{-1}$ which maps
$$\Dom(F)\setminus \gin([s_1,1])\mapsto V\setminus \gin([s_1,s_2]).$$
We note:
\begin{prop}
\label{in1}
The inverse branch $G$ maps the domain $\Dom(F)\setminus\gin$ inside itself.
\end{prop}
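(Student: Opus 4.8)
The plan is to show that $G(\Dom(F)\setminus\gin)\subset\Dom(F)\setminus\gin$ by combining the univalence properties from \propref{contain1} with the forward-invariance of the cut $\gin$ under $F$. Recall that $G$ is the univalent branch of $F^{-1}$ defined on $\Dom(F)\setminus\gin([s_1,1])$, taking this set onto $V\setminus\gin([s_1,s_2])$, where $V$ is the component of $U\setminus\gin_{-1}$ not containing $t^F$ on its boundary. Since $\gin\subset\gin([s_1,1])\cup\gin([0,s_1])$ and the endpoint behavior is controlled, I would first observe that $G$ does extend to (a subset of) $\Dom(F)\setminus\gin$ in the obvious way: the part of $\gin$ that matters is $\gin([0,s_2])$, because $F(\gin_{-1})=\gin$ realizes $\gin_{-1}$ as the $G$-image of $\gin([s_1,1])$ (the portion of $\gin$ from the critical value onward), while $\gin([0,s_1])$ lifts under $G$ to the remaining half of $\gin$ via the relation $F(\gin)=\tau\supset\gin$ near $0$.

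First I would set up the bookkeeping carefully. Parametrize $\gin:[0,1]\to\overline{B_0^F}$ with $\gin(0)=0$, $\gin(1)=t^F$, and $\gin(s_1)=c^F$, $\gin(s_2)=p^F$. By \propref{tip2} and \propref{contain1}, $F(\gin_{-1})=\tau$ and $\gin_{-1}\cap\gin=\{p^F\}$ with $\gin_{-1}$ a crosscut of $U$. The key structural fact I want is: $F^{-1}(\gin)\cap B_0^F=\gin\cup\gin_{-1}$ (stated just before the proposition), together with the self-similarity $F(\gin([s_2,1]))=\gin([s_1,1])$ coming from $F\circ\gin$ being a reparametrized shift along the invariant ray $\tau$ in the Fatou coordinate. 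From these I can conclude that $G$ maps $\gin([s_1,1])$ onto $\gin_{-1}$, and $G$ maps $\gin([0,s_1])$ onto $\gin([0,s_2])$ — so the full $G$-image of the relevant part of $\gin$ is $\gin([0,s_2])\cup\gin_{-1}$, which is contained in $\gin\cup\gin_{-1}$.

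Now for a point $w\in\Dom(F)\setminus\gin$, I must show $G(w)\notin\gin$. Since $G$ is injective and $G$ carries the arc $\gin([s_1,1])\cup\gin([0,s_1])=\gin$ (traced appropriately) onto $\gin_{-1}\cup\gin([0,s_2])$, if $G(w)$ landed on $\gin$ at some point $\gin(s)$ with $s\le s_2$, then by injectivity $w$ would have to be the unique preimage of $\gin(s)$ under this branch, which lies in $\gin$ — contradiction. If $G(w)=\gin(s)$ for $s>s_2$, then $F(\gin(s))=\gin(s')$ for some $s'>s_1$ lies on $\gin$, but also $F(G(w))=w$, forcing $w\in\gin$, again a contradiction. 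Finally I need $G(w)\in\Dom(F)$: since the range of the extended $G$ is $V\cup\gin_{-1}\cup(\text{a bit of }\gin)\subset U\subset\Dom(F)$ by property (2) of \propref{contain1}, this is automatic. Assembling these pieces gives $G(\Dom(F)\setminus\gin)\subset\Dom(F)\setminus\gin$.

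The main obstacle I anticipate is the careful verification of exactly which subarcs of $\gin$ and $\gin_{-1}$ are identified under $G$ near the two ends — the endpoint $0$ (where $F$ has a parabolic point and $\gin$ accumulates along a sector) and the tip $t^F$ (where $\partial U$ and $\partial\Dom(F)$ touch). Near $0$ one must use the local parabolic dynamics / Fatou-coordinate picture to see that the inverse branch $G$ sending $\gin$ near $0$ back into $B_0^F$ keeps it inside $\gin$ (this is where the forward invariance $F(\gin)\supset\gin$ near $0$ is used in reverse), and near $t^F$ one must invoke that $G$ extends continuously with $G(t^F)$ one of the two boundary endpoints $u^\pm$ of $\gin_{-1}$, which by property (6) lie on $\partial U$, hence $G$ does not push anything onto the forbidden arc there. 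Once the endpoint matching is pinned down, the interior argument is a routine injectivity-plus-invariance chase of the type already used repeatedly in the paper.
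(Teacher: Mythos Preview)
Your approach is essentially correct and matches what the paper intends—note that the paper offers no proof at all, stating the proposition immediately after ``We note:'' as an obvious consequence of the construction. The verification really comes down to two observations: (i) $G(\Dom(F)\setminus\gin)\subset V\setminus\gin([s_1,s_2])\subset U\subset\Dom(F)$ by \propref{contain1}(2); and (ii) if $G(w)\in\gin$ then $w=F(G(w))\in F(\gin)=\tau=\gin([0,s_1])\subset\gin$, contradicting $w\notin\gin$.

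You reach this conclusion, but your write-up contains several bookkeeping slips that obscure it. First, $F(\gin_{-1})=\gin([s_1,1])$, not $\tau$ (the crosscut $\gin_{-1}$ is a $2$--$1$ cover of the arc from $c^F$ to $t^F$, folding at $p^F$). Second, the inclusion goes the other way: $\tau=\gin([0,s_1])\subset\gin$, not $\tau\supset\gin$. Third, $F(\gin([s_2,1]))=\tau=\gin([0,s_1])$, not $\gin([s_1,1])$ (the ``outer'' half of $\gin$ also maps onto $\tau$, since $F(p^F)=c^F$ and $F(t^F)=0$ in the limit). None of these errors break your core contradiction argument, because all you actually need is the forward invariance $F(\gin)=\tau\subset\gin$, which you do invoke. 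Your separate treatment of the case $s>s_2$ is moreover superfluous: $\gin((s_2,1])$ lies in the component of $U\setminus\gin_{-1}$ containing $t^F$ on its boundary, hence is disjoint from $V$, so $G(w)$ can never land there. The endpoint-matching worries of your final paragraph are likewise unnecessary—the one-line forward-invariance argument handles everything without any boundary analysis.
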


Let $l(t)$ be a simple path such that $l(0)=u$, $l(1)=F(u)\in\partial\Dom(F)$, and 
$\cir{l}\cap \overline{U}=\emptyset.$
Since $\Dom(F)$ is a Jordan domain, the inverse branch $G$ has a continuous extension to the boundary point $u$.
Set $u_0\equiv u$, and denote $u_{-n}$ the orbit of $u_0$ under $G$.
The curve $l$ has a univalent pull-back $l_{-1}\subset \overline{V}$ such that $l_{-1}(0)=u_{-1}$ and $l_{-1}(1)=u.$
We let
$$l_{-n}=G(l_{-(n-1)})\text{ for }n\geq 2.$$
Set
$$\Lambda=\cup_{n\geq 1} l_{-n}.$$
We parametrize this curve by $(0,1]$ so that 
$$\Lambda\left(\left[\frac{1}{j},\frac{1}{j-1}\right]\right)=l_{-(j-1)}\text{ for }j\geq 2.$$
Thus, $\Lambda(1/j)=u_{-j}.$ By \propref{in1} and Denjoy-Wolff Theorem, we have:
\begin{prop}
\label{in2}
The curve $\Lambda$ is disjoint from $B_0^F$, and lands at $0$:
$$\lim_{t\to 0-}\Lambda(t)=0.$$
\end{prop}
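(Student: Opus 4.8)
\emph{Plan.} The idea is to regard the inverse branch $G$ as a holomorphic self-map of the hyperbolic, simply connected domain $\Omega:=\Dom(F)\setminus\gin$ and to apply the Denjoy--Wolff theorem, while the disjointness of $\Lambda$ from $B_0^F$ is a soft backward-invariance argument. For the disjointness: by \corref{basinbd} we have $B_0^F\subset U$, whereas the arc $l$ was chosen with $l((0,1))\cap\overline U=\emptyset$ and with $l(1)\in\partial\Dom(F)$; since $B_0^F$ is an open subset of $\Dom(F)$, the arc $l$ is disjoint from $B_0^F$. The immediate basin is forward invariant, $F(B_0^F)\subset B_0^F$, so its complement in $\Dom(F)$ is invariant under $G$ wherever the latter is defined: if $w\notin B_0^F$ and $G(w)$ exists then $w=F(G(w))$, so $G(w)\in B_0^F$ would force $w\in B_0^F$. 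By \propref{in1} the backward orbit $l_{-n}=G^{\,n-1}(l_{-1})$ never leaves $\Omega\subset\Dom(F)$, so induction on $n$ gives $l_{-n}\cap B_0^F=\emptyset$ for every $n$, hence $\Lambda=\bigcup_{n\ge1}l_{-n}$ is disjoint from $B_0^F$.

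For the landing at $0$: removing the arc $\gin$, which joins the interior point $0$ to the boundary point $t^F$, does not disconnect the Jordan domain $\Dom(F)$, so $\Omega$ is simply connected, and $\Omega$ is hyperbolic because $\Omega\subsetneq\CC$. By \propref{in1}, $G(\Omega)\subset\Omega$, and in fact $G(\Omega)$ is contained in the component of $U\setminus\gin_{-1}$ not touching $t^F$ (with a slit removed), which is a proper subdomain of $\Omega$, so $G$ is not an automorphism of $\Omega$. The Denjoy--Wolff theorem therefore furnishes a point $p\in\overline\Omega$ with $G^{\,n}\to p$ locally uniformly on $\Omega$. To identify $p$ with $0$: since $\gin([s_1,1])$ is a compact arc not containing $0=\gin(0)$, the domain of $G$ contains a full disk about $0$, and on that disk $F$ is univalent because $F'(0)=1$; hence $G$ coincides there with the parabolic local inverse $g(z)=z-z^2+\cdots$ of $F$, in particular $G(0)=0$. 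Consequently any repelling petal of $F$ at $0$, intersected with $\Omega$, is a nonempty open subset of $\Omega$ on which $G^{\,n}=g^{\,n}\to0$; by the uniqueness of the Denjoy--Wolff point this forces $p=0$, so $G^{\,n}\to0$ uniformly on compact subsets of $\Omega$.

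It then remains to observe that $\overline{l_{-1}}$ is a compact subset of $\Omega$: its relative interior is $G\bigl(l((0,1))\bigr)\subset\Omega$ by \propref{in1}, and its two endpoints $u$ and $u_{-1}=G(u)$ lie in $\Omega$ because $u$ is an endpoint of $\gin_{-1}$ and $\gin_{-1}\cap\gin=\{p^F\}$ with $p^F$ interior to $\gin_{-1}$, so $u\notin\gin$. Therefore $\diam(l_{-n})=\diam\bigl(G^{\,n-1}(l_{-1})\bigr)\to0$ and, since consecutive pieces $l_{-n}$, $l_{-n-1}$ share the endpoint $u_{-n}$ and accumulate only at the parameter value $0$, every neighborhood of $0$ eventually contains all $l_{-n}$; in view of the parametrization $\Lambda\bigl([1/j,1/(j-1)]\bigr)=l_{-(j-1)}$ this gives $\Lambda(t)\to0$ as $t\to0^+$. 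The main obstacle is precisely the identification of the Denjoy--Wolff point of $G$ with the parabolic point $0$, together with the bookkeeping needed to be sure that $G$ genuinely preserves the hyperbolic domain $\Omega$ and that the backward orbit never runs into the arc $\gin$ where $G$ ceases to be defined; the petal argument above is the cleanest route, since it only requires exhibiting a single nonempty open subset of $\Omega$ on which the iterates of $G$ already tend to $0$.
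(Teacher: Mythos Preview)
Your proof is correct and follows essentially the same route as the paper, which simply says ``By \propref{in1} and Denjoy--Wolff Theorem'' and leaves all details to the reader. You have filled in exactly the details the paper suppresses: the backward-invariance argument for disjointness from $B_0^F$, the verification that $\Omega=\Dom(F)\setminus\gin$ is simply connected and that $G$ is a non-automorphic self-map of it, the identification of the Denjoy--Wolff point with $0$ via a repelling petal on which $G$ agrees with the local parabolic inverse, and the compactness of $\overline{l_{-1}}$ in $\Omega$ needed to conclude uniform shrinking of the pieces $l_{-n}$. One small remark: your base case for disjointness should be stated for $l_{-1}$ rather than $l$ (you get it immediately from $F(l_{-1})=l$ and forward invariance of $B_0^F$), and the limit $t\to 0^-$ in the statement is evidently a typo for $t\to 0^+$, as your parametrization confirms.
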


\begin{defn}
We call the curve $\hat\Gamma\equiv \Gamma\cup\Lambda$ the {\it secondary} cut.
\end{defn}

Let $u^+,u^-\in\partial U$ be the two endpoints of $\gin_{-1}$, labeled in such a way that $u^+$ is encountered
first, when going around $\partial U$ in the positive direction  from $t^f$. 
Denote the connected components of $U\setminus \hat\Gamma$ by $U^+$, $U^-$ so that $U^\pm\ni u^\pm$.

Consider the inverse branch of $f$ which maps $B_0^F\setminus \gin([0,s_1])$ into $B_0^F\cap U^+$, and let $G_0$
be its univalent extension to
$$G_0:U\setminus (\Lambda\cup\gin([0,s_1]))\hookrightarrow U^+.$$
Replacing $U^+$ with $U^-$ we similarly define an analytic branch of $F^{-1}$ 
$$G_1:U\setminus (\Lambda\cup\gin([0,s_1]))\hookrightarrow U^-.$$

Let $P\subset B_0^F$ be an attracting petal which contains $c^F$ in its interior, and let $P_{-1}$ be a 
connected component of $F^{-1}(P)$
such that
$$P\subset P_{-1}.$$

We set $$\bA\equiv U\setminus P_{-1}.$$

\begin{figure}
\centerline{\includegraphics[width=\textwidth]{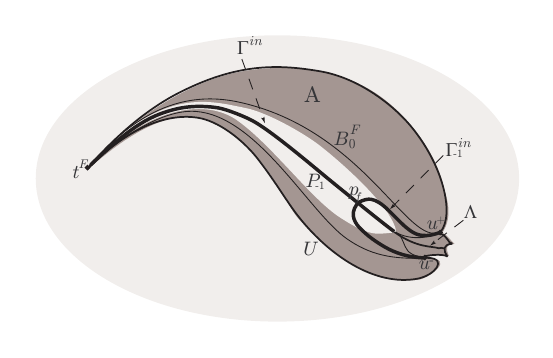}}
\caption{\label{fig-basinb}
The domain $\bA$.}
\end{figure}

Let
\[
\bA_0 \equiv \bA\cap U^+\quad\text{and}\quad
\bA_1\equiv \bA\cap U^-
\]
If $i_0 i_1 \ldots i_n$ is an arbitrary sequence of $n+1$ 0's and 1's,
we define
\[
\bA_{i_0 i_1 \ldots i_n} := \{ z \in \bA_0 : f^j(z) \in
\bA_{i_j}\quad\text{for}\quad 1 \leq j \leq n \}
\]

The proof of \thmref{class-1c} now follows the same lines as in \secref{sec:quadratic-example}.
It follows from the definitions that
\begin{itemize}
\item 
$\bA_{i_0 i_1 \ldots i_n}$ is decreasing in $n$:
$
\bA_{i_0 \ldots i_n} \subset \bA_{i_0 \ldots i_{n-1}};
$
\item
$
F \bA_{i_0 i_1 \ldots i_n} = \bA_{i_1 \ldots i_n};
$

\item
$
\bA_{i_0 i_1 \ldots i_n} = \finv{i_0} \bA_{i_1 \ldots i_n},
$
and hence
\[
\bA_{i_0 i_1 \ldots i_n} = \finv{i_0} \circ \finv{i_1} \circ
\cdots \circ \finv{i_{n-1}} \bA_{i_n},
\]
or, more generally,
\[
\bA_{i_0 i_1 \ldots i_n} = \finv{i_0} \circ \finv{i_1} \circ
\cdots \circ \finv{i_{j-1}}\bA_{i_j i_{j+1} \ldots
  i_n}\quad\text{for $1 \leq j \leq n-1$.}
\]
\end{itemize}

We show that the diameters of the
puzzle pieces $\bA_{i_0 i_1 \ldots i_n}$ go to zero as $n \to
\infty$, uniformly in $i_0 i_1 \ldots i_n$. More precisely,
let us define 
\[
\rho_n := \sup \left\{ \text{diam}(\bA_{i_0 \ldots i_n}) : i_0 \ldots i_n
\in \{0,1\}^{n+1} \right\}
\]
Since $\bA_{i_0 \ldots i_n} \supset \bA_{i_0
  \ldots i_{n+1}}$, the sequence $\rho_n$ is non-increasing in $n$, so 
\begin{equation}
\label{limrho1}
\rho_* :=\lim_{n \to \infty} \rho_n
\end{equation}
exists. 

We will prove 
\begin{prop}\label{th:5a}
The limit $\rho_*=0.$  
\end{prop}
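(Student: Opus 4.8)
The plan is to run the same argument as in the quadratic case (\secref{sec:quadratic-example}, proof of \propref{th:5}), with the branches $\finv{0}=G_0$ and $\finv{1}=G_1$ of $F^{-1}$ playing the role of the quadratic inverse branches and $\bA=U\setminus P_{-1}$ playing the role of the basic annulus. First I would reduce to a single descending chain: a diagonal argument identical to the one proving \lemref{th:5:1} yields an infinite dyadic sequence $\bm i=i_0i_1i_2\ldots$ with $\diam(\bA_{i_0\ldots i_n})\geq\rho_*$ for every $n$. Assuming $\rho_*>0$ for contradiction, I would split according to the tail of $\bm i$: (1) $i_j=0$ eventually; (2) $i_j=1$ eventually; (3) both symbols occur infinitely often.

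For case (3), fix an increasing sequence $j_k$ with $i_{j_k}=0,\ i_{j_k+1}=1$ and put $f_{-j_k}=\finv{i_0}\circ\cdots\circ\finv{i_{j_k-1}}$. Here the construction of the cuts in \propref{contain1} pays off: since each of $\finv{0},\finv{1}$ maps the open set $U\setminus(\Lambda\cup\gin([0,s_1]))$ into $U^{+}$, respectively $U^{-}$, and $U^{\pm}$ are again contained in that set, every composition $f_{-j_k}$ is a single-valued univalent branch of $F^{-j_k}$ on the \emph{fixed} open set $U\setminus(\Lambda\cup\gin([0,s_1]))$, with image inside the bounded set $U$. Thus $(f_{-j_k})$ is automatically uniformly bounded there -- no forward-invariant neighbourhood of $\infty$ is needed, unlike in the quadratic case -- hence a normal family, and after passing to a subsequence $f_{-j_k}\to h$ locally uniformly, with $h$ non-constant because $\diam(f_{-j_k}\bA_{01})=\diam(\bA_{i_0\ldots i_{j_k+1}})\not\to0$. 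Choosing $z_0\in\bA_{01}\cap\partial B_0^F$ (an iterated preimage of the marked boundary point $t^F$ with itinerary beginning $01$) and using that $\partial B_0^F$ is closed and stable under the chosen branches, $w_0:=h(z_0)\in\partial B_0^F$. A Hurwitz argument then gives a fixed open $W\ni w_0$ with $\overline W$ compact in $h(\bA_{01})$ and $W\subset f_{-j_k}(\bA_{01})$ for all large $k$; writing $w\in W$ as $w=f_{-j_k}(w')$ with $w'\in\bA_{01}$ gives $F^m(w)=\finv{i_m}\circ\cdots\circ\finv{i_{j_k-1}}(w')\in\bA_{i_m\ldots i_{j_k+1}}\subset\bA\subset U$ for all $m\leq j_k$, so letting $k\to\infty$, every forward iterate of every point of $W$ stays in the bounded set $\overline U$. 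By Montel's theorem $\{F^n|_W\}$ is then normal, so $W$ lies in a Fatou component $\Omega$ of $F$; since $F^n\to0$ locally uniformly on $B_0^F$ and $W$ meets $B_0^F$, one checks $\Omega=B_0^F$, contradicting $w_0\in W\setminus B_0^F$.

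For cases (1) and (2) I would argue as in the quadratic case. Say $i_j=0$ eventually. On $\bA_{00}$ the branch $\finv{0}$ coincides with the local inverse $\finvloc$ of $F$ fixing $0$, and $\bA_{000}\subset\bA_{00}$ shows $\finvloc(\bA_{00})\subset\bA_{00}$; moreover $\overline{\bA_{00}}$ lies in the immediate parabolic basin of $\finvloc$ -- the region near $0$ around the repelling direction $+1$ of $F$, safely away from the attracting direction $-1$ because the petal-enlargement $P_{-1}$ has been deleted -- so by the local theory of parabolic points $\finvloc^{\,n}\rightrightarrows0$ on $\overline{\bA_{00}}$ and $\bigcap_n\overline{\bA_{0^{n}}}=\{0\}$ ($n$ zeros), whence $\diam(\bA_{0^{n}})\to0$. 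For a tail of the form $i_0\ldots i_k00\ldots$ I would write $\bA_{i_0\ldots i_k0^{n}}=\finv{i_0}\circ\cdots\circ\finv{i_k}(\bA_{0^{n}})$ and invoke uniform continuity of the fixed univalent map $\finv{i_0}\circ\cdots\circ\finv{i_k}$ near $\overline{\bA_{00}}$ to conclude the diameters still go to $0$. Case (2) is the mirror image with $\finv{1}$ and $\bA_{11}$. In all three cases $\diam(\bA_{i_0\ldots i_n})\to0$, contradicting $\rho_*>0$, so $\rho_*=0$.

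The main obstacle I expect is in case (3), and it has two facets. First, one must check that $\overline{\bA_{01}}$ (and, in cases (1)--(2), $\overline{\bA_{00}}$ and $\overline{\bA_{11}}$) really sits inside $U\setminus(\Lambda\cup\gin([0,s_1]))$, i.e.\ that the relevant puzzle pieces stay bounded away from $\partial U$, from the secondary-cut arc $\Lambda$, and from the parabolic point $0$; this is geometric and should follow from the definitions, since $\bA_0,\bA_1$ are cut out by $\widehat\Gamma$ and, for $z$ near $0$, $F(z)$ stays on the same side of $\widehat\Gamma$ as $z$. Second, the identification of the Fatou component containing $W$ with $B_0^F$ rests on the standard -- but, for a general $F\in\clad$, not yet formally established in the text -- facts that $F^n\to0$ locally uniformly on the immediate basin $B_0^F$ and that $\partial B_0^F$ is disjoint from the Fatou set of $F$; in the quadratic case (\secref{sec:quadratic-example}) this was simply ``$w_0\in J(f_0)$'', and one should either quote the corresponding finite-type theory or arrange for $z_0$, hence $w_0$, to be a repelling (pre)periodic boundary point so that the conclusion is immediate.
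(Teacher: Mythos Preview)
Your three-case decomposition, the use of the inverse branches $G_0,G_1$, and the treatment of cases (1) and (2) via the local inverse $G$ and parabolic/Denjoy--Wolff considerations are exactly what the paper does. The only substantive difference is how the contradiction is extracted in case (3), and here the paper takes the shortcut you yourself suggest at the end.

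In case (3) you pass from $W\subset f_{-j_k}(\bA_{01})$ to ``all forward iterates of $W$ stay in $\overline U$'', invoke Montel to put $W$ in a Fatou component, and then need to identify that component with $B_0^F$ --- which, as you note, requires knowing that $\partial B_0^F$ lies in the Julia set of $F$, a fact not yet available at this point of the argument. The paper sidesteps this entirely by making a specific choice of base point: it takes
\[
z_0 := G_0(u^-)\in\overline{\bA_{01}}\cap\partial B_0^F,
\]
where $u^-$ is an endpoint of $\gin_{-1}$ on $\partial U$. The key feature of this choice is that $F(u^-)=t^F\in\partial\Dom(F)$, so the forward orbit of $z_0$ exits the domain after exactly two steps. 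Then the Rouch\'e/Hurwitz step gives a neighbourhood $V\ni w_0=h(z_0)$ with $F^{j_{k'}}(V)\subset Q$ for all large $k'$; but $V$ contains the points $f_{-j_k}(z_0)$ for large $k$, and $F^{j_k+2}(f_{-j_k}(z_0))=t^F\notin\Dom(F)$, so $F^{j_{k'}}$ cannot even be defined on $V$ for $j_{k'}>j_k+2$. This is an immediate contradiction, with no appeal to Fatou--Julia structure.

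This also bears on your first flagged obstacle: rather than trying to fit $\overline{\bA_{01}}$ inside $U\setminus(\Lambda\cup\gin([0,s_1]))$, the paper simply takes a simply connected neighbourhood $Q\supset\overline{\bA_{01}}$ disjoint from the postcritical set (exactly as in the quadratic model) and runs Montel in the spherical metric. So both of your anticipated difficulties are handled by the same device: the special point $z_0=G_0(u^-)$ with $F^2(z_0)=t^F$ supplies the contradiction directly.
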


In the same way as \lemref{th:5:1}, we have
\begin{lem}
\label{th:5:1a} 

There is an infinite sequence $i_0 i_1 \ldots i_n
  \ldots$ so that
\[
\diam(A_{i_0 i_1 \ldots i_n}) \geq \rho_* \quad\text{for all
  $n$.}
\]
\end{lem}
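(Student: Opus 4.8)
The plan is to reproduce the diagonal argument used for \lemref{th:5:1}. For a finite word $w = i_0 i_1 \ldots i_m$ of $0$'s and $1$'s, set
\[
g(w) := \lim_{n \to \infty} \sup \left\{ \diam(\bA_{w u}) : u \in \{0,1\}^{n} \right\},
\]
where $wu$ denotes concatenation. This limit exists: by the nesting $\bA_{i_0 \ldots i_k} \supset \bA_{i_0 \ldots i_{k+1}}$, the supremum over extensions of $w$ of a given length is non-increasing as the length grows. Note also that $\lim_n \sup\{ \diam(\bA_{u}) : u \in \{0,1\}^{n+1}\} = \lim_n \rho_n = \rho_*$; it is convenient to write this as $g(\emptyset) = \rho_*$, with the understanding that words here have length at least one, so there is a harmless offset by one (there are $2^{n+1}$ pieces of level $n$).

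First I would record the splitting identity
\[
g(w) = \max\bigl( g(w0),\, g(w1)\bigr).
\]
It follows from the fact that, for each $n$, the supremum of $\diam(\bA_{wv})$ over $v \in \{0,1\}^{n+1}$ equals the maximum of the two suprema of $\diam(\bA_{w0u})$ and of $\diam(\bA_{w1u})$ over $u \in \{0,1\}^{n}$, together with the elementary observation that the maximum of two convergent sequences converges to the maximum of their limits. Starting from $g(\emptyset) = \rho_*$ and applying this identity repeatedly, I would select $i_0$, then $i_1$, and so on, always choosing $i_k \in \{0,1\}$ so that $g(i_0 i_1 \ldots i_k) \geq \rho_*$; at each stage such a choice exists because $\max\bigl( g(i_0 \ldots i_{k-1} 0), g(i_0 \ldots i_{k-1} 1)\bigr) = g(i_0 \ldots i_{k-1}) \geq \rho_*$. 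This produces an infinite itinerary $i_0 i_1 \ldots$ with $g(i_0 \ldots i_k) \geq \rho_*$ for all $k$.

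To conclude, observe that $\bA_{(i_0 \ldots i_k) u} \subset \bA_{i_0 \ldots i_k}$ for every $u$, so $\diam(\bA_{i_0 \ldots i_k}) \geq \sup\{ \diam(\bA_{(i_0 \ldots i_k)u}) : u \in \{0,1\}^n\}$ for all $n$; letting $n \to \infty$ gives $\diam(\bA_{i_0 \ldots i_k}) \geq g(i_0 \ldots i_k) \geq \rho_*$, which is the assertion of the lemma. I do not expect any genuine obstacle here: this is a purely combinatorial compactness argument, structurally identical to the proof of \lemref{th:5:1}, the only bookkeeping point being the length offset noted above. The real work lies in \propref{th:5a} --- showing that this common limiting diameter $\rho_*$ is in fact zero --- which, following the quadratic model of \secref{sec:quadratic-example}, will be organized according to whether the itinerary $(i_j)$ is eventually constant or takes both values infinitely often.
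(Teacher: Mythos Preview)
Your argument is correct and is essentially the same diagonal argument the paper uses: the paper proves the identical \lemref{th:5:1} by exactly this method (choosing $i_k$ inductively so that the limiting supremum over extensions stays $\geq \rho_*$, then invoking the nesting to bound $\diam(\bA_{i_0\ldots i_k})$ below), and for \lemref{th:5:1a} simply refers back to that proof. You have merely spelled out the splitting identity $g(w)=\max(g(w0),g(w1))$ more explicitly than the paper does.
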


%%%%%%%%%%%%%%%%%%%%%%%%%%%%%%%%%%%%%%%%%%%%%%%%%%%%%%%%%%%%%%%%%%%%
\begin{proof}[{\it Proof of \propref{th:5a}}]
%%%%%%%%%%%%%%%%%%%%%%%%%%%%%%%%%%%%%%%%%%%%%%%%%%%%%%%%%%%%%%%%%%
Let us assume the contrary: $\rho_*>0$.

We fix a sequence $i_0 i_1 \ldots$ as in the \lemref{th:5:1}, and we
split the proof into three cases:
\begin{enumerate}
\item $i_j$ is eventually 0;
\item $i_j$ is eventually 1;
\item neither of the above holds.
\end{enumerate}

We start with case (3). There are then infinitely many $j$'s so that
\[
i_{j} = 0 \quad\text{and}\quad i_{j+1} = 1
\]
Let $j_k$ be a strictly increasing sequence of such $j$'s. Then, for
each $k$,
\[
f_{-j_k} := \finv{i_0} \circ \finv{i_1} \circ \cdots \circ \finv{i_{j_k-1}}
\]
is an analytic branch of the inverse of $f_0^{j_k}$ mapping $\bA_{01}$
bijectively to $\bA_{i_0 \ldots i_{j_k+1}}$. The closure of
$\bA_{01}$ does not intersect the postcritical set of $f$. 
We now use a version of the  argument we gave the proof of \thmref{th:5}.

%%%%%%%%%%%%%%%%%%%%%%%%%%%%%%%%%%%%%%%%%%%%%%%%%%%%%%%%%%%%%%%%%%%%%%%%%%%%%%%%%%%%%%%%%%%%

We let 
$Q$ be an simply connected open neighborhood of $\overline{\bA_{01}}$
disjoint from the postcritical set. Then each $f_{-j_k}$ extends to
an analytic branch of the inverse of $F^{j_k}$ defined on $Q$ (and
we denote this extension also by $f_{-j_k}$). 

%%{\em{\bf Sublemma \ref{th:5}.2.} The $f_{-j_k}$ are uniformly bounded on $U$.}

By Montel's Theorem, there is a subsequence of $(j_k)$ along which $f_{-j_k}$
converges uniformly on compact subsets of $U$ in the spherical metric on $\hat\CC$. By adjusting the
notation, we can assume that the sequence $(f_{-j_k})$ itself
converges to an analytic function which we denote $h$. 
Since
\[
\text{diam}(f_{-j_k} \bA_{01}) = \text{diam}(\bA_{i_0 \ldots i_{j_k+1}})
\]
does not go to zero as $k \to \infty$, the function $h$ is non-constant.

Let  $z_0:=\finv{0}(u^-)\in\bA_{01}\cap \partial B_0^f$. By invarince of the basin boundary,
we have $w_0=h(z_0)\in \partial B_0^f$. Since $h$ is non-constant, $h(\bA_{01})$ contains an open neighborhood
$W$ of $w_0$. Let $V\Subset W$ be a smaller open neighborhood of $w_0$.
Then, an arbitrary large iterate $F^{j_k}$ maps $V$ inside $Q$, which is impossible, since $f(u^-)=t^f\in\partial \Dom(f)$.

We turn next to case 1 above, and deal first with the situation $i_j =
0$ for all $j$. We write
\[
\bA^{(n)} := \bA_{\underbrace{0\cdots 0}_{\text{$n$ terms}}}
= (\finv{0})^{n-2}\bA_{00}
\]
Now $\finv{0}$ maps $\bA_{00}$ into itself, and $\finv{0} = \finvloc$
on $\bA_{00}$, so we can write
\[
\bA^{(n)} = (\finvloc)^{n-2} \bA_{00}.
\]
By Denjoy-Wolff Theorem applied to $g$ and local dynamics near the parabolic point $0$,
\[
(\finvloc)^{n-2} \to 0\quad\text{uniformly on $\bA_{00}$},
\]
so
\[
\text{diam}(\bA^{(n)}) \to 0\quad\text{as $n \to \infty$.}
\]

Next consider sequences of the form $i_0 i_1 \cdots i_k 0 0 \cdots$,
and use the formula
\[
\bA_{i_0 \ldots i_k \underbrace{0 \cdots 0}_{\text{$n$ terms}}} =
  \finv{i_0} \circ \finv{i_1} \circ \cdots \circ \finv{i_k} A^{(n)}.
\]
The mapping $\finv{i_0} \circ \finv{i_1} \circ \cdots \circ
\finv{i_k}$ extends to be continuous on $\overline{\bA_{00}}$,
and $\text{diam}(A^{(n)}) \to 0$ by what we just proved, so
\[
\text{diam}(\bA_{i_0 \ldots i_k \underbrace{0 \cdots 0}_{\text{$n$
      terms}}}) \to 0\quad\text{as $n \to \infty$.}
\]

A similar argument, using $\finv{1} = \finvloc$ on $\bA_{11}$ shows
that
\[
\text{diam}(\bA_{i_0 \ldots i_k \underbrace{1 \cdots 1}_{\text{$n$
      terms}}}) \to 0\quad\text{as $n \to \infty$.}
\]
%
%Thus, in all three cases, $\text{diam}\bA_{i_0 \cdots i_n} \to 0$ as
%$n \to \infty$, contradicting the construction of $i_0 i_1 \cdots$, so
%the proposition is proved.

\end{proof}

%%%%%%%%%%%%%%%%%%%%%%%%%%%%%%%%%%%%%%%%%%%%%%%%%%%%%%%%%%%%%%%%%%%%%%%%%%%%%%%%%%%%
Let $\bm{i}=(i_j)_{j=0}^N$ be a finite or infinite sequence ($N\leq\infty$) of $0$'s and $1$'s.
We interpret it as a binary representation of a number in $[0,1]$:
$$\bin{i}\equiv \sum_{j=0}^N i_j 2^{j}\in[0,1].$$
For any such dyadic sequence,
\[
\overline{\bA_{i_0}} \supset \overline{\bA_{i_0 i_1}} \supset \cdots \supset
\overline{\bA_{i_0 \ldots i_n}} \supset \ldots
\]
is a nested sequence of compact sets in $\CC$ with diameter going
to 0, so its intersection contains exactly one point, which we denote by
$\hat{z}(\bm{i})$. It is immediate from the construction that $\bm{i}
\mapsto \hat{z}(\bm{i})$ is continuous from $\{0,1\}^\ZZ$ to
$\CC$. It is not injective, however, this ambiguity is easily tractable:
\begin{lem}
\label{ambiguitya}
Suppose $\bm{i}=i_0i_1\ldots i_{n-1}$ and  $\bm{i}'=i'_0i'_1\ldots i'_{n-1}$ are two finite dyadic
sequences of an equal length, and
$$\overline{\bA_{\bm{i}}}\cap\overline{\bA_{\bm{i}'}}\neq\emptyset.$$
Then either $\bin{i}=\bin{i'},$ or $\bin{i}=\bin{i'}\pm 2^{-n}.$
\end{lem}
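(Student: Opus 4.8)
The statement to prove is \lemref{ambiguitya}: for two finite dyadic sequences $\bm{i}=i_0\dots i_{n-1}$ and $\bm{i}'=i'_0\dots i'_{n-1}$ of equal length with $\overline{\bA_{\bm{i}}}\cap\overline{\bA_{\bm{i}'}}\neq\emptyset$, either $\underline{\bm i}_2=\underline{\bm i'}_2$ or $\underline{\bm i}_2=\underline{\bm i'}_2\pm 2^{-n}$. This is the exact analogue of \lemref{ambiguity} in the quadratic case, where the proof was ``a straightforward induction in $n$'' left to the reader. The plan is to carry out that same induction, using the combinatorics of how the two puzzle pieces $\bA_0=\bA\cap U^+$ and $\bA_1=\bA\cap U^-$ fit together along the secondary cut $\hat\Gamma$.

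\textbf{Base case.} For $n=1$ the claim is that if $\overline{\bA_{i_0}}\cap\overline{\bA_{i_0'}}\neq\emptyset$ then $i_0=i_0'$ (so the values agree, difference $0$) or $|i_0-i_0'|=1$ (difference $\pm 2^{-1}$, i.e. $\pm 1/2$) — which is automatic since there are only two pieces $\bA_0,\bA_1$ and $\underline{0}_2=0$, $\underline{1}_2=1/2$. So the base case is vacuous once one notes $\bA_0$ and $\bA_1$ both lie in $\bA=U\setminus P_{-1}$ and meet only along (part of) the secondary cut $\hat\Gamma$, with their closures disjoint from each other except possibly along that cut. The content of the base case is really just the bookkeeping: label $\bA_0$ with binary string $0$ and $\bA_1$ with $1$, and observe $\overline{\bA_0}\cap\overline{\bA_1}\subset\hat\Gamma$.

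\textbf{Inductive step.} Assume the claim for length $n$. Given $\bm i, \bm i'$ of length $n+1$ with $\overline{\bA_{\bm i}}\cap\overline{\bA_{\bm i'}}\neq\emptyset$, first pass to the tails: using $\bA_{i_0 i_1\ldots i_n}=G_{i_0}\bA_{i_1\ldots i_n}$ (and that $G_{i_0}$, $G_{i_1}$ are injective on their domains with continuous extensions to the relevant closures — established in \propref{contain1} and the subsequent construction of $G_0,G_1$), I would consider two cases. If $i_0=i_0'$, then applying $F$ (which is injective where it needs to be, by the univalence statements) gives $\overline{\bA_{i_1\ldots i_n}}\cap\overline{\bA_{i'_1\ldots i'_n}}\neq\emptyset$, the induction hypothesis gives $\underline{i_1\ldots i_n}_2=\underline{i'_1\ldots i'_n}_2$ or differs by $\pm 2^{-n}$, and since $\underline{\bm i}_2 = \tfrac{i_0}{2}+\tfrac12\underline{i_1\ldots i_n}_2$ (with the shift convention in this subsection, $\underline{\bm i}_2=\sum i_j 2^{-j-1}$ or whatever normalization the lemma uses — I would match it to (\ref{ambiguitya})'s statement, namely $\sum i_j 2^j$ rescaled), scaling by $1/2$ turns a difference of $\pm 2^{-n}$ into $\pm 2^{-(n+1)}$, which is exactly what is wanted. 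If $i_0\neq i_0'$, say $i_0=0$, $i_0'=1$: then $\overline{\bA_{0\,i_1\ldots i_n}}$ and $\overline{\bA_{1\,i'_1\ldots i'_n}}$ can only meet along $\hat\Gamma$ (the common frontier of $\bA_0$ and $\bA_1$), and I would analyze which sub-pieces of $\bA_0$ and $\bA_1$ actually abut along $\hat\Gamma$. The key geometric fact — the analogue of what made \lemref{ambiguity} work — is that $\hat\Gamma$ is a single arc through which only the ``extreme'' puzzle pieces touch: $\bA_{0\,11\cdots1}$ abuts $\bA_{1\,00\cdots0}$ along $\hat\Gamma$, and these have $\underline{\bm i}_2$-values that are consecutive dyadic rationals differing by $2^{-(n+1)}$. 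This forces $i_1=\cdots=i_n=1$ and $i'_1=\cdots=i'_n=0$ (or the mirror situation), whence $\underline{\bm i}_2 = \underline{0111\cdots1}_2 = \underline{1000\cdots0}_2 - 2^{-(n+1)} = \underline{\bm i'}_2 - 2^{-(n+1)}$.

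\textbf{Main obstacle.} The routine part is the arithmetic of binary strings; the real work — and the step I expect to be the main obstacle — is establishing cleanly that along the secondary cut $\hat\Gamma$, adjacency of level-$(n+1)$ puzzle pieces with differing first symbol is possible \emph{only} between $\bA_{0\,1\cdots1}$ and $\bA_{1\,0\cdots0}$. This requires tracking how the inverse branches $G_0,G_1$ (which map $U$, minus the appropriate slits, into $U^+$ and $U^-$ respectively) act on the two ends $u^+,u^-$ of $\gin_{-1}$ and on the curve $\Lambda$, and knowing that iterating them produces the nested structure in which the cut is approached only through one designated ``corner'' of each piece — a monodromy/combinatorial argument of the same flavor as the one ``spared the reader'' in the proof of \propref{contain2}. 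Once that adjacency fact is in hand, the induction closes exactly as above. I would state the adjacency fact as a short sub-lemma, prove it by induction simultaneously with the main statement (strengthening the inductive hypothesis to also record \emph{where} on $\partial\bA_{\bm i}$ the piece can be met from outside), and then the conclusion on $\underline{\bm i}_2$ follows formally.
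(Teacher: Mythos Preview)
Your proposal takes essentially the same approach as the paper: the paper's own proof consists of the single sentence ``The proof is a straightforward induction in $n$, and is left to the reader,'' and your plan is precisely to carry out that induction, with the case split on $i_0=i_0'$ versus $i_0\neq i_0'$ and the adjacency analysis along the secondary cut $\hat\Gamma$ that this requires. Since the paper gives no further details, your write-up is already more explicit than what appears there; the ``main obstacle'' you flag (that only the extreme pieces $\bA_{0\,1\cdots1}$ and $\bA_{1\,0\cdots0}$ abut across $\hat\Gamma$, together with the mirror pair at the other end of the cut) is exactly the geometric input the induction needs, and is indeed of the same flavor as the monodromy bookkeeping the paper earlier ``spares the reader'' in \propref{contain2}.
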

The proof is a straightforward induction in $n$, and is left to the reader. 

As a corollary, we get:
\begin{cor}\label{th:7a}
Let $\bm{i}$ and $\bm{i'}$ be two infinite dyadic sequences.
If $\bin{i} \neq \bin{i'}$, then $\overline{\bA_{i_0
    \cdots i_n}}$ and $\overline{\bA_{i'_0 \cdots i'_n}}$ are disjoint
for large enough $n$
\end{cor}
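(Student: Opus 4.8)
\textbf{Proof plan for Corollary \ref{th:7a}.}

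The plan is to reduce the statement to \lemref{ambiguitya} by a quantitative argument on binary expansions, exactly parallel to the proof of \corref{th:7} in the quadratic case. First I would record the elementary estimate that, for any dyadic sequence $\bm{i} = i_0 i_1 \cdots$ and any $n$,
\[
\left| \bin{i} - \underline{i_0 \cdots i_{n-1}}_2 \right| \leq 2^{-n},
\]
since the tail $\sum_{j \geq n} i_j 2^j$ lies between $0$ and $2^{-n}$ (here I am using the convention of \secref{sec:quadratic-example} that $\underline{i_0 \cdots i_{n-1}}_2 = \sum_{j=0}^{n-1} i_j 2^{j}$, and I would make sure to normalize indices consistently with the definition preceding \lemref{ambiguitya}).

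Next, suppose $\bin{i} \neq \bin{i'}$. Choose $n$ large enough that $|\bin{i} - \bin{i'}| > 3 \cdot 2^{-n}$. Then, by the triangle inequality together with the estimate above applied to both $\bm{i}$ and $\bm{i'}$,
\[
\left| \underline{i_0 \cdots i_{n-1}}_2 - \underline{i'_0 \cdots i'_{n-1}}_2 \right|
\geq |\bin{i} - \bin{i'}| - 2^{-n} - 2^{-n} > 2^{-n}.
\]
By \lemref{ambiguitya} (in contrapositive form: if the two truncated dyadic values differ by more than $2^{-n}$, the corresponding closed puzzle pieces cannot meet), it follows that $\overline{\bA_{i_0 \cdots i_{n-1}}}$ and $\overline{\bA_{i'_0 \cdots i'_{n-1}}}$ are disjoint. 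Since the puzzle pieces are nested decreasing in $n$, disjointness persists for all larger $n$ as well, which is the assertion.

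I do not expect any real obstacle here: the only thing to be careful about is the bookkeeping of the indexing convention (whether the binary digit $i_j$ carries weight $2^{j}$ or $2^{j+1}$, and whether the truncation $\underline{i_0\cdots i_{n-1}}_2$ is read as an integer or as a point of $[0,1]$), so that the constant "$3$" in the choice of $n$ is genuinely correct; a factor-of-two slip there is harmless but should be avoided. Everything else is a verbatim transcription of the argument already given for \corref{th:7}, with \lemref{ambiguity} replaced by \lemref{ambiguitya}. One could also remark, as in the quadratic case, that the converse inclusion (if $\bin{i} = \bin{i'}$ then $\hat z(\bm{i}) = \hat z(\bm{i'})$) holds by the same two-expansions-of-a-dyadic-rational argument, so that $\hat z$ factors through an injective map $z(\cdot)$ from $[0,1]/\!\sim$; but strictly this is not needed for the stated corollary, so I would include it only if the subsequent development (the analogue of \propref{th:6}) calls for it.
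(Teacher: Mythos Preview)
Your proposal is correct and follows essentially the same argument as the paper: record the tail estimate $|\bin{i}-\underline{i_0\cdots i_{n-1}}_2|\le 2^{-n}$, choose $n$ with $|\bin{i}-\bin{i'}|>3\cdot 2^{-n}$, apply the triangle inequality, and invoke \lemref{ambiguitya} in contrapositive form. Your added remarks on index bookkeeping and on persistence of disjointness under nesting are reasonable but not strictly needed.
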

\begin{proof}
For any
$\bm{i} = i_0 \cdots i_n \cdots$, and any $n$
\[
\vert \bin{i} - \underline{i_0 \cdots i_{n-1}}_2 \vert \leq 2^{-n}
\]
so, if $n$ is large enough so that
\[
\vert\bin{i} - \bin{i'} \vert > 3 \times 2^{-n},
\]
then
\[
\vert \underline{i_0 \cdots i_{n-1}}_2 - \underline{i'_0 \cdots i'_{n-1}}_2 \vert
> 2^{-n},
\]
which by \lemref{ambiguity} implies that
$\overline{\bA_{i_0 \cdots i_{n-1}}}$ and $\overline{\bA_{i'_0 \cdots
    i'_{n-1}}}$ are disjoint, as asserted.
\end{proof}

Putting together what we know about the map $\bm{i}\mapsto \hat z(\bm{i})$, we finally obtain:
\begin{prop}\label{th:6a}
The angles $\bin{i} = \bin{i'}$ if and only if  $\hat{z}(\bm{i}) =
\hat{z}(\bm{i'})$. Hence there is a function $\theta \mapsto z(\theta)$
from the circle $\RR/\ZZ$ to $\CC$ so that
\[
\hat{z}(\bm{i}) = \bin{i}.
\]
The function $z(\,.\,)$ is continuous, bijective, and maps the circle onto
$$J\equiv \overline{\cap_n((F^{-n})|_U\overline{\bA})}.$$
\end{prop}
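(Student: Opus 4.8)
The plan is to follow the template of \propref{th:6} and \corref{th:7} from the quadratic case, adapting it to the combinatorics of $\clad$. First I would record the key inputs already in hand: (a) \propref{th:5a}, which gives $\rho_* = 0$, so the nested intersections $\bigcap_n \overline{\bA_{i_0\ldots i_n}}$ are singletons $\hat z(\bm i)$ and $\bm i \mapsto \hat z(\bm i)$ is continuous from $\{0,1\}^{\NN}$ to $\CC$; and (b) \lemref{ambiguitya}, the two-sided estimate that if $\overline{\bA_{\bm i}}\cap\overline{\bA_{\bm i'}}\neq\emptyset$ for finite sequences of equal length $n$, then $\underline{\bm i}_2 = \underline{\bm i'}_2$ or $\underline{\bm i}_2 = \underline{\bm i'}_2 \pm 2^{-n}$. \corref{th:7a} then packages the ``only if'' half of the injectivity statement: distinct dyadic \emph{values} force eventually-disjoint closures, hence distinct points.

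Next I would prove the converse, i.e. $\underline{\bm i}_2 = \underline{\bm i'}_2 \implies \hat z(\bm i) = \hat z(\bm i')$. If $\bm i = \bm i'$ there is nothing to do; otherwise, exactly as in the proof of \corref{th:7}, after possibly swapping the two sequences, $\bm i$ has the form $i_0\cdots i_{m-1}0111\ldots$ and $\bm i'$ the form $i_0\cdots i_{m-1}1000\ldots$. I would show by the straightforward induction underlying \lemref{ambiguitya} (run the other direction) that $\overline{\bA_{i_0\cdots i_n}}$ and $\overline{\bA_{i'_0\cdots i'_n}}$ meet for every $n$ — this reduces, after peeling off the common prefix with the branches $G_{i_0},\ldots,G_{i_{m-1}}$ which are continuous on the relevant closures, to showing $\overline{\bA_{0\underbrace{1\cdots 1}}}$ and $\overline{\bA_{1\underbrace{0\cdots 0}}}$ intersect, which holds because $\bA_0$ and $\bA_1$ share the boundary arc $\Lambda \cup \gin([0,s_1])$ (the secondary cut inside $U$), and the relevant pieces accumulate on it. Since the closures are nested compacts meeting for all $n$ with diameters $\to 0$, the intersections coincide: $\hat z(\bm i) = \hat z(\bm i')$. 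Combining with \corref{th:7a} gives that $\hat z(\bm i)$ depends only on $\underline{\bm i}_2 \in \RR/\ZZ$, so there is a well-defined $z:\RR/\ZZ \to \CC$ with $z(\underline{\bm i}_2) = \hat z(\bm i)$, and it is injective.

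Then I would establish that $z(\cdot)$ is continuous: given $\theta_n \to \theta$ in $\RR/\ZZ$, choose dyadic expansions whose first $N$ binary digits agree for $n$ large (possible since $|\theta_n - \theta| < 2^{-N}$ forces agreement of digits up to an index determined by $N$, up to the usual $0111\ldots/1000\ldots$ ambiguity which is harmless by the previous paragraph); then $\hat z(\bm i^{(n)})$ and $\hat z(\bm i)$ both lie in $\overline{\bA_{i_0\cdots i_{N-1}}}$, whose diameter is $\le \rho_N \to 0$. Hence $z(\theta_n)\to z(\theta)$. A continuous injection from the compact circle to the Hausdorff space $\CC$ is a homeomorphism onto its image $J$, so $J$ is a Jordan curve. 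Finally I would identify $J$ with $\overline{\bigcap_n (F^{-n}|_U)\overline{\bA}}$: the inclusion $J \subset \bigcap_n \overline{\bA_{\ldots}} \subset \overline{\bigcap_n (F^{-n}|_U)\overline\bA}$ is immediate from the construction, since each $\hat z(\bm i)$ lies in every $\overline{\bA_{i_0\cdots i_n}}$ and each such piece is contained in $(F^{-n}|_U)\overline\bA$; for the reverse inclusion, a point $z$ with $F^n(z)\in\overline\bA$ for all $n$ has its itinerary $i_j \in \{0,1\}$ recorded (with the boundary case $F^n(z) \in \Lambda\cup\gin$ handled as in the quadratic argument, yielding a sequence eventually $0$ or eventually $1$), so $z = \hat z(\bm i) \in J$; taking closures and using that $J$ is already closed finishes it.

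The main obstacle I expect is the careful bookkeeping in the converse-to-\lemref{ambiguitya} step, precisely because the ``boundary'' of the puzzle partition in $\clad$ is the secondary cut $\hat\Gamma = \Gamma\cup\Lambda$ rather than the real axis, and $\bA_0, \bA_1$ are glued along $\Lambda\cup\gin([0,s_1])$, which is an arc landing at $0$ rather than a line through the critical point. One must check that consecutive puzzle pieces $\bA_{\ldots 0}$ and $\bA_{\ldots 1}$ genuinely share boundary (so their closures meet) and that the branches $G_0 = \finv 0$, $G_1 = \finv 1$ extend continuously to the closures of the pieces on which they act — this uses \propref{contain1} (in particular that $F: U'\setminus\gin \to \Dom(F)\setminus\gin$ is univalent) and the fact, from \corref{basinbd}, that $\partial B_0^f \cap \partial\Dom(f) = \{t^f\}$, so the cut structure is compatible with the dynamics. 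Everything else is a faithful transcription of \secref{sec:quadratic-example}.
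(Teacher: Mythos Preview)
Your proposal is correct and follows precisely the approach the paper intends: the paper gives no explicit proof of \propref{th:6a}, introducing it with ``Putting together what we know about the map $\bm{i}\mapsto \hat z(\bm{i})$, we finally obtain,'' and relies on the reader to transcribe the argument of \secref{sec:quadratic-example} (specifically \corref{th:7} and \propref{th:6}) using \propref{th:5a}, \lemref{ambiguitya}, and \corref{th:7a} in place of their quadratic counterparts. Your identification of the secondary cut $\hat\Gamma$ as the replacement for the real axis, and the care you flag about continuous extension of $\finv{0},\finv{1}$ to closures via \propref{contain1}, are exactly the adaptations needed.
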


We thus have:
\begin{cor}
The set $J$ is a Jordan curve.
\end{cor}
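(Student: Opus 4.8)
The plan is to read the corollary off from \propref{th:6a} together with a single elementary topological fact, in exactly the same way that \propref{th:6} was deduced in the quadratic case. By \propref{th:6a} the map $\theta\mapsto z(\theta)$ is a continuous bijection from the circle $\RR/\ZZ$ onto $J$. Since $\RR/\ZZ$ is compact and $\hat\CC$ is Hausdorff, such a continuous bijection is automatically a homeomorphism onto its image: a continuous map from a compact space to a Hausdorff space is closed, so the set-theoretic inverse is continuous. Hence $z(\,.\,)$ is a homeomorphism of the circle onto $J$, which is precisely the definition of $J$ being a Jordan curve. So the entire proof consists of invoking \propref{th:6a} and then citing this compact-to-Hausdorff lemma; one may even refer back to the identical argument in the proof of \propref{th:6}.

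The only point that needs care has already been handled before the statement of the corollary, namely that $z(\,.\,)$ is well defined, injective, and continuous, and that its image is genuinely $J\equiv\overline{\bigcap_n (F^{-n})|_U\overline{\bA}}$. This is the content of \propref{th:6a}, which rests in turn on \propref{th:5a} (the diameters of the puzzle pieces $\bA_{i_0\ldots i_n}$ shrink to zero), on \lemref{ambiguitya} and \corref{th:7a} (the precise description of when two dyadic addresses yield the same point $\hat z(\bm{i})$), and on the standard fact that a nested sequence of compact sets with diameters tending to $0$ has a one-point intersection. Granting those, there is nothing further to prove here.

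I do not anticipate any obstacle at this final step; the substance of the argument lies entirely upstream, in \propref{th:5a} and in the combinatorial bookkeeping of \propref{tip1}, \propref{contain1}, and \propref{contain2}. If one wished to make the corollary completely self-contained, one would restate the compact-to-Hausdorff lemma explicitly and remark that the closure in the definition of $J$ is in fact superfluous (the shrinking-diameter argument already exhibits $J$ as the image of $z(\,.\,)$ without taking closure), but this identification is part of \propref{th:6a} and need not be repeated.
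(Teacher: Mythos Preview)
Your proposal is correct and matches the paper's approach exactly: the corollary is stated without proof in the paper, and the intended argument is precisely the compact-to-Hausdorff lemma applied to the continuous bijection $z(\,.\,)$ furnished by \propref{th:6a}, just as in the proof of \propref{th:6} in the quadratic case. There is nothing to add.
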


\begin{prop}\label{th:9a}
Let $\tl F:J\to J$ be defined as $\tl F(t^F)=0$, and coincide with $F$ elsewhere on $J$.
The map $\bin{i}\mapsto \hat z(\bm{i})$ is a conjugacy between $\theta\mapsto 2\theta$ and $\tl F(z)$:
\[
\tl F(\hat z(\bm{i})) = \hat z(2 \cdot \bin{i}).
\]
\end{prop}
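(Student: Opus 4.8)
The plan is to deduce the intertwining relation $\tl F(\hat z(\bm{i}))=\hat z(2\cdot\bin{i})$ (with the argument on the right read modulo $1$, as licensed by \propref{th:6a}) directly from the two structural identities $F\,\bA_{i_0 i_1\ldots i_n}=\bA_{i_1\ldots i_n}$ and $\bA_{i_0 i_1\ldots i_n}=\finv{i_0}\bA_{i_1\ldots i_n}$ already recorded for the puzzle pieces, exactly as \propref{th:9} was obtained in the quadratic case. The one new feature is the point $t^F$: it lies on $\partial\Dom(F)$, so $F$ is not literally defined there, which is precisely why $\tl F$ must be prescribed separately at $t^F$, and the bulk of the work is checking that the prescribed value $0$ is the one the combinatorics demands.

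First I would handle the generic case $\hat z(\bm i)\neq t^F$. From \propref{contain1} and \corref{basinbd}, $\partial U\cap\partial\Dom(F)=\{t^F\}$ and $B_0^F\subset U$, hence $\overline U\setminus\{t^F\}\subset\Dom(F)$; so any closed puzzle piece $\overline{\bA_{i_0\ldots i_n}}$ that avoids $t^F$ is a compact subset of $\Dom(F)$, on which $F$ is continuous. Since $\operatorname{diam}\bA_{i_0\ldots i_n}\to 0$ (\propref{th:5a}) and $\hat z(\bm i)\in\overline{\bA_{i_0\ldots i_n}}$ for all $n$, for all large $n$ the piece $\overline{\bA_{i_0\ldots i_n}}$ sits in a small ball about $\hat z(\bm i)$ missing $t^F$, so $F(\overline{\bA_{i_0\ldots i_n}})=\overline{F(\bA_{i_0\ldots i_n})}=\overline{\bA_{i_1\ldots i_n}}$ (a continuous image of a compact set commutes with closure). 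Intersecting over these $n$ forces $F(\hat z(\bm i))\in\bigcap_n\overline{\bA_{i_1\ldots i_n}}=\{\hat z(i_1 i_2\ldots)\}$, that is $\tl F(\hat z(\bm i))=F(\hat z(\bm i))=\hat z(i_1 i_2\ldots)$.

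Next I would pin down the address of $t^F$. By \propref{contain1}, $\gin$ is the part of the secondary cut $\hat\Gamma$ interior to $U$ and is incident to $t^F$, so every neighbourhood of $t^F$ meets both components $U^{+},U^{-}$ of $U\setminus\hat\Gamma$, and --- staying near $\partial U$, hence away from $\overline{P_{-1}}$ --- meets both $\bA_0=\bA\cap U^{+}$ and $\bA_1=\bA\cap U^{-}$. Thus $t^F\in\overline{\bA_0}\cap\overline{\bA_1}$, so by \lemref{ambiguitya} $t^F=\hat z(\bm j)$ with $\bin{j}$ in the (closed) address sets of both $\bA_0$ and $\bA_1$, i.e. $\bin{j}\in\{0,\tfrac12\}$; since the all-$0$ (and all-$1$) pieces contract to the parabolic point, $\hat z(000\ldots)=0\neq t^F$, so $\bin{j}=\tfrac12$ and $\bm j$ is $1\,000\ldots$ or $0\,111\ldots$. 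Then $\hat z(i_1i_2\ldots)$ is $\hat z(000\ldots)$ or $\hat z(111\ldots)$, hence $0$, which agrees with $\tl F(t^F):=0$; so $\tl F(\hat z(\bm j))=0=\hat z(2\cdot\bin{j})$ too.

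Finally, $\bin{i_1 i_2\ldots}\equiv 2\cdot\bin{i_0 i_1\ldots}\pmod1$ for any binary sequence, and by \propref{th:6a} $\hat z$ descends to a well-defined function of $\bin{i}$ on $\RR/\ZZ$, so $\hat z(i_1 i_2\ldots)=\hat z(2\cdot\bin{i})$; combined with the two cases this is the claimed identity for all $\bm i$, and the conjugacy statement then follows since $z(\cdot)$ is a homeomorphism onto $J$ by \propref{th:6a}. I expect the genuinely delicate part to be the middle step: verifying from the geometry of $U$ and the secondary cut (\propref{contain1}) that $t^F$ is the sole point of $\partial\Dom(F)$ where $F$ fails to be defined and that its address is exactly $\tfrac12$, which is what forces the special value $0$ in the definition of $\tl F$.
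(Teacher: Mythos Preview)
The paper does not give a proof of this proposition at all; it is simply stated, exactly as \propref{th:9} was in the quadratic case, as an immediate consequence of the puzzle-piece identities $F\,\bA_{i_0\ldots i_n}=\bA_{i_1\ldots i_n}$. Your argument supplies the details the paper omits and is correct in substance.

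One small sharpening in your Step~2: the inference ``$t^F\in\overline{\bA_0}\cap\overline{\bA_1}$ forces the address of $t^F$ to lie in $\{0,\tfrac12\}$'' is not really a consequence of \lemref{ambiguitya} as stated (that lemma compares two pieces of the same level). The clean way to say it is: from $t^F\in\overline{\bA_0}$ one builds, by successive bisection $\overline{\bA_{j_0\ldots j_{n-1}}}=\overline{\bA_{j_0\ldots j_{n-1}0}}\cup\overline{\bA_{j_0\ldots j_{n-1}1}}$, an address $\bm j$ with $j_0=0$ and $t^F=\hat z(\bm j)$; from $t^F\in\overline{\bA_1}$ one likewise gets $\bm j'$ with $j'_0=1$ and $t^F=\hat z(\bm j')$. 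Then the \emph{bijectivity} part of \propref{th:6a} gives $\bin j=\bin{j'}\in[0,\tfrac12]\cap[\tfrac12,1]\pmod 1=\{0,\tfrac12\}$, and you eliminate $0$ exactly as you do. With that adjustment your proof is complete and matches the intended (but unwritten) argument.
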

Let us denote $B$ the connected component of $\hat\CC\setminus J$ which contains $c^F$. By construction,
$B\cap B_0^F\neq\emptyset$. By the Maximum Principle, 
$$F:B\to B.$$
By the classification of dynamics on a hyperbolic domain, 
$$B\subset B_0^F.$$
On the other hand, preimages of $t^f$ are dense in $J=\partial B$, and hence
$$B_0^F\cap J=\emptyset.$$
We conclude:
$$B=B_0^F,$$
and thus \thmref{class-1c} is proven. 

We illustrate the topological structure of the parabolic basin of a map $f\in\pr(\cla)$ in \figref{fig-basin}.
We have indicated the immediate basin $B_0$ as well as several components of $f^{-1}(B_0)$. To better 
understand those, we have drawn the preimage $f^{-1}(C)$ of the circle 
$$C=\{|z|=|c|\},$$
which cuts across the only critical value $c$ of $f$. These critical level
curves partition the domain $W$ into univalent preimages 
of the disks $D^+=D_{|c|}(0)$ and $D^-=\hat\CC\setminus \overline{D^+}$.

Two smooth  branches in the  preimage $f^{-1}(C)$ may intersect at a ramification point of $f$ (all of the
ramification points are simple). Each {\it critical component} of $f^{-1}(B_0)$ has one of these branch points,
and touches $\partial W$ in two points (``preimages'' of the asymptotic value $0$).
The {\it non-critical components} of $f^{-1}(B_0)$ arrange themselves in sequences along the critical level curves,
as shown.

\begin{figure}
\label{fig-basin}
\centerline{\includegraphics[width=0.8\textwidth]{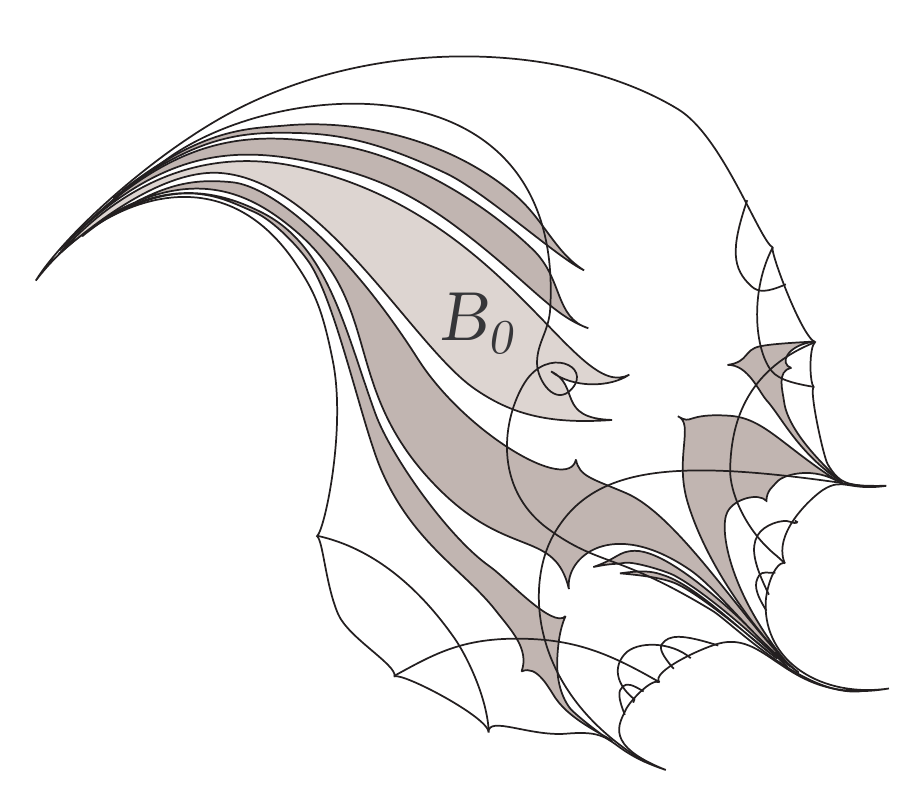}}
\caption{Several components of the parabolic basin of $f\in\cla$}
\end{figure}

\subsection{Convergence of parabolic renormalization}

By definition of the class $\cla$, every map $f\in\cla$ can be decomposed as 
$$f=v\cdot \cP(f_0)\circ \varphi_f(e^{2\pi i\theta}z):\Dom(f)\to\hat\CC,$$
where $v=c^f/c^{\cP(f_0)}.$
We will denote $\psi_f=\varphi_f^{-1}$. Thus, $\varphi_f$ conformally maps the unit disk $\DD$ onto $W_f=\text{Dom}(f)$
with $\psi_f(0)=0$ and $\psi_f'(0)>0$.
We will topologize $\cla$ by identifying it with the space of thus normalized conformal maps of the unit disk:
$$f\mapsto \psi_f,$$
equipped with the compact-open topology.

Let us state an obvious consequence of the Koebe Distortion Theorem:
\begin{lem}
\label{pre-compact}
Let $\cS$ be a family of univalent maps $h:\DD\to\hat\CC$ with  $h(0)=0$. Assume further that
there exist positive constants $0<a<b$ such that $a\leq |h'(0)|\leq b$ for every $h\in\cS$. Then the
family $\cS$ is equicontinuous.
\end{lem}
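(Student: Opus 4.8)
\textbf{Proof plan for \lemref{pre-compact}.}

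The plan is to reduce equicontinuity of the family $\cS$ to a uniform modulus-of-continuity estimate obtained directly from the Koebe Distortion Theorem. First I would observe that, given $h\in\cS$, the rescaled map
\[
\tilde h(z) := \frac{h(z)}{h'(0)}
\]
is univalent on $\DD$ with $\tilde h(0)=0$ and $\tilde h'(0)=1$, so it belongs to the standard class $\mathcal{S}$ of normalized univalent functions (up to the trivial composition with the identity chart, since $h$ a priori takes values in $\hat\CC$ but, having a univalent extension with $h(0)=0$ finite, must actually omit some value, hence after a M\"obius normalization we may treat it as a planar univalent map; alternatively one works on a slightly smaller disk $\DD_r$ where $h$ is bounded). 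To this $\tilde h$ the Koebe Distortion Theorem applies on each disk $\DD_r$, $r<1$, giving
\[
|\tilde h'(z)| \leq \frac{1+|z|}{(1-|z|)^3} \leq \frac{1+r}{(1-r)^3}\quad\text{for }|z|\leq r.
\]

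Next I would turn this derivative bound into a Lipschitz bound. For $z_1,z_2$ with $|z_1|,|z_2|\leq r$, integrating $\tilde h'$ along the straight segment $[z_1,z_2]\subset\DD_r$ yields
\[
|\tilde h(z_1)-\tilde h(z_2)| \leq \frac{1+r}{(1-r)^3}\,|z_1-z_2|,
\]
and hence, restoring the factor $h'(0)$ and using the hypothesis $|h'(0)|\leq b$,
\[
|h(z_1)-h(z_2)| \leq b\cdot\frac{1+r}{(1-r)^3}\,|z_1-z_2|.
\]
This is a single Lipschitz constant, depending only on $r$ and on $b$ but not on the particular $h\in\cS$, valid on the closed disk $\DD_r$. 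Since every compact subset of $\DD$ is contained in some $\DD_r$ with $r<1$, this establishes equicontinuity of $\cS$ on compact subsets of $\DD$, which is the assertion of the lemma. (The lower bound $a\leq|h'(0)|$ is not needed for equicontinuity per se; it is stated because in applications it guarantees, together with the upper bound, that limits of sequences in $\cS$ are again nonconstant univalent maps, via Hurwitz's theorem — but for the equicontinuity claim only $|h'(0)|\leq b$ enters.)

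The only genuine subtlety — and the step I would be most careful about — is the passage to the standard class $\mathcal S$, i.e., justifying that the Koebe Distortion Theorem, which is stated for univalent maps $\DD\to\CC$, applies to our $h$ whose target is $\hat\CC$. The clean fix is: a univalent map $h:\DD\to\hat\CC$ with $h(0)=0$ cannot be surjective (a univalent map of $\DD$ omits at least one value of $\hat\CC$, in fact at least two points unless it is a global automorphism, which it is not since $\DD\not\cong\hat\CC$), so composing with a M\"obius transformation sending an omitted value to $\infty$ and fixing $0$ and the value at $0$ appropriately puts $h$ into the planar univalent setting without changing $|h'(0)|$ up to a bounded factor controlled by the location of the omitted value; or, more simply, one applies the distortion theorem on the disk $\DD_{r'}$ with $r<r'<1$, on which $h$ is bounded because $\overline{\DD_{r'}}$ is compact and $h$ is continuous there with values in $\hat\CC\setminus\{$omitted value$\}\cong\CC$ after the chart change. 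Once this bookkeeping is done the estimate above goes through verbatim, and the lemma follows.
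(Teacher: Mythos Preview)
Your core argument --- normalize by $h'(0)$, apply the Koebe derivative bound $|\tilde h'(z)|\le (1+|z|)/(1-|z|)^3$, integrate to get a Lipschitz constant $b(1+r)/(1-r)^3$ on each $\overline{\DD_r}$ --- is correct and is precisely what the paper intends: the paper offers no proof beyond calling the lemma ``an obvious consequence of the Koebe Distortion Theorem.''

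One caveat worth recording. The $\hat\CC$-target issue you flag is real, and your proposed M\"obius fix does not work uniformly. If meromorphic univalent maps (i.e.\ maps with a pole in $\DD$) are admitted, the lemma is in fact \emph{false}: take $g_n(z)=z/(1-nz)$, which is univalent on $\DD$ with $g_n(0)=0$ and $g_n'(0)=1$, but has a pole at $1/n$; since $g_n(0)=0$ and $g_n(1/n)=\infty$, the family is not equicontinuous at $0$ in the spherical metric. Your M\"obius argument breaks down because the omitted value of $g_n$ is roughly $-1/n$, which is not bounded away from $0$, so the conjugation distorts $|h'(0)|$ by an unbounded factor. The correct reading of the lemma is that the $h$ are holomorphic into $\CC$ (equivalently, never take the value $\infty$); in the paper's application the maps are the Riemann maps $\varphi_f:\DD\to\Dom(f)\subset\CC$, so this is automatic and your direct Koebe argument applies without any chart change.
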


H. Inou and M. Shishikura \cite{IS} have recently demonstrated the following:

\begin{thm}[\cite{IS}]
\label{thm-IS}
There exists a class $\mathbf F$ of analytic maps with a simple parabolic fixed point at the origin, such that the 
following properties hold:

\begin{itemize}
\item $\cP({\mathbf F})\subset ({\mathbf F})$;
\item there exists a map $f_*\in\mathbf F$ which is a fixed point of the parabolic renormalization:
$$\cP(f_*)=f_*;$$
\item denoting $f_0(z)=z+z^2$, we have $\cP(f_0)\in\mathbf F$, and 
$$\cP^n(f_0)\to f_*\text{ in }\mathbf F;$$
\item in fact, there is a structure of an infinite-dimensional complex-analytic manifold on $\mathbf F$ which is
compatible with the local-uniform norm, in which $\cP$ is a contraction. 
\end{itemize}

\end{thm}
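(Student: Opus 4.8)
The plan is to realise $\cP$ as a complex-analytic contraction of a ball in a Banach space of univalent functions, and then to read off all four assertions from the contraction mapping principle. First I would fix the Inou--Shishikura cubic $P(z)=z(1+z)^2$: it has a simple parabolic point at $0$, its critical point $-1$ is sent by $P$ to $0$, and its free critical point $-1/3$ has critical value $-4/27$ — the same covering pattern (one critical value, two asymptotic values) that we saw for $f_0$ and $K$. Fix also an explicit bounded Jordan domain $\mathcal V$ on which the parabolic dynamics of $P$ is ``visible'': $\mathcal V$ contains an attracting petal, a repelling petal, the relevant fundamental crescents, and the free critical value. Set
$$\mathbf F:=\bigl\{\,P\circ\varphi^{-1}\ :\ \varphi:\mathcal V\to\CC\ \text{univalent},\ \varphi(0)=0,\ \varphi'(0)=1\,\bigr\}.$$
Every $f\in\mathbf F$ has a simple parabolic point at $0$, is renormalizable, and inherits the global covering structure of $P$, so the machinery of \S 2--3 applies to it: $B^f_0$ is a Jordan domain (by the arguments behind \thmref{cauliflower} and \thmref{class-1a}), and $\cP(f)$ (normalised as in Definition \ref{defn-par-ren_bis}, hence of the form $z+z^2+\cdots$) again carries this covering data. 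The point of using $P\circ\varphi^{-1}$ rather than the looser class $\cla$ is that $\varphi$ ranges over univalent maps of a \emph{fixed} domain, so $\mathbf F$ is an open piece of a ball in the Banach space $\mathcal B$ of bounded holomorphic functions on $\mathcal V$, and $\cP$ becomes a precomposition operator on $\mathcal B$.

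\textbf{Invariance.} The technical core is $\cP(\mathbf F)\subset\mathbf F$. One must (i) construct attracting and repelling petals for $f\in\mathbf F$ and their Fatou coordinates with moduli bounded \emph{uniformly} over $\mathbf F$, using the asymptotic expansion \thmref{asym-expansion-2} and — crucially for quantitative control — the Gevrey-type bounds \thmref{gevrey-asym} together with the explicit constants quoted afterwards, to dominate the remainders on the crescents; (ii) show that $\ixp\circ\phi_R$ maps the fundamental repelling crescent of $f$ onto a domain that \emph{contains} $\mathcal V$, which is precisely where the shape of $\mathcal V$ and the numerical margins are consumed; (iii) combine (i)--(ii) with the defining formula $\cP f=\ixp\circ\phi_A\circ\phi_R^{-1}\circ\ixp^{-1}$ of equation (\ref{eqn-par-ren}) and the covering structure to exhibit $\cP f$ in the factored form $P\circ\psi^{-1}$ with $\psi:\mathcal V\to\CC$ univalent, $\psi(0)=0$, $\psi'(0)=1$. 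Step (ii) is the main obstacle, and is essentially the content of Inou--Shishikura's long estimates; once the renormalised domain is pinned down, everything downstream is soft.

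\textbf{Holomorphy and contraction.} Identify $f=P\circ\varphi^{-1}$ with $\varphi\in\mathcal B$. All ingredients of $\cP$ depend holomorphically on $f$: the Fatou coordinates can be produced by the quasiconformal surgery of \propref{fatou cyl} with the transported complex structure varying holomorphically, so the measurable Riemann mapping theorem with parameters gives holomorphic dependence; likewise the {\'E}calle--Voronin transition map and the normalisation to (\ref{eq:normalized-h}) depend holomorphically. Hence $\cP$ extends to a holomorphic map between open subsets of $\mathcal B$. For contraction, note that by step (ii) the new uniformiser $\psi$ is obtained from $\varphi$ by postcomposition with a fixed holomorphic map and precomposition with a univalent self-map $\iota$ of $\mathcal V$ whose image is compactly contained in $\mathcal V$; precomposition with such an $\iota$ strictly contracts the relevant Kobayashi/Schwarz--Pick metric, so after restricting to a slightly smaller $\cP$-invariant neighbourhood of the fixed point and to a subdomain $\mathcal V'\Subset\mathcal V$ one gets a uniform contraction in the sup norm on $\mathcal V'$, which by Koebe distortion (cf. \lemref{pre-compact}) induces the local-uniform topology on $\mathbf F$. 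The invariant complex manifold of the theorem is then a neighbourhood of $f_*$ in $\mathbf F$ with this $\mathcal B$-chart.

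\textbf{Conclusion.} The contraction mapping principle now yields a unique fixed point $f_*\in\mathbf F$ with $\cP(f_*)=f_*$ and geometric convergence $\cP^n(f)\to f_*$ for every $f\in\mathbf F$. Although $f_0(z)=z+z^2\notin\mathbf F$, one verifies $\cP(f_0)\in\mathbf F$ directly: $B^{f_0}_0$ is a Jordan domain by \thmref{cauliflower}, $\cP(f_0)$ has the correct covering data by \thmref{covering-properties}, and the crescent estimate of step (ii) puts $\cP(f_0)$ in the factored form $P\circ\psi^{-1}$ with $\psi$ univalent on $\mathcal V$. Therefore $\cP^n(f_0)=\cP^{\,n-1}\!\bigl(\cP(f_0)\bigr)\to f_*$ in $\mathbf F$, which is the last assertion.
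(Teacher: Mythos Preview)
The paper does not contain a proof of this theorem. It is quoted verbatim as a result of Inou and Shishikura \cite{IS} and used as a black box; the only thing the authors add is the easy corollary that $f_*$ extends to a map in $\cla$. So there is no ``paper's own proof'' to compare your proposal against.

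That said, your outline is a fair high-level summary of the actual Inou--Shishikura strategy: parametrise a class of maps as $P\circ\varphi^{-1}$ with $P(z)=z(1+z)^2$ and $\varphi$ ranging over normalised univalent maps of a fixed domain $\mathcal V$, so that the class becomes an open set in a Banach space of bounded holomorphic functions; prove that $\cP$ lands back in this class (this is indeed where almost all of the work lies, and is the content of their long explicit estimates on petals and Fatou coordinates); verify holomorphic dependence; and obtain contraction via a Schwarz--Pick/Kobayashi argument coming from the fact that the renormalised uniformiser factors through a strict inclusion $\mathcal V'\Subset\mathcal V$. Two remarks on your sketch: first, invoking \thmref{gevrey-asym} and the Dudko--Sauzin constants is anachronistic and unnecessary --- Inou and Shishikura work with direct, elementary estimates on the Fatou coordinates, not with resurgence; second, your step (ii), which you correctly flag as the crux, is not something one can wave through: pinning down an explicit $\mathcal V$ whose image under $\ixp\circ\phi_R$ is covered by the repelling crescent \emph{uniformly over $\mathbf F$} is delicate and occupies the bulk of \cite{IS}. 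Your proposal identifies the right architecture but does not supply this estimate, so as written it is a plan rather than a proof.
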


We remark:
\begin{cor}
The fixed point $f_*$ has an analytic extension to a mapping in $\cla$ which we will denote in the same way.
It is also fixed under $\cP$, considered as a transformation $\cla\to\cla$.
\end{cor}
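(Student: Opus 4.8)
The plan is to realize $f_*$ as a limit, inside $\cla$, of the iterated renormalizations $\cP^n(f_0)$, and then to use that a member of $\cla$ is just a germ at $0$ whose maximal analytic extension has the required properties, so that it is determined by its germ. First, since $F=\cP(f_0)\in\cla$ (this is exactly why $\cla$ is non-empty), \thmref{class-2} gives $f_n:=\cP^n(f_0)\in\cla$ for every $n\ge 1$, together with $\cP(f_n)=f_{n+1}$. Let $\varphi_{f_n}\colon\DD\to\Dom(f_n)$ be the normalized Riemann maps, $\varphi_{f_n}(0)=0$, $\varphi_{f_n}'(0)>0$, and recall from Definition~\ref{defn-class-0} that $f_n\circ\varphi_{f_n}(z)=v_n\,F(\varphi_F(e^{2\pi i\theta_n}z))$ with $v_n=c^{f_n}/c^F$. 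Setting $G:=F\circ\varphi_F$ (a fixed univalent map on $\DD$ with $G(z)=\varphi_F'(0)z+\cdots$) and comparing linear terms, using $f_n(z)=z+z^2+\cdots$, yields $\varphi_{f_n}'(0)=v_n\,e^{2\pi i\theta_n}\,\varphi_F'(0)$; since $\varphi_{f_n}'(0)$ and $\varphi_F'(0)$ are positive, $v_ne^{2\pi i\theta_n}=|v_n|$ and $\varphi_{f_n}'(0)=|c^{f_n}|\,\varphi_F'(0)/|c^F|$.

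Second, I would extract the limit. By \thmref{thm-IS} the maps $f_n$ converge to $f_*$ uniformly on the common domain of $\mathbf F$; applying Hurwitz to $f_n'$, the (simple) critical points converge, so $c^{f_n}\to c^{f_*}\neq 0$ and $\varphi_{f_n}'(0)$ stays between two positive constants. Hence $v_n\to v_*:=c^{f_*}/c^F$ and $e^{2\pi i\theta_n}=|v_n|/v_n\to |v_*|/v_*=:e^{2\pi i\theta_*}$. By \lemref{pre-compact} the family $\{\varphi_{f_n}\}$ is precompact in the compact--open topology; any locally uniform limit $\varphi_*$ is univalent (Hurwitz again, $\varphi_*'(0)>0$), so $W_*:=\varphi_*(\DD)$ is a simply connected domain containing $0$. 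Passing to the limit, $v_{n}G(e^{2\pi i\theta_{n}}\cdot)\to v_*G(e^{2\pi i\theta_*}\cdot)$ locally uniformly on $\DD$, so we may set $\tilde f:=v_*\,G(e^{2\pi i\theta_*}\varphi_*^{-1}(\cdot))$ on $W_*$; this $\tilde f$ satisfies property (II) of Definition~\ref{defn-class-0}, and since $\varphi_{f_n}^{-1}\to\varphi_*^{-1}$ locally uniformly near $0$, the germ of $\tilde f$ at $0$ coincides with that of $f_*$. Because this germ, together with $v_*$ and $\theta_*$, already determines $\varphi_*^{-1}$ near $0$ and hence $\varphi_*$ and $W_*$ by analytic continuation, the limit is independent of the chosen subsequence, so in fact $f_n\to f_*$ in the topology of $\cla$.

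Third — and this is where the real work lies — I would show that $W_*$ is the maximal domain of analyticity of the germ $f_*$ and that it is a Jordan domain, so that $\tilde f$ is the genuine $\cla$-representative of $f_*$ and, in particular, $f_*\in\cla$. Maximality should follow from the composition formula $f_*=v_*G(e^{2\pi i\theta_*}\varphi_*^{-1}(\cdot))$ together with the fact that $G=F\circ\varphi_F$ admits no analytic continuation across $\partial\DD$ (because $F=\cP(f_0)$ is maximal on its Jordan domain $\Dom(F)$, cf.\ \thmref{th:W-Jordan-domain} and \eref{closure-crit}). The Jordan property is the main obstacle: one must pass from the Carath\'eodory limit $W_*$ of the Jordan domains $\Dom(\cP^n(f_0))$ to the assertion that $W_*$ is again Jordan, i.e.\ that the limit really lands in $\cla$ and not merely in $\clac$. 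The route I would follow is to exploit the rigidity of the universal covering model: each $\partial\Dom(f_n)$ is the image under $\ixp\circ\phi_R^{f_n}$ of a fundamental arc in $\partial B_0^{f_n}$, on which $\hat f_n$ is conjugate to angle doubling (\thmref{class-1a}), so these boundaries are ``combinatorially the same''; from this I would first conclude $f_*\in\claa$, then that its tip $t^{f_*}$ is accessible from the complement of $W_*$ (so $f_*\in\clad$), and finally invoke \thmref{class-1c} together with \thmref{th:W-Jordan-domain} to get that $B_0^{f_*}$, and hence $\Dom(f_*)=W_*$, is a Jordan domain. An alternative would be to prove directly that $\cla$ is sequentially closed inside $\clab$ using the same combinatorial uniformity.

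Once $f_*\in\cla$ is established, the transformation $\cP\colon\cla\to\cla$ of \thmref{class-2} applies to it, and $\cP(f_*)$ is the member of $\cla$ whose germ at $0$ is the parabolic renormalization of the germ of $f_*$; by \thmref{thm-IS} that germ equals the germ of $f_*$. Since a member of $\cla$ is the maximal analytic extension of its germ and is therefore determined by it, $\cP(f_*)=f_*$ as transformations of $\cla$, which completes the argument.
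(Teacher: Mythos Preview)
Your core argument---precompactness of $(\cP^n(f_0))$ via \lemref{pre-compact}, together with convergence of the germs from \thmref{thm-IS}---is exactly what the paper does; its entire proof is those two observations in two sentences. The paper does not address the issues you raise in your third paragraph: it simply asserts that the sequence is ``pre-compact in $\cla$'' and that every limit germ coincides with $f_*$, leaving the Jordan and maximality properties of the limit domain implicit. So your expansion is more scrupulous than the paper's own argument, and you have correctly located where the work would lie.

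That said, your proposed route to fill the gap has two defects. First, an index slip: $\partial\Dom(f_n)=\partial\Dom(\cP(f_{n-1}))$ is the image of a fundamental arc in $\partial B_0^{f_{n-1}}$ under $\ixp\circ\phi_R^{f_{n-1}}$, not under the Fatou coordinate of $f_n$ itself. Second, and more seriously, invoking \thmref{class-1c} and \thmref{th:W-Jordan-domain} once you have $f_*\in\clad$ tells you that $B_0^{f_*}$ is Jordan and hence (via the proof of \thmref{class-2}) that $\Dom(\cP(f_*))$ is Jordan---not that $W_*$ is. To close the loop you must already know that $W_*$ is the \emph{maximal} extension of the germ $f_*$, so that the map $\cP(f_*)$ constructed from the $\clad$-representative on $W_*$ is identified with the map on $W_*$; but maximality is precisely one of the properties you are trying to establish. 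A cleaner bootstrap would be: (i) show the limit lies in $\clad$; (ii) run the proof of \thmref{class-2} (which only needs \thmref{class-1c}, hence only $\clad$) to get $\cP(f_*)\in\cla$; (iii) use $\cP(f_*)=f_*$ as germs and the maximality of the $\cla$-domain of $\cP(f_*)$ to conclude. Step (i), however---passing from a Carath\'eodory limit of Jordan domains to local connectivity of $\partial W_*$ and accessibility of $t^{f_*}$---is the genuine obstacle, and neither your sketch nor the paper supplies it.
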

\begin{proof}
By \lemref{pre-compact}, the sequence of parabolic renormalizations $\cP^n(f_0)$ is pre-compact in $\cla$. 
By \thmref{thm-IS}, every limit point of this sequence coincides with $f_*$ on a neighborhood of the origin.
\end{proof}

\section{Numerical results}

\subsection{Properties of the asymptotic expansion of the Fatou coordinates}
A justification of the use of the asymptotic expansion of the Fatou coordinate
(\thmref{asym-expansion-2}) for computation has a theoretical basis in the 
following. Recall (see e.g. \cite{Ram}) that a formal power series
$\sum_{m=1}^\infty a_mx^{-m}$ is of {\it Gevrey order }$k$ if
$$|a_m|<CA^n(n!)^{\frac{1}{k}}\text{ for some choice of positive constants }C, A.$$
As was shown by {\'E}calle \cite{Ec}:

\begin{thm}
\label{gevrey-order}
The asymptotic expansion of \thmref{asym-expansion-2} is of Gevrey order $1$. 
\end{thm}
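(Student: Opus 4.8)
The plan is to show that the coefficients $b_j$ of the asymptotic series in \thmref{asym-expansion-2} satisfy a bound of the form $|b_j|\le C\,K^{j}\,j!$ for suitable constants $C,K>0$, which is precisely the assertion that $\sum_{j\ge1}b_jw^{-j}$ is Gevrey of order $1$. A quick route is available directly from the refined estimate \thmref{gevrey-asym}: writing
\[
\Phi_A(w)-\Bigl(w-A\log w+\sum_{j=1}^{n}b_jw^{-j}\Bigr)=\sum_{j\ge n+1}b_jw^{-j},
\]
and subtracting from this the analogous expression with $n$ replaced by $n+1$, one finds $|b_{n+1}|\,|w|^{-(n+1)}\le C^{n+1}(n+1)!\,|w|^{-(n+1)}+C^{n+2}(n+2)!\,|w|^{-(n+2)}$ for all $|w|>B$; taking $|w|=\max(B,C(n+2))$ makes the second term at most the first, so $|b_{n+1}|\le 2\,C^{n+1}(n+1)!$, which is the claimed bound. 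Thus the statement is a formal corollary of the results recalled in \S2.3. For a self-contained argument not relying on the sharp constants — one in the spirit of {\'E}calle's original proof — I would instead proceed via the Borel transform, as follows.

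Specialize, as in \thmref{asym-expansion-2}, to $F(w)=w+1+a(w)$ with $a(w)=Aw^{-1}+O(w^{-2})$ convergent near $\infty$, and put $\varphi(w):=\sum_{j\ge1}b_jw^{-j}$. Subtracting the linear and logarithmic parts of $\Phi_{\text{ps}}$ from the functional equation $\Phi_{\text{ps}}\circ F=\Phi_{\text{ps}}+1$ turns it into a linear difference equation for $\varphi$,
\[
\varphi(F(w))-\varphi(w)=-g(w),\qquad g(w):=a(w)-A\log\!\bigl(1+(1+a(w))/w\bigr),
\]
where $g$ is convergent near $\infty$ and vanishes to order $2$; expanding the composition gives $\varphi(F(w))=\varphi(w+1)+\sum_{k\ge1}\tfrac1{k!}a(w)^k\varphi^{(k)}(w+1)$. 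Being Gevrey-$1$ means exactly that the formal Borel transform $\hat\varphi(\zeta):=\sum_{j\ge1}\tfrac{b_j}{(j-1)!}\zeta^{j-1}$ has positive radius of convergence. Applying the Borel transform term by term — the shift $w\mapsto w+1$ becomes multiplication by $e^{-\zeta}$, $d/dw$ becomes multiplication by $-\zeta$, and products become convolutions, all as identities of formal power series — converts the difference equation into the convolution equation
\[
(e^{-\zeta}-1)\,\hat\varphi(\zeta)+\sum_{k\ge1}\frac1{k!}\;\hat a^{\,*k}*\bigl[(-\zeta)^k e^{-\zeta}\hat\varphi\bigr](\zeta)=-\hat g(\zeta),
\]
in which $\hat a$ and $\hat g$ are entire of exponential type, being Borel transforms of convergent series.

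Since $e^{-\zeta}-1=-\zeta\,u(\zeta)$ with $u$ holomorphic and $u(0)=1$, and since $\hat g$ vanishes to order $1$ at $0$, dividing through by $e^{-\zeta}-1$ exhibits $\hat\varphi$ as a fixed point of an affine operator $\hat\varphi\mapsto\Psi+\mathcal{T}(\hat\varphi)$ with $\Psi$ holomorphic near $0$. The point is to check that $\mathcal{T}$ is a contraction in the sup-norm $\|\cdot\|_\rho$ on $\{|\zeta|\le\rho\}$ for $\rho$ small. Writing the $k$-th convolution as $\int_0^\zeta$ and parametrizing the segment by $s=t\zeta$ produces a factor $\zeta^{k+1}$; the $k$-fold convolution obeys $\|\hat a^{\,*k}\|_\rho\le\tfrac{\rho^{k-1}}{(k-1)!}\|\hat a\|_\rho^{\,k}$ (simplex volume estimate); and dividing by $|e^{-\zeta}-1|\gtrsim|\zeta|$ on the disk still leaves a net factor $\sim\rho^{2k-1}$. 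Hence the $k$-th term of $\mathcal{T}\hat\varphi$ is bounded by $\tfrac{(C_1\rho)^{k}}{k!(k-1)!}\|\hat\varphi\|_\rho$, so summing over $k$ yields $\|\mathcal{T}\hat\varphi\|_\rho=O(\rho)\,\|\hat\varphi\|_\rho<\tfrac12\|\hat\varphi\|_\rho$ once $\rho$ is small. The contraction mapping principle then produces a solution $\hat\varphi$ holomorphic on $\{|\zeta|\le\rho\}$; by uniqueness of the formal solution it is the Borel transform of $\sum b_jw^{-j}$, and its holomorphy at $0$ is the Gevrey-$1$ bound.

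The main obstacle is this last estimate: one must organize the convolution equation so that every term on the nonlinear side genuinely picks up at least one net power of $\zeta$ (this is what makes $\mathcal{T}$ contractive on a small disk rather than merely bounded), and one must keep the constants in the $k$-fold convolution bounds uniform in $k$ — the factor $1/k!$ coming from the Taylor expansion of $\varphi\circ F$ is exactly what absorbs the $k$-dependent growth and makes the sum over $k$ converge. I would expect the rest (deriving the difference equation for $\varphi$, verifying the Borel dictionary at the level of formal series, the simplex estimate for $\hat a^{\,*k}$) to be routine. Note that this argument is considerably softer than the resurgence statements of \S2.3: it needs only convergence of $\hat\varphi$ near the origin, and in particular says nothing about analytic continuation of $\hat\varphi$, the location of its singularities at $2\pi i\,\ZZ^{*}$, or the explicit Gevrey constants of \thmref{gevrey-asym}.
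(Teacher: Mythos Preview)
The paper does not prove this theorem: in both \S2.3 and \S5.1 it is stated and attributed to \'Ecalle \cite{Ec} (case $A=0$) and Dudko--Sauzin \cite{DuSa} (general case), so there is no in-paper argument to compare your proposal against.

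Your first route is formally correct but logically backwards relative to the paper's presentation: \thmref{gevrey-asym} is the deeper result, stated after \thmref{gevrey-order} and drawn from the same source \cite{DuSa}, so deducing the latter from the former is a tautology rather than an independent proof. One notational slip: the displayed identity $\Phi_A(w)-\Phi_n(w)=\sum_{j\ge n+1}b_jw^{-j}$ is not an equality of functions, since the right side diverges; but your actual argument only uses the triangle inequality on $b_{n+1}w^{-(n+1)}=(\Phi_A-\Phi_n)-(\Phi_A-\Phi_{n+1})$ together with the two Gevrey remainder bounds, which is fine.

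Your second route --- rewriting the functional equation as a linear difference equation for $\varphi$, passing to the Borel plane, and showing the resulting convolution operator is a contraction on a small disk --- is essentially the method of the cited references, and the estimates you outline are correct. The simplex bound $\|\hat a^{*k}\|_\rho\le\rho^{k-1}\|\hat a\|_\rho^{\,k}/(k-1)!$, the factor $|\zeta|$ picked up by the final convolution, and the cancellation of one $|\zeta|$ against $|e^{-\zeta}-1|\asymp|\zeta|$ indeed combine to give $\|\mathcal T\|_\rho=O(\rho)$; summability over $k$ is ensured by the $1/k!$ from the Taylor expansion of $\varphi\circ F$. The identification of the holomorphic fixed point with the formal Borel series then follows from the uniqueness in \propref{formal-fatou}. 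This is a correct sketch of the standard proof and, as you note, considerably softer than the full resurgence statements recalled in \S2.3.
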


Thus, the asymptotic series of $g(w)$ is rather slowly divergent. A Stirling formula
estimate suggest that the first $n$ terms of the series are useful in estimating the 
value of $g$ for $|w|>\text{const}\cdot n$.

\thmref{gevrey-order} is a part of {\'E}calle's theory of {\it resurgence} as applied specifically to Fatou coordinates
(see \cite{Sau} for an account). Recall, that the {\it Borel transform} of a formal power series
$\sum_{m=1}^\infty a_mx^{-m}$ consists in applying the termwise inverse Laplace transform:
$$a_m x^{-m}\mapsto \frac{a_m\zeta^{m-1}}{(m-1)!}.$$
In the case when the formal power series is of Gevrey order $1$, this yields a series
$$\sum_{m=1}^\infty \frac{a_m\zeta^{m-1}}{(m-1)!},$$
which converges to an analytic function $\hat h(\zeta)$ in a neighborhood of the origin.
Assume further, that the function $\hat h$ extends analytically along the positive real axis, and
the extension is of an exponential type. Then, the Laplace transform 
$$h(x)=\int_0^\infty e^{-x\zeta}\hat h(\zeta)d\zeta$$
is defined when $\Re x$ is sufficiently large. The original formal power series
$\sum _{m=1}^\infty a_m x^{-m}$ is the asymptotic series of $h(x)$, which should thus 
be considered a summation of the divergent formal series.

Resurgence theory implies much more detailed information about the asymptotic expansion 
of the Fatou coordinate given in \thmref{asym-expansion-2}, than that required to perform the Borel summation.
In particular, the function $\hat h$ analytically extends not just along the positive real axis, but along
every path in $\CC\setminus \{2\pi i n,\; n=\pm 1,\pm 2,\ldots\}.$ 
The attracting Fatou coordinate is obtained using the Laplace transform as described above. The same
function $\hat h$ also produces the repelling Fatou coordinate: by using the Laplace transform 
$$\int_{-\infty}^0 e^{-x\zeta}\hat h(\zeta)d\zeta$$
for $x$ with a large negative real part. The development of the resurgence theory for Fatou coordinates
was started by {\'E}calle in \cite{Ec},  and was largely completed in the recent work of Dudko and Sauzin \cite{DuSa}.

\subsection{Computational scheme for $\cP$}

Having mentioned the resurgent properties of the asymptotic expansion of the Fatou coordinate, we
proceed to describe the computational scheme for $\cP$. We begin with a germ of an analytic mapping
$$f(z)=z+z^2+O(z^3)$$
defined in a neighborhood of the origin. Applying the change of coordinates $w=\kappa(z)=-1/z$, we obtain
$$F(w)=w+1+\frac{A}{w}+O\left(\frac{1}{w^2}\right)$$
defined in a neighborhood of $\infty$. 
We again use the notation $\Phi_A(w)$ for the function which conjugates $F$ with the unit translation
$$\Phi_A(F(w))=\Phi_A(w)+1$$
for $\Re w>> 1$. We let $\Phi_R(w)$ be the solution of the same functional equation for $\Re w<<-1$.
These changes of coordinate are well-defined up to an additive constant, and 
$$\tlphi_A(z)=\kappa^{-1}\circ \Phi_A\circ \kappa(z),\; \tlphi_R(z)=\kappa^{-1}\circ \Phi_R\circ \kappa(z).$$

As we have seen  in \thmref{asym-expansion-2}, the function $\Phi_A(w)$ has an asymptotic development
$$\Phi_A(w)\sim w- A\log w + \text{const}_A +\sum_{k=1}^\infty b_k w^{-k}.$$
The coordinate $\Phi_R(w)$ has an {\sl identical} asymptotic development, differing only by the value of
$\text{const}_R$. While this may seem surprising at first glance, recall that these functions are Laplace transforms of {\sl different}
analytic continuations of the Borel transform of the same divergent series (plus the $w-A\log w+ \text{const}$ term).
 
We select a large integer $M$ (in practice, $M\approx 100$). We will use the asymptotic expansion to estimate
$\Phi_A(w)$ for $w\geq M$ and $\Phi_R(w)$ for $w\leq -M$. 
Consider an iterate $N\approx 2M$ such that
$$\Re F^N(w)\geq M\text{ for }\Re w\in[-M-1,-M].$$
Denote $\nu(z)$ the function
$$\nu(z) = \ixp \circ \Phi_A \circ F^N\circ (\Phi_R)^{-1}\circ \ixp^{-1}(z).$$
It differs from the parabolic renormalization $\cP(f)$ only by rescaling the function and its argument:
$$\cP(f)(z)=a_1\nu(a_0 z).$$
Now consider a contour $\Gamma$ connecting $w=-M-1+iH$ with $F(w)\approx -M+iH$ which is mapped onto the circle
$S_\rho=\{|z|=\rho\}$ for a small value of $\rho$ by $\ixp \circ \Phi_R$. 
Select $n\in\NN$ and consider the $n$ points in $S_\rho$ given by $x_k=\rho \exp(2\pi k/n),\; k=0,\ldots,n-1$.
We then evaluate the first $n$ coefficients in the Taylor expansion of $\eta$ at the origin
$$\eta(z)=\sum_{j=0}^\infty r_j z^j$$
using a discrete Fourier transform. Specifically, we calculate 
$$s_k=\nu(x_k)\approx \sum_{j=0}^{n-1}r_j (x_k)^j=\sum_{j=0}^{n-1}r_j\rho^j \exp(2\pi kj/n),$$
and apply the inverse discrete Fourier transform:

$$r_j\approx \frac{1}{n\rho ^j}\sum_{k=0}^{n-1} s_k \exp(- 2\pi kj/n).$$

Since 
$$\cP(f)(z)=\sum_{j=1}^\infty s_ja_1a_0^jz^j,$$
we have $$a_1a_0s_1=1,\text{ and further }a_0=\frac{s_1}{s_2}.$$
This step completes the computation of the Taylor expansion of $\cP(f)$.

\subsection{Computing $f_*$}
In computing the fixed point $f_*(z)$ we find it more convenient to work with the representation of 
a germ $f(z)=z+z^2+\cdots$ in the form:
$$f(z)=z\exp(f_{\log}(z)),$$
where $f_{\log}$ is a germ of an analytic function at the origin with $f_{\log}(z)=z+\cdots.$
We then rewrite the parabolic renormalization operator in terms of its action on $f_{\log}$:
$$\cP_{\log} (f_{\log})(z)=(2\pi i)^{-1}\Phi_A\circ F^{N}\circ (\Phi_R)^{-1}\circ \ixp^{-1}(z)-\ixp^{-1}(z).$$
This helps to avoid the round-off error which arises from the growth of $f_*$ near
the boundary $\partial \operatorname{Dom}(f_*).$

Modifying the scheme described above to the operator $\cP_{\log}$, we calculate the fixed point by iterating $\cP$ starting at $f_0(z)=z+z^2$:
\begin{empirical}
$$f_*(z)\approx z+z^2+0.(514 -0.0346i)z^3+\cdots$$
\end{empirical}
Our calculations appear reliable up to the size of the round-off error in double-precision arithmetic ($\sim 10^{-14}$)
in the disk of radius $r=5$ around the origin.
As we will see below, the true radius of convergence for the series for $f_*$ is approximately $41$ (see the Empirical Observation \ref{emp1}).

We also estimated the leading eigenvalue of $D\cP|_{f_*}$:
\begin{empirical}The eigenvalue of $D\cP|_{f_*}$ with the largest modulus is 
$$\lambda\approx -0.017+ 0.040i,\; |\lambda|\approx 0.044.$$
\end{empirical}
The small size of $\lambda$ explains the rapid convergence of the iterates of $\cP$ to the fixed point.
To obtain this estimate, we write 
$$f(z)=z+z^2+\sum_{k=3}^\infty \text{coeff}_k(f) z^k,$$
and consider the spectrum of the $N\times N$ matrix $A=(a_{ij})_{i,j=3..N+3}$, with
$$a_{ij}=\frac{\text{coeff}_j(\cP(f_*+\eps z^i))-\text{coeff}_j(f_*)}{\eps},$$
which serves as a finite-dimensional approximation to $D\cP|_{f_*}$.

\begin{figure}
\label{fig-first-renorm}
\centerline{\includegraphics[height=0.7\textheight,angle=-90]{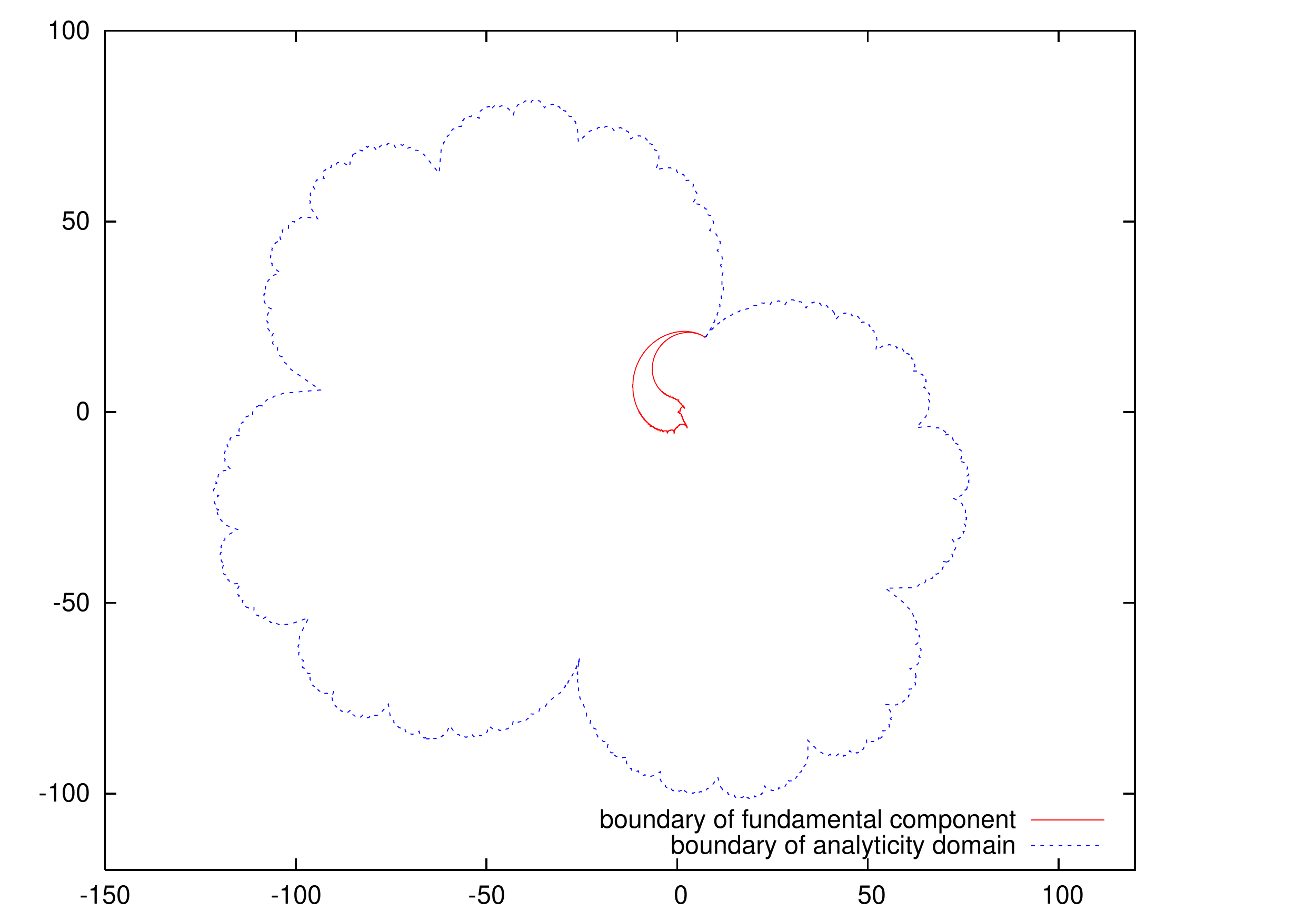}}
\centerline{\includegraphics[height=0.6\textheight,angle=-90]{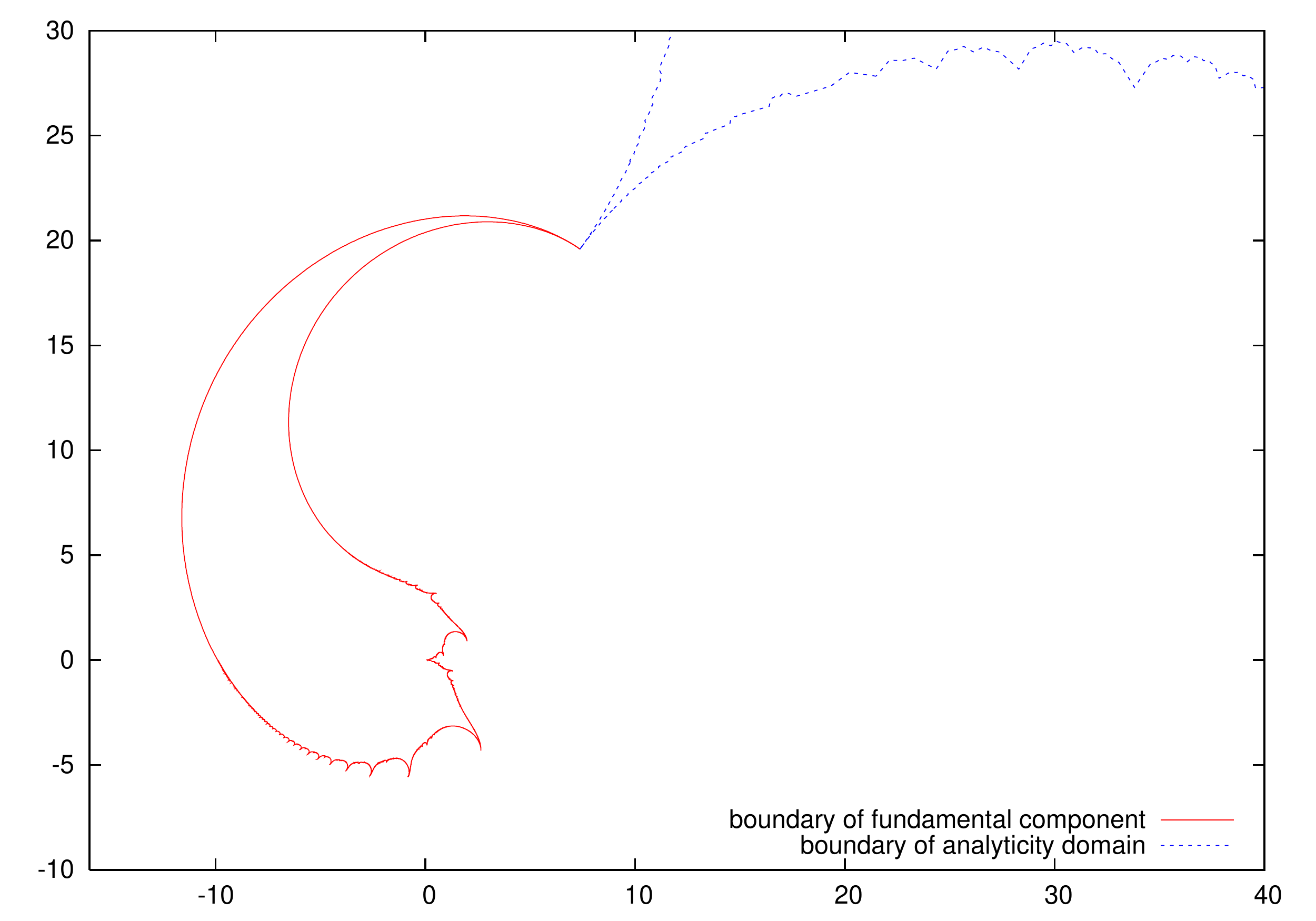}}
\caption{The domain of analyticity of $\cP f_0(z)$ for $f_0(z)=z+z^2$, with the
immediate parabolic basin indicated.
}
\end{figure}

\subsection{Computing a tail of the domain $\text{Dom}(f_*)$}$\-$

\medskip

\noindent{\bf Computing the tail using an approximate self-similarity near the tip.}
Let us denote 
$$t_*\equiv t^{f_*}=\partial\text{Dom}(f_*)\cap\overline{B_0^{f_*}}$$
the endpoint of the tail of the immediate basin of $f_*$.
Let $C_R$ be a repelling fundamental crescent of $f_*$, and let $w\in C_R$ have the property
$$t_*=\ixp\circ \tlphi_R(w).$$
Let $k\geq 2$ be such that 
$$f^k_*(w)=0,\text{ so that }f^{k-1}_*(w)=t_*.$$
Denote $\chi$ the local branch of $f_*^{-(k-1)}$ which sends   $t_*$ to $w$. 
Then the composition
$$\nu\equiv \ixp\circ \tlphi_R\circ \chi$$
is an analytic map defined in a neighborhood of the endpoint $t_*$, which fixes it:
$$\nu(t_*)=t_*.$$
This point can be found numerically:
\begin{empirical}
$$t_*\approx -779.306-643.282i,\text{ and }\nu'(t_*)\approx 0.232+0.264i.$$
\end{empirical}
Thus, we have identified the endpoint of the largest tail of $\text{Dom}(f_*)$. This construction
also gives us the means to compute the tail itself. This can be done  by successively applying $\nu$ to the 
immediate basin $B_0^{f_*}$ thus pulling it in towards $t_*$.

Now let $q\in C_R$ be any other preimage of $0$:
$$f_*^l(q)=t_*\text{ for some } q\in\NN.$$
Then $v=\ixp\circ\tlphi_R(q)$ is the endpoint of a different tail in $\partial\text{Dom}(f_*)$. It can be 
computed by first pulling back the tail of $B_0^{f_*}$ using the inverse branch
$$f_*^{-l}:t_*\mapsto q,$$
and then applying $\ixp\circ\tlphi_R.$

\begin{figure}
\label{fig-xi}
\centerline{\includegraphics[width=1.2\textwidth]{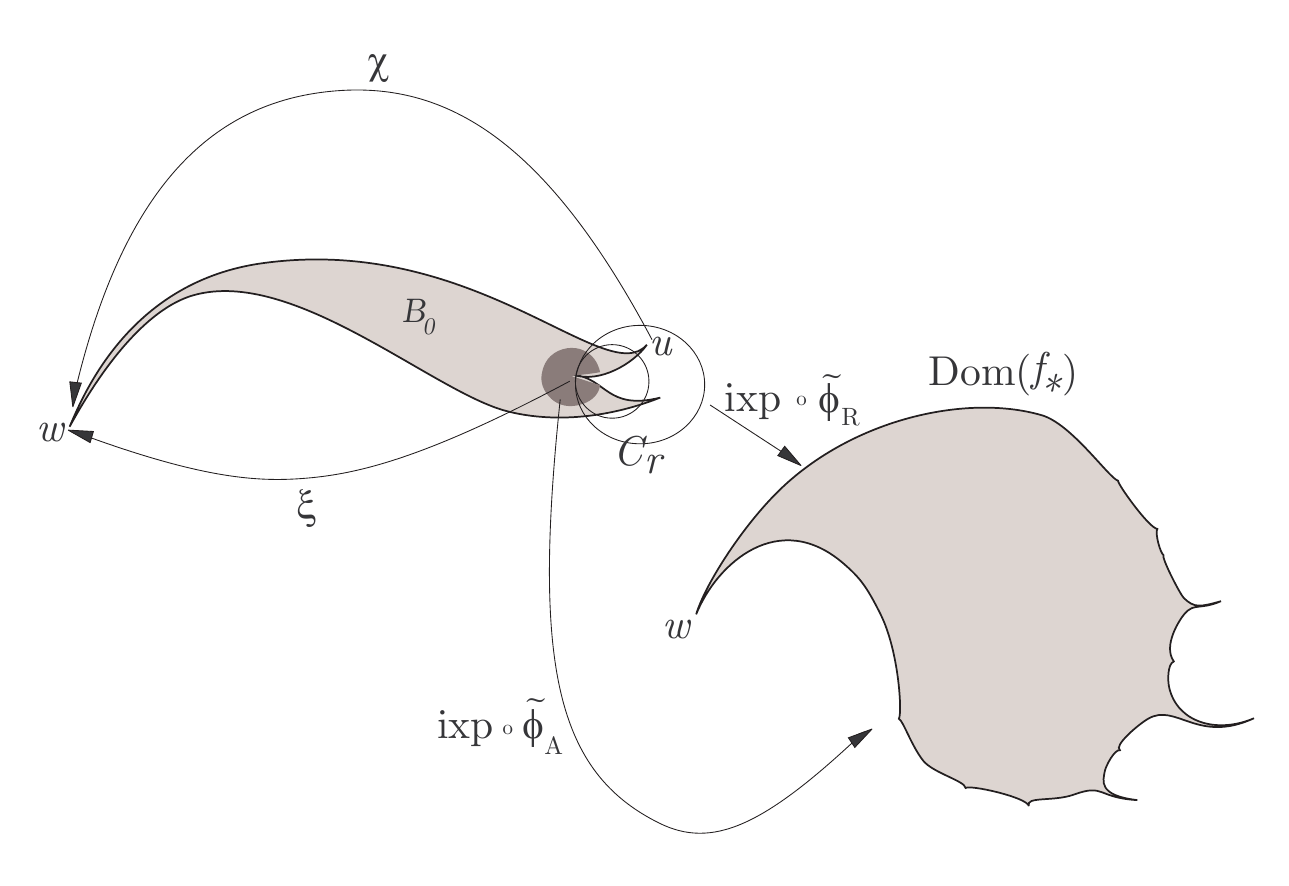}}
\caption{The inverse branches used in computing the tail of $\text{Dom}(f_*)$.}
\end{figure}

\medskip

\noindent{\bf Computing the tail using the functional equation for an inverse branch.}
A more careful analysis of the tail can be done as follows. Denote $\xi$ the local branch of $f_*^{-1}$ defined
in a slit neighborhood $D_r(0)\setminus [0,r)$ for some small value of $r$, which sends $0\mapsto t_*$. We can write
the renormalization fixed point equation for this particular branch:
\begin{equation}
\label{renorm inverse}
\xi=\psi_R\circ \xi\circ \psi_A^{-1},
\end{equation}
where $\psi_R=\chi\circ \ixp\circ\tlphi_R$, and $\psi_A^{-1}$ is the appropriately chosen branch of 
$(\ixp\circ \tlphi_A)^{-1}$ (thus the ``self-similarity'' of the tail is exponential, rather than linear). 
We are going to use the renormalization equation (\ref{renorm inverse})
{\sl inductively} to compute $\xi(z)$ for sufficiently small values of $z$, and thus plot the tail.

\medskip

\noindent{\bf Representing the numbers in the image of the tail.}
Numerical computations indicate that the value of $r=0.0002$ is sufficiently small for our needs, and for $|z|<r$
the difference between the left and the right sides of (\ref{renorm inverse}) is of the order of $10^{-11}$. 
The values of $z$ for which we would like to evaluate $\xi(z)$ become too small to be represented by the standard
double precision numbers (and even too small for their logarithms to be so represented).
We write
$$s(t)=\exp(2\pi t),$$
and choose $\hat t$ so that
$$\exp(-2\pi \hat t)=0.0002,\text{ that is }\hat t=1.3555...$$
We then represent a small positive number $x$ as
$$x=\frac{1}{s^k(t)},$$
for the unique choices of $t\in[\hat t,s(\hat t)),$ and an iterate $k\in\NN$.

We can write any complex number $z$ with $|z|<r$ uniquely as
$$z=(k,t,\theta)\equiv \frac{\exp(2\pi i\theta)}{s^k(t)},\; 0\leq\theta<1.$$
Note that this representation of small numbers makes it very easy to compute logarithms. In particular,
$$\ixp^{-1}((k,t,\theta))=\theta+is^{k-1}(t).$$
The next step in applying (\ref{renorm inverse}) is to apply $\phi_A^{-1}$ to the right-hand side of the equation.
From the first two terms in the asymptotics of
 $$\phi_A(z)=-\frac{1}{z}+O(\log|z|)\text{ for small }z,$$
it follows that
$$\phi_A^{-1}(y)=-\frac{1}{y+O(\log|y|)}\text{ for large }|y|.$$
A numerical estimate shows that for $|y|\geq 10^{18}$ the $O(\log|y|)$ term dissapears into the round-off error,
when added to $y$. Thus
$$\psi_A^{-1}((k,t,\theta))\approx -\frac{1}{\theta+i s^{k-1}(t)}\approx is^{k-1}(t)=(k-1,t,1/4)$$
provided $s^{k-1}(t)\geq 10^{18}.$
A direct estimate shows that for either $k\geq 3$, or $k=2$ and $t>18\log 10/2\pi\approx 6.596$
the last inequality will hold. 

\begin{figure}
\label{fig-fixed-pt}
\centerline{\includegraphics[height=0.5\textheight,angle=-90]{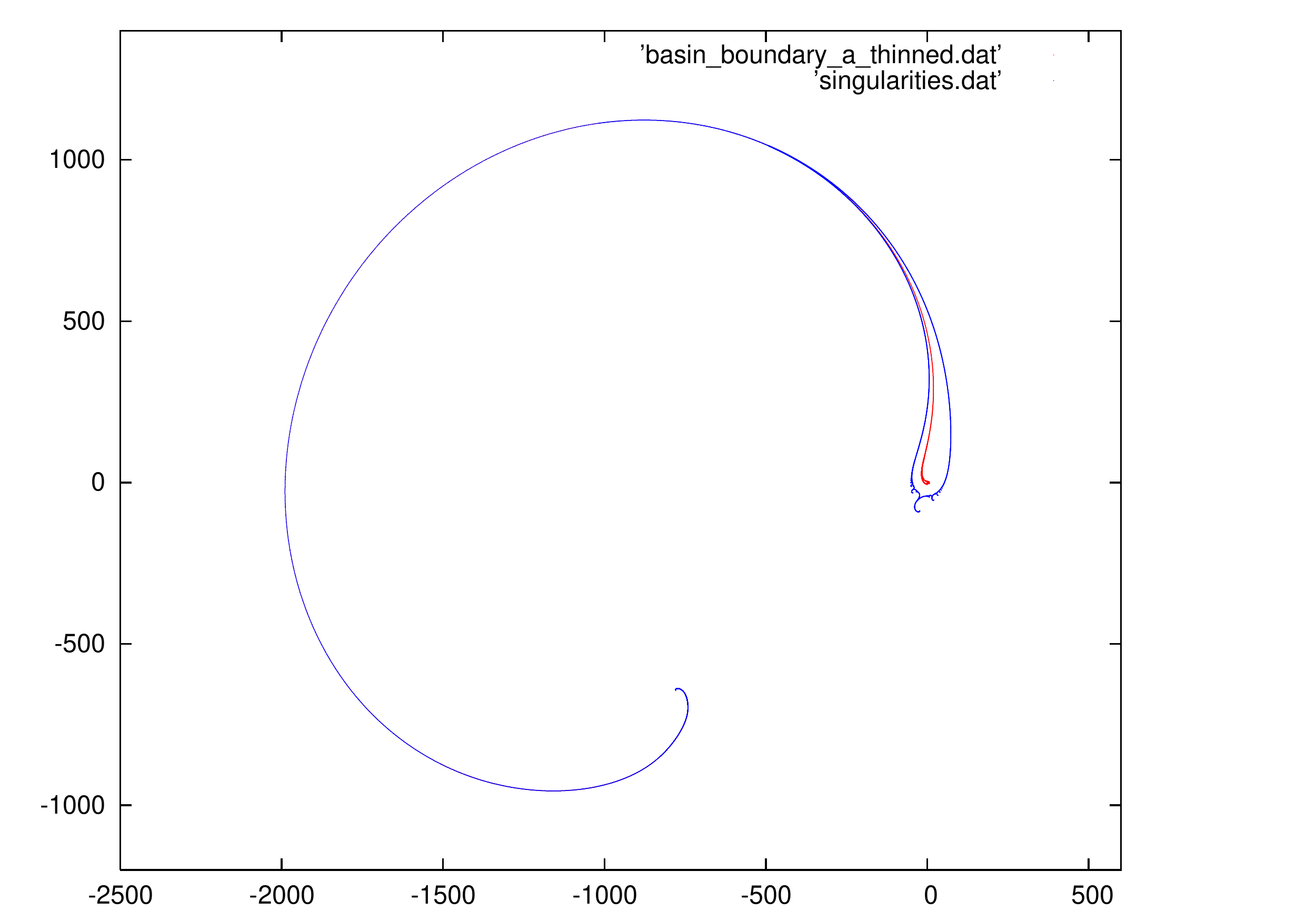}}
\centerline{\includegraphics[height=0.5\textheight,angle=-90]{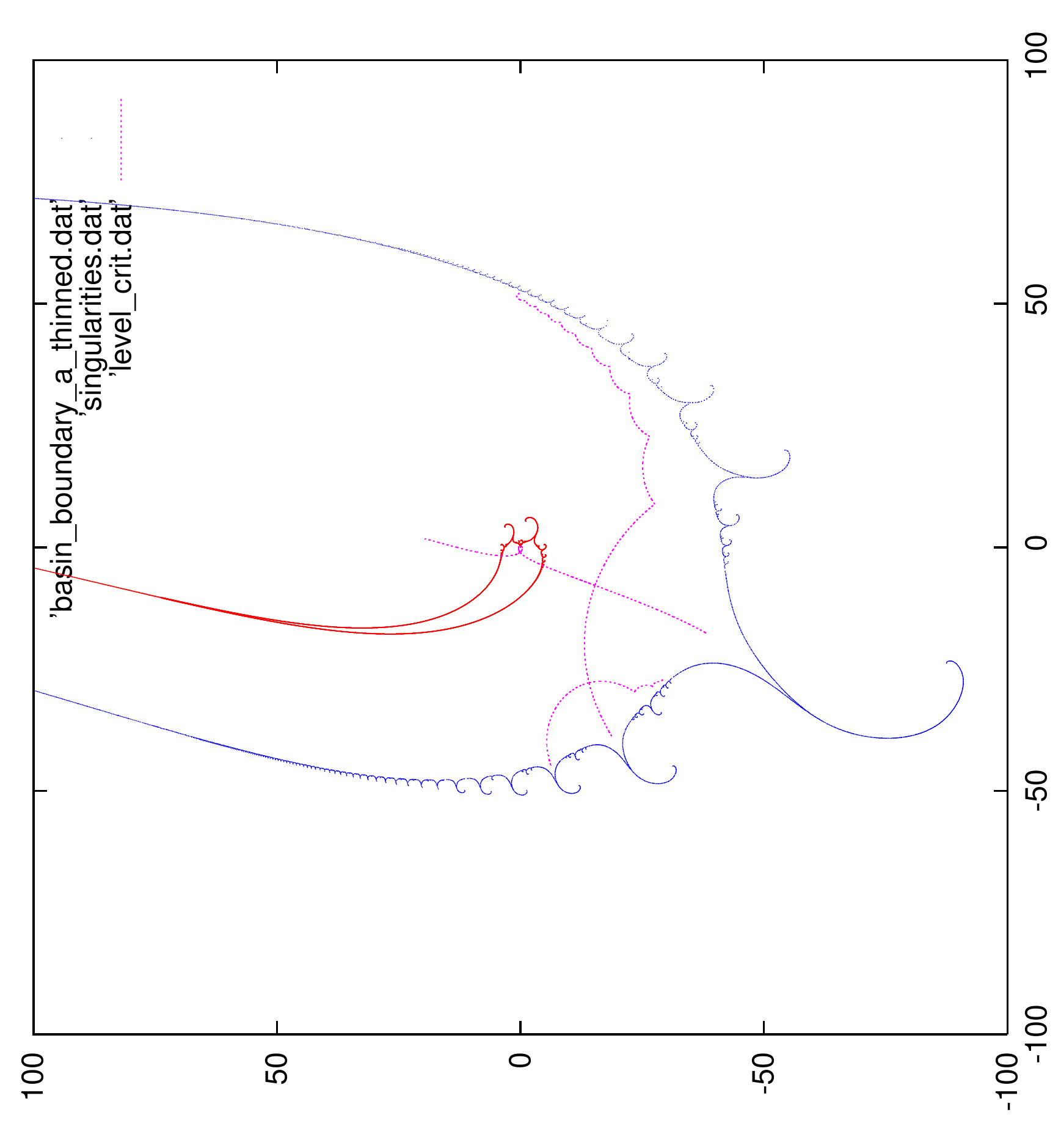}}
\caption{The domain of analyticity of $f_*$ and the boundary of the immediate parabolic basin $B_0^{f_*}$. In the second figure, a part of the
critical level curve of $f_*$ is also indicated.
}
\end{figure}

\begin{figure}
\label{fig-fixed-pt2}
\centerline{\includegraphics[height=0.7\textheight,angle=-90]{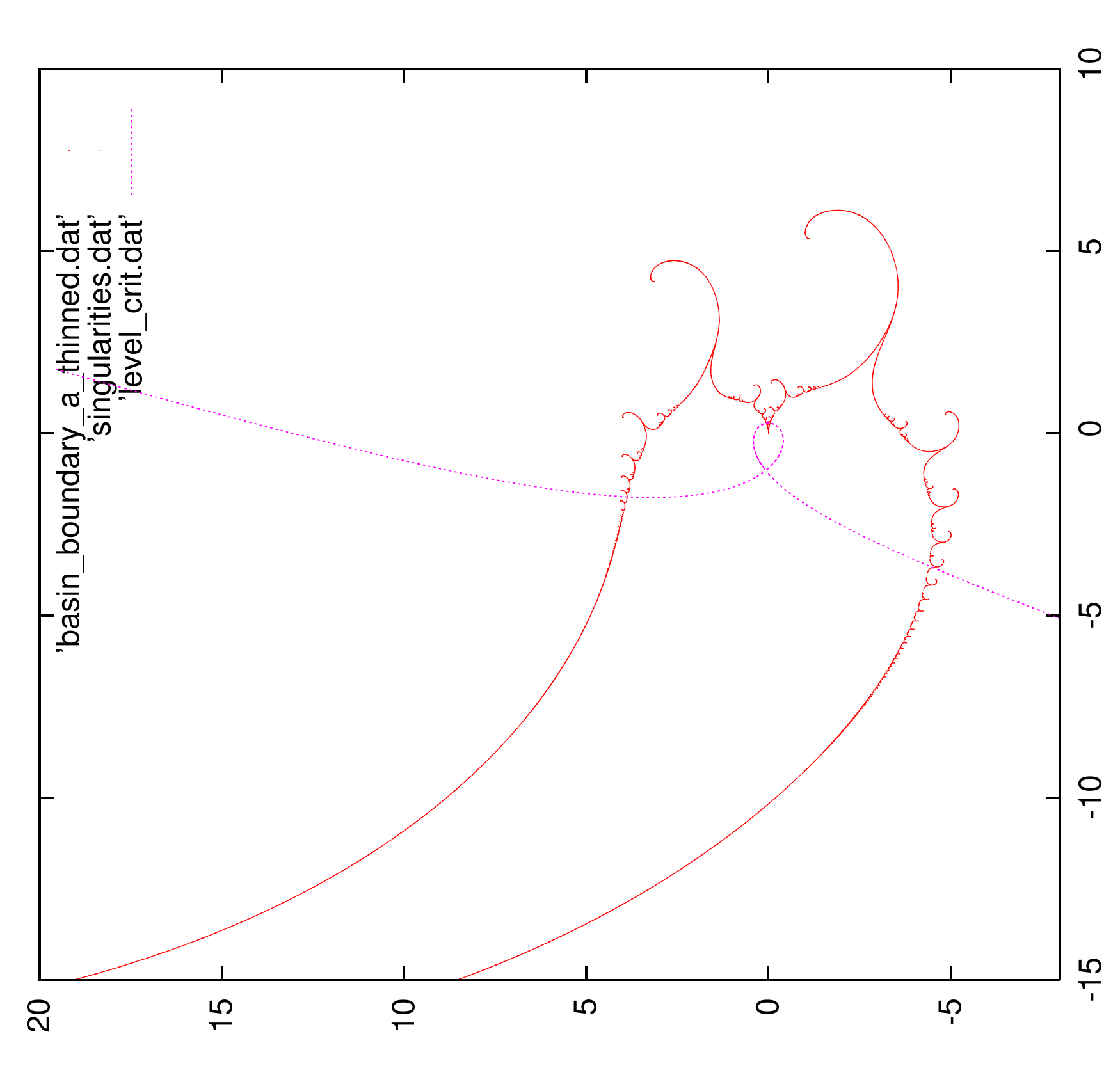}}
\caption{A blow-up of the boundary of the immediate basin of $f_*$ in the vicinity of the parabolic point.}
\end{figure}

\medskip
\noindent
{\bf The size of the domain of analyticity.}
To draw the pictures of the domain of analyticity of the fixed point of $f_*$ (Figures \ref{fig-fixed-pt} and \ref{fig-fixed-pt2})
we employed the following strategy.
First, a periodic orbit of period $2$ in $\partial B_0$ was identified. Its preimages give a rough outline of $\partial B_0$,
but become sparse near the ``tails'', which are not visible in this initial outline. At the next step, the large ``tail'' of $B_0$ is
computed as described above. Finally, its preimages are used to fill in the remaining gaps in $\partial B_0$.

As the final step, we calculate the boundary of $\operatorname{Dom}(f_*)$ as
$$\partial \operatorname{Dom}(f_*)=\ixp \circ \tlphi_R(\partial B_0\cap P_R).$$
An empirical estimate of the inner radius of $\text{Dom}(f_*)$ around the origin allows us to formulate the following observation
 (see \figref{fig-fixed-pt2}):

\begin{empirical}
\label{emp1}
The radius of convergence of the Taylor's expansion of $f_*$ at the origin is $R\approx 41.$
\end{empirical}

\appendix
\section{Example of a map with a simply-connected parabolic basin whose boundary is not locally connected}

\begin{thm}
\label{th:example}
There exists a quadratic rational map $R:\hat\CC\to\hat\CC$ of degree $2$ with the following properties:
\begin{itemize}
\item $R$ has a simple parabolic fixed point at $\infty$ with a proper immediate basin $B_0^R$ of degree $2$;
\item the boundary of the immediate basin $B_0^R$ is not locally connected.
\end{itemize}

\end{thm}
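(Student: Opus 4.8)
The plan is to produce $R$ explicitly inside the one–parameter family of quadratic rational maps with a simple parabolic fixed point at $\infty$ and a second, indifferent, fixed point, and to choose the rotation number of that second fixed point so badly that it becomes a Cremer point sitting on $\partial B_0^R$; then a Carath\'eodory/Snail-Lemma argument rules out local connectivity. First I would fix the family: for irrational $\theta$ put $b_\theta:=(1-e^{2\pi i\theta})^{-1}\neq 0$ and
\[
R_\theta(z):=z+1+\frac{b_\theta}{z}=\frac{z^2+z+b_\theta}{z},
\]
a quadratic rational map. In the coordinate $w=1/z$ one computes $R_\theta$ to be $w\mapsto w-w^2+(1-b_\theta)w^3+\cdots$, so after the harmless conjugation $w\mapsto -w$ the point $\infty$ is a simple parabolic fixed point in the sense of (\ref{parabolic-normal-form-3}). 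A direct computation gives $R_\theta(-b_\theta)=-b_\theta$ and $R_\theta'(-b_\theta)=1-b_\theta^{-1}=e^{2\pi i\theta}$, so $z_0:=-b_\theta$ is an indifferent fixed point with rotation number $\theta$; the two critical points $\pm\sqrt{b_\theta}$ are distinct and distinct from $0,z_0,\infty$, so near $z_0$ the map is a genuine non-linear germ. I would then fix $\theta$ satisfying Cremer's non-linearisability condition (e.g.\ an irrational $\theta$ whose continued-fraction convergents $p_n/q_n$ satisfy $\limsup_n q_{n+1}^{1/q_n}=\infty$); by Cremer's theorem \cite{Mil} the point $z_0$ is then a Cremer point of $R:=R_\theta$, hence $z_0\in J(R)$.

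Next I would pin down the immediate parabolic basin $B_0:=B_0^R$ of $\infty$. By Fatou's theorem \cite{Mil} $B_0$ contains at least one critical point of $R$; since $R$ has global degree $2$, the proper self-map $R:B_0\to B_0$ has degree at most $2$ and therefore contains at most one critical point, hence exactly one, so it is a degree-$2$ branched cover and by Riemann--Hurwitz $B_0$ is simply connected. As the full mapping degree of $R$ is already accounted for on $B_0$, one gets $R^{-1}(B_0)=B_0$: the basin is completely invariant, so $\partial B_0=J(R)$. In particular $B_0$ is a proper immediate parabolic basin of degree $2$ (the first bullet), and the Cremer point $z_0$ lies in $\partial B_0$.

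It then remains to show $\partial B_0$ is not locally connected, which I would do by contradiction. Assuming local connectivity, uniformise $\Psi:B_0\to\DD$; the conjugated map $g:=\Psi\circ R\circ\Psi^{-1}$ is a degree-$2$ parabolic Blaschke product, and by Carath\'eodory's theorem $\Psi^{-1}$ extends to a continuous surjection $\overline{\DD}\to\overline{B_0}$ restricting to $\gamma:\TT\to\partial B_0$ which semiconjugates the degree-$2$ boundary map $\beta:=g|_{\TT}$ to $R|_{\partial B_0}$. Since $R(z_0)=z_0$, the fibre $\gamma^{-1}(z_0)\subset\TT$ is non-empty, compact and carried into itself by $\beta$; by the standard structure of the Carath\'eodory loop over a fixed point of a locally connected Julia set this fibre is finite, hence permuted by $\beta$, hence contains a $\beta$-periodic angle. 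The corresponding internal ray of $B_0$ is then a periodic ray landing at $z_0$; but a periodic ray of a periodic Fatou component lands at a repelling or parabolic periodic point (the Snail-Lemma fixed-point-portrait fact, \cite{Mil}), contradicting that $z_0$ is irrationally indifferent. Hence $\partial B_0$ is not locally connected, and $R$ has all the asserted properties.

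The main obstacle is the last paragraph: one must carefully set up the Carath\'eodory semiconjugacy for a completely invariant \emph{parabolic} (rather than attracting) basin, verify finiteness of the fibre of $\gamma$ over the fixed point $z_0$, and promote a periodic boundary angle to a genuinely periodic internal ray to which the Snail-Lemma/ray-landing argument applies. Everything else — the explicit family, the check that $\infty$ is simply parabolic, the degree and Riemann--Hurwitz bookkeeping, and making Cremer's arithmetic condition concrete — is routine and uses only standard facts from \cite{Mil}.
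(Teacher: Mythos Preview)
Your proposal is correct and follows essentially the same route as the paper: build a quadratic rational map with a simple parabolic fixed point at $\infty$ and a Cremer fixed point, show $\partial B_0^R=J(R)$, and derive a contradiction from local connectivity via the Carath\'eodory semiconjugacy and finiteness of the fibre over the Cremer point.

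The one noteworthy difference is in the endgame. You invoke the Snail Lemma on a periodic internal ray, and correctly flag as the main obstacle that one must set up internal rays in a \emph{parabolic} basin and prove landing. The paper sidesteps this entirely: once a periodic angle $x$ over the Cremer point is found, it takes the branch $Q$ of $R^{-m}$ on one half $U^+$ of $B_0^R$ (split along the real line in the Koebe model) whose Carath\'eodory extension fixes the corresponding prime end, and applies Denjoy--Wolff directly to $Q:U^+\to U^+$ to conclude that the Cremer point attracts $Q$-orbits, hence is repelling or parabolic for $R$. This avoids constructing rays altogether and handles exactly the obstacle you identified. Your explicit family $R_\theta(z)=z+1+b_\theta/z$ is M\"obius-equivalent to the paper's $R(z)=(z^2+\lambda z)/(z+1)$; the paper reaches the same map via Milnor's moduli-space description rather than by writing it down first.
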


We begin by recalling:
\begin{prop}
\label{cremer}
There exist $\alpha\in \RR/\setminus\QQ$ such that no fixed point of multiplier $e^{2\pi i\alpha}$ for a rational function
of degree $d$ can be locally linearizable.
\end{prop}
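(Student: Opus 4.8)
# Proof Proposal for Proposition \ref{cremer}

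The plan is to invoke the classical construction of Cremer points via a Baire-category / arithmetic argument, which I would phrase as follows. Fix the degree $d \geq 2$. For a rotation number $\alpha \in \RR/\ZZ$, call $\alpha$ \emph{Cremer} (for degree $d$) if every rational map $R$ of degree $d$ having a fixed point $z_0$ with $R'(z_0) = e^{2\pi i\alpha}$ fails to be locally linearizable at $z_0$. The claim is that such $\alpha$ exist; in fact a topologically generic $\alpha$ works. The key mechanism is that non-linearizability is forced by the presence of periodic orbits accumulating at the fixed point, and for a rational map of degree $d$ one has a definite, finite bound (growing only polynomially in the period $q$) on the number of periodic points of period $q$, together with a lower bound on how many such points a linearizable fixed point would have to repel into a fixed small disk.

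First I would normalize: by a M\"obius change of coordinates we may assume the fixed point in question is at the origin, so $R(z) = e^{2\pi i\alpha} z + O(z^2)$. If $0$ were linearizable, there would be a linearizing coordinate on some disk $D_r$, and an irrational rotation has no periodic points other than the center; hence $R$ would have \emph{no} periodic points of any period $q \geq 1$ inside $D_r$ other than $0$ itself. The contradiction is produced by the Yoccoz/Cremer-type estimate: if $\alpha$ is very well approximated by rationals $p_n/q_n$ (specifically, if $\limsup_n \tfrac{1}{q_n}\log\tfrac{1}{|e^{2\pi i q_n\alpha}-1|} = \infty$, equivalently $\log q_{n+1}/q_n \to \infty$ along a subsequence of continued-fraction convergents), then for a suitable family of perturbations — or directly, for any fixed $R$ — the $q_n$-th iterate $R^{q_n}$ has a fixed point near $0$ that must lie within distance tending to $0$ of the origin. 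The counting ingredient: $R^{q_n}$ is a rational map of degree $d^{q_n}$, so it has exactly $d^{q_n}+1$ fixed points counted with multiplicity on $\hat\CC$; a short argument (following Cremer's original one) shows that if $0$ is linearizable these must stay a definite distance away, while the small-divisor condition on $\alpha$ forces one of them to collide with $0$ in the limit — impossible.

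The cleanest route to the \emph{existence} of $\alpha$ is a Baire-category argument rather than an explicit construction. I would show that for each $N$, the set
\[
\cG_N := \{ \alpha \in \RR/\ZZ : \exists\, q \le \text{(some bound)},\ \tfrac{1}{q}\log\tfrac{1}{|e^{2\pi i q\alpha}-1|} > N \}
\]
is open and dense in $\RR/\ZZ$ (density because the Liouville-type $\alpha$ are dense and $\cG_N$ is clearly open as a finite union of conditions on the argument of $e^{2\pi i q\alpha}$). Then $\bigcap_N \cG_N$ is a dense $G_\delta$, and by the iterate-counting argument above every $\alpha$ in it is Cremer for \emph{all} degrees $d$ simultaneously. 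In particular such $\alpha$ is irrational (Cremer points are never rational — a rational $\alpha$ gives a parabolic point, which for suitable $R$ \emph{is} the fixed point of a rotation-free normal form, but the key point is rational $\alpha$ fails the small-divisor condition), so $\alpha \in (\RR/\ZZ)\setminus\QQ$ as required.

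The main obstacle is making the iterate-counting estimate precise and uniform in $d$: one must show that the number of fixed points of $R^q$ lying in a disk $D_\rho(0)$ that a linearizable germ would permit (namely exactly one, the center) is incompatible with the algebraic count $d^q + 1$ once the continued-fraction denominators of $\alpha$ grow fast enough — the subtlety being to control where the \emph{other} $d^q$ fixed points of $R^q$ go, and to extract from rapid approximation $p/q \approx \alpha$ a genuine near-fixed-point of $R^q$ at scale $o(1)$. This is exactly the content of Cremer's classical theorem, so in the write-up I would cite it (e.g.\ \cite{Mil}, \S11, or the original argument) rather than reprove it, and simply record that a dense $G_\delta$ of $\alpha$, being simultaneously Liouville in the strong sense needed, yields the proposition for every fixed degree $d$.
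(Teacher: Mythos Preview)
The paper does not actually prove this proposition; it simply records it as a classical result of Cremer and refers to \cite{Mil}, Theorem 11.2. Your proposal goes further and sketches Cremer's argument, which is indeed the standard route.

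The outline is correct, but there is a quantitative slip in the small-divisor condition. Cremer's product-of-roots argument runs as follows (say for a monic polynomial $P$ of degree $d$ with $P(0)=0$, $P'(0)=\lambda$): the polynomial $P^q(z)-z$ has degree $d^q$, vanishes at $0$, and the product of its \emph{nonzero} roots has modulus $|\lambda^q-1|$. Hence the smallest nonzero period-$q$ point has modulus at most $|\lambda^q-1|^{1/(d^q-1)}$. To force these to accumulate at $0$ one needs
\[
\liminf_{q\to\infty}\ |\lambda^q-1|^{1/d^q}=0,
\]
i.e.\ $\limsup_q \tfrac{1}{d^q}\log\tfrac{1}{|\lambda^q-1|}=\infty$, not merely $\limsup_q \tfrac{1}{q}\log\tfrac{1}{|\lambda^q-1|}=\infty$ as you wrote. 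The condition you stated (equivalently $\log q_{n+1}/q_n\to\infty$ along a subsequence of convergents) is strictly weaker and does \emph{not} suffice for Cremer's elementary argument; it does imply non-linearizability for quadratic polynomials, but only via Yoccoz's much deeper theorem. With the corrected Cremer condition, the set of admissible $\alpha$ is still a dense $G_\delta$, so your Baire-category existence argument goes through unchanged.

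Two minor points. Your set $\cG_N$ is garbled as written (``$\exists\, q\le\text{(some bound)}$'' should just be ``$\exists\, q$''); and the parenthetical about rational $\alpha$ ``failing the small-divisor condition'' is backwards --- rational $\alpha$ satisfies it trivially, since $\lambda^q=1$ for some $q$. Rational $\alpha$ are excluded simply because the proposition is stated for $\alpha\in\RR\setminus\QQ$, and because a root-of-unity multiplier gives a parabolic rather than a Cremer point.
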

The first proof of this result is due to Cremer [1927], who gave a sufficient condition for $\alpha$ (see \cite{Mil}, Theorem 11.2).

Let us fix $\alpha$ as in \propref{cremer} and set $\lambda=e^{2\pi i\alpha}$.

\begin{prop}
\label{defn-map}
There exists a quadratic rational map $R$ with  a simple parabolic point with multiplier $1$ at $\infty$, and
a Cremer point with multiplier $\lambda$ at $0$.
\end{prop}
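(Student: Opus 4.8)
The plan is to exhibit $R$ explicitly inside a one-parameter family of quadratic rational maps fixing $0$ and $\infty$. I would consider, for a parameter $e\in\CC^{*}$, the map
\[
R_e(z)=z\cdot\frac{z+\lambda e}{z+e}=\frac{z^{2}+\lambda e z}{z+e}.
\]
First I would check that $R_e$ genuinely has degree $2$: the numerator $z(z+\lambda e)$ and the denominator $z+e$ share a root only if $-e\in\{0,\,-\lambda e\}$, i.e.\ only if $\lambda=1$, which is excluded since $\lambda=e^{2\pi i\alpha}$ with $\alpha$ irrational (by the choice of $\alpha$ as in \propref{cremer}). Next, $R_e(0)=0$, and writing $R_e(z)=z\,g(z)$ with $g(z)=(z+\lambda e)/(z+e)$ one has $R_e'(0)=g(0)=\lambda$, so $0$ is a fixed point of multiplier $\lambda$.

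For the local behavior at $\infty$ I would pass to the coordinate $w=1/z$ and compute the conjugated germ
\[
\widetilde R_e(w)=\frac{1}{R_e(1/w)}=\frac{ew^{2}+w}{\lambda e\,w+1}
= w+e(1-\lambda)w^{2}+O(w^{3})\qquad(w\to 0).
\]
This shows that $\infty$ is a fixed point of multiplier $1$ and, crucially, that it is a \emph{simple} parabolic fixed point in the sense of \S2.2: the coefficient $e(1-\lambda)$ of $w^{2}$ is nonzero because $e\neq 0$ and $\lambda\neq 1$. As a consistency check one may also note that $R_e(z)-z=(\lambda-1)ez/(z+e)$, so $z=0$ is the unique finite fixed point and carries multiplicity one; since a quadratic rational map has exactly three fixed points counted with multiplicity, the remaining multiplicity two must sit at $\infty$, matching the expansion above.

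Finally I would specialize, say, $e=1$ and set $R:=R_1(z)=\frac{z^{2}+\lambda z}{z+1}$. By the previous steps, $R$ is a quadratic rational map with a simple parabolic fixed point of multiplier $1$ at $\infty$ and a fixed point of multiplier $\lambda=e^{2\pi i\alpha}$ at $0$. Since $\alpha$ was chosen as in \propref{cremer}, no fixed point with multiplier $e^{2\pi i\alpha}$ of a degree-$2$ rational map can be locally linearizable; hence the irrationally indifferent fixed point $0$ of $R$ is, by definition, a Cremer point, which is what is required. I do not expect a real obstacle here: the construction is entirely elementary, and the only point demanding genuine verification is the non-degeneracy of the parabolic point at $\infty$, which is precisely the computation of the $w^{2}$-coefficient of $\widetilde R_e$ above.
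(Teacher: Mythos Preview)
Your proof is correct. Your route differs from the paper's: the paper invokes Milnor's description of the moduli space of quadratic rational maps (via the symmetric functions of the three fixed-point multipliers, subject to $\sigma_3=\sigma_1-2$) to assert existence of a map with multipliers $1$ and $\lambda$, and then argues \emph{dynamically} that the parabolic point is simple --- since a critical orbit must accumulate on the Cremer point, only one critical value can lie in the parabolic basin, forcing a single attracting petal. You instead write the map down explicitly and verify simplicity by direct Taylor expansion at $\infty$, bypassing both the moduli-space machinery and the dynamical argument. Your approach is more elementary and self-contained; the paper's has the virtue of illustrating the general principle that the number of attracting petals is bounded by the number of available critical orbits. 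Amusingly, the paper later remarks that its map is, up to linear conjugacy, exactly your $R_1(z)=\frac{z^2+\lambda z}{z+1}$, so you have landed on the same object by a shorter path.
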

\begin{proof}
The reader may find a detailed discussion of the dynamics of quadratic rational maps
in Milnor's paper \cite{Milnor-remarks}. Below we give a brief summary of some relevant
facts.
Every quadratic rational map $F$ has three fixed points, counted with multiplicity. Let $\mu_1$, $\mu_2$, $\mu_3$
denote the multipliers of the fixed points.
$$\sigma_1=\mu_1+\mu_2+\mu_3,\;\sigma_2=\mu_1\mu_2+\mu_1\mu_3+\mu_2\mu_3,\;\sigma_3=\mu_1\mu_2\mu_3$$
be the elementary symmetric functions of these multipliers. \vspace{0.12 in}\\
{\bf Proposition }(\cite{Milnor-remarks}, Lemma 3.1). {\it The numbers $\sigma_1$, $\sigma_2$, $\sigma_3$ determine 
$F$ up to a M\"{o}bius conjugacy, and are subject only to the restriction that
$$\sigma_3=\sigma_1-2.$$
Hence the moduli space of quadratic rational maps up to M{\"o}bius conjugacy
 is canonically isomorphic to $\CC^2$, with coordinates $\sigma_1$ and $\sigma_2$.}
\vspace{0.12 in}

Note that for any choice of $\mu_1$, $\mu_2$ with $\mu_1\mu_2\ne 1$
there exists a quadratic rational map $F$, unique up to a M\"obius conjugacy, which has distinct fixed points
with these multipliers. The third multiplier can be computed as $\mu_3=(2-\mu_1-\mu_2)/(1-\mu_1\mu_2)$.

Thus there exists a quadratic rational map $R$ with fixed points at $0, \infty$ such that $R'(0)=\lambda$ and $R'(\infty)=1$.
The map $R$ has only two critical orbits, and at least one of them has to contain the Cremer fixed point $0$ in its closure.
Thus the parabolic basin of $\infty$ can contain only one critical value, and hence $\infty$ is a simple parabolic point.

\end{proof}
Let us fix $R$ as in \propref{defn-map}.
The following is an immediate consequence of Montel's Theorem:
\begin{prop}
The Julia set 
$$J(R)=\partial B_0^R.$$
\end{prop}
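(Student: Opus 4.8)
The plan is to deduce the identity from complete invariance of the immediate basin $B_0^R$ together with the standard ``blowing‑up'' property of Julia sets that comes from Montel's theorem. One inclusion is free: since $B_0^R$ is a connected component of the open set $F(R)$, its boundary is disjoint from $F(R)$, i.e. $\partial B_0^R\subseteq J(R)$. All the content is in the reverse inclusion $J(R)\subseteq\partial B_0^R$.

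First I would show that $B_0^R$ is completely invariant, $R^{-1}(B_0^R)=B_0^R$. By \propref{defn-map}, $\infty$ is a simple parabolic fixed point of $R$, so it has a single attracting petal and $B_0^R$ is the unique immediate basin, fixed by $R$. By a classical theorem of Fatou, this immediate basin contains at least one critical point of $R$; as $\deg R=2$ there are exactly two critical points, and the second cannot also lie in $B_0^R$, for its forward orbit would then converge to $\infty$, while (as already observed in the proof of \propref{defn-map}, using the classical fact that the Cremer point must lie in the closure of the postcritical set $\overline{\bigcup_{n\ge1}R^n(\crit(R))}$) some critical orbit has to accumulate at $0$. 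Hence exactly one critical point lies in $B_0^R$, and $R\colon B_0^R\to B_0^R$ is a proper holomorphic map (\propref{proper1}) of degree $\ge 2$, because it is ramified; since $\deg R=2$, the degree is exactly $2$. Counting preimages with multiplicity, $R^{-1}(B_0^R)$ already carries total degree $2$ over $B_0^R$ and contains $B_0^R$ (which is a component of $R^{-1}(B_0^R)$), so $R^{-1}(B_0^R)=B_0^R$, and by iteration $R^{-n}(B_0^R)=B_0^R$ for all $n\ge0$.

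With complete invariance in hand, the remaining argument is the usual Montel step. Let $z\in J(R)$ and let $U$ be any neighbourhood of $z$. Since $\{R^n|_U\}$ fails to be normal, Montel's theorem gives $\bigcup_{n\ge0}R^n(U)\supseteq\hat\CC\setminus E$, where the exceptional set $E$ has at most two points; as $B_0^R$ is a nonempty open set it is not contained in $E$, so there exist $n\ge0$ and $w\in U$ with $R^n(w)\in B_0^R$. By complete invariance $w\in R^{-n}(B_0^R)=B_0^R$, hence $U\cap B_0^R\neq\emptyset$. Since $U$ was arbitrary and $z\notin B_0^R$, we conclude $z\in\overline{B_0^R}\setminus B_0^R=\partial B_0^R$. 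Combined with the trivial inclusion, this gives $J(R)=\partial B_0^R$.

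The only step carrying any substance is the complete invariance of $B_0^R$; granting that, $J(R)=\partial B_0^R$ is the general fact that the boundary of a completely invariant Fatou component is the whole Julia set, which is exactly the ``immediate consequence of Montel's theorem'' advertised. The main point requiring care is that the statement is about the \emph{immediate} basin rather than the full parabolic basin — which is why \propref{defn-map} is needed, to pin down that $\infty$ is simple and that its immediate basin captures the single available critical point, forcing $R|_{B_0^R}$ to have full degree $2$.
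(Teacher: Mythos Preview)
Your proof is correct and fills in precisely the details the paper elides: the paper merely asserts the result as ``an immediate consequence of Montel's Theorem,'' and your argument---complete invariance of $B_0^R$ (via the degree count forced by \propref{defn-map}) followed by the standard Montel blow-up---is exactly the intended route.
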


Let us now argue by way of contradiction and assume that $\partial B_0^R$ is locally connected. It is elementary to see that
$B_0^R$ is simply-connected. Denote $D:\TT\to\TT$ the doubling map $D(x)=2x\mod 1$.

\begin{prop}
\label{quotient}
There exists a continuous surjective map $\gamma:\TT\to \partial B_0^R$ such that
$$\gamma\circ D=R\circ \gamma.$$
\end{prop}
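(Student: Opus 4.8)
The plan is to construct the semi-conjugacy $\gamma$ as the boundary extension of a Riemann map, exactly as in the classical theory of locally connected Julia sets (Carath\'eodory loop), and then to verify the functional equation $\gamma\circ D = R\circ\gamma$ by a continuity/density argument. First I would fix a conformal isomorphism $\Psi:\DD\to B_0^R$ with $\Psi(0)=0$; since by hypothesis $\partial B_0^R$ is locally connected and $B_0^R$ is simply connected (noted already in the excerpt), Carath\'eodory's Theorem guarantees that $\Psi$ extends to a continuous surjection $\overline\DD\to\overline{B_0^R}$, and in particular we obtain a continuous surjection $\Psi|_{\TT}:\TT\to\partial B_0^R$. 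This is the candidate for $\gamma$ after a reparametrization of the circle; the reparametrization is forced by the requirement that $\gamma$ intertwine $D$ with $R$.

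Next I would analyze the induced boundary dynamics. Since $R$ is a degree-$2$ rational map with a parabolic fixed point at $\infty$ whose immediate basin $B_0^R$ is invariant, and $R:B_0^R\to B_0^R$ is a proper map of degree $2$ (this uses that $\infty$ is a \emph{simple} parabolic point with a single critical value in the basin, established in \propref{defn-map}), the map $g:=\Psi^{-1}\circ R\circ\Psi:\DD\to\DD$ is a proper holomorphic self-map of $\DD$ of degree $2$, i.e.\ a Blaschke product of degree $2$. A Blaschke product of degree $2$ with a parabolic fixed point on the boundary corresponding to $\infty$ (it has a fixed point on $\TT$ because $\infty\in\partial B_0^R$ is fixed and the extension $\tilde\Psi$ maps the prime end over $\infty$ to a point of $\TT$ which is then fixed by $g$) is, after a M\"obius change of coordinate on $\DD$, exactly the degree-$2$ map whose boundary restriction is conjugate to the doubling map $D$ on $\TT$. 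Here I would invoke the standard fact that a finite Blaschke product $g$ of degree $d$ restricts on $\TT$ to a degree-$d$ expanding circle endomorphism, hence is topologically conjugate to $x\mapsto d\cdot x$; in our case $d=2$. Let $\eta:\TT\to\TT$ be such a conjugacy, so $\eta\circ D = (g|_\TT)\circ\eta$. Then I would set $\gamma:=\tilde\Psi\circ\eta:\TT\to\partial B_0^R$, which is continuous (composition of continuous maps) and surjective (both factors are surjective onto $\TT$, resp.\ onto $\partial B_0^R$).

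Finally I would verify the functional equation. On one hand $\gamma\circ D = \tilde\Psi\circ\eta\circ D = \tilde\Psi\circ(g|_\TT)\circ\eta$. On the other hand $R\circ\gamma = R\circ\tilde\Psi\circ\eta$. So it suffices to show $\tilde\Psi\circ(g|_\TT) = R\circ\tilde\Psi$ on $\TT$, i.e.\ that the relation $\Psi\circ g = R\circ\Psi$, which holds by definition on $\DD$, persists to the boundary extensions. This is where I would be a little careful: $R$ is continuous on the sphere and $\tilde\Psi$ is continuous on $\overline\DD$, so $R\circ\tilde\Psi$ is continuous on $\overline\DD$; and $\tilde\Psi\circ g$ is continuous on $\overline\DD$ provided $g$ extends continuously to $\overline\DD$, which it does, being a Blaschke product (rational, hence continuous on the sphere). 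Both continuous functions agree on the dense subset $\DD$, hence agree on $\overline\DD$, in particular on $\TT$. This gives $\gamma\circ D = R\circ\gamma$, completing the proof.

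\textbf{Main obstacle.} The routine parts are Carath\'eodory's theorem and the Blaschke-product normal form. The step requiring the most care is identifying the boundary map $g|_\TT$ with the doubling map: one must confirm that $g$ really is a genuine degree-$2$ proper map of $\DD$ (equivalently a degree-$2$ Blaschke product), which rests on $R:B_0^R\to B_0^R$ being proper of degree $2$ — this in turn uses that $B_0^R$ contains exactly one critical point of $R$ and no critical points escaping, i.e.\ the simple-parabolic structure from \propref{defn-map} together with a Riemann--Hurwitz count; and one must pin down which fixed point of $g$ on $\TT$ corresponds to the parabolic point $\infty$ so that the conjugacy $\eta$ can be chosen compatibly. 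Once $g$ is known to be a degree-$2$ Blaschke product, the conjugacy of $g|_\TT$ to $D$ is a classical fact (finite Blaschke products are expanding on the circle and are classified up to conjugacy by their degree), so no genuinely new difficulty remains.
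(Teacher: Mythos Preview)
Your approach is essentially the paper's: conjugate $R|_{B_0^R}$ to a canonical degree-$2$ model via a Riemann map, apply Carath\'eodory to extend to the boundary, and read off the doubling conjugacy. The paper does this in one line by invoking its earlier model (the Koebe function $K$, equivalently the Blaschke product $B(z)=(3z^2+1)/(3+z^2)$ of \thmref{th:blaschke-model}) together with Carath\'eodory's Theorem.

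Two points to fix. First, a trivial one: you cannot normalize $\Psi(0)=0$, since $0$ is the Cremer point and lies in $J(R)=\partial B_0^R$, not in $B_0^R$. Normalize instead by sending $0\in\DD$ to the critical point in $B_0^R$, as in \thmref{th:blaschke-model}.

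Second, and more important: your justification that $g|_{\TT}$ is conjugate to doubling is wrong as stated. You assert that ``finite Blaschke products are expanding on the circle,'' but the Blaschke product arising here has a \emph{parabolic} fixed point on $\TT$ (the image of $\infty\in\partial B_0^R$ under the Carath\'eodory extension), with $g'=1$ there; it is not uniformly expanding. The conclusion that $g|_{\TT}$ is topologically conjugate to $D$ is still correct, but it requires a different argument: either invoke the explicit model $B(z)=(3z^2+1)/(3+z^2)$ from \thmref{th:blaschke-model} and the analysis already carried out in the paper (\thmref{cauliflower}, \propref{th:9}) showing the boundary dynamics is doubling, or appeal to the more refined fact that a degree-$d$ Blaschke product whose Julia set is all of $\TT$ (no attracting cycle in $\DD$) restricts to a circle covering topologically conjugate to $x\mapsto dx$. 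The paper sidesteps this by routing through $K$ and its earlier results.
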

\begin{proof}
We use the fact that $R$ is conformally conjugate to the Koebe function $K$ on the immediate basin $B_0^R$,
$$\psi\circ R\circ \psi^{-1}=K,$$
and apply Carath{\'e}odory Theorem to the conformal map $\psi$.
\end{proof}
Let us denote $W=\gamma^{-1}(0).$ We claim:
\begin{prop}
The set of angles $\gamma$ is finite.
\end{prop}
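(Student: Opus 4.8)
The plan is to show that the preimage $W = \gamma^{-1}(0)$ of the Cremer point $0$ under the Carath\'eodory semiconjugacy $\gamma:\TT\to\partial B_0^R$ is a finite set, and then use this to derive a contradiction with the defining property of the Cremer point (non-linearizability). First I would record the basic structure: since $\gamma\circ D = R\circ\gamma$ and $R(0)=0$ (as $0\in\partial B_0^R = J(R)$ is a fixed point), the set $W$ is forward invariant under the doubling map $D$, i.e. $D(W)\subset W$; moreover $W$ is closed, because $\gamma$ is continuous and $\{0\}$ is closed. The key observation is that $\gamma$ identifies only ``few'' angles, in the following sense: two angles $\theta\ne\theta'$ with $\gamma(\theta)=\gamma(\theta')$ correspond to the boundary point $\gamma(\theta)$ being accessible from $B_0^R$ along (at least) two distinct prime-end directions. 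Since $R:B_0^R\to B_0^R$ is conformally conjugate (via $\psi$, as in \propref{th:conjugate-on-B_zero} and the Koebe-function model) to $K$ on the disk, the local degree of $R$ at a boundary point governs how prime ends are permuted; away from the (finitely many) critical and post-critical points of $R$ the map $\gamma$ is locally injective.

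The main step is to bound the cardinality of $W$. I would argue as follows. If $W$ were infinite, then, being an infinite closed $D$-invariant subset of $\TT$, it would either contain a nontrivial minimal Cantor set or accumulate somewhere; in either case there would be an angle $\theta_0\in W$ which is not eventually periodic under $D$, and such that every neighborhood of $\theta_0$ contains other elements of $W$. Pushing forward by $\gamma$, this produces infinitely many distinct angles all mapping to $0$ with $0$ being the fixed point $\gamma(\theta_0)$ --- but the number of accesses to a fixed point of a rational map from a fixed simply connected Fatou-type component is constrained. Concretely, for the Koebe-function model $K$ on $\DD$, the fixed point $1$ on $\partial\DD$ is the landing point of exactly one internal ray (this is visible directly from the explicit dynamics of $K$ near the parabolic point $1$, or from \propref{th:h-elem} and the fact that $(-\infty,0]$ is forward invariant), and the conformal conjugacy $\psi$ transports this rigidity: the Carath\'eodory extension of $\psi^{-1}$ sends each prime end of $B_0^R$ to a unique prime end of $\DD$. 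Combining this with the semiconjugacy $\gamma\circ D = R\circ\gamma$, one sees that the number of elements of $W$ is controlled by the number of preimages of $0$ under $R$ that land inside $\overline{B_0^R}$, together with local multiplicities --- a finite count. More carefully: let $N$ be the number of distinct angles landing at $0$; the functional equation forces the preimage $D^{-1}(W)$ to be related to $\gamma^{-1}(R^{-1}(0)\cap\partial B_0^R)$, and since $R$ has degree $2$ the set $R^{-1}(0)$ has at most $2$ points, at least one of which ($0$ itself) is in $\overline{B_0^R}$; an induction on levels, as in the puzzle-piece arguments of \secref{sec:quadratic-example}, shrinks the ambiguity and shows $N<\infty$.

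The hard part --- and the step I expect to be the main obstacle --- is making rigorous the claim that the fixed point $0\in\partial B_0^R$ has only finitely many accesses from $B_0^R$ \emph{without} already assuming local connectivity in a way that trivializes the problem; the point is that $\gamma$ exists precisely because we have \emph{assumed} $\partial B_0^R$ locally connected (for contradiction), so I may freely use that $\gamma$ is a continuous surjection, but I must extract the finiteness of $\gamma^{-1}(0)$ from the dynamics (the Koebe model and the degree-$2$ structure), not from any regularity of $\partial B_0^R$ beyond local connectivity. I would handle this by transporting everything to the model: under $\psi$, $\gamma$ becomes the Carath\'eodory boundary map $\TT\to\partial\DD=\TT$ of the \emph{identity} (since $\psi$ conjugates $R$ to $K$ and $B_0^R$ to $\DD$), so in fact $\gamma$ is, up to the conjugating homeomorphism of circles induced by $\psi$ on prime ends, essentially a homeomorphism \emph{except} where $\psi$ fails to be boundary-injective --- and $\psi$ (equivalently its boundary extension) is injective on prime ends by Carath\'eodory's theorem. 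Hence $\gamma^{-1}(0)$ has the same cardinality as the set of prime ends of $B_0^R$ at which the impression contains $0$, which, pulled to the disk model, is a single prime end; so in fact $W$ is a \emph{single} angle. This is the cleanest route, and it reduces the whole proposition to the Carath\'eodory rigidity already invoked in \propref{quotient} together with the elementary local structure of $K$ at $1$. I would then conclude by noting this single angle is fixed by $D$, hence is $0$ or would be a rational angle, and use this in the subsequent (omitted) argument to contradict \propref{cremer}.
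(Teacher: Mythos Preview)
Your argument has a genuine gap. In the ``cleanest route'' you claim that, transported to the disk model, $\gamma^{-1}(0)$ corresponds to a single prime end, and you justify this by pointing to the parabolic fixed point $1$ of the Blaschke model. But the Cremer point $0$ of $R$ is \emph{not} the parabolic point --- the parabolic fixed point of $R$ is at $\infty$, and it is $\infty$ (not $0$) that corresponds to the fixed point $1\in\partial\DD$ in the model. The Cremer point $0$ lies somewhere else on $\partial B_0^R$, and while Carath\'eodory's theorem does give a bijection between prime ends of $B_0^R$ and points of $\TT$, it says nothing about how many distinct prime ends share the \emph{impression} $\{0\}$. That is exactly the quantity $|W|$ you are trying to bound, so the argument is circular. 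Your earlier sketch (counting preimages of $0$ under $R$) does not close the gap either: the issue is not how many preimages $0$ has, but how many accesses $0$ itself has from $B_0^R$.

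The paper's proof uses a completely different and much shorter idea. Since $R'(0)=\lambda\neq 0$, the map $R$ is a local homeomorphism at $0$; hence the semiconjugacy $\gamma\circ D = R\circ\gamma$ forces $D$ to restrict to a \emph{homeomorphism} of the compact set $W=\gamma^{-1}(0)$ onto itself (the local inverse of $R$ near $0$ induces an inverse for $D|_W$). One then invokes the general lemma (Milnor, Lemma 18.8) that an expanding homeomorphism of a compact metric space has finite underlying space. No analysis of accesses, internal rays, or the Koebe model is needed --- only the nonvanishing of the multiplier at the Cremer point.
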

To prove this first note that $R$ is a local homeomorphism at $0$ and therefore it induces a homeomorphism on $W$.
The proposition now follows from the following general fact (see e.g. \cite{Mil}, Lemma 18.8):
\begin{lem}
Let $X$ be a compact metric space and let $h:X\to X$ be a homeomorphism. If $h$ is expanding then $X$ is finite.
\end{lem}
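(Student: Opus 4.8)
The plan is to prove a quantitative form of the statement: an expanding homeomorphism of $X$ forces $\#X\le M$, where $M$ is the least number of sets of diameter $\le\epsilon_0$ needed to cover $X$ — a finite number because a compact metric space is totally bounded. Throughout I take ``expanding'' to mean that there are constants $\lambda>1$ and $\epsilon_0>0$ with $d(h(x),h(y))\ge\lambda\,d(x,y)$ whenever $d(h(x),h(y))\le\epsilon_0$; equivalently, every local inverse branch of $h$ contracts distances that are $\le\epsilon_0$ by a factor at least $1/\lambda$.

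The crux is that, since $h$ is a homeomorphism of $X$, the expansion property passes to a uniform contraction property for $h^{-1}$ at scale $\epsilon_0$: if $d(u,v)\le\epsilon_0$, then writing $u=h(x)$, $v=h(y)$ gives $d(u,v)\ge\lambda\,d(x,y)$, i.e. $d(h^{-1}(u),h^{-1}(v))\le\lambda^{-1}d(u,v)\le\epsilon_0$. Since the bound keeps the distance below $\epsilon_0$, it iterates: $d(u,v)\le\epsilon_0$ implies $d(h^{-n}(u),h^{-n}(v))\le\lambda^{-n}d(u,v)$ for all $n\ge0$. Now fix a finite cover $X=B_1\cup\cdots\cup B_M$ with $\operatorname{diam}(B_i)\le\epsilon_0$. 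For each $n$, the sets $h^{-n}(B_1),\dots,h^{-n}(B_M)$ still cover $X$ (as $h^{-n}$ is a bijection), and $\operatorname{diam}(h^{-n}(B_i))\le\lambda^{-n}\operatorname{diam}(B_i)\le\lambda^{-n}\epsilon_0$. Thus $X$ is covered, for every $n$, by $M$ sets whose diameters tend to $0$; hence $X$ cannot contain $M+1$ distinct points $z_1,\dots,z_{M+1}$, since choosing $n$ with $\lambda^{-n}\epsilon_0<\min_{i\ne j}d(z_i,z_j)$ would force two of the $z_i$ into the same set $h^{-n}(B_k)$, contradicting its small diameter. Therefore $\#X\le M<\infty$.

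This argument is routine once the framework is set up; the only delicate point is the step ``$h$ expands $\Rightarrow$ $h^{-1}$ contracts small distances.'' It is immediate with the image-side formulation of expansiveness above, but if one prefers the source-side formulation $d(x,y)\le\epsilon_0\Rightarrow d(h(x),h(y))\ge\lambda\,d(x,y)$, one must rule out that $h$ collapses some pair of distance $>\epsilon_0$ to a pair of distance $\le\epsilon_0$. For that one first observes that the distance between iterates of any two distinct points is multiplied by at least $\lambda$ until it exceeds $\epsilon_0$, so $\bigcap_{n\ge0}(h\times h)^{-n}\{d\le\epsilon_0\}$ is the diagonal of $X\times X$, and then uses compactness of $X\times X$ to conclude that the finite intersections $\bigcap_{n=0}^{N}(h\times h)^{-n}\{d\le\epsilon_0\}$ are contained in any prescribed neighbourhood of the diagonal for $N$ large; this yields the required uniform contraction of $h^{-1}$ in a metric equivalent to $d$, after which the covering argument goes through unchanged. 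In either case the result is the classical fact — a special case of the statement that a compact metric space carrying a positively expansive homeomorphism is finite — recorded as \cite{Mil}, Lemma 18.8.
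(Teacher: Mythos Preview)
Your proof is correct. The paper does not actually prove this lemma; it simply quotes it as a known fact with a reference to \cite{Mil}, Lemma 18.8, so there is nothing to compare your argument against. Your covering argument with the image-side definition of ``expanding'' is clean and gives the sharp bound $\#X\le M$; the closing remark on the source-side definition correctly identifies the extra step needed and the standard compactness/adapted-metric fix.
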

We conclude:
\begin{cor}
\label{periodic angle}
There exists a periodic angle $x\in B$.
\end{cor}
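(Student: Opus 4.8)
The plan is to read off the corollary from the two facts just established: that $W=\gamma^{-1}(0)$ is a finite set, and that the semiconjugacy $\gamma\circ D=R\circ\gamma$ of \propref{quotient} transports the fact that $0$ is fixed by $R$ into a statement about $D$ on $W$. Concretely, I would argue that $D$ restricts to a self-map of the finite nonempty set $W$, and then invoke the elementary observation that a self-map of a finite set has a periodic point.

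First I would verify $\emptyset\neq D(W)\subseteq W$. Since $\gamma$ is surjective onto $\partial B_0^R$ and $0\in J(R)=\partial B_0^R$, the set $W=\gamma^{-1}(0)$ is nonempty. If $x\in W$, then $\gamma(x)=0$, and because $0$ is a fixed point of $R$ (it is the Cremer point of \propref{defn-map}, so $R(0)=0$), the semiconjugacy gives
\[
\gamma(D(x)) = R(\gamma(x)) = R(0) = 0,
\]
hence $D(x)\in W$. Thus $D$ maps the finite set $W$ into itself. (One may note in addition that, since $R$ is a local homeomorphism at $0$, $D|_W$ is actually a bijection of $W$, but this is not needed.)

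Now the conclusion is a pigeonhole argument. Fix any $x_0\in W$; the forward orbit $x_0,D(x_0),D^2(x_0),\ldots$ lies in the finite set $W$, so it cannot be injective: choose $i<j$ with $D^i(x_0)=D^j(x_0)$, and set $x:=D^i(x_0)\in W$ and $n:=j-i\geq 1$. Then $D^n(x)=x$, so $x$ is a periodic angle, and $x\in W=\gamma^{-1}(0)$, which is the assertion of the corollary.

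I do not expect any real obstacle here: all the substance is in the preceding proposition that $W$ is finite, and this corollary is only the remark that a self-map of a finite set admits a periodic point, together with the bookkeeping that $D$ preserves $W$ because $R$ fixes $0$.
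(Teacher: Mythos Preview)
Your proposal is correct and matches the paper's reasoning: the paper states the corollary immediately after establishing that $W$ is finite, with no separate argument, so the intended proof is exactly the pigeonhole observation you spell out---that $D$ preserves the finite nonempty set $W$ (because $R(0)=0$ and $\gamma\circ D=R\circ\gamma$), hence some iterate repeats.
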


We are now set to prove \thmref{th:example}:
\begin{proof}
Let $\psi$ be as in \propref{quotient} and let 
$$\ell=\psi^{-1}(\RR)\subset B_0^R.$$
Let us denote $U^+$, $U^-$ the components of $B_0^R\setminus \ell$. To fix the ideas, assume that the angle $x$
from \propref{periodic angle} corresponds to a prime end $p$ in the Carath{\'e}odory completion of $U^+$, and has period $m$
under the doubling map.
Let $Q:U^+\to U^+$ be the branch of $R^{-m}$ whose Carath{\'e}odory extension fixes $p$. 
By Denjoy-Wolff Theorem, all orbits of $Q$ converge to 
$0$. Hence, $0$ has a non-empty basin under $Q$, and thus is either repelling or parabolic for $R$, which 
contradicts our assumptions.
\end{proof}

We finally note, that up to a conjugacy by $z\mapsto az$ with $a\in\CC^*$, our map $R$ has the form
$$R(z)=\frac{z^2+\lambda z}{z+1}.$$

\section{Example of a map in $\claa\setminus \cla$.}
We begin by showing:
\begin{thm}
\label{th-ex-2}
There exists a quadratic rational map $R$ with a simple parabolic fixed point whose immediate basin
$B_0^R$ is simply connected and has a locally connected boundary, which is not a Jordan curve.

\end{thm}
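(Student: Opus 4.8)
\textbf{Proof proposal for Theorem~\ref{th-ex-2}.}
The plan is to construct the example inside the same moduli space of quadratic rational maps used in Appendix~A, exploiting the normal form from \cite{Milnor-remarks}. I would fix one fixed point to be a simple parabolic with multiplier $1$ (placed at $\infty$), so that the immediate basin $B_0^R$ has degree $2$ as before, and then choose the multiplier at a second fixed point so as to \emph{prevent} $\partial B_0^R$ from being a Jordan curve while \emph{keeping} it locally connected. The natural candidate is a second \emph{parabolic} fixed point with a \emph{rationally indifferent} multiplier $e^{2\pi i p/q}$, $q\ge 2$ --- or more simply an attracting fixed point --- lying outside $\overline{B_0^R}$ but whose basin pinches the Julia set. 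The mechanism: if $R$ has a parabolic (or attracting) cycle whose basin is not $B_0^R$, then $J(R)=\partial B_0^R$ still by Montel, and $J(R)$ is locally connected by standard results (e.g.\ the Julia set of a geometrically finite rational map with no critical point in $J$ is locally connected --- see the discussion and the conformal conjugacy to the Koebe function $K$ as in Proposition~\ref{th:conjugate-on-B_zero} and Carath\'eodory theory), but the second basin forces infinitely many pinch points, so $\partial B_0^R$ is not a simple closed curve.

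Concretely I would proceed as follows. First, using Lemma~3.1 of \cite{Milnor-remarks} (quoted in the excerpt), select multipliers $\mu_1=1$ at $\infty$ and a second multiplier $\mu_2$ --- either $\mu_2=0$ (a superattracting fixed point, equivalently a critical point) or $\mu_2=e^{2\pi i/q}$ for suitable small $q$ --- with $\mu_1\mu_2\ne 1$, so that a quadratic rational map $R$ with these data exists and is unique up to M\"obius conjugacy; the third multiplier is then $\mu_3=(2-\mu_1-\mu_2)/(1-\mu_1\mu_2)$. Second, verify that the parabolic basin of $\infty$ absorbs exactly one of the two critical orbits (as in the proof of Proposition~\ref{defn-map}, the other critical orbit is tied up with the second indifferent/attracting cycle), so $\infty$ is a \emph{simple} parabolic point and $B_0^R$ is simply connected of degree $2$; the conformal model $K$ of Proposition~\ref{th:determine-h}/Theorem~\ref{th:blaschke-model} applies and gives a Carath\'eodory semiconjugacy $\gamma:\TT\to\partial B_0^R$ with $\gamma\circ D=R\circ\gamma$ exactly as in Proposition~\ref{quotient}. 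Third, show $J(R)=\partial B_0^R$ is locally connected: this is where one invokes that $R$ is geometrically finite with $J(R)$ containing no critical point (both critical points are attracted to the non-repelling cycles), hence $J(R)$ is locally connected, so $\gamma$ is a genuine (continuous) parametrization. Fourth, show $\partial B_0^R$ is \emph{not} a Jordan curve: because the second basin $V$ (basin of the attracting/parabolic cycle other than $\infty$) is open, nonempty, and disjoint from $B_0^R$, and because both basins accumulate everywhere on $J(R)$, the curve $\gamma$ must be non-injective --- there exist two distinct angles $\theta\ne\theta'$ in $\TT$ with $\gamma(\theta)=\gamma(\theta')$ (a pinch point separating $B_0^R$ from $V$); equivalently the critical point $+1$ of the Koebe model (the preimage of $\infty$) maps to a boundary point that is bi-accessible from $B_0^R$, forcing a self-intersection of $\partial B_0^R$. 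This is the key structural point and I would argue it by a Montel/expansion argument paralleling the proof of Corollary~\ref{periodic angle}: the fiber $\gamma^{-1}(\text{pinch point})$ is finite and $D$-invariant, hence contains a periodic angle, and the two outer cylinder ends of $K$ (the punctures $0$ and $\infty$ of $\CC/\ZZ$, i.e.\ the prime ends $0^\pm$) are identified under $\gamma$ because $f_0$ and the Chebyshev map are mated --- more directly, in the Koebe model the arc $[0,\infty]$ is fully invariant and both its endpoints are accessible from $\DD$, so their $\psi$-images coincide.

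The main obstacle I expect is the fourth step: proving rigorously that $\gamma$ is non-injective, i.e.\ that $\partial B_0^R$ genuinely has a pinch, rather than merely failing to be smooth. One clean route around it is to choose $\mu_2$ so that the second fixed point is \emph{attracting} with its immediate basin abutting $B_0^R$ along an arc of $J(R)$; then the standard theory of geometrically finite maps (pinched-disk models, or the Yoccoz/Milnor description of quadratic rational maps with two attracting fixed points) directly exhibits $J(R)$ as the quotient of $\TT$ by a nontrivial lamination, which is locally connected but not a circle. An even more economical choice is to take $R$ in the family $R(z)=\frac{z^2+\lambda z}{z+1}$ from Appendix~A with $\lambda$ replaced by a \emph{Siegel} multiplier of bounded type at $0$: then $0$ is the center of a Siegel disk whose boundary lies in $J(R)=\partial B_0^R$, the Julia set is locally connected (Petersen's theorem), and $B_0^R$ cannot be a Jordan domain because its boundary carries the Siegel-disk boundary, a nowhere-smooth invariant curve that separates $B_0^R$ from the Siegel disk --- giving the required example in $\claa\setminus\cla$. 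I would present the attracting-second-fixed-point version as the primary construction, since it is the most self-contained, and remark on the explicit formula $R(z)=\frac{z^2+\lambda z}{z+1}$ at the end, mirroring the closing remark of Appendix~A. $\Box$
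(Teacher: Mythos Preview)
Your strategy --- pick a second non-repelling cycle so that the map is geometrically finite, quote a local-connectivity result for $J(R)=\partial B_0^R$, and then argue that the second basin forces pinching --- is close in spirit to what the paper does, but your concrete choices fail at exactly the step you flagged as the obstacle.

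Your primary candidate $\mu_2=0$ (a superattracting fixed point together with the parabolic at $\infty$) produces a map Möbius-conjugate to $f_0(z)=z+z^2$ itself. Indeed, with fixed points of multipliers $1$ and $0$ the holomorphic index relation forces $\infty$ to be a \emph{double} fixed point and $0$ to be the unique other fixed point; a normal form is $R(z)=z^2/(z-1)$, and the substitution $w=1/z$ gives $w\mapsto w-w^2$, conjugate to $z\mapsto z+z^2$ by $z\mapsto -z$. So $J(R)$ \emph{is} a Jordan curve and your fourth step cannot be carried out. More generally, whenever the Fatou set has exactly two components (one parabolic basin, one attracting basin, both completely invariant), nothing you wrote excludes $\partial B_0^R$ from being a Jordan curve. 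Your parabolic alternative $\mu_2=e^{2\pi i/q}$ with $q\ge 2$ also runs into trouble with the critical-orbit count before you ever get to step four, and the Siegel route, while logically sound, drags in Petersen's theorem for what should be an elementary example.

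The paper sidesteps all of this by choosing the \emph{second} cycle to have period $2$: it takes $R(z)=\dfrac{32/27}{z^2+2z}$, the mating of the basilica $z^2-1$ with the parabolic map $z+z^2$, which has a simple parabolic fixed point at $-4/3$ and a superattracting $2$-cycle $\infty\leftrightarrow 0$. Local connectivity of $J(R)$ is quoted from Tan--Yin \cite{TY} on geometrically finite rational maps. The ``not a Jordan curve'' step is then done by a completely explicit real-symmetry argument: the repelling fixed point $\alpha=2/3$ lies on $\partial B_0^R$, and since both real intervals $(0,\alpha)$ and $(\alpha,\infty)$ are shown to lie in the superattracting basin, $\alpha$ is accessible from $B_0^R$ on both sides of the real axis, hence bi-accessible, hence $\partial B_0^R$ is not a simple closed curve. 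No abstract pinch-point argument is needed. If you want to salvage your approach, the clean fix is exactly this: force at least three Fatou components by taking the competing cycle to have period $\ge 2$, and then exhibit an explicit bi-accessible boundary point rather than arguing abstractly.
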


We find our example in the family of quadratic rational maps
$$R_a(z)=\frac{a}{z^2+2z}$$
which was considered in \cite{AY}. For $a\neq 0$ the map $R_a$ has two simple critical points $c_1=-1$ and $c_2=\infty.$ The latter
one is periodic with period 2:
$$\infty\mapsto 0\mapsto \infty.$$
Fix $a_0=32/27$ and let 
$R\equiv R_{a_0}$. The rational function $R$ has a parabolic fixed point $z_0=-\frac{4}{3}$ with multiplier $1$. Since
there is a single critical orbit of $R$ which may be attracted by $z_0$, the point $z_0$ is a simple parabolic fixed point.
Elementary considerations imply that $B_0^R$ is simply-connected, it is bounded since a neighborhood of $\infty$ belongs to the
basin of the super-attracting cycle of period $2$.

By Montel's Theorem $\partial B_0^R=J(R)$. The rational map $R$ can be described as a conformal mating of the basilica
$z\mapsto z^2-1$ and the parabolic map $z\mapsto z+z^2$. The Julia set of $R$ is locally connected by \cite{TY}. 

The other fixed point of $R$ is $\alpha=\frac{2}{3}$. It is repelling: $R'(\alpha)=-\frac{5}{4}$. It is elementary to verify that
the interval $[0,\alpha]$ is invariant under the second iterate of $R$, and that
$$R^2(x)<x\text{ for }x\in(0,\alpha).$$
Hence,  the interval $(0,\alpha)$ belongs to the super-attracting basin, and so does the interval $(\alpha,\infty)$. The point $\alpha$ is
on the boundary of the immediate basin $B_0^R$ and hence must be accessible from $B_0^R$. By real symmetry, $\alpha$ is bi-accessible
from $B_0^R$. Consider the parabolic renormalization $F\equiv \cP(R)$. Preimages of $\alpha$ are dense in $\partial B_0^R$, and therefore
the boundary $\cD(F)$ contains locally conformal preimages of a neighborhood of $\alpha$ in $J(R)$. In particular, 
$\partial \cD(F)$ contains bi-accessible points, and therefore is not Jordan. On the other hand, 
$\partial \cD(F)$ is a locally homeomorphic image of a locally connected set, and hence is locally connected.
Thus, $F\in \claa\setminus\cla$. Note that by \thmref{class-1c}, the parabolic renormalization $\cP(F)\in\cla.$

\newpage

\end{document}